\newcommand{\refcheckize}[1]{%
	\expandafter\let\csname @@\string#1\endcsname#1%
	\expandafter\DeclareRobustCommand\csname relax\string#1\endcsname[1]{%
		\csname @@\string#1\endcsname{##1}\wrtusdrf{##1}}%
	\expandafter\let\expandafter#1\csname relax\string#1\endcsname
}
\def\widebreve{\mathpalette\wide@breve}
\def\wide@breve#1#2{\sbox\z@{$#1#2$}%
	\mathop{\vbox{\m@th\ialign{##\crcr
				\kern0.08em\brevefill#1{0.8\wd\z@}\crcr\noalign{\nointerlineskip}%
				$\hss#1#2\hss$\crcr}}}\limits}
\def\brevefill#1#2{$\m@th\sbox\tw@{$#1($}%
	\hss\resizebox{#2}{\wd\tw@}{\rotatebox[origin=c]{90}{\upshape(}}\hss$}
\definecolor{mycolor}{rgb}{0.122, 0.435, 0.698}
\DeclareOldFontCommand{\rm}{\normalfont\rmfamily}{\mathrm}
\DeclareOldFontCommand{\sf}{\normalfont\sffamily}{\mathsf}
\DeclareOldFontCommand{\tt}{\normalfont\ttfamily}{\mathtt}
\DeclareOldFontCommand{\bf}{\normalfont\bfseries}{\mathbf}
\DeclareOldFontCommand{\it}{\normalfont\itshape}{\mathit}
\DeclareOldFontCommand{\sl}{\normalfont\slshape}{\@nomath\sl}
\DeclareOldFontCommand{\sc}{\normalfont\scshape}{\@nomath\sc}
\setlist{nolistsep} %
\newlist{asslist}{enumerate}{1} 
\setlist[asslist]{label=(\roman*), ref=\thethmT(\roman*)}
\newlist{thmlist}{enumerate}{1} 
\setlist[thmlist]{label=(\alph*), ref=\thethmT(\alph*)}
\definecolor{ocre_old}{RGB}{243,102,25} 
\definecolor{mblue}{rgb}{0.122, 0.435, 0.698}
\definecolor{ocre_n}{rgb}{0, 0.435, 0.698}
\definecolor{ocre}{rgb}{0.1, 0.14, 0.13}
\newtheoremstyle{ocrenumbox}
{0pt}
{0pt}
{\sl}
{}
{\small\bf\sffamily\color{ocre}}
{\;}
{0.25em}
{\small\sffamily\color{ocre}\thmname{#1}\nobreakspace\thmnumber{\@ifnotempty{#1}{}\@upn{#2}}
	\thmnote{\nobreakspace\the\thm@notefont\sffamily\bfseries\color{black}---\nobreakspace#3.}} 
\newtheoremstyle{ocrenumhypbox}
{0pt}
{0pt}
{}
{}
{\small\bf\sffamily\color{ocre}}
{\;}
{0.25em}
{\small\sffamily\color{ocre}\thmname{#1}\nobreakspace\thmnumber{\@ifnotempty{#1}{}\@upn{#2}}
	\thmnote{\nobreakspace\the\thm@notefont\sffamily\bfseries\color{black}---\nobreakspace#3.}} 
\newtheoremstyle{blacknumex}
{5pt}
{5pt}
{\sl}
{} 
{\small\bf\sffamily}
{\;}
{0.25em}
{\small\sffamily{\tiny\ensuremath{\blacksquare}}\nobreakspace\thmname{#1}\nobreakspace\thmnumber{\@ifnotempty{#1}{}\@upn{#2}}
	\thmnote{\nobreakspace\the\thm@notefont\sffamily\bfseries---\nobreakspace#3.}}
\newtheoremstyle{blacknumbox} 
{0pt}
{0pt}
{\normalfont}
{}
{\small\bf\sffamily}
{\;}
{0.25em}
{\small\sffamily\thmname{#1}\nobreakspace\thmnumber{\@ifnotempty{#1}{}\@upn{#2}}
	\thmnote{\nobreakspace\the\thm@notefont\sffamily\bfseries---\nobreakspace#3.}}
\newtheoremstyle{ocrenum}
{5pt}
{5pt}
{\sl}
{}
{\small\bf\sffamily\color{ocre}}
{\;}
{0.25em}
{\small\sffamily\color{ocre}\thmname{#1}\nobreakspace\thmnumber{\@ifnotempty{#1}{}\@upn{#2}}
	\thmnote{\nobreakspace\the\thm@notefont\sffamily\bfseries\color{bl!ack}---\nobreakspace#3.}} 
\theoremstyle{ocrenumbox}
\newtheorem{thmT}{Theorem}[section]
\newtheorem{theoT}{Theorem}
\newtheorem{theoremeT}[thmT]{Theorem}
\newtheorem{lemT}[thmT]{Lemma}
\newtheorem{propT}[thmT]{Proposition}
\theoremstyle{ocrenumhypbox}
\newtheorem{hypT}[thmT]{Hypothesis}
\theoremstyle{blacknumex}
\theoremstyle{blacknumbox}
\newtheorem{definitionT}[thmT]{Definition}
\newtheorem{notationT}[thmT]{Notation}
\newtheorem{remarkT}[thmT]{Remark}
\newtheorem{paraT}[thmT]{}
\theoremstyle{ocrenum}
\newtheorem{corollaryT}[thmT]{Corollary}
\newmdenv[skipabove=7pt,
skipbelow=7pt,
backgroundcolor=black!5,
linecolor=ocre,
innerleftmargin=5pt,
innerrightmargin=5pt,
innertopmargin=5pt,
leftmargin=0cm,
rightmargin=0cm,
innerbottommargin=5pt]{tBox}
\newmdenv[skipabove=7pt,
skipbelow=7pt,
rightline=false,
leftline=true,
topline=false,
bottomline=false,
backgroundcolor=ocre!10,
linecolor=ocre,
innerleftmargin=5pt,
innerrightmargin=5pt,
innertopmargin=5pt,
innerbottommargin=5pt,
leftmargin=0cm,
rightmargin=0cm,
linewidth=4pt]{eBox}	
\newmdenv[skipabove=7pt,
skipbelow=7pt,
rightline=false,
leftline=true,
topline=false,
bottomline=false,
linecolor=ocre,
innerleftmargin=5pt,
innerrightmargin=5pt,
innertopmargin=0pt,
leftmargin=0cm,
rightmargin=0cm,
linewidth=4pt,
innerbottommargin=0pt]{dBox}	
\newmdenv[skipabove=7pt,
skipbelow=7pt,
rightline=false,
leftline=true,
topline=false,
bottomline=false,
linecolor=gray,
backgroundcolor=black!5,
innerleftmargin=5pt,
innerrightmargin=5pt,
innertopmargin=5pt,
leftmargin=0cm,
rightmargin=0cm,
linewidth=4pt,
innerbottommargin=5pt]{cBox}
\newenvironment{theorem}{\begin{tBox}\begin{theoremeT}}{\end{theoremeT}\end{tBox}}
\newenvironment{hyp}{\begin{tBox}\begin{hypT}}{\end{hypT}\end{tBox}}
\newenvironment{theo}{\begin{tBox}\begin{theoT}}{\end{theoT}\end{tBox}}
\newenvironment{defi}{\begin{dBox}\begin{definitionT}}{\end{definitionT}\end{dBox}}	
\newenvironment{notation}{\begin{dBox}\begin{notationT}}{\end{notationT}\end{dBox}}	
\newenvironment{para}{\begin{dBox}\begin{paraT}}{\end{paraT}\end{dBox}}	
\newenvironment{lem}{\begin{dBox}\begin{lemT}}{\end{lemT}\end{dBox}}	
\newenvironment{prop}{\begin{dBox}\begin{propT}}{\end{propT}\end{dBox}}
\newenvironment{cor}{\begin{dBox}\begin{corollaryT}}{\end{corollaryT}\end{dBox}}	
\renewcommand{\@seccntformat}[1]{\llap{\textcolor{ocre}{\csname the#1\endcsname}\hspace{1em}}} 
\renewcommand{\section}{\@startsection{section}{1}{\z@}
{-4ex \@plus -1ex \@minus -.4ex}
{1ex \@plus.2ex }
{\normalfont\large \bf \color{ocre}}}
\renewcommand{\subsection}{\@startsection {subsection}{2}{\z@}
{-3ex \@plus -0.1ex \@minus -.4ex}
{0.5ex \@plus.2ex }
{\normalfont\large\bf\color{ocre} }}
\newcommand{\ul}{{\underline {l}}}
\newcommand{\uk}{{\underline {k}}}
\newcommand{\unl}{\underline }
\newcommand{\vFq}{{{\nu F_q}}}
\newcommand{\GvF}{{\bG^{\vFq}}}
\newcommand{\phieinszwei}{{\phi_1.\phi_2}}
\def\author#1{\gdef\autrun{\def\and{\unskip, }#1}\gdef\@author{#1}}
\def\address#1{{\def\and{\\\hspace*{18pt}}\renewcommand{\thefootnote}{}%
\footnote {#1}}}
\def\email#1{e-mail: #1}
\newcommand{\tw}[1]{{}^{#1}\!}
\newcommand{\Ad}{{$\mathbf{A}(d)$}}
\newcommand{\Bd}{{$\mathbf{B}(d)$}}
\newcommand{\otw}{\text{otherwise}}
\newcommand{\inv}{^{-1}}
\theoremstyle{definition}
\newtheorem{rem}[thmT]{Remark}
\theoremstyle{plain}
\newtheorem{ass}[thmT]{Assumption}
\theoremstyle{definition}
\numberwithin{equation}{section}
\numberwithin{table}{section}
\def\norm#1#2{{\operatorname N}_{#1}(#2)}
\def\cent#1#2{{\operatorname C}_{#1}(#2)}
\newcommand{\Id}{\operatorname {Id}}
\newcommand{\id}{\operatorname {id}}
\newcommand{\wh}{\widehat}
\newcommand{\wt}{\widetilde}
\newcommand{\wT}{{\widetilde T}}
\newcommand{\wb}{\widebreve}
\newcommand{\wbrevT}{{\widebreve T}}
\newcommand{\AHs}{A_\bH(s)}
\newcommand{\bN}{{\mathbf N}}
\newcommand{\wbT}{{\widetilde {\mathbf T}}}
\newcommand{\wM}{{\wt M}}
\newcommand{\wG}{{\widetilde G}}
\newcommand{\wH}{{\widetilde H}}
\newcommand{\wN}{{\wt N}}
\newcommand{\wZ}{{\wt Z}}
\newcommand{\bC}{{\mathbf C}}
\newcommand{\La}{\ensuremath{\Lambda}}
\newcommand{\la}{\ensuremath{\lambda}}
\newcommand{\whphi}{\ensuremath{{\wh\phi}}}
\newcommand{\bT}{{\mathbf T}}
\newcommand{\tDtwepslsc}{\tD^\eps_{l,\mathrm{sc}}}
\newcommand{\tDksc}{\tD_{k,\mathrm{sc}}}
\newcommand{\tDlsc}{\tD_{l,\mathrm{sc}}}
\newcommand{\tBlsc}{\tB_{l,\mathrm{sc}}}
\newcommand{\sico}{{\mathrm{sc}}}
\newcommand{\twepsDlq}{\tDtwepslsc(q)}
\newcommand{\bB}{{\mathbf B}}
\newcommand{\bG}{{{\mathbf G}}}
\newcommand{\bW}{{{\mathbf W}}}
\newcommand{\bH}{{{\mathbf H}}}
\newcommand{\bL}{{{\mathbf L}}}
\newcommand{\bK}{{{\mathbf K}}}
\newcommand{\bM}{{\mathbf M}}
\newcommand{\bS}{{\mathbf S}}
\newcommand{\HF}{{{\bH}^F}}
\newcommand{\bX}{{\mathbf X}}
\newcommand{\wbG}{{\wt{\mathbf G}}}
\newcommand{\wbH}{{\wt{\bH}}}
\newcommand{\vtB}{{v_{\tB}}}
\newcommand{\Irr}{\mathrm{Irr}}
\newcommand{\Lin}{\mathrm{Lin}}
\newcommand{\Irrl}{\Irr_{\ell'}}
\newcommand{\bZ}{{\mathbf Z}}
\newcommand{\SL}{\operatorname{SL}}
\newcommand{\SU}{\operatorname{SU}}
\newcommand{\GL}{\operatorname{GL}}
\newcommand{\PGL}{\operatorname{PGL}}
\newcommand{\ZZ}{\ensuremath{\mathbb{Z}}}
\newcommand{\CC}{\ensuremath{\mathbb{C}}}
\newcommand{\VV}{{\mathbb{V}}}
\newcommand{\EE}{{\mathbb{E}}}
\newcommand{\TT}{\ensuremath{\mathbb{T}}}
\newcommand{\oTT}{\ensuremath{\overline {\mathbb T}}}
\newcommand{\RR}{\ensuremath{\mathbb{R}}}
\newcommand{\MM}{\ensuremath{\mathbb{M}}}
\newcommand{\ov}{\overline }
\newcommand{\R}{\operatorname{R}}
\newcommand{\obG}{{\overline {\mathbf G}}}
\newcommand{\xx}{\mathbf x }
\newcommand{\n}{\mathbf n }
\newcommand{\nn}{\mathbf n}
\newcommand{\maex}{maximal extendibility\ }
\newcommand{\Maex}{Maximal extendibility\ }
\newcommand{\h}{\mathbf h }\newcommand{\hh}{\mathbf h }
\renewcommand{\o}{\overline}
\newcommand{\Cent}{\ensuremath{{\rm{C}}}}
\newcommand{\NNN}{\ensuremath{{\mathrm{N}}}}
\newcommand{\Sym}{{\mathcal{S}}}
\def\restr#1|#2{\left.#1\right\rceil_{#2}}
\def\III#1{\index{#1@$#1$}{#1}}
\def\II#1@#2{\index{#1@$#2$}{{#2}}}
\def\Ispezial#1@#2@#3{\index{#1@$#2$}{{#3}}}
\newcommand{\bII}{\mathbb I}
\newcommand{\tD}{\ensuremath{\mathrm{D}}}
\newcommand{\Cy}{\mathrm C}
\newcommand{\tC}{\mathrm C}
\newcommand{\tB}{\mathrm B}
\newcommand{\cE}{\mathcal E}
\newcommand{\cM}{\mathcal M}
\newcommand{\calO}{\mathcal O}
\newcommand{\calM}{\mathcal M}
\newcommand{\calL}{\mathcal L}
\newcommand{\calC}{\mathcal C}
\newcommand{\calG}{\ensuremath{\mathcal G}}
\newcommand{\calF}{\ensuremath{\mathcal F}}
\newcommand{\al}{{\alpha}}
\newcommand{\eps}{{\epsilon}}
\newcommand{\UCh}{\operatorname{Uch}}
\newcommand{\spannh}{\spann<h_0>}
\newcommand{\FF}{{\mathbb{F}}}
\newcommand{\FFtimes}{{\mathbb{F}^\times}}
\newcommand{\si}{\ensuremath{\sigma}}
\newcommand{\GF}{{{\bG^F}}}
\newcommand{\wGF}{{{{\wbG}^F}}}
\newcommand{\cO}{{\mathcal O}}
\newcommand{\SO}{{\operatorname{SO}}}
\def\Set#1{\Set@h#1@}
\def\Lset#1{\Lset@h#1@}
\def\Set@h#1|#2@{\left\{\left.#1\vphantom{#2}\hskip.1em\,\right|\,\relax #2\right\}}
\def\Lset@h#1@{\left\{#1\right\}}
\def\CALC#1{\CALC@h#1@}
\def\CALC@h#1|#2@{\calC^{#1}(#2)}
\def\CALCrad#1{\CALCrad@h#1@}
\def\CALCrad@h#1|#2@{\calC_\radic^{#1}(#2)}
\def\CALCNC#1{\CALCNC@h#1@}
\def\CALCNC@h#1|#2@{\calC_{\radic,nc}^{#1}(#2)}
\def\restr#1|#2{\left.#1\right\rceil_{#2}}
\def\spann<#1>{\left\langle#1\right\rangle}
\def\spa#1{\left\langle#1\right\rangle}
\def\Sympm#1{\Sym_{\pm #1}}
\def\Spann<#1>{\Spann@h#1@}
\def\Spann@h#1|#2@{\left\langle\left.#1\vphantom{#2}\hskip.1em\right.\mid\relax #2 \right\rangle}
\def\Set#1{\Set@h#1@}
\def\Set@h#1|#2@{\left\{\left.#1\vphantom{#2}\hskip.1em\,\right.
\mid\relax #2\right\}}
\def\set#1{\set@h#1@}
\def\set@h#1@{\left\{#1\right\}}
\def\spann<#1>{\left\langle#1\right\rangle}
\newcommand{\UE}{{\underline{E}}}
\newcommand{\uE}{{\underline{E}}}
\newcommand{\Aut}{\mathrm{Aut}}
\newcommand{\Out}{\ensuremath{\mathrm{Out}}}
\newcommand{\Z}{\operatorname Z}
\newcommand{\calP}{\mathcal P}
\newcommand{\calX}{\mathcal X}
\newcommand{\DD}{\mathbb D}
\newcommand{\calN}{\mathcal N}
\newcommand{\forevery}{{\text{\quad\quad for every }}}
\newcommand{\und}{{\text{ and }}}
\newcommand{\lra}{\longrightarrow}
\newcommand{\tE}{\mathrm E}
\newcommand{\tA}{\mathrm A}
\newcommand{\tF}{\mathrm F}
\newcommand{\tG}{\mathrm G}
\newcommand{\cF}{\mathcal F}
\newcommand{\Norm}{\operatorname{N}}
\newcommand{\neins}{n_1^\circ}
\newcommand{\neinszwei}{n_1^\circ n_2^\circ}
\newcommand{\nii}{n_i^\circ}
\newcommand{\nzwei}{n_2^\circ}
\newcommand{\wrt}{{with respect to\ }}
\def\starStab#1|#2|#3{ (#1 #2)_{#3}= {#1}_{#3} {#2}_{#3}}
\def\starStabalign#1|#2|#3{ (#1 #2)_{#3}&= {#1}_{#3} {#2}_{#3}}
\def\starStabkla#1|#2|#3{ (#1 #2)_{#3}= {(#1)}_{#3} {#2}_{#3}}
\def\starStabklaneq#1|#2|#3{ (#1 #2)_{#3}\neq {(#1)}_{#3} {#2}_{#3}}
\def\starStabneq#1|#2|#3{ (#1 #2)_{#3}\neq {#1}_{#3} {#2}_{#3}}
\title{The McKay Conjecture on character degrees}
\author{Marc Cabanes 
 and 
Britta Sp\"ath }
\date{}
\begin{document}

\maketitle
\markboth{Marc Cabanes and Britta Sp\"ath}{The McKay Conjecture on character degrees}
\abstract{ We prove that for any prime $\ell$, any finite group has as many irreducible complex characters of degree prime to $\ell$ as the normalizers of its Sylow $\ell$-subgroups. This equality was conjectured by John McKay. 

The conjecture was reduced by Isaacs--Malle--Navarro (2007) to a conjecture on representations, linear and projective, of finite simple groups that we finish proving here using the classification of those groups.

We study mainly characters of normalizers $\NNN_\bG(\bS)^F$ of Sylow $d$-tori $\bS$ ($d\geq 3$) in a simply-connected algebraic group $\bG$ of type $\tD_l$ ($l\geq 4$) for which $F$ is a Frobenius endomorphism. We also introduce a certain class of $F$-stable reductive subgroups $\bM\leq \bG$ of maximal rank where $\bM^\circ$ is of type $\tD_{k}\times \tD_{l-k}$. The finite groups $\bM^F$ are an efficient substitute for $\NNN_\bG(\bS)^F$ or the $\ell$-local subgroups of $\bG^F$ relevant to McKay's abstract statement. For a general class of those subgroups $\bM^F$ we describe their characters and the action of $\Aut(\bG^F)_{\bM^F}$ on them, showing in particular that $\Irr(\bM^F)$ and $\Irr(\bG^F)$ share some key features in that regard.}
    
\address{M.C. : CNRS, Institut de Math\'ematiques Jussieu-Paris Rive Gauche, Place Aur\'elie Nemours, 75013 Paris, France, \email{cabanes@imj-prg.fr}}
\address{B.S. : School of Mathematics and Natural Sciences,
		University of Wuppertal, Gau\ss str. 20, 42119 Wuppertal, Germany, \email{bspaeth@uni-wuppertal.de}}\address{\textit{Mathematics Subject Classification (2020):} 20C15 (20C33 20G40)}
		
\tableofcontents
\section{Introduction}

The main theorem of this paper is as follows. 

\begin{theo}\label{theo_McK}Let $X$ be a finite group, $\ell$ a prime and $S$ a Sylow $\ell $-subgroup of $X$. Let $\Irr_{\ell ' }(X)$ denote the set of complex irreducible characters of $X$ whose degree is prime to $\ell $. Then $$ |\Irr_{\ell ' }(X)|=|\Irr_{\ell ' }({\rm N}_X(S))|.\eqno(\textrm{\bf MK})$$
\end{theo}

Every algebraist may have recognized that this is John McKay's conjecture on character degrees of finite groups. This paper provides the last of many steps in a proof of this conjecture using the classification of finite simple groups (CFSG). The other steps are mainly contained in the papers \cite{IMN}, \cite{MaH0}, \cite{ManonLie}, \cite{S12}, \cite{CS13}, \cite{KoSp}, \cite{MS16}, \cite{CS17A}, \cite{CS17C}, \cite{CS18B}, \cite{S21D1}, \cite{S21D2}, adding more than 400 pages to the CFSG and the background knowledge on representations of quasisimple groups -- thus fulfilling Jon Alperin's prediction that ``\textit{we have here a very easily stated conjecture about all finite groups which is not easily decided from a possible classification of all simple groups}'' \cite{A76}.

\subsection{The McKay conjecture}\label{subMK} McKay's equality ({\bf MK}) relates two numbers, one \textit{global} in the sense that it pertains to $X$, the other \textit{local} in that it is the same for the $\ell $-local subgroup N$_X(S)$.

It seems to originate in McKay's research on character tables of sporadic simple groups \cite{McK71}, an interest that would also lead him to the ``$\tE_8$-observation" \cite[p. 185]{McK80} and the so-called ``(monstrous) Moonshine'' on the character degrees of the Monster sporadic group (see \cite{CoNo79}). It could now be argued that the idea itself of the McKay Conjecture owes a lot to the CFSG as a project, and its proof now draws from the CFSG as a theorem.

While a proof of ({\bf MK}) as elementary as the statement itself seems unattainable, a legitimate wish is to find more structural statements implying character theoretic ones like ({\bf MK}), ({\bf AWC}) below or Dade's conjecture \cite[Conj. 6.3]{Da92}. This direction of research is exemplified by Brou\'e's conjecture \cite[Conj. 6.1]{Br90a} for blocks with abelian defect, see \cite{Rou23} for a recent survey of the issues raised. However, once those character theoretic equalities are checked (see \cite{S17} for a reduction of Dade's conjecture), they may well be helpful in establishing module theoretic statements, using arguments in the vein of \cite[Thm 1.1]{Br90b}.

\subsection{Early results and perspectives} \label{EarlyMK} Soon after the conjecture was sketched by McKay and made precise by Alperin, important verifications of ({\bf MK}) followed: For symmetric groups and $\ell =2$ already in \cite{Mac71} and \cite{McK}, for a large class of solvable groups by Isaacs in \cite{Isa73}, for general linear groups in characteristic $\ell $ in \cite{A76}, for many other finite groups of Lie type in characteristic $\ell $ by Lehrer in \cite{Lehrer}, for symmetric groups and general linear groups for arbitrary primes by Olsson in \cite{Olsson}. A strong form of ({\bf MK}) was proven for $\ell $-solvable groups by Wolf \cite{Wo78}.

The statement ({\bf MK}) itself has similarities with the so-called Harish-Chandra theory of cusp forms for finite groups of Lie type \cite{Sp70}. Fix $G=\GF$ a finite reductive group of Lie type where $F\colon \bG\to\bG$ is a Frobenius endomorphism defining the reductive group $\bG$ over a finite field. For an $F$-split Levi subgroup $\bL$ of $\bG$ and a so-called \textit{cuspidal} character $\lambda\in \Irr(\bL^F)$, parabolic induction allows us to define a subset $\Irr(\GF,(\bL,\lambda))\subseteq \Irr(\GF)$ which turns out to be parametrized via $$\Irr(\GF,(\bL,\lambda))\longleftrightarrow \Irr(\NNN_\GF(\bL,\lambda)/\bL^F)\eqno({\bf HC})$$ thanks to the Howlett--Lehrer--Lusztig theory of Hecke algebras (see for instance \cite[Thm 3.2.5]{GM}). This was generalized in the wake of the determination of $\ell $-blocks as partitions of $\Irr(G)$ for a classical group $G$ in characteristic $p\neq \ell$ by Fong--Srinivasan \cite{FS82}, \cite{FS86}, \cite{FS89}. In this generalization formalized by Brou\'e--Malle--Michel in all types, see \cite{BMM93}, one gets for any integer $d\geq 1$ similar subsets of unipotent characters $\cE(\GF,(\bL,\lambda))$ where $\bL$ (non $F$-split but still $F$-stable when $d\neq 1$) is the centralizer of a so-called \textit{$d$-torus} \cite{BM92} of $\bG$ and $\lambda$ is a so-called $d$-\textit{cuspidal} unipotent character of $\bL^F$. The above ({\bf HC}) is then true up to replacing parabolic induction by the Lusztig functor $R_\bL^\bG$ in the definition of $\Irr(\GF,(\bL,\lambda))$. 

In the meantime, a cluster of conjectures emerged around McKay's, starting with Alperin's weight conjecture \cite{A87}. Considering Brauer characters of a finite group $X$ with respect to the prime $\ell $, the conjecture posits that $$|\text{IBr}(X)|= |\text{Alp}_\ell(X)|\eqno({\bf AWC})$$ where Alp$_\ell(X)$ is the set of $X$-conjugacy classes of pairs $(Q,\pi)$ with $Q$ an $\ell $-subgroup of $X$ and $\pi$ an element of $\Irr(\NNN_X(Q)/Q)$ with codegree $|\NNN_X(Q)|/|Q|\pi(1)$ prime to $\ell $. Kn\"orr and Robinson reduced ({\bf AWC}) to a remarkable statement about ordinary characters, see \cite{KnRo89}, \cite[Thm 9.24]{Navarro_book}. This was in turn generalized by E.C. Dade into a broad conjecture \cite[Conj. 6.3]{Da92} implying both ({\bf MK}) and ({\bf AWC}), see \cite[Thms 9.26 and 9.27]{Navarro_book}. 

This and the many refinements brought to Dade's conjecture lead Brou\'e to introduce a strengthened version including all the extra refinements known in 2006 \cite{Br06}, which he referred to as MAKRODINU  (an acronym from the names of the authors of \cite{McK71}, \cite{A87}, \cite{KnRo89}, \cite{Da92}, \cite{IN_Annals}, \cite{Uno04}). See \cite{Tu08} and \cite{N_Annals} for other refinements. All suggest equivalences of algebras of a geometric nature over $\ell $-adic rings of coefficients.  
Indeed, Brou\'e's own conjecture on $\ell$-blocks with abelian defect \cite[Conj. 6.1]{Br90a} asserts an equivalence of derived module categories $$D(B)\cong D(b)\eqno({\bf ADC}) $$between an $\ell$-block $B$ of a finite group with abelian defect and its Brauer correspondent $b$, where both blocks are seen as algebras over $\overline{\ZZ}_\ell$. See \cite{Ok00} for the case of SL$_2(\ell^m)$, \cite[Thm 7.6]{ChR08} for the case of symmetric groups, \cite[Thm 4.33]{CR13} for a proof in some cases of principal blocks using CFSG. 

\subsection{A conjecture made (quasi) simple} \label{red_iMK} In reducing ({\bf MK}) to a statement about simple groups, it seems impossible not to involve \textit{quasisimple} groups, i.e. perfect groups $G$ such that $G/\Z(G)$ is simple. To illustrate why, consider even the alternating groups: characters of $\mathfrak{A}_n$ ($n\geq 5$) are not of much help to find faithful characters of their double covers $2{\mathfrak A}_n$, let alone to prove counting statements about them. 

In a major breakthrough, the reduction theorem by Isaacs--Malle--Navarro for McKay's conjecture \cite{IMN} appeared in 2007. It introduces a so-called ``inductive McKay condition'', formally stronger than ({\bf MK}), which, once checked for a given $\ell $ and all quasisimple groups $G$, implies ({\bf MK}) for $\ell $ and any finite group $X$. See the surveys \cite{Ti14}, \cite{KM19}, \cite{Ma17b} and the book \cite{Navarro_book} for developments after \cite{IMN}. This condition was reformulated in terms of centrally isomorphic character triples, a notion devised by Navarro and the second author \cite{JEMS_NS}, see \cite[Def 10.14]{Navarro_book}. 

Let us recall that a \textit{character triple} is any $(A,X,\chi)$ with finite groups $X\unlhd A$ and $\chi\in \Irr(X)$ an irreducible character of $X$ invariant under the conjugation action of $A$ on $X$, a terminology due to Isaacs \cite[Ch. 11]{Isa}. The inductive condition ({\bf iMK}) for a finite group $X$ and a prime $\ell$ is as follows (see Definition~\ref{def:2_1} below for the relation $\geq_c$ between character triples).

First, we assume the group theoretic condition that for a Sylow $\ell$-subgroup $S$ of $X$ \textsl{there exists $\NNN_X(S)\leq N\leq X$ such that $N$ is stable under the stabilizer $\Gamma :=\Aut(X)_S$ of $S$ in $\Aut(X)$, with $N\neq X$ when $\NNN_X(S)\neq X$,} and second, we assume that

\noindent\textbf{({\bf iMK})} \textsl{there exists a $\Gamma$-equivariant bijection $$\Irr_{\ell ' }(X)\to \Irr_{\ell ' }(N)$$ such that $(X\rtimes \Gamma_\chi, X,\chi)\geq_c (N\rtimes \Gamma_\chi, N,\chi')$ whenever $\chi\mapsto \chi '$.}

\medskip

The group $N$ can be taken to be $\NNN_{X}(S)$ but it is important to keep the freedom in many quasisimple groups to choose a nicer overgroup $N$. 
Then the Reduction Theorem~\ref{RossiMK} below simply states that for a given prime $\ell$, once ({\bf iMK}) is checked for all quasisimple groups $X$, then it is true for any finite group, see \cite[Thm B]{Rossi_McKay}. 
It is clear by induction that ({\bf iMK}) implies McKay's equality (\textbf{MK}).

It is this ({\bf iMK}) that we indeed prove for all finite groups and primes.

\begin{theo}
	Let $X$ be a finite group and $\ell$ a prime. For any Sylow $\ell$-subgroup $S$ of $X$ and $\Gamma:=\Aut(X)_S$ there exists a $\Gamma$-equivariant bijection 
	\[\Omega: \Irrl(X)\to \Irrl(\NNN_X(S)),\] 
	such that every $\chi\in\Irrl(X)$ satisfies 
	\[ (X\rtimes \Gamma_\chi,X,\chi)\geq_c (\NNN_X(S)\rtimes \Gamma_{\chi}, \NNN_X(S),\Omega(\chi)) .\text{\ \ \ \ } \] 
	
\end{theo}

\subsection{Quasisimple groups of Lie type} \label{sub_iMKred} Whenever Out$(X)$ is cyclic then ({\bf iMK}) above boils down to the existence of an equivariant bijection $\chi\mapsto \chi '$ such that the restrictions of $\chi$ and $\chi '$ to Z$(X)$ have same irreducible constituent. This essentially reduces ({\bf iMK}) for this type of quasisimple groups to cases previously checked for ({\bf MK}), see \cite[\S 3, \S 5]{ManonLie}. 
It then remains to check ({\bf iMK}) for quasisimple groups of Lie type, thus making the proof of McKay's conjecture a Lie theoretic effort. 

The reformulation of the inductive condition of \cite{IMN} in terms of character triple equivalences originated in \cite{S12}, and the main application given there was to check ({\bf iMK}) for quasisimple groups of Lie type whose defining characteristic is the same prime $\ell$. 

After that, the main question is, of course, to prove ({\bf iMK}) in the cases where $X$ is a quasisimple group of Lie type in characteristic $p\neq\ell$. 

Apart from a few exceptions, the universal coverings of simple groups of Lie type are of the form $G=\bG^F$ where $\bG$ is a simple simply connected linear algebraic group endowed with a Frobenius endomorphism $F\colon \bG\to \bG$ defining $\bG$ over a finite field of order $q$, a power of $p$. To account for so-called diagonal automorphisms one defines the inclusion $G\unlhd \wG$ associated to a regular embedding $\bG\leq \wbG$ of the algebraic groups. One also defines $E$ acting on $\wG$ as a group of field and graph automorphisms of $G$, so that $G=[\wG ,\wG]$ and $\wG\rtimes E$ induces the whole of $\Aut(G)$ on $G$. 

\medskip

In such a situation, we define the integer $d_\ell(q)$ as the multiplicative order of $q$ mod $\ell$ when $\ell\geq 3$, mod 4 when $\ell =2$. To check ({\bf iMK}) an important idea from \cite{MaH0} is to take for $N$ the normalizer in $G=\GF$ of a Sylow $d_\ell(q)$-torus $\bS$ of $\bG$. The strong relation between $\NNN_G(\bS)$ and a Sylow $\ell$-subgroup of $G$ is ensured by Brou\'e--Malle's results \cite{BM92} and a remark of the first author \cite{Ca94}. Let us mention that the relevance of $d_\ell(q)$-tori to the $\ell$-local analysis of $G$ goes much deeper, see \cite[\S 4]{Br06}, \cite{Ro23b}.

\subsection{Proving ({\textbf{iMK}}) for the non-defining primes} \label{sub_iMKnondef} Malle and the second author checked the inductive condition ({\bf iMK}) for the prime 2 and all quasisimple groups in \cite{MS16}, thus proving McKay's equality (\textbf{MK}) for the prime $2$. Proving ({\bf iMK}) for the prime 2 is made simpler by ({\bf HC}) providing an equivariant bijection and by the fact that character triples $(A,X,\chi)$ are easier to describe whenever $X$ is perfect and $|A/X|$ is prime to $\chi(1)$, see \cite[6.25]{Isa}. 

Returning to the general case, the choice of a bijection $$\Irrl(G)\to \Irrl(N)\eqno{(\mathbf{\Omega})} $$ has been described in \cite{MaH0} drawing mainly on \cite{BMM93} augmented with a discussion of character degrees in (\textbf{HC}) to give a common indexing set to both sides of $(\mathbf{\Omega})$, see also \cite{S09}, \cite{S10a} and \cite{S10b} for the $N$-side. This choice of the map being relatively settled, the main effort to check ({\bf iMK}) for quasisimple groups then adresses the control of the character triples on either side and the action of $\Out(G)$ using a method introduced as \cite[Thm 2.12]{S12} and recalled here through the variant \Cref{prop_23}. 

Recall the choice of $N$ as $\NNN_{G}(\bS)$ for $\bS$ a Sylow $d_\ell(q)$-torus of $\bG$. Through an elementary application of Clifford theory, the representations of $N$ are strongly related with the ones of $\Cent_G(\bS)$ once certain extendibility questions are solved.

For any given $d\geq 1$ and Sylow $d$-torus $\bS$ of $(\bG,F)$ we single out in \cite{CS18B} the following conditions where 
\[\wt N=\NNN_{\wG}(\bS)\unrhd N=\NNN_G(\bS)\unlhd \wh N=\NNN_{GE}(\bS) .\]

\noindent	{\Ad.} \textit{There is an $ \wh N$-stable $\wt N$-transversal in $\Irr(N)$ where each element extends to its stabilizer in $\wh N$.}

\noindent	{\textbf{\Bd}.} \textit{Every $\theta\in \Irr(N)\cup \Irr(\Cent_{\wG}(\bS))$ extends to its stabilizer $\wt N_\theta$. In the case of $\theta\in\Irr(\Cent_{\wG}(\bS))$ this can be done in an $\Irr(\wt N/N)\rtimes \NNN_{\wG E}(\bS)$-equivariant way.}

\medskip

For large values of $d$ -- forcing $\bS=\{1\}$ -- the condition \Ad{} becomes a quite challenging condition entirely about $\Irr(G)$ that can also be written as follows (see \cite[Sect. 1.C]{S21D2}):

\medskip
\noindent $\textbf{A}(\infty)$.\ \textsl{Any element of $\Irr_{}(G)$ has a $\wG$-conjugate $\chi$ such that $ (\wG E)_\chi=\wG _\chi E_\chi $ and $\chi $ extends to $G E_\chi$.}

\medskip

For $\ell\geq 3$ and prime to $q$, what has been said above explains why ({\bf iMK}) for $G$ and $\ell$ is then implied by the conjunction of $\textbf{A}(\infty)$, $\textbf{A}(d_\ell(q))$ and $\textbf{B}(d_\ell(q))$, see \cite[2.4]{CS18B}.

Condition $\textbf{A}(\infty)$ was finally reached in all types as \cite[Thm A]{S21D2}. As noted in \cite[3.5]{CS18B} the stabilizer statement in $\textbf{A}(\infty)$ settles the question of determining the action of $\Out(G)$ on $\Irr(G)$ for all quasisimple groups $G$. This question has been a natural one since the completion of the CFSG. The answer, and in fact the stronger statement  \textbf{A}$(\infty)$, is expected to have applications to any counting conjecture, see already \cite{FS23} or \cite{Ru22b} through \cite{Ru22a}. The proof of \textbf{A}$(\infty)$ for all types is spread in \cite{CS17A}, \cite{CS17C}, \cite{CS18B}, \cite{S21D1}, {\cite{S21D2}} (see also \cite{Ta18}, \cite{Ma17a}) with the main part being devoted to the types $\tD$ and $\tw 2\tD$. We refer to the introductions of those papers for the main issues raised and the methods used.

\subsection{The present paper} \label{sub_paper} The above leaves us to prove the conditions $\textbf{A}(d)$ and $\textbf{B}(d)$ for every $d\geq 1$ and all groups $G$ of Lie type. In types \textit{not $\tD$ nor} $^2\tD$ this is done in \cite{CS17A}, \cite{CS17C}, \cite{CS18B}, accounting for the most technical part of those texts. The main goal of the present paper is to show $\textbf{A}(d)$ and $\textbf{B}(d)$ in types $\tD$ and $\tw 2\tD$.

In Chapter~\ref{sec_recall} we recall more precisely than above the setting, the background results, and some of the main notation. In Section 2.A the results concern general character theory and in particular the existence of extensions of characters. Then we state the condition (\textbf{iMK}) in terms of centrally isomorphic character triples. We also recall the main criterion used in types already solved, \Cref{prop_23} being a slightly generalized form that will be useful later. General finite groups of Lie type are introduced in \Cref{ssec2C} with their generators and automorphisms. We then define the conditions \Ad{} and \Bd{} matching the criterion for (\textbf{iMK}) just mentioned. We also comment on the case of groups of type $\tD$ and the important inclusion $\bG\leq\ov\bG$ of a group of type $\tD_l$ into a group of type $\tB_l$ with a common maximal torus.

In Chapter~\ref{sec_3} we extend some of the results of \cite{S21D2}. We start by studying the centralizers of semisimple elements in $\tD_{l,\text{sc}}(\ov\FF_p)$ for $l\geq 4$. This leads to further results on the characters of $^{}\tD_{l,\text{sc}}(q)$ and $^{2}\tD_{l,\text{sc}}(q)$. This includes groups of ranks $\leq 3$ that will appear in our study of local subgroups and will help us to give a uniform treatment in the part of Chapter \ref{sec_nondreg_groupM} dealing with characters. An important feature is the partition $$\Irr(G)=\oTT\sqcup \EE\sqcup \DD$$ deduced naturally from \textbf{A}$(\infty)$ in terms of stabilizers and extendibility of those characters \wrt $G\unlhd \wG E$. In Sections \ref{ssec_IrrG} and \ref{ssec:3D} we get precise results on stabilizers and kernels of characters specific to each of the three subsets above, see \Cref{thm_sumup_D}.

In the proofs of the local Conditions \Ad{} and \Bd{} for types other than $\tD$ and $^2\tD$, the discussion splits into two main cases according to whether or not the Levi subgroup $\bC:=\Cy_\bG(\bS)$ is a torus, the solution in the latter case -- \textit{non-regular $d$'s} -- usually using $\textbf{A}(\infty)$ in smaller ranks. Here we will have to use more than just $\textbf{A}(\infty)$, drawing on Chapter~\ref{sec_3} and \cite[Ch. 5]{S21D2},  in a slightly different dichotomy also introducing an overgroup of $\NNN_{ \bG}(\bS)$ in Chapter \ref{sec_nondreg_groupM}.

In Chapter 4, the Conditions \Ad{} and \Bd{} are proven first for integers $d\geq 3$ that are additionally \textit{doubly regular} for $(\bG,F)$.
The integer $d$ is called doubly regular if $\bC$ and $\ov\bC:=\Cy_{\o\bG}(\bS)$ are tori (hence equal) when $\bS$ is a Sylow $d$-torus of $(\bG,F)$. The proof in that case is simpler than the usual regular case in other types and takes advantage of the case of type $\tB_l$ being already known from \cite{CS18B}.
This finally ensures ({\bf iMK}) for quasisimple groups $\GF$ of type $\tD_l$ or $^2\tD_l$ and primes $\ell$ such that $d_\ell(q)$ is doubly regular for $(\bG,F)$, see \Cref{thm:dreg}. 

In the non-doubly regular case, where $\ov \bC$ is not a torus, we bring here a simplification that would also simplify the proofs given for other types and helps keeping the case of types $\tD$ and $^2\tD$ to a reasonable size. 
We introduce a finite group $M$, a subgroup of $\bG^{F'}$ where $F'$ is a version of $F$ slightly altered to fit the technicality of the non-doubly regular case (notation is slightly different in Chapters 5 and 6). This group $M$ is isomorphic to a subgroup $M'$ of $\GF$ containing $\NNN_\GF(\bS)$ and normalizing $\bK_2':=\bG\cap[\ov\bC ,\ov\bC]$ and $\bK_1':=[\Cent_\bG(\bK_2'), \Cent_\bG(\bK_2')]$. 
The groups $\bK'_i{}^F$ ($i=1,2$) are of types $\tD$ in possibly small ranks and their images $K_i$ in $\bG^{F'}$ define a central product 
 $K_1.K_2$ which is normal of index $2$ or $4$ in $M$. We then use the knowledge of the character theory of $K_1$ and $K_2$ gathered from \cite{S21D2} and Chapter~\ref{sec_3} to derive crucial information about $\Irr(M)$. The groups $\wh M$ and $\wt M$ being defined from $M$ similarly to $\wh N$ and $\wt N$ of \ref{sub_iMKnondef} above from $N$, we establish a theorem that reads roughly as follows (the precise statement is \Cref{thm_sec_Ad}).

\begin{theo} There exists some $\wh M$-stable $\wt M$-transversal ${\TT}(M)$ in $\Irr(M)$ such that moreover any element of ${\TT}(M)$ extends to its stabilizer in $\wh M$.
\end{theo}
\medskip

In fact a larger class of groups $M$ with no reference to an integer $d$ is defined in Section \ref{ssec_M}. The relevance of this class of subgroups to the non-doubly regular case is given in the trichotomy of \Cref{tricho}, showing how Theorem C along with \Cref{thm:dreg} and the known case of cyclic defect \cite{KoSp} essentially exhaust all cases to consider.  The action of automorphisms on $M$ is given in Section \ref{ssecE(M)}. Afterwards, we study the stabilizers and extendibility of the characters of $M$ in Section \ref{ssec_5B}.
The proof of Theorem C as \Cref{thm_sec_Ad} is our Sections 5.D-E where we discuss several relevant subsets of $\Irr(M)$ defined according to the restriction of characters to $K_1'.K_2'$ and the various subsets ${\TT}_i$, ${\EE}_i$, and ${\DD}_i$ of $\Irr(K_i)$ selected from the description given in \cite{S21D2}.

 From there the end of the proof of Conditions \Ad{} and \Bd{} in Chapter~\ref{sec6} uses the fact that $d$ is doubly regular for $(\bK_1,F')$ where $\bK_1$ is a factor of a central product $\bM^\circ =\bK_1.\bK_2$ of $F'$-stable simple simply-connected groups of type $\tD$. Let $\bS'$ be a Sylow $d$-torus of $(\bK_1,F')$. Thanks to Chapter~4, ({\bf iMK}) holds in $\bK_1^{F'}$, thus providing a bijection $\Omega_1\colon \Irrl(K_1)\to\Irrl(\NNN_{K_1}(\bS'))$ with strong properties in terms of the $\geq_c$ relation, yielding more character correspondences and establishing a version of $\textbf{A}(\infty)$ for $\NNN_{\bM}(\bS')^{F'}$, see Section 6.A. This is completed in Section 6.C with results showing that indeed $\NNN_\bG(\bS')\leq \bM$ and translating the results obtained for $(\bG,F')$ into similar ones for $(\bG,F)$ with a special care for automorphisms. This essentially completes the proof of \Ad. Meanwhile, Section 6.B establishes \Bd{} by applying results on the Clifford theory for $(\bM^\circ)^{F'}$ gathered in 
Section \ref{ssec5E}.

\bigskip

\noindent{\bf Acknowledgement:} 
Some of this research was conducted in the framework of the research training group \textit{GRK 2240: Algebro-Geometric Methods in Algebra, Arithmetic and Topology}, funded by the DFG. The first author is grateful to the University of Wuppertal for its hospitality.

The authors would like to thank 
Gunter Malle, Ga\"etan Mancini, Gabriel Navarro, Mandi Schaeffer-Fry and the anonymous referee  for their careful reading and their suggestions on various versions of the manuscript. We also thank Lucas Ruhstorfer for his interest and discussions on several points.

\section{Background results}\label{sec_recall}
The aim of this chapter is twofold. On the one hand, we recall some notation used later, as well as the inductive McKay condition ({\bf iMK}), with a criterion for verifying it in \ref{ssec_2D}. 
We also state the relevant results for groups of Lie type that ensure part of the required assumptions. We conclude with some group-theoretic results for groups of type $\tD_l$. 
 
\subsection{Characters and extensions}
Our notation tries to be as classical as possible, some being recalled in \cite[1.A]{S21D2}. We are dealing a lot with situations where $X\unlhd A$ are finite groups and $\chi\in\Irr(X)$ is invariant under the conjugation action of $A$, i.e. $\chi\in\Irr(X)^A$. Extendibility is then a major issue.

\begin{notation} \label{defMaxExt} Let $X\unlhd A $ be finite groups. Let $\calX\subseteq \Irr(X)$ and $\psi \in\Irr(A)$. 
	
	We denote by $\II{IrrA}@ {\Irr(A\mid\calX)}$ the set of irreducible components of induced characters $\chi^A$ for $\chi\in\calX$, and by $\II{IrrX}@{\Irr(\restr\psi|{X})}$ the set of irreducible components of the restriction $\restr\psi|{X}$. 
	
We say that \index{maximal\ extendibility}{ \textit{maximal\ extendibility holds with respect to $X\unlhd A$ for $\calX$}}, if $\calX$ is $A$-stable and every $\chi\in \calX$ extends to its stabilizer $A_\chi$. When this occurs, \index{extension map} an \textit{extension map} is any $A$-equivariant $\Lambda\colon \calX\to\sqcup_{X\leq I\leq A }\Irr(I)$ such that for any $\chi\in\calX$, $\Lambda(\chi)\in \Irr(A_\chi)$ is an extension of $\chi$. Such a map (in particular satisfying $A$-equivariance) always exists as soon as maximal extendibility holds with respect to $X\unlhd A$ for $\calX$. When no set $\calX$ is specified, maximal extendibility \wrt $X\unlhd A$ means it holds for $\calX=\Irr(X)$.
	
	We denote by $\II{Lin X}@{\Lin(X)}$ the set of linear characters of $X$. 
\end{notation}{} 

Whenever maximal extendibility holds with respect to $X\unlhd A$ for a subset $\calX$ of $\Irr(X)$, let Cliff$(A\mid\calX)$ be the set of $A$-conjugacy classes of pairs $(\chi,\eta)$ with $\chi\in\calX$ and $\eta\in \Irr (A_\chi\mid 1_X)$. After an extension map $\Lambda$ has been chosen, Clifford theory (see for instance \cite[Sect. 1.8]{Navarro_book}) leads to the bijection \begin{align}\label{Cliff}\text{Cliff}(A\mid\calX)\ {\xrightarrow{\sim}} \ \Irr(A\mid \calX) \text{\ \ \   by \ \ \   } (\chi,\eta)\mapsto (\Lambda(\chi)\eta)^A. \end{align} 
This fact and its variants are key for exploring the characters of local subgroups such as the group $M$ defined later in the paper. 

Let us gather here some situations where extendibility is ensured.

\begin{prop} \label{ExtCrit} Assume $X\unlhd A$. 
	\begin{enumerate}\item If $A/X\Z(A)$ is cyclic, then maximal extendibility holds \wrt $X\unlhd A$.
		
		\item If $A/X$ is abelian and maximal extendibility holds  \wrt $X\unlhd A$, then for any subgroup $X\leq J\leq A$ we have maximal extendibility \wrt $X\unlhd J$ and to $J\unlhd A$.
		
		\item Let $n\geq 1$ and denote by $\II{Sn}@{\Sym_n}$ the corresponding symmetric group. If maximal extendibility holds \wrt $X\unlhd A$ for some set $\TT\subseteq \Irr(X)$ then it holds \wrt $X^n\unlhd A\wr \Sym_n$ for the subset $\TT^n:=\{ \chi_1\times\dots\times \chi_n\mid \chi_i\in\TT \}$ of $\Irr(X^n)$.	
		\item If $X$ is abelian and $E\leq A$ satisfies $A=XE$, then maximal extendibility holds \wrt $X\unlhd A$ for $\{\la\in\Lin(X)\mid \la([E_\la,E_\la]\cap X)=1 \}$. The latter is the whole $\Irr(X)$ when in addition \maex holds for $E\cap X\unlhd E$, e.g. when $A$ is a semidirect product $A=X\rtimes E$.
		\item Let $\chi\in \Irr(X)$ and assume $A=XV$ where $V\leq A$ is such that $\chi_0:=\restr\chi|{V\cap X}\in\Irr(V\cap X)$ and extends to some $\wt\chi_0\in\Irr(V_\chi)$. Then $\chi$ extends to $A_\chi =X V_\chi$ and there exists some $\wt\chi\in\Irr(X V_\chi)$ extending $\chi$ with $\restr{\wt\chi}|{V_\chi} = {\wt\chi_0} $. 
		
	\end{enumerate}
\end{prop}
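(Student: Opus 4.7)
The plan is to prove each of the five items by standard Clifford-theoretic reductions plus explicit constructions of extensions, using Gallagher's theorem on cyclic quotients, compatibility of extensions under restriction/induction, and (for part (5)) a matching of matrix representations via Schur's lemma. For (1), I would first extend $\chi$ to $B:=X\Z(A)$ via the formula $\wh\chi(xz):=\chi(x)\wt\omega(z)$, where $\wt\omega\in\Lin(\Z(A))$ extends the central character $\omega\colon\Z(A)\cap X\to\CC^\times$ associated to $\chi$. The formula is well-defined because $\wt\omega|_{\Z(A)\cap X}=\omega$, and $\wh\chi$ is irreducible since its restriction is $\chi$. Since $\chi$ is $A_\chi$-invariant and $\Z(A)$ is centralized by $A$, the extension $\wh\chi$ is automatically $A_\chi$-invariant. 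As $A_\chi/B$ is a subgroup of the cyclic group $A/B$ and is thus cyclic, Gallagher's classical extension theorem extends $\wh\chi$, hence $\chi$, to $A_\chi$.

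For (2), the case $X\unlhd J$ is immediate: restrict the given $\wt\chi\in\Irr(A_\chi)$ to $A_\chi\cap J=J_\chi$, still irreducible because it extends $\chi$. For $J\unlhd A$, after replacing $A$ by $A_\psi$ (under which the hypotheses persist, by restriction of extensions) one may assume $\psi\in\Irr(J)$ is $A$-invariant. Pick a constituent $\chi\in\Irr(X)$ of $\restr\psi|X$ and an extension $\wt\chi\in\Irr(A_\chi)$; by Clifford plus Gallagher (using $J/X$ abelian) we have $\psi=\theta^J$ for $\theta=\restr{\wt\chi}|{J_\chi}\cdot\beta_0$ with some $\beta_0\in\Lin(J_\chi/X)$. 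Extend $\beta_0$ to $\beta\in\Lin(A_\chi/X)$ — possible because $A/X$ is abelian — and set $\Theta:=\wt\chi\cdot\beta\in\Irr(A_\chi)$. Then $\Theta^A\in\Irr(A)$ by the Clifford correspondence above $\chi$, and it restricts to $\theta^J=\psi$ by Mackey, using that $A$-invariance of $\psi$ forces $A=J\cdot A_\chi$ with $J\cap A_\chi=J_\chi$; hence $\Theta^A$ extends $\psi$.

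For (3), I would extend $\chi_1\times\cdots\times\chi_n$ factor by factor on the base group $A^n$ (each $\chi_i$ extending to $A_{\chi_i}$ by hypothesis) and then let the symmetric factors act by permuting tensor positions within blocks of equal characters, the standard wreath-product extension construction yielding an extension to the full stabilizer. For (4), a linear extension $\wt\la$ of $\la$ to $A_\la=XE_\la$ must take the form $\wt\la(xe)=\la(x)\mu(e)$ for some $\mu\in\Lin(E_\la)$ with $\mu|_{X\cap E_\la}=\la|_{X\cap E_\la}$; well-definedness and multiplicativity of this formula both follow from $\la^e=\la$ for $e\in E_\la$. Existence of such $\mu$ reduces to extending the $E_\la$-invariant linear character $\la|_{X\cap E_\la}$ to $E_\la$: the condition $\la([E_\la,E_\la]\cap X)=1$ is necessary, and when maximal extendibility holds for $X\cap E\unlhd E$ it is also sufficient — apply that hypothesis to $\la|_{X\cap E}$ at its stabilizer (which contains $E_\la$) and restrict to $E_\la$.

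For (5) — the most technical item and the main expected obstacle — I would work at the level of matrix representations, fixing $\rho\colon X\to\GL(W)$ affording $\chi$ and $\sigma\colon V_\chi\to\GL(W)$ affording $\wt\chi_0$. Since $V_\chi\cap X=V\cap X$ and $\chi_0$ is irreducible, after conjugating one representation we may arrange $\restr\rho|{V_\chi\cap X}=\restr\sigma|{V_\chi\cap X}$ as matrix representations. Define $\tau\colon XV_\chi\to\GL(W)$ by $\tau(xv):=\rho(x)\sigma(v)$; well-definedness on $A_\chi=XV_\chi$ follows from the matching on $V_\chi\cap X$. The key point is the homomorphism property, which reduces to the identity
\[\rho(vxv^{-1})=\sigma(v)\rho(x)\sigma(v)^{-1}\quad\text{for every }v\in V_\chi,\ x\in X.\]
Both sides are irreducible representations of $X$ affording $\chi$ — the right side because $v\in V_\chi$ stabilizes $\chi$ — so they coincide up to conjugation by some $M\in\GL(W)$. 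Evaluating at $x\in V_\chi\cap X$ (for which $vxv^{-1}\in V_\chi\cap X$ since $V_\chi\cap X\unlhd V_\chi$), the matching of $\rho$ and $\sigma$ there yields equality of both sides, so $M$ centralizes the irreducible $\restr\rho|{V_\chi\cap X}=\chi_0$; Schur's lemma forces $M$ to be scalar and the two representations are in fact equal. The trace of $\tau$ is then the desired extension $\wt\chi\in\Irr(XV_\chi)$ with $\restr{\wt\chi}|{V_\chi}=\wt\chi_0$.
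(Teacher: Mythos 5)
The one genuine gap is in part (d). The first sentence of (d) is unconditional: assuming only that $X$ is abelian and $A=XE$, every $\la\in\Lin(X)$ with $\la([E_\la,E_\la]\cap X)=1$ extends to $A_\la$. Your argument reduces this, correctly, to producing $\mu\in\Lin(E_\la)$ agreeing with $\la$ on $X\cap E_\la$, but you then declare the kernel condition merely ``necessary'' and invoke the extra hypothesis (\maex for $E\cap X\unlhd E$) for sufficiency; so you have only proved the first claim under an additional assumption that the statement does not make. In fact the kernel condition alone suffices to finish your own reduction: $\la|_{X\cap E_\la}$ is a linear character of the normal subgroup $X\cap E_\la$ of $E_\la$ which is trivial on $(X\cap E_\la)\cap[E_\la,E_\la]=X\cap[E_\la,E_\la]$, hence it factors through a subgroup of the abelian group $E_\la/[E_\la,E_\la]$ and therefore extends to a linear character $\mu$ of $E_\la$ --- no hypothesis on $E\cap X\unlhd E$ is needed. (The paper argues equivalently: $[X,E_\la]\le\ker\la$ always, so the hypothesis gives $[A_\la,A_\la]\cap X\le\ker\la$, and one passes to $A_\la/[A_\la,A_\la]$.) The unconditional form is the one actually used later, e.g. in the proof of \Cref{prop7_2}, where only $[E,E]\cap M^\circ\le\ker(\phi)$ is verified; so this step must be repaired, although the repair is one line. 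Your treatment of the second sentence of (d) is then fine: under the extra hypothesis every $\la$ extends, and necessity of the kernel condition shows the set is all of $\Irr(X)$.

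A smaller point concerns (c): the image in $\Sym_n$ of the stabilizer of $\chi_1\times\dots\times\chi_n$ permutes positions whose entries are $A$-conjugate, not necessarily equal, so ``permuting tensor positions within blocks of equal characters'' reaches the full stabilizer only after first conjugating by an element of $A^n$ to make $A$-conjugate entries equal; this reduction is legitimate (since $\TT$ is $A$-stable and extendibility to the stabilizer is invariant under conjugation) but should be said. The remaining parts are correct and complete, and in places take a different route from the paper: for (a) you spell out the reduction to the cyclic-quotient extension theorem that the paper cites; for (b) the nontrivial case $J\unlhd A$ is handled by an explicit Clifford--Gallagher--Mackey construction of an extension of an invariant $\psi\in\Irr(J)$, where the paper instead uses the equivalence of \maex with multiplicity-free restrictions when $A/X$ is abelian --- both are valid, yours being more constructive and the paper's shorter; and for (e) your Schur-lemma gluing of matrix representations is a complete proof of the fact the paper quotes from the literature.
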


\begin{proof}
	For (a), use \cite[Cor. 11.22]{Isa}. For (c), see \cite[Lem. 2.6]{S21D2}. For (e), see \cite[Lem.~4.1]{S10b}.
	
	 In (b) only maximal extendibility \wrt $J\unlhd A$ is nontrivial. But then we can use the fact that since $A/X$ is abelian then \maex is equivalent to elements of $\Irr(A)$ having multiplicity-free restrictions to $X$, see for example \cite[1.A]{S21D2}. Now restrictions to $J$ can't have any multiplicity $\geq 2$. 
	
For (d) note first that $[X,E_\la]$ is a subgroup of $\ker(\la)$. So if moreover $\la ([E_\la,E_\la]\cap X)=1 $ then indeed $[A_\la ,A_\la]\cap X\leq \ker(\la)$ since $A_\la =XE_\la$. We can then change $X\unlhd A_\la$ into $X/(X\cap [A_\la,A_\la])\unlhd A_\la/[A_\la,A_\la]$ thus reducing the problem to the case when $A_\la$ is abelian. The extension problem is then easy. When in addition \maex holds for $E\cap X\unlhd E$, then for any $\la\in \Irr(X)$, one has $\restr\la|{E\cap X}=\restr\wt\la|{E\cap X}$ for some (linear) character $\wt\la$ of $E_\la\leq E_{\restr\la|{E\cap X}}$ and therefore $\la ([E_\la,E_\la]\cap X)=1 $ since $\wt\la ([E_\la,E_\la] )=1 $. 
\end{proof}  

When a semidirect product $B\rtimes C$ acts on a group $X$ we are interested in characters of $X$ whose stabilizer in $B C$ decomposes as $B' C'$ with $B'\leq B $ and $C'\leq C$. This property can be given a different formulation in terms of $C$-stable $B$-transversals (\cite[Lem. 2.4]{S21D1}). See also the reformulation of the $\textbf{A}(\infty)$ condition in \Cref{Ainfty} or \Cref{prop_23}(iii).

The following statement allows one to deduce an extension map from a given extension map for a given transversal.

\begin{prop}\label{Ext_f} Assume $X\unlhd A\unlhd\wh A$ with $X\unlhd\wh A$. Let $\wh A_0\leq \wh A$ with $\wh A= A\wh A_0 $ and $A=A_0X$ for $A_0:= \wh A_0\cap A$. Let $X_0:=\wh A_0\cap X$ and assume $A/A_0\cong X/X_0$ abelian.
	
	Assume \maex holds \wrt $X_0\unlhd X$ and assume there is an $\wh A_0 $-equivariant extension map \wrt $X_0\unlhd A_0$ for $\TT$ an $\wh A_0$-stable $X$-transversal of $\Irr(X_0)$. 
	
	Then $X\unlhd A$ satisfies \maex with a $\Lin(A/A_0)\rtimes \wh A_0$-equivariant extension map $\wt \Lambda$. Namely when $\rho\in\Irr(X)$, $a\in \wh A_0$ and $\la\in \Lin(A/A_0)$ is seen as character of $A$ then $$\wt\La(\rho^a)=\wt\La(\rho)^a \text{\ \ and \ \ } \wt\La(\restr \la|{X}\rho)= \restr \la|{A_\chi}\wt\La(\rho).$$
\end{prop}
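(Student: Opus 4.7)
The plan is to combine the two given extension devices using Clifford correspondence and the decomposition of $A/X_0$ coming from the abelian-quotient hypothesis. First I would establish the structural fact: $A/A_0\cong X/X_0$ abelian gives $[A,A]\leq A_0$, and for $x\in X$, $a\in A_0$ one checks $[x,a]=x(ax^{-1}a^{-1})\in X$ while $[x,a]\in[A,A]\leq A_0$, so $[X,A_0]\leq X\cap A_0=X_0$. Hence $A/X_0$ is the internal direct product $(X/X_0)\times(A_0/X_0)$, and the same splitting passes to stabilisers: $A_\rho/X_0=(X/X_0)\times((A_0)_\rho/X_0)$ and $A_\theta/X_0=(X_\theta/X_0)\times((A_0)_\theta/X_0)$, with $(A_0)_\rho\leq(A_0)_\theta$ forced by the $\wh A_0$-stability and $X$-transversal property of $\TT$.

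Given $\rho\in\Irr(X)$, I would use the transversal property to select the unique $\theta=\theta(\rho)\in\TT$ below $\rho$, choose an extension $\wt\theta\in\Irr(X_\theta)$ of $\theta$ via \maex for $X_0\unlhd X$, and write $\rho=(\wt\theta\,\mu_\rho)^X$ by Clifford with a unique $\mu_\rho\in\Irr(X_\theta/X_0)$. Since $X\leq A_\rho$ one gets $A_\rho=X(A_0)_\rho$, and $\zeta:=\restr{\Lambda_0(\theta)}|{(A_0)_\rho}\in\Irr((A_0)_\rho)$ extends $\theta$. Applying \Cref{ExtCrit}(e) with $Y=(A_0)_\rho$, $V=X_\theta$, $B=(A_\rho)_\theta:=X_\theta(A_0)_\rho$, $\chi=\zeta$---where $\restr\zeta|{V\cap Y}=\theta\in\Irr(X_0)$ extends to $V=X_\theta$ by hypothesis---together with a choice of $\wt\theta$ compatible with $\zeta$ (made possible by the direct-product structure from Step~1) so that $\zeta$ is $X_\theta$-invariant, produces an extension $\widehat\zeta\in\Irr((A_\rho)_\theta)$ of $\theta$. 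Gallagher's theorem combined with the $(A_\rho)_\theta$-invariance of $\mu_\rho$ (a consequence of the compatible choice) then gives $\wt\psi_\rho:=\widehat\zeta\cdot\wt\mu_\rho\in\Irr((A_\rho)_\theta)$ extending $\psi_\rho:=\wt\theta\mu_\rho$, and I would set $\wt\Lambda(\rho):=(\wt\psi_\rho)^{A_\rho}$, which is an irreducible extension of $\rho$ by Clifford's theorem.

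For the equivariance, the $\wh A_0$-equivariance of $\Lambda_0$ and the $\wh A_0$-stability of $\TT$ (giving $\theta(\rho^a)=\theta(\rho)^a$) combined with the naturality of the Clifford correspondence under conjugation yield $\wt\Lambda(\rho^a)=\wt\Lambda(\rho)^a$ for $a\in\wh A_0$. For $\la\in\Lin(A/A_0)$, since $\restr\la|{X_0}=1$ one has $\theta(\restr\la|X\rho)=\theta(\rho)$, $A_{\restr\la|X\rho}=A_\rho$, and the Clifford datum changes by $\mu_\rho\mapsto\restr\la|{X_\theta}\mu_\rho$, so tracing through the construction gives $\wt\Lambda(\restr\la|X\rho)=\restr\la|{A_\rho}\cdot\wt\Lambda(\rho)$.

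The main obstacle is the construction of $\widehat\zeta$: \Cref{ExtCrit}(e) only gives an extension of $\zeta$ to $YV_\zeta=(A_0)_\rho(X_\theta)_\zeta$, which equals the full $(A_\rho)_\theta$ only once $\zeta$ is arranged to be $X_\theta$-invariant. The commutation relation $[X_\theta,(A_0)_\rho]\leq X_0$ coming from the abelian-quotient hypothesis is what makes such a compatible choice of $\wt\theta$ available and simultaneously forces $(A_\rho)_\theta$-invariance of $\mu_\rho$; turning this structural availability into a \emph{canonical} choice of $\wt\theta$ behaving well under both the $\wh A_0$-action and the $\Lin(A/A_0)$-action is where the delicate bookkeeping lies.
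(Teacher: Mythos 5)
Your argument rests on a structural claim that the hypotheses do not supply. From ``$A/A_0\cong X/X_0$ abelian'' you deduce $[A,A]\leq A_0$, hence $[X,A_0]\leq X_0$ and the direct-product decomposition $A/X_0=(X/X_0)\times(A_0/X_0)$. This presupposes that $A_0$ is normal in $A$ with abelian quotient group, whereas the hypothesis is only the coset identification coming from $A=A_0X$ and $A_0\cap X=X_0$, with $X/X_0$ abelian. In the very situation the proposition is built for (the application in \Cref{ExtMapHdVd}(e), with $X=\bT^{uF}$, $A=\bN^{uF}=TV_d$, $A_0=V_d$, $X_0=H_d$), the subgroup $A_0$ acts on $X/X_0$ through the relative Weyl group $W_d$, so $[X,A_0]\not\leq X_0$ and $A/X_0$ is a genuinely twisted semidirect, not direct, product; in particular $A_0$ is not a normal subgroup of $A$ with abelian quotient. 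Everything downstream of your Step 1 then collapses: the splitting $A_\rho/X_0=(X/X_0)\times((A_0)_\rho/X_0)$, the normality of $(A_0)_\rho$ in $X_\theta(A_0)_\rho$ that you need in order to apply \Cref{ExtCrit}(e) in your configuration, and the asserted $(A_\rho)_\theta$-invariance of $\mu_\rho$. (The facts $A_\rho=X(A_0)_\rho$, $(A_0)_\rho\leq(A_0)_\theta$ and $A_\theta=X_\theta(A_0)_\theta$ are fine, but they come from the transversal hypothesis, not from any commutation.)

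Even granting the direct-product structure, the decisive step is missing: you must show that $\zeta:=\restr{\Lambda_0(\theta)}|{(A_0)_\rho}$ is $X_\theta$-invariant so that \Cref{ExtCrit}(e) extends it to all of $X_\theta(A_0)_\rho$ with prescribed restriction $\wt\theta$ on $X_\theta$ (without this prescription the induced character need not restrict to $\rho$). Your proposed remedy, ``a choice of $\wt\theta$ compatible with $\zeta$'', cannot deliver this, because $\zeta$ is defined from $\Lambda_0(\theta)$ and $(A_0)_\rho$ alone and does not depend on $\wt\theta$; your closing paragraph in effect concedes that this point is unresolved. The paper sidesteps the invariance problem by exchanging the roles of the two extensions: it fixes the Clifford correspondent $\wt\rho_0\in\Irr(X_{\rho_0})$ of $\rho$ over $\rho_0$ (an extension of $\rho_0$, using \maex for $X_0\unlhd X$ and $X/X_0$ abelian) and applies \Cref{ExtCrit}(e) with $\chi=\wt\rho_0$ and $V=(A_0)_{\wt\rho_0}$, prescribing on $V$ the restriction of $\Lambda_0(\rho_0)$; since $V$ stabilizes $\wt\rho_0$ by construction, no invariance has to be arranged. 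Setting $\wt\Lambda(\rho):=(\rho_0')^{A_\rho}$ for the resulting common extension $\rho_0'$, the identity $A_\rho=XA_{\wt\rho_0}$ --- a consequence of the stabilizer factorization $A_{\rho_0}=X_{\rho_0}(A_0)_{\rho_0}$ encoded in the $\wh A_0$-stable $X$-transversal hypothesis, see \cite[Lem. 2.4]{S21D1} --- guarantees that the induced character is an extension of $\rho$, and equivariance follows from the canonicity of $\rho_0$, $\wt\rho_0$ and the $\wh A_0$-equivariance of $\Lambda_0$. So your proposal shares the general strategy (Clifford theory plus \Cref{ExtCrit}(e)) but, as written, contains a genuine gap both in the structural setup and at the crucial extension step.
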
 
 
\noindent\begin{minipage} {0.5\textwidth} 
		\setlength{\parindent}{1em}
		\noindent \textit{Proof.} We essentially recall the proof of \cite[Thm 4.2]{CS18B}. 
		Let $\Lambda$ be the extension map \wrt $X_0\unlhd A_0$ for $\TT$. Let $\rho\in \Irr(X)$. By the transversality property of $\TT$, $\rho$ lies over a unique character $\rho_0\in \TT\subseteq \Irr(X_0)$. Since maximal extendibility is assumed with respect to $X_0\unlhd X$, Clifford theory ensures that there exists an extension $\wt \rho_0\in\Irr(X_{\rho_0})$ of $\rho_0$ with $\wt \rho_0^{X}= \rho $, see (\ref{Cliff}). Note that according to \Cref{ExtCrit}(e) there exists a common extension $\rho_0'$ of $\wt \rho_0$ and $\restr \Lambda(\rho_0)|{A_{\wt \rho_0}}$ to $X_{\rho_0} A_{\wt \rho_0}$. Then $\restr(\rho_0')^{A_{\rho}}|{X}=(\restr\rho_0'|{X_{\rho_0}})^X=\rho$ since $A_\rho= X A_{\wt \rho_0} $ and $A_{\rho_0}= X_{\rho_0}(A_0)_{\rho_0}$ holds by the assumption on $\TT$ as recalled before the Proposition. So $\wt \Lambda(\rho):=(\rho_0')^{A_{\rho}}$ is an extension of $\rho$ to $A_\rho$. It is then easy to check from its construction and the $\wh A_0$-equivariance of $\La $ that $\wt \Lambda$ is $\Lin(A/A_0)\rtimes \wh A_0$-equivariant.\qed\end{minipage}
	\hspace{0.1\textwidth}
	\begin{minipage}{0.45\textwidth}
		$\xymatrix{&&&\wh A\ar@{-}[dd]\ar@{-}[llldd]\\ &&&\\ \wh A_0\ar@{-}[dd]&&&A\ar@{-}[dd]\ar@{-}[llldd]\\ &&&\\ A_0\ar@{-}[dd]&&&X\ar@{-}[llldd]\\ &&&\\ X_0&&&\\ &&&}$
	\end{minipage}

The following statement describes situations where the stabilizer of a character writes as a semidirect product and has extensions whose stabilizers have analogous properties.

\begin{lem} \label{lem_stab_extensions}
Let $A$ be a finite group, $  X\unlhd A$ and $\wt X\unlhd A$ such that $X \leq \wt X$ and $\wt X/X$ is abelian. Suppose that there exists a subgroup $Y\leq A$ such that $A=\wt X Y$, $X=Y\cap \wt X$ and $Y/X$ is abelian. Let $L\lhd A$ with $L=J(Y\cap L)$ for $J:=\wt X\cap L$ and abelian $L/X$. Let $\phi\in\Irr(X)$ with $A_\phi=\wt X_\phi Y_\phi$. Assume that $\phi$ extends to $\wt X_\phi$ and to $(Y_\phi)_{\wh\phi}$ for every $\wh\phi\in\Irr(J_\phi\mid \phi)$.
\begin{enumerate}
\item 
Then $\phi$ extends to $J_\phi$
and every extension $\wh \phi$ of $\phi$ to $J_\phi$ satisfies 
\[(A_\phi)_{\wh \phi}=\wt X_{\wh \phi} (Y_\phi)_{\wh \phi}.\]
\item Every $\kappa\in\Irr(J \mid \phi)$ satisfies $(\wt X Y)_\kappa=\wt X_\kappa Y_\kappa$ and extends to $J Y_\kappa$.
\item Every $\chi\in\Irr(L \mid \phi)$ satisfies $(\wt X Y)_\chi=\wt X_\chi Y_\chi$ and extends to $L Y_\chi$.
\item Let $z\in \Z(A)\cap Y$. Assume that $\phi$ has an extension $\phi'$ to $(Y_{ \phi})_{\wh \phi}$ with $z\in \ker(\phi')$. Then every $\chi\in \Irr( L \mid \phi)$ has an extension to $LY_\chi$ that contains $z$ in its kernel. 
\end{enumerate}
\end{lem}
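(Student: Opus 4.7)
The plan is to establish (a) by a direct argument exploiting the abelianness of $\wt X_\phi/X$, then bootstrap (b) via Clifford correspondence combined with Proposition 2.2(e), and finally handle (c) and (d) by running analogous arguments inside $A_\kappa=\wt X_\kappa Y_\kappa$. The main obstacle is the stabilizer formula in (a); once that is secured, parts (b)--(d) become standard Clifford-theoretic bookkeeping.

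\smallskip
\noindent\textbf{Part (a).}
The extension of $\phi$ to $J_\phi$ is immediate from Proposition 2.2(b) applied to the chain $X\le J_\phi\le\wt X_\phi$ (the quotient $\wt X_\phi/X$ is abelian as a subgroup of $\wt X/X$, and $\phi$ extends to $\wt X_\phi$ by hypothesis). The key observation for the stabilizer identity is that \emph{every} extension of $\phi$ to $J_\phi$ is $\wt X_\phi$-invariant. Fix $\wt\phi$ extending $\phi$ to $\wt X_\phi$ and set $\wh\phi_0:=\restr{\wt\phi}|{J_\phi}$; as a restriction of a class function on $\wt X_\phi$, $\wh\phi_0$ is invariant under $\wt X_\phi$-conjugation. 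Any other extension writes $\wh\phi=\wh\phi_0\cdot\mu$ for some $\mu\in\Lin(J_\phi/X)$. Since $\wt X_\phi/X$ is abelian, $[\wt X_\phi,J_\phi]\subseteq[\wt X_\phi,\wt X_\phi]\subseteq X$, so $\wt X_\phi$ acts trivially on $J_\phi/X$, hence trivially on $\Lin(J_\phi/X)$. Therefore $\mu^x=\mu$ and $\wh\phi^x=\wh\phi_0^x\mu^x=\wh\phi_0\mu=\wh\phi$ for every $x\in\wt X_\phi$, giving $\wt X_{\wh\phi}=\wt X_\phi$. Writing $a=\wt xy\in A_\phi$ with $\wt x\in\wt X_\phi$ and $y\in Y_\phi$, we get $\wh\phi^a=(\wh\phi^{\wt x})^y=\wh\phi^y$, so $a\in(A_\phi)_{\wh\phi}$ if and only if $y\in(Y_\phi)_{\wh\phi}$, yielding $(A_\phi)_{\wh\phi}=\wt X_\phi(Y_\phi)_{\wh\phi}=\wt X_{\wh\phi}(Y_\phi)_{\wh\phi}$.

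\smallskip
\noindent\textbf{Part (b).}
With $J/X$ abelian and $\phi$ extending to $J_\phi$, Clifford correspondence identifies every $\kappa\in\Irr(J\mid\phi)$ with $\wh\phi^J$ for a unique extension $\wh\phi\in\Irr(J_\phi\mid\phi)$, and gives $A_\kappa=J\cdot(A_\phi)_{\wh\phi}$, which by (a) equals $J\wt X_\phi(Y_\phi)_{\wh\phi}$. The modular law (using $J\le\wt X$) yields $\wt X_\kappa=J\wt X_\phi$, and $\wt X\cap Y=X$ forces $Y_\kappa=(Y_\phi)_{\wh\phi}$, so $(\wt XY)_\kappa=A_\kappa=\wt X_\kappa Y_\kappa$. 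For extendibility, apply Proposition 2.2(e) with $(A',X',V',\chi):=(J_\phi(Y_\phi)_{\wh\phi},J_\phi,(Y_\phi)_{\wh\phi},\wh\phi)$: since $V'\cap X'=X$, $V'$ fixes $\wh\phi$ by definition, and $\phi$ extends to $V'$ by hypothesis, one obtains an extension of $\wh\phi$ to $J_\phi(Y_\phi)_{\wh\phi}$. Inducing this extension up to $J(Y_\phi)_{\wh\phi}=JY_\kappa$ gives a character of dimension $[J:J_\phi]\wh\phi(1)=\kappa(1)$, and a single-double-coset Mackey computation (using $J\cdot J_\phi(Y_\phi)_{\wh\phi}=JY_\kappa$ and $J\cap J_\phi(Y_\phi)_{\wh\phi}=J_\phi$) shows it restricts to $\wh\phi^J=\kappa$ on $J$; this is the desired extension.

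\smallskip
\noindent\textbf{Parts (c) and (d).}
Given $\chi\in\Irr(L\mid\phi)$, let $\kappa\in\Irr(J\mid\phi)$ be an irreducible constituent of $\restr\chi|J$. Since $L/J$ is abelian (a quotient of $L/X$) and $\kappa$ extends to $L_\kappa=J(Y\cap L)_\kappa\le JY_\kappa$ by restricting the extension from (b), Clifford yields $\chi=\chi_0^L$ for some extension $\chi_0$ of $\kappa$ to $L_\kappa$, and gives $A_\chi=L\cdot(A_\kappa)_{\chi_0}$. The argument of (a) transfers to the pair $(A_\kappa,L_\kappa)$: one checks $[\wt X_\kappa,L_\kappa]\subseteq L\cap\wt X=J$, so $\wt X_\kappa$ acts trivially on $L_\kappa/J$ and on $\Lin(L_\kappa/J)$, and a suitable reference extension of $\kappa$ to $L_\kappa$ (built by another application of Proposition 2.2(e), analogous to the construction in (b)) can be shown $\wt X_\kappa$-invariant; consequently every extension of $\kappa$ to $L_\kappa$ is $\wt X_\kappa$-invariant, giving $(A_\kappa)_{\chi_0}=\wt X_\kappa(Y_\kappa)_{\chi_0}$. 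Combining with (b) and the modular law yields $(\wt XY)_\chi=A_\chi=\wt X_\chi Y_\chi$, and a further application of Proposition 2.2(e) followed by induction produces the extension of $\chi$ to $LY_\chi$. For (d), the hypothesis supplies an extension $\phi'$ of $\phi$ to $(Y_\phi)_{\wh\phi}$ with $z\in\ker\phi'$; tracking the central element $z\in Z(A)\cap Y$ through the constructions preserves kernel-membership, since Proposition 2.2(e)'s common extension keeps a central element in the kernel whenever it lies in the kernels of the given restrictions, and induction from a subgroup containing $z$ preserves $z\in\ker$ for central $z$. Following these through yields an extension of $\chi$ to $LY_\chi$ with $z$ in its kernel.
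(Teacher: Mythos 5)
Your parts (a) and (b) are correct and follow essentially the same route as the paper: the Gallagher argument with a restriction of an extension from $\wt X_\phi$ as reference character for (a), and Proposition~\ref{ExtCrit}(e) applied to $J_\phi(Y_\phi)_{\wh\phi}$ followed by induction for (b). The problem is in part (c), and it is a genuine error, not a presentational one. You transfer the argument of (a) to the pair $(A_\kappa,L_\kappa)$ with $\wt X_\kappa$ playing the role of $\wt X_\phi$, claiming that a reference extension of $\kappa$ to $L_\kappa$ can be shown $\wt X_\kappa$-invariant and hence that every extension is, giving $(A_\kappa)_{\chi_0}=\wt X_\kappa(Y_\kappa)_{\chi_0}$. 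This is false in general. Take $A=D_8=\langle w,y\mid w^2=y^2=1,\ [w,y]=j \text{ central of order }2\rangle$, $X=1$, $\phi=1_X$, $\wt X=\langle j,w\rangle\cong C_2\times C_2$, $Y=\langle y\rangle$, $L=\langle j,y\rangle$, so $J=\wt X\cap L=\langle j\rangle$; all hypotheses of the lemma hold (everything about $\phi$ is trivial). For $\kappa\in\Irr(J)$ nontrivial one has $L_\kappa=L$, $\wt X_\kappa=\wt X$, but conjugation by $w$ sends $y\mapsto yj$ and therefore interchanges the two extensions of $\kappa$ to $L$: no extension of $\kappa$ to $L_\kappa$ is $\wt X_\kappa$-invariant, and your claimed identity fails ($(A_\kappa)_{\chi_0}=A_\chi=L$ while $\wt X_\kappa(Y_\kappa)_{\chi_0}=A$). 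The reason the (a)-argument does not transfer is that the roles of $\wt X$ and $Y$ swap when you pass from $X\unlhd J$ to $J\unlhd L$: the overgroup supplying a canonical invariant reference extension is now on the $Y$-side, since $L=J(Y\cap L)$.

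The paper's proof of (c) uses exactly this: since $Y/X$ is abelian, $[Y_\kappa,L_\kappa]\le J$, so $Y_\kappa$ acts trivially on $\Lin(L_\kappa/J)$, and the restriction to $L_\kappa$ of the extension of $\kappa$ to $JY_\kappa$ produced in (b) is visibly $Y_\kappa$-invariant (it comes from a character of a group containing $Y_\kappa$); hence \emph{every} extension $\wt\kappa$ of $\kappa$ to $L_\kappa$ is $Y_\kappa$-invariant, so $Y_{\wt\kappa}=Y_\kappa$ and then $Y_\kappa\le A_{\wt\kappa}\le A_\kappa=\wt X_\kappa Y_\kappa$ forces $A_{\wt\kappa}=\wt X_{\wt\kappa}Y_{\wt\kappa}$ and $A_\chi=\wt X_\chi Y_\chi$, with the extension of $\chi$ to $LY_\chi$ obtained by extending $\wt\kappa$ to $Z_\kappa=JY_\kappa$ and inducing to $Z_\kappa L$. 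You should replace your $\wt X_\kappa$-invariance step by this $Y_\kappa$-invariance argument. Part (d) is affected twice: it rests on your (c), and in any case "tracking $z$ through the constructions" needs to be made concrete; the paper's way is to note that the hypothesis says $\phi$ extends to $(Y_\phi)_{\wh\phi}/\langle z\rangle$, so Proposition~\ref{ExtCrit}(e) and the inductions in (b) and (c) can be carried out modulo $\langle z\rangle$ throughout, which your closing remark about induction preserving central kernel elements would then justify.
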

Note that $Y_\phi$ in this situation stabilizes $J_\phi$ and hence acts on $\Irr(J_\phi\mid \phi)$. Now $(Y_\phi)_{\wh \phi}$ denotes the stabilizer of $\wh \phi$ in $Y_\phi$. By this construction  $(Y_\phi)_{\wh \phi}$ does not in general contain $J_\phi$, the group on which $\wh \phi$ is defined. 

\noindent
\begin{minipage} {0.5\textwidth} 
		\setlength{\parindent}{1em}
		\noindent \textit{Proof.} We first prove part (a):  As $J\leq \wt X$ and $\phi$ extends to $\wt X_\phi$ by assumption it is clear that $\phi$ extends to $J_\phi$.
  Let $\beta\in\Irr(\wt X_\phi)$ be an extension of $\phi$. Then $\wh \phi$ and $\restr\beta|{J_\phi}$ are extensions of $\phi$ so $\wh \phi =\lambda(\restr\beta|{J_\phi})$ for some $\la\in\Lin(J_\phi/X)$. Since $\wt X/X$ is abelian, $\la$ is $\wt X_\phi$-invariant, but then $\lambda(\restr\beta|{J_\phi})=\wh \phi$ is also $\wt X_\phi$-invariant, i.e., $\wt X_{\wh \phi}=\wt X_\phi$. As 
  \[\wt X_\phi Y_\phi=A_\phi\geq (A_\phi)_{\wh  \phi} \geq \wt X_{\wh \phi},\] 
  this implies $(A_\phi)_{\wh \phi}=\wt X_{\wh \phi} (Y_\phi)_{\wh \phi}$ as required. 		
	\end{minipage}
	\hspace{0.06\textwidth}
	\begin{minipage}{0.45\textwidth}
		$\xymatrix{&&A\ar@{-}[lldd]\ar@{-}[rd]\\ &&&Z \ar@{-}[rd] \ar@{-}[ld]\\ \wt X\ar@{-}[dr]&&L \ar@{-}[rd]\ar@{-}[ld]&&Y\ar@{-}[lldd]\\ &J\ar@{-}[rd]&&\\&&X}$\!\!\!\! \!\!\!\! \!\!\!\! \!\!\!\! \!\!\!\! \!\!\!\! \!\!\!\! \!\!\!\! \!\!\!\! \!\!\!\! \!\!\!\!  \!\!\!\! \!\!\!\! $\xymatrix{ \\ \\ \\ \\ Y\cap L  }$
	\end{minipage}

 Let now $\kappa=(\wh \phi)^J$. As $\phi$ extends to $Y_{\wh \phi}$ there exists some extension $\wt \phi$ of $\wh \phi$ to $Z_\whphi=J_\phi Y_\whphi $ for $Z=J Y$ according to \Cref{ExtCrit}(e). Because of $Z_{\wh \phi}= J_{\wh \phi} Y_{\wh \phi}$, we have $JY_{\wh \phi }= JY_\kappa =Z_{\kappa}$ and $(\wt\phi)^{JZ_{\wh\phi}}$ is an extension of $\kappa$ to $Z_{\kappa}=JY_{\kappa}$. Then every extension of $\whphi$ to $Z_\whphi=(JY)_{\wh \phi} $ defines by induction a character of $Z_{\kappa}=JY_{\kappa}$. This proves the statement in (b). 
 
	Since $Y/X$ is abelian, we see that every extension $\wt \kappa$ of $\kappa$ to $L_\kappa$ is $Y_\kappa$-invariant. The character $\chi=\wt \kappa^L$ is irreducible and $Y_\kappa$-invariant. The group $A_{\wt \kappa}$ contains $Y_{\wt \kappa}=Y_\kappa$. Recall $A_{\kappa}=\wt X_{\kappa} Y_\kappa$ and hence 
\[ Y_\kappa=Y_{\wt \kappa}\leq A_{\wt \kappa }\leq A_\kappa= \wt X_\kappa Y_\kappa.\]
This shows $A_{\wt \kappa}=\wt X_{\wt \kappa } Y_{\wt \kappa}$, implying $A_\chi= \wt X_{\chi} Y_\chi$. 
Let $\wh \kappa$ be an extension of $\wt \kappa$ to $Z_{\kappa}$, which exists as $\kappa$ extends to $Z_\kappa$ and $Y/X$ is abelian. Then $\wh \kappa^{Z_\kappa L}$ is irreducible and an extension of $\chi$. This proves part (c). 

For part (d), by assumption we have that $\phi$ extends to $Y_{\whphi}$ such that $z$ is contained in the kernel of some extension or equivalenty that $\phi$ extends to $Z_{\whphi}/\spa{z}$. By the above construction, $\wh \phi$ extends to $J_\phi Y_{\wh \phi}/\spa{z}$ and hence $\kappa=(\wh\kappa)^J$ extends to $Z_\kappa /\spa{z}$ and accordingly $\chi=\wt\kappa^L$ extends to $Z_\chi/\spa{z}$.
\qed

\subsection{Character triples and inductive McKay condition}\label{sec2_B}

Character triples {\index{character triples}} $(A,X,\chi)$ (where $X\unlhd A$ and $\chi\in\Irr(X)^A$) and certain relations on them help characterizing the situations where $\chi$ does not extend to $A$. See [Isa, 11.24] for some early occurrence of the notion and \Cref{def:2_1} below for the relation $\II{geqc}@{\geq_c}$. We recall more recent results and a reduction theorem for McKay's conjecture in that language (see \cite{Navarro_book}, \cite{S18}, \cite{Rossi_McKay}). 

By a classical application of Schur's lemma, each character triple $(A,X,\chi)$ defines a projective representation $\calP\colon A\to \GL_{\chi(1)}(\mathbb C)$ whose restriction to $X$ is a linear representation affording the character $\chi$, see \cite[Ch. 11]{Isa}, \cite[Def.~5.2]{Navarro_book}.

\begin{defi} \textbf{ --- Centrally isomorphic character triples \cite[Def. 10.14]{Navarro_book}.} \label{def:2_1}
	Let $(A,X,\chi)$ and $(H,M,\chi')$ be two character triples with $\Cent_A(X)\leq H\leq A$, $A=XH$, and $H\cap X=M$. We write 
	\[ (A, X,\chi)\geq _c (H,M,\chi'),\] 
	if two projective representations $\calP$ and $\calP'$ of $A$ and $H$ associated with $\chi$ and $\chi'$ exist such that the factor sets of $\calP$ and $\calP'$ coincide on $H\times H$ and such that for every $x\in \Cent_A(X)$, the matrices $\calP(x)$ and $\calP'(x)$ are scalar matrices to the same scalar.
\end{defi}

When dealing with $\geq_c$ the following lemma will be useful. 
\begin{lem}\label{rem_chartrip}
	Let $X\unlhd A$ and $H\leq A$ with $A=XH$ and $\Cent_A(X)\leq H$. We write $M:=X\cap H$. Let $\chi\in \Irr(X)$ and $\chi'\in\Irr(M)$.
	\begin{thmlist}
		\item Then 
		\[(A,X,\chi)\geq _c (H,M,\chi'),\]
		if there exist $\wt \chi\in \Irr(A)$ and $\wt \chi'\in\Irr(H)$
		with $\restr \wt \chi| X=\chi$, $\restr \wt \chi'|M=\chi'$ and $\Irr(\restr\wt\chi|{\Cent_A(X)}  )=\Irr(\restr\wt\chi'|{\Cent_A(X)}  )$.
		\item Assume $(A,X,\chi)\geq _c (H,M,\chi')$. Let $\kappa\in\Irr(\Cent_A(X))$. Then $\chi$ has an extension to $A$ that belongs to  $\Irr(A\mid \kappa)$ if and only if $\chi'$ has an extension to $H$ that belongs to  $\Irr(H\mid \kappa)$. 
		\item Assume $(A,X,\chi)\geq _c (H,M,\chi')$ and let $J$ be a group with $X\leq J \leq A$. Then $(J,X,\chi)\geq _c (J\cap H,M,\chi')$
	\end{thmlist}
\end{lem}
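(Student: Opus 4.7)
The plan is to unwind \Cref{def:2_1} in each part with suitable choices of projective representations.

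For \textbf{part (a)}, I would take as projective representation $\calP$ of $A$ (resp.\ $\calP'$ of $H$) the linear representation afforded by $\wt\chi$ (resp.\ $\wt\chi'$) itself; both factor sets are then trivial, so they agree on $H\times H$. For $x\in\Cent_A(X)$, Schur's lemma applied to the irreducible $X$-module underlying $\wt\chi$ gives $\calP(x)=\mu(x)I$ for a linear character $\mu$ of $\Cent_A(X)$, so $\restr{\wt\chi}|{\Cent_A(X)}=\chi(1)\mu$; similarly $\calP'(x)=\mu'(x)I$ with $\restr{\wt\chi'}|{\Cent_A(X)}=\chi'(1)\mu'$. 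The assumed equality of irreducible constituents then forces $\mu=\mu'$, giving the required matching scalars.

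For \textbf{part (b)}, suppose $\chi$ admits an extension $\wh\chi\in\Irr(A\mid\kappa)$. The linear representation of $A$ afforded by $\wh\chi$ yields a projective representation $\calP_0$ with trivial factor set and $\calP_0(x)=\kappa(x)I$ for $x\in\Cent_A(X)$. Given any projective representations $\calP,\calP'$ witnessing $(A,X,\chi)\geq_c(H,M,\chi')$, the uniqueness of projective representations afforded by $\chi$ up to a scalar cocycle produces a function $\alpha\colon A\to\CC^\times$ with $\alpha|_X\equiv 1$ and $\calP(a)=\alpha(a)\calP_0(a)$, so the factor set of $\calP$ is the coboundary $(a,b)\mapsto\alpha(ab)\alpha(a)^{-1}\alpha(b)^{-1}$. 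By \Cref{def:2_1} this same coboundary is the factor set of $\calP'$ on $H\times H$, hence $h\mapsto\alpha(h)^{-1}\calP'(h)$ is a linear representation of $H$ extending $\chi'$; its value on $x\in\Cent_A(X)$ equals $\alpha(x)^{-1}\calP(x)=\kappa(x)I$ by the scalar-matching clause of \Cref{def:2_1}, placing the resulting extension in $\Irr(H\mid\kappa)$. The converse direction follows from the analogous argument, appealing to the standard equivalence of projective representations sharing a factor set modulo $H$ (see for instance \cite[Ch.~10]{Navarro_book}).

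For \textbf{part (c)}, I would first verify the three compatibility conditions of \Cref{def:2_1} for $X\unlhd J$ and $J\cap H\leq J$: writing $j\in J\subseteq A=XH$ as $j=xh$ forces $h=x^{-1}j\in J\cap H$, so $J=X(J\cap H)$; next $\Cent_J(X)=J\cap\Cent_A(X)\leq J\cap H$; and $X\cap(J\cap H)=X\cap H=M$. Then the restrictions $\calP|_J$ and $\calP'|_{J\cap H}$ of projective representations witnessing $(A,X,\chi)\geq_c(H,M,\chi')$ automatically match on factor sets over $(J\cap H)\times(J\cap H)$ and on scalars over $\Cent_J(X)\leq\Cent_A(X)$, giving the desired $(J,X,\chi)\geq_c(J\cap H,M,\chi')$.

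The main obstacle lies in part (b): the defining projective representations $\calP,\calP'$ for $\geq_c$ are not a priori those coming from the linear extensions $\wh\chi,\wh{\chi'}$, so the extendibility must be transferred across the coboundary correction $\alpha$, and one must verify that the resulting linear extension of $\chi'$ indeed lies over the prescribed central character $\kappa$. Parts (a) and (c) are essentially bookkeeping around \Cref{def:2_1}.
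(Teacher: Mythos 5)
Your proof is correct, but it takes a different route from the paper: the paper simply quotes Lemma 2.15 of \cite{S18} for parts (a) and (b) and calls (c) straightforward, whereas you verify everything directly from \Cref{def:2_1}. Your direct verification works, with two points that a careful write-up of (b) must make explicit. First, the uniqueness statement you use (after a similarity, $\calP=\alpha\calP_0$ with $\alpha$ trivial on $X$) holds because the projective representations occurring in \Cref{def:2_1} are associated with the triples in the strong sense of \cite[Def.~5.2]{Navarro_book}, i.e.\ they satisfy $\calP(xa)=\calP(x)\calP(a)$ and $\calP(ax)=\calP(a)\calP(x)$ for $x\in X$; for an arbitrary projective representation whose restriction merely affords $\chi$, the operator $\calP_0(a)^{-1}\calP(a)$ need not be scalar, so the "scalar cocycle" comparison is not automatic. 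Second, the converse in (b) is not literally the analogous argument: there one must extend the scalar correction $\alpha'$ from $H$ to all of $A$, which uses that the factor sets of associated projective representations are constant on $X$-cosets in each variable together with $A=XH$ and $X\cap H=M$; this is standard but is precisely the extra step hidden in "analogous". What your approach buys is a self-contained proof from the definition (and it makes visible exactly which hypotheses of \Cref{def:2_1} are used where); what the paper's citation buys is brevity, since \cite[Lem.~2.15]{S18} already packages these verifications, in fact in the stronger form of bijections $\Irr(J\mid\chi)\to\Irr(J\cap H\mid\chi')$ for all intermediate groups $J$ compatible with central characters, from which (a) and (b) drop out. Your parts (a) and (c) coincide with what the paper regards as routine bookkeeping around \Cref{def:2_1}.
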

\begin{proof} Parts (a) and (b) follow from Lemma 2.15 of \cite{S18}. Part (c) is straightforward.
\end{proof}

Let us now recall the inductive McKay condition (\textbf{iMK}) from our introduction.

\begin{defi}\label{iMK} \textbf{(iMK)} Let $X$ be a finite group, $\ell$ a prime, $S$ a Sylow $\ell$-subgroup of $X$ and $\Gamma :=\Aut(X)_S$. We say that $X$ \textit{satisfies the inductive McKay condition} (\textbf{iMK}) for the prime $\ell$ whenever
	
	\begin{enumerate}
		\item there exists $\NNN_X(S)\leq N\leq X$ such that $N$ is stable under $\Gamma $, with $N\neq X$ when $\NNN_X(S)\neq X$, and 
		
		\item there exists a $\Gamma$-equivariant bijection $$\Omega_{X,\ell}\colon \Irr_{\ell ' }(X)\to \Irr_{\ell ' }(N)$$ such that $(X\rtimes \Gamma_\chi, X,\chi)\geq_c (N\rtimes \Gamma_\chi, N, \Omega_{X,\ell}(\chi))$ for any $\chi\in\Irrl(X)$.
	\end{enumerate}
\end{defi}

In our later considerations, when proving (\textbf{iMK}) for a given $X$ it will be crucial to use the fact that it is known already for certain subgroups of $X$.

An important tool to deal with character triple equivalences is the following Butterfly Theorem \cite[Thm 2.16]{S18}, \cite[Thm~10.18]{Navarro_book}. Among other things, it is crucial to navigate through various variants of (\textbf{iMK}).

\begin{theorem}\label{Butterfly} 	Let $(A,X,\chi)$ and $(H,M,\chi')$ be two character triples with \[(A, X,\chi)\geq _c (H,M,\chi').\] We assume that 
	$X\unlhd A'$ and define $\kappa\colon A\to \Aut(X)$, $\kappa ' \colon A'\to \Aut(X)$ the maps induced by conjugation. If $\kappa(A)=\kappa '(A')$ then $$(A', X,\chi)\geq _c (\kappa'{}\inv\kappa(H),M,\chi').$$ 
\end{theorem}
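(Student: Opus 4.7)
The plan is to use the classical ``butterfly'' fibre product construction. Form
\[ B\ :=\ \{\,(a,a')\in A\times A'\ |\ \kappa(a)=\kappa'(a')\,\},\]
a subgroup of $A\times A'$, with projections $\pi\colon B\to A$ and $\pi'\colon B\to A'$. Both projections are surjective by the hypothesis $\kappa(A)=\kappa'(A')$, with $\ker(\pi)=\{1\}\times\ker(\kappa')$ and $\ker(\pi')=\ker(\kappa)\times\{1\}$. Embedding $X$ diagonally as $\widetilde X:=\{(x,x):x\in X\}\leq B$ gives $\widetilde X\unlhd B$; moreover any element of $\ker(\pi)$ acts trivially on $X$ via $\kappa'$, hence centralises $\widetilde X$, so $\ker(\pi)\leq \Cent_B(\widetilde X)$, and similarly $\ker(\pi')\leq \Cent_B(\widetilde X)$.

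Next, I would transport the given $\geq_c$-relation into $B$. Set $\widetilde\chi\in\Irr(\widetilde X)$ to be the pullback of $\chi$, let $\widetilde M:=\pi^{-1}(M)\cap \widetilde X$, and let $\widetilde\chi{}'$ be the corresponding pullback of $\chi'$. Let $\calP$ and $\calP'$ be projective representations of $A$ and $H$ witnessing $(A,X,\chi)\geq_c(H,M,\chi')$. The pulled-back projective representations $\calP\circ \pi$ on $B$ and $\calP'\circ (\pi|_{\pi^{-1}(H)})$ on $\pi^{-1}(H)$ still afford $\widetilde\chi$ and $\widetilde\chi{}'$ on restriction, have matching factor sets on $\pi^{-1}(H)\times\pi^{-1}(H)$, and on $\Cent_B(\widetilde X)$ take scalar values with the same scalar (since this property was assumed on $\Cent_A(X)\leq H$ and pulls back verbatim). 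Thus
\[(B,\widetilde X,\widetilde\chi)\ \geq_c\ (\pi^{-1}(H),\widetilde M,\widetilde\chi{}').\]

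To conclude, I would push the $\geq_c$-relation forward through $\pi'$. Note $\pi'(B)=A'$ and $\pi'(\pi^{-1}(H))=\kappa'^{-1}\kappa(H)$, while $\pi'(\widetilde X)=X$ with $\widetilde\chi\mapsto \chi$, and likewise $\widetilde M\mapsto M$, $\widetilde\chi{}'\mapsto \chi'$. Because $\ker(\pi')\leq \Cent_B(\widetilde X)$, both projective representations constructed above act on $\ker(\pi')$ as scalars given by the same linear character $\nu$ of $\ker(\pi')$. After twisting both representations by a common extension of $\nu^{-1}$ from $\ker(\pi')$ (which is central in $B$) to all of $B$ — such an extension exists because $\ker(\pi')$ is abelian and central in the relevant subgroups — I obtain equivalent projective representations that factor through $B/\ker(\pi')\cong A'$ and through $\pi^{-1}(H)/\ker(\pi')\cong\kappa'^{-1}\kappa(H)$, respectively. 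Their factor sets still agree, and on $\Cent_{A'}(X)$, which is the image under $\pi'$ of $\Cent_B(\widetilde X)$, they remain scalar with the same scalar. This yields $(A',X,\chi)\geq_c (\kappa'^{-1}\kappa(H),M,\chi')$.

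The only delicate point is the final descent: one must justify that the scalar values of both projective representations on $\ker(\pi')$ agree (so that a \emph{single} twist kills both) and that such a twist preserves the scalar-compatibility condition on all of $\Cent_B(\widetilde X)$. This follows from the defining property of $\geq_c$ applied to elements of $\ker(\pi')\leq \Cent_B(\widetilde X)$: by hypothesis $\calP\circ\pi$ and $\calP'\circ\pi$ already act there by the same scalars, so subtracting them off simultaneously is legitimate, and the remaining behaviour on $\Cent_B(\widetilde X)/\ker(\pi')=\Cent_{A'}(X)$ still satisfies the same-scalar condition required by Definition~\ref{def:2_1}.
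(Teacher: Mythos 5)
The paper itself does not prove this statement: it is quoted from \cite[Thm 2.16]{S18} and \cite[Thm 10.18]{Navarro_book}, so your argument can only be judged on its own terms. The first half of your proposal is correct and is the standard route: the fibre product $B=\{(a,a')\in A\times A' : \kappa(a)=\kappa'(a')\}$ with the diagonal copy $\widetilde X$, the verifications $\Cent_B(\widetilde X)\le \pi^{-1}(H)$, $B=\widetilde X\,\pi^{-1}(H)$, $\pi^{-1}(H)\cap\widetilde X=\widetilde M$, and the pulled-back pair $(\calP\circ\pi,\ \calP'\circ\pi)$ do give $(B,\widetilde X,\widetilde\chi)\geq_c(\pi^{-1}(H),\widetilde M,\widetilde\chi{}')$, and $\pi'(\pi^{-1}(H))=\kappa'{}\inv\kappa(H)$.

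The gap is in the descent from $B$ to $A'\cong B/\ker(\pi')$, and the justification you give would fail. It rests on three claims that are false or unproved in general: (1) $\ker(\pi')=\Cent_A(X)\times\{1\}$ is \emph{not} central in $B$ — conjugation by $(a,a')$ sends $(c,1)$ to $(aca\inv,1)$, and $\Cent_A(X)$ is only normal, not central, in $A$ — nor need it be abelian (take $A=X\times S$ with $S$ nonabelian); (2) the scalar-valued restriction of $\calP\circ\pi$ to $\ker(\pi')$ need not be a linear character $\nu$, since the factor set of $\calP$ need not vanish on $\Cent_A(X)\times\Cent_A(X)$ (Definition~\ref{def:2_1} imposes no such normalization); (3) even granting a global twist whose restriction is $\nu\inv$, twisting does not make the projective representation constant on $\ker(\pi')$-cosets: that is a cocycle condition relating $\calQ(kb)$ to $\calQ(b)$ which your argument never addresses. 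The correct way to finish (and, in substance, what the cited proofs do) is not a twist but a transversal: since $\ker(\pi')\cap\widetilde X=1$, choose a transversal $T$ of $\ker(\pi')$ in $B$ with $\widetilde X\subseteq T$ and with $T\cap\pi^{-1}(H)$ a transversal of $\ker(\pi')$ in $\pi^{-1}(H)$ containing $\widetilde M$, and define $\calR(\pi'(t)):=(\calP\circ\pi)(t)$ and $\calR'(\pi'(t)):=(\calP'\circ\pi)(t)$ on representatives. One then checks directly that these are projective representations of $A'$ and of $\kappa'{}\inv\kappa(H)$ whose restrictions to $X$, resp. $M$, afford $\chi$, resp. $\chi'$; that their factor sets agree on $\kappa'{}\inv\kappa(H)\times\kappa'{}\inv\kappa(H)$ (using the agreement of the original factor sets on $H\times H$ together with the equality of the two scalar functions on $\ker(\pi')\le\Cent_B(\widetilde X)$); and that the same-scalar condition holds on $\Cent_{A'}(X)=\pi'(\Cent_B(\widetilde X))$, because the representative of any element of $\Cent_{A'}(X)$ lies in $\Cent_A(X)\times\Cent_{A'}(X)$, where the original hypothesis applies. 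Finally, you should also record the (easy) group-theoretic prerequisites demanded by Definition~\ref{def:2_1} for the target relation: $\Cent_{A'}(X)\le\kappa'{}\inv\kappa(H)$, $A'=X\,\kappa'{}\inv\kappa(H)$ and $\kappa'{}\inv\kappa(H)\cap X=M$.
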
 

\begin{rem}[\textbf{Relation with }\cite{IMN}]\label{IMNiMK}
Let $\ell$ be a prime and let $X$ be the universal covering of a finite simple group $\ov X:=X/\Z(X)$. As an application of the above one can see that \textit{$\ov X$ satisfies the inductive McKay condition of \cite[\S 10]{IMN} for $\ell$ if, and only if, $X$ satisfies the condition (\textbf{iMK}) of \Cref{iMK} for $\ell$}. See also a version of the reduction theorem of \cite{IMN} using the $\geq_c$ relation in \cite[Def. 10.23, Thm 10.26]{Navarro_book}.

Indeed, as explained in \cite[Prop. 2.3]{S12}, $\ov X$ satisfies the conditions (1-8) of \cite[\S 10]{IMN} if and only if the group theoretic conditions of \Cref{iMK}(a) are satisfied by $X$, $\ell$, a Sylow $\ell$-subgroup $S$ of $X$, along with a subgroup $N$, and there is an $\Aut(X)_S$-equivariant bijection $\Omega\colon \Irr_{\ell ' }(X)\to \Irr_{\ell ' }(N)$ such that each pair $(\chi, \Omega(\chi))$ satisfies the condition (\textbf{cohom}) introduced in Definition 2.4 of \cite{S12}. The latter means that there exists a group $\ov A$ with $\ov G:= X/Z \unlhd \ov A$ for $Z:=\ker( \chi)\cap {\Z(G)}=\ker( \Omega(\chi))\cap {\Z(G)}$ such that, by the construction in the proof of \cite[Prop. 2.8]{S12}, $\ov A$ induces the whole $\Aut(X)_{S,\chi}$ on $X/Z$, and the characters $\ov{\chi}$ and $\ov{\Omega ( \chi)}$ of $X/Z$ and $\ov N:=N/Z$ associated with $\chi$ and $\Omega ( \chi)$ have \textit{extensions} to $\ov A$, resp. $\NNN_{\ov A}( \ov N)$ lying above the same $\eps\in\Irr(\Cent_{\ov A}(\ov G))$. According to \Cref{rem_chartrip}(a) and thanks also to the group theoretic properties of $N$ and $X$, this condition (\textbf{cohom}) is equivalent to the existence of an overgroup $\ov A$ of $\ov G$ inducing $\Aut(X)_{S,\chi}$ on $X/Z$ and such that 
\[ ( \ov A, \ov G,\ov \chi) \geq_c (\NNN_{\ov A}( \ov N), \ov N ,\ov{\Omega(\chi)} ) .\] 

But then the Butterfly \Cref{Butterfly} implies that $$((X/Z)\rtimes \Aut(X)_{S,\chi}, X/Z,\ov\chi)\geq_c ((N/Z)\rtimes \Aut(X)_{S,\chi}, N/Z, \ov{\Omega{}(\chi)}).$$ It is then trivial to lift that into the relation $$(X\rtimes \Aut(X)_{S,\chi}, X,\chi)\geq_c (N\rtimes \Aut(X)_{S,\chi}, N, \Omega{}(\chi))$$ defining (\textbf{iMK}).
\end{rem}

The above is key to rephrase Isaacs--Malle--Navarro's reduction theorem on the McKay conjecture as the following criterion taken from \cite{Rossi_McKay}.

\begin{theorem}\label{RossiMK} Let $\ell$ be a prime. 
If any universal covering $X$ of a finite nonabelian simple group (see \cite[Def 5.1.1]{GLS3}) satisfies the above {(\textbf{iMK})} for $\ell$, then any finite group satisfies it.
\end{theorem}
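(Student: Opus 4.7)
The plan is to invoke the reduction theorem of \cite[Thm B]{Rossi_McKay}, after matching our hypothesis with the version of the inductive McKay condition employed there. As already spelled out in Remark \ref{IMNiMK}, for a universal covering $X$ of a nonabelian finite simple group, the (\textbf{iMK}) condition of Definition \ref{iMK} is equivalent to the reformulated inductive McKay condition of \cite{Rossi_McKay} (which refines the original one from \cite{IMN}), with the Butterfly Theorem \ref{Butterfly} serving as the transport tool between groups inducing the same automorphisms on $X/Z$. Hence the hypothesis of the theorem places us exactly in the framework of Rossi's reduction, and the statement follows.

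To indicate the strategy of the reduction itself one would proceed by strong induction on $|X|$. The base cases are $X$ with $\NNN_X(S)=X$ (trivial with $N=X$) and $X$ quasisimple (use the hypothesis, possibly after passing to or from a central quotient through the Butterfly Theorem). For the inductive step, pick a proper nontrivial normal subgroup $K \unlhd X$ which is moreover $\Aut(X)_S$-stable, and decompose
\[
\Irr_{\ell'}(X)\ =\ \bigsqcup_{\theta}\Irr_{\ell'}(X\mid\theta)
\]
according to $X$-orbits on $\Irr_{\ell'}(K)$ (one shows, via \cite[Thm 9.4]{Navarro_book}-type arguments, that only $\ell'$-constituents $\theta$ contribute). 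For each such $\theta$, Clifford correspondence provides a bijection $\Irr_{\ell'}(X\mid\theta)\xrightarrow{\sim}\Irr_{\ell'}(X_\theta\mid\theta)$, and one analyzes $\Irr_{\ell'}(\NNN_X(S))$ analogously via $\NNN_X(S)_\theta$. Applying the inductive hypothesis to the smaller section $X_\theta / \ker(\theta)$ (after possibly replacing $\theta$ by a suitable conjugate to ensure that $S$ meets $X_\theta$ in a Sylow $\ell$-subgroup) produces an equivariant bijection together with the $\geq_c$ relation, which one lifts back through $K$ via Lemma \ref{rem_chartrip} and the Butterfly Theorem \ref{Butterfly} to glue into the required correspondence for $X$.

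The main obstacle is the case in which the minimal normal subgroup $K$ is a direct product $S_0^n$ of isomorphic nonabelian simple groups: here one must promote the hypothesis from the universal covering of $S_0$ through central quotients, direct powers, and the wreath structure $\Aut(K)=\Aut(S_0)\wr\Sym_n$, all while preserving $\Aut(X)_S$-equivariance and the $\geq_c$ relation. This is where maximal extendibility in wreath products (Proposition \ref{ExtCrit}(c)) intervenes, together with repeated appeals to the Butterfly Theorem \ref{Butterfly} and the transitivity/restriction properties of $\geq_c$ recalled in Lemma \ref{rem_chartrip}, to identify the inductively produced triples with the ones actually needed for $(X,\NNN_X(S))$.
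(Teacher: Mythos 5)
Your first paragraph is essentially the paper's proof: one quotes Rossi's reduction theorem \cite[Thm B]{Rossi_McKay} and checks that the present (\textbf{iMK}), stated with the specific overgroup $X\rtimes\Gamma_\chi$, matches the condition used there for arbitrary overgroups, which is exactly the ``straightforward application'' of Lemma \ref{rem_chartrip}(c) and the Butterfly Theorem \ref{Butterfly} given in the paper. Your second and third paragraphs, sketching the internal mechanics of the reduction itself, are unnecessary here since that theorem is being cited rather than reproved, and they do not affect the correctness of the argument.
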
 

\begin{proof} 
	To see the equivalence between this and \cite[Thm B]{Rossi_McKay} we must, for $X$ the universal covering of a nonabelian simple group, replace the overgroup $X\rtimes \Gamma$ in our (\textbf{iMK}) by any overgroup $A$. As noted in the comment after Conjecture A in \cite{Rossi_McKay}, this is a straightforward application of the above \Cref{rem_chartrip}(c) and the Butterfly \Cref{Butterfly}.\end{proof}

\begin{theorem}\label{CS1319} Let $\ell$ be a prime and $X$ the universal covering of a finite simple group $\ov X$. If $\ell\geq 5$, assume $\ov X$ is not a group of Lie type $\tD$ or $^2\tD$ in characteristic $\neq\ell$. Then $X$ satisfies the above {(\textbf{iMK})} for $\ell$.
\end{theorem}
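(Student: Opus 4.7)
My plan is to invoke the Classification of Finite Simple Groups (CFSG) and dispatch each universal covering $X$ to the appropriate previously-established verification of (\textbf{iMK}), according to which family $\ov X=X/\Z(X)$ belongs to and which prime $\ell$ is considered. The statement is essentially a bookkeeping of prior work.

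First I would deal with the cases where $\ov X$ is an alternating group $\mathfrak{A}_n$ ($n\geq 5$), a sporadic group, or the Tits group. These are handled by the compilation in \cite{ManonLie} together with the classical McKay bijections for $\mathfrak{A}_n$ and direct checks from the known character tables for sporadic groups. The recasting of the condition of \cite{IMN} in the $\geq_c$-formulation of \Cref{iMK} is ensured by \Cref{IMNiMK}.

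Next, when $\ov X$ is of Lie type in defining characteristic $\ell$, I would cite \cite{S12}, where (\textbf{iMK}) for these quasisimple groups is reformulated in the $\geq_c$ language and checked using the Steinberg presentation.

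For $\ov X$ of Lie type in non-defining characteristic $p\neq \ell$, I would split into two subcases. When $\Out(X)$ is cyclic (e.g.\ types ${}^3\!D_4$, $E_7$, $E_8$, $F_4$, $G_2$, ${}^2\!B_2$, ${}^2\!F_4$, ${}^2\!G_2$), the condition (\textbf{iMK}) reduces, as recalled in \S\ref{sub_iMKred}, to an $\Aut(X)_S$-equivariant bijection $\Irrl(X)\to \Irrl(N)$ matching the restriction to $\Z(X)$, which is handled in \cite[\S 3, \S 5]{ManonLie} by appeal to earlier McKay bijections. When $\Out(X)$ is non-cyclic, so $\ov X$ is of type $A$, $\tD$, or $E_6$ (or their twisted forms), I would split further according to $\ell$: for $\ell=2$ cite \cite{MS16}, which covers all Lie types at the prime $2$ including $\tD$ and ${}^2\tD$; for $\ell\geq 3$ and types other than $\tD, {}^2\tD$ cite \cite{CS17A, CS17C, CS18B} (together with \cite{S21D2} for $\textbf{A}(\infty)$), which establish the local conditions $\textbf{A}(d)$, $\textbf{B}(d)$ and $\textbf{A}(\infty)$ of \S\ref{sub_iMKnondef} from which (\textbf{iMK}) is deduced via the criterion \cite[2.4]{CS18B}; for $\ell=3$ and types $\tD, {}^2\tD$ one combines the cyclic defect case handled by \cite{KoSp} with the results of \cite{S21D1, S21D2}.

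The main obstacle is to ensure that the various reductions from these papers are consistent with the formulation of (\textbf{iMK}) of \Cref{iMK} using centrally isomorphic character triples. This is where \Cref{IMNiMK} and the Butterfly \Cref{Butterfly} do the bookkeeping, allowing one to pass between the original (\textbf{iMK}) formulation of \cite{IMN} and the $\geq_c$-formulation used throughout the paper, and also to replace the overgroup $X\rtimes\Gamma_\chi$ by any compatible overgroup as needed when matching to the statements of the source papers.
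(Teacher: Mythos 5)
Your overall strategy is the same as the paper's: invoke the CFSG, use \Cref{IMNiMK} and the Butterfly \Cref{Butterfly} to pass between the formulation of \cite{IMN} and the $\geq_c$-formulation of \Cref{iMK}, and dispatch each family of simple groups and each prime to the paper in which the inductive condition was verified. However, your case division by cyclicity of $\Out(X)$ contains concrete errors that, as written, leave several families uncovered. First, $E_7$ (and likewise $\tB_l$, $\tC_l$) does not in general have cyclic outer automorphism group: for $q$ odd the diagonal part has order $2$ and commutes with the field automorphisms, so $\Out(X)\cong \ZZ/2\times\ZZ/f$ is non-cyclic whenever $f$ is even. So these types cannot be dispatched through the cyclic-$\Out$ reduction; your own citations of \cite{CS17C} and \cite{CS18B} in the other branch do cover $\tC$, $\tB$, $\tE_6$, $^2\tE_6$ and $\tE_7$, but this contradicts your stated dichotomy ("type $A$, $\tD$, or $E_6$") and should simply replace it.

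Second, and more seriously, for the types $^2\tB_2$, $^2\tG_2$, $^3\tD_4$, $\tF_4$, $^2\tF_4$, $\tG_2$, $\tE_8$ at non-defining primes you appeal to \cite[\S 3, \S 5]{ManonLie} "by appeal to earlier McKay bijections". But \cite{ManonLie} treats the simple groups \emph{not} of Lie type (alternating and sporadic); the cyclic-$\Out$ principle recalled in \ref{sub_iMKred} only reduces (\textbf{iMK}) to the existence of an equivariant bijection compatible with central characters, and such bijections for these Lie-type families still have to be produced somewhere. The paper's proof cites \cite[Ch. 16 and 17]{IMN} for $^2\tB_2$ and $^2\tG_2$ and \cite[Thm A]{CS13} for $^3\tD_4$, $\tE_8$, $\tF_4$, $^2\tF_4$, $\tG_2$; without these (or equivalent) references your argument has a genuine hole for exactly those families. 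The remaining attributions (defining characteristic via \cite{S12}, $\ell=2$ via \cite{MS16}, types $\tA$, $^2\tA$, $\tC$, $\tB$, $\tE_6$, $^2\tE_6$, $\tE_7$ via \cite{CS17A}, \cite{CS17C}, \cite{CS18B}, and $\ell=3$ in types $\tD$, $^2\tD$ via \cite{S21D2} -- the paper cites \cite[Thm C]{S21D2} directly rather than a combination with \cite{KoSp} -- are essentially as in the paper.
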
 

\begin{proof} 
By the Classification of Finite Simple Groups, a finite nonabelian simple group is either an alternating group, a sporadic group or a simple group of Lie type, see \cite[Ch. 47]{Asch}. We have seen in \Cref{IMNiMK} that for the universal cover $X$ of a simple group and a prime $\ell$, the condition (\textbf{iMK}) is equivalent to the simple group $X/\Z(X)$ satisfying the inductive McKay condition of \cite{IMN}. This is the condition checked in \cite[Thm 3.1 and Thm 5.1]{ManonLie} for alternating groups and sporadic groups. For simple groups of Lie type in characteristic $\ell$, this is checked as \cite[Thm 1.1]{S12}. For $\ell =2$ and all types, this is the main result of \cite{MS16}. For $\ell =3$ and all types, see \cite[Thm C]{S21D2} and its proof. For other cases of groups of Lie type, \cite[Ch. 16 and 17]{IMN} covers the types $^2\tB_2$ and $^2\tG_2$, \cite[Thm A]{CS13} covers the types $^3\tD_4$, $\tE_8$, $\tF_4$, $^2\tF_4$, and $\tG_2$, \cite{CS17A} the types $\tA$ and $^2\tA$, \cite{CS17C} the type $\tC$, \cite[Thm A]{CS18B} the types $\tB$, $\tE_6$, $^2\tE_6$ and $\tE_7$. This clearly leaves out only types $\tD$ or $^2\tD$ for primes $\ell\geq 5$ different from the defining characteristic.
\end{proof}

In the cases reviewed in the above proof, (\textbf{iMK}) was mostly ensured via the criterion \cite[Thm 2.12]{S12} leading to the conditions \textbf{A}$(\infty)$, \textbf{A}$(d)$ and \textbf{B}$(d)$ described in the Introduction, see also \Cref{iMKbyAdBd}. At some point we will need the following slight reinterpretation of \cite[Thm 2.12]{S12}.
\begin{prop}\label{prop_23}
	Let $\wG\unrhd G\geq N$ and $E$ be finite groups, such that $E$ acts on $\wG$ normalizing $G$,
	$\NNN_G(N)=N$, $\Cent_{\wG \rtimes E}(G)=\Z(\wG)$, $G\rtimes E=G \wh N$ and $\wG=G \wt N$, 
	where $\wt N:=\NNN_\wG(N)$ and $\wh N:=\NNN_{GE}(N)$. 
	Let $\calG \subseteq \Irr(G)$ and $\calN \subseteq \Irr(N)$ be $\wt N \wh N$-stable. Assume the following: 
	\begin{asslist}
		\item The quotient $\wG/G$ is abelian. Maximal extendibility holds \wrt $G\unlhd \wG$ for $\calG $ and \wrt $N \unlhd \wt N $ for $\calN $.
		\item \label{crit_S12_bij}
		Denoting $\wt\calG:= \Irr(\wG \mid \calG )$ and $\wt\calN:= \Irr(\wt N\mid \calN )$, there exists some $\Lin(\wG/G)\rtimes \wh N $-equivariant bijection 
		$$\wt \Omega :\wt\calG \lra \wt\calN,$$
		such that $\wt \Omega\big(\wt\calG \cap \Irr(\wG \mid \xi)\big) =\wt\calN \cap \Irr(\wt N \mid \xi)$ for every $\xi \in \Irr(\Z(\wG))$. 
		\item There exists some $E$-stable $\wG$-transversal $\calG_0 $ in $\calG $, such that every $\chi\in\calG_0 $ extends to $GE_\chi$.
		\item There exists some $\wh N$-stable $\wt N$-transversal $\calN_0 $ in $\calN $, such that every $\psi\in\calN_0 $ extends to $\wh N_\psi$.
	\end{asslist}
	Then there exists some $\wt N \wh N$-equivariant bijection 
	\[\Omega:\calG \lra \calN ,\] 
	such that 
	$$( (\wG E)_\chi,G,\chi )\geq_c((\wt N \wh N)_\chi,N,\Omega(\chi) )	\forevery \chi\in\calG .$$
\end{prop}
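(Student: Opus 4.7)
The plan is to follow the blueprint of the criterion \cite[Thm~2.12]{S12}: combine the upper bijection $\wt\Omega$ from (ii) with the extension maps of (i) to obtain a bijection of $\wG$- and $\wt N$-orbits, rigidify via the transversals $\calG_0$ and $\calN_0$ to a character-level bijection, and finally derive $\geq_c$ using the compatible extensions provided by (iii) and (iv).

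For the construction, note that the hypotheses $\wG=G\wt N$ and $\wt N\cap G=\NNN_G(N)=N$ yield a canonical isomorphism $\wG/G\cong\wt N/N$. Let $\Lambda$ (resp.\ $\Lambda'$) be an extension map with respect to $G\unlhd\wG$ for $\calG$ (resp.\ $N\unlhd\wt N$ for $\calN$), provided by (i). The Gallagher--Clifford correspondence $\chi\mapsto\Lambda(\chi)^{\wG}$ descends to a bijection of orbit spaces $\calG/\wG\to\wt\calG/\Lin(\wG/G)$, and likewise on the $\wt N$-side. By the $\Lin(\wG/G)\rtimes\wh N$-equivariance of $\wt\Omega$, I obtain a $\wh N$-equivariant bijection $\calG/\wG\to\calN/\wt N$. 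To rigidify this to a character-level bijection $\Omega$, for $\chi_0\in\calG_0$ define $\Omega(\chi_0)$ to be the unique element of $\calN_0$ in the $\wt N$-orbit corresponding to the $\wG$-orbit of $\chi_0$, and extend to all of $\calG$ by $\wt N$-equivariance. Full $\wt N\wh N$-equivariance should then follow from the $\wh N$-equivariance of $\wt\Omega$ at the orbit level together with the $E$- and $\wh N$-stability of the transversals.

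For the $\geq_c$ relation, by \Cref{rem_chartrip}(c) and the Butterfly \Cref{Butterfly} it suffices to treat $\chi=\chi_0\in\calG_0$. The transversal property of $\calG_0$ combined with (iii) gives $(\wG E)_{\chi_0}=\wG_{\chi_0}E_{\chi_0}$ together with an extension of $\chi_0$ to $GE_{\chi_0}$; combined with the extension to $\wG_{\chi_0}$ supplied by (i), \Cref{ExtCrit}(e) produces a common extension $\check\chi\in\Irr((\wG E)_{\chi_0})$. The analogous construction on the $N$-side using (iv) gives $(\wt N\wh N)_{\psi_0}=\wt N_{\psi_0}\wh N_{\psi_0}$ and a common extension $\check\psi\in\Irr((\wt N\wh N)_{\psi_0})$ of $\psi_0:=\Omega(\chi_0)$. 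The compatibility $\wt\Omega\big(\wt\calG\cap\Irr(\wG\mid\xi)\big)=\wt\calN\cap\Irr(\wt N\mid\xi)$ from (ii) is then the lever that lets me arrange $\Irr(\restr{\check\chi}|{\Z(\wG)})=\Irr(\restr{\check\psi}|{\Z(\wG)})$, whereupon \Cref{rem_chartrip}(a) delivers the conclusion.

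The main obstacle is this last alignment on $\Z(\wG)$: $\wt\Omega$ is canonical only at the level of $\wG$- and $\wt N$-characters, whereas $\geq_c$ requires that the chosen extensions to the larger stabilizers $(\wG E)_{\chi_0}$ and $(\wt N\wh N)_{\psi_0}$ carry equal scalars on $\Z(\wG)$. Making this work requires choosing the common extensions on both sides not independently but as dictated by a single pair of extensions at the $\wG$- and $\wt N$-level lying above the same $\xi\in\Irr(\Z(\wG))$, and then using the Gallagher twist to move between different extensions of $\chi_0$ in a way compatible with $\wt\Omega$ and the factorisations of the stabilisers.
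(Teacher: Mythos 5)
Your construction of the bijection follows the same blueprint as the paper's source, namely the proof of \cite[Thm 2.12]{S12} (the paper itself does not redo this: it checks that the present hypotheses match those of \cite{S12} and quotes that theorem). The genuine problem is your final step. To invoke \Cref{rem_chartrip}(a) you need honest extensions $\check\chi\in\Irr((\wG E)_{\chi_0})$ of $\chi_0$ and $\check\psi\in\Irr((\wt N\wh N)_{\psi_0})$ of $\psi_0$, and the hypotheses do not provide them: (i) and (iii) only give extensions of $\chi_0$ to $\wG_{\chi_0}$ and to $GE_{\chi_0}$ \emph{separately}, and having extensions to two subgroups whose product is $(\wG E)_{\chi_0}=\wG_{\chi_0}E_{\chi_0}$ does not yield an extension to the product (already for $G=\Z(D_8)$ inside $A=D_8$ with the two index-two overgroups, the faithful character of $G$ extends to each but not to $A$). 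Your appeal to \Cref{ExtCrit}(e) is a misapplication: that lemma needs a subgroup $V$ with $(\wG E)_{\chi_0}=GV$ and $\restr{\chi_0}|{V\cap G}$ \emph{irreducible}, which is not available here. This is not a repairable detail within your strategy: the whole point of formulating (\textbf{iMK}) via $\geq_c$ rather than via extensions is that such extensions to the full stabilizer genuinely fail in the intended applications, so no argument from the stated hypotheses can produce $\check\chi$ and $\check\psi$.

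The correct route (the one the paper takes) is to combine the two partial extensions into \emph{projective} representations of $(\wG E)_\chi$ and $(\wt N\wh N)_\chi$ with controlled factor sets, i.e. to verify the condition (\textbf{cohom}) of \cite[Def.~2.4]{S12} after deflating modulo $Z=\ker(\chi)\cap\Z(G)$, and then to transport the resulting relation to the groups $(\wG E)_\chi$ and $(\wt N\wh N)_\chi$ by the Butterfly \Cref{Butterfly}; matching of central characters is handled there through assumption (ii), not through global extensions. A secondary, lesser issue: your rigidification of the orbit bijection to a character-level $\wt N\wh N$-equivariant bijection silently needs the matching of stabilizers $\wG_\chi/G\leftrightarrow\wt N_{\Omega(\chi)}/N$, which must be extracted from the $\Lin(\wG/G)$-equivariance of $\wt\Omega$ (the stabilizer of $\wt\chi$ in $\Lin(\wG/G)$ is $\Lin(\wG/\wG_\chi)$, and similarly on the local side); "should then follow" is not enough, though this part is standard and is carried out in \cite{S12}.
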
 
\begin{proof} 
	The proof is essentially the same as the one of Theorem~2.12 in \cite{S12}. Let us comment on the differences between the two statements. 
	In \cite{S12} the focus was on the characters in $\Irrl(G)$ for some prime $\ell$ but the degree of the characters did not play any role in the arguments used. The group theoretical assumptions in \cite[Thm 2.12]{S12} are also related to a prime $\ell$ and the corresponding Sylow subgroup but they are only used through the fact that $N$ is self-normalizing in $G$ along with $G E=G \wh N$ and $\wG=G \wt N$ in $\wG E$. The assumptions made in (iii) and (iv) are equivalent to the original assumptions of \cite[Thm 2.12]{S12} in terms of stabilizers thanks to \cite[Lemma~2.4]{S21D1}.
	 
	The proof of \cite[Thm 2.12]{S12} then yields a $\wt N \wh N$-equivariant bijection $\Omega:\calG \lra \calN $ such that for any $\chi\in\calG$ the pair $(\chi,\Omega(\chi))$ satisfies the condition (\textbf{cohom}) introduced in Definition 2.4 of \cite{S12}. Then our discussion is very similar to the one given in \Cref{IMNiMK} above. The condition (\textbf{cohom}) implies then that there exists a group $\ov A$ with $\ov G:= G/Z \unlhd \ov A$ for $Z:=\ker( \chi)\cap {\Z(G)}$ such that $\ov A$ has the same image as $G(\wt N \wh N)_\chi$ in $\Aut(G)$, and the characters $\ov{\chi}$ and $\ov{\Omega ( \chi)}$ of $\ov G$ and $\ov N:=N/Z$, deflations of $\chi$ and $\Omega ( \chi)$, satisfy
	\[ ( \ov A, \ov G,\ov \chi) \geq_c (\NNN_{\ov A}( \ov N), \ov N ,\ov{\Omega(\chi)} ) . \]
	Now taking into account the Butterfly Theorem~\ref{Butterfly} we get 
	\begin{align*} ( (\wt G E)_\chi/Z, G/Z, \ov\chi) &\geq_c ((\wt N \wh N)_\chi/Z, N/Z , \ov{\Omega(\chi)} ) . \end{align*}
	
	The sought for relation $( (\wG E)_\chi,G,\chi )\geq_c((\wt N \wh N)_\chi,N,\Omega(\chi) )	$ is then an easy consequence of the definition of $\geq_c$.
\end{proof}

\subsection{Groups of Lie type, generators and automorphisms}\label{ssec2C}
We now make more precise the notation for $G$, $\wG$, $E$ and the conditions \textbf{A}$(\infty)$, \textbf{A}$(d)$ and \textbf{B}$(d)$ from our Introduction. Additional notation will be given in \Cref{sub2E} for type $\tD$.

Let $\III{p}$ be a prime, $\III{q}=p^f$ for some $\III{f}\geq 1$, $\II{F}@{\FF}$ be an algebraic closure of $\II Fq@{\FF_q}$, the field with $q$ elements. We consider $\II Gbold@{\bG}$ a simple simply connected linear algebraic group over $\FF$ with the choice of a maximal torus and a Borel subgroup $\II{T0}@{\bT}\leq \bB$, thus fixing a root system $\II{Phi}@{\Phi(\bG,\bT)}$ with a basis $\II{Del}@\Delta$. There is a presentation by the generators $\II{xal}@{\xx_\al(t_1)}$, $\II{nalphat}@{ \nn_\al(t_2)}:=\xx_\al(t_2)\xx_{-\al}(-t_2\inv)\xx_\al(t_2)\in\NNN_{ \bG}(\bT)$ and $\II{halphat}@{\hh_\al(t_2)}:=\nn_\al(t_2)\nn_\al(1)\inv\in\bT$ for $\al\in \Phi(\bG,\bT)$, $t_1\in \FF$, and $t_2\in \FF ^\times$, subject to the Chevalley relations, see \cite[Thm 1.12.1]{GLS3}. In particular the commutator formula \cite[Thm 1.12.1(b)]{GLS3} has consequences that we use repeatedly, see also \ref{ZG_order}(d) below.

\begin{para}\label{Comm}
 If $\al,\beta\in \Phi(\bG ,\bT)$ are such that $(\ZZ \al+\ZZ\beta )\cap\Phi(\bG ,\bT)=\{ \pm\al ,\pm \beta \}$, then \[[\xx_\al(t),\xx_\beta(t')]=1\] for any $t,t'\in\FF$. This is in particular the case if $\al\perp\beta$ in type $\tD$, or in type $\tB$ with $\al\perp\beta$ and $\beta$ a long root. 
\end{para}

We denote by $\II{Fp}@{F_p}$ the bijective endomorphism of $\bG$ sending any $\xx_\al(t)$ to $\xx_\al(t^p)$. We denote by $\II {EG}@{E(\bG)}$ the group of abstract group automorphisms of $\bG$ generated by $F_p$ and the graph automorphisms of type $\xx_{\eps\delta}(t)\mapsto \xx_{\eps\delta '}(t)$ for $t\in \FF$, $\delta\in \Delta$, $\eps\in\{1,-1\}$ and $\delta \mapsto \delta'$ is a symmetry of the Dynkin diagram of $\Delta$. 

We denote by $\II F@{F\colon \bG\to\bG}$ a Frobenius endomorphism defining some $\FF_q$-structure on $\bG$ preserving $\bT\leq \bB$, namely $F=F_p^f\circ\sigma$ with $\sigma$ some graph automorphism (possibly trivial) as above. We denote by $\II{Lang}@{\calL}$ the \index{Lang map} {Lang map} on $\bG$ defined by $\calL(g)=g\inv F(g)$ for $g\in \bG$. \index{diagonal outer automorphism}

Let $\II{EGF}@{E(\GF)}$ be the image of $\Cent_{E(\bG)}(F)$ in $ \Aut(G)$ by restrictions to $G=\GF$. The kernel of the latter is the subgroup generated by $F$ (see \cite[Lem. 2.5.7]{GLS3}), so for instance stabilizers $E(\GF)_{\bS}$ for $F$-stable subsets $\bS$ of $\bG$ make sense. 

We assume chosen a so-called regular embedding $\bG\leq \II{Gtilde}@{\wbG}$ \index{regular embedding} with $\wbG$ also defined over $\FF_q$ with Frobenius endomorphism $F$ extending the one of $\bG$, and such that $\Z(\wbG)$ is connected and $\wbG =\bG \Z(\wbG)$. We can assume that the action of $E(\bG)$ extends to $\wbG$, see \cite[Sect. 2]{MS16} or \cite[Prop. 1.7.5]{GM}. The action of $\wbG^F$ on $\GF$ by conjugation provides all diagonal automorphisms of $\GF$, while $\wbG^F\rtimes E(\GF)$ can be formed and induces the whole $\Aut(\GF)$ on $\GF$ in the case where $\GF$ is quasisimple not of type $\tB_2$, $\tG_2$ or $\tF_4$, see \cite[Thm 2.5.12]{GLS3}.\index{diagonal outer automorphism}

\begin{notation}[Diagonal automorphisms of $\GF$]\label{not_diag}
	Concerning $\Out(\GF)$, note that $\Cent_{\wbG^F}(\GF)=\Z(\wbG^F)=\Z(\wbG)^F$ (see \cite[Lem. 6.1]{Cedric}) with $\bG\cap\Z(\wbG)=\Z(\bG)$. Therefore $\wbG^F/\GF\Cent_{\wbG^F}(\GF)=(\bG\Z(\wbG))^F/\bG^F\Z(\wbG)^F$ is isomorphic to the group of cofixed points $\II{ZGF}@{\wZ(\GF)}:=\Z(\bG)/[\Z(\bG),F]$ by Lang's theorem, the map being explicitly $gz\mapsto g\inv F(g)[\Z(\bG),F]$ whenever $g\in\bG$, $z\in \Z(\wbG)$ with $gz\in \wbG^F$. Thus, the group $\wZ(\GF)\rtimes E(\GF)$ acts on $\Irr(\GF)$.
\end{notation}

Note that $|\Z(\bG)^F|=|\Z(\bG)/[\Z(\bG),F]|$ and indeed since $\Z(\bG)$ is either cyclic or of order 4, the two groups $\Z(\bG)^F$ and $\Z(\bG)/[\Z(\bG),F]$ are isomorphic even as $E(\GF)$-groups. 

We often abbreviate $\II{G}@{G}:=\GF$ and $\II{Gtilde}@{\wG}:=\wbG^F$.

\begin{notation}[Overgroups ${\wb G}$]\label{Gbreve} A slightly different way of dealing with diagonal automorphisms of $\GF$ is as follows, see \cite[Rem. 2.16(a)]{S21D1}. From the fact that $\Cent_\bG(\GF)=\Z(\bG)$ (and therefore $\Z(\bG^F)=\Z(\bG)^F$) recalled above, it is easy to see that $\NNN_\bG(\bG^F)=\calL\inv(\Z(\bG))\leq \bG$. We set \[ {\wb G}=\calL\inv(\Z(\bG)).\] Observe more generally that $\GF=\calL\inv(1)\unlhd \calL\inv(Z)\unlhd {\wb G}$ with $|\calL\inv(Z)/\GF|=|Z|$ for any subgroup $Z\leq\Z(\bG)$. Moreover 
$\widebreve G$ induces on $\GF$ the whole group of diagonal automorphisms of $\GF$ since ${\wb G}/\Z(\bG)\cong (\bG/\Z(\bG))^F$ by the natural map. We denote by $E({\wb G})\leq \Aut({\wb G})$ the restriction of $\Cent_{E(\bG)}(F)$ to ${\wb G}$. Then, in a way similar to $\wbG^F\rtimes E(\GF)$, the overgroup ${\wb G}\rtimes E({\wb G})$ induces the whole $\Aut(\GF)$ on $\GF$. Note however that $\Cent_{\wbG^F E(G)}(G)=\Z(\wbG^F)$ while $\Cent_{ \wb G E(\wb G)}(G)=\Z(\bG)\spa{\restr F|{\wb G}}$.
\end{notation}

\begin{rem}
Note that in the above construction, one has ${\wb G}\leq \bG^{F^e}$ for $e$ the exponent of the finite group $\Z(\bG)\rtimes E'$ where $E'$ is the subgroup of $\Aut(\Z(\bG))$ generated by $\restr F|{\Z(\bG)} $. Indeed, if $g\in \bG$ and $g\inv F(g)=z\in \Z(\bG)$, then $g\inv F^e(g)=zF(z)\dots F^{e-1}(z)$ can be written as $(zF)^eF^{-e}$ in $\Z(\bG)\rtimes E'$.
\end{rem}

\subsection{The conditions (\textbf{iMK}), \Ad {} and \Bd}\label{ssec_2D}

Following the road map suggested by \Cref{prop_23} and starting with $G=\bG^F$ as in the preceding section, we review the choices of groups $\wG$, $E$, $N$ to be made and how the assumptions translate for a quasisimple group of Lie type. 
The choice for the group $\wG$ is then obviously $\wbG^F$, while $E$ is $E(\GF)$.

Assumption \ref{prop_23}(iii) has the following generalization, introduced in \cite{CS17C}. 

\begin{para}[Condition] $\II Ainf @{\mathbf{A}(\infty)}$:\label{Ainfty}
	 \index{Condition $\mathbf{A}(\infty)$}
	There exists some $E(G)$-stable $\wG$-transversal $\TT$ in $\Irr(G)$, where every $\chi\in \TT$ extends to $G E(G)_\chi$.\end{para}
 For the equivalence between the above version of \textbf{A}$(\infty)$ and the version in terms of stabilizers used in the Introduction, see \cite[Lem. 2.4]{S21D1}.
\begin{theorem}[{\cite[Theorem~A]{S21D2}}] \label{Julia2}
	Condition $\mathbf{A}(\infty)$ holds for any simple simply connected $\bG$ and Frobenius endomorphism $F$.
\end{theorem}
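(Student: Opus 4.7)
The plan is to proceed by a case-by-case analysis on the isogeny type of $\bG$ and to rely heavily on the body of preparatory work already in the literature. For $\bG$ of type other than $\tD_l$ or $^2\tD_l$, Condition $\mathbf{A}(\infty)$ has already been established: \cite{CS17A} for types $\tA$ and $^2\tA$, \cite{CS17C} for type $\tC$, \cite{CS18B} for types $\tB$, $\tE_6$, $^2\tE_6$, $\tE_7$, and \cite{CS13} together with \cite{S12} for the remaining exceptional and Suzuki--Ree types $^2\tB_2$, $^2\tG_2$, $^3\tD_4$, $\tE_8$, $\tF_4$, $^2\tF_4$, $\tG_2$ (as well as the defining-characteristic situation). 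The substantive content of the theorem is therefore to settle the types $\tD_l$ and $^2\tD_l$ for $l\geq 4$, which occupy the bulk of \cite{S21D2}.

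For the $\tD_l$ and $^2\tD_l$ cases the strategy is to exploit the Lusztig--Jordan decomposition of $\Irr(G)$ together with the common-torus embedding $\bG\leq\obG$ of the simply connected group of type $\tD_l$ into a group of type $\tB_l$ introduced at the end of \Cref{ssec2C}. Each $\chi\in\Irr(G)$ is parametrized by a pair $(s,\lambda)$ with $s$ a semisimple conjugacy class in $(\bG^*)^F$ and $\lambda$ a unipotent character of $\Cent_{\bG^*}(s)^F$. To construct an $E(G)$-stable $\wG$-transversal in $\Irr(G)$, I would choose in each $\wG$-orbit of $\Irr(G)$ a canonical representative by first selecting an $E(G)$-compatible semisimple representative $s$ in $(\bG^*)^F$ and then pinning down $\lambda$ using a compatible choice at the level of unipotent characters of the centralizer. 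Because $\mathbf{A}(\infty)$ is already known in type $\tB_l$ by \cite{CS18B}, one can transfer the $\obG^F$-side choices to $\GF$ via Clifford theory with respect to $\GF\unlhd\obG^F$, adjusting by the characters of $\obG^F/\GF$.

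The hard part is twofold. First, for those characters whose stabilizer in $E(G)$ contains the graph automorphism (of order $2$ generically, of order $3$ for $\tD_4$), the component group $\AHs:=\Cent_{\bG^*}(s)/\Cent_{\bG^*}^\circ(s)$ on the dual side is nontrivial and intertwines nontrivially with the graph action, so extendibility to $G E(G)_\chi$ is not automatic. One needs explicit descriptions of graph-invariant cuspidal data and of the extending cocycles, for which the methods developed in \cite{CS17A} and a careful analysis of centralizers of semisimple elements in simply connected type $\tD_l$ become indispensable. Second, maintaining simultaneously $E(G)$-stability of the transversal \emph{and} extendibility of each chosen representative forces delicate compatible choices across Lusztig series whose underlying Levi carries an outer graph symmetry; these are precisely the cases where a naive Harish-Chandra style extension map does not suffice.

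Once the $\tD_l$ and $^2\tD_l$ cases have been checked by this route, combining them with the existing results for all other types yields $\mathbf{A}(\infty)$ for every simple simply connected $\bG$ and every Frobenius endomorphism $F$. I expect the main obstacle throughout to be the simultaneous control of the graph automorphism action on both the Lusztig series labels and the extensions, which is exactly the technical core that \cite{S21D2} is devoted to, and whose consequences on stabilizers and kernels of characters (the partition $\Irr(G)=\oTT\sqcup\EE\sqcup\DD$ mentioned in the introduction) will be refined further in Chapter \ref{sec_3} of the present paper.
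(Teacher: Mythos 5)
This statement is not proved in the paper at all: it is imported verbatim as \cite[Thm A]{S21D2}, and the earlier cases (types $\tA$, $^2\tA$, $\tC$, $\tB$, $\tE$, the exceptional and very twisted types) enter only through the references assembled there and in the introduction. So the only "proof" the paper offers is the citation, and your reduction to types $\tD_l$ and $^2\tD_l$ via \cite{CS17A}, \cite{CS17C}, \cite{CS18B}, \cite{CS13}, \cite{S12} matches exactly how the literature — and \Cref{CS1319} in this paper — organizes the problem. Your sketched strategy for type $\tD$ (equivariant Jordan decomposition, the overgroup $\obG$ of type $\tB_l$ with common torus, control of the graph automorphism on Lusztig series and on extensions) is also the actual route of \cite{S21D1} and \cite{S21D2}.

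However, judged as a proof rather than a roadmap, there is a genuine gap: the $\tD_l$/$^2\tD_l$ case is only described, not carried out. The decisive steps — producing an $E(G)$-stable $\wG$-transversal whose members simultaneously have the stabilizer factorization and extend to $G E(G)_\chi$, handling characters with $\AHs\neq 1$ whose labels are graph-stable (where transfer from $\obG^F$ via Clifford theory does not by itself yield extendibility, since $\GF$ is not normal in anything realizing the graph automorphism except through $\neins$-conjugation), the $^2\tD_l$ case where $F$ itself involves $\gamma$, and triality for $\tD_4$ — are precisely the content of the roughly two hundred pages of \cite{S21D1} and \cite{S21D2}, and your text ultimately defers to those papers for them. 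Deferring is perfectly legitimate, but then your argument reduces to the same citation the paper makes, and the added outline should not be mistaken for an independent verification of the hard case.
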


Let us fix $d\geq 1$. Recall the notion of ($F$-stable) $d$-tori in $(\bG,F)$, see \cite[Def. 25.6]{MT} and the corresponding Sylow theory. 

Let $\bS$ be a Sylow (i.e. maximal) $d$-torus in $\bG$.
Let ${N}:=\NNN_{\GF}(\bS)$, ${\wh N}:=\NNN_{\GF E(\GF)}(\bS)$, ${\wt N}:=\NNN_{\wGF }(\bS)$ and ${\wt C}:=\Cent_{\wGF }(\bS)$. We recall the following conditions already seen in our introduction:

\begin{para}[Condition $\II Ad@{\mathbf{A}(d)}$]\label{cond_Ad}
	\index{Condition \Ad} 
	There exists some $\wh N$-stable $\wt N$-transversal $\MM$ in $\Irr(N)$, where every $\chi\in \MM$ extends to $(\GF E(\GF))_{\bS,\chi}$.
\end{para}

\begin{para}[Condition $\II Bd@{\mathbf{B}(d)}$]\label{cond_Bd} \index{Condition \Bd}
		\begin{thmlist}
		\item Maximal extendibility holds with respect to $N\unlhd \wt N$ and $\wt C\unlhd \wt N$. 
		
		\item There exists some $\Lin(\wG/G)\rtimes \wh N$-equivariant extension map $\wt \Lambda$ \wrt $\wt C\unlhd \wt N$. 
	\end{thmlist}
\end{para}

For $\ell$ an odd prime not dividing $q$, denote by $\II{dlq}@{d:=d_\ell(q)}$ the multiplicative order of $q$ in $(\ZZ/\ell\ZZ)^\times$. Then the strong relation between $d$-tori and $\ell$-subgroups allows us to choose $N$ as above to apply \Cref{prop_23} with conditions \Ad{} and \Bd{} essentially completing the assumptions \ref{prop_23}(i) and (iv). We get

\begin{theorem}[{\cite[Thm~2.4]{CS18B}}]\label{iMKbyAdBd}
	Assume that $\ell$ is a prime with $\ell\nmid 6q$, $\GF/\Z(\GF)$ is simple and $\GF$ is its universal covering group. Assume \textbf{B}$(d_\ell(q))$ and \textbf{A}$(d_\ell(q))$ are satisfied. Then the condition (\textbf{iMK}) of \Cref{iMK} holds for $\GF $ and $\ell$, taking for $N$ the subgroup $\NNN_{ \bG}(\bS)^F$ where $\bS$ is a Sylow $d_\ell(q)$-torus of $(\bG,F)$.
\end{theorem}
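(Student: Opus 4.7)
The plan is to apply \Cref{prop_23} with inputs $\mathbf{A}(\infty)$, $\mathbf{A}(d)$ and $\mathbf{B}(d)$, and then to transfer its conclusion to the form required by \Cref{iMK} using the Butterfly \Cref{Butterfly}. I would set $\wG := \wbG^F$, $E := E(\GF)$, $\wt N := \NNN_{\wbG}(\bS)^F$, $\wh N := \NNN_{GE}(\bS)$, and choose $\calG := \Irrl(G)$, $\calN := \Irrl(N)$. Because $\ell \nmid 6q$ and $|\wG/G|$ divides $|\Z(\bG)|$, this order is coprime to $\ell$, hence $\Irr(\wG \mid \calG) = \Irrl(\wG)$ and similarly for $\wt N$; both sets are $\wt N \wh N$-stable. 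The ambient hypotheses of \Cref{prop_23} then hold: $\NNN_G(N) = N$ (normalizers of Sylow $d$-tori are self-normalizing), $\Cent_{\wG E}(G) = \Z(\wG)$ (regular-embedding property, cf.\ \Cref{not_diag}), and the Frattini identities $GE = G\wh N$ and $\wG = G\wt N$ follow from the $\GF$- respectively $\wGF$-conjugacy of Sylow $d$-tori.

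Next I would verify the four numbered conditions of \Cref{prop_23}: for (i), $\wG/G$ is abelian with $\wG = G\Z(\wG)$, so \Cref{ExtCrit}(a) gives maximal extendibility \wrt $G \unlhd \wG$, while Condition $\mathbf{B}(d)(a)$ supplies it for $N \unlhd \wt N$; for (ii), the bijection $\wt\Omega \colon \Irrl(\wG) \to \Irrl(\wt N)$ is the one built in \cite{MaH0} from the $d$-Harish-Chandra theory of \cite{BMM93} combined with Lusztig's Jordan decomposition, with its $\Lin(\wG/G) \rtimes \wh N$-equivariance and central-character compatibility following from the $\wh N$-equivariant extension map provided by Condition $\mathbf{B}(d)(b)$; condition (iii) is \Cref{Julia2} ($\mathbf{A}(\infty)$) restricted to the $\wG E$-stable subset $\Irrl(G)$; and (iv) is Condition $\mathbf{A}(d)$ (\Cref{cond_Ad}) restricted to $\Irrl(N)$.

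\Cref{prop_23} then produces an $\wt N \wh N$-equivariant bijection $\Omega \colon \Irrl(G) \to \Irrl(N)$ satisfying
\[((\wG E)_\chi, G, \chi) \geq_c ((\wt N \wh N)_\chi, N, \Omega(\chi))\]
for every $\chi \in \Irrl(G)$. Since $\GF$ is the universal covering of a simple group, $\wG E$ surjects onto $\Aut(G)$ by \cite[Thm 2.5.12]{GLS3}, and Brou\'e--Malle \cite{BM92} lets us choose a Sylow $\ell$-subgroup $S \leq N$ with $\Aut(G)_S = \Aut(G)_{\bS} =: \Gamma$. The Butterfly \Cref{Butterfly}, combined with \Cref{rem_chartrip}(c), then transfers the above relation to $(G \rtimes \Gamma_\chi, G, \chi) \geq_c (N \rtimes \Gamma_\chi, N, \Omega(\chi))$, which, together with the $\Gamma$-equivariance of $\Omega$ inherited from its $\wh N$-equivariance, is precisely the content of \Cref{iMK}.

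The hard part will be condition (ii): constructing $\wt\Omega$ requires the full $\ell'$-degree strength of $d$-Harish-Chandra theory combined with the extendibility data of $\mathbf{B}(d)$, and tracking its equivariance under both $\wh N$ and $\Lin(\wG/G)$. A secondary bookkeeping issue is to see that the Butterfly transfer produces a triple relation whose overgroups are $G \rtimes \Gamma_\chi$ and $N \rtimes \Gamma_\chi$ rather than strictly larger ones; this reduces to checking $\Cent_{\wG E}(G) = \Z(\wG) \leq \wt N \wh N$ and that the image of $(\wt N \wh N)_\chi$ in $\Aut(G)_S$ equals $\Inn(N)_\chi \cdot \Gamma_\chi$, both consequences of the self-normalizing property of $N$ in $G$ and of $\bS$ being characteristic inside the reductive subgroup $\Cent_\bG(\bS)$.
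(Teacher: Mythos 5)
Your strategy is sound, but it is not the route the paper takes: the paper's proof of \Cref{iMKbyAdBd} is a two-line citation, namely that $\mathbf{A}(\infty)$ always holds by \Cref{Julia2}, so \cite[Thm 2.4]{CS18B} already gives the inductive McKay condition of \cite{IMN} for $\GF/\Z(\GF)$ with $N=\NNN_\bG(\bS)^F$, and \Cref{IMNiMK} converts that condition into (\textbf{iMK}). What you propose is essentially a re-derivation of the cited \cite{CS18B} theorem: run \Cref{prop_23} with $\calG=\Irrl(G)$, $\calN=\Irrl(N)$, feed $\mathbf{A}(\infty)$ into (iii), $\mathbf{A}(d)$ into (iv), $\mathbf{B}(d)$ into (i) and into the Malle-type construction of the bijection needed for (ii) (this last implication is exactly what the paper records later as \Cref{bij_Omega'}(a), resting on \cite{MaH0}, \cite{BMM93}, \cite{CS17A}), and then pass to the (\textbf{iMK}) format via \Cref{rem_chartrip}(c) and the Butterfly \Cref{Butterfly}. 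This buys self-containedness, at the price of redoing the bookkeeping of \cite[\S 2]{CS18B} and \cite[\S 5--7]{MaH0} that the citation spares.

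Two supporting claims in your write-up are false as stated, though both are repairable. First, $|\wG/G|$ is \emph{not} coprime to $\ell$ in general: $\wG/G$ contains $\Z(\wG)/\Z(G)$, whose order is typically divisible by primes dividing $q-1$, etc. What is true, and what you actually need, is that $\wG$ acts on $\Irr(G)$ (and $\wt N$ on $\Irr(N)$) with orbits of length dividing $|\wG/G\Cent_{\wG}(G)|\cong|\Z(\bG)/[\Z(\bG),F]|\le 4$ (see \Cref{not_diag}), which is coprime to $\ell$ because $\ell\nmid 6$; this is what yields $\Irr(\wG\mid\Irrl(G))=\Irrl(\wG)$ and its analogue for $\wt N$. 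Second, the equality $\Aut(G)_S=\Aut(G)_\bS$ is wrong (the right-hand side contains the image of $\wt N\wh N$, most of which does not normalize $S$); the correct and sufficient statement, coming from \cite{Ca94} together with \cite{BM92} and \cite[\S 5]{MaH0}, is that $\NNN_G(S)\le N$ and that $\Aut(G)_S$ stabilizes $N$ (indeed $\bS$ after a suitable choice), which gives the $\Gamma$-stability of $N$, the $\Gamma$-equivariance of $\Omega$, and lets the final step go through as in \Cref{IMNiMK}: one first restricts the triple relation via \Cref{rem_chartrip}(c) to the subgroup of $(\wG E)_\chi$ inducing $\Inn(G)\Gamma_\chi$ before invoking \Cref{Butterfly}. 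Note also that \Cref{prop_23} is stated with $\wt N=\NNN_\wG(N)$ and $\wh N=\NNN_{GE}(N)$, not the normalizers of $\bS$; identifying the two is one more of the standard verifications that the paper's citation of \cite[Thm 2.4]{CS18B} avoids having to spell out.
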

\begin{proof} Since \textbf{A}$(\infty)$ is always satisfied thanks to \Cref{Julia2}, \cite[Thm~2.4]{CS18B} tells us that the inductive McKay Condition of \cite{IMN} is satisfied for the simple group $\GF/\Z(\GF)$ and the prime $\ell$ with the above choice of $N$. But we have seen that it is equivalent to (\textbf{iMK}) in \Cref{IMNiMK}.
\end{proof}

\begin{rem}\label{rem_A1A2B1B2} If $d\in \{1,2\}$, $\bG=\tDlsc(\FF)$ and $F$ is a Frobenius endomorphism of $\bG$ defining it over $\FF_q$, then the Conditions \Ad{} and \Bd \ are known for $(\bG,F)$, see \cite[Thm 3.1]{MS16}. 
\end{rem}

\subsection{The curious case of types \texorpdfstring{$\tD_l$}{D} and \texorpdfstring{$\tB_l$}{B}}\label{sub2E}

In addition to what has been said about general groups $\GF$ with $\bG$ simple simply connected, we introduce here more notation, in particular related to the inclusion $\bG\leq \obG$ where $\bG$ has type $\tD_l$ and $\obG$ type $B_l$ for some $l\geq 4$, see also \cite[10.1]{S10a}, \cite[2.C]{MS16}, \cite[Sect. 2.1]{S21D1}. 

Let $e_1,\ldots ,e_l$ be the orthonormal basis of the $l$-dimensional Euclidean vector space $\oplus_{i=1}^l\RR e_i$ ($l\geq 4$). Set $\II {lu}@{\protect{\underline {l} }} =\{1, \dots ,l\}$ and 
$$\II Phi @{\Phi}:=\{ \pm e_i\pm e_j\mid i,j\in\ul , i\neq j \}\subseteq\II{Phi overline}@{\protect{\overline \Phi}}:=\{ \pm e_i, \pm e_i\pm e_j\mid i,j\in\ul , i\neq j \}.$$ 
These are root systems of type $\tD_l$ and $\tB_l$, respectively, with bases ${\Delta}:=\{\al_1, \al_2, \ldots , \al_l \}$ and $\II {DelOv}@{\overline \Delta}:=\{\ov \al_1, \al_2, \ldots , \al_l \}$, where $ \al_1:=e_1+e_2$, $\ov \al_1:=e_1$ and $\al_i:=e_i-e_{i-1}$ ($i\geq 2$), see \cite[Rem.~1.8.8]{GLS3}. Let $\II{G overline}@{\protect{\overline{ \mathbf G}}}:=\tB_{l,\mathrm{sc}}(\FF)$ be the simple simply connected linear algebraic group with root system $\ov \Phi =\Phi(\obG, \ov\bT)$ for some maximal torus $\ov\bT$. We recall ${ \xx_\al(t_1)}$, $\II{nalphat}@{ \nn_\al(t_2)}$ and $\II{halphat}@{\hh_\al(t_2)}$ its Chevalley generators, where $\al\in \ov \Phi$, $t_1\in \FF$ and $t_2\in \FF ^\times$. We denote by $ \bX_\al$ the group $ \xx_\al(\FF)$, $\II{N overline}@{\ov \bN}:=\spa{\nn_\al(t) \mid \al\in \ov \Phi, t \in \FF^\times} =\NNN_{ \obG}(\ov\bT)$ and 
$\II{Woverline}@{\protect{\overline W}}:=\ov\bN/\ov\bT$ the Weyl group of $\obG$ together with the canonical epimorphism $\II rhoov@{\protect{\overline \rho}: \ov \bN\lra \ov W}$. As explained in \cite[10.1]{S10a} and \cite[2.C]{MS16} the subgroup $\II{G}@{\bG}:=\Spann<\bX_\al| \al\in \Phi>$ is a simply connected simple group over $\FF$ with same maximal torus $\ov \bT=\bT$ and the root system $\Phi=\Phi(\bG,\bT)$ of type $\tD_l$. We set $\II{Nb}@{ \bN}:=\NNN_{ \bG}( \bT)$, 
$\II{W}@{\protect{ W}}:= \bN/ \bT$ and the surjection $\II{rho}@{\rho}:\bN\lra W$.

For $I\subseteq \ul$ and $\zeta\in\FF^\times$ we set $\II{hIzeta}@{ \h_I(\zeta):=\prod_{i\in I } \h_{e_i}(\zeta)}$. 

We assume chosen $\II{pivar}@{\varpi}\in\FFtimes$ with $\varpi^2=-1$ (note that $\varpi=1$ when $p=2$). We set $\II ncirc@{\neins:=\n_{e_1}(\varpi)}\in \NNN_{ \obG}(\bT)$. The Chevalley relations give easily the following statement.
\begin{lem}[{\cite[2.C]{MS16}}]\label{lem_gamma} 
	Let $\II{gamma}@{\gamma}: \bG \lra \bG$ be the graph automorphism defined by $\xx_{\epsilon \al_i}(t)\mapsto \xx_{\epsilon \al_i'}(t)$ for $t\in\FF$, $\eps =\pm 1$, $ i\in \ul$ and $(\al_1',\al_2',\al_3',\al_4',\dots ,\al_l'):=(\al_2,\al_1,\al_3,\al_4,\dots ,\al_l)$.
	Then $\neins$ normalizes $\bG$ and induces $\gamma$ on it, namely $\gamma(x)=x^{\neins}$ for any $x\in\bG$.
\end{lem}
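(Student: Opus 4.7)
The plan is to compute the conjugation action of $\neins=\nn_{e_1}(\varpi)$ on each Chevalley generator $\xx_\al(t)$ of $\bG$ (with $\al\in\Phi$, $t\in\FF$) using the relations in $\obG$, and check directly that the outcome matches the prescribed graph automorphism $\gamma$. Since the $\xx_\al(\FF)$ for $\al\in\Phi$ generate $\bG$, this will simultaneously establish that $\neins$ normalizes $\bG$ and that it induces $\gamma$ on it.

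First I would note that $\neins\in\NNN_{\obG}(\ov\bT)$ projects in $\ov W$ to the reflection $s_{e_1}$, which preserves the subsystem $\Phi\subseteq\ov\Phi$ and acts on $\Delta$ by swapping $\al_1=e_1+e_2$ with $\al_2=e_2-e_1$ while fixing $\al_3,\dots,\al_l$; in particular $s_{e_1}(\al)\in\Phi$ for every $\al\in\Phi$. Thus for each $\al\in\Phi$ the conjugate $\neins\,\xx_\al(t)\,\neins\inv$ lies in $\bX_{s_{e_1}(\al)}\subseteq\bG$, so $\neins$ normalizes $\bG$ and the assertion reduces to showing that this conjugate equals $\xx_{s_{e_1}(\al)}(t)$ (i.e.\ that the Chevalley sign on each root subgroup is $+1$).

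Next I would split the roots $\al\in\Phi$ into two cases. If $1\notin\{i,j\}$ where $\al=\pm e_i\pm e_j$, then $\al\perp e_1$ and $(\ZZ\al+\ZZ e_1)\cap\ov\Phi=\{\pm\al,\pm e_1\}$; by paragraph~\ref{Comm} the subgroup $\bX_\al$ commutes with $\bX_{\pm e_1}$, hence with $\neins=\xx_{e_1}(\varpi)\xx_{-e_1}(-\varpi\inv)\xx_{e_1}(\varpi)$, and $\neins$ fixes $\xx_\al(t)$, matching $\gamma$ (which is the identity on such $\al$). In the remaining case $\al=\eps e_1+\eps' e_j$ with $j\geq 2$ and $\eps,\eps'\in\{\pm 1\}$, I would apply the standard formula
\[
\nn_{e_1}(\varpi)\,\xx_\al(t)\,\nn_{e_1}(\varpi)\inv \;=\; \xx_{s_{e_1}(\al)}\bigl(\eta_\al\,\varpi^{-\langle\al,e_1^\vee\rangle}\,t\bigr)
\]
coming from the Chevalley relations in $\obG$, where $\eta_\al\in\{\pm 1\}$ is the sign attached to the pair. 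Since $e_1$ is short in $\ov\Phi$ one has $e_1^\vee=2e_1$, giving $\langle\al,e_1^\vee\rangle=2\eps$, so the scalar in front of $t$ is $\eta_\al\,\varpi^{-2\eps}=-\eta_\al\,\eps$ via the choice $\varpi^2=-1$ (with $\eps=1$ interpreted multiplicatively). The main remaining task is to verify that the sign $\eta_\al$ produced by the explicit presentation of \cite[Thm 1.12.1]{GLS3} always cancels this $-\eps$; this can be done uniformly by expanding $\nn_{e_1}(\varpi)$ as a product of three $\xx_{\pm e_1}$-terms and applying the commutator relations for the pairs $(\pm e_1,\al)$ in $\ov\Phi$, whose structure constants are explicit in the $\tB_l$ presentation.

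The point where care is needed is precisely this sign check: the identity $\varpi^2=-1$ (which is forced in order to make $\neins$ square to a central element of $\obG$) is exactly what compensates the signs coming from the $\tB_l$ structure constants, and without it we would only obtain $\gamma$ up to a diagonal automorphism. Once the sign calculation is done for one representative pair $(\al_1,e_1)$, the general case follows by symmetry/Weyl conjugation inside the parabolic subsystem containing $\al$ and $e_1$, so the computation is essentially a single check. After this, the conclusion $\gamma(x)=x^{\neins}$ for all $x\in\bG$ follows by extending multiplicatively from the generators $\xx_\al(t)$, as cited from \cite[2.C]{MS16}.
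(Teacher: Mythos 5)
Your plan is essentially the paper's own route: the paper offers no argument beyond ``the Chevalley relations give easily the following statement'' together with the citation to \cite[2.C]{MS16}, and that reference performs exactly the computation you outline (conjugate the simple root subgroups by $\neins=\nn_{e_1}(\varpi)$, use the commutation of \ref{Comm} for the roots orthogonal to $e_1$, and for $\al_1\leftrightarrow\al_2$ use the $\nn$--$\xx$ Chevalley relation, where the choice $\varpi^2=-1$ absorbs the structure-constant sign that conjugation by $\nn_{e_1}(1)$ alone would leave). One minor slip: since $\varpi^{-2\eps}=(-1)^{\eps}=-1$ for either value of $\eps$, the scalar in your case-2 formula is $-\eta_\al$ rather than $-\eta_\al\,\eps$, but this does not change the sign verification you correctly identify as the only real task.
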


Recall ${f}\geq 1$, ${q}=p^f$, and let $F\in \{ F_p^f, F_p^f\circ \gamma \}$ a Frobenius endomorphism of $\bG$. Then in the standard notation $\GF=\tD_{l,\mathrm{sc}}^\eps(p^f) $ where $\II eps@{\eps} =1$ or $-1$ according to $F= F_p^f$ or $F_p^f\circ \gamma $. We choose an extension of $F$ to $\obG$ as follows. If $F=F_p^f$ then $\o F$ is defined the same on the generators $\xx_\al(t) $ as in the preceding section. By contrast when $F=F_p^f\circ \gamma$ we define $\II{Fbar}@{\o F}:=F_p^f\circ \o\gamma$ where $\o\gamma$ is the inner automorphism of $\obG$ defined by $\o\gamma(x)=x^{\neins}$ for $x\in\obG$. In both cases $\obG^{\o F}=\tB_{l,\mathrm{sc}}(p^f) $.

 We denote by $\II Eun@{\underline{E}(\bG)}$ the subgroup of $E(\bG)$ generated by $F_p$ and $\gamma$. 
 We denote by $\II{Eunder}@{\UE(\GF)}$, 
 respectively $\Ispezial EunderbG@{\underline{E}(\protect{\breve{G}})}@{\underline{E}(\wb G)}$, the corresponding subgroup of $E(\GF)$, respectively $E(\wb G)$.

We describe below properties related to the centers of $\bG$ and $\GF$. These can easily be deduced from \cite[Thm 1.12.6]{GLS3}.

\begin{para}\label{ZG_order}
	\begin{thmlist}
		\item 	According to \cite[Table 2.2]{GLS3} the center of $\bG$ is the $2$-group generated by $\h_\ul(\varpi)$ and $\II h0@{h_0}:=\h_{\al_1}(-1) \h_{\al_2}(-1)=\h_{e_1}(-1)=(\neins)^2$. 
		
		\item 	We have $\spa{h_0}=[\Z(\bG),\gamma]= \Z(\bG\UE(\bG))=\Z(\obG)$. 
		
		\item $|\Z(\GF)|=4$ if and only if $p\neq 2$ and 
		\begin{asslist}
			\item $\eps=1$ and $4\mid (q-1)l$; or 
			\item $\eps=-1$ and $4\nmid (q-1)l$, in particular $2\nmid f$.
		\end{asslist}
		In all other cases: $|\Z(\GF)|=\gcd(2,q-1)$. 	
		\item If $\al,\beta\in \Phi(\obG,\bT)$ are both short roots and $\al\perp \beta$, then $[\nn_\al(t),\nn_\beta(t')]=h_0$ for any $t,t'\in\FFtimes$, see \cite[Bem. 2.1.7]{S07}.
	\end{thmlist}
\end{para}

\section{Centralizers of semisimple elements and consequences for $\Irr(\GF)$}\label{sec_3}

From now on the groups $\bG$, $G=\bG^F$ are as defined in \Cref{sub2E} and we use the notation introduced there for groups of type $\tD$.

The aim of this chapter is to complement the results of \cite{S21D2} on elements of $\Irr(G)$ that don't extend to their stabilizer in $G\UE(G)$. We introduce the sets $\EE$ and $\DD$ in \Cref{def_TED} that will be of constant use in Chapter~\ref{sec_nondreg_groupM}. Through the equivariant Jordan decomposition of characters of \cite[Thm B]{S21D2}, analyzing elements of $\EE\cup\DD$ leads us to a study of centralizers of semisimple elements in the adjoint group of $\bG$. This extends the results of \cite{CS22}. It will be also of some use in our study of relative Weyl groups in \Cref{ssec4C}. 

Our main theorem is \Cref{thm_sumup_D} dealing with kernel and stabilizers of characters outside the transversal $\TT$ from \Cref{Julia2}. This includes a study of characters of groups of rank $1\leq k\leq 3$, called $G_{\underline{k}}$ in \ref{ssec:3D}, that occur naturally in centralizers of Sylow $d$-tori. 

\subsection{Centralizers of semisimple elements}
We describe here properties of the centralizers of semisimple elements $s_0\in \tDlsc(\FF)$. We keep $\bG=\tDlsc(\FF)$, $F$, $\bT$, and the notation for the associated roots as in \Cref{sub2E}.

\begin{notation}\label{not3.1} Let $\pi\colon\bG\to\bH$ be the adjoint quotient of $\bG$ with $F_p$, $\gamma$ and $F$ acting accordingly also on $\bH$. Let $\II piSO@{\pi_\SO}\colon \bG=\tDlsc(\FF)=\text{Spin}_{2l}(\FF)\to\SO_{2l}(\FF)$ be the the natural morphism with kernel $\spann<h_0>$, see \ref{ZG_order}. 
	
	 For every $I\subseteq \underline l $ we set  $ \II RoI@{\ov R_I}:=\ov \Phi\cap \spa{e_i\mid i \in I}_\ZZ ,\, 
	\II RI@{R_I}:=\Phi\cap \ov R_I =\Phi\cap \spa{e_i\mid i \in I}_\ZZ$ and $\II TI@{\bT_I} :=\bT\cap \spa{ \bX_\al \mid \al\in \ov R_{I} }$.
	
\end{notation}

\begin{rem}\label{rem:3:2}
	\begin{thmlist}
		\item Whenever $t_1,\dots ,t_l\in\FFtimes$, then $\pi_\SO(\prod_{i=1}^l \h_{e_i}(t_i))$ has the eigenvalues $\{ t_i^{\pm 2}\mid i \in \ul\}$ as an element of $\SO_{2l}(\FF) $, see the description of $\pi_\SO( \h_{e_i}(t_i))$ in \cite[2.7]{GLS3}.
		\item Assume $p$ is odd. Let $s_0\in \bG$ be semisimple. According to \cite[2A]{FS89}, $s_0$ and $s_0 h_0$ are $\bG$-conjugate if and only if $1$ and $-1$ are both eigenvalues of $\pi_\SO(s_0)$. Using then (a) above, an element $s_0\in \bT$ is $\bG$-conjugate to $s_0h_0$ if and only if $s_0$ can be written as $\prod_i \h_{e_i}(t_i)$ with $\{1,\varpi\}\subseteq \{ \pm t_i^{\pm 1}\mid i \in \ul\}$. 
	\end{thmlist}
\end{rem}

\begin{para} [Weyl groups and parabolic subgroups]\label{3:3'} We denote by $\Sym_{\pm l}$ the subgroup of the permutation group of $\ul\cup -\ul$ whose elements $\si$ satisfy $\si(-x)=-\si(x)$ for any $x\in \ul\cup -\ul$.
		The Weyl group ${\ov W} =\NNN_{\obG}(\bT)/\bT$ can be identified with $\II Spml@{\Sym_{\pm l}}$ as in \cite[Rem.~1.8.8]{GLS3} or \cite[Sect. 3.2]{S21D1}. 
		
		We write 
	$\II SDpml@{\Sym^\tD_{\pm l}}$ 
	for the normal subgroup of $\ov W$ consisting of permutations $\si$ with even $|-\ul\cap \si(\ul)|$. This coincides with $W=\NNN_{\bG}(\bT)/\bT$. 
	
	If $I\subseteq \ul$, then the ordinary symmetric group $\II SI@{\Sym_{I}}$ can be identified with the (parabolic) subgroup $\II SBI@{\Sym^\tB_I}$ of $\ov W=\Sym_{\pm l}$ fixing every element of $\ul \setminus I $ and stabilising $I$. For example $\Sym^\tB_I$ is the trivial group if $|I|= 1$. 
 We define $\II SDpmI@{\Sym^\tD_{\pm I}}$ as the subgroup of elements in $W$ which fix any element of $\underline l\setminus I$ (hence also of $-\underline{l}\setminus -I$ and stabilize $I\cup -I$).

	Given a partition $\mathbb M$  of a set $M\subseteq \ul$  we set $\II SBM@{\Sym^\tB_{\mathbb M}}=\prod_{I\in\mathbb M}\Sym^\tB_I$ the direct product of subgroups $\Sym^\tB_I$ with $I\in \mathbb M$. 
\end{para}

\begin{para}[Elements of $\bT$ and the action of the Weyl group]\label{3:3}
	The elements $\spa{\h_{e_i}(t)\mid t\in \FF^\times , i \in \ul}$ generate $\bT$. Chevalley relations show 
	\[ \prod_{i=1}^l \h_{e_i}(t_i) =\prod_{i=1}^l \h_{e_i}(t'_i) \]
	if and only if $t'_i/t_i=\pm 1$ for every $i\in\ul$ and $\prod_{i=1}^l t'_i/t_i=1$. So the value of each $t_i$ is determined by $\prod_{i=1}^l \h_{e_i}(t_i)$ up to multiplication with $-1$.

 The Weyl group $\ov W=\Sym_{\pm l}$
 acts on those elements in the following way: 
	\[ \big(\prod_{i=1}^l \h_{e_i}(t_i)\big)^{(j,-j)}= \h_{e_j}(t^{-2}_j) \, \prod_{i=1}^l \h_{e_i}(t_i) 
	\und 
	\big(\prod_{i=1}^l \h_{e_i}(t_i)\big)^{(k,k')}= 	\h_{e_k}(t_{k'}) \h_{e_{k'}}(t_{k}) \big(\prod_{i\in \ul \setminus\{k,k'\}} \h_{e_i}(t_i)\big)
, \] 
where $j,k,k'\in \ul$ with $k\neq k'$.
\end{para}

From the action of $\ov W$ on $\bT$ described above we easily get the following. 
\begin{lem}\label{lem:3_3}
Let $\II Fcal@{\cF}\subseteq \FF^\times$ be a set of representatives in $\FF^\times$ under $x\mapsto -x$ and inversion $x\mapsto x\inv$, and such that $\{1,\varpi\}\subseteq\mathcal F$. Then every $s\in \bT$ has some $ \ov W$-conjugate $s' $ such that 
	\[ s'\in \prod_{i=1}^l \h_{e_i}(t'_i) \spannh, \]
	where $t'_i\in \mathcal F$ for every $i\in\ul$. 
\end{lem}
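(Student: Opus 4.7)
The plan is to start from the decomposition in \ref{3:3}: write $s=\prod_{i\in\ul}\h_{e_i}(t_i)$ for some $t_i\in\FF^\times$, and then combine the $\ov W$-action with multiplication by $h_0$ to apply two independent per-coordinate operations---inversion and sign flip---on the $t_i$'s until each lies in $\mathcal F$.

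The inversion $t_j\mapsto t_j^{-1}$ is provided directly by \ref{3:3}: conjugation by (a representative in $\ov\bN$ of) $(j,-j)\in\Sym_{\pm l}=\ov W$ sends $\prod_i\h_{e_i}(t_i)$ to $\h_{e_j}(t_j^{-2})\prod_i\h_{e_i}(t_i)=\h_{e_j}(t_j^{-1})\prod_{i\neq j}\h_{e_i}(t_i)$. For per-coordinate sign flips modulo $\spannh$, I would observe that $s\spannh=\{s,\,sh_0\}$, with $sh_0=\h_{e_1}(-t_1)\prod_{i\geq 2}\h_{e_i}(t_i)$, while the equality relation recalled in \ref{3:3} allows one to replace the tuple $(t_i)$ by $(\epsilon_i t_i)$ for any signs $\epsilon_i\in\{\pm 1\}$ with $\prod_i\epsilon_i=1$. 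Combining these two rewritings, the coset $s\spannh$ coincides with the set of $\prod_i\h_{e_i}(\epsilon_i t_i)$ for arbitrary $(\epsilon_i)\in\{\pm 1\}^{\underline l}$, so modulo $\spannh$ any single $t_j$ can be negated independently.

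Since $\mathcal F$ is a transversal for the orbits of the group $\langle x\mapsto -x,\,x\mapsto x^{-1}\rangle$ acting on $\FF^\times$, for each $i\in\ul$ there exist unique $\epsilon_i,\delta_i\in\{\pm 1\}$ and $t'_i\in\mathcal F$ with $t'_i=\epsilon_i t_i^{\delta_i}$. Conjugating $s$ by the (well-defined) product $w:=\prod_{i:\,\delta_i=-1}(i,-i)\in\ov W$---the factors being mutually disjoint transpositions in $\Sym_{\pm l}$, hence commuting---replaces $t_i$ by $t_i^{\delta_i}$ at each coordinate, producing an $\ov W$-conjugate $s''$ of $s$. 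By the previous paragraph, $s''$ lies modulo $\spannh$ in the set $\{\prod_i\h_{e_i}(\eta_i t_i^{\delta_i})\mid (\eta_i)\in\{\pm 1\}^{\underline l}\}$, which contains $\prod_i\h_{e_i}(\epsilon_i t_i^{\delta_i})=\prod_i\h_{e_i}(t'_i)$, so $s':=s''$ satisfies the required conclusion. The only subtle point is the verification made in the second paragraph---that per-coordinate sign flips become independently available once one quotients by $\spannh$; everything else is a direct rewriting from the formulas of \ref{3:3}.
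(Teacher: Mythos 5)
Your argument is correct and is exactly the intended one: the paper gives no separate proof of this lemma, deriving it directly ("easily") from the relations and the $\ov W$-action recorded in \ref{3:3}, which is precisely what you spell out — inversions at chosen coordinates via the commuting reflections $(i,-i)$, and independent per-coordinate sign changes obtained by combining the sign ambiguity in the expression $\prod_i\h_{e_i}(t_i)$ with multiplication by $h_0=\h_{e_1}(-1)$. The only cosmetic point is that the pair $(\epsilon_i,\delta_i)$ need not be unique (e.g. when $t_i=\pm 1$), but only its existence is used, so this does not affect the proof.
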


The corresponding centralizers are then described as follows. Note that whenever $X_1,X_2\leq X$ are finite groups with $[X_1,X_2]=1$ we write $X_1.X_2$ for the central product of the groups $X_1$ and $X_2$. 

\begin{lem}\label{3:5}
	Let $s'$ be as in \Cref{lem:3_3}. We write $I_\zeta(s'):=\{ i \mid t'_i\in \{\pm\zeta \} \}$ for $\zeta \in \mathcal F$. We abbreviate $I_1:=I_1(s')$, $I_\varpi:=I_\varpi(s')$ and set $
	R':=\bigsqcup_{\zeta\in\calF\setminus \{1,\varpi\}} \{ e_i-e_{i'}\mid i,i' \in I_\zeta(s') \text{ with } i \neq i' \}.$
Then $\Cent_\bG(s')$ is a central product of reductive groups normalized by $\bT$: 
	\[ \Cent_\bG(s')= \bC_1 . \bC_{4}. \bC_{R'}, \]
	where \begin{align*}
		 \bC_1&:=\spa{\bT_{I_1}, \bX_\al \mid \al\in R_{I_1} },\,\\ 
		\bC_4&:=\begin{cases}\spa{\bT_{I_\varpi}, \bX_\al \mid \al\in R_{I_\varpi} } \ \ \text{if $p$ is odd} \\  1  \ \  \ \ \text{if $p=2$, } \end{cases}\ \ \ \ \ \ \ \ \und \\
		\bC_{R'}&:=\spa{\bT_{\ul\setminus (I_\varpi \cup I_1)}, \bX_\al \mid \al\in R' }.
	\end{align*} 

Then, for $\bN:=\NNN_{\bG}(\bT)$ the group $ W_{s'}:=\Cent_{\bN}(s')/\bT$ satisfies
\[ W_{s'}=\begin{cases}\Sym^{\tD}_{\pm I_1 } \times \Sym^{\tD}_{\pm I_\varpi }\times \prod_{\zeta\in \mathcal F\setminus\{1,\varpi\}} \Sym^{\tB}_{I_\zeta(s')}\ \ \text{if $p$ is odd} \\  \Sym^{\tD}_{\pm I_1 } \times  \prod_{\zeta\in \mathcal F\setminus\{1\}} \Sym^{\tB}_{I_\zeta(s')}  \ \  \ \ \text{if $p=2$. } \end{cases}
\]
\end{lem}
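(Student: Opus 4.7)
The plan is to combine Steinberg's theorem on connectedness of centralisers in simply-connected reductive groups with an explicit evaluation of roots on $s'$. Since $\bG = \tDlsc(\FF)$ is simply connected, $\Cent_\bG(s')$ is connected reductive with maximal torus $\bT$, hence generated by $\bT$ together with the root subgroups $\bX_\alpha$ for $\alpha \in \Phi(s') := \{\alpha \in \Phi \mid \alpha(s') = 1\}$. The problem thus reduces to identifying $\Phi(s')$ and computing the stabiliser of $s'$ in the Weyl group.

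For $\Phi(s')$ I would apply the standard identity $\alpha(\h_\beta(t)) = t^{\langle \alpha, \beta^\vee \rangle}$ with $\beta = e_k$ inside $\obG$, where $e_k$ is a short root and $e_k^\vee = 2 e_k$. This yields
\[ (e_i - e_j)(s') = (t'_i / t'_j)^2 \quad\text{and}\quad (e_i + e_j)(s') = (t'_i t'_j)^2, \]
so $(e_i - e_j)(s') = 1$ iff $t'_i = \pm t'_j$ and $(e_i + e_j)(s') = 1$ iff $t'_i t'_j = \pm 1$. Since $t'_i, t'_j \in \calF$ are representatives modulo $\langle x \mapsto -x, \, x \mapsto x^{-1} \rangle$, both conditions force $i$ and $j$ to lie in a common fibre $I_\zeta(s')$. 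Within a fibre $I_\zeta$: the equality $\zeta^4 = 1$, i.e. $\zeta \in \{1, \varpi\}$ in odd characteristic, is exactly what makes both roots vanish at $s'$, giving the type-$\tD$ subsystems $R_{I_1}$ and $R_{I_\varpi}$; for $\zeta \in \calF \setminus \{1, \varpi\}$ only $\pm(e_i - e_j)$ vanishes, producing the type-$\tA$ components in $R'$. The three subsets $R_{I_1}, R_{I_\varpi}, R'$ are pairwise orthogonal, so by \Cref{Comm} their root subgroups commute, and since each factor also contains the relevant piece of $\bT$ and is therefore $\bT$-stable one obtains the central-product decomposition $\Cent_\bG(s') = \bC_1 . \bC_4 . \bC_{R'}$ (with $\bC_4 = 1$ in characteristic~$2$, since then $\varpi = 1$ and $I_\varpi$ merges into $I_1$).

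For the Weyl group factor I would first compute $\Cent_{\ov W}(s')$ and then intersect with $W$. Using the $\ov W$-action from \Cref{3:3}, a permutation stabilises $s'$ iff it preserves each $I_\zeta$ (distinct $\calF$-representatives lie in distinct orbits). An elementary sign flip $(j, -j)$ sends $\h_{e_j}(t)$ to $\h_{e_j}(t^{-1})$, hence acts trivially on $s'$ when $j \in I_1$ (as $\h_{e_j}(1) = 1$), multiplies $s'$ by $\h_{e_j}(-1) = h_0$ when $j \in I_\varpi$ (using $\varpi^{-1} = -\varpi$ together with \Cref{ZG_order} and the centrality of $h_0$ to identify $\h_{e_j}(-1)$ with $h_0$ for every $j$), and otherwise moves $t'_j$ out of its $\calF$-orbit. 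Thus $\Cent_{\ov W}(s') = \Sym_{\pm I_1} \times \Sym^\tD_{\pm I_\varpi} \times \prod_{\zeta \neq 1, \varpi} \Sym^\tB_{I_\zeta}$, and intersecting with $W = \Sym^\tD_{\pm l}$ (the index-$2$ subgroup of even total sign parity) imposes an additional even-parity constraint on $I_1$, producing the stated formula. The main technical subtlety, and the source of the $\tD$-twist in the answer, is precisely this $h_0$-bookkeeping: odd sign flips on $I_1$ fix $s'$ outright but fall outside $W$, while odd flips on $I_\varpi$ must be paired to cancel the resulting central factor; this asymmetry is what forces $\Sym^\tD_{\pm I_1}$ rather than the full $\Sym_{\pm I_1}$. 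The $p = 2$ case is immediate as $h_0 = 1$ and $I_\varpi$ disappears.
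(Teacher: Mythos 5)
Your argument is correct and follows essentially the same route as the paper's own (very terse) proof: Steinberg connectedness of $\Cent_\bG(s')$ plus the Carter-style description of the centralizer by the roots vanishing on $s'$, with the Weyl-group stabiliser worked out from the torus action recorded in \Cref{3:3} and the identity $\h_{e_j}(-1)=h_0$ — the paper merely cites these ingredients (or alternatively the eigenspace computation in $\SO_{2l}(\FF)$), while you spell out the bookkeeping, including the key point that odd sign flips on $I_1$ fix $s'$ but lie outside $W$ whereas odd flips on $I_\varpi$ produce the central factor $h_0$ and must occur in pairs. One wording slip: for $\zeta\notin\{1,\varpi\}$ an inversion keeps $t'_j$ inside its orbit under negation and inversion (that is what $\calF$ parametrises); the correct statement is that $t'_j{}^{-1}\notin\{\pm t'_j\}$, so the flipped element differs from $s'$ coordinatewise and cannot be compensated — which is exactly the conclusion your argument needs.
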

\begin{proof} We can compute the centralizer of $s'$ from the root system as in \cite[3.5.3]{Ca85}, noting that the first statement reduces to the one about Weyl groups which can in turn be checked thanks to \ref{3:3} above. 
	
	Another way is to note that $\Cent_\bG(s')$ is connected and therefore $\Cent_\bG(s') =\pi_\SO\inv (\Cent_{\SO_{2l}(\FF)}^\circ(s'))$ so our claim reduces to a computation in $\SO_{2l}(\FF)$. The groups $\bC_1$, $\bC_4$ (the latter when $\varpi\neq 1$, i.e. $p\neq 2$) and $\bC_{R'}$ then correspond to the orthogonal groups on the eigenspaces associated with $1$ and $\varpi$, while $\bC_{R'}$ corresponds to the other eigenspaces of $\pi_\SO(s')$ in $\FF^{2l}$, see also \cite[1.13]{FS89} . 
\end{proof} 
The following statement is used to apply later \Cref{thm6_6} in the calculations of relative Weyl groups. We denote by $\UE(\HF)$ the group of automorphisms of $\HF$ generated by $F_p$ and $\gamma$.

We use the term $d$-\textit{regular} in the sense of \cite[Ch. 5]{BroueBook} and \cite{Sp74} for certain elements of $W$, see also
\Cref{def_varphi0} below.
\begin{cor} \label{cor3_8}

    	Let ${\varphi_0}$ be the automorphism of the character lattice $X(\bT)$, such that $F$ acts there as $q\varphi_0$. Let $\bT'$ be a maximal torus of $\bH$, let $s\in \bT'$ and set $\bC:=\Cent^\circ_\bH(s)\geq \bT'$, $P:=\NNN_{\bC}(\bT')/\bT'$. Let $d\geq 1$ be an integer and $\bS$ be a Sylow $d$-torus of $(\bH,F)$.
\begin{enumerate}[(a)]
	\item There is an inner automorphism $\iota$ of $\bH$ sending $\bT'$ to $\pi(\bT)$, and which induces an isomorphism
	\[ P\xrightarrow{\sim} \Sym^\tD_{\pm J}\times \Sym^\tD_{\pm J'}\times \Sym^{\tB}_{ \mathbb I}\leq W,\]
	for some $J, J'\subseteq \underline l $ with $J\cap J'=\emptyset$ and some partition $\mathbb I$ of $\underline l\setminus (J\cup J')$.
	\item Assume additionally $\Cent_\bH(\bS)$ is the torus $\bT '$ and $s\in \Cent_\bH(\bS)^F$. Then $\Cent_\bH(\bS)\leq \bC$ and there exists some $d$-regular element $w \varphi_0\in W\varphi_0$ normalising $\iota(P)$ such that $\Cent_{\iota(P)}(w\varphi_0)$ is isomorphic to $\NNN_{\bC}(\bS)^F/\Cent_\bH(\bS)^F$. 
	\item Keep the  assumptions about $s$ and $\bS$ from (b). Set $K:=\NNN_{\overline{ W}}(\iota(P))$ and $\widehat{ C}:=\Cent_{\bH^F\rtimes \underline{E}(\bH^F)}(s)$. There is a morphism 
	\[\NNN_{\widehat{ C}}(\bS)/\Cent_\bH(\bS)^F \longrightarrow \Cent_K(w\varphi_0), \] 
restricts into an isomorphism $\NNN_{\bC}(\bS)^F/\Cent_\bH(\bS)^F\cong\Cent_{\iota(P)}(w\varphi_0)$.
\end{enumerate}

\end{cor}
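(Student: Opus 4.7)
The plan is to transfer each assertion, via an inner automorphism $\iota$ of $\bH$, to a computation on the standard torus $\pi(\bT)$ and its Weyl group $W\leq\ov W$, combining the preceding \Cref{lem:3_3} and \Cref{3:5} with the Broué--Malle theory of Sylow $d$-tori.

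For part (a), all maximal tori of $\bH$ are $\bH$-conjugate, so choose $g\in\bH$ with $g\pi(\bT)g^{-1}=\bT'$ and let $\iota$ be conjugation by $g^{-1}$. Lifting $\iota(s)\in\pi(\bT)$ through the central isogeny $\pi\colon\bG\to\bH$ yields a semisimple $\tilde s\in\bT$ whose centralizer is connected (since $\bG$ is simply connected) and satisfies $\pi(\Cent_\bG(\tilde s))=\Cent^\circ_\bH(\iota(s))=\iota(\bC)$. Hence $\iota$ identifies $P=W_\bC(\bT')$ with the subgroup $W_{\Cent_\bG(\tilde s)}(\bT)\leq W$. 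Then \Cref{lem:3_3} provides an $\ov W$-conjugate $s'\in\prod_i\h_{e_i}(t'_i)\spannh$ of $\tilde s$ with $t'_i\in\calF$, and \Cref{3:5} describes $W_{s'}$ explicitly as $\Sym^\tD_{\pm I_1(s')}\times\Sym^\tD_{\pm I_\varpi(s')}\times\prod_{\zeta\in\calF\setminus\{1,\varpi\}}\Sym^\tB_{I_\zeta(s')}$ (the middle factor being trivial when $p=2$). Since $\ov W=\Sym_{\pm l}$ acts on such subgroups merely by permuting the partition labels, $W_{\Cent_\bG(\tilde s)}(\bT)$ still takes the form $\Sym^\tD_{\pm J}\times\Sym^\tD_{\pm J'}\times\Sym^\tB_{\mathbb I}$ for appropriate disjoint $J,J'\subseteq\ul$ and partition $\mathbb I$ of $\ul\setminus(J\cup J')$, establishing (a).

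For part (b), set $\dot w:=g^{-1}F(g)\in\NNN_\bH(\pi(\bT))$ and denote by $w\in W$ its image, so that the Frobenius transferred through $\iota$ acts on $X(\pi(\bT))$ as $q\cdot w\varphi_0$. As $s\in\Cent_\bH(\bS)^F$ commutes with $\bS$ and $\bS$ is connected, $\bS\leq\Cent^\circ_\bH(s)=\bC$. The hypothesis $\Cent_\bH(\bS)=\bT'$ transferred to $\pi(\bT)$ is precisely Springer's criterion ensuring that $w\varphi_0$ is $d$-regular (see \cite{Sp74} and \cite[Ch.~5]{BroueBook}). Since $F$ stabilizes $\bC$ (because $F(s)=s$), the element $w\varphi_0$ stabilizes the root system of $\iota(\bC)$, hence normalizes $\iota(P)$. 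Finally, any $F$-fixed element of $\NNN_\bC(\bT')$ commutes with $F|_{\bT'}$ and thus preserves the Sylow $d$-subtorus $\bS$, so $\NNN_\bC(\bS)^F=\NNN_\bC(\bT')^F$; applying Lang--Steinberg to $1\to\bT'\to\NNN_\bC(\bT')\to P\to 1$ identifies the quotient by $\bT'^F$ with $P^F=\Cent_P(w\varphi_0)\cong\Cent_{\iota(P)}(w\varphi_0)$.

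For part (c), define the morphism by sending $a\in\NNN_{\widehat C}(\bS)$ to the element it induces on $\pi(\bT)$ through $\iota$, modulo $\pi(\bT)$. Because $a$ normalizes $\bS$ it normalizes $\Cent_\bH(\bS)=\bT'$; because $a$ centralizes $s$ it normalizes $\bC$ and hence $\iota(P)$, placing the image in $K=\NNN_{\ov W}(\iota(P))$. The target is $\ov W$ rather than just $W$ because elements of $\UE(\bH^F)$ may involve the graph automorphism $\gamma$, which by \Cref{lem_gamma} is realized by conjugation by $\neins\in\ov\bN\setminus\bN$ and thereby contributes precisely the missing $\ov W/W$ component; verifying that this outer contribution embeds unambiguously into $\ov W$ is the main delicate point. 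Since $a$ commutes with $F$ (as $a\in\bH^F\rtimes\UE(\bH^F)$), its image commutes with $w\varphi_0$ and so lies in $\Cent_K(w\varphi_0)$. The kernel consists of elements acting trivially on $\bT'$, which is $\bT'^F$. Restricted to $\NNN_\bC(\bS)^F\subseteq\NNN_{\widehat C}(\bS)$, one recovers the inner-only isomorphism already constructed in (b), completing the proof.
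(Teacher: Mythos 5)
Your route is essentially the paper's: for (a) you pass to $\bG$ through $\pi$, normalize the lift of $s$ via \Cref{lem:3_3} and read off the centralizer Weyl group from \Cref{3:5}; for (b) you use that $\bT'=\Cent_\bH(\bS)$ is obtained from $\pi(\bT)$ by twisting with a $\zeta_d$-regular element (the paper's reference to \cite[\S 3.3]{Ca85}, \cite[3.5.7]{GM}) together with the Lang-theorem argument of \cite[3.3.6]{Ca85}; for (c) you enlarge to $\ov W$ to account for $\gamma$, again as in the paper, and at the same (very terse) level of detail as the paper's one-line appeal to ``the standard discussion'' — including the point you flag but do not carry out, namely that elements of $\UE(\bH^F)$ only lift to automorphisms of $\bH$ up to $\spa{F}$, so one must check the resulting class in $\Cent_K(w\varphi_0)$ is unambiguous.

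The one step that is wrong as written is in (a): the claim that $\ov W=\Sym_{\pm l}$ acts on subgroups of the form $\Sym^\tD_{\pm J}\times\Sym^\tD_{\pm J'}\times\Sym^\tB_{\mathbb I}$ ``merely by permuting the partition labels''. Sign changes do normalize the factors $\Sym^\tD_{\pm J}$, but conjugating $\Sym^\tB_I$ by an odd number of sign changes supported in $I$ gives a twisted diagonal symmetric group that is not of the form $\Sym^\tB_{I'}$ for any $I'$: for instance, with $c=(1,-1)$ one gets $c\,\Sym^\tB_{\{1,2\}}\,c\inv=\{1,(1,-2)(-1,2)\}$, which is none of the prescribed shapes. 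This matters because \Cref{lem:3_3} only supplies an $\ov W$-conjugate of $\tilde s$ in normal form, whereas an inner automorphism of the adjoint group $\bH$ realizes only $W$-conjugacy on $\pi(\bT)$ ($\gamma$ is outer on $\bH$); so your argument does not yet put $\iota(P)$ literally in the stated shape (the paper's ``up to inner automorphism'' glosses exactly the same point). The repair is either to note that the offending sign change can usually be compensated inside $W$ by a sign change in $J\cup J'$ or by $-\id$ on a block of odd size, or, in the remaining configurations, to settle for $\iota(P)$ being an $\ov W$-conjugate of the standard form — which is all that is needed downstream, since \Cref{thm41} feeds the triple $(\iota(P),K,w\varphi_0)$ into \Cref{thm6_6} and that statement is invariant under conjugating the whole triple by $\ov W$. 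With that caveat made explicit, parts (b) and (c) are correct and match the paper's argument.
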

\begin{proof} 
	Up to inner automorphism, we can assume that $s=\pi(s')$ where $s'$ is as in \Cref{3:5} which then gives us the structure of the Weyl group of $\Cent_{\bG}(s')$, hence (a). 
	
	The maximal torus of $\bG$ corresponding to $\Cent_\bH(\bS)$ is the centralizer of a Sylow $d$-torus. This torus $\bT'$ is obtained from the maximally split torus $\pi(\bT)$ by twisting with $w\varphi_0$, where $\iota(F)\pi(\bT)=wF$ in the semidirect product $\bH E(\bH)$ and $w\varphi_0$ is some regular element of $W\varphi_0$ of order $d$, see \cite[\S 3.3]{Ca85} and \cite[3.5.7]{GM}. The quotient $\NNN_{\bC}(\bS)^F/\Cent_\HF(\bS)$ is therefore isomorphic to $\Cent_P(w\varphi_0)$ according to \cite[3.3.6]{Ca85}, which gives (b). The group $\NNN_{\bH \rtimes \spa{\gamma}}(\bS)/\Cent_\bH(\bS)$ is isomorphic to $\ov W$. With the standard discussion, see the proof of \cite[3.3.6]{Ca85}, we easily get the statement in (c). 
\end{proof}
\subsection{An automorphism of the centraliser of a semisimple element}
The adjoint group $\bH$ being the dual of the simply-connected group $\bG$, Lusztig's Jordan decomposition of characters associates with each element of $\Irr(\twepsDlq)$ the $\bH^F$-orbit of a pair $(s,\phi)$, where $s\in\HF$ is semisimple and $\phi$ is a unipotent character of $\Cent_\HF(s)$. Under some assumptions on $s$ we find an automorphism of $\HF$ fixing $s$ and related to the graph automorphism $\gamma$ of $\twepsDlq$. This result complements \cite[Cor. 3.2]{CS22}.

When dealing with the semidirect product $\bG\rtimes \UE(\bG)$, recall that we consider any element of $\bG\UE(\bG)\setminus \bG$ (or $\bH\UE(\bH)\setminus \bH$) as an element of $\Aut(\bG)$ whence the notation $\sigma(g)$ for the product $\sigma g\sigma\inv$ whenever $\sigma\in \bG\UE(\bG)\setminus \bG$, $g\in\bG$. Recall the element $h_0\in\Z(\bG)$ from \ref{ZG_order}(a). We use the notation $\II Gss@{\bG_{\text{ss}}}$ and $\II Hss@{\bH_{\text{ss}}}$ to denote the sets of semisimple elements of $\bG$ and $\bH$.

\begin{prop} \label{wunderprop} 
	Let $s_0\in \bG_{\text{ss}}$ be such that $F(s_0)\in s_0 \spannh$ and $s_0$ is $\bG$-conjugate to $s_0 h_0$. 
	Set $s=\pi(s_0)\in \bH^F_{\text{ss}}$. 
	
	Then there exist some $a\in \bH^F$ with $s_0^a=s_0 h_0$, 
	some $\gamma'\in\HF\gamma$,
	and $F$-stable connected reductive subgroups $\bC_1$ and $\bC_{2}$ of $\bH$ such that 
	\begin{asslist}
		\item $\Cent^{\circ}_\bH(s)=\bC_1.\bC_{2}$ (central product);
		\item
		 $[\gamma',s_0]\in \spannh$ and therefore $\gamma'(s)=s$;
		\item $[a \gamma', \bC_1]=1$, and
		\item $[\gamma', \bC_{2}]=1$.
	\end{asslist}
\end{prop}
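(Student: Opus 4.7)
\textit{Plan.}
If $p=2$ then $h_0=1$ and the statement holds trivially with $a=1$, $\gamma'=\gamma$, $\bC_1=\Cent^\circ_\bH(s)$, $\bC_2=\{1\}$. Assume henceforth $p$ odd. After replacing $s_0$ by a $\bG^F$-conjugate, we may take $s_0\in\bT$ and apply \Cref{lem:3_3} to write $s_0=\prod_{i\in\underline l}\h_{e_i}(t_i)\cdot z$ with $t_i\in\cF$ and $z\in\spann<h_0>$; set $I_\zeta:=I_\zeta(s_0)$ as in \Cref{3:5}. The hypothesis $s_0\sim_\bG s_0 h_0$ translates via \Cref{rem:3:2}(b) to $\{1,\varpi\}\subseteq\{\pm t_i^{\pm 1}\mid i\in\underline l\}$, so both $I_1$ and $I_\varpi$ are nonempty; the condition $F(s_0)\in s_0\spann<h_0>$ further makes each $I_\zeta$ $F$-stable as a subset of $\underline l$ (since $F$ permutes the eigenvalues of $\pi_\SO(s_0)$ by a power map). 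Pick $j\in I_\varpi$ and $k\in I_1$, chosen inside $F$-orbits. A key preparatory identity is $\h_{e_i}(-1)=h_0$ for every $i\in\underline l$: by \Cref{rem:3:2}(a), $\h_{e_i}(-1)\in\ker\pi_\SO=\spann<h_0>$, and conjugating the central element $h_0=\h_{e_1}(-1)$ by any $\bG$-lift of the transposition $(1,i)\in W$ yields $\h_{e_i}(-1)=h_0$.

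Let $\sigma_a\in \bN$ be any lift of the Weyl element $s_{e_j}s_{e_k}\in W$; by \ref{3:3} and the key identity,
\[
\sigma_a\, s_0\, \sigma_a^{-1}\;=\;s_0\cdot \h_{e_j}(t_j^{-2})\h_{e_k}(t_k^{-2})\;=\;s_0\cdot h_0\cdot 1\;=\;s_0\, h_0.
\]
Since $s_{e_j}s_{e_k}$ is $F$-stable, Lang--Steinberg applied to $\pi(\bT)$ lets us modify $\sigma_a$ by a $\bT$-element (which centralises $s_0$, so $s_0^{\sigma_a}=s_0 h_0$ is preserved) so that $a:=\pi(\sigma_a)\in \bH^F$. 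Similarly pick $\tau\in\bN$ representing $(1,k)\in W$, adjusted so that $\gamma':=\tau\gamma\tau^{-1}\in\bH^F\gamma$; then $\gamma'$ acts on $\bT$ as the reflection $s_{e_k}$, and since $t_k^2=1$ we get $\gamma'(s_0)=s_0$, proving~(ii). Using the factorisation $\Cent_\bG(s_0)=\bC_1^{(G)}.\bC_4^{(G)}.\bC_{R'}^{(G)}$ of \Cref{3:5}, define $\bC_1:=\pi(\bC_1^{(G)})$ (the type-$\tD_{|I_1|}$ factor on $I_1$) and $\bC_2:=\pi(\bC_4^{(G)}.\bC_{R'}^{(G)})$. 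Condition~(i) is immediate. For~(iv), $\gamma'$ acts as $s_{e_k}$ on roots with $k\in I_1$, which leaves the root data of $\bC_4^{(G)}$ (supported on $I_\varpi$) and of $\bC_{R'}^{(G)}$ (supported on $\underline l\setminus(I_1\cup I_\varpi)$) pointwise fixed, so $\gamma'$ centralises $\bC_2$. For~(iii), $a$ restricts on $\bC_1^{(G)}$ to $s_{e_j}s_{e_k}|_{\bC_1^{(G)}}=s_{e_k}|_{\bC_1^{(G)}}$ (since $j\notin I_1$), which coincides with $\gamma'|_{\bC_1^{(G)}}=s_{e_k}$. Both being the same outer involution of the type-$\tD_{|I_1|}$ group $\bC_1$, their composition $a\gamma'$ acts as $s_{e_k}^2=\id$, yielding $[a\gamma',\bC_1]=1$.

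\textit{Main obstacles.} Two delicate points require care. First, the restrictions $a|_{\bC_1}$ and $\gamma'|_{\bC_1}$ agree a priori only as outer automorphisms; the sign factors in the Chevalley formulas $\nn_\al(c)\xx_\beta(t)\nn_\al(c)^{-1}=\xx_{s_\al(\beta)}(\pm t)$ may leave a residual inner automorphism of $\bC_1$ induced by some element of $\bT\cap\bC_1^{(G)}$ in the composition $a\gamma'|_{\bC_1}$. This discrepancy is absorbed by pre-multiplying $\sigma_a$ by a suitable element of $\bT\cap\bC_1^{(G)}\leq\Cent_\bG(s_0)$, which is inner on $\bC_1$, preserves $s_0^a=s_0 h_0$, and can be kept compatible with $a\in\bH^F$ by a further Lang--Steinberg adjustment. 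Second, in the twisted case $F=F_p^f\gamma$, the Weyl elements $(1,k)$ and $s_{e_j}s_{e_k}$ need not be $F$-stable on the nose (the $F$-action on $W$ being conjugation by $s_{e_1}$), so choosing $j,k$ inside $F$-orbits of $I_\varpi$ and $I_1$—possibly replacing each representative by a product of reflections that symmetrises the pair $\{j,F(j)\}$ or $\{k,F(k)\}$—together with Lang--Steinberg on the torus ensures the constructions of $a\in\bH^F$ and $\gamma'\in\bH^F\gamma$ go through uniformly.
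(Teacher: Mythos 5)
There is a genuine gap at the very first reduction, and it is precisely the point the proposition is about. You claim that ``after replacing $s_0$ by a $\bG^F$-conjugate, we may take $s_0\in\bT$''. This is false: the hypothesis $F(s_0)\in s_0\spannh$ only places $s_0$ in some $F$-stable maximal torus of $\bG$, and such a torus is in general not $\bG^F$-conjugate to the maximally split torus $\bT$ (already in $\SL_2(q)$ the semisimple elements of order dividing $q+1$ cannot be conjugated into the split torus by rational elements). One can conjugate $s_0$ into the normal form of \Cref{lem:3_3} by an element of $\bG$, but this replaces $F$ by a twisted Frobenius $nF$ with $n\in\NNN_\bG(\bT)$, and every later step of your argument --- the $F$-stability of the sets $I_\zeta$, the $F$-stability of $\bC_1,\bC_2$, and the Lang--Steinberg corrections inside $\pi(\bT)$ that are supposed to produce $a\in\bH^F$ and $\gamma'\in\HF\gamma$ --- tacitly assumes that $F$ still acts in the standard way on $\bT$ and on the Chevalley generators adapted to that normal form. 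The paper's proof is organized exactly around this difficulty: after passing to $s'=\iota_1(s_0)$ it records the new Frobenius $\iota_1(F)=nF$, decomposes $n=n_1(\neins)^{i_1}n_4(\nzwei)^{i_2}n'$ along the factors of $\Cent_\bG(s')$ from \Cref{3:5}, and applies Lang's theorem in the connected group $\Cent_\bG(s')$ to normalize the Frobenius to $vF$ with $v=(\neins)^{i_1}(\nzwei)^{i_2}n'$ centralizing the relevant factors up to $\spannh$; only then do the fixed elements $a'=\neins\nzwei$ and $\gamma$ (which centralize $\bC_1'$, resp.\ $\bC_2'$, exactly, by the Chevalley commutator formula) transport back via $\iota^{-1}$ to data that is genuinely $F$-fixed. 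None of this bookkeeping appears in your proposal.

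A second, related defect: even granting $s_0\in\bT$ with the standard $F$, your mechanism for forcing $a\in\bH^F$ and $\gamma'\in\HF\gamma$ --- correcting the lifts $\sigma_a,\tau$ by torus elements obtained from Lang--Steinberg --- destroys the exact centralization demanded in (iii) and (iv), because a general element of $\pi(\bT)$ acts on $\bC_1$ and $\bC_2$ by a nontrivial inner automorphism. Your repair in the ``obstacles'' paragraph (absorb the defect by an element of $\bT\cap\bC_1^{(G)}$, then make ``a further Lang--Steinberg adjustment'') is circular: each new correction reintroduces the discrepancy it was meant to cancel, unless the corrections are confined to centralizers of $\bC_1$, resp.\ $\bC_2$, whose connectedness you would then have to prove before Lang--Steinberg applies. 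In fact, in the honest split normal-form situation no correction is needed at all: with the explicit lifts one has $[F,\nn_{e_i}(\varpi)]\in\spannh$, so conjugation by $\nn_{e_j}(\varpi)\nn_{e_k}(\varpi)$ and by $\nn_{e_k}(\varpi)$ already give elements of $\bH^F$ and of $\HF\gamma$, and $a\gamma'$ is conjugation by $\nn_{e_j}(\varpi)h_0$, which centralizes $\bC_1$ on the nose --- this is exactly the paper's computation with the indices $1$ and $l$. So the computational core of your argument coincides with the paper's, but the actual content of the proposition, namely reconciling the normal form of $s_0$ with the given rational structure, is missing.
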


\begin{proof} Note that the statement is essentially about $\bH$ which is a quotient of $\bG/\spannh =\SO_{2l}(\FF)$. The automorphism $\gamma$ is induced by conjugation by the element $n_1^\circ\in\ov\bG$ thanks to \Cref{lem_gamma}, so the considerations below could be seen as happening in $\ov\bG/\spannh =\SO_{2l+1}(\FF)$ thus making more concrete the commutation arguments used.
	 
	Let $\bT_{0}$ be an $F$-stable maximal torus of $\bG$ with $s_0\in\bT_{0}$. Since all maximal tori of $\bG$ are $\bG$-conjugate, there exists some inner automorphism 
	$\iota_0:\bG\rtimes \UE(\bG)\lra \bG\rtimes \UE(\bG) $
	with $\iota_0(\bT_{0})=\bT$.
	 
	Now \Cref{3:3} implies that $\iota_0(s_0)$ is $\NNN_{\bG\rtimes\spa{\gamma}}(\bT)$-conjugate to some element of the $\spannh$-coset\hfill \break
	$( \prod_{i=1}^l \h_{e_i}(t'_i)) \ \spannh $
	with all $t'_i$'s in $ \mathcal F$. So there is some 
	$\iota_1:\bG\rtimes \UE(\bG)\lra \bG\rtimes \UE(\bG)$, which is the conjugation by some element of $\bG\rtimes \spa{\gamma}$ such that $\iota_1(\bT_{0})=\bT$ and 
	\[ s':= \iota_1(s_0)\in \left(\prod_{i=1}^l\h_{e_i}(t'_i) \right) \quad\spannh\] 
 with all $t'_i$'s in $ \mathcal F$.
	Then $\Cent_{\bG}(s')$ can be written as 
	\[ \Cent_{\bG}(s')=\bC_1'.\bC'_{R'}.\bC'_4, \]
	where $\bC_1'$, $\bC_4'$ and $\bC'_{R'}$ are defined as in \Cref{3:5}.
	 
	Since $s_0$ and $s_0h_0$ are $\bG$-conjugate we can assume without loss of generality that $t'_1=\pm 1$ and $t'_l=\pm \varpi$ thanks to \Cref{rem:3:2} (b). 
	
	We have $\iota_1(F)\in \bG F$ in $\bG\UE(\bG)$ since $\iota_1$ is an inner automorphism there and $\UE(\bG)=\bG\UE(\bG)/\bG$ is abelian. Moreover $\iota_1(F)\in \NNN_{\bG}(\bT) F$ since $F(\bT_{0})=\bT_{0}$ implies $\iota_1(F)(\bT)=\bT$. Let $n\in \NNN_{\bG}(\bT) $ be such that $\iota_1(F)=n F$.
	
	The assumption $[s_0,F]\in \spannh$ implies $[ s',\iota_1(F)]=[ \iota_1(s_0),\iota_1(F)]\in \spannh$ by \ref{ZG_order}(b). Then $\pi_\SO(s')$ is fixed under the Frobenius endomorphism $nF=\iota_1(F)$ of $\SO_{2l}(\FF)$ and its eigenspaces are permuted by $nF$ as the corresponding eigenvalues. From the definition of $\bC'_1$, $\bC'_4$ and $\bC'_{R'}$, we get that their images under $\pi_\SO$ are $\iota_1(F)$-stable. 
	 
	Note that $h_0\in \bT_I$ for any nonempty $I\subseteq \ul$ and therefore $\bC'_1=\pi_\SO\inv( \pi_\SO(\bC'_1))$ and $\bC'_4 =\pi_\SO\inv(\pi_\SO(\bC'_4 ))$ while $\bC'_{R'}$ also contains $h_0$ unless it is trivial. So from what we have seen in $\SO_{2l}(\FF)$ we get that $\bC'_1$, $\bC'_4 $ and $\bC'_{R'}$ are all $\iota_1(F)$-stable.
	 
	What we said above about $\iota_1(F)=n F$ implies that $n\in \NNN_{\bG}(\bT) $ can be written in the form $n_1 (\neins)^{i_1} n_4  (\nzwei)^{i_2} n'$, where $n_1\in \bC_1'$, $n_4 \in \bC '_4 $, ${\neins:=\n_{e_1}(\varpi)}\in \ov\bG$, $\II ncircz@{\nzwei:=\n_{e_l}(\varpi)}\in \neins\bG$, $i_1,i_2\in \{0,1\}$ and $n'\in \Cent_{\obG}(\bC_1'.\bC '_4 )$. 
	 
	Since $\bC_1'$, $\bC_4  '$ are connected $\iota_1(F)$-stable, Lang's Theorem (\cite[21.7]{MT}) implies that there exists some $c\in \Cent_{\bG}(s')$ such that $(\iota_1(F))^c= vF$ where $v=(\neins)^{i_1} (\nzwei)^{i_2} n' $. Let $\iota\colon \bG\UE(\bG)\lra \bG\UE(\bG)$ be defined by $\iota(x)=\iota_1(x)^c$. Then $\iota(F)=vF$ and $\iota(\bG^F)=\iota(\bG^{vF})$. Note that $\iota(s_0)=(s')^c=s'$. 
	
	We have $ a':= \neinszwei\in \bG$ by checking its class mod $\bT$. Moreover using \Cref{3:3} we get $s'^{a'}=s' h_0$ while $[a', \bC'_{R'}]=1$ thanks to Chevalley's commutator formula. Note also that $[F,a']\in \spannh$. Let us now take 
    \[ a:=\pi(\iota^{-1}(a')) \ ,\ \   
 \bC_1:=\pi(\iota^{-1}(\bC'_1)),\ \und\ \ 
 \bC_2:=\pi(\iota^{-1}(\bC'_2))\ \  \text{for}\ \  
 \bC'_2:=\bC'_{R'}.\bC'_4 .\]
 
We clearly have $\Cent_{\bH}^\circ (s)=\pi (\Cent_\bG(s_0))=\pi\circ\iota\inv(\Cent_\bG(s'))=\pi\circ\iota\inv(\bC_1' .\bC'_{R'}.\bC'_4  )=\bC_1\bC_2$. 

Recall $F\in \{F_q, \gamma F_q\}$. For $F':=\iota(F)=v F$ we have $[F',a']= [F,a'] [v,a'] \in\spannh$ since 
$[n',a']\in\spannh$, see \Cref{ZG_order}(d). Therefore $a\in \bH^F$. 
 
Recall that $\gamma$ and $\neins$ induce the same automorphism on $\bG$. Hence we get $[\gamma ,\bC_2']=[\neins, \bC_2']=1$ thanks to the Chevalley commutator formula, and analogously $[a'\gamma,\bC'_1]=[\neins \nzwei\gamma,\bC'_1]=1$. Applying $\pi\circ\iota\inv$, this gives our claims (iii)--(iv) that $[\bC_1, a\gamma ']=[\bC_2, \gamma ']=1$ in $\bH\UE(\bH)$, where 
$\gamma ' \in \bH \UE(\bH)$ is the image of $\iota\inv(\gamma)\in \bG\UE(\bG)$ under $\pi$.
Note that $[\gamma ,\pi (s')]=1$ since $[\neins ,s']\in\spannh$ by \ref{3:3}, hence (ii). 

It remains to show that $\gamma'\in \HF \gamma$. By its definition, $\gamma'\in\bH \gamma$. Additionally $vF$ commutes with $\gamma$ in $\bH \UE(\bH)$, as \Cref{ZG_order} implies $[\gamma,\pi(v)]=\pi([\neins , n'])=1$. Now $\gamma\in \Cent_{\bH \gamma}(vF)=\Cent_{\bH \gamma}(\iota(F))$ implies $\gamma'\in \Cent_{\bH \gamma}(F)= \HF\gamma$. This completes our proof. \end{proof}

\subsection{Consequences on characters of $\tDlsc(q)$}\label{ssec_IrrG}

Let $(\bG,F)$ and $\UE(\GF)$ be given as in \ref{sub2E}. We assume that a regular embedding $\bG\leq \wbG$ is chosen such that $\UE(\bG)$ acts on $\wbG$. This implies that $F$ and therefore $\UE(\GF)$ acts on $\wbG^F$.
The $\textbf{A}(\infty)$ property of
\Cref{Julia2} was originally introduced as saying that any element of $\Irr(\GF)$ has a $\wbG^F$-conjugate $\chi$ such that $(\wbG^F\UE(\GF))_\chi =\wbG^F_\chi\UE(\GF)_\chi$ and $\chi$ extends to $\GF \UE(\GF)_\chi$, see \cite[Thm 2.12(v)]{S12} and our Introduction. This condition on $\chi$ defines a subset $\o\TT\subseteq \Irr(\GF)$ that we complete below into a partition $$ \Irr(\GF)=\o\TT\sqcup \EE\sqcup \DD$$ that will be crucial in our description of $\Irr(\GF)$. This will be particularly useful when studying characters of certain local subgroups where $\bG$ is replaced by groups $\bG_\uk$ for $k\leq l$ (see \ref{ssec:3D}) occuring as $[\bL,\bL]$ for $\bL$ a minimal $d$-split Levi subgroup of our $\bG$. 

Most of the properties we single out revolve around the value of characters at $h_0$ (see \Cref{ZG_order}) and the action of the diagonal outer automorphism associated with $h_0$ through \Cref{not_diag}.

\begin{defi} \label{def_TED}
	Let $\ov \TT$, $\EE$ and $\DD$ be the following subsets of $\Irr(\GF)$:
	\begin{align*} 
		\II{Toverline}@{\protect{ \oTT}} &=&& \left\{ \chi\, \left |\, { \starStab \wGF | \UE(\GF)|\chi \text{ and }}{\chi \text{ extends to }\GF \UE(\GF)_\chi}\right.\right \},\\
		\II E@{\protect{\EE}} &=&&
		\left \{ \chi \, \left |\, {
			\starStab \wGF| \UE(\GF)|\chi \text{ and }}{\chi \text{ has no extension to }\GF \UE(\GF)_\chi}\right.\right \}, \und \\ 
		\II D@{\protect{\DD}}&=&&
		\left \{ \chi \, \left |\, 
		\starStabneq \wGF| \UE(\GF)|\chi \right.\right \}, 
	\end{align*}
so that $\Irr(\GF)= \ov \TT \sqcup \EE\sqcup \DD$.

We also define $ \II E'@{\protect{\EE'}}:= \ov\TT\cap \big( \bigcup_{x\in\wG}{}^x\EE \big)$ and $ \II D'@{\protect{\DD'}}:= \ov\TT \cap \big( \bigcup_{x\in\wG}{}^x\DD \big)$.
\end{defi}

\begin{notation}\label{that}
	Let $\II{that}@{\protect{\wh t}}\in \bT$ with $F(\wh t)=h_0\wh t$ and set $\wh G:=\GF\spa{\wh t}=\bG\cap\calL\inv(\spa{h_0})$, where $\calL:\wbG\longrightarrow\wbG$ is given by $x\mapsto x^{-1}F(x)$. 
\end{notation}

In the following, we prove that any element of $\EE\cup \EE'\cup \DD\cup \DD'$ is contained in a $\gamma$-stable $\wbG^F$-orbit, and is lying under a $\gamma$-invariant character of $\GF\spa{\wh t}$. This generalizes \cite[Prop.~5.3]{S21D1} where $\eps=1$, $\chi$ is cuspidal, $h_0\in\ker(\chi)$ and $\Norm _\bG(\GF)_\chi\leq \GF\spa{\wh t}$.

\begin{theorem} \label{thm_sumup_D_1}
	Let $l\geq 4$, $\eps\in\{\pm 1\}$, $\bG:=\tDlsc(\FF)$ with $\GF \cong\tDlsc^\eps(q) $. 
	\begin{thmlist}
		\item Let $\chi\in\EE\cup \EE '$. Then $h_0\in\ker(\chi)$ while $\chi$ is invariant under and extends to $\GF\spa{\wh t, \gamma}$. 
		\item Let $\chi\in\DD\cup \DD '$. Then $h_0\in\ker(\chi)$ and $\chi^\gamma =\chi^{\wh t}\neq\chi$ or $\chi^\gamma =\chi$ according to $\chi\in \DD$ or $\DD '$. 
	\end{thmlist}

\end{theorem}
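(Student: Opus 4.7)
The plan is to reduce the theorem to \Cref{wunderprop} via the equivariant Jordan decomposition of \cite[Thm~B]{S21D2}, generalising the strategy of \cite[Prop.~5.3]{S21D1} by replacing its cuspidality hypothesis and explicit centraliser description with a direct appeal to the structural statement of \Cref{wunderprop}.

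First I would fix the adjoint quotient $\pi\colon\bG\to\bH$ as in \Cref{not3.1} and recall from \Cref{not_diag} that the group of diagonal automorphisms $\wG/\GF\Z(\wG)\cong\wZ(\GF)$ acts on $\Irr(\GF)$ compatibly, through Jordan decomposition, with translating the semisimple part $s\in\HF$ of a Jordan pair $(s,\phi)$ by elements of the subgroup of $\HF$ dual to $\Z(\bG)^F/[\Z(\bG),F]$. In particular, the non-trivial order-$2$ diagonal automorphism attached to the image of $h_0$ translates $s$ to $s\xi$ for a specific $\xi\in\bH$, and by duality the condition $s\sim_{\HF}s\xi$ is equivalent, after choosing a semisimple lift $s_0\in\bG$ with $\pi(s_0)=s$, to the two conditions $F(s_0)\in s_0\spa{h_0}$ and $s_0\sim_\bG s_0h_0$. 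The goal then becomes to prove that every $\chi\in\EE\cup\DD\cup\EE'\cup\DD'$ corresponds to a Jordan pair whose semisimple part satisfies both conditions.

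This translation is the main obstacle. Using equivariance of the Jordan decomposition under $\wG$ and $\UE(\GF)$, the factorisation condition $(\wG\UE(\GF))_\chi=\wG_\chi\UE(\GF)_\chi$ and the extendibility of $\chi$ to $\GF\UE(\GF)_\chi$ become properties of the joint stabiliser of $(s,\phi)$ inside the semidirect product $\wZ(\GF)\rtimes\UE(\GF)$: membership of $\chi$ in $\DD$ forces a non-trivial element of this joint stabiliser to lie in neither of the two factors; membership in $\EE$ forces a genuine cohomological obstruction while the factorisation still holds; and $\EE'\cup\DD'$ is handled by passing to a $\wG$-conjugate $\chi^x\in\EE\cup\DD$. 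By \ref{ZG_order}(c) and the structure of $\wZ(\GF)$ for type $\tD_l$ with $l\geq 4$, the only diagonal automorphism that can produce any of these phenomena is the order-$2$ one associated with $h_0$, which forces exactly the two conditions above and hence the hypotheses of \Cref{wunderprop}.

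Applying \Cref{wunderprop} yields $a\in\HF$, $\gamma'\in\HF\gamma$ and a central-product decomposition $\Cent^\circ_{\bH}(s)=\bC_1.\bC_2$ with $[a\gamma',\bC_1]=1=[\gamma',\bC_2]$ and $\gamma'(s)=s$. Writing $\phi=\phi_1\otimes\phi_2$ with $\phi_i\in\UCh(\bC_i^F)$, the automorphism $\gamma'$ stabilises $(s,\phi)$ modulo $\HF$-conjugacy: it acts trivially on $\phi_2$, while on $\bC_1$ it coincides with conjugation by $a\inv$. Transporting back through Jordan decomposition, this gives $\chi^\gamma=\chi$ in cases $\EE\cup\EE'\cup\DD'$ and $\chi^\gamma=\chi^{\wh t}\neq\chi$ in case $\DD$. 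The identity $s_0^a=s_0h_0$ then forces $h_0\in\ker(\chi)$ through the pairing between $\Z(\GF)$ and the component group controlling the Lusztig series of $s$. For part (a), $h_0\in\ker(\chi)$ combined with $F(\wh t)=h_0\wh t$ yields $\chi^{\wh t}=\chi$ and an extension to $\GF\spa{\wh t}$; the $\gamma$-invariance together with the commutation of $\gamma$ and $\wh t$ modulo $\GF$ in $\Aut(\GF)$ then extends $\chi$ further to $\GF\spa{\wh t,\gamma}$.
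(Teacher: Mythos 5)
Your overall route is the same as the paper's (reduce to the hypotheses of \Cref{wunderprop} via the equivariant Jordan decomposition, then transport the conclusion back), but two of the steps you treat as routine are precisely where the paper needs real input, and one of your derivations runs in the wrong direction. First, you claim that the stabiliser analysis in $\wZ(\GF)\rtimes\UE(\GF)$ "forces exactly the two conditions" $F(s_0)\in s_0\spannh$ and $s_0\sim_\bG s_0h_0$, and that afterwards $s_0^a=s_0h_0$ "forces $h_0\in\ker(\chi)$ through the pairing with the component group". This conflates two different pieces of data: the central character of $\chi$ corresponds via \Cref{lem_dualZGF} to the class of $[s_0,F]$ in $\Z(\bG)/[\Z(\bG),F]$, whereas the component group $A_\bH(s)$ and $\omega_s$ control the behaviour of $\chi$ under diagonal automorphisms; $h_0\in B(s)$ does not give $h_0\in\ker(\chi)$. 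In the paper the logic is the reverse: $h_0\in\ker(\chi)$ and the precise diagonal stabilisers are established first, in \Cref{lem3:7} -- elementarily for $\DD$, but for $\EE$ by quoting \cite[Lem.~5.11, Prop.~5.21]{S21D2} -- and only then do $[s_0,F]\in\spannh$ (via \Cref{lem_dualZGF} and $\gamma$-invariance of the central character) and $h_0\in B(s)^F_{\wt\chi}$ (via Clifford theory, $B(s)^F_{\wt\chi}$ being the orthogonal of $\wZ(\GF)_\chi$) follow. Your sketch gives no argument for the $\EE$ case, where nothing elementary about "the structure of $\wZ(\GF)$" yields $h_0\in\ker(\chi)$ or $\wZ(\GF)_\chi=\spa{\wh h_0}$; that is exactly the nontrivial content imported from \cite{S21D2}. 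Similarly, pinning down $\chi^\gamma=\chi$ versus $\chi^\gamma=\chi^{\wh t}\neq\chi$ in part (b) needs the stabiliser-comparison argument (a $\gamma$-fixed $\chi\in\DD$ would have the same stabiliser as its $\oTT$-conjugate), not just $\gamma$-stability of the $\wGF$-orbit, which is all the Jordan equivariance gives.

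Second, your proof of the extension statement in (a) does not work as written. The assertion that $h_0\in\ker(\chi)$ yields $\chi^{\wh t}=\chi$ is false in general: $\wh t$-invariance is equivalent to $\wh h_0\in\wZ(\GF)_\chi$, which for $\chi\in\EE$ is again \Cref{lem3:7}(b), not a consequence of the kernel condition. More seriously, "commutation of $\gamma$ and $\wh t$ modulo $\GF$" cannot give the extension to $\GF\spa{\wh t,\gamma}$: the quotient is not cyclic, and since $\chi\in\EE$ by definition fails to extend to $\GF\UE(\GF)_\chi$, extendibility here is genuinely delicate -- $\gamma$ could a priori swap the two extensions of $\chi$ to $\GF\spa{\wh t}$, in which case no extension to $\GF\spa{\wh t,\gamma}$ exists. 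What is needed, and what the paper supplies, is the existence of a $\gamma$-invariant extension of $\chi$ to $\GF\spa{\wh t}$, obtained from \cite[Cor.~6.6]{S21D2} using the output $[\gamma',s_0]\in\spannh$ of \Cref{wunderprop}. Without these two ingredients (the \cite{S21D2} input behind \Cref{lem3:7} for the $\EE$ case, and \cite[Cor.~6.6]{S21D2} for the $\gamma$-stable extension) your argument has genuine gaps rather than omitted routine details.
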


Here are some first properties coming mostly from the structure of the outer automorphism group of $\GF$.
\begin{lem}\label{lem3:7} Recall $\wZ(G)=\Z(\bG)/[\Z(\bG),F]$ and its action on $\Irr(\GF)$ by diagonal outer automorphisms (see \Cref{not_diag}).
	\begin{thmlist} 
		\item If $\chi\in\DD$, then $|\wZ(G):\wZ(G)_\chi|=4$ and $h_0\in\ker(\chi)$.
		\item If $\chi\in \EE$, then $h_0\in \ker(\chi)$, $\chi^\gamma=\chi$ and $\wZ(G)_\chi = \spa{\wh h_0}$ for $\wh h_0=h_0[\Z(\bG),F]\in \wZ(G)$.
	\end{thmlist}
\end{lem}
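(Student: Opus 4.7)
The plan is to deduce (b) from the extension criterion \Cref{ExtCrit}(a) followed by a Jordan-decomposition analysis leveraging \Cref{wunderprop}, and (a) from a squaring identity combined with the commutator formula $[\Z(\bG),\gamma]=\spa{h_0}$ of \Cref{ZG_order}(b). Throughout we use that $\wbG^F$ acts on $\Irr(\GF)$ through the quotient $\wZ(G)$, and that $\UE(G)$ acts on both $\wZ(G)$ and $\Irr(\GF)$ compatibly.

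For part (b), fix $\chi\in\EE$. To get $\chi^\gamma=\chi$, suppose for contradiction $\gamma\notin\UE(G)_\chi$. Distinguishing the two cases for $\UE(G)$ (cyclic of order $2f$ when $\eps=-1$; $\ZZ/f\times\spa{\gamma}$ when $\eps=1$), the stabiliser $\UE(G)_\chi$ is cyclic: for $\eps=-1$ because $\UE(G)$ itself is, and for $\eps=1$ because the assumption places $\UE(G)_\chi$ inside the $\spa{F_p}$-factor. Hence $\GF\UE(G)_\chi/\GF$ is cyclic, so \Cref{ExtCrit}(a) provides an extension of $\chi$ to $\GF\UE(G)_\chi$, contradicting $\chi\in\EE$. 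For the remaining claims we pass to Lusztig's Jordan decomposition $\chi\leftrightarrow (s,\phi)$ with $s=\pi(s_0)$; here the action of $\wZ(G)$ corresponds to shifts $s_0\mapsto s_0z$ and so $\wh z\in\wZ(G)_\chi$ iff $s_0z\sim_\bG s_0$. The non-extendability of $\chi$ along the now-present $\gamma\in\UE(G)_\chi$ produces a nontrivial projective cocycle and, via the general extension criteria, an auxiliary automorphism $\gamma'=a\gamma\in\HF\gamma$ with $a\notin\Z(\HF)$ also stabilising $(s,\phi)$. This is precisely the setup of \Cref{wunderprop}, which then forces $s_0\sim_\bG s_0h_0$. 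Consequently $\wh h_0\in\wZ(G)_\chi$, and via the identification $\omega_\chi(h_0)=\la_{s_0}(h_0)$ determined by the duality pairing, also $h_0\in\ker(\chi)$. The equality $\wZ(G)_\chi=\spa{\wh h_0}$ follows by noting that any strictly larger stabiliser would, together with \Cref{Julia2}, place $\chi$ in $\ov\TT$, contradicting $\chi\in\EE$.

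For part (a), fix $\chi\in\DD$, so there exist $\wh z\in\wZ(G)\setminus\wZ(G)_\chi$ and $\sigma\in\UE(G)\setminus\UE(G)_\chi$ with $\chi^{\wh z\sigma}=\chi$. A case analysis on $\sigma$ together with adjustments by elements of $\UE(G)_\chi$ reduces to $\sigma=\gamma$. Squaring in the semidirect product $\wZ(G)\rtimes\UE(G)$ then gives $(\wh z\gamma)^2=\wh z\cdot\gamma(\wh z)\in\wZ(G)_\chi$, and by $[\Z(\bG),\gamma]=\spa{h_0}$ from \Cref{ZG_order}(b) this element lies in $\spa{\wh h_0}$. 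Distinguishing the isomorphism types of $\Z(\bG)$ allowed by \Cref{ZG_order}(c), we will show $\wh z\cdot\gamma(\wh z)$ must in fact be trivial, forcing $\wZ(G)_\chi=1$ and consequently $|\wZ(G)|=4$ (so $F$ acts trivially on $\Z(\bG)$). The kernel claim $h_0\in\ker(\chi)$ then follows from the same Jordan-decomposition argument as in (b): the relation $s_0\sim_\bG s_0h_0$ arises from combining the non-factorization data with the $\gamma$-action on $\Z(\bG)$ and invoking \Cref{wunderprop}.

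The main obstacle we foresee is the two-way translation between the character-theoretic picture (stabiliser factorization and extendibility of $\chi$) and the Jordan-decomposition picture (conjugacy of semisimple elements under shifts and $\gamma$-twists). In particular, extracting $h_0\in\ker(\chi)$ cleanly requires identifying the central character $\omega_\chi(h_0)$ with the duality pairing $\la_{s_0}(h_0)$, and the detailed case analysis in part (a) must verify that no proper nontrivial subgroup of $\wZ(G)$ can occur as $\wZ(G)_\chi$.
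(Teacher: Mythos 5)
Your argument for $\chi^\gamma=\chi$ in (b) (noncyclic stabiliser via \Cref{ExtCrit}(a)) is fine and matches the paper, but the core of both parts has genuine gaps. The most serious is your use of \Cref{wunderprop}: that proposition takes $F(s_0)\in s_0\spa{h_0}$ \emph{and} the conjugacy $s_0\sim_\bG s_0h_0$ as hypotheses and produces $\gamma'\in\HF\gamma$ fixing $s$; it cannot be run backwards to deduce $s_0\sim_\bG s_0h_0$ from the existence of some $\gamma$-type automorphism stabilising $(s,\phi)$. Your dictionary is also off in two places: the stabiliser $\wZ(G)_\chi$ under diagonal automorphisms is not described by ``$s_0z\sim_\bG s_0$'' (it is governed by $A_\bH(s)$, $\omega_s$ and the unipotent part $\phi$ through a Clifford-theoretic orthogonality, as in the proof of \Cref{thm_sumup_D_1}), and $h_0\in\ker(\chi)$ is equivalent, via \Cref{lem_dualZGF}, to a condition on $[s_0,F]$ modulo $[\Z(\bG),F]$ — not to $s_0\sim_\bG s_0h_0$. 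Finally, ruling out a ``strictly larger stabiliser'' only excludes $\wZ(G)_\chi=\wZ(G)$; it neither shows $\wh h_0\in\wZ(G)_\chi$ nor excludes $\wZ(G)_\chi=1$. In the paper the statements $h_0\in\ker(\chi)$ and $\wZ(G)_\chi=\spa{\wh h_0}$ for $\chi\in\EE$ are nontrivial imports from the companion paper (\cite[Lem.~5.11, Prop.~5.21]{S21D2}), and the paper's own use of \Cref{wunderprop} (in \Cref{thm_sumup_D_1}) takes \Cref{lem3:7} as input — so your route is circular relative to the intended architecture and is not repaired by the sketch given.

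Part (a) also does not close. Your reduction to $\sigma=\gamma$ is unjustified, and even granting it, triviality of $\wh z\cdot{}^\gamma\wh z$ says nothing about $\wZ(G)_\chi$; moreover $\wZ(G)_\chi=1$ does not ``consequently'' give $|\wZ(G)|=4$ — the implication runs the other way, and $|\wZ(G)|=4$ must be forced first (in the paper: non-factorisation makes $\wZ(G)\rtimes\UE(G)$ nonabelian). Crucially, you never invoke \Cref{Julia2}; without $\mathbf{A}(\infty)$ providing a $\wG$-conjugate $\chi'$ of $\chi$ with split stabiliser, a purely local computation in $\wZ(G)\rtimes\UE(G)$ cannot exclude $\wZ(G)_\chi$ being an order-two subgroup different from $\spa{\wh h_0}$ when $\Z(\bG)\cong(\ZZ/2)^2$. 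The paper's argument uses $\chi'$, the relation $((\wZ E)_\chi)^z=(\wZ E)_{\chi'}$ and $[\Z(\bG),\UE(G)]\le\spa{h_0}$ (\Cref{ZG_order}) to show $\wZ(G)_{\chi'}$ contains none of $\wh h_0$, $z$, $z\wh h_0$, hence is trivial; and it gets $h_0\in\ker(\chi)$ elementarily by choosing $e'\in\UE(G)_{\chi'}$ not commuting with $z$, so that $e'$-invariance of the central character of $\chi'$ kills $[\Z(G),e']=\spa{h_0}$. You should replace your Jordan-decomposition detour by this stabiliser argument (for (a)) and by the citations to \cite{S21D2} (for (b)), or supply genuinely new proofs of those two facts.
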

 
\begin{proof} First we prove part (a), so assume $\chi\in\DD$. We abbreviate $\wZ=\wZ(G)$ and $E=\UE(G)$. By \Cref{Julia2} there is $\chi'=\chi^z$ with $z\in\wZ$ such that $(\wZ E)_{ \chi '} = \wZ_{ \chi '} E_{ \chi '}$. Since $\chi\in\DD$, one has $(\wZ E)_\chi \neq \wZ_\chi E_\chi$ while \begin{align}
\label{eq31}	\big( (\wZ E)_\chi \big)^z=(\wZ E)_{ \chi '}.
	\end{align} This forces $\wZ E$ to be nonabelian and therefore $|\wZ|=4$. Then $F$ acts trivially on $\Z(\bG)$ and $\wZ=\Z(\bG)=\Z(\bG)^F$. 
	Also one can't have $\wZ_{ \chi '}=\spa{h_0}$ since the latter is central in $\wZ E$. But by (\ref{eq31}) above neither $z$ nor $zh_0$ belongs to $\wZ_{ \chi '}$, forcing $\wZ_{ \chi '}=1$, hence our statement that $|\wZ:\wZ_\chi|=4$.
	 Since $(\wZ E)_{ \chi '}=E_{ \chi '}$ can't centralize $z$, let us take $e'{}\in E_{ \chi '}$ with $ e'{}\neq{}^ze'{}\in (\wZ E)_{ \chi }$. 
 Concerning $\ker(\chi)$ note that $[\Z(G),e']=\spa{h_0}$ by \ref{ZG_order} and therefore $[\wZ, e'{}]=\spa{h_0}$ in $\wZ$ since $[\wZ, e'{}]\neq 1$. Then $^{e'{}}\chi '=\chi '$ implies $\spa{h_0}\leq \ker(\chi ')$ and also $\spa{h_0}\leq \Z(G)\cap\ker(\chi ')=\Z(G)\cap\ker(\chi^z )=\Z(G)\cap\ker(\chi )$ since diagonal automorphisms act trivially on $\Z(G)$.

	Next, we prove part (b). Let $\chi\in \EE$. Note that by \Cref{ExtCrit}(a) and the definition of $\EE$ this forces $\UE(\GF)_\chi$ to be noncyclic. Then $\chi^\gamma =\chi$, $F=F_p^f$ for some even $f$ and $\chi\in \Irr(\GF)^D$ for a subgroup $D\leq \UE(\GF)$ satisfying \cite[Hyp. 5.5]{S21D2}. Now our claims that $h_0\in\ker(\chi)$ and $\wZ_\chi=\spa{\wh h_0}$ are known from \cite[Lem. 5.11 and Prop. 5.21]{S21D2}.
\end{proof}
 
Let us recall briefly Lusztig's parametrization of $\Irr(\GF)$ (for $(\bG,F)$ any connected reductive group defined over $\FF_q$). Let $(\bG^*,F^*)$ be dual to $(\bG,F)$ in the sense of \cite[Def. 11.1.10]{DiMi2} or \cite[1.5.17]{GM}. Then the partition of $\Irr(\GF)$ into Lusztig's \textit{rational series} is $$\Irr(\GF)=\bigsqcup _{[s]}\cE(\GF,[s])$$ where $[s]$ ranges over the $\bG^*{}^{F^*}$-conjugacy classes of semisimple elements $s\in\bG^*{}^{F^*}$. We set $\UCh(\GF)=\cE(\GF,[1])$. Moreover for any $s\in\bG^*_{\text{ss}}{}^{F^*}$, one has a bijection $$\cE(\GF,[s])\xrightarrow{\sim}\II{Uch}@{\UCh}(\Cent_{\bG^*}(s)^{F^*}):=\Irr(\Cent_{\bG^*}(s)^{F^*}\mid \cE(\Cent_{\bG^*}^\circ(s)^{F^*},[1])) $$ thus allowing us to associate to each $\chi\in\Irr(\GF)$ a ${\bG^*}^{F^*}$-class of pairs $(s,\phi)$ with $s\in{\bG^*}_{\text{ss}}^{F^*}$ and $\phi\in\UCh{(\Cent_{\bG^*}(s)^{F^*}})$. This Jordan decomposition of characters can be chosen to be $\Out(\GF)$-equivariant when $\bG$ is simple simply connected, see \cite[Thm B]{S21D2}. 

All elements of $\cE(\GF,[s])$ have the same restriction to $\Z(\GF)$, see \cite[Prop. 9.11(b)]{Cedric}. We also recall in the next lemma some standard related facts. The proof is basic theory of dual groups (see more details in proofs of \cite[Lem. 4.4(ii)]{NavarroTiep_Annals_pprime} and \cite[Lem. 9.14]{Cedric}). We state everything in the case where $\bG$ is as defined in \ref{sub2E} and $(\bG^*,F^*)$ can then be taken to be $(\bH,F)$, thus resulting in more identifications. 
\begin{lem}\label{lem_dualZGF}
	The duality between $(\bH,F)$ and $(\bG,F)$ induces an $\UE(\GF)$-equivariant isomorphism $$\Lin(\Z(\GF))\cong\Z(\bG)/[\Z(\bG),F].$$ If $s\in \bH^F_{\text{ss}}$, $s_0\in\pi^{-1}(s)$ and $\chi\in \cE(\GF,[s])$, then the element of $\Irr(\restr \chi|{\Z(\GF)})$ corresponds to $[s_0,F][\Z(\bG),F]$. 
\end{lem}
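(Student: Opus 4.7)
The plan is to handle the two assertions separately. For the $\UE(\GF)$-equivariant isomorphism $\Lin(\Z(\GF))\cong \Z(\bG)/[\Z(\bG),F]$, note first that since $\bG$ is semisimple simply connected, $\Z(\bG)$ is a finite abelian group and $\Z(\GF)=\Z(\bG)^F$. The Lang-Steinberg exact sequence
$$1\to \Z(\bG)^F\to \Z(\bG)\xrightarrow{\calL}\Z(\bG)\to \Z(\bG)/[\Z(\bG),F]\to 1$$
implies $|\Z(\GF)|=|\Z(\bG)/[\Z(\bG),F]|$. The canonical pairing producing the isomorphism is then a standard consequence of the duality between $(\bG,F)$ and $(\bH,F)$: identifying the character group of $\Z(\bG)$ with the quotient $X(\bT)/\ZZ\Phi$ for any $F$-stable maximal torus $\bT\leq \bG$, and dualising to the cocharacter side via the dual torus in $\bH$, one obtains an $F$-equivariant pairing between the two finite abelian groups (see \cite[Lem.~9.14]{Cedric} for a detailed treatment). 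The $\UE(\GF)$-equivariance is inherited from functoriality: $\UE(\bG)$ acts compatibly on $\bG$ and $\bH$ via the quotient map $\pi$ and commutes with $F$, so all natural constructions restrict equivariantly to $F$-fixed points and the pairing carries this action.

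For the second assertion, since all elements of $\cE(\GF,[s])$ share the same restriction to $\Z(\GF)$ by \cite[Prop.~9.11(b)]{Cedric}, it suffices to verify the claim for one well-chosen $\chi$. Choose an $F$-stable maximal torus $\bT\leq \bG$ whose dual $\bT^*\leq \bH$ contains $s$ (possibly replacing $s$ by an $\bH^F$-conjugate, which does not affect the class $[s]$), and let $\theta\in\Irr(\bT^F)$ correspond to $s\in(\bT^*)^F$ under the standard duality of tori. Every irreducible constituent $\chi$ of the Deligne-Lusztig character $R_{\bT}^{\bG}(\theta)$ lies in $\cE(\GF,[s])$, and since $\Z(\GF)\leq \bT^F$ acts on $R_{\bT}^{\bG}(\theta)$ by the scalar $\restr\theta|{\Z(\GF)}$, this restriction is the central character of $\chi$. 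Picking a lift $s_0\in\pi^{-1}(s)\subseteq \bG$, one has $\calL(s_0)=s_0^{-1}F(s_0)\in\Z(\bG)$, and an inspection of the explicit form of the duality on tori shows that $\calL(s_0)$ represents precisely the image of $\restr\theta|{\Z(\GF)}$ under the pairing established in the first assertion.

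The main obstacle is this last identification, coupled with two well-definedness checks. Independence of the lift $s_0\in\pi^{-1}(s)$ is immediate, as any other lift differs by some $z\in\Z(\bG)$, changing $\calL(s_0)$ by $\calL(z)\in[\Z(\bG),F]$. Independence of the chosen representative of $[s]$ and of the torus $\bT$ follows from the naturality of duality under $\HF$- and $\GF$-conjugation respectively. Both checks are standard, and the lemma ultimately emerges as a routine application of the duality machinery for finite reductive groups.
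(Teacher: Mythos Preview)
Your proposal is correct and follows essentially the same approach the paper points to. The paper does not give its own proof but simply defers to \cite[Lem.~4.4(ii)]{NavarroTiep_Annals_pprime} and \cite[Lem.~9.14]{Cedric}; your sketch --- the $X(\bT)/\ZZ\Phi$ description of $\Lin(\Z(\bG))$, functoriality for the equivariance, and the computation of the central character via the restriction of a Deligne--Lusztig character $R_\bT^\bG(\theta)$ --- is exactly the standard argument underlying those references.
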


We can now prove \Cref{thm_sumup_D_1}.
 
\begin{proof}[Proof of \Cref{thm_sumup_D_1}] We let $\chi\in\EE\cup \EE'\cup \DD\cup \DD'$. In view of the claim made and $[\wt Z(\bG),\gamma]\leq \spannh$ (see \Cref{ZG_order}) we can assume $\chi\in \EE\cup \DD\cup\DD '$. We have $h_0\in\ker(\chi)$ thanks to \Cref{lem3:7}.
	 
	Let $s\in \bH^F_{\text{ss}}$ and $s_0\in\pi^{-1}(s)$ with $\chi\in\cE(\GF,[s])$. Set $\Irr (\restr\chi|{\Z(\bG)^F})=:\{\xi\}$. Then $\xi(h_0)=1$, hence $\xi$ is $\gamma$-fixed by \Cref{ZG_order}. Now \Cref{lem_dualZGF} implies that $[s_0,F]$ is $\gamma$-fixed, hence an element of $\spannh$ by \Cref{ZG_order} again. 
	 
Let us show that $s_0^g=s_0h_0$ for some $g\in \bG$. Using the injective map $\II{omegas}@{\omega_s}\colon \II{AHs}@{A_\bH(s)}:=\Cent_\bH(s)/\Cent^\circ_\bH(s)\to \Z(\bG)$ from \cite[(1.5)]{S21D2}, this means by definition that we must prove that $h_0$ is in the image $\III{B(s)}$ of $\omega_s$. To see this note that by \Cref{lem3:7} again, $\wt Z(\GF)_\chi$ is trivial or generated by the class of $h_0$.
 Taking $\wt\chi\in \Irr(\wbG^F\mid \chi)$ and seeing $B(s)^F$ as acting by linear characters on $\Irr(\wbG^F)$ (see \cite[Not. 2.14]{S21D2}), one has by Clifford theory that $B(s)_{\wt\chi}^F$ is the orthogonal of $\wt Z(\GF)_\chi$  (see \cite[Lem. 2.13(c)]{S21D2}). Therefore $h_0\in B(s)_{\wt\chi}^F$ which gives our claim. 

All assumptions of \Cref{wunderprop} are now satisfied. Let $a\in \HF$, $\gamma'\in \HF\gamma$, $\bC_1$ and $\bC_2$ be such that $\Cent_{ \bH}^\circ (s)=\bC_1\bC_2$ (central product) as there. 
 
 Let $(\wbG^*,F^*)$ be dual to $(\wbG,F)$ and let $\wt s\in \wt {\bG}^*{}^{F^*}$ such that $\wt\chi\in\cE(\wbG^F,[\wt s])$ with $\wt s\mapsto s$ by the map dual to the inclusion $\bG\to\wbG$. Let $\phi\in \UCh(\Cent_{\wt \bG^*}(\wt s)^{F^*})$, also be seen as an element of $\UCh( \Cent^\circ _\bH (s)^F)$ since $\bH$ and $\wbG^*$ are isogeneous, associated to $\wt \chi$ by the Jordan decomposition $\wt J$ of \cite[Prop. 2.15]{S21D2}. 
 From the definition of $\omega_s$ \cite[Equ. (1.5)]{S21D2}, $\phi $ is $c$-invariant under any $c\in \Cent_\HF(s)$ with $s_0^c=s_0 h_0$ since $A_\bH(s)^F_\phi =\omega_s\inv(B(s)^F_{\wt\chi})$ and we have seen that $h_0\in B(s)_{\wt\chi}^F$. Then $\phi^a=\phi$.
 
 Let us show that $\phi$ is also $\gamma '$-invariant. Restrictions induce a bijection $$\UCh(\Cent_{ \bH}^\circ (s)^F)\xrightarrow{\sim} \UCh(\bC_1^F)\times \UCh(\bC_2^F), \text{\ \ by\ \ }\phi\mapsto (\phi_1,\phi_2) $$ 
 see \cite[1.10]{S21D2}. Let $(\phi_1, \phi_2)$ be the character corresponding to $\phi$. Then $\phi^a=\phi$ implies $\phi_i^a=\phi_i$ for $i=1,2$ and therefore ${a\gamma '}$ fixes $\phi_1$ and $\phi_2$ thanks to \Cref{wunderprop}(iii) and (iv). Then it also fixes $\phi$ by the above bijection and we get our claim that 
 \[(s,\phi)^{\gamma '}=(s,\phi).\]
	
By the $E(\GF)$-equivariance of $\wt J$ and its compatibility with $\wt\bG^F$-orbits on $\Irr(\GF)$ (see \cite[Prop. 2.16]{S21D2}) we finally get that the $ \wbG^F$-orbit containing $\chi$ is $\gamma$-stable, as $\gamma'\in \HF\gamma$. By the definition of $\oTT$, this also implies $\chi=\chi^\gamma$ whenever $\chi\in \oTT$, which is the case when $\chi\in\DD '$. If on the contrary $\chi\in \DD$, then $\chi ':=\chi^g\in \oTT$ for some $g\in \wbG^F$ and we can't have $\chi =\chi ^\gamma$ since this would imply $g\gamma g\inv\gamma\inv\in \wbG^F_\chi =G\Z( \wbG^F)$ and therefore that $[g, \wbG^F\UE(\GF)]\leq \GF\Z( \wbG^F)$ thanks to the structure of $\wZ(\GF)\rtimes \UE(\GF)$. But then $\chi$ would have the same stabilizer as $\chi '\in \oTT$. So $\chi^\gamma\neq \chi$ and therefore $\chi^\gamma =\chi^{[g,\gamma]}=\chi^{\wh t}$ by \Cref{ZG_order}. This gives the two cases of our statement (b).
	
	
Assume now $\chi\in \EE$ and therefore $ \Norm_\bG(G,\chi)=\wh G$ by \Cref{lem3:7}(b). Recall that $[\gamma',s_0]\in \spannh$ according to \Cref{wunderprop}. Then we can apply \cite[Cor. 6.6]{S21D2} and obtain that $\chi$ has some extension $\wh \chi$ to $\wh G$ that is $\gamma$-invariant. This gives our claim (a).
\end{proof}

\begin{rem}\label{prop:DD_EE} The above proof gives the following slightly stronger statement for
$\chi\in\Irr(\GF\mid 1_{\spannh})$: \textit{If $ \wbG^F_\chi\neq \wbG^F$ and $ \wbG^F_\chi$ is $\gamma$-stable, then every $\wt \chi\in \Irr(\spa{\GF,\wh t}\mid \chi)$ is $\gamma$-invariant. 
}\end{rem}

\subsection{Characters of $\mathrm{D}_{k,\mathrm{sc}}^\eps(q)$ with $1\leq k\leq l$}\label{ssec:3D}
In the following we establish a slight generalization of \Cref{thm_sumup_D_1} including groups of rank $\leq 3$ that appear naturally when studying $d$-split Levi subgroups of $\bG$, see \Cref{thm_sumup_D}. The groups added to the picture are essentially semisimple simply connected of type $\tA_1\times \tA_1$ or $\tA_3$ but we need some results  about them that are not formally contained in the \textbf{A}$(\infty)$ condition known already for groups of type $\tA$, see \cite[Thm 4.1]{CS17A}. For those groups the proofs are an adaptation of what has been done in the preceding section. 

For $I\subseteq \ul$ recall $\hh_I\colon \FFtimes\to \bT$, $h_0=\hh_{\{ 1\}}(-1)$ defined in \ref{sub2E}, and $\o\Phi\supseteq \o R_I\supseteq R_I=\Phi\cap\o R_I$ from \Cref{not3.1}.

\begin{notation} \label{not:3_16}
	For $k\in \ul$, let $\II{Tk}@{\bT_\uk}:=\bT\cap \spa{\bX_\al \mid \alpha \in \overline R_{\uk}} $ and \[\II{Gk}@{\bG_{\uk}}:=\bT_\uk\spa{\bX_\al \mid \alpha \in R_\uk }.\] 
	
	Let $F\in \{F_q,\gamma F_q\}$, $\III{G_\uk}:=\bG_\uk^F\ \unlhd\ \II{Gbr}@{{\breve G}_\uk}:=\calL^{-1}_F(\spa{h_0,\h_{\underline k}(\varpi)})\cap \bG_\uk$, and $\UE({\wb G}_\uk):=\spa{\restr\gamma|{{\wb G}_\uk}, \restr{F_p}|{{\wb G}_\uk}}\leq \Aut({\wb G}_\uk)$. 
	
		We define the sets of irreducible characters $\ov \TT$, $\EE$, $\EE'$, $\DD$, and $\DD'$ as in \Cref{def_TED}~: 
			\begin{align*}
		\II{Tkunderline}@{{ { \oTT_\uk}}} &= \left \{ \chi\in\Irr(G_\uk)\, \left |\, \text{{
				$\starStabkla {\wb G}_\uk| \UE({\wb G}_\uk)|\chi$ and }{$\chi$ extends to $G_\uk \UE({\wb G}_\uk)_\chi$}}\right.\right \},\\
		\II E@{ {\EE_\uk}} &= \left \{ \chi\in\Irr(G_\uk)\, \left |\, 
		\text{ 
			{$\starStabkla {\wb G} _\uk| \UE({\wb G}_\uk)|\chi$ \text{ and }}
			{ $\chi \not\in { \oTT_\uk}$ }}
		\right.\right \}, \ \ \II E'@{ {\EE'_\uk}} =\oTT_\uk\cap( \bigcup_{x \in{\wb G}_\uk}{}^x\EE_\uk) \\ 
		\II Dk@{ {\DD_\uk}}&=
		\left \{ \chi\in\Irr(G_\uk)\, \left |\, \starStabklaneq {{\wb G}_\uk}| {\UE({{\wb G}}_\uk)}|\chi \right.\right \}, \und \II D'@{ {\DD'_\uk}} =\oTT_\uk\cap(\bigcup_{x\in{\wb G}_\uk}{}^x\DD_\uk).
	\end{align*}
\end{notation}
 
The following statement now covers all ranks $\geq 1$. We keep $l\geq 4$ and $k\in\ul$.

\begin{theorem}\label{thm_sumup_D}
Let $\wh t_\uk\in {\bT}_\uk$ be such that $F(\wh t_\uk) =h_0\wh t_\uk$.
	\begin{thmlist} 
		\item In every ${\wb G}_\uk$-orbit in $\Irr(G_\uk)$ there exists some $\chi$ with $( {\wb G}_\uk \UE({\wb G}_\uk))_\chi =( {\wb G}_\uk)_\chi \UE({\wb G}_\uk)_\chi$ and $\chi$ extends to $G_\uk \UE({\wb G}_\uk)_\chi$.
			\item Let $\chi\in\EE_\uk\cup\EE'_\uk$. Then $h_0\in\ker(\chi)$ and $\chi$ is invariant under and extends to $G_\uk\spa{\wh t_\uk ,\gamma}$. 
		\item Let $\chi\in\DD_\uk\cup\DD'_\uk$. Then $h_0\in\ker(\chi)$ and $\chi^\gamma =\chi^{\wh t_\uk}\neq\chi$ or $\chi^\gamma =\chi$ according to $\chi\in\DD_\uk$ or $\chi\in\DD_\uk '$. 
	\end{thmlist}
\end{theorem}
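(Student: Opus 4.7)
The plan is to split on the rank $k$. For $k \geq 4$, the subgroup $\bG_\uk = \bT_\uk \spa{\bX_\al \mid \al \in R_\uk}$ is itself a simply connected simple algebraic group of type $\tD_k$ on which the restriction of $F$ acts as a Frobenius endomorphism and on which $\restr{\gamma}|{\bG_\uk}$ acts as the standard graph automorphism. The sets $\oTT_\uk$, $\EE_\uk$, $\DD_\uk$ and the element $\wh t_\uk$ are defined in exact analogy with $\oTT$, $\EE$, $\DD$, $\wh t$ for $\bG$, and ${\wb G}_\uk$ induces the full group of diagonal automorphisms on $G_\uk$ just as $\wbG^F$ does on $\GF$. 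Hence parts (a)--(c) reduce to \Cref{Julia2} and \Cref{thm_sumup_D_1} applied with $\bG_\uk$ in place of $\bG$.

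For $k \leq 3$ the root subsystem $R_\uk$ is of type $\tA$ (or empty) and a separate argument is required. The case $k=1$ is essentially trivial: $\bG_\uk$ is a one-dimensional torus on which $\gamma$ acts by inversion, so every irreducible character of $G_\uk$ is linear and the three statements follow by direct inspection. For $k=3$ we have $\bG_\uk \cong \tDksc(\FF) = \SL_4(\FF)$ with $\gamma$ acting as the outer $\tA_3$-automorphism, and for $k=2$ the group $\bG_\uk$ is a central product of two $\SL_2$'s that $\gamma$ exchanges. In both of these cases, part (a) is the $\mathbf{A}(\infty)$ condition for type $\tA$, supplied by \cite[Thm~4.1]{CS17A}.

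For parts (b) and (c) in ranks $2$ and $3$, I would re-run the proof of \Cref{thm_sumup_D_1} inside $\bG_\uk$. Two substantive ingredients have to be re-established: first, the analogue of \Cref{lem3:7} describing $\DD_\uk$ and $\EE_\uk$ in terms of the action of the diagonal group $\wZ(G_\uk) \rtimes \UE({\wb G}_\uk)$, which only relies on the abstract structure of $\Z(\bG_\uk) \rtimes \spa{\gamma}$ together with the $\mathbf{A}(\infty)$ condition and therefore carries over; and second, the analogue of \Cref{wunderprop} producing elements $a$ and $\gamma'$ in the appropriate extension of the adjoint quotient of $\bG_\uk$ with the required commutation properties relative to a decomposition $\bC_1.\bC_2$ of the connected centralizer of $s$. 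Once these are in hand, the Jordan-decomposition argument and the application of \cite[Cor.~6.6]{S21D2} to produce a $\gamma$-invariant extension go through exactly as before. The main obstacle is the re-proof of \Cref{wunderprop} in type $\tA_1 \times \tA_1$: the central product structure forces a slightly different choice of $a$ and $\gamma'$, and one must verify using \ref{ZG_order}(d) and the Chevalley relations that $\gamma$ (which exchanges the two $\SL_2$-factors) can be adjusted by an inner element so as to commute with one factor while conjugating $s_0$ into $s_0 h_0$ on the other.
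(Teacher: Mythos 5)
Your treatment of $k\geq 4$ (reduction to \Cref{Julia2} and \Cref{thm_sumup_D_1} after handling the $\UE(G_\uk)$ versus $\UE({\wb G}_\uk)$ inflation issue), of $k=1$, and of part (a) in ranks $2,3$ via \cite[Thm~4.1]{CS17A} matches the paper. The gap is in your plan for (b) and (c) in ranks $2$ and $3$, specifically in the claim that the analogue of \Cref{lem3:7} ``only relies on the abstract structure of $\Z(\bG_\uk)\rtimes\spa{\gamma}$ together with $\mathbf{A}(\infty)$ and therefore carries over.'' That is true for the $\DD$-part, but \Cref{lem3:7}(b) rests on \cite[Lem.~5.11, Prop.~5.21]{S21D2}, results proved only for type $\tD_l$ with $l\geq 4$. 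In the single small-rank case where $\EE_\uk\neq\emptyset$ (namely $k=3$, $\eps=1$, so $G_\uk\cong\SL_4(q)$ with $4\mid q-1$ and $q$ a square), one must still prove that $\wZ(G_\uk)_\chi=\spa{\wh h_0}$, i.e.\ that $\wG_\chi$ has index exactly $2$ in $\wG$ and then that $|A_\bH(s)^F_\phi|=2$ with $|A_\bH(s)|=2$. This does not follow formally: ruling out $\wG_\chi=G\Z(\wG)$ requires the three-subgroup-lemma argument producing an extension with $F$ in its kernel (contradicting $\chi\in\EE$), and ruling out $|A_\bH(s)|=4$ requires observing that the centralizer of $\wt s$ would then be a torus, forcing $\phi=1$ and $A_\bH(s)^F_\phi=A_\bH(s)$. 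Your sketch contains no substitute for this step, and it is exactly here that the real work in small rank lies.

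Relatedly, your identification of the ``main obstacle'' as a re-proof of \Cref{wunderprop} for $\tA_1\times\tA_1$ is misplaced, and the re-running strategy is heavier than needed: the machinery you want to transport (equivariant Jordan decomposition, the map $\omega_s$, \cite[Prop.~2.15, 2.16]{S21D2}) is formulated for $\bG$ simple simply connected, so it does not apply verbatim to the semisimple group $\bG_{\underline 2}$. The paper avoids any small-rank analogue of \Cref{wunderprop}: for $k=2$, $\eps=1$ part (c) is settled by the character table of $\SL_2(q)$ (non-$\GL_2(q)$-invariant characters are $F_p$-invariant), for $(k,\eps)=(3,-1)$ by a Sylow-$2$ argument on stabilizers in $\Z(G_\uk)\UE(G_\uk)$, and for $(3,1)$ by Lusztig series and eigenvalue computations in $\GL_4(q)$; the one place where \cite[Cor.~6.6]{S21D2} is invoked, its hypothesis $[\pi\inv(s),\gamma^*]\subseteq\spannh$ is verified by a direct eigenvalue argument ($\gamma^*(s_0)=\pm s_0$ because the eigenvalue multiset is $\{\kappa,-\kappa,\kappa\inv,-\kappa\inv\}$ and $A_\bH(s)$ has order $2$), not by a type-$\tA$ version of \Cref{wunderprop}. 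So as written your proposal both relies on a transfer that is not automatic and omits the concrete computation that replaces it.
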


\begin{proof} (1) Note that for $k\geq 4$ the groups $G_\uk$ are just of the type studied before with $l=k$ since 
	$ \bT_{\uk}\leq\spa{\bX_\al\mid \al\in R_{\uk}}$ by the Steinberg relations as soon as $k\geq 2$ and therefore $\bG_\uk=\spa{\bX_\al\mid \al\in R_{\uk}}$ is the derived subgroup of the Levi subgroup $\bT\bG_\uk$ hence of type $\tDksc$ thanks to \cite[12.14]{MT}. The action of ${\wb G}_\uk$ on characters of $G_\uk$ induces all and only diagonal outer automorphisms, so the stabilizer statement of (a) is covered by \Cref{Julia2}. The extension statement is also a consequence of the extension statement of \Cref{Julia2} since the groups $\UE(G_\uk)$ and $\UE({\wb G}_\uk)$ are such that $\UE(G_\uk)\to\!\!\!\!\to\UE({\wb G}_\uk)$ with a kernel $K$ acting trivially on $G_\uk$ and therefore if $\chi\in \Irr(G_\uk)$ extends to a character $\chi '$ of $G_\uk\rtimes \UE(G_\uk)_\chi= (G_\uk\rtimes \UE({\wb G}_\uk)_\chi)/K$ then one can inflate $\chi '$ into a character of $G_\uk\rtimes \UE({\wb G}_\uk)_\chi$ also extending $\chi$. As for statements (b) and (c), they are covered by \Cref{thm_sumup_D_1}. 
	
	There remains to prove the theorem in the cases where $k\in\{1,2,3\}$. Set $\eps = 1$ or $-1$ according to $F=F_q$ or $\gamma F_q$.
	 
(2)	Let us assume $k=1$. The group $\bG_{\underline{1}}$ is a torus of dimension $1$. Then every $\chi\in\Irr(G_{\underline{1}})$ is linear, hence extends to its stabilizer in the semidirect product $G_{\underline{1}} \UE({\wb G}_{\underline{1}})$ by \Cref{ExtCrit}(d). This gives (a) since ${\wb G}_{\underline{1}}$ acts trivially on $G_{\underline{1}}$. Note also that $\EE_{\underline{1}}=\DD_{\underline{1}} =\emptyset$.
	 
(3)	Next, we study the case where $k\in\{2,3\}$. 
Formally one could argue that for $k=3$ the proof of \Cref{Julia2} could be transferred and still applies but we give here an independent proof that does not require to check that the relevant parts of \cite{S21D2} apply for $k=3$.
Recall that by (1) above 
	$ \bT_{\uk}\leq\spa{\bX_\al\mid \al\in R_{\uk}}$ and $\bG_\uk$ is then a simply connected group of type $\tA_1\times \tA_1$ or $\tA_3$.
	We get that $\bG_{\underline 2}=\SL_2(\FF)\times \SL_2(\FF)$ and $\bG_{\underline 3}=\SL_4(\FF)$ with $F_p$ acting as $x\mapsto x^p$ on matrix entries, while $\gamma$ acts by swapping the two components in the first case and by the graph automorphism of order 2 preserving the usual torus/Borel subgroups of 2.C above in the second case, see also the proof of \cite[Lem.~6.30]{S21D1}.
	It follows that 
\begin{align}
	 G_{\unl 2}\cong \begin{cases} \SL_2(q)\times \SL_2(q)& \text{if }\eps=1 ,\\
		\SL_2(q^2)& \text{if }\eps=-1 ,
	\end{cases}\ \ \und \ \
	G_{\unl 3}\cong \begin{cases} \SL_4(q)& \text{if }\eps=1 ,\\
		\SU_4(q) & \text{if }\eps=-1.
	\end{cases}  \label{eq32}
		\end{align}
	Note that ${\wb G}_\uk$ acts by diagonal automorphisms of $G_\uk$. Applying \cite[Thm~4.1]{CS17A} to types $\tA_1$ ($\SL_2$) and $\tA_3$ gives our claim (a) in the cases $k=3$ and $(k,\eps)=(2,-1)$, using what has been said in (1) about the $\UE(G_\uk)$ versus $\UE({\wb G}_\uk)$ question. In the case of $(k,\eps)=(2,1)$, \cite[Thm~4.1]{CS17A} indeed shows that 
	there exists an $\spa{F_p}$-stable $\GL_2(q)$-transversal in $\Irr(\SL_2(q))$. 
	This implies that $\ov \TT_{\underline 2 } $ contains some $\spa{F_p, \gamma}$-stable $\wt G_{\underline 2}$-transversal. Moreover the group $G \UE(G)$ being a subgroup of the wreath product $\SL_2(q)\spa{F_p}\wr \Cy_2$, the extendibility claim holds by \Cref{ExtCrit}(c). We then get part (a) in all cases. 
 
 (4) We now turn to statements (b), (c) in the case when $k=2,3$ and make some preliminary remarks. 
 
Notice first that whenever $\Z(G_\uk)=\Z(\bG_\uk)^F$ is of order $\leq 2$ then (a) and the commutativity of $({\wb G}_\uk/\Z(\bG_\uk).G_\uk)\rtimes \UE({\wb G}_\uk) $ imply that $\oTT =\Irr(G_\uk)$ and therefore $\EE_\uk=\DD_\uk =\emptyset$, so (b) and (c) are trivial. 
 
So we assume that $\Z(G_\uk)$ is of order 4, and in particular $q$ is odd. Then $\Z(G_\uk)=\Z(\bG_\uk)$ and therefore $h_0\in {[\gamma ,\Z(G_\uk)]}$ since $h_0=[\gamma , \h_\uk(\varpi)]$, see \Cref{ZG_order}. This implies that the second part of (b) or (c) implies also that $h_0\in\ker\chi$. This is because, if $\chi$ is invariant under the composition of a diagonal automorphism with $\gamma$, then its restriction to $\Z(G_\uk)=\Z(\bG_\uk)$ has the same property and indeed the restriction of $\chi$ to $\Z(G_\uk)$ is $\gamma$-invariant, hence is trivial on ${[\gamma ,\Z(G_\uk)]}$. 

Concerning (b), as in the proof of \Cref{thm_sumup_D_1}(b), notice that one can consider $\EE_\uk$ alone.

 From now on we omit the index $\uk$.
 
(5)	Assume $k=2$. 	If $\eps=-1$, we have $|\Z(G_{ })|=2$ so (b) and (c) are trivial as said in (4). So we assume $\eps=1$. Then the group $G_{ }$ is isomorphic to $\SL_2(q)\times \SL_2(q)$, see above. 
What has been said about the wreath product action of $\gamma$ implies $\EE_{ }=\emptyset$ hence (b).

To prove (c) let's take $\chi\in\DD {_{ }}$ and let $\chi_0$ be a $\wb G$-conjugate of $\chi$ such that $\chi_0\in\oTT{_{ }}$. That the stabilizers of $\chi $ and $\chi_0$ are distinct implies that $\UE(G)_{\chi_0}\nleq \Cent_{\UE(G)}(\Z(G))=\spa{F_p}$ and $\chi_0$ is not $\wb G$-invariant. 
	The character $\chi_0\in \Irr(\SL_2(q)\times \SL_2(q))$ being now written as $\chi_1.\chi_2$, we see that at least one of $\{\chi_1,\chi_2 \}$ is not $\GL_2(q)$-invariant. 
	But $\UE(G)_{\chi_0}\nleq \spa{F_p}$ implies that $\chi_0$ is invariant under some element of $\gamma\spa{F_p}$, leading to $\chi_1$ and $\chi_2$ being both not $\GL_2(q)$-invariant.
	
	Now every character of $\SL_2(q)$ that is not $\GL_2(q)$-invariant is $\spa{F_p}$-invariant, see the character table of $\SL_2(q)$ in \cite[Table 5.4]{CedricSL2}. So we get that $\chi_1$ and $\chi_2$ are both $\spa{F_p}$-invariant. We can conclude that $\chi_0$ is $\spa{F_p,\gamma}$-invariant, which implies the second case of (c) for $\chi_0$. But $\chi_0$ being the general element of $\DD '$, this settles the second case of (c) in general. Noting that $[F_p,\Z (G)]=1$ we also get that since $\chi_0=\chi^z$ for some $z\in \wb G$, $\chi$ is $F_p$-invariant. This implies that it can't be $\gamma$-invariant, by the definition of $\DD$. Then $\chi^\gamma =\chi^{\wh t}$ since $[z,\gamma]\in \spa{\wh t }G=\calL\inv(\spannh)$. This finishes the proof of (c) when $k=2$.

From now on, we assume $k=3$ and we consider the regular embedding $\bG=\SL_4(\FF)\leq \wbG=\GL_4(\FF)$ with extended Frobenius $F$ defined by the same formula on matrix entries. The action of $\wb G$ on $G$ is also induced by the one of $\wG=\wbG^F$ and the action of $\wh t$ is also the one of any $\wt t\in \wG$ corresponding with $h_0=-\Id_4$ in the isomorphism $\wG/G\Z(\wG)\cong \Z(\bG)/{[\Z(\bG),F]}=\Z(G)=\Z(\bG)$, the last equalities a consequence of $|\Z(G)|=4$.

(6)	We consider first the case where $k=3 \und \eps=-1.$ Note that $\UE(\wb G)$ is cyclic thus implying $\EE=\emptyset$ and (b) trivially.
Concerning (c) note that $|\Z(G)|=4$ implies $4\mid (q+1)$ and $q=p^f$ with an odd $f\geq 1$. Then $ \UE(G)_2=\spa{\gamma}$ and $ \UE(G)_{2'}=\spa{F_p\gamma}=\Cent_{\UE(G)}(\Z(G))$. Let $\chi\in\DD$ and $\chi_0\in\oTT$ a $\wG$-conjugate of $\chi$. Then $\UE(G)_{ \chi_0}\lneq \Cent_{\UE(G)}(\Z(G))$ since otherwise any $\wG$-conjugate would also be in $\oTT$. This forces $\spa{\gamma}=\UE(G)_2\leq \UE(G)_{\chi}$. We get $\chi^\gamma =\chi$ but also $\chi_0^\gamma \neq \chi_0$ since the stabilizers of $\chi$ and $\chi_0$ in $(\wG/G\Z(\wG))\rtimes \UE(G)\cong \Z(G)\UE(G)$ can't have the same Sylow 2-subgroup, otherwise they would be equal, the $2'$-part being central. This implies $\chi_0^\gamma = \chi_0^t$ for some $t\in {[\gamma ,\wG]}\setminus G\Z(\wG)$ hence our claim (c) since $[\Z(G), \gamma]=\spannh$. 

	
(7)	From now on we assume $k=3$ and $\eps =1$, so that $G= \SL_4( q)$.

For part (c) we consider a character $\chi\in\DD_{}$. The arguments used to prove \Cref{lem3:7}(a) apply also here and we get that the $\wG$-orbit of $\chi$ has 4 elements. Let us show that this $\wG$-orbit is $\gamma$-stable. Let $\wt\chi\in \Irr(\GL_4(q)\mid \chi)$. The fact that $\wG_\chi =G\Z(\wG)$ implies that $\wt\chi$ is in a Lusztig series $\cE (\GL_4(q),[\wt s])$ such that the centralizer of the image of $\wt s$
in $\PGL_4(q)$ has a component group of order 4 (see for instance \cite[Prop. 2.15]{S21D2}). This implies that the eigenvalues of $\wt s$ are an orbit under the multiplication by the 4th root of unity $\varpi$. But then the transpose-inverse automorphism ${\gamma_0}$ of $\GL_4(q)$ sends $\wt s$ to some conjugate times a scalar. This means that $\wt\chi^{\gamma_0}$ is in the same Lusztig series as some $\lambda\wt\chi$ with $\lambda\in\Lin(\GL_4(q))$ by the action of automorphisms on Lusztig series in the connected center case (\cite[Prop. 2.15]{S21D2} again). But those Lusztig series have a single element since the centraliser of $\wt s$ is a torus. So $\wt\chi$ and $\wt\chi^{\gamma}$ have the same restriction to $G=\SL_4(q)$ which gives our claim. We then proceed like in the end of the proof of \Cref{thm_sumup_D}(b). The above implies that if $\chi_0=\chi^z$ is an element of $\oTT$ for some $z\in \wt Z(G)=\wG/\Z(\wG)G$ then $\chi_0^\gamma =\chi^{z'}_0$ for some $z'\in \wZ(G)$. This implies $\chi_0^\gamma =\chi^{z'}_0=\chi_0$ by the definition of $\oTT$, hence the second case of (c). Concerning the first case, note that $\chi^\gamma =\chi$ would imply that $z\gamma z\inv\gamma\inv\in \wZ(G)_\chi =1$ and in turn that $z$ centralizes $\gamma$ hence the whole $\wZ(G).\UE(G)$. But then $^z\chi_0=\chi$ would have the same stabilizer as $\chi_0$. So $\chi^\gamma \neq\chi$ and this implies
 $\chi^\gamma =\chi^{\wh t}$ since $[\wZ(G),\gamma]$ is the group of order 2 generated by the image of $\wh t$. We have proven part (c) in this case. 

(8) We keep $k=3$, $\eps =1$ and address (b). Let $\chi\in \EE$. We then have $(\wG \UE(G))_\chi =\wG_\chi \UE(G)_\chi$ with $\chi$ not extending to $G\UE(G)_\chi$. This forces $\UE(G)$ to be non-cyclic, hence $q$ to be a square and $F$ to act trivially on $\Z(\bG)$. It also implies  that $\UE(G)_\chi$ is not cyclic and therefore $\gamma\in \UE(G)_\chi$. As pointed out in (4) above this implies $h_0\in\ker\chi$.

First we show that  $\wG_\chi\notin \{G\Z(\wt G),\wG \}$. 
Let $\chi'\in\oTT$ be a $\wG$-conjugate of $\chi$ (which exists by (a)).  The equality $\wG_\chi=\wG$ is not possible as then $\chi=\chi'\in \oTT$.

Assume next that $\wt G_\chi= G \Z(\wt G)$ or equivalently ${\wb G}_{\chi}=G$ and hence ${\wb G}_{\chi'}=G$. 
By definition of $\oTT$ we have $(\wb G \UE(\wb G))_{\chi'}= G \UE(\wb G)_{\chi'}$ and $\chi'$ has an extension to  $G \UE( G)_{\chi'}$. 
Via $\UE(\wb G)/\spa{\restr F|{\wb G}}=\UE(G)$ we see that $\chi'$ extends to some $\wt \chi'\in \Irr(G \UE(\wb G)_{\chi'})$ such that $\spa{h_0,\restr F|{\wb G}}\leq \ker(\wt \chi')$. 
Let us write $\chi=\chi'^{ t}$ for some $t\in \wb G$. We have 
\[(\wb G \UE(\wb G))_{\chi}=( G \UE(\wb G)_{\chi'})^t=  G (\UE(\wb G)_{\chi'})^t \] with also
$(\wb G \UE(\wb G))_\chi=\wb G_\chi\UE(\wb G)_\chi$. Then $[\gamma ,t]\in G$ since the above implies $G\gamma^t\subseteq G\UE(\wb G)\cap \bG\gamma =G\gamma$. Then $[[\gamma ,t],F]=[[F,\gamma],t]=1$ and by the Three-Subgroup lemma \cite[(8.7)]{Asch} $[F,t]\in \Cent_{ \Z(\bG)}(\gamma)=\spannh$.  Since $\chi\neq \chi'$ we actually have $[F,t]=h_0$ and we can assume $t=\wh t$.
If now $\chi =(\chi')^{\wh t}$, then $\wh \chi:=(\wt \chi')^{\wh t}$ is an extension of $\chi$ to $ G \UE(\wb G)_{\chi}$ with $\spa{h_0,F}
^{\wh t}=\spa{h_0,h_0F}\leq\ker( \wt \chi)$. In particular $F\in \ker(\wt \chi)$. Via  $\UE(\wb G)/\spa{F}=\UE(G)$ the character $\wh \chi$ defines also an extension of $\chi$ to $G \UE( G)_\chi$, contradicting $\chi\in\EE$. This contradiction establishes our claim that $\wG_\chi\notin \{G\Z(\wt G),\wG \}$, or equivalently that $\wt G_\chi$ has index $2$ in $\wt G$ while being also $\gamma$-stable. 

 Let $s\in \HF$ be such that $\chi\in\cE(G,[s])$. Denote $\wt \bH =\wbG^*=\GL_4(\FF)$. As in the above proof of (c), let $\wt s \in\wt \bH^F$ and $\wt \chi\in \Irr(\GL_4(q)\mid \chi)$, such that $\wt \chi\in \cE(\GL_4(q),[\wt s])$. 
	Then $|\wt G_\chi:G\Z(\wG)|=2$ implies 
 \begin{align}\label{eq33}
 	|A_\bH(s)^F_\phi|=2, 
 \end{align}
	where $\phi:=\Psi^{\wGF}_{\wt s}(\wt \chi)\in \UCh(\Cent_{\wt \bH}(\wt s)^F)$ is associated to $\wt\chi$ via Jordan decomposition of characters $\II{PsiGs}@{\Psi^{(\wGF)}_{\wt s}}: \cE(\wGF,[\wt s]) \lra \UCh(\Cent_{\wt \bH}(\wt s)^F)$, see \cite[Prop.~2.15]{S21D2}.
	
	Assume $|A_\bH(s)|=4$. Then as in the discussion made for (c) one gets that $\Cent_\wbH(\wt s)$ is a torus and $\phi$ is the trivial character. This leads to $A_\bH(s)^F_\phi=A_\bH(s)^F=A_\bH(s)$ since $F$ acts trivially on $\Z(\bG)$ hence on $\AHs$. This contradicts (\ref{eq33}) above.
	
	We therefore get that \begin{align}\label{eq34}
		A_\bH(s)=A_\bH(s)^F_\phi\text{ and has order 2}.
	\end{align} 
	
	Like for \Cref{thm_sumup_D_1}(a), our aim is to apply \cite[Cor. 6.6]{S21D2} whose original proof was for types $\tD_l$ ($l\geq 4$) but is easily seen to apply to our $G$ of type $\tA_3$. We have proven assumptions (i) and (ii) of \cite[Cor. 6.6]{S21D2}. Letting $\pi\colon \bH_0=[\wbH ,\wbH]\to\bH$ be the canonical surjection $\SL_4(\FF)\to\PGL_4(\FF)$, there remains to show that there exists $\gamma^*\in \HF\gamma$ such that $\gamma^*(s,\phi)=(s,\phi)$ and $[s_0,\gamma^*]\in \spannh$ for some (in fact any) $s_0\in\pi\inv(s)$. The first part is ensured by the equivariant Jordan decomposition for type $\tA$ (see \cite[Thm 8.2]{CS17A} or \cite[Thm B]{S21D2}), so we concentrate on the second point, namely having $[\pi\inv(s),\gamma^*]\subseteq \spannh$.
	
		Let $s_0\in\pi\inv(s)\subseteq \bH_0=\SL_4(\FF)$ lifting $s$. Then $|A_\bH(s)|=2$ from (\ref{eq34}) above implies that the set of eigenvalues of $s_0$ is closed under multiplication with $-1$. Let $\kappa$, $\kappa'$ be two of them, such that $\{\kappa, -\kappa, \kappa',-\kappa'\}$ is the set of all eigenvalues of $s_0$ with multiplicities. Because of $\det (s_0)=1$ we get $\kappa'=\pm \kappa\inv$ and hence the set of eigenvalues is $\{\kappa, -\kappa, \kappa\inv,-\kappa\inv\}$. Writing $\sim$ for the conjugation in $\bH_0=\SL_4(\FF)$, which for semisimple elements coincides with conjugation in $\GL_4(\FF)$, we see from the eigenvalues that $$s_0\sim s_0\inv\sim\gamma^*(s_0).$$ Since $\gamma^*(s)=s$ then $\gamma^*(s_0)=\omega s_0$ for some $\omega\in\FFtimes$, and $s_0\sim\omega s_0$ by the above. This forces in turn $\omega =\pm 1$ since $A_\bH(s)$ has order 2. So $\gamma^*(s_0)=\pm s_0$ and we get our claim that $[\pi\inv(s),\gamma^*]\subseteq \spannh =\{\pm\Id_4\}$. 
\end{proof}

The following definition makes precise what is mmeant by groups of type $\tD$ in ranks $\leq 3$, e.g. $\tD_{k,\sico}^\eps(q)$ like $G_{\underline{k}}$ above for $1\leq k\leq 3$. (Most textbooks consider types $\tD_l$ only for $l\geq 4$.) This will be used mainly in Chapters \ref{sec_nondreg_groupM} and \ref{sec6}.

\begin{defi} \label{Def_low_l} Assume $k\in \{1,2,3\}$ and $\eps =\pm 1$. Then we denote by $\tD_{k,\sico}(\FF)$ the subgroup $\bG_{\underline k}$ of $\bG=\tD_{4,\sico}(\FF)$ defined in \Cref{not:3_16}. We also write $\tD_{k,\sico}^\eps(q)$ for $\bG_{\underline{k}}^F$ where $F$ is such that $\bG^F=\tD_{4,\sico}^\eps(q)$. Then we get the isomorphisms with groups of type $\tA$ and a torus recalled in (\ref{eq32})
of the above proof.\end{defi}
 
We gather some extra statements that will be used alongside \Cref{thm_sumup_D}.

\begin{lem} \label{cor:3_24} Assume $k$ and $\wh t_\uk$ are like in \Cref{thm_sumup_D}. If $\chi\in\Irr(G_\uk)$ is such that $({\wb G}_\uk)_{\chi}\leq G_\uk\spa{\wh t_\uk}$, then $({\wb G}_\uk \UE({\wb G}_\uk))_{\chi}\leq G_\uk\spa{\wh t_\uk} \UE({\wb G}_\uk)$.
\end{lem}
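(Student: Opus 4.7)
My plan is to deduce the inclusion from the normality of the subgroup $H:=G_\uk\spa{\wh t_\uk}\UE({\wb G}_\uk)$ in ${\wb G}_\uk\UE({\wb G}_\uk)$, combined with \Cref{thm_sumup_D}(a). The key is that, once $H$ is shown to be normal, it suffices to check the containment for some ${\wb G}_\uk$-conjugate of $\chi$ whose stabilizer is known to split.

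The first step will be to verify this normality. Via the Lang isomorphism ${\wb G}_\uk/G_\uk\cong\spa{h_0,\hh_\uk(\varpi)}$ sending $\wh t_\uk$ to $h_0$, the quotient ${\wb G}_\uk/(G_\uk\spa{\wh t_\uk})$ identifies with $\spa{h_0,\hh_\uk(\varpi)}/\spa{h_0}$, which is a $2$-group of order at most $2$. The relation $[\Z(\bG),\gamma]\leq\spa{h_0}$ from \ref{ZG_order}(b) shows $\gamma$ acts trivially on this quotient, while $F_p$ acts by raising to the $p$-th power and hence trivially whenever the quotient is nontrivial (since then $p$ must be odd). Thus $\UE({\wb G}_\uk)$ acts trivially on ${\wb G}_\uk/(G_\uk\spa{\wh t_\uk})$, which gives on the one hand that $\UE({\wb G}_\uk)$ stabilizes $G_\uk\spa{\wh t_\uk}$, and on the other that $g\cdot e(g^{-1})\in G_\uk\spa{\wh t_\uk}$ for all $g\in{\wb G}_\uk$ and $e\in\UE({\wb G}_\uk)$. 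Together with the normality of $G_\uk\spa{\wh t_\uk}$ in ${\wb G}_\uk$ (automatic as ${\wb G}_\uk/G_\uk$ is abelian), these two facts imply that $H$ is normal in ${\wb G}_\uk\UE({\wb G}_\uk)$.

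With normality in hand the proof becomes short. By \Cref{thm_sumup_D}(a) I choose $g_1\in{\wb G}_\uk$ with $\chi_0:=\chi^{g_1}\in\oTT_\uk$; normality of $G_\uk\spa{\wh t_\uk}$ in ${\wb G}_\uk$ transfers the hypothesis to $\chi_0$, giving $({\wb G}_\uk)_{\chi_0}=g_1^{-1}({\wb G}_\uk)_\chi g_1\leq G_\uk\spa{\wh t_\uk}$. By the very definition of $\oTT_\uk$ the stabilizer of $\chi_0$ in ${\wb G}_\uk\UE({\wb G}_\uk)$ splits, so
\[({\wb G}_\uk\UE({\wb G}_\uk))_{\chi_0}=({\wb G}_\uk)_{\chi_0}\UE({\wb G}_\uk)_{\chi_0}\leq H.\]
Conjugating back by $g_1$ and invoking normality of $H$ yields $({\wb G}_\uk\UE({\wb G}_\uk))_\chi=g_1({\wb G}_\uk\UE({\wb G}_\uk))_{\chi_0}g_1^{-1}\leq H$, which is the desired conclusion.

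The main point requiring care is the normality verification, specifically the triviality of the $\UE({\wb G}_\uk)$-action on $\spa{h_0,\hh_\uk(\varpi)}/\spa{h_0}$. This has to be confirmed for each of the rank cases $1\leq k\leq l$, including the low-rank cases of \Cref{Def_low_l} where $\bG_\uk$ is of type $\tA$, but once phrased through \ref{ZG_order}(b) it reduces to a routine computation with $2$-groups of order at most $4$.
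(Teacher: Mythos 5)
Your proof is correct and follows essentially the same route as the paper: the paper also combines \Cref{thm_sumup_D}(a) applied to a ${\wb G}_\uk$-conjugate with the key fact that the class of $\wh t_\uk$ is central in $({\wb G}_\uk/G_\uk)\rtimes \UE({\wb G}_\uk)$ and that $\UE({\wb G}_\uk)$ acts trivially on the order-$\leq 2$ quotient ${\wb G}_\uk/G_\uk\spa{\wh t_\uk}$ (i.e. $[{\wb G}_\uk/G_\uk,\UE({\wb G}_\uk)]\leq\spa{\wh t_\uk G_\uk}$), which is exactly your normality of $H$, and then conjugates back. Note only that your closing worry is unnecessary: triviality of the action on a group of order at most $2$ is automatic, so no case-by-case check over $1\leq k\leq l$ is needed.
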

\begin{proof} As in the proof of \Cref{lem3:7} we abbreviate $\wZ ={\wb G}_\uk/G_\uk$ , $E=\UE({\wb G}_\uk)$, $\wh h_0$ the class of $\wh t_\uk$. Note that $\wh h_0$ is central in the semidirect product $\wZ \rtimes E$ thanks to \ref{ZG_order} and that $E$ also acts trivially on $\wZ/\spa{\wh h_0}$ since this is of order $\leq 2$. By \Cref{thm_sumup_D}(a) there exists $z\in \wZ$ such that $(\wZ E)_{^z\chi}= Z_0E_0$ with $Z_0\leq \wZ$ and $E_0\leq E$. Since $\wZ$ is abelian we have $Z_0=\wZ\cap Z_0E_0= (\wZ\cap Z_0E_0)^z=\wZ\cap( Z_0E_0)^z=\wZ\cap (\wZ E)_\chi =\wZ_\chi\leq \spa{\wh h_0}$ by our hypothesis. Then $(\wZ E)_\chi =(Z_0E_0)^z\leq (\spa{\wh h_0}E)^z \leq \spa{\wh h_0}E$ since we have seen that $[\wZ ,E]\leq \spa{\wh h_0}$.
\end{proof}

\begin{lem} \label{max_ext_Gi_wGi}\label{rem_maxext_smallrank}
Assume $k\in \ul$, then maximal extendibility holds with respect to $G_\uk\unlhd{\wb G}_\uk $.
\end{lem}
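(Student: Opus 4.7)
The plan is to exploit the small size of the quotient $\wb G_{\underline{k}}/G_{\underline{k}}$. First I would verify that the Lang map $\calL_F$ induces an isomorphism
\[\wb G_{\underline{k}}/G_{\underline{k}}\ \xrightarrow{\ \sim\ }\ \spa{h_0,\h_{\underline{k}}(\varpi)}\leq \Z(\bG_{\underline{k}});\]
surjectivity is built into the definition of $\wb G_{\underline{k}}$, the kernel is $G_{\underline{k}}$ by definition of $\calL_F$, and the multiplicativity modulo $G_{\underline{k}}$ is a consequence of the image lying in $\Z(\bG_{\underline{k}})$. Using \Cref{ZG_order} together with the relations described in \ref{3:3}, this quotient has order dividing $4$ and is non-cyclic only when $p$ is odd and $k$ is even, in which case it is the Klein four group $\spa{h_0}\times\spa{\h_{\underline{k}}(\varpi)}$.

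Whenever $(\wb G_{\underline{k}})_\chi/G_{\underline{k}}$ is cyclic, extension of $\chi$ is immediate from \Cref{ExtCrit}(a). This disposes of all characters in the cases $k=1$ (where $\bG_{\underline{1}}$ is a torus, so $\wb G_{\underline{1}}$ is itself abelian and every linear character extends), $k\in\{3\}\cup\{k\geq 5\text{ odd}\}$, and $p=2$; it also disposes of every character whose stabilizer is a proper subgroup of the full Klein four in the remaining cases. For $k=2$ and $\chi$ stabilized by all of $\wb G_{\underline{2}}/G_{\underline{2}}$, I would use the identifications in (\ref{eq32}): when $\eps=1$ the group $\wb G_{\underline{2}}$ is a direct product of two copies of an index-$2$ extension of $\SL_2(q)$ inside $\SL_2(\FF)$, so extendibility reduces componentwise to a cyclic-quotient situation; when $\eps=-1$ the centralizer of $G_{\underline{2}}=\SL_2(q^2)$ inside $\wb G_{\underline{2}}$ already has index $2$, and dividing out this trivial-action half reduces the problem to extending across the cyclic outer diagonal quotient of $\SL_2(q^2)\unlhd \GL_2(q^2)$.

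What remains is the case $p$ odd, $k\geq 4$ even, and $\chi$ fixed by all of $\wb G_{\underline{k}}/G_{\underline{k}}\cong \Cy_2\times\Cy_2$. The $\wb G_{\underline{k}}$-invariance forces $\Z(G_{\underline{k}})\leq\ker(\chi)$ by \Cref{lem_dualZGF} applied to Lusztig series. I would then appeal to \Cref{thm_sumup_D}(a) to reduce, up to $\wb G_{\underline{k}}$-conjugation, to a character $\chi\in\oTT_{\underline{k}}$ in the given orbit, and then combine the equivariant Jordan decomposition of \cite[Thm~B]{S21D2} with the graph-like element $\gamma'\in \bH^F\gamma$ of \Cref{wunderprop} to build an extension to $\wb G_{\underline{k}}$ out of the factorisation $\Cent_\bH^\circ(s)=\bC_1.\bC_2$.

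The hard part is precisely this Klein four case with $k\geq 4$ even: the cohomological obstruction to extending across $\Cy_2\times\Cy_2$ lives in $H^2(\Cy_2\times\Cy_2,\CC^\times)\cong \Cy_2$ and need not vanish abstractly, so one cannot conclude by soft arguments. Its triviality for our specific central extension $G_{\underline{k}}\unlhd\wb G_{\underline{k}}$ must be extracted from the fine structure of the Lusztig parameter $(s,\phi)$, using that the factorisation $\Cent^\circ_\bH(s)=\bC_1.\bC_2$ afforded by \Cref{wunderprop} satisfies $[\gamma',\bC_2]=1$ and $[a\gamma',\bC_1]=1$ for some $a\in\bH^F$ with $s_0^a=s_0h_0$, thereby allowing one to match extensions across both generators of $\spa{h_0,\h_{\underline{k}}(\varpi)}$ simultaneously.
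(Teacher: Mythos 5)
Your treatment of the easy configurations is fine and essentially agrees with the paper: $k=1$; $p=2$; odd $k$ (where $\spa{h_0,\h_\uk(\varpi)}$ is cyclic of order $4$, so \Cref{ExtCrit}(a) disposes of everything, including $k=3$, which the paper instead settles by citation); characters with cyclic stabilizer quotient; and $k=2$ via the componentwise reduction. The problem is exactly the case you isolate as hard: $p$ odd, $k\geq 4$ even, $\chi$ invariant under the whole Klein four group ${\wb G}_\uk/G_\uk$. There you do not give a proof — you say yourself that the vanishing of the obstruction in $H^2(\Cy_2\times\Cy_2,\CC^\times)$ ``must be extracted from the fine structure of the Lusztig parameter'', and that extraction is precisely the missing content. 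The paper does not prove it by hand either: for every $k\geq 3$ it simply invokes \cite[Thm 2.17]{S21D1}, the previously established maximal extendibility statement for the pair $\GF\unlhd{\wb G}$ in types $\tA$ and $\tD$ (which ultimately rests on Lusztig's multiplicity‑free restriction from a group inducing all diagonal automorphisms). Your sketch neither cites nor reproves this ingredient, and the tools you point to instead (\Cref{wunderprop}, the mechanism of \Cref{thm_sumup_D_1}) were built for a different question — $\gamma$-invariance and extensions to $G_\uk\spa{\wh t_\uk,\gamma}$ for characters in $\EE_\uk\cup\DD_\uk$ — not for extending a ${\wb G}_\uk$-invariant character across the full group of diagonal automorphisms.

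Moreover, the reduction you propose inside that case starts from a false statement: ${\wb G}_\uk$-invariance of $\chi$ does not force $\Z(G_\uk)\leq\ker(\chi)$. Diagonal automorphisms act trivially on $\Z(G_\uk)$ and impose no condition on the central character: whenever $\Cent_{\bH}(s)$ is connected (for instance $s$ regular semisimple), every element of $\cE(G_\uk,[s])$ is invariant under all diagonal automorphisms, while by \Cref{lem_dualZGF} its central character corresponds to $[s_0,F]$, which can be arranged to be nontrivial; already for $\SL_2(q)$ the irreducible Deligne--Lusztig characters attached to a torus character in general position are invariant under all diagonal automorphisms yet may have nontrivial central character. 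The paper's \Cref{lem3:7} obtains $h_0\in\ker(\chi)$ only for $\chi\in\DD\cup\EE$, i.e.\ under stabilizer hypotheses involving the graph automorphism, never from mere ${\wb G}$-invariance. Consequently the ${\wb G}_\uk$-invariant characters with $h_0\notin\ker(\chi)$ — for which \Cref{wunderprop} is not even applicable, since it needs $s_0$ conjugate to $s_0h_0$ — are absent from your analysis altogether. To close the lemma you should either quote \cite[Thm 2.17]{S21D1} as the paper does, or argue via Lusztig's multiplicity‑free restriction that $\chi$ extends to $(\wGF)_\chi$ and then explain how to transport such an extension to $({\wb G}_\uk)_\chi$ (this transport is in effect what the cited theorem packages); as written, the decisive case is unproved.
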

\begin{proof} If $k\geq 3$, then the statement is \cite[Thm 2.17]{S21D1} thanks to our description of $\bG_\uk$ given in the proof of \Cref{thm_sumup_D}. When $k=1$ then ${\wb G}_\uk$ is abelian, while when $k=2$ and $F=F_q\gamma$ the quotient ${\wb G}_\uk/G_\uk$ is cyclic and \Cref{ExtCrit}(a) applies. When $k=2$ and $F=F_q$ then 
	$G_{\underline 2}= \SL_2(q)\times \SL_2(q) \unlhd \NNN_{\SL_2(\FF)}(\SL_2(q))\times \NNN_{\SL_2(\FF)}(\SL_2(q))$ and we have again the cyclic quotient situation on each factor.
\end{proof}


\section{The doubly regular case}\label{sec:4}
In this chapter, we prove the inductive McKay condition (\textbf{iMK}) for the group $\GF=\tD_{l,\sico}^\eps(q)$ of \Cref{sub2E} and an odd prime $\ell$ under an arithmetic assumption on $d_\ell(q)$, the order of $q$ in $(\ZZ/\ell\ZZ)^\times$. This condition of being a\textit{ doubly regular} number for $(\bG,F)$, see \Cref{def_dreg} below, ensures that the Sylow $\ell$-subgroups of $\GF$ have abelian centralizers in $\bG^F=\tD_{l,\sico}^\eps(q)$ but also in the overgroup $\obG^F=\tB_{l,\sico}(q)$.
The criterion for (\textbf{iMK}) is the one of \Cref{iMKbyAdBd} through the conditions \Ad{} and \Bd{} of \ref{cond_Ad} and \ref{cond_Bd} defined for $(\bG ,F)$ and any integer $d\geq 1$, relating in particular to Sylow $d$-tori of $\bG$ as recalled in \Cref{ssec_2D}. 

The main result of the present chapter is as follows.

\begin{theorem}\label{thm:dreg} Let $\GF=\tD_{l,\sico}^\eps(q)$ for $l\geq 4$, $\eps=\pm 1$, $q$ a prime power.
	Let $d\geq 3$ and $\bS$ be a Sylow $d$-torus of $(\bG,F)$. 
\[ 	\text{Assume that $d\mid 2l$ and the ratio $\frac{2l}d $ is } \begin{cases} 	\text{ \text{even} \ when } & \eps =1 ,\textbf{} \\
		\text{ \text{odd} \ when } & \eps =-1,
	\end{cases} \] (in other words $d$ is a doubly regular integer for $(\bG,F)$, see \Cref{def_dreg}).
	Then:
	\begin{thmlist}
		\item Conditions \Ad{} and \Bd{} from \Cref{ssec_2D} hold for $(\bG,F)$.
		\item If $\ell$ is an odd prime with $\ell\nmid q$ and $d_\ell(q)=d$, then (\textbf{iMK}) from \Cref{iMK} holds for $\GF$ and $\ell$ with respect to $N=\NNN_\bG(\bS)^F$.
		\item (\textbf{iMK}) holds for $\tD_{4,\sico}^\eps(q)$ and any prime $\ell$.
	\end{thmlist}
\end{theorem}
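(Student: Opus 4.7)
I would prove (a) by exploiting the inclusion $\bG\leq \obG$ from \Cref{sub2E} together with the analogous statement already known for type $\tB_l$, and then deduce (b) and (c) by combining (a) with the reduction results collected in \Cref{sec_recall}.

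For (a), the doubly regular hypothesis on $d$ is precisely the condition forcing a Sylow $d$-torus $\bS$ of $(\bG,F)$ to be a Sylow $d$-torus of $(\obG,\o F)$ as well: the rank of a Sylow $d$-torus is the multiplicity of $\Phi_d$ in the generic order polynomial, and the assumption $d\mid 2l$ with the prescribed parity of $2l/d$ makes the multiplicities coincide for the two groups. Consequently the centralizer of $\bS$ in either group is a common $F$-stable maximal torus, and a direct Weyl group analysis shows $\NNN_\bG(\bS)^F \leq \NNN_{\obG}(\bS)^{\o F}$ with index at most $2$, the extra contribution being an $\o F$-fixed analogue of the element $\neins$ of \Cref{lem_gamma} which induces the graph automorphism $\gamma$ of $\GF$. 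Now Conditions \Ad{} and \Bd{} have been established for $\obG^{\o F}$ in \cite[Thm~A]{CS18B}; I would transfer them to $\GF$ by restricting and adapting the transversals and extension maps witnessing the $\tB_l$ versions, then checking their equivariance under $\wh N/\wt N$ and $\UE(\GF)$ directly. The key point is that the graph automorphism $\gamma$, invisible inside $\obG^{\o F}$, is already realized by conjugation by the distinguished element above.

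For (b), apply \Cref{iMKbyAdBd}: under $\ell\nmid 6q$ and $d_\ell(q)=d$ doubly regular, this reduces (\textbf{iMK}) for $\GF$ and $\ell$ to $\mathbf{A}(\infty)$, \Ad{} and \Bd{} for $(\bG,F)$. Condition $\mathbf{A}(\infty)$ is \Cref{Julia2} and the other two are part (a). The only remaining odd prime is $\ell =3$, for which (\textbf{iMK}) for $\GF$ is already contained in \Cref{CS1319}. For (c), fix $\GF=\tD_{4,\sico}^\eps(q)$. The cases $\ell\in\{2,3\}$ and $\ell=p$ are all covered by \Cref{CS1319}, so assume $\ell\geq 5$ with $\ell\nmid q$ and set $d:=d_\ell(q)$. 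Inspecting the cyclotomic factorization of $|\GF|$, the relevant values of $d$ lie in $\{1,2,3,4,6\}$ when $\eps=1$ and in $\{1,2,3,4,6,8\}$ when $\eps=-1$. If $d\in\{1,2\}$, Conditions \Ad{} and \Bd{} hold by \Cref{rem_A1A2B1B2} and \Cref{iMKbyAdBd} applies. If $d$ is doubly regular, i.e.\ $d=4$ for $\eps=1$ or $d=8$ for $\eps=-1$, part (b) applies directly. In every remaining case, $\Phi_d$ appears with multiplicity $1$ in $|\GF|$, hence the Sylow $\ell$-subgroups of $\GF$ are cyclic, and (\textbf{iMK}) follows from the cyclic defect verification \cite{KoSp} cited in the Introduction.

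The main obstacle sits in (a), namely making the transfer from type $\tB_l$ to type $\tD_l$ fully compatible with the outer automorphism groups on both sides: $\Out(\GF)$ contains the diagonal automorphisms together with the graph automorphism $\gamma$, whose presence requires us to track the way $\obG^{\o F}$ induces $\gamma$ on $\GF$ through the same element that also normalizes $\bS$. Ensuring that the transversals, extensions, and the extension map of \cite{CS18B} descend from the $\tB_l$-picture to the $\tD_l$-picture with the correct $\wh N/\wt N$-equivariance, and that \Bd(b) in particular still holds with values consistent with the inclusion $\GF\leq \obG^{\o F}$, is the content that remains.
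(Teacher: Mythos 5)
Your reductions of (b) and (c) to (a) are essentially the paper's, but your plan for (a) has a genuine gap: Conditions \Ad{} and \Bd{} for $(\bG,F)$ cannot be obtained by "restricting and adapting" the transversals and extension maps that witness the type-$\tB_l$ case of \cite{CS18B}. The $\tB_l$ data only sees a diagonal-automorphism quotient of order at most $2$ and no graph automorphism, whereas the type-$\tD_l$ conditions demand the stabilizer factorization $(\wt N\wh N)_\chi=\wt N_\chi\wh N_\chi$ (for a suitable $\wt N$-conjugate) and extendibility with respect to the order-$4$ diagonal quotient and to $\gamma\in\wh N$; the characters of $N$ for which this is delicate are invisible from the $\tB_l$ picture, and there is no canonical bijection between $\Irr(N)$ and $\Irr(\NNN_{\obG}(\bS)^{\o F})$ that would transport the required properties across the index-$2$ inclusion. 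What the paper actually does is verify the criterion of \Cref{dreg_crit}: the type-$\tB$ results of \cite{CS18B} are used only at the level of the Tits subgroup, via $\ov V_d=\Cent_{\ov V}(v_\tB)$, to get assumptions (i)--(iii) (\Cref{ExtMapHdVd}); the remaining assumptions (iv) and (v) are the real new content and are proved by analyzing centralizers of semisimple elements in the dual group (\Cref{3:5}, \Cref{cor3_8}), translating relative Weyl group stabilizers through duality (\Cref{thm41}), and establishing the purely permutation-theoretic maximal extendibility statement \Cref{thm6_6} for $\Cent_P(w)\unlhd\Cent_K(w)$ with $P=\Sym^\tD_{\pm J'}\times\Sym^\tD_{\pm J''}\times\Sym^\tB_{\bII}$ non-parabolic, plus \Cref{NtildeN} for $N\unlhd\wt N$. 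None of this is a formal descent from the $\tB_l$ statement, and your proposal leaves exactly this work undone (as your own closing paragraph concedes).

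A second, smaller gap: for $\GF=\tD_{4,\sico}(q)$ you only track equivariance under $\UE(\GF)$, but \Ad{} and \Bd{} there involve the full $E(\GF)$, which contains the triality automorphism $\gamma_3$. Since your part (c) for $d=4$, $\eps=1$ invokes part (b), hence part (a), the triality case must be handled separately; the paper does this at the end of the proof with ad hoc $2$-group versus $3$-group arguments (using \cite[6.28, Cor.~11.31]{Isa} together with \Cref{ExtCrit}(a)) to upgrade the $\UE$-equivariant statements of \Cref{ExtMapHdVd} and \Cref{thm41} to $E(\bG)$-versions. Your treatment of the cyclic-Sylow cases $d\in\{3,6\}$ (and $d\in\{3,4,6\}$ for $\eps=-1$) via \cite{KoSp} matches the paper.
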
 
Note that for $d\in\{1,2\}$ and primes $\ell = 2$ or odd with $d_\ell(q)\in\{1,2\}$ the above points (a) and (b) are known from \cite[Thm~1]{MS16}. 

The proof will use a criterion for \Ad\ and \Bd\ from \cite{CS17C} devised for the more general case where $d$ is a regular number for $(\bG ,F)$, see \Cref{dreg_crit} below. This will split the proof of \Cref{thm:dreg}(a) into two parts. \Cref{ssec4B} deals with the Tits subgroup $\spa{\nn_\al(1)\mid \al\in \Phi(\bG,\bT)}\leq\NNN_{ \bG}(\bT)$. Then \Cref{ssec4C} shows properties of the relative Weyl groups $\NNN_\bG(\bS,\xi)^F/\Cent_\bG(\bS)^F$ for $\bS$ a Sylow $d$-torus of $\bG$ and $\xi\in\Irr(\Cent_\bG(\bS)^F)$. The proof of \Cref{thm:dreg}
concludes in \Cref{ssec_4D}. After that we derive some consequences of the constructions made, in particular correspondences of characters not necessarily of $\ell '$-degrees, that will be useful later in Section 6.A.

\subsection{The conditions \Ad\ and \Bd\ for regular numbers} \label{subsec_dreg_def}

Let $(\bG ,F)$ be a simple simply connected algebraic group defined over a finite field $\FF_q$ as in \Cref{ssec2C}. We recall the choice of a maximal torus and Borel subgroup $\bT\leq \bB$ both $F$-stable and the associated notation. Let \[\rho\colon \NNN_{ \bG}(\bT)\lra W(\bG,\bT)=\NNN_{ \bG}(\bT)/\bT\] be the canonical surjection onto the Weyl group. 
We see the latter as acting on the euclidean vector space ${\mathbb V} =\RR \otimes X(\bT)$, abbreviating the image of $W(\bG,\bT)$ as $W\leq \GL(\VV)$. We recall $\varphi_0\in \NNN_{ \GL(\VV)}(W)$, the element of $\GL(\VV)$ such that $F$ induces $q\varphi_0$ on $X(\bT)$, see \cite[Def. 22.10]{MT}. Writing $F=F_p^f\circ \si$ for $\si$ a graph automorphism as in \Cref{ssec2C}, $\varphi_0$ corresponds to the symmetry of the basis of the root system that defines $\si$.

For regular elements of Weyl groups and regular numbers we refer to \cite[Ch. 5]{BroueBook} and \cite{Sp74}. \index{regular element}\index{regular number}For the relation with $d$-tori, see \cite[Thm 25.10]{MT} and \cite[Sect. 3.5]{GM}.

\begin{defi}[Regular elements of $W\varphi_0$]\label{def_varphi0} An element of $W\varphi_0$ is called regular if it has an eigenvector in $\CC\otimes \VV$ which is not contained in the reflecting hyperplane to any $\alpha\in\Phi(\bG,\bT)$. It is called $\zeta$-regular if this eigenvector is for the eigenvalue $\zeta\in\CC^\times$.
	The integers $d\geq 1$ such that there is a $\zeta$-regular element in $W\varphi_0$ with $\zeta$ of order $d$ are called the \textit{regular numbers} for $(W,\varphi_0)$, or equivalently for $(\bG,F)$. An important property in relation with the polynomial order of $F$-stable subgroups of $\bG$ is that\textit{ $d$ is a regular number for $(\bG,F)$ if and only if a Sylow $d$-torus $\bS$ of $\bG$ is such that $\Cent_{ \bG}(\bS)$ is a torus} \cite[Example 3.5.7]{GM}. In particular we get that if $w\varphi_0\in W\varphi_0$ is $\zeta$-regular for some $\zeta$ of order $d$ and if $u\in\rho\inv(w) $ then the Sylow $d$-torus of $(\bT,uF)$ is a Sylow $d$-torus of $(\bG ,uF)$.
\end{defi}

See \cite[Table 1]{S10a} for a list of regular numbers for each type of $(W,\varphi_0)$.

 Keeping the pair $(\bG,F)$ of arbitrary type, we define the extended Weyl group (or Tits subgroup, see \cite{T}) already mentioned
$\III{V}:=\spa{\nn_\al(1)\mid \al\in \Phi(\bG,\bT)}\leq\NNN_{ \bG}(\bT)$ and the toral group $\III{H}=V\cap\bT$. They are both finite and we clearly have $\rho (V)=W(\bG,\bT)\cong V/H$.

Recall the choice of a regular embedding $\bG\leq \wbG$ and the group $E(\bG)$ acting on both $\bG$ and $\wbG$, see \Cref{ssec2C}. We set $\wt\bT=\Z(\wbG)\bT$. We form the semidirect product $\wbG\rtimes E(\bG)$ and see its elements outside of $\wbG$ as acting on $\bG$. 

Here is the slight variant of the criterion \cite[Thm~4.3]{CS18B} for \Ad\ and \Bd\ of \Cref{ssec_2D} that we will use.

\begin{prop}\label{dreg_crit}
	Let $d \geq 1$ be a regular number for $(\bG,F)$. Assume there exists an element $u \in V$ such that, denoting by $\bS$ the Sylow $d$-torus of $(\bT, uF )$, the following properties hold:
	\begin{asslist}
		\item $\rho (u)\varphi_0\in W\varphi_0$ is a $\zeta$-regular element of $W\varphi_0$ for some $\zeta\in \CC^\times$ of order $d$;
		
		\item $\rho(V_d) =W_d$ with $\II Vd@{V_d} := V ^{uF}$ and $\III{W_d}:= \Cent_W (\rho(u)\varphi_0)$.
		\item Set $\II{Vhatd}@{\protect{\wh V}_d}:=(V E(\bG) )^{uF}/\spa{uF} \geq V_d\geq \III{H_d}:= H^{uF}$. There exists an extension map $\Lambda_0$ with respect to $H_d \unlhd V_d$ such that 
		\begin{enumerate}
			\item[(iii.1)] $\Lambda_0$ is $\wh V_d$-equivariant; and
			\item[(iii.2)] if $E(\GF)$ is not cyclic, then for any $\la \in \Irr(H_d)$, $\Lambda_0(\la)$ extends to $(\wh V_d)_\la$.
		\end{enumerate}
		\item Set $ {T:=\bT^{uF}}=\Cent_{ \bG}(\bS)^{uF}$, $ N:=\NNN_{\bG }(\bS)^{uF}$, $\wh N:=\NNN_{\bG E(\bG)}(\bS)^{uF}/\spa{uF}$ and $\wh W_d :=\wh N/T$. 
		For $\wt \xi\in \Irr(\wt \bT^{uF} )$ set $W_{\wt \xi} := N_{\wt \xi}/T\unlhd \wh K(\wt \xi) := ({ \wh W_d})_{\restr\wt \xi|{T}}$. Then for every $\wt\xi \in \Irr(\wt \bT^{uF} )$, \maex holds for $W_{\wt \xi}\unlhd \wh K(\wt \xi)$.
		\item Maximal extendibility holds with respect to $N \unlhd \wt N :=\NNN_{\wbG}(\bS)^{uF}$.
	\end{asslist}
	Then conditions \Ad{} and \Bd{} from \ref{cond_Ad} and \ref{cond_Bd} are satisfied by $(\bG,F)$. 
\end{prop}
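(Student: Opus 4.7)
The plan is to adapt the proof of the analogous \cite[Thm~4.3]{CS18B}, of which this proposition is a slight variant. By Lang's theorem, choosing $g \in \bG$ with $g^{-1}F(g) = u$ yields an isomorphism $\bG^{uF} \xrightarrow{\sim} \bG^F$ via conjugation by $g$, compatible with the embedding $\bG \leq \wbG$ and the action of $E(\bG)$, and carrying the Sylow $d$-torus $\bS$ of $(\bT, uF)$ to a Sylow $d$-torus of $(\bG, F)$. Hence it suffices to verify \Ad\ and \Bd\ for the pair $(\bG, uF)$, i.e.\ with the groups $T, N, \wt C, \wt N, \wh N$ of the proposition taking the role of those appearing in \Cref{ssec_2D}.

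Since $d$ is regular one has $\Cent_\bG(\bS) = \bT$, so $T = \bT^{uF}$, and hypothesis (ii) yields the factorizations $N = T V_d$, $\wt N = \wt C V_d$, $\wh N = T \wh V_d$, all with intersection $T \cap V_d = T \cap \wh V_d = H_d$, inducing $N/T \cong V_d/H_d \cong W_d$. This reduces the study of $\Irr(N)$ and $\Irr(\wt N)$ via Clifford theory for the abelian normal subgroups $T \leq \wt C$ to extension problems along $H_d \unlhd V_d \leq \wh V_d$ that are governed by the map $\Lambda_0$ of (iii).

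For \Bd(b), I would construct $\wt\Lambda$ as follows: given $\wt\xi \in \Irr(\wt C)$, set $\xi_0 := \restr{\wt\xi}|{H_d}$; by (iii) the character $\Lambda_0(\xi_0)$ extends $\xi_0$ to $(V_d)_{\xi_0}$, so its restriction to $(V_d)_{\wt\xi}\leq (V_d)_{\xi_0}$ extends $\xi_0$ there, and since $\wt C$ is abelian, \Cref{ExtCrit}(e) supplies a common extension of $\wt\xi$ and of this restriction to $\wt C (V_d)_{\wt\xi} = \wt N_{\wt\xi}$, which I take to be $\wt\Lambda(\wt\xi)$. The required $\Lin(\wbG^F/\bG^F) \rtimes \wh N$-equivariance follows from the $\wh V_d$-equivariance of $\Lambda_0$ in (iii.1) together with the fact that linear characters of $\wbG^F/\bG^F$ restrict trivially to $T \supseteq H_d$. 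Hypothesis (v) is maximal extendibility for $N \unlhd \wt N$ in \Bd(a); the one for $\wt C \unlhd \wt N$ is an immediate consequence of the existence of $\wt\Lambda$.

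For \Ad, I would choose a $\wh N$-stable $\wt N$-transversal in $\Irr(\wt C)$ and take restrictions to $T$ to obtain a $\wh N$-stable set of representatives for the $\wt N$-orbits on the relevant elements of $\Irr(T)$. For each representative $\xi := \restr{\wt\xi}|{T}$, Clifford theory together with the extension map just constructed identifies $\Irr(N \mid \xi)$ with $\Irr(W_\xi)$ where $W_\xi := N_\xi / T$; using (iv) I can pick a $\wh K(\wt\xi)$-stable transversal of $\Irr(W_\xi)$ whose members extend to their stabilizer in $\wh K(\wt\xi)$. Assembling these local choices gives a $\wh N$-stable $\wt N$-transversal in $\Irr(N)$, and the required extendibility to $\wh N$-stabilizers follows by combining those extensions with (iii.2) in the case when $E(\bG^F)$ is not cyclic, respectively \Cref{ExtCrit}(a) in the cyclic case. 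The main obstacle is keeping all the equivariances coherent through the Clifford correspondences and the transfer from $F$ to $uF$; this is however essentially bookkeeping once (iii)--(v) are in hand, which is precisely why only a slight adaptation of \cite[Thm~4.3]{CS18B} is needed.
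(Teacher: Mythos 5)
Your treatment of \Bd{} is essentially the paper's: (v) gives maximal extendibility for $N\unlhd\wt N$, and your construction of $\wt\Lambda$ from $\Lambda_0$ (restrict to $H_d$, apply $\Lambda_0$, take the common extension with $\wt\xi$, which is unique once its restriction to $(V_d)_{\wt\xi}$ is prescribed) is a hands-on version of what the paper gets by citing \Cref{Ext_f}. The problem is in \Ad{}, where your sketch misses the mechanism that makes hypothesis (iv) usable. First, a conceptual slip: since $\wt\bT$ is abelian, $\wt T$ centralizes $T$ and acts trivially on $\Irr(T)$, so ``restrictions to $T$ of a transversal in $\Irr(\wt C)$'' do not detect the $\wt N$-orbits at all; the whole action of $\wt T$ on $\Irr(N\mid\xi)$ is by twisting, namely $\Pi(\xi,\eta)^t=\Pi(\xi,\eta\nu_t)$ where $\nu_t\in\Lin(N_\xi/N_{\wt\xi})$ is defined by $\Lambda(\xi)^t=\Lambda(\xi)\nu_t$, and $t\mapsto\nu_t$ is surjective onto $\Irr(N_\xi/N_{\wt\xi})$. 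Consequently the ``transversal of $\Irr(W_\xi)$'' you need is a transversal for this multiplication action of $\Lin(W_\xi/W_{\wt\xi})$, not a conjugation-stable set, and it is not something (iv) hands you directly.

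Producing it is the actual core of the proof: for $\eta\in\Irr(W_\xi)$ one takes a constituent $\eta_0$ of $\restr\eta|{W_{\wt\xi}}$, uses (iv) to find an extension $\wt\eta_0$ of $\eta_0$ to $(W_\xi)_{\eta_0}$ which extends further to $\wh K(\wt\xi)_{\eta_0}$, and replaces $\eta$ by $\eta':=\wt\eta_0^{W_\xi}$; the surjectivity of $t\mapsto\nu_t$ is what guarantees that $\chi_0:=\Pi(\xi,\eta')$ is a $\wt T$-conjugate of $\chi$. One must then verify the stabilizer factorization $(\wt N\wh N)_{\chi_0}=\wt N_{\chi_0}\wh N_{\chi_0}$ by an explicit computation: writing $x=\wt n\wh n$ with $\wt n=tn$, the relation $\eta'=(\eta'\nu_t)^{\wh n}$ forces $\wh n$ to stabilize $\restr\eta|{W_{\wt\xi}}$, hence $\wh n T\in W_\xi\wh K(\wt\xi)_{\eta_0}$, and the $\wh K(\wt\xi)_{\eta_0}$-invariance of $\eta'$ then gives the factorization; finally the extension of $\chi_0$ to $\wh N_{\chi_0}$ comes from inducing a common extension of $\wt\eta_0$ built from the further extension to $\wh K(\wt\xi)_{\eta_0}$, with (iii.2) (or \Cref{ExtCrit}(a) in the cyclic case) entering through the extendibility of $\Lambda(\xi)$ itself. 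None of this is ``bookkeeping'': without identifying the $\wt T$-action with the $\nu_t$-twists and running this Clifford-theoretic replacement $\eta\rightsquigarrow\eta'$, your assembled set is neither known to be an $\wt N$-transversal nor known to consist of characters with the required stabilizer and extension properties.
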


\begin{proof} We abbreviate $\II Ttilde@{\wt T}:=\wt \bT^{uF}$. The first point of the definition of \Bd{} in \Cref{cond_Bd} is given by (v). For the second point in \Bd, i.e., an equivariant form of \maex for $\wt T\unlhd \wt N$, it is a consequence of the assumptions (i), (ii) and (iii), as explained in the proof of {\cite[Thm~4.2]{CS18B}}, see also \Cref{Ext_f}.

	We now turn to \Ad. Note that the proposition reproduces {\cite[Thm~4.3]{CS18B}} verbatim except for (iv). We follow below the original proof of \Ad\ combining the ones of \cite[Prop. 5.13]{CS17A} and {\cite[Thm~4.3]{CS17C}}. We only stress the differences due to our simpler assumption (iv).

Assumption (iii) along with \Cref{Ext_f} yield an $\wh N$-equivariant extension map $\Lambda$ for $T\unlhd N$, see also \Cref{ExtMapHdVd}(e) below for an alternative proof. Let us recall also that if $\wt\xi\in\Irr(\wt T)=\Lin(\wt T)$ and $\xi:=\restr\wt\xi|{T}$, then $N_{\wt\xi}\unlhd N_\xi$ and both are normal in $\wh N_\xi$. 
For every $t\in\wt T$ the character $\nu_t \in\Lin(N_\xi)$ defined by $$\Lambda(\xi)^t=\Lambda(\xi)\nu_t $$ satisfies $\nu_t (N_{\wt\xi})=1$. For a fixed $\wt\xi\in\Irr(\wt T)$ the map $t\mapsto\nu_t$ is a surjection $\wt T_{}\to \Irr(N_\xi/N_{\wt\xi})=\Lin(N_\xi/N_{\wt\xi})$ with kernel $\wt T_{\La(\xi)}$.
Clifford theory (\ref{Cliff}) provides an $\wh N$-equivariant parametrization \begin{align*}
\Pi:\ &\calP \rightarrow \Irr(N)\\ \text{ } &(\xi,\eta) \longmapsto (\Lambda(\xi)\eta)^N
\end{align*}
of $\Irr(N)$ by the set $\calP$ of $N$-conjugacy classes of pairs $(\xi, \eta)$ with $\xi\in\Irr(T)$ and $\eta\in\Irr(W_\xi)$. By the above one clearly has $\Pi(\xi,\eta)^{t}=\Pi(\xi,\eta \nu_t )$ for every $t\in\wt T$. 

Letting now $\chi\in\Irr(N)$ we must check $(\wt N\wh N)_{\chi_0} =\wt N_{\chi_0}\wh N_{\chi_0} $ for some $\wt T$-conjugate $\chi_0$ of $\chi$. 

We choose $(\xi,\eta)$ as above such that $\chi =\Pi(\xi,\eta)$. Let $\wt\xi\in \Irr(\wt T)$ be an extension of $\xi$. 
	Let $\eta_0\in \Irr(\restr \eta|{W_{\wt\xi}} )$. By the assumption (iv),
	$\eta_0$ extends to $\wh K(\wt \xi)_{\eta_0}$ and hence 
	there exists some extension $\wt \eta_0$ of $\eta_0$ to $(W_\xi)_{\eta_0}$ that further extends to $\wh K(\wt \xi)_{\eta_0}$. The character $\eta':=\wt \eta_0^{W_{ \xi}}$ is $\wh K(\wt \xi)_{\eta_0}$-invariant and irreducible thanks to Clifford theory (\ref{Cliff}), so $(\xi ,\eta ')$ is another pair as above and we can define $\chi_0:=\Pi (\xi,\eta ')\in \Irr(N)$. Note that $\chi_0$ is in the $\wt T$-orbit of $\chi$ by the surjectivity of $t\mapsto \nu_t$ recalled above. 

	Let $x\in(\wt N \wh N)_{\chi_0}$. By the properties of $\Pi$ we can assume $x=\wt n \wh n$ with $\wt n\in \wt N_\xi$ and $\wh n\in \wh N_\xi$ after factoring out an element of $N$. 
	Then $\wt n=tn $ with $t\in \wt T$ and $n\in N_\xi$. 
	Arguing as in the proof of \cite[Prop. 5.13]{CS17A} we get 
	\begin{align*}
		\chi_0=\Pi(\xi,\eta')=\Pi(\xi,\eta')^{\wt n \wh n}=&\ \Pi (\xi , (\eta'\nu_t )^{n \wh n}) \\
	=&\ \Pi (\xi , (\eta'\nu_t )^{ \wh n}) \text{ using }(\eta')^n=\eta'\text{ since }n\in N_\xi.
		\end{align*} This implies $\eta'= (\eta'\nu_t)^{ \wh n}$ and therefore $\wh n$ stabilizes $\restr \eta|{W_{\wt \xi}}$. 
	Accordingly $\wh n\in \wh N_{ \xi}$ and $\wh n T\in W_\xi \wh K(\wt \xi)_{\eta_0}$. 
	By what was recalled above, $\eta'$ is $\wh n$-invariant. This yields $\Pi(\xi,\eta')^{\wh n}=\Pi(\xi,\eta')$ and therefore 
	$x\in \wt N_{\chi_0}\wh N_{\chi_0}$ as claimed.
	
	It remains to show that $\chi_0$ extends to $\wh N_{\chi_0}$. By assumption $\eta_0$ extends to $\wh K(\wt \xi)_{\eta_0}$ and hence there exists some extension $\wh \eta_0$ of $\wt \eta_0$ to $(\wh N_{\chi_{{}_0},\xi})_{\eta_0}/T$ since the latter is a subgroup of $\wh K(\wt \xi)_{\eta_0}$. The induced character $(\wh \eta_0)^{\wh K(\wt \xi)}$ is the extension of $\eta'$ required in the end of the proof of \cite[Thm 4.3]{CS17C} and denoted there as Res$^{{\wh N}_\xi}_{{\wh N}_{\xi ,\eta}}(\wh \eta)\in\Irr ({{\wh N}_{\xi ,\eta}})$.
	\end{proof}
	
\subsection{Doubly regular numbers and Sylow tori}\label{ssec_dregdef}
 
We now focus on the case when $\bG=\tDlsc(\FF)$ and $\ov \bG=\tBlsc(\FF)$ with $\bG\leq \ov \bG$, with Frobenius endomorphism $\ov F\colon \obG\to\obG$, $F=\restr \ov F|{\bG}$ and $\GF= \tDlsc^\eps(q)\leq \obG^{\ov F}=\tBlsc(q)$ as in \ref{sub2E}. As in \cite{S10a,S10b,MS16} we derive the results in type $\tD_l$ by some transfer of the analogous results in $\ov\bG$. We also assume that $d\geq 3$ satisfies the assumption of \Cref{thm:dreg}. In view of the list of regular numbers in types $\tD$ and $\tB$, see for instance \cite[Table 1]{S10a}, it corresponds to the following. 

\begin{defi}[Doubly regular integers for $\twepsDlq$]\label{def_dreg}
An integer $d\geq 3$ is called \textit{doubly regular} for $(\bG,F)$ if $d$ is regular for both $(\bG,F)$ and $(\obG, \ov F)$, i.e. $d\mid 2l$ with ratio satisfying $(-1)^{\frac{2l}d}=\eps$. \index{doubly regular}
\end{defi}

This is equivalent to the property that the centralizer in $\obG$ of a Sylow $d$-torus of $(\bG ,F)$ is a (maximal) torus.
Using \Cref{def_varphi0}, this can be checked by noting that the ratio of polynomial orders $P_{\obG,\ov F}(X)/P_{\bG,F}(X)$ is $X^l(X^l+\eps)$, not divisible by the $d$-th cyclotomic polynomial if $d$ is doubly regular.

Recall we identify the Weyl group $\ov W$ of type $\tB_l$ with $\Sym_{\pm l}$, where for a set $I\subseteq \ZZ_{>0}$, $\II Spm@{\Sym_{\pm I}}$ denotes the group of permutations $\pi$ of $I\sqcup -I$ with $\pi(-i)=-\pi(i)$ for every $i\in I$ and $\Sym_{\pm l}$ corresponds to $I=\ul$.
Recall also $W$ the normal subgroup of $\ov W$ corresponding to the Weyl group of type $\tD_l$. Then $\varphi_0$ associated with $F\colon \bG\to\bG$ is $1$ if $\eps =1$, while $\varphi_0=\ov\rho(\neins)=(1,-1)\in\ov W$ when $\eps =-1$.
We collect some statements on regular elements of $\ov W$ in the following.

\begin{lem}\label{lemreg4:8}
	Let $d\geq 1$ be an integer and $\zeta_d$ a primitive $d$-th root of unity. 
	\begin{thmlist}
		\item The set of $\zeta_d$-regular elements of $W\varphi_0$ forms a $W$-orbit. 
		\item $w_{\text{Cox}}:=(1,2,\dots ,l,-1,\dots ,-l+1,-l)$ is a Coxeter element of $\ov W$. Every regular element $w$ of $\ov W$ is conjugate to a power of $w_{Cox}$. If the order of $w$ is $d$, every $w$-orbit in $\ul\cup -\ul$ has length $d$.
		\item If $w$ is a regular element of $\ov W$ of order $d$ and $I\subseteq \ul$ is a union of $w$-orbits, then the projection $w_I$ of $w$ to $\Sym_{\pm I}$ is a regular element of order $d$ of $\Sym_{\pm I}$. All regular elements of $\ov W$ of a given order form a single conjugacy class of $\ov W$.
		\item Assume $d\geq 3$ is doubly regular for $(\bG,F)$. If $w$ is a regular element of $\ov W$ of order $d$, then it is a $\zeta_d$-regular element of $W\varphi_0$. 
	\end{thmlist}
\end{lem}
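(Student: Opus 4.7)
The plan is to combine Springer's theory of regular elements with an explicit cycle analysis of $w_{\text{Cox}}$ viewed as a $2l$-cycle. Part (a) is the coset form of Springer's theorem: the set of $\zeta$-regular elements of $W\varphi_0$ for a fixed root of unity $\zeta$ is a single $W$-conjugacy class whenever nonempty; see \cite[Thm~4.2(iv)]{Sp74} or \cite[Thm~5.9]{BroueBook}.

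For part (b), I would first verify directly that $w_{\text{Cox}}$ is a Coxeter element of $\ov W$, namely a product of the simple reflections associated with $\ov\Delta$; a short calculation shows that an appropriate ordering produces the $2l$-cycle $(1, 2, \dots, l, -1, \dots, -l)$. Its order is then the Coxeter number $h = 2l$ of type $\tB_l$. By Springer's structural result \cite[Thm~5.9]{BroueBook}, every regular element of $\ov W$ of order $d$ is $\ov W$-conjugate to $w_{\text{Cox}}^{2l/d}$. Identifying $\ul \cup -\ul$ with $\ZZ/2l\ZZ$ via $i \mapsto i$, $-i \mapsto l+i$ turns $w_{\text{Cox}}$ into translation by $1$, so that $w_{\text{Cox}}^{2l/d}$ visibly has all its orbits of length exactly $d$; the same then holds for any $\ov W$-conjugate.

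For part (c), since $I \cup -I$ is $w$-stable by assumption, the projection $w_I \in \Sym_{\pm I}$ is well-defined and of order $d$ by (b); its cycle structure on $I \cup -I$ consists of $2|I|/d$ cycles of length $d$. This cycle description identifies $w_I$ with a conjugate of an appropriate power of a Coxeter element of $\Sym_{\pm I}$, so that applying part (b) inside $\Sym_{\pm I}$ yields the regularity of $w_I$. The single-conjugacy-class assertion for $\ov W$ then follows from (b) itself, since any two regular elements of order $d$ are conjugate to $w_{\text{Cox}}^{2l/d}$.

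For part (d), given $w$ regular in $\ov W$ of order $d$, by (b) $w$ is $\ov W$-conjugate to $w_{\text{Cox}}^{2l/d}$. The sign homomorphism $\ov W \to \ov W/W \cong \{\pm 1\}$ sends a permutation to $(-1)^{|\{i \in \ul:\, w(i) < 0\}|}$; since $w_{\text{Cox}}(l) = -1$ is the only negative value taken by $w_{\text{Cox}}$ on $\ul$, its sign is $-1$, so $w$ has sign $(-1)^{2l/d} = \eps$ by the doubly-regular hypothesis. As $\varphi_0$ also has sign $\eps$ (trivial if $\eps = 1$, equal to $(1,-1)$ if $\eps=-1$), we conclude $w \in W\varphi_0$. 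Any $\zeta$-regular eigenvector of $w$ with respect to $\ov \Phi$ avoids the hyperplanes of $\Phi \subseteq \ov \Phi$, so $w$ is $\zeta$-regular in $W\varphi_0$; Springer's theorem \cite[Thm~4.2(v)]{Sp74} then gives that the regular eigenvalues of any regular element of order $d$ are exactly the primitive $d$-th roots of unity, yielding in particular $\zeta_d$-regularity. The most subtle point is the sign matching in (d) via the doubly-regular hypothesis; the rest reduces to a direct application of Springer's theory and cycle bookkeeping.
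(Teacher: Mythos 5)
Your overall route is sound and considerably more explicit than the paper's proof, which simply quotes \cite[Appendix 1]{BrMi} together with Remark 3.2 and Table 1 of \cite{S10b} and Table 1 of \cite{S10a}. Parts (a), (b) and (d) are essentially fine; in (d) your sign computation is exactly where the doubly regular hypothesis enters, though the fact that a regular element of order $d$ is $\zeta'$-regular for \emph{every} primitive $d$-th root $\zeta'$ is not literally the statement of \cite[Thm 4.2]{Sp74} — it is false for general complex reflection groups — but it does hold here because $\ov W$ is a Weyl group: $w$ and $w^k$ are conjugate for $k$ coprime to $d$ by rationality of characters, and conjugation transports regular eigenvectors.

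Part (c), however, has a genuine gap. You record of $w_I$ only that it has $2|I|/d$ orbits of length $d$ on $I\cup -I$, and you infer that $w_I$ is conjugate to a power of a Coxeter element of $\Sym_{\pm I}$. That inference fails when $d$ is even: in type $\tB$ there are two signed cycle types with exactly this orbit data, namely positive $d$-cycles (each giving two orbits of length $d$) and negative cycles of length $d/2$ (each giving one negation-stable orbit of length $d$), and only the latter is regular. For instance $(1,2)(-1,-2)\in\Sym_{\pm 2}$ has two orbits of length $2$, yet it is not regular and not conjugate to a power of the Coxeter element, whose square is $-\mathrm{id}$. So ``all orbits have length $d$'' is the necessary condition of (b), not a sufficient one, and your appeal to (b) uses it in the converse direction. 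The repair is easy but must be made explicit: since $w$ is conjugate to $w_{\text{Cox}}^{2l/d}$, for even $d$ all of its signed cycles are negative of length $d/2$ (equivalently, every $w$-orbit is stable under $x\mapsto -x$), and this finer datum is inherited by $w_I$; together with the orbit lengths it determines the signed cycle type of $w_I$ as that of the corresponding power of a Coxeter element of $\Sym_{\pm I}$, whence regularity (for odd $d$ the orbit lengths alone do suffice). This positive/negative cycle bookkeeping is precisely what the criterion cited by the paper, Remark 3.2 of \cite{S10b}, encapsulates.
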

\begin{proof} Part (a) follows from the definition, see also \cite[5.14.1]{BroueBook}. Part (b) follows from Appendix 1 of \cite{BrMi} or from Remark 3.2 of \cite{S10b}. For Part (c), recall that $d$ is a regular number for $\ov W$ and therefore $d\mid 2l$ according to Table 1 of \cite{S10a}. Now the projection $w_I$ of $w$ is a regular element of $\Sym_{\pm I}$ since the orbits of $w$ and $w_I$ on $\pm I$ have the structure required in Remark 3.2 of \cite{S10b}. Part (d) is again derived from the characterization of $\zeta_d$-regular elements given in Table 1 of \cite{S10b}. \end{proof}

\subsection{The doubly regular case. Extended Weyl groups} \label{ssec4B}

In the following, we assume $d$ to be doubly regular for $(\bG ,F)$ in the sense of \Cref{def_dreg} and show that \Cref{dreg_crit} applies in that situation to establish \Cref{thm:dreg}(a).
We check assumptions (i)-(iii) of \Cref{dreg_crit} in the present section and assumptions (iv) and (v) in the next. 

We define first an element $u\in V$. Recall that we have chosen $\varpi$ to be a primitive $\gcd(2,q-1)^2$-th root of unity, see \ref{sub2E}. 

\begin{notation} \label{notVdHd} Recall $V= \spa{ \n_\al(1) \mid \al \in \Phi(\bG,\bT)} $, $H=V\cap\bT$ and set $$V\leq \II{Voverline}@{\ov V}:= \spa{ \n_\al(1) \mid \al \in \Phi(\obG,\bT)} \leq \NNN_\obG(\bT)^{F_p}.$$
	Let $\II vtB@{\vtB}\in {\ov V}$ be chosen as in \cite[Sect. 5.A]{CS18B}, i.e.
	$$\vtB:=\big(\nn_{ e_1}(1)\nn_{ \al_2}(1)\cdots \nn_{\al_l}(1)\big)^{\frac {2l} d} \ \ \ \text{ and set }\ \ \ t=\hh_{\ul}(\omega)=\prod_{i=1} ^l \hh_{e_i}(\omega)\in \bT$$ 
 with 
	$\II omega@{\omega}\in\FFtimes$, a primitive $\gcd(2,q-1)^3$-th root of unity with $\omega^2=\varpi$. Let
	\begin{align}\label{def_u}
		\III{u}&:=\begin{cases} (\vtB)^t&\text{if } \eps =\,\,\,1,\\ (\vtB)^t\nn_{ e_1}(\varpi)\inv &\text{if } \eps=-1.\end{cases}
	\end{align}
	Let $ {V_d} = V\cap \bG^{u F}$ and $ {H_d}=V_d\cap \bT $.
%
\end{notation}

Basic properties of the Tits subgroup imply the following.

\begin{lem} \label{ovV_V}\begin{thmlist}
\item $\ov V\cap \bG =V$ and $\ov V\cap \bT=V\cap \bT=H$ is  elementary abelian of order $\gcd(2,q-1)^l$ with $V/H\cong W$, $\ov V/H\cong \ov W$.
\item $ \nn_{e_1}(1)^t= \nn_{ e_1}(1) \hh_{ e_1}(\omega^2)= \nn_{ e_1}(\varpi)=\neins.$
\item $u\in V=V^t$.
	\end{thmlist}
\end{lem}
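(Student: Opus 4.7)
The plan is to prove the three parts in order, with (a) providing the structural foundation for (c). For part (a), I would invoke the standard Tits-extension theory, giving the surjections $V\to W$ and $\ov V\to \ov W$ with kernels generated by the squares $\nn_\al(1)^2=\hh_\al(-1)$. The first task is to check that the extra generators of $\ov V\cap\bT$ coming from short roots $e_i\in\ov\Phi$ already lie in $V\cap\bT$: from the coroot identity $2e_i=(e_i+e_j)+(e_i-e_j)$ in the cocharacter lattice of $\bT$ one obtains $\hh_{e_i}(-1)=\hh_{e_i+e_j}(-1)\,\hh_{e_i-e_j}(-1)\in V\cap\bT$, so $\ov V\cap\bT=V\cap\bT=H$. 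The order of $H$ then follows by identifying $\bT[2]$ with $(\ZZ/2)^l$ in odd characteristic (trivial in characteristic $2$) and observing that since $\bG$ is simply connected, the coroots of $\Phi$ span the cocharacter lattice of $\bT$, so the $\hh_\al(-1)$ for $\al\in\Phi$ already generate all of $\bT[2]$. Finally $\ov V\cap\bG=V$ reduces to a parity argument in $\ov W/W\cong\{\pm 1\}$: for $x\in\ov V\cap\bG$ the image in $\NNN_{\obG}(\bT)/\bT$ lies in $\NNN_\bG(\bT)/\bT=W$, so picking a lift $v\in V$ of this image gives $xv^{-1}\in\ov V\cap\bT\subseteq V$ and hence $x\in V$.

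Part (b) will be a direct Chevalley computation. Conjugation by $t=\hh_\ul(\omega)$ scales each root vector $\xx_\al(s)$ by $\al(t^{-1})$. Using the coroot $e_j^\vee=2e_j$ in $\ov\Phi$ and the pairing $\spa{e_1,e_j^\vee}=2\delta_{1j}$ gives $e_1(t)=\omega^2=\varpi$. Applying this factor to the three pieces of $\nn_{e_1}(1)=\xx_{e_1}(1)\xx_{-e_1}(-1)\xx_{e_1}(1)$ yields $\nn_{e_1}(1)^t=\nn_{e_1}(\varpi)=\neins$. The intermediate form $\nn_{e_1}(1)\hh_{e_1}(\omega^2)$ comes from rewriting $\nn_{e_1}(\varpi)=\hh_{e_1}(\varpi)\nn_{e_1}(1)$ and using the commutation $\hh_\al(s)\nn_\al(1)=\nn_\al(1)\hh_\al(s^{-1})$.

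For (c) I first verify $V^t=V$: the conjugation action of $t\in\bT$ preserves each generator $\nn_\al(1)$ for $\al\in\Phi$ up to a factor in $H\subseteq V$, by a short Chevalley computation analogous to (b). To see $u\in V$ I use the signature homomorphism $\ov W\to\ov W/W\cong\{\pm 1\}$ together with (a). The image of $\vtB$ in $\ov W$ is $w_{\mathrm{Cox}}^{2l/d}$, with $w_{\mathrm{Cox}}$ a Coxeter element of signature $-1$. When $\eps=1$ the exponent $2l/d$ is even so the image lies in $W$, hence $\vtB\in\ov V\cap\bG=V$ by (a), and $u=\vtB^t\in V^t=V$. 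When $\eps=-1$ the exponent is odd and $\vtB\notin V$, but $\neins$ also has signature $-1$, so $u=\vtB^t\,\neins^{-1}$ has trivial signature and lies in $V$ by (a).

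The hardest step will be part (a): the equality $\ov V\cap\bT=V\cap\bT$ and the order count $|H|=\gcd(2,q-1)^l$ both hinge on the short-root-coroot decomposition and on the fact that the coroots of $\Phi$ span the cocharacter lattice of $\bT$ (which uses that $\bG$ is simply connected).
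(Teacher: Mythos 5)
Parts (a), (b) and the $\eps=1$ half of (c) are sound and run essentially along the paper's lines; your coroot identity $\hh_{e_i}(-1)=\hh_{e_i+e_j}(-1)\,\hh_{e_i-e_j}(-1)$ replaces the paper's cardinality count for $\ov V\cap\bT=V\cap\bT$, which is a harmless (indeed more explicit) variation.

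The gap is in the $\eps=-1$ case of (c). You argue that $u=\vtB^t\,\neins^{-1}$ has image in $W$ ("trivial signature") and conclude that $u\in V$ "by (a)". But (a) states $\ov V\cap\bG=V$: to invoke it you must first know $u\in\ov V$, and the signature computation only yields $u\in\NNN_{\bG}(\bT)$, i.e.\ membership in $\bG$, not in $\ov V$. Neither factor of your product lies in $\ov V$: conjugation by $t$ does not preserve $\ov V$ (by your own part (b), $\nn_{e_1}(1)^t=\nn_{e_1}(\varpi)=\hh_{e_1}(\varpi)\nn_{e_1}(1)$, and $\hh_{e_1}(\varpi)$ has order $4$ for odd $p$, so it is not in $H=\ov V\cap\bT$), hence $\vtB^t\notin\ov V$ in general, and likewise $\neins\notin\ov V$ when $p$ is odd. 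As written, the inference "image in $W$ implies membership in $V$" is false for elements of $\NNN_{\bG}(\bT)$ — any element of $\bT\setminus H$ is a counterexample — so this step does not go through.

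The repair is exactly the regrouping the paper performs. By (b), $\neins=\nn_{e_1}(1)^t$, so $u=\vtB^t\bigl(\nn_{e_1}(1)^t\bigr)^{-1}=\bigl(\vtB\,\nn_{e_1}(1)^{-1}\bigr)^t$. The element $\vtB\,\nn_{e_1}(1)^{-1}$ does lie in $\ov V$ (both factors do), and your signature argument shows its image in $\ov W$ lies in $W$; hence it lies in $\ov V\cap\bG=V$ by (a) (equivalently, $\vtB\in\nn_{e_1}(1)V$ and $\nn_{e_1}(1)$ normalizes $V$). Since you did verify $V^t=V$, this gives $u\in V^t=V$, completing the case $\eps=-1$.
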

\begin{proof} From the Chevalley relations or \cite{T}, we know for any type that the group $V=\spann<\nn_\al(1)\mid \al\in\Phi(\bG,\bT)>$ is generated by the $\nn_\delta(1)$ for $\delta\in \Delta$ in the notation of \Cref{ssec2C}. Those elements satisfy $\nn_\delta(1)^2=\hh_\delta(-1)$ and the braid relations. From the Coxeter presentation of $W(\bG,\bT)$ it is then easy to see that $H$ is the direct product $\Pi_{\delta\in \Delta}\spa{\hh_\delta(-1)} $,  with $V/H\cong W(\bG,\bT)$ by $\rho$. We get (a) by applying this simultaneously to $\bG$ and $\obG$, noting that $\ov V\cap \bT\geq V\cap \bT$ but have same cardinality. 
	
	(b) is clear from the definitions given in \Cref{ssec2C} and $e_i(t)=\omega^2=\varpi$ when $e_i$ ($i\in\ul$) is seen as an element of $ \Phi(\obG,\bT)\subseteq X(\bT)$. One shows similarly that $\nn_{\al}(1)^t=\nn_\al(\pm 1) $ when $\al =\pm e_i\pm e_j$ with $i\neq j$ in $\ul$. This implies $V^t=V$. 
	
	When $\eps =1$ the assumption that $d$ is doubly regular implies that $2l/d$ is even and therefore $v_\tB\in V$ since all squares of elements of $\ov V$ are in $V$. When $\eps =-1$ we have $v_\tB\in \ov V\setminus V = \nn_{ e_1}(1) V$ since $\nn_{ e_1}(1)\nn_{ \al_2}(1)\cdots \nn_{\al_l}(1)\in \ov V\setminus V$ and $2l/d$ is odd. But then (b) implies $v_\tB .{}^t\nn_{ e_1}(\varpi)\inv \in \nn_{ e_1}(1)V\nn_{ e_1}(1)\inv =V$ and we get our claim that $u\in V^t=V$.
This finishes the proof of (c). 
\end{proof}

We now prove the requirements (i)--(iii) of \Cref{dreg_crit} in a form slightly weakened for rank 4, replacing $E(\bG)$ with $\UE(\bG)$ as defined in \Cref{sub2E}.

\begin{prop}\label{ExtMapHdVd} Let $u\in V$ be as in \eqref{def_u}.
	Let $\varphi_0$ be defined from $F$ as in \Cref{cor3_8}. 
	Then 
	\begin{thmlist}
		\item $\rho(u)\varphi_0$ is a $\zeta_d$-regular element of $W\varphi_0$ for some primitive (complex) $d$-th root $\zeta_d$ of $1$;
		\item $\rho(V_d)=\cent W{\rho(u)\varphi_0}$;
		\item there exists a $\Cent_{V \UE(\bG)}(uF)$-equivariant extension map $\Lambda_0$ with respect to $H_d\unlhd V_d$ and, 
		\item if $\eps =1$, then $\Lambda_0$ can be chosen such that for every $\la\in \Irr(H_d)$, $\Lambda_0(\la)$ extends to an irreducible character of $\Cent_{V \UE(\bG)}(u F)_\la/\spa{u F}$.
		\item Recall $\bN=\NNN_{ \bG}(\bT)$. There is a $\Cent_{\bN  \UE(\bG)}(uF)$-equivariant extension map \wrt $\bT^{uF}\unlhd\bN^{uF}$.
	\end{thmlist}
\end{prop}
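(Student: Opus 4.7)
My plan is to transfer the parallel statements for the overgroup $\obG$ of type $\tB_l$ already established in \cite[Sect.~5.A]{CS18B} to the type $\tD_l$ situation, via the inclusion $V \leq \ov V$ and the equality $H = V \cap \bT = \ov V \cap \bT$ recorded in \Cref{ovV_V}(a); the element $u$ has been rigged for exactly this transfer.

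For (a), since $t \in \bT$, conjugation by $t$ preserves every $\bT$-coset in $\NNN_\obG(\bT)$, so $\rho(u) = \rho(\vtB)$ when $\eps = 1$, and $\rho(u) = \rho(\vtB)\rho(\neins)^{-1}$ when $\eps = -1$. In the latter case $\rho(\neins)^{-1} = (1,-1)^{-1} \in \ov W$ exactly cancels $\varphi_0 = (1,-1)$, giving $\rho(u)\varphi_0 = \rho(\vtB)$ in $\ov W$ in both cases. Since $\rho(\vtB)$ is by construction a $(2l/d)$-th power of a Coxeter-type element, \Cref{lemreg4:8}(b)--(d) identifies it as a regular element of $\ov W$ of order $d$, hence $\zeta_d$-regular for $W\varphi_0$. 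For (b), the inclusion $\rho(V_d) \subseteq \cent W{\rho(u)\varphi_0}$ is immediate from the definitions, and the reverse inclusion follows by combining the known equality $\rho(\ov V \cap \obG^{u\ov F}) = \cent {\ov W}{\rho(u)\varphi_0}$ from \cite{CS18B} with a careful index-$2$ comparison: $V$ has index $2$ in $\ov V$ and $W$ in $\ov W$, and double regularity forces the $\tD$-halves on the two sides to correspond under $\rho$.

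For (c) and (d), I would invoke the extension map $\ov\Lambda_0$ of \cite[Sect.~5.A]{CS18B} for $H_d \unlhd \ov V_d$ (noting $\ov H_d = H_d$) together with its equivariance under $\Cent_{\ov V \UE(\obG)}(u\ov F)$. Restricting $\ov\Lambda_0(\la)$ to $(V_d)_\la$ yields the desired $\Lambda_0(\la)$; the main point is confirming it stays irreducible, which reduces via Clifford reciprocity to an index-$2$ computation, while $\Cent_{V\UE(\bG)}(uF)$-equivariance is inherited. For (d), when $\eps = 1$ the stronger extendibility reduces via \Cref{ExtCrit}(a)--(b) to checking that the relevant quotient is abelian-by-cyclic, which holds thanks to the explicit structure of $\UE(\bG)$ and $V_d$. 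Part (e) is then deduced from (c) combined with the equality $\bN^{uF} = \bT^{uF} V_d$ (obtained from Lang--Steinberg in the connected torus $\bT$ together with (b)) by applying \Cref{Ext_f} to the tower with abelian quotient $\bN^{uF}/\bT^{uF} \cong V_d/H_d$.

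The main obstacle I anticipate is preserving equivariance under the type-$\tD$-specific graph automorphism $\gamma \in \UE(\bG)$ when restricting $\ov\Lambda_0$, since $\gamma$ does not lift to an element of $\UE(\obG)$. One must verify that $\ov\Lambda_0$ as constructed in \cite{CS18B} is already $\gamma$-equivariant on the subgroup $V_d$, particularly in the case $\eps=-1$ where the twist by $\neins$ intertwines $\gamma$ with an inner automorphism of $\obG$; this delicate compatibility check is presumably why part (d) is restricted to $\eps=1$.
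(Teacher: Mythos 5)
Your overall route is the same as the paper's: transfer the type $\tB_l$ results of \cite[Sect.~5]{CS18B} through $V\leq \ov V$, $H=\ov V\cap \bT$, identify $\rho(u)\varphi_0=\ov\rho(\vtB)$, and deduce (e) from (c) via \Cref{Ext_f} and $\bN^{uF}=\bT^{uF}V_d$. Parts (a) and (b) are essentially the paper's argument; just note that the reverse inclusion in (b) needs nothing about double regularity beyond what (a) already used -- it is simply that the kernel $H$ of $\ov V\to\ov W$ lies in $V$ (so the preimage of $W$ in $\ov V$ is $V$), together with $V^t=V$ from \Cref{ovV_V}(c), applied to the equality $\ov\rho(\Cent_{\ov V}(\vtB))=\Cent_{\ov W}(\ov\rho(\vtB))$ of \cite[Lem.~5.4]{CS18B} (and it is $\ov V^t$, not $\ov V$, that centralizes $uF$ after the twist by $t$, though the images in $\ov W$ coincide). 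Also, the irreducibility of $\restr{\Lambda_\tB(\la)}|{(V_d)_\la}$ that you propose to check by Clifford reciprocity is automatic: its restriction to $H_d$ is already the irreducible character $\la$.

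The genuine gap is the point you flag and then defer: equivariance of $\Lambda_0$ under all of $\Cent_{V\UE(\bG)}(uF)$, i.e.\ under a coset element $n\gamma$ when $\Cent_{V\spa{\gamma}}(uF)=V_d\spa{n\gamma}$. This is the crux of (c); it is not ``inherited'' for free, and it is not the reason (d) is stated only for $\eps=1$. The paper settles it for both values of $\eps$ as follows: by \Cref{lem_gamma}, $\gamma$ is conjugation by $\neins$, and by \Cref{ovV_V}(b) one has $\ov V^t=V\spa{\neins}$, so $V\spa{\gamma}$ acts on $\ov V^t$ by inner automorphisms; since $n\gamma$ commutes with $uF$, the element $n\neins$ commutes with $(\vtB)^t$, hence lies in $\Cent_{\ov V^t}((\vtB)^t)={\ov V_d}^t$, and therefore the action of $n\gamma$ on $V_d$ is induced by an element of ${\ov V_d}^t$, under which $\Lambda_\tB$ (hence $\Lambda_0$) is already equivariant. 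The restriction to $\eps=1$ in (d) is only because that is the case where \Cref{dreg_crit}(iii.2) is needed ($E(\GF)$ is cyclic when $\eps=-1$); (d) is then proved by extending first to $\pi_u\big((V_d)_{\Lambda_0(\la)}\spa{F_p}\big)$, using that $F_p$ acts trivially on $V_d$, and then that this subgroup has index at most $2$ in the stabilizer. Finally, the same mechanism is missing from your sketch of (e): \Cref{Ext_f} only yields $\Cent_{V\UE(\bG)}(uF)$-equivariance, and upgrading to the stated $\Cent_{\bN\UE(\bG)}(uF)$-equivariance requires $\Cent_{\bN\UE(\bG)}(uF)=\bT^{uF}\,\Cent_{V\UE(\bG)}(uF)$, which the paper proves by the same $n\neins$ computation combined with $\ov\rho({\ov V_d})=\Cent_{\ov W}(\ov\rho(\vtB))$.
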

\begin{proof} 
	Set $ {\ov V\! _d}:=\cent{\ov V}{{}^t{(uF)}}=\cent{\ov V}{v_\tB}$ as in \cite[Sect. 5.B]{CS18B}, $\ov H_d={\ov V\! _d}\cap \bT =\ov V\cap \cent{\bT}{{}^t{(uF)}} =\ov V\cap \bT^{uF}=H_d$ thanks to \Cref{ovV_V}(a). By \Cref{lem_gamma}, $W\varphi_0$ and $W\ov\rho(\neins)$ are equal when $\eps =-1$. Regarding $\rho(u)\varphi_0$ we have $\rho(u)\varphi_0=\ov\rho({}\vtB^t)=\rho({}\vtB)$ and this is a $\zeta_d$-regular element in $W\varphi_0$ since it is one in $\ov W$ as proven in \cite[Lem. 5.4]{CS18B}. Again, by \cite[Lem. 5.4]{CS18B} the groups $ \ov \bN =\NNN_\obG(\bT)$ and $ \bT$ satisfy $\ov \bN ^{\vtB F}=(\ov \bT )^{\vtB F} {\ov V\! _d}$ or equivalently $$\ov \rho({\ov V\! _d})=\cent{\ov W_{ }}{\ov \rho(\vtB)}.\eqno(1)$$ 
	
	On the other hand $V_d=\cent V {uF}= \cent {V^t} {(\vtB)^t }= (\cent {V} {\vtB })^t$ by \Cref{ovV_V}(c) and therefore $\rho(V_d)=\rho(\cent V {uF})=\cent W {\rho(\vtB) }$ as claimed. This ensures parts (a) and (b). 
	
	According to \cite[Thm 5.5]{CS18B} maximal extendibility holds with respect to $H_d=\ov H_d \unlhd {\ov V\! _d}$, hence also \wrt $H_d\unlhd 
	{\ov V\! _d}^t$. Let $\Lambda_\tB$ be an extension map \wrt $H_d\unlhd {\ov V\! _d}^t$, which can be assumed to be ${\ov V\! _d}^t$-equivariant. Since $V_d=V\cap \obG^{uF}=V^t\cap \obG^{uF}$ and $ {\ov V\! _d}^t=\ov V^t\cap \obG^{uF}$, we have $V_d\unlhd {\ov V\! _d}^t$. Then maximal extendibility holds \wrt $H_d\unlhd V_d$ with the extension map $\Lambda_0$ defined by $\Lambda_0(\la)=\restr\Lambda_{\tB}(\la)|{V_d}$ for $\la\in\Irr(H_d)$. Moreover, $\Lambda_0$ is ${\ov V\! _d}^t$-equivariant. So, to obtain our claim (c) about equivariance, it suffices to show that the automorphisms of $V_d$ induced by $\cent{V\UE(\bG) } {uF}$ are also induced by ${\ov V\! _d}^t=\Cent_{\ov V^t}{(uF)}$. Since $[F_p,\gamma]=[F_p,V]=1$ one has $\cent{V\UE(\bG) } {uF}= \cent{V\spa{\gamma}}{uF}\spa{F_p}$ where $\cent{V\spa{\gamma}}{uF} =\cent{V{}}{uF}=V_d$ or $\cent{V\spa{\gamma}}{uF}=V_d\spa{n\gamma}$ for some $n\in V$. It now suffices to show that $n\neins\in {\ov V\!_d} ^t$. Observe first that $\ov V^t=V\spa{\neins}$ by \Cref{ovV_V}(b) and therefore $V\spa{\gamma}$ acts on $\ov V^t$ by inner automorphisms. Then $n\gamma$ commutes with $uF$ and therefore with $(v_\tB)^t$ from the definition of $u$ in \Cref{notVdHd}. Then $n\neins$ commutes with $(v_\tB)^t$, therefore $n\neins\in \cent{\ov V^t}{(v_\tB)^t}={\ov V\!_d} ^t$. So the action of $n\gamma$ on $V_d$ is also induced by an element of ${\ov V\!_d} ^t$. This proves (c).

	For part (d), let $\pi_u\colon C:=\Cent_{V \UE(\bG)}(uF)\to \Cent_{V \UE(\bG)}(uF)/\spa{uF} $ be the quotient map. One has $\pi_u(C)_\la =\pi_u(C)_{\Lambda_0(\la)}$ by the equivariance of $\Lambda_0$ we have just proved. So to get our claim it suffices to show that $\Lambda_0(\la)$ extends to its stabilizer in $\pi_u(C)$. Now it extends first to $\pi_u(V_d\spa{F_p})_{\Lambda_0(\la)}=\pi_u((V_d)_{\Lambda_0(\la)}\spa{F_p})$ since $F_p$ acts trivially on $V_d$. But now $\pi_u((V_d)_{\Lambda_0(\la)}\spa{F_p})$ has index $\leq 2$ in $\pi_u(C)_{\Lambda_0(\la)}$ since $V\UE(\bG)\unrhd V\spa{F_p}$ has index 2 and therefore the intersections with $\bG^{uF}$ have index $\leq 2$ and the same holds in turn for the stabilizers of $\Lambda_0(\la)$ there. Then \Cref{ExtCrit}(a) gives our claim.

 We now turn to (e). Set $T:=\bT^{uF}$, $N:=\bN^{uF}$. Point (b) implies $T V_d=N$ while point (c) gives us the extension map $\Lambda_0$ \wrt $H_d\unlhd V_d$. According to \Cref{Ext_f}, we then get an extension map $\Lambda $ with respect to $T \unlhd N$, where for all $\la\in \Irr(T)$ and since $\restr\la|{H_d}\in\Irr(H_d)$, 
	\[ \restr \Lambda(\la)|{(V_d)_\la}= \Lambda_0(\restr\la|{H_d}) .\] 
	Since $\La_0$ is $\Cent_{V \UE(\bG)}(uF)$-equivariant, $\Lambda$ is therefore also $\Cent_{V \UE(\bG)}(uF) $-equivariant. 
	
	So to get our claim about equivariance it is enough to show that $ \cent{\bN\UE(\bG)}{uF}$ acts on $T$ by elements of 
$\Cent_{V \UE(\bG)}(uF) $.	A sufficient condition is that $\bT^{uF}\cent{V\UE(\bG)}{uF}=\cent{\bN\UE(\bG)}{uF},$ in $\bG\rtimes \UE(G)$.
We clearly have $\bT^{uF}\leq \bT^{uF}\cent{V\UE(\bG)}{uF}\leq \cent{\bN\UE(\bG)}{uF}$ and using Lang's theorem the quotient mod $\bT^{uF}$ satisfies 
\[ \cent{V\UE(\bG)}{uF}/H_d\leq \cent{W\UE(\bG)}{uF}.\] 
We have to show that this is an equality. Since $\UE(\bG)=\spa{\gamma ,F_p}$ with $F_p$ acting trivially on both $V$ and $W$, it suffices to show $\cent{V\spa{\gamma}}{uF}/H_d = \cent{W\spa{\gamma}}{uF}$. As seen before, the term on the right is $\cent{\ov W{}}{\ov\rho(uF)}=\cent{\ov W{}}{\ov\rho(v_\tB)}$. On the other hand $\cent{V\spa{\gamma}}{uF}=\cent{V\spa{\gamma}}{v_\tB^t}\ni n\gamma$ implies $n\neins\in \cent{\ov V^t}{v_\tB^t}$ in the notation of the proof of (c) above. This gives our claim by equation (1).
 \end{proof}

\subsection{The doubly regular case. Relative Weyl groups }
\label{ssec4C}
Our next step in the proof of \Cref{thm:dreg} through the criterion in \Cref{dreg_crit} leads us to study certain subgroups of the so-called relative Weyl groups $W( \la):=\norm{\bG^{uF}}{\bT, \la}$ for $ \la\in\Irr( \bT^{uF})$ and $u$ as in \Cref{notVdHd}. In the following, we ensure assumption (iv) of \Cref{dreg_crit}. 

As in \cite[Sect. 6.1--2]{CS18B}, we determine $W( \la)$ using computations in the dual group, see also the proof of \cite[Thm 3.17]{MS16}. The character $ \la$ corresponds to some semisimple element $s\in \bH^F$ which centralizes a Sylow $d$-torus. As before, we assume that $d\geq 3$ and that $d$ is doubly regular in the sense of \Cref{def_dreg}. 

Let us recall from \ref{3:3'} the notation $\ov W=\Sym_{\pm l}\unrhd W={\Sym^\tD_{\pm l}}$ and for $I\subseteq \ul$, the (parabolic) subgroup $\Sym^\tB_I$ of $\ov W=\Sym_{\pm l}$ fixing every element of $\ul \setminus I $ and stabilizing $I$.  We also recall the notation $\Sym^\tB_{\mathbb M}=\prod_{I\in\mathbb M}\Sym^\tB_I\cong \prod_{I\in\mathbb M}\Sym_I$ for any partition $M=\sqcup_{I\in\mathbb M}I$ of a subset $M\subseteq \ul$. Recall ${\Sym^\tD_{\pm I}}$ the group of elements of $W$ that fix any element of $\underline{l}\setminus I$.

The following statement on permutation groups is key for ensuring assumption \ref{dreg_crit}(iv).
\begin{prop}\label{thm6_6}
	Let $l\geq 1$, $\III{J'},\III{J''}\subseteq \ul$ disjoint (possibly empty) subsets, $\bII$ a partition of $\ul\setminus(J'\cup J'')$, 
	\[\III{P}:=\Sym^\tD_{\pm J'}\times \Sym ^\tD_{\pm J''}\times\Sym^\tB_{\bII}\leq \ov W,\]
	$\III{K}:=\NNN_{\ov W}(P)$ and $w\in K$ a regular element of $\ov W$ seen as reflection group in $\GL_{{\mathbb R}}({\mathbb R}\ov\Phi)$. Then maximal extendibility holds \wrt $\Cent_P(w) \unlhd \Cent_K (w)$. 
\end{prop}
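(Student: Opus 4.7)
The plan is to reduce the problem to a single ``isotypic block'', show that the resulting quotient $\Cent_K(w)/\Cent_P(w)$ is cyclic modulo the center, and conclude via \Cref{ExtCrit}(a).

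First, by \Cref{lemreg4:8}(b), the regular element $w$ of order $d$ has all its orbits on $\ul\cup -\ul$ of length $d$. Since $w\in K=\NNN_{\ov W}(P)$, it permutes the parts of the partition $\{J'\cup -J'\},\{J''\cup -J''\},\{I\cup-I:I\in\bII\}$ underlying $P$, and such permutations must preserve cardinality and $\tB$/$\tD$-type. Grouping the factors of $P$ into their $\spa{w,K}$-orbits yields compatible decompositions $P=\prod_\alpha P_\alpha$ and $K=\prod_\alpha K_\alpha$ (with $w=\prod_\alpha w_\alpha$), where inside each block $K_\alpha$ acts transitively on the factors of $P_\alpha$. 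Because maximal extendibility is preserved under direct products (apply \Cref{ExtCrit}(c) or argue componentwise), it suffices to establish the claim for a single block.

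Inside such a block, $P_\alpha\cong Q^k$ with $Q\in\{\Sym^\tB_m,\Sym^\tD_{\pm m}\}$, and $K_\alpha$ is a subgroup of the natural wreath product $\widetilde Q\wr \Sym_k$, where $\widetilde Q$ is either $Q$ itself or, in the $\tD$-case, the enlargement $\Sym_{\pm m}=\NNN_{\Sym_{\pm m}}(\Sym^\tD_{\pm m})$ of $Q$ by an outer involution. Let $c\in\Sym_k$ be the permutation induced by $w_\alpha$ on the factors; by the uniform orbit-length of $w$, the cycles of $c$ all share a common length $r\mid k$. Using the standard computation of centralizers in wreath products, $\Cent_{P_\alpha}(w_\alpha)$ is a product over the cycles of $c$ of diagonal subgroups of the corresponding $Q^r$, each isomorphic to $\Cent_Q(w_0)$ for a suitable $w_0\in Q$ induced by the $r$-th power of the twisted cycle. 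On the other hand the quotient $\Cent_{K_\alpha}(w_\alpha)/\Cent_{P_\alpha}(w_\alpha)$ embeds into $\Cent_{\Sym_k}(c)\cong \Cy_r\wr \Sym_{k/r}$ together with, in the $\tD$-case, a possible diagonal $\Cy_2$ coming from $\widetilde Q/Q$. The extra condition of centralizing $w_\alpha$ itself (and not merely $c$) forces the wreath-part $\Sym_{k/r}$ to collapse: any nontrivial element there would produce a nonzero $w$-eigenvector in some reflection hyperplane of $\ov W$, contradicting the regularity of $w$. The remaining $\spa{c}\cong \Cy_k$ is cyclic, and the diagonal $\Cy_2$ of the $\tD$-case, when present, acts identically on every factor of $P_\alpha$, hence is central in $\Cent_{K_\alpha}(w_\alpha)$.

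Thus in each block $\Cent_{K_\alpha}(w_\alpha)/\Cent_{P_\alpha}(w_\alpha)\,\Z(\Cent_{K_\alpha}(w_\alpha))$ is cyclic, and \Cref{ExtCrit}(a) gives maximal extendibility \wrt $\Cent_{P_\alpha}(w_\alpha)\unlhd\Cent_{K_\alpha}(w_\alpha)$; passing back through the product decomposition completes the proof. The main obstacle I anticipate is the bookkeeping for the $\tD$-type factors: pinning down when an element of $K$ mixes the $\Sym^\tD_{\pm J'}$ and $\Sym^\tD_{\pm J''}$ factors (only possible when $|J'|=|J''|$) or implements the outer automorphism of a single such factor, and verifying that any contribution to $\Cent_K(w)$ from these sources is either central or already detected by the cyclic quotient $\spa{c}$. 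The regularity of $w$, together with \Cref{lemreg4:8}(b)-(c) applied to the restriction of $w$ to each block, is what makes this dichotomy work.
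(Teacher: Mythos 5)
Your reduction to isotypic blocks and the description of $\Cent_P(w)$ as a product of diagonally embedded centralizers are in the spirit of the paper's proof, but the pivotal step of your argument is false: centralizing $w$ does \emph{not} force the wreath part $\Sym_{k/r}$ to collapse, and consequently $\Cent_K(w)/\Cent_P(w)$ is in general not cyclic modulo the centre, so \Cref{ExtCrit}(a) cannot close the proof. Regularity of $w$ only says that $w$ has an eigenvector lying outside all reflecting hyperplanes; it places no constraint of the kind you invoke on the \emph{other} elements of $\Cent_{\ov W}(w)$ (by Springer's theorem this centralizer is itself a reflection group on the $\zeta$-eigenspace, typically far from cyclic). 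Concretely, take $l=4$, $J'=J''=\emptyset$, $\bII=\{\{1,2\},\{3,4\}\}$ and $w=-\id$, a regular element of order $2$ of $\ov W=\Sym_{\pm 4}$ lying in $K$. Here the induced permutation $c$ of the two blocks is trivial, so $k/r=2$, and yet the block swap $(1,3)(2,4)$ belongs to $\Cent_K(w)=K$; one computes $\Z(K)=\spa{(1,2)(3,4),-\id}$ and $\Cent_K(w)/\Cent_P(w)\Z(\Cent_K(w))\cong \Cy_2\times\Cy_2$, which is not cyclic. (Maximal extendibility does hold in this example, but via the wreath-product criterion \Cref{ExtCrit}(c), not via \Cref{ExtCrit}(a).) This is exactly the phenomenon the paper's proof must confront: in its case (2) the centralizer is $K_0=\spa{K_O\mid O\in\cO}\rtimes S'$ with $S'\cong\Sym_{|\cO|}$ genuinely permuting the $w$-orbits of blocks, and the argument combines extendibility inside each $K_O$ (a cyclic quotient together with a central power of $w$) with \Cref{ExtCrit}(c) for the symmetric group on top; the analogous wreath step also appears in case (1b) with the factor $\spa{\tau_0}$.

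A secondary, repairable inaccuracy: uniform length of the $w$-orbits on $\ul\cup-\ul$ (\Cref{lemreg4:8}(b)) does not imply that the induced permutation $c$ of the blocks has cycles of a single length (for instance with $d=4$ and blocks of size $2$, some blocks may lie in $c$-cycles of length $1$ and others of length $2$); the paper handles this by splitting $\bII$ into the subsets $\bII_b$ and treating each separately. To repair your proof you would have to abandon the cyclic-modulo-centre claim, establish instead the semidirect decomposition of $\Cent_K(w)$ over the set of $w$-orbits of factors, prove extendibility within each orbit using the central element given by the appropriate power of $w$, and then apply \Cref{ExtCrit}(c) to the symmetric group permuting the orbits — which is essentially the published argument.
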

Note that in general $P$ is not a parabolic subgroup of $\ov W$.
\begin{proof}
		Let $d$ be the order of $w$ and set
		\[ P_0:=\Cent_P(w) \und K_0:=\Cent_K(w).\] 
Let us first determine the group $K$. Let $\tau\in \Sym^\tB_{J\cup J'}$ be such that \begin{itemize}
			\item[-] $\tau =1$ if $|J'|\neq |J''|$, or 
			\item[-] $\tau(J')=J''$, $\tau^2=1$ if $|J'|= |J''|$
		\end{itemize} 
		Let $S\leq \Sym^\tB_l$ be the subgroup of permutations $\pi$ with
		\begin{itemize}
			\item[-]$\pi (\bII)=\bII$,
			\item[-] $\pi (i)< \pi(i')$ for every $I\in \bII$ and $i,i'\in I$ with $i<i'$, and 
			\item[-] $\pi(i)=i$ for every $i \in J'\cup J''$.
		\end{itemize}
	
	Any element of $\ov W$ normalizing $P$ must have an image in the symmetric group on $\ul$ stabilizing $J'\cup J''$ and its complement, so it's easy to see that the determination of $K$ boils down to the cases where
	 $J'\cup J''=\ul$, or $J'= J''=\emptyset$. We get in the general case 
		\[K=\NNN_{\ov W}(P)= \left((\Sympm{J'}\times \Sympm{J''}) \rtimes \spann<\tau> \right ) \times \left(\prod_{I\in \bII} \spann<\Sym^\tB_{I},-\id_{I}>\right)\rtimes S\ \leq \ \Sym_{\pm (J'\cup J'')}\times \Sympm{\cup_{I\in\bII}I}.\] 
		
		It is clear that the question splits along the orbits of $K$ on $\ul$, the assumption of regularity remaining thanks to \Cref{lemreg4:8}(c). We now assume that $K$ is transitive on $\ul$ and treat separately the following cases \begin{itemize}
			\item[1a.] $J'=\ul$,
			\item[1b.] $J'\cup J''=\ul$ and $|J'|=|J''|$,
			\item[2.\ ] $J'= J''=\emptyset$ and all elements of $\bII$ have same cardinality. 
			 \end{itemize}

	(1)	 Assume now
		$J'\cup J''=\ul$, so that $$P=\Sym^\tD_{\pm J'}\times \Sym^\tD_{\pm J''}\ \unlhd \ K=(\Sympm{J'}\times \Sympm{J''}) \rtimes \spann<\tau> \ \ni \ w.$$
		
		Assume first that $ w( J'\cup -J')= J'\cup -J'$. Then we can write $w$ as $w'w''$ with $w'\in \Sym_{\pm J'}$ and $w''\in \Sym_{\pm J''}$. This leads to 
		$P_0=P_0'\times P_0'',$ 
		where $P_0':=\Cent_{\Sym^\tD_{\pm J'}}(w')$ and $P_0'':=\Cent_{\Sym^\tD_{\pm J''}}(w'')$. Set $\ov P_0':=\Cent_{\Sym_{\pm J'}}(w')$ and $\ov P_0'':=\Cent_{\Sym_{\pm J''}}(w'')$. 
		
In case (1a) above, i.e. $J'=\ul$, the sought \maex holds by \Cref{ExtCrit}(a) since $\ov P_0'/ P_0'$ is cyclic of order $\leq 2$. 

Assume now (1b), i.e. $|J'|= |J''|=l/2$. We first keep the assumption $ w( J'\cup -J')= J'\cup -J'$. According to \Cref{lemreg4:8}, $w'$ and $w''$ are regular elements of the same order of $\Sym_{\pm J'}$ and $\Sym_{\pm J''}$, respectively. As all regular elements of $\ov W$ of the same order are $W$-conjugate by \Cref{lemreg4:8}(a), there exists some involution $\tau_0\in \Sym_{\pm l}$ with $(w')^{\tau_0}=w''$. Then $K_0=(\ov P_0'\times \ov P_0'') \rtimes \spann<\tau_0>$ and \Cref{ExtCrit}(c) applies. 
		
    Assume now $ w(\pm J')=\pm J''.$ Note that $w^2$ is regular as well according to \Cref{lemreg4:8}(b). By the structure of regular elements recalled in \Cref{lemreg4:8}(b), if $P_0':=\Cent_{\Sym^\tD_{\pm J'}}(w^2)$, then $P_0'$ and $P_0$ are isomorphic via $x\mapsto x \cdot x^w$. 
		
		Set $\ov P_0':=\Cent_{\Sym_{\pm J'}}(w^2)$ and $\Delta \ov P_0':=\{ pp^w\mid p \in \ov P_0'\}$. We observe $K_0=\Cent_K(w)=\spann<\Delta \ov P_0',w>$ with $w\in \Z(K_0)$. We have maximal extendibility \wrt $P_0'\unlhd \ov P'_0$ by \Cref{ExtCrit}(a), and therefore also \wrt $P_0\unlhd \Delta \ov P_0'$ by the above description. But now, since $w\in \Z(K_0)$, any character of an intermediate group $A$ is stable under $w$ and extends to $A\spa{w}$ by \Cref{ExtCrit}(a) again. This shows \maex \wrt $P_0\unlhd K_0= \Delta \ov P_0'\spa{w}$. 
		
	(2)	 We now consider the case where $J'= J''=\emptyset$ and $K$ permutes transitively the elements of $\bII$.
	
		We have seen that $$P=\Sym_{\bII}\ \unlhd \ K= \left(\prod_{I\in \bII} \spann<\Sym^\tB_{I},-\id_{I}>\right)\rtimes S\ \ni \ w,$$ where $S\leq \Sym_{\pm \ul}$ is the subgroup preserving $\bII$ and the ordering on each subset $I\in \bII$. Note that $w$ acts on $\bII$ since $w$ stabilizes $P$. Recall that $d$ is the order of $w$. 
		
		Assume now that all $w$-orbits in $\bII$ have the same length $b$ (a divisor of $d$). Set $w':=w^{b} \in \Sym_{\pm \bII}$. Note that $w'$ is again a regular element according to \Cref{lemreg4:8}(b). For each $I\in \bII$ let $w'_I\in \Sym_{\pm I}$ be the permutation induced by $w'$. Note that $w'_I$ is a regular element of $\Sym_{\pm I}$ according to \Cref{lemreg4:8}(c), where we consider $\Sym_{\pm I}$ as a reflection group of type $\tB_{|I|}$. 
		
	The element $w^b$ can be written as the product of elements $w'_I\in \Sym_{\pm I}$ ($I\in\mathbb I$). Every $w'_I$ is a regular element of $\Sym_{\pm I}$ of order $\frac d b$. 
		We write $\cO$ for the set of $w$-orbits in $\bII$ and for every $O\in \cO$ let us fix some $I_O\in O$ . We define $\Delta_O: \Sym_{\pm I_O}\lra \Sym_{\pm \bigcup_{I\in O} I} $ by $x\longmapsto x \cdot x^w \cdot x^{w^2} \cdots x^{w^{b-1}}$.
		
		For $O\in \cO$ set $P_{I_O}:=\Cent_{\Sym_{I_O}}(w'_{I_O})$ and $P_O:=\Delta_O(P_{I_O})$. The group $P_0$ is the direct product of the groups $P_O$ ($O\in \cO$).
		
		Recall $K=\NNN_{\ov W}(P)=\left( \prod_{I\in \bII} \spa{\Sym_I, -\id_I} \right)\rtimes S$. Set $K_I:=\Cent_{\spa{\Sym_{I}, -\id_I}}(w'_I)$ and $K_O:=\spa{ \Delta_O(K_{I_O}), w_O}\leq K_0$ where $w_O$ is the projection of $w$ on $\Sym_{\pm \bigcup_{I\in O} I}$.

Since all regular elements of order $\frac db$ are conjugate in $\ov W$ thanks to \Cref{lemreg4:8}(c), we find a subgroup $S'\leq K$ such that 
	$$K_0= \spa{K_O\mid O\in \cO}\rtimes S',$$ 
where $S'\cong \Sym_{|\cO|}$.
		Note that $ w_O \in \Z(K_O)$ by the definition of $K_0$. 
		
		We can then conclude, as at the end of (1) above, that \maex holds \wrt $P_O\unlhd K_O$ since $\Delta_O(K_{I_O})/P_O \cong K_{I_O}/P_{I_O}$ is cyclic and $w_O$ is central in $K_O$. Then we get \maex \wrt $P_0\unlhd K_0$ by \Cref{ExtCrit}(c).
		
Assume not all $w$-orbits on $\bII$ have same cardinality. For $b\geq 1$ dividing $d$, let $\bII_{b}$ be the set of $I\in \bII$ such that $b$ is the smallest divisor of $d$ with $w^b(\pm I)= \pm I$.
	Then $K_0$ is the direct product of groups $K_{\bII_{b}}:=K_0\cap \Sym_{\pm \bII_{b}}$ ($b\geq 1$) while $P_0$ is the direct product of its intersections with those factors. We are then reduced to the case of a single $b$ treated above.
	\end{proof}

We are now back with $(\bG, F)$, $\bT$ and a doubly regular $d\geq 3$ as in \Cref{ssec4B}. Recall also the regular embedding $\bG\leq \wbG=\Z(\wbG)\bG$ with $F$ extended to $\wbG$ and $\wt\bT=\Z(\wbG)\bT$. The element $u\in \norm{\bG}{\bT}$ from \Cref{notVdHd} is chosen such that $\bT$ is the centralizer of a Sylow $d$-torus of $(\bG ,uF)$ by \Cref{def_varphi0}. 

We show below that in this case of a doubly regular $d$ the above proposition about centralizers in Weyl groups essentially implies the point (iv) in \Cref{dreg_crit}. The proof goes through a recasting of the question inside the dual group following ideas from \cite[\S 3.D]{MS16}, \cite[\S 6]{CS18B}.
\begin{cor} \label{thm41} 
	Let $\wt \la\in \Irr(\wt\bT^{uF})$. Set $\bW:=\NNN_{ \bG{}}(\bT)/\bT \unlhd \ov\bW:=\NNN_{ \bG\spa{\gamma}}(\bT)/\bT $ and $\la:=\restr\wt\la|{\bT^{uF}}$. 
	 Then \maex holds for $\bW{}^{uF}_{\wt\la}\unlhd {\ov \bW^{uF}_\la}$. 
\end{cor}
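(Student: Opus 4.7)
The strategy is to translate the question into a statement on centralizers in Weyl groups via the duality between $(\bG,F)$ and $(\bH,F)$, and then appeal to \Cref{thm6_6}. This follows the dual-group approach used for other classical types in \cite[\S 6.1--2]{CS18B}, adapted here to the doubly regular setting.

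First I would set up the dual picture. Since $\wbG$ has connected center, the dual $\wbH$ is simply connected, and the linear character $\wt\la \in \Lin(\wt\bT^{uF})$ corresponds via standard torus duality to a semisimple element $\wt s$ in the torus of $\wbH$ dual to $\wt\bT$. Its image $s \in \bH^F$ under the isogeny $\wbH \to \bH$ is the semisimple class attached to $\la=\restr\wt\la|{\bT^{uF}}$. Under the $\ov\bW\cong \ov W$-action on $\bT$ and the induced action on $\Irr(\bT^{uF})$, the stabilizer of $\la$ matches the stabilizer $(\ov W)_s$ of $s$ in $\ov W$, and similarly for $\wt\la$ and $\wt s$. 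By \Cref{ExtMapHdVd}(a), the element $\tilde w := \rho(u)\varphi_0 \in \ov W$ is $\zeta_d$-regular of order $d$, and passing to $uF$-fixed points in the Weyl group gives $\bW^{uF}\cong \Cent_W(\tilde w)$ and $\ov\bW^{uF}\cong \Cent_{\ov W}(\tilde w)$. Combining these identifications yields $\bW^{uF}_{\wt\la}\cong \Cent_{W_{\wt s}}(\tilde w)$ and $\ov\bW^{uF}_\la\cong \Cent_{(\ov W)_s}(\tilde w)$.

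Next I would identify $W_{\wt s}$ and $(\ov W)_s$ explicitly. Since $d$ is regular, $\bT^{uF}$ is (conjugate to) the torus $\Cent_\bH(\bS)^{uF}$ for a Sylow $d$-torus $\bS$, so \Cref{cor3_8}(a) applies to $s$; after conjugation by a suitable inner automorphism of $\bH$ the stabilizer $W_{\wt s}$ becomes $P = \Sym^\tD_{\pm I_1}\times \Sym^\tD_{\pm I_\varpi}\times \Sym^\tB_{\bII}$, using that $\wbH$ simply connected forces $\Cent_{\wbH}(\wt s)$ to be connected with Weyl group exactly $P$. For the $\ov W$-stabilizer of $s$, I would adapt the analysis of \Cref{3:5} by tracking which sign changes and coordinate permutations fix $s$ after the central collapse $\bG \to \bH = \bG/\Z(\bG)$: the result is $(\ov W)_s = \Sym_{\pm I_1}\times \Sym_{\pm I_\varpi}\times \prod_{\zeta\ne \pm 1,\pm\varpi}\Sym^\tB_{I_\zeta}$, a subgroup of $K := \NNN_{\ov W}(P)$ containing $P$ with quotient $(\ov W)_s/P\cong (\Z/2)^2$.

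Finally, \Cref{thm6_6} applied to $P$, $K$ and the regular element $\tilde w\in K$ yields maximal extendibility with respect to $\Cent_P(\tilde w)\unlhd \Cent_K(\tilde w)$. The pair we actually want, $\Cent_P(\tilde w)\unlhd \Cent_{(\ov W)_s}(\tilde w)$, concerns an intermediate subgroup. One transfers \maex down via \Cref{ExtCrit}(b) after observing that $\tilde w\in (\ov W)_s$ preserves the block decomposition of $(\ov W)_s$ (since it fixes $s$, hence stabilizes each eigenvalue class $I_\zeta$), so the problem splits as a direct product over blocks, with each factor having cyclic quotient $\Sym_{\pm I}/\Sym^\tD_{\pm I}$ of order $\leq 2$ handled directly by \Cref{ExtCrit}(a), or trivial quotient in the $I_\zeta$ blocks with $\zeta\ne\pm 1,\pm\varpi$. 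The main obstacle is the dual-group bookkeeping for the exact form of $(\ov W)_s$ — a careful but routine adaptation of \Cref{3:5} to the action in $\ov W$ — together with verifying that $\tilde w$ respects the block structure, which follows from it fixing $s$.
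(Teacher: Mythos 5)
Your route is the same as the paper's: transport $(\wt\la,\la)$ by duality to a pair $(\wt s,s)$, identify $\bW^{uF}_{\wt\la}$ with $\Cent_P(u\varphi_0)$ where $P=\Cent_\bW(\wt s)$ is the Weyl group of $\Cent^\circ_\bH(s)$ (whose shape is \Cref{cor3_8}(a)), identify $\ov\bW^{uF}_\la$ with $\Cent_{(\ov W)_s}(u\varphi_0)$, and feed this into \Cref{thm6_6}. Two small points on the set-up: $\wbH=\wbG^*$ is not simply connected — connectedness of $\Cent_{\wbH}(\wt s)$ comes from $[\wbH,\wbH]$ being simply connected, i.e.\ from $\Z(\wbG)$ being connected — and the regularity of $u\varphi_0$ as a reflection-group element of $\ov W$ of type $\tB_l$ (which is what \Cref{thm6_6} requires) uses that $d$ is doubly regular, not merely the $\zeta_d$-regularity in $W\varphi_0$ from \Cref{ExtMapHdVd}(a).

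Your final paragraph, however, rests on two claims that are false in general. First, $(\ov W)_s$ is not $\Sym_{\pm I_1}\times\Sym_{\pm I_\varpi}\times\prod_\zeta\Sym^\tB_{I_\zeta}$ with quotient $(\ZZ/2\ZZ)^2$ over $P$: since $s$ lies in the adjoint group, any $w\in\ov W$ with $w(s')\in s'\Z(\bG)$ for a lift $s'\in\bT$ fixes $s$, and whenever the class sizes allow it there are elements with $w(s')=s'\h_{\ul}(\varpi)$, which interchange $I_1$ and $I_\varpi$ and permute the remaining classes $I_\zeta\mapsto I_{\varpi\zeta}$ (this is the $A_\bH(s)$ phenomenon behind \Cref{rem:3:2}(b)); your description omits all such block-permuting elements. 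Second, $u\varphi_0$ need not stabilize the individual classes $I_\zeta$: the relation $uF(s)=s$ only forces $u\varphi_0$ to permute them via $\zeta\mapsto\zeta^{\pm q}$, possibly combined with a central twist. This is exactly why \Cref{thm6_6} must treat the cases where the regular element swaps the two $\Sym^\tD$-blocks or permutes the $\Sym^\tB$-blocks in long orbits; if the block decomposition were preserved, that proposition would be nearly contentless. Fortunately neither claim is needed: once \Cref{thm6_6} gives \maex for $\Cent_P(u\varphi_0)\unlhd\Cent_{\NNN_{\ov W}(P)}(u\varphi_0)$, the pair you want sits in between, because $(\ov W)_s$ normalizes $P$, and \maex descends to any intermediate subgroup simply by restricting the chosen extensions — no block-splitting and no appeal to \Cref{ExtCrit}(b) (whose abelian hypothesis you do not have) is necessary. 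With that last step replaced by this one-line restriction argument, your proof coincides with the paper's, which works directly with $K=\Cent_{\ov W}(s)\leq\NNN_{\ov W}(P)$.
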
 
\begin{proof} As explained in the proof of \cite[Prop. 6.2]{CS18B}, the duality between $\wbG\geq\bG$ and $\wbH\twoheadrightarrow\bG^*=\bH$ with Frobenius endomorphism $uF$ allows us to associate $(\bT,\la)$ to $ (\bT^*,s)$ and $(\wt\bT,\wh\la)$ to $ (\wt\bT^*,\wh s)$ with $\wt s\in (\wt\bT^*)^{uF}$ and $\wt s\mapsto s$. The stabilizers $\bW^{uF}_{\wt\la}\unlhd\bW^{uF}_{\la}$ become $\Cent_\bW(\wt s)^{uF}\unlhd \Cent_\bW( s)^{uF}$, see also \cite[Cor. 3.3]{CS13}. Note that everything is now written inside $\bH$ since $\Cent_{\wbH}(\wt s)/\Z(\wH)=\Cent_\bH^\circ(s)$. We denote by $\Cent_\bW^\circ( s):=\NNN_{\Cent_\bH^\circ(s)}(\bT^*)/\bT^*$ the Weyl group of the latter. 
The automorphism $\gamma$ acts on $\bH$ and $\bG$ and we can see also $\ov\bW$ as $\NNN_{\bH\rtimes \spa{\gamma}}(\ov\bT^*)/\ov\bT^*$ such that $\ov \bW_\la^{uF}$ corresponds via duality to $\Cent_{\ov\bW}( s)^{uF}$, hence we study $\bW_\la^{uF}\lhd \ov \bW_{\la}^{uF}$ via 
$$\Cent_\bW^\circ( s)^{uF}\unlhd \Cent_{\ov\bW}( s)^{uF}.$$
Note $\bW\spa{\gamma}=\ov\bW$ is a Weyl group of type $\tB_l$. In the latter this rewrites as $\Cent_{P}(u\varphi_0)\unlhd\Cent_K(u\varphi_0)$ where $P=\Cent_\bW^\circ(s)$, $ K=\Cent_{\ov\bW}(s)$ and $P\unlhd K$. By \Cref{cor3_8}(a), $P$ has the structure studied in \Cref{thm6_6} above, as $d$ was doubly regular for $(\bG,F)$ and hence $u\varphi_0$ is a regular element of $\ov\bW$. So \Cref{thm6_6} indeed gives our claim. 
\end{proof}

	 
	\subsection{Proof of \Cref{thm:dreg}}\label{ssec_4D}
	In order to complete the proof of \Cref{thm:dreg} we now essentially have to check condition (v) of \Cref{dreg_crit} and that groups of type $\tD_4$ satisfy the condition (\textbf{iMK}) for all primes.
	
	For the first point we keep the notation of \Cref{notVdHd}.
	\begin{lem}\label{NtildeN}
	 Maximal extendibility holds \wrt $\NNN_{\bG}( \bT)^{uF}\unlhd \NNN_{\wt\bG}(\wt\bT)^{uF}$.\end{lem}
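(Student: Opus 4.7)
The plan is to first establish the structural decomposition $\wt N=N\wt T$ where $\wt T:=\wt\bT^{uF}$. Since $\wt\bG=\bG\Z(\wt\bG)$ in our regular embedding, the natural maps onto the Weyl group give $\NNN_{\wt\bG}(\wt\bT)=\NNN_\bG(\bT)\cdot\wt\bT$. Taking $uF$-fixed points and applying Lang's theorem to the connected torus $\wt\bT$ (so that both $N/T$ and $\wt N/\wt T$ surject onto $W^{uF}$) yields $\wt N=N\wt T$. Since $\wt\bT\cap\bG=\bT$ one has $N\cap\wt T=T$, hence $\wt N/N\cong\wt T/T$ is abelian. In particular $N\unlhd\wt N$ and $[\wt T,N]\leq \wt T\cap N=T$.

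Because $\wt N/N$ is abelian, maximal extendibility with respect to $N\unlhd\wt N$ is equivalent to showing that every $\chi\in\Irr(N)$ extends to its stabilizer $\wt N_\chi$ in $\wt N$ (see the proof of \Cref{ExtCrit}(b)). I would prove this by Clifford theory, using the extension map $\Lambda$ for $T\unlhd N$ provided by \Cref{ExtMapHdVd}(e). Given $\chi\in\Irr(N)$, pick a constituent $\la\in\Irr(T)=\Lin(T)$ of $\restr\chi|_T$ with $N_\chi\leq N_\la$, and write $\chi=(\Lambda(\la)\eta)^N$ for some $\eta\in\Irr(N_\la/T)$ via the bijection \eqref{Cliff}. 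Since $\wt T$ is abelian and $T\leq\wt T$, the linear character $\la$ has a linear extension $\wt\la\in\Lin(\wt T)$. The characters $\Lambda(\la)$ and $\wt\la$ then agree on the intersection $N_\la\cap\wt T=T$, so \Cref{ExtCrit}(e) applied with $X=N_\la$ and $V=\wt T$ yields a common extension $\wt{\Lambda(\la)}\in\Irr(N_\la\wt T_{\Lambda(\la)})$ of $\Lambda(\la)$ and of $\restr{\wt\la}|_{\wt T_{\Lambda(\la)}}$.

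Next, since $[\wt T,N]\leq T\leq N_\la$, one has $N_\la\cap\wt T_{\Lambda(\la)}=T$, so the inflation of $\eta$ along the canonical isomorphism $N_\la\wt T_{\Lambda(\la)}/\wt T_{\Lambda(\la)}\cong N_\la/T$ defines a character $\eta'$ of $N_\la\wt T_{\Lambda(\la)}$, and the product $\wt{\Lambda(\la)}\,\eta'$ extends $\Lambda(\la)\eta$. Inducing from $N_\la\wt T_{\Lambda(\la)}$ to $\wt N_\chi$ should yield the desired extension of $\chi=(\Lambda(\la)\eta)^N$. The main obstacle is verifying the product decomposition $\wt N_\chi=N_\chi\cdot\wt T_{\Lambda(\la)}$ after a suitable adjustment of $\wt\la$ by an element of $\Lin(\wt T/T)$, which combines the equivariance of $\Lambda$ with the commutator bound $[\wt T,N]\leq T\leq N_\la$ and a Clifford-theoretic count matching the relevant stabilizers on both sides.
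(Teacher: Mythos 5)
Your structural reductions are fine ($\wt N=N\wt T$, $N\cap\wt T=T$, $[\wt T,N]\le T$, abelian quotient), and your construction via \Cref{ExtCrit}(e) does produce an extension of $\chi=(\Lambda(\la)\eta)^N$ to the subgroup $N\wt T_{\Lambda(\la)}$. But the lemma needs an extension to $\wt N_\chi=N\wt T_\chi$, and the step you flag as the "main obstacle" — the decomposition $\wt N_\chi=N_\chi\cdot\wt T_{\Lambda(\la)}$, i.e.\ $\wt T_\chi=T\,\wt T_{\Lambda(\la)}$ — is false in general, and no adjustment of $\wt\la$ can repair it, since $\wt T_{\Lambda(\la)}$ depends only on $\la$ and not on the chosen extension $\wt\la$. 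Indeed, as in the proof of \Cref{dreg_crit}, the map $t\mapsto\nu_t$ defined by $\Lambda(\la)^t=\Lambda(\la)\nu_t$ is a surjection $\wt T\to\Lin(N_\la/N_{\wt\la})$ with kernel $\wt T_{\Lambda(\la)}$, and $\chi^t=\chi$ exactly when $\eta\nu_t=\eta$; hence $\wt T_\chi/\wt T_{\Lambda(\la)}\cong\Lin(N_\la/N_{\wt\la})_\eta$, which is nontrivial whenever $\restr\eta|{N_{\wt\la}}$ is reducible — a situation that certainly occurs for nonabelian relative Weyl groups. In that case your induced character from $N_\la\wt T_{\Lambda(\la)}$ has degree strictly larger than $\chi(1)$ and cannot extend $\chi$.

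The missing ingredient is exactly the deep input of this section: extending $\chi$ across the extra piece $\wt T_\chi/T\wt T_{\Lambda(\la)}$ is, by Clifford/Gallagher theory, equivalent to maximal extendibility with respect to $W(\wt\la)=N_{\wt\la}/T\unlhd W(\la)=N_\la/T$, which is precisely \Cref{thm41} (resting on the permutation-group statement \Cref{thm6_6} about centralizers of regular elements). Your argument never invokes this, and without it the claim is not formal: an abelian quotient together with an extension map at the torus level does not by itself force extendibility. For comparison, the paper argues top-down: it builds from $\Lambda$ a compatible extension map $\wt\Lambda$ for $\wt T\unlhd\wt N$ via \Cref{Ext_f}, writes any $\psi\in\Irr(\wt N)$ as $(\wt\Lambda(\wt\la)\eta)^{\wt N}$ with $\eta\in\Irr(W(\wt\la))$, computes $\restr\psi|{N}=(\Lambda(\la)\,\eta^{W(\la)})^{N}$ by Mackey, and concludes multiplicity-freeness of $\restr\psi|{N}$ from the multiplicity-freeness of $\eta^{W(\la)}$, which is where \Cref{thm41} is used. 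Your bottom-up route can be made to work, but only after inserting that same input at the stabilizer $\wt T_\chi$; as written, the proof has a genuine gap.
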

	\begin{proof} Let us abbreviate $T=\bT^{uF}$, $N:=\NNN_{\bG}( \bT)^{uF}$, $\wt T=\wt\bT^{uF}$, $\wt N:= \NNN_{\wt\bG}(\wt\bT)^{uF}=N \wt T$. Note that since $\wt N/N$ is abelian, maximal extendibility can be proven by ensuring that the restriction of any irreducible character of $\wt N$ to $N$ is multiplicity-free (see for example \cite[1.A]{S21D2}).
		
		Let us first recall the parameterization of $\Irr (\wt N)$. We know from \Cref{ExtMapHdVd}(e) that there is an extension map $\Lambda$ \wrt $T\unlhd N$. By \Cref{Ext_f}, the map $\Lambda$ allows us to construct an extension map $\wt\Lambda$ \wrt $\wt T\unlhd\wt N$ with the property that $\restr\wt\Lambda (\wt\la)|{N_{\wt\la}} = \restr \Lambda (\la)|{N_{\wt\la}}$ for $\wt\la\in\Irr(\wt T)$, $\la:=\restr\wt\la|{T}$ and using $N_{\wt\la}\leq N_{\la}$, see \Cref{thm41}. 
By Clifford theory (\ref{Cliff}), every $\psi\in \Irr(\wt N)$ then writes as 
		\[\psi=(\wt\Lambda (\wt\la)\eta)^{\wt N}\] 
		for some $\wt\la\in\Irr(\wt T)$ and $\eta\in\Irr(\wt N_{\wt\la}/\wt T)$. Since $\wt N=\wt T N$, we observe $\wt N=N \, \wt N_{\wt \la}$ and hence the Mackey formula shows 
		\[ \restr\psi|{N}=(\restr(\wt \Lambda (\la)\eta)| {N_{\wt\la}} )^{ N}=(\Lambda ( \la)\eta^{W(\la)} )^{ N},
		\]
		where we set $W( \la):={N_{ \la}}/T$, and see $\eta$ as a character of $W(\wt\la):={N_{\wt\la}}/T$, ${\wt N_{\wt\la}}/\wt T$ and $\wt N_{\wt \la}$, respectively.
		(This also uses the above definition of $\wt\Lambda$ from $\Lambda$.)
		
		By \Cref{thm41}, maximal extendibility holds \wrt $W(\wt\la)\unlhd W(\la)$ since the group $\ov\bW_\la^{uF}$ considered there is an overgroup for $ W(\la)$. But $W(\la)/W(\wt \la)=N_\la/N_{\wt \la}$ is abelian (see the proof of \Cref{dreg_crit}), so \maex implies that $\eta^{W(\la)}$ is multiplicity-free. But then by Clifford correspondence, the character $\left (\Lambda ( \la)(\eta^{W(\la)\\})\right )^{ N}$ is also multiplicity-free, whence our claim.
	\end{proof}

We can now complete the proof of \Cref{thm:dreg}.
\begin{proof}[Proof of \Cref{thm:dreg}] 
		Set $\bG=\tDlsc(\FF)$ ($l\geq 4$), $F:\bG\lra \bG$, $d\geq 3$ as in the assumptions. Notice first that (a) implies (b) thanks to \Cref{iMKbyAdBd} since $\bG^F/\Z(\bG)^F$ is always a simple group and \cite{ManonLie} ensures that we can take $\GF$ to play the rôle of its universal covering group. So we just verify (a) and (c).
		
		We first check (a) by establishing
	 \Ad\ and \Bd\ through \Cref{dreg_crit} whose assumptions we now review. We let $V\leq \NNN_{ \bG}(\bT)$ and $u\in V$ as in \ref{notVdHd}, so that $\rho(u)\varphi_0$ is a $\zeta_d$-regular element of $W\varphi_0$ thanks to \Cref{ExtMapHdVd}(a). Then assumptions \ref{dreg_crit}(i) and (ii) are ensured by \Cref{ExtMapHdVd}(a) and (b). Assumption \ref{dreg_crit}(v) holds according to \Cref{NtildeN} since $\Cent_\bG(\bS)=\bT$ with $\bS$ the Sylow $d$-subtorus of $(\bT,uF)$ imply $\NNN_\bG(\bS)^{uF}=\NNN_\bG(\bT)^{uF}$ and $\NNN_\wbG(\bS)^{uF}=\NNN_\wbG(\wt\bT)^{uF}$.

	 Assume now $(l,\eps)\neq (4,1)$, so that $E(\bG^F)=\UE(\bG^F)$. Then assumption \ref{dreg_crit}(iii) is ensured by \Cref{ExtMapHdVd}(c) and (d). Assumption \ref{dreg_crit}(iv) amounts to a strengthening of the above \Cref{thm41} where $\ov\bW=\NNN_{\bG\spa{\gamma}}(\bT)/\bT$ is replaced by the overgroup $\NNN_{\bG\UE(\bG)}(\bT)/\bT=\ov\bW\times \spa{F_p}$. The extendibility property is preserved since $F_p$ is central.

We now concentrate on the case of $\GF=\tD_{4,\sico}(q)$ to finish checking (a) and (c). By \Cref{CS1319}, we can content ourselves with taking a prime $\ell\nmid 6q$ for (c). For (a), thanks to \Cref{iMKbyAdBd}, we have to check \Ad \ and \Bd \ for $d=4 $, the only integer doubly regular for $\GF$ and $\geq 3$.
	The polynomial order of $(\bG ,F)$ is 
	\[P_{(\bG ,F)}(X)= X^{12} \mathbf{\Phi}_1^4 \mathbf{\Phi}_2^4 \mathbf{\Phi}_3 \mathbf{\Phi}_4^2 \mathbf{\Phi}_6 ,\] where $\II Phii@{\mathbf{\Phi}_i}$ is the $i$-th cyclotomic polynomial,
	see \cite[p. 75]{Ca85}. For primes $\ell\nmid 2q$ such that $d_\ell(q)=d\in\{3,6\}$ we observe that $\ell>3$, $\mathbf{\Phi}_d$ occurs with exponent 1 and $\mathbf{\Phi}_{d\ell^a}$ is not present for $a>1$. Then $|\GF|_\ell =\mathbf{\Phi}_d(q)_\ell$, so a Sylow $\ell$-subgroup of $\GF$ is a subgroup of a Sylow $d$-torus of polynomial order $\mathbf{\Phi}_d$, hence cyclic (see for instance proof of \cite[Prop. 25.7]{MT}). For $\ell$-blocks of quasisimple groups with cyclic defect groups a so-called \textit{inductive Alperin--McKay condition} holds according to \cite[Thm~1.1]{KoSp}. This provides us with a stronger version of the required (\textbf{iMK}), see also 2nd paragraph of \cite[Sect. 6.C]{CS18B}.

It remains to check \textbf{A}(4) and \textbf{B}(4). We resume reviewing the assumptions of \Cref{dreg_crit}, remembering that only \ref{dreg_crit}(iii) and \ref{dreg_crit}(iv) were left incomplete. Denote by $\II{gamma3}@{\gamma_3}\in E(\bG)$ a graph automorphism of order 3.
Let $u\in V$, $V_4=\Cent_V(u)\geq H_4$ from \ref{notVdHd}, and $\wh V_4:= \Cent_{V\rtimes {\spa{\gamma, \gamma_3,F_p}}}(u)/\spa{uF_q}\geq \widecheck{V}_4:= \Cent_{V\rtimes {\spa{\gamma,F_p}}}(u)/\spa{uF_q}\geq V_4$ in $\bG^F$. Note that the centralizer of $\ov\rho(u)$ in $\ov W$ is a 2-group (use \Cref{lemreg4:8}(b) to determine $\rho(u)$) while $H_4$ is also a 2-group, hence $V_4$ is also one.
		
What has been checked of assumption \ref{dreg_crit}(iii) by applying \Cref{ExtMapHdVd}(c-d) ensures that the inclusion $H_4\unlhd \widecheck{V}_4$ satisfies maximal extendibility. Now if $\la\in\Irr(H_4)$, then its extension $\widecheck{\la}$ to $(\widecheck{V}_4)_\la$ can be chosen to have the image of $\spa{F_p}$ (central) in its kernel, see the proof of \Cref{ExtMapHdVd}(d). The quotient of $\widecheck{V}_4$ by the image of $\spa{F_p}$ is a 2-group, while $\wh{V}_4/\widecheck{V}_4$ has order 1 or 3, so \cite[6.28]{Isa} implies that $\widecheck{\la}$ can be chosen to be $(\wh{V}_4 )_{\la}$-invariant. It then further extends to $(\wh{V}_4 )_{\la}$ by \Cref{ExtCrit}(a). So we get \maex for $H_4\unlhd \wh{V}_4 {}$ hence our claim since the associated extension map can always be chosen to be $\wh{V}_4$-equivariant as recalled in \Cref{defMaxExt}.

For assumption \ref{dreg_crit}(iv), by the argument used before when $E(\GF)=\UE(\GF)$ it suffices to check maximal extendibility for $\bW^{uF}_{\wt\la}\unlhd \wh\bW^{uF}_\la$ where $\wh\bW:=\NNN_{ \bG\spa{\gamma, \gamma_3}} (\bT)/\bT\unrhd \ov\bW=\NNN_{ \bG\spa{\gamma}} (\bT)/\bT $. We have \maex for $\bW^{uF}_{\wt\la}\unlhd \ov\bW^{uF}_\la$ thanks to \Cref{thm41}. But since $\ov\bW^{uF}$ is a $2$-group, we get that $\ov\bW^{uF}_\la/\bW^{uF}_{\wt\la}$ is a Sylow 2-subgroup of $\wh\bW^{uF}_\la/\bW^{uF}_{\wt\la}$ while every Sylow 3-subgroup of $\wh\bW^{uF}_\la/\bW^{uF}_{\wt\la}$ is cyclic of order 1 or 3. We then get the sought maximal extendibility for $\bW^{uF}_{\wt\la}\unlhd \wh\bW^{uF}_\la$ by applying \cite[Cor. 11.31]{Isa} and \Cref{ExtCrit}(a). 
		\end{proof}

\subsection{Extending Malle's bijection}\label{ssec_4E}
	For later applications, we construct a character correspondence extending the one given by (\textbf{iMK}) only assuming Conditions \Ad{} and \Bd{}. 

Proving (\textbf{iMK}) for $\GF$ and a prime $\ell$ not dividing $2 q$ with the choice $N=\NNN_{ \bG}(\bS)^F$ for $\bS$ a Sylow $d_\ell(q)$-torus of $(\bG,F)$ gives us a $\Gamma:=\Aut(\GF)_\bS$-equivariant bijection 
\[ \Omega_{\GF,\ell}:\Irrl(\GF)\lra \Irrl(\NNN_\bG(\bS)^F),\] 
which satisfies for every $\chi\in\Irrl(\GF)$ the relation
\[ (\GF\rtimes \Gamma_\chi, \GF,\chi)\geq_c (N\rtimes \Gamma_{\Omega_{\GF,\ell}(\chi)}, N,\Omega_{\GF,\ell}(\chi))\]
or any variant obtained by applying the Butterfly \Cref{Butterfly}.

In the setting of \Cref{thm:dreg}(b) we get the following.
\begin{cor} \label{cor3_5}
		Assume that $\ell$ is a prime with $\ell\nmid 2 q$ such that $d:=d_\ell(q)$ is doubly regular for $(\bG,F)$ and let $\bS$ be a Sylow $d$-torus of $(\bG,F)$. Set $T:=\Cent_{ \bG}(\bS)^F\unlhd N:=\NNN_{ \bG}(\bS)^F$.
		\begin{thmlist}
			\item There exists some $(\GF E(\GF))_\bS$-equivariant extension map $\Lambda$ \wrt $T\unlhd N$. Furthermore, for every $\la\in\Irr(T)$, the character $\Lambda(\la)$ extends to $(\GF E(\GF))_{\bS,\la}$. 
			\item For $\la\in \Irr(T)$ with $\ell\nmid |N/N_\la|$, the character $\chi:=\Omega_{\GF,\ell}^{-1}(\La(\la)^{N})$ belongs to $\ov\TT$, i.e. $\starStab {\wt\bG^F} | E(\GF)|\chi$ and $\chi$ extends to $\GF E(\GF)_\chi$. 
		\end{thmlist}
	\end{cor}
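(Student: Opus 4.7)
The strategy is to combine the extension map and equivariance already built in the proof of \Cref{thm:dreg} with standard Clifford-theoretic transfer, then use the character-triple equivalence encoded in $\Omega_{\GF,\ell}$ to carry the resulting stabilizer and extendibility data back to the $\GF$-side.

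For part (a), double regularity gives $\Cent_{\bG}(\bS)=\bT$, so $T=\bT^F$ and $N=\bN^F$, which Lang's theorem identifies with $\bT^{uF}$ and $\bN^{uF}$ for the twisted Frobenius $uF$ of \Cref{notVdHd}. The extension map $\Lambda$ is the one constructed in \Cref{ExtMapHdVd}(e), and its $\Cent_{\bN\UE(\bG)}(uF)$-equivariance becomes $\wh N$-equivariance after identification; the $\tD_4$ triality is handled as at the end of the proof of \Cref{thm:dreg}(a), via $2$- and $3$-Sylow considerations and \cite[Cor.~11.31]{Isa}. For the extendibility of $\Lambda(\la)$ to $(\GF E(\GF))_{\bS,\la}$: by $\wh N$-equivariance this target coincides with $\wh N_\la$. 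When $\eps=1$, the extendibility of $\Lambda_0(\restr\la|{H_d})$ supplied by \Cref{ExtMapHdVd}(d) propagates to $\Lambda(\la)$ through the construction of $\Lambda$ from $\Lambda_0$ as in \Cref{Ext_f}. When $\eps=-1$, the group $\UE(\GF)$ is cyclic, so $\wh N/N$ and the quotient $\wh N_\la/N_\la$ are cyclic, and \Cref{ExtCrit}(a) gives the extendibility directly; the $\tD_4$ triality is again treated as above.

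For part (b), set $\chi':=\Lambda(\la)^N$ (irreducible of $\ell'$-degree by the hypothesis $\ell\nmid |N/N_\la|$) and $\chi:=\Omega_{\GF,\ell}^{-1}(\chi')$. The key computations on the $N$-side are: since $\wt T$ is abelian and centralizes $T$, it fixes every character of $T$, so using $\wt N=\wt T N$ one obtains $(\wt N\wh N)_\la=\wt T\wh N_\la=\wt N_\la\wh N_\la$. The $\wh N$-equivariance of $\Lambda$ then gives $(\wt N\wh N)_{\chi'}=N\cdot(\wt N\wh N)_\la=\wt N_{\chi'}\wh N_{\chi'}$, which is the analogue on the $N$-side of the stabilizer decomposition defining $\oTT$. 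Moreover, by part (a), $\Lambda(\la)$ extends to $\wh N_\la$, and Clifford induction promotes this to an extension of $\chi'$ to $\wh N_{\chi'}=N\cdot\wh N_\la$. The character-triple relation from (\textbf{iMK}), enhanced via the Butterfly \Cref{Butterfly} and \Cref{rem_chartrip} to accommodate the larger overgroups $\wbG^F E(\GF)$ on the $\GF$-side and $\wt N\wh N$ on the $N$-side (as in \Cref{IMNiMK}), then transfers both the stabilizer decomposition and the extendibility from $\chi'$ to $\chi$, yielding $\chi\in\oTT$.

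The main obstacle lies in the bookkeeping needed to translate the $\wt N\wh N$-stabilizer decomposition for $\chi'$ into the $\wbG^F E(\GF)$-stabilizer decomposition for $\chi$ through the character-triple equivalence: one must match the $\wt T/T$-action on $\Irr(N)$ with the $\wbG^F/\GF\Z(\wbG)^F$-action by diagonal automorphisms on $\Irr(\GF)$, and verify that the Butterfly Theorem suffices to enlarge the $\geq_c$-relation supplied by (\textbf{iMK}) to these overgroups without losing the required decomposition.
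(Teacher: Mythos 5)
Your part (a) is correct and is essentially the paper's own argument: the map comes from \Cref{ExtMapHdVd}(e), the further extendibility of $\La(\la)$ comes from \Cref{ExtMapHdVd}(d) propagated through the construction in \Cref{Ext_f} together with \Cref{ExtCrit}(e) (your cyclicity shortcut for $\eps=-1$ is a harmless simplification), and triality for $\tD_4$ is treated exactly as at the end of the proof of \Cref{thm:dreg}(a). Likewise, the final transfer in (b) from the $N$-side to the $\GF$-side via equivariance of $\Omega_{\GF,\ell}$, the Butterfly Theorem and \Cref{rem_chartrip}(b),(c) is the paper's route; the ``bookkeeping obstacle'' you flag there is in fact harmless, since the stabilizer identity on the $\GF$-side follows from equivariance of $\Omega_{\GF,\ell}$ together with $\wbG^F=\GF\wt N$ and $\GF E(\GF)=\GF\wh N$, without any appeal to the $\geq_c$-relation.

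The genuine gap is the displayed chain in (b), $(\wt N\wh N)_{\chi'}=N\cdot(\wt N\wh N)_\la=\wt N_{\chi'}\wh N_{\chi'}$. Indeed $(\wt N\wh N)_\la=\wt T\,\wh N_\la$ as you say, because the abelian group $\wt T$ fixes every character of $T$; but $\wt T$ does \emph{not} fix the chosen extension $\La(\la)$: for $t\in\wt T$ one has $\La(\la)^t=\La(\la)\nu_t$, where $t\mapsto\nu_t$ is surjective onto $\Lin(N_\la/N_{\wt\la})$ with kernel $\wt T_{\La(\la)}$ (here $\wt\la\in\Irr(\wt T)$ extends $\la$; see the proof of \Cref{dreg_crit}), and by Gallagher/Clifford $\chi'^t=(\La(\la)\nu_t)^N\neq\chi'$ whenever $\nu_t\neq 1$. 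Consequently $N\cdot(\wt N\wh N)_\la=\wt N\,\wh N_\la$ contains all of $\wt N$, whereas $\wt N_{\chi'}=N\,\wt T_{\La(\la)}$ is a proper subgroup of $\wt N$ as soon as $N_{\wt\la}\lneq N_\la$; so both asserted equalities fail in general, and $\wh N$-equivariance of $\La$ alone cannot yield them, since it controls only the $\wh N$-part of a stabilizing element and says nothing about its $\wt T$-part. The decomposition $(\wt N\wh N)_{\chi'}=\wt N_{\chi'}\wh N_{\chi'}$ you need is nevertheless true, but proving it requires exactly the $\nu_t$-bookkeeping of the proof of \Cref{dreg_crit} specialized to the pair $(\la,1)$ (this is how the paper argues): given $x\in(\wt N\wh N)_{\chi'}$, adjust by an element of $N$ so that $x$ fixes $\la$, write $x=t\wh n$ with $t\in\wt T$ and $\wh n\in\wh N_\la$, and deduce from $\chi'=\chi'^x=(\La(\la)\nu_t^{\wh n})^N$ that $\nu_t=1$, i.e.\ $t\in\wt T_{\La(\la)}\leq\wt N_{\chi'}$ and $\wh n\in\wh N_\la\leq\wh N_{\chi'}$. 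Once this step is supplied, your Clifford-induction argument extending $\chi'$ from $\wh N_\la$ to $\wh N_{\chi'}=N\wh N_\la$ and the subsequent transfer to $\chi$ go through and coincide with the paper's proof.
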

	\begin{proof}
		Part (a) follows from \Cref{ExtMapHdVd}(e) in the cases where $E(\GF)=\UE(\GF)$. When $\GF =\tD_{4,\sico}(q)$, then $d=4$ and we have seen in the proof of \Cref{thm:dreg}(a) above that \maex holds for $H_4\unlhd \wh{V}_4$. As explained in the proof of \Cref{dreg_crit}, this implies our claim by 
		\Cref{ExtCrit}(e).
		For (b), set $ \la \in \Irr(T )$, $\chi':= \Lambda(\la)^{N }$. We also abbreviate $G=\GF\unlhd \wG=\wbG^F$, $\wt N :=\NNN_{\wt G }(\bS)$ and $\wh N :=\NNN_{G E(G)}(\bS)$. In the notation of the proof of \Cref{dreg_crit}, we have $\chi '=\Pi( \xi, \eta)$ for $( \xi, \eta)=(\la ,1)$. The fact that $\eta =1$ implies that this proof can be followed with both $\eta_0$, $\wt\eta_0$, and $\eta'$ being trivial and therefore the choice $\chi_0=\Pi(\la, 1)$. This gives 
		\[\starStab {\wt N }{} | {\wh N } {}| {\chi'}\] with $\chi'$ extending to $ \wh N_{\chi '}$. 
		
		As $\Omega_{G,\ell}$ is $\wt N \wh N$-equivariant, $\chi:=\Omega_{G,\ell}^{-1}(\chi')$ satisfies 
		\begin{align*}
			(\wt G E(G))_\chi &= G (\wt N \wh N)_{\chi}= 
			G (\wt N \wh N)_{\chi'} \\
			&=G (\wt N_{\chi'} \wh N_{\chi'})= 
			G (\wt N_{\chi} \wh N_{\chi})=
			(G\wt N_{\chi})\, ( G \wh N_{\chi})=
			\wt G_{\chi} E(G)_{\chi}.
		\end{align*} 
		As recalled above the bijection $\Omega_{G,\ell}$ satisfies some $\geq_c$-relation that we can take to be
		\[ ((\wt G E(G))_\chi, G , \chi)\geq_c ((\wt N\wh N)_{\chi'}, N , \chi'). \]
		According to \Cref{rem_chartrip}(c) this implies 
		\[ ( G E(G)_\chi, G , \chi)\geq_c (\wh N_{\chi'}, N , \chi'). \]
		Now, \Cref{rem_chartrip}(b) and the fact that $\chi'$ extends to $\wh N_{\chi'}$ allow us to see that $\chi$ extends to $G E(G)_\chi$.
	\end{proof}
	The groups that appear above depend on the integer $d$ but not on the prime $\ell$ leading to $d$. The condition (\textbf{iMK}) for two primes $\ell$ and $r$ with the same value $d$ give two bijections, all mapping to some characters of $N=\NNN_\GF(\bS)$. Going back to the construction of $\Omega_{\GF,\ell}$ through \cite{MaH0} and \cite[Sect. 6]{CS17A}, we associate to a fixed $d$ a character set $\calG_d\subseteq \Irr(\GF)$ and a bijection 
	\[ \Omega ': \calG_d\lra \Irr(N),\]
	such that 
	\[ (G\rtimes \Gamma_\chi, G,\chi)\geq_c (N\rtimes \Gamma_{\Omega '(\chi)}, N,\Omega'(\chi)) \text{ for every } \chi\in\calG_d \]
	and for every prime $\ell$ with $d_\ell(q)=d$, the map $\Omega'$ restricts to $\Omega_{\GF,\ell}$ on $\Irrl(N)$.

 For the construction of this map we assume $(\bG ,F)$ to be as in \Cref{ssec2C}, i.e., $\bG$ might be of type different from $\tD$. 
 \begin{defi}\label{calGd} Let $(\bG ,F)$ be as in \Cref{ssec2C} with a regular embedding $\bG\leq \wbG$. Assume we have dual groups $\wbG^*\twoheadrightarrow \bG^*$ with Frobenius endomorphisms denoted by the same letter $F$. Let $d\geq 1$ and let $\bS^*$ be a Sylow $d$-torus of $(\wbG^*,F)$.
		
		For any $\wt s\in \wbG^*_{\text{ss}}{}^F$, let $\cE(\wGF,[\wt s])$ be the associated rational series of characters of $\wbG^F$. Recall $ \UCh(\Cent_{\wbG^*}(\wt s)^F)\subseteq \Irr (\Cent_{\wbG^*}(\wt s)^F)$ the set of unipotent characters and the Jordan decomposition map \begin{align*}
			\UCh(\Cent_{\wbG^*}(\wt s)^F)&\xrightarrow{\ \ \sim\ \ } \cE(\wGF,[\wt s]),\\
			\la&\mapsto \chi_{\wt s,\la}^\wbG.
		\end{align*}

For $\bK^*$ an $F$-stable connected reductive subgroup of $\wbG^*$ containing $\bS^*$, let $\UCh_d(\bK^*{}^F)$ be the set of irreducible components of Lusztig's generalized characters $\R^{\bK^*{}}_{\Cent_{\bK^*}(\bS^*)}(\la)$ for $\la\in \UCh({\Cent_{\bK^*}(\bS^*)^F})$.

Set $$\II{Gdtilde}@{\protect{\wt{\calG}}_d} =\bigcup_{\wt s\in \Cent_{ \wbG^*}(\bS^*)^F_{\text{ss}}} \{ \chi_{\wt s,\la}^\wbG\mid\la\in \UCh_d(\Cent_{\wbG^*}(\wt s)^F)\}$$
and let 
 $$\II {Gd}@{ {\mathcal G}_d}:=\bigcup_{\wt\chi\in \wt{\calG}_d} \Irr(\restr\wt\chi|{\GF})\subseteq\Irr(\GF)$$ be the set of irreducible components of the restrictions $\restr\wt\chi|{\GF}$ for $\wt\chi\in\wt\calG_d$.
	\end{defi}

The following proposition is an adaptation of the construction recalled in \cite[Sect. 6]{CS17A}, assuming Conditions \Ad{} and \Bd{}. We give a proof in the case when $d$ is regular for $(\bG,F)$ which simplifies a bit the notation. We later apply it only for $d$ a doubly-regular number for $(\bG,F)$ and $\bG=\tDlsc(\FF)$, where the assumptions are ensured by \Cref{thm:dreg}(a).
	
	\begin{prop}\label{bij_Omega'}
		Let $(\bG,F) $ be as in \ref{ssec2C} and let $d\geq 1$. Let $\bS$ be a Sylow $d$-torus of $(\bG,F)$, $N:=\NNN_\GF(\bS)$, $\wh N:=\NNN_{\GF\rtimes E(\GF)}(\bS)$ and $\wN:=\NNN_\wGF(\bS)$. 
		\begin{thmlist}
			\item Assume Condition \Bd{}. Then there exists some $\Lin(\wGF/\GF)\rtimes \wh N$-equivariant bijective map 
			\[\wt \Omega':\wt \calG_d \lra \Irr(\wN),\] such that 
			\begin{enumerate}
				\item[(a.1)] $\Irr(\restr \chi|{\Z(\wGF)})=\Irr(\restr\wt \Omega'(\chi)|{\Z(\wGF)}) \forevery \chi\in\wt\calG_d$, and
				\item[(a.2)] $ \wt \Omega'(\Irr(\wGF\mid \Irrl(\GF)))=\Irr(\wN\mid \Irrl(N))$ for every prime $\ell$ with $d=d_\ell(q)$.
			\end{enumerate}
			\item Assume Conditions \Ad{} and \Bd{}. Set $\Gamma:=\Aut(\GF)_\bS$. Then there exists a $\Gamma$-equivariant bijective map 
			\[ \Omega':\calG_d\lra \Irr(N)\] 
			such that 
			\[ (G\rtimes \Gamma_\chi,G,\chi)\geq_c (N\rtimes \Gamma_{\Omega'(\chi)}, N,\Omega'(\chi)) \text{ for every $\chi\in \calG_d $} \]
			and $\Omega'(\Irrl(G))=\Irrl(N)$ for every odd prime $\ell$ with $d_\ell(q)=d$. 
		\end{thmlist}
	\end{prop}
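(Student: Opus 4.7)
The plan is to follow the construction of \cite[Sect. 6]{CS17A}, adapted so that the only ingredients used on the $N$-side are what is provided by Conditions \Ad{} and \Bd{}, rather than particular features of the types treated there. For clarity I describe the doubly-regular case, which is what is needed in the sequel and avoids the discussion of a cuspidal datum on the $d$-split Levi.

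First I would parametrize both sides by pairs. On the $\wG$-side, Jordan decomposition identifies $\wt \calG _d$ with the $\wGF$-classes of pairs $(\wt s, \la)$ where $\wt s\in \Cent_{\wbG ^*}(\bS ^*)^{F}_{\text{ss}}$ and $\la\in \UCh_d(\Cent_{\wbG ^*}(\wt s)^F)$. For $d$ doubly regular, $\Cent_{\wbG ^*}(\bS ^*)=\wt\bT ^*$ is a torus, so $d$-Harish--Chandra theory \cite{BMM93} provides an $F$-equivariant bijection between $\UCh_d(\Cent_{\wbG ^*}(\wt s)^F)$ and $\Irr (W^*(\wt s))$, where $W^*(\wt s):=(\NNN _{\Cent_{\wbG ^*}(\wt s)}(\wt\bT ^*))^F/\wt\bT ^{*F}$. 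On the $\wN$-side, Condition \Bd{} gives maximal extendibility of $\wt C\unlhd \wt N$ together with a $\wh N$-equivariant extension map $\wt\Lambda$, so Clifford theory (\ref{Cliff}) identifies $\Irr(\wt N)$ with the $\wt N$-classes of pairs $(\phi,\eta)$ with $\phi\in \Irr(\wt C)$ and $\eta\in\Irr(\wt N_\phi/\wt C)$, sending $(\phi,\eta)\mapsto (\wt\Lambda(\phi)\eta)^{\wt N}$. Since $\wt C$ is a torus, duality gives an $E(\GF)$-equivariant identification $\Irr(\wt C)\leftrightarrow \wt\bT ^{*F}$, $\phi\leftrightarrow \wt s$, under which $\wt N_\phi/\wt C$ corresponds exactly to $W^*(\wt s)$.

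Matching these two parametrizations defines $\wt\Omega'$: to $\chi_{\wt s,\la}^{\wbG}\in\wt\calG _d$ one associates $(\wt\Lambda(\phi_{\wt s})\eta_\la)^{\wt N}$, where $\phi_{\wt s}$ is the character of $\wt C$ dual to $\wt s$ and $\eta_\la$ is the character of $W^*(\wt s)$ labelling $\la$. The $\Lin(\wGF/\GF)\rtimes \wh N$-equivariance follows from the equivariance of Jordan decomposition \cite[Prop. 2.15]{S21D2}, the equivariance of the duality $\wt C\leftrightarrow \wt\bT ^*$, and the equivariance of $\wt\Lambda$. Property (a.1) on $\Z(\wGF)$-restrictions is the classical formula relating $\restr \chi^{\wbG}_{\wt s,\la}|{\Z (\wGF)}$ to the image of $\wt s$ in $(\wbG ^*/[\wbG ^*,\wbG ^*])^F$. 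For (a.2), a prime $\ell$ with $d_\ell(q)=d$ satisfies $|\wt N/\wt C|_\ell=|\wGF/\wt C|_\ell$, hence $\chi^{\wbG}_{\wt s,\la}$ has $\ell '$-degree if and only if $(\wt\Lambda (\phi_{\wt s})\eta_\la)^{\wt N}$ does, by comparing the classical degree formulas for $\chi^{\wbG}_{\wt s,\la}$ and for induced characters of $\wt N$, as in \cite[Thm 5.3]{MaH0}. This proves (a).

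For (b), the strategy is to apply \Cref{prop_23} with $\calG =\calG _d$, $\calN =\Irr(N)$, and $\wt\Omega$ the bijection constructed in (a): assumption (i) of \Cref{prop_23} is part of Condition \Bd{} for the $N$-side and follows on the $\wG$-side from \textbf{A}$(\infty)$ (\Cref{Julia2}) which gives maximal extendibility $\GF\unlhd \wGF$ on the relevant $\wGF$-stable set $\calG _d$; assumption (ii) is (a) together with property (a.1); assumption (iii) is \textbf{A}$(\infty)$ again; and assumption (iv) is the extension statement in Condition \Ad{}. The conclusion of \Cref{prop_23} is exactly the $\geq_c$ statement claimed. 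The restriction $\Omega'(\Irr_{\ell '}(G))=\Irr_{\ell '}(N)$ for $d_\ell(q)=d$ follows from (a.2) and the fact that Clifford correspondence and Jordan decomposition are compatible with restriction $\wGF\to \GF$ and $\wt N\to N$ up to $\wZ(\GF)$-twists, which preserve $\ell '$-degrees since $\ell\nmid |\wZ(\GF)|$.

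The main obstacle I anticipate is controlling the $\wh N$-equivariance of the $d$-Harish--Chandra bijection $\UCh_d(\Cent_{\wbG ^*}(\wt s)^F)\leftrightarrow\Irr(W^*(\wt s))$: while equivariance under the subgroup $\Cent_{\wGF}(\wt s)$ is automatic, equivariance with respect to the graph and field automorphisms (which in type $\tD$ are delicate) requires a careful choice of the parametrization at the cuspidal level. This is addressed in our setting by the doubly-regular hypothesis, which makes the cuspidal datum trivial and reduces the issue to the known equivariance of the Howlett--Lehrer--Lusztig theory of Hecke algebras for the relative Weyl groups $W^*(\wt s)$, combined with \Cref{thm:dreg}(a) giving the required extendibility on both sides.
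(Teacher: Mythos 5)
Your proposal follows essentially the same route as the paper: part (a) is obtained exactly as in \cite[Sect.~6]{CS17A} by matching the pair parametrization of $\wt\calG_d$ (Jordan decomposition plus $d$-Harish--Chandra theory in the dual group, with the cuspidal datum trivial since $\Cent_{\wbG^*}(\bS^*)$ is a torus) against the Clifford-theoretic parametrization of $\Irr(\wt N)$ furnished by the equivariant extension map of Condition \Bd{}, with the degree discussion of \cite{MaH0}/\cite{CS17A} giving (a.2), and part (b) is then an application of \Cref{prop_23} exactly as in the paper. The only slip is a citation: maximal extendibility with respect to $\GF\unlhd\wGF$ needed in \ref{prop_23}(i) is Lusztig's multiplicity-freeness theorem (\cite[Thm 15.11]{CE04}), not a consequence of $\mathbf{A}(\infty)$, but this does not affect the argument.
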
 
\begin{proof} The proof essentially follows \cite[Sect. 6]{CS17A}. 
We give a detailed proof for the case where $d$ is regular for $(\bG,F)$, allowing us to parameterise both characters via a set of pairs $\wt \cM$. This is the only case used in the paper, namely for $\bG$, $\wbG$, etc.  as defined in \Cref{sub2E} with $d$ doubly regular. 
A simplification comes from the fact that centralizers of Sylow $d$-tori are tori in both $\bG$, $\wbG$ and their dual groups. A more general case would require to deal with triples instead of pairs as parameters. 
		
Let $\wt \bT_d= \Cent_{\wbG}(\bS)$ and $\wbT_d^*=\Cent_{\wbG^*}(\bS^*)$ for $\bS^*$ a Sylow $d$-torus in $\wt\bG^*$. Both are $F$-stable tori with same type the regular element $\rho(u)\in W$ with regard to the maximally split tori in duality $\wbT$ and $\wbT^*$. So $\wbT_d$ and $\wbT^*_d$ can be taken as effecting the duality between $\wbG$ and $\wbG^*$. 

    Let $\Ispezial Mtil@\tilde {\mathcal M}@{\protect{\wt\cM}}$ be the set of pairs $(\wt s, \eta)$ where $\wt s\in \wbT_d^*{}^F$ and $\eta\in\Irr(\NNN_{\Cent_{\wbG^*}(\wt s)}(\wbT_d^*)^F/\wbT_d^*{}^F)$.
		
The so-called generalized $d$-Harish-Chandra theory, see \cite[Sect. 4.6]{GM}, implies that there is a bijection 
\begin{align*}
\UCh\colon	\Irr(\NNN_{\Cent_{\wbG^*}(\wt s)}(\wbT_d^*)^F/\wbT_d^*{}^F)&\to \UCh_d(\Cent_{\wbG^*}(\wt s)^F) \\
\eta&\mapsto \UCh(\eta)\end{align*} 
defined as $\eta\mapsto \pm I_{\wbT_d^*,1}^{\Cent_{\wbG^*}(\wt s)}(\eta ) $ in the notation of \cite[Thm~4.6.21]{GM}.

One then sets 
$$\Psi^{(G)}\colon \wt\cM\to \Irr(\wbG^F)\ \text{ by } (\wt s,\eta)\mapsto \chi^{\wbG}_{\wt s,\UCh(\eta)}.$$
		
Note that $\wt\calG_d$ is defined to be its image, taking into account that $\Cent_{ \wbG^*}(\bS^*)=\wbT_d^*$ is a torus and therefore $\UCh(\wbT_d^*{}^F)=\{1_{\wbT_d^*{}^F}\}$.
		
On the other hand Condition \Bd{} implies there exists some $\Lin(\wG/G)\rtimes \wh N$-equivariant extension map $\wt \Lambda$ \wrt $\wt C\unlhd \wt N$. Let us recall the isomorphism 
  $$\wt N_{\chi_{\wt s,1}^{\wbT_d}}/\wbT_d^F\cong \NNN_{\Cent_{\wbG^*}(\wt s)}(\wbT_d^*)^F/\wbT_d^*{}^F,$$ 
given by a well-defined duality map $i_{\wt s,1} $, see the proof of \cite[Cor. 3.3]{CS13}. For $\eta\in\Irr(\NNN_{\Cent_{\wbG^*}(\wt s)}(\wbT_d^*)^F/\wbT_d^*{}^F)$ we denote by $\eta^*=\eta\circ i_{\wt s,1}$ the corresponding character of $\wt N_{\chi_{\wt s,1}^{\wbT_d}}$. We obtain a surjective map 
\[ \Psi^{ (N)}:\wt \calM\lra \Irr(\wt N)\text{ given by }(\wt s,\eta)\mapsto (\wt \Lambda(\chi_{\wt s,1}^{\wbT_d})\eta^*)^{\wt N},\]
which makes sense by Clifford theory (\ref{Cliff}). By the considerations made in the proof of \cite[Thm 6.1]{CS17A} which only use that the characters of $\wbG^F$ involved are in $\wt\calG_d$, the maps $\Psi^{(G)}$ and $\Psi^{(N)}$ are constant on $\wt N^*$-orbits, and they are bijections once seen as on the quotient sets. Then one gets a bijection 
$$\wt\Omega '\colon \wt\calG_d\to\Irr(\wt N) \text{ with } \Psi^{(G)}(\wt s,\eta) \mapsto \Psi^{(N)}(\wt s,\eta)$$ 
and the properties announced thanks to \cite[Prop. 6.3]{CS17A} and \cite[Thm 6.1]{CS17A} whose main arguments are independent of the prime $\ell$. 

The above follows the constructions of Section 6 of \cite{CS17A} and uses the fact that $\Cent_{\wbG^F}(\bS)$ is a torus to simplify the technical construction of $\wt \Omega'$ as otherwise the parameters used are triples. If $d$ is not regular for $(\bG,F)$, the construction of $\wt \Omega'$ can be deduced from Section 6 of \cite{CS17A} in a similar manner by omitting the assumptions involving $\ell$.  Note that in both cases, as soon as $\ell$ is an odd prime with $d=d_\ell(q)$, we also have $ \Irr(\wGF\mid \Irrl(\GF))\subseteq \wt\calG_d$   and   $ \wt \Omega'(\Irr(\wGF\mid \Irrl(\GF)))=\Irr(\wN\mid \Irrl(N))$ (see the discussion on degrees in  \cite[pp 180-181]{CS17A}).
This finishes the proof of (a).

		For the proof of (b), we can apply \Cref{prop_23} thanks to Conditions \Ad{} and \Bd{}. Hence, there exists a bijection 
	\[ \Omega ':\calG_{d}\lra \Irr(N),\]
	such that 
	\[ ( G\rtimes \Gamma_\chi, G,\chi)\geq_c (N\rtimes \Gamma_{\chi}, N, \Omega'(\chi)) \text{ for every } \chi\in\calG_d.\]
	Assuming $d=d_\ell(q)$ for some odd prime $\ell$, as $ \wt\Omega'(\Irr(\wG \mid \Irrl(G )))=\Irr(\wt N\mid \Irrl(N))$ the map $ \Omega '$ satisfies $ \Omega'(\Irrl(G))=\Irrl(N)$ by Clifford theory. \end{proof}


\section{The group \texorpdfstring{$M$}{M}. Characters and Clifford theory} \label{sec_nondreg_groupM}

From now on we work with the group $\bG\cong\tDlsc(\FF)$ ($l\geq 4$), seen as a subgroup of $\ov \bG\cong \tBlsc(\FF)$ with common maximal torus $\bT$, associated root system $\ov\Phi =\Phi(\ov\bG,\bT)$ and root subgroups $\bX_\al$ ($\al\in \ov \Phi$), see \ref{sub2E}. Recall also the Frobenius endomorphism $F\colon\bG\to\bG$ such that $\GF=\tDlsc^\eps(q)$.

In this chapter we introduce first a finite subgroup $M\leq \bG$ depending on some integers $l_1$, $l_2=l-l_1$, $\eps_1=\pm 1$ and $\eps_2=\eps_1\eps$. The group $M$ has a normal subgroup $M_0$ of index $2\gcd(2,q-1)$, which is a central product $G_1.G_2$ with $G_i\cong \tD_{l_i, \sico}^{\eps_i}(q)$ in the sense of \Cref{Def_low_l}. In an analysis split into two main cases, we also introduce in \Cref{ssecE(M)} a group $\UE(M)$ acting on $M$ that will allow us to make statements similar to the condition \textbf{A}$(\infty)$ for this group $M$. This will involve changing $F$ into a slightly different $\vFq$ that proves more suitable when dealing with $d$-tori. 
Our main aim is to deduce from the knowledge of $\Irr(G_i)$, the properties of $\Irr(M)$, mainly the statement in \Cref{thm_sec_Ad}, an analogue of \Ad\ for $M$. We show later in \Cref{lem6_11} that the integers $l_1$, $\eps_1$ can be chosen so that the associated group $M $ contains $\NNN_\GvF(\bS)$ for a Sylow $d$-torus $\bS$ of $(\bG,\vFq)$. 

The present chapter is structured as follows. First, we define the group $M$ and investigate the structure of $M$ as a group, in particular, we introduce the normal subgroup $M_0\unlhd M$. Afterwards, we introduce a group $\UE(M)$ acting on $M$ in \Cref{ssecE(M)}. 

 \Cref{ssec_5B} starts with collecting some basic observations on the characters of $M$ and an approach how to study them via $M_0$, thus establishing some preliminary simplifications for the proof of \Cref{thm_sec_Ad}. Since $M_0$ is the central product $G_1.G_2$ of two groups of type $\tD$, we transfer in Section \ref{ssec5D} the results on the character sets $\oTT$, $\EE$, $\DD$ from Chapter \ref{sec_3} to the characters of the direct factors of $M_0$. This leads to a partitioning of $\Irr(M)$ into subsets along the characters of $M_0$ and splitting the proof of \Cref{thm_sec_Ad} according to those sets in Sections \ref{ssec5E} and \ref{ssec5F}. 

\subsection{The group \texorpdfstring{$M$}{M} } \label{ssec_M}
In the following we introduce a subgroup $\bM$ of $\bG= \tDlsc(\FF)$ and then set $M:=\bM^{\vFq}$, defined in a general way from the integers $\eps_1$, $\eps_2$, $l_1$ and $l_2$ as well as the prime power $q$ determining some Frobenius endomorphism $\vFq$. The conventions followed to define $\bG_1$ and $\bG_2$ in \Cref{not_groupM} are meant to limit the number of cases to review in proofs. The rôles of $\bG_1$ and $\bG_2$ are symmetric in the sense that a Sylow $d$-torus of $\bG$ will eventually be assumed to be included in one of them but both cases, $\bG_1$ or $\bG_2$ are permitted, see \Cref{sec_6.B}. The definitions made below may look arbitrary, but a glance at \Cref{tricho} and its proof can already provide an explanation.

 For a given Frobenius endomorphism $F'$ of $\bG$ recall the Lang map 
\[ \II LF'@{\calL_{F'}}:\bG\lra \bG \text{ given by }x\mapsto x^{-1}F'(x).\] 

\begin{notation}\label{not_groupM}
    Let $ \eps_1,\eps_2\in \{\pm 1\}$ and $l_1,l_2\geq 1$ with $\eps=\eps_1\eps_2$ and $l_1+l_2=l\geq 5$. Assume that 
	\begin{asslist}
		\item $\eps_1=-1$ if $\eps=-1$; and 
		\item $2\mid l_1$ if $2\nmid l$ and $(\eps_1,\eps_2)=(-1,-1)$.
	\end{asslist}
	For $\II{J1}@{J_1:=\underline{l_1}}$, $\II{J2}@{J_2:=\underline l \setminus J_1}$ and $i\in \{1,2\}$, let $\II{Rioverline}@{\protect{\overline R}_i}:= \ov \Phi\cap \spa{e_j\mid j \in J_i}_\ZZ$, $\II{Ri}@{R_i}:=\ov R_i\cap \Phi$, $\II{Ti}@{\bT_i}:= \bT\cap \spa{\bX_\al\mid \al\in \ov R_i}$, and
	\[ \II{Gi}@{\bG_i}:=\begin{cases} \bT_i&\text{if }l_i= 1,\\
		\spa{\bX_\al\mid \al\in R_i}& \otw .\end{cases} \] 
	
\end{notation}

Following \Cref{not:3_16}, we clearly have $\bG_1 =\bG_{\underline{l_1}}$ and $\bG_2\cong \bG_{\underline{l_2}}$. Recall that for $ k\geq 2$, $\bG_{\uk}\cong \tD_{k,\sico}(\FF)$, using the notation of \Cref{Def_low_l} when $k\leq 3$.

 Recall $ {\neins=\n_{e_1}(\varpi)} $ and set $ {\nzwei:=\n_{e_l}(\varpi)}$. The above and the commutators given in \Cref{Comm} and \Cref{ZG_order}(d) yield at once the following.

\begin{lem} \label{rem51}
	\begin{thmlist}
		\item $[\bG_1,\bG_2]=1$, $\bG_1\cap\bG_2=\spannh$ with $\bT\leq \bG_1.\bG_2$.
		\item $[\neins, \nzwei]=h_0$, $n_i^\circ$ normalizes $\bG_i$ and centralizes $\bG_{3-i}$ for $i=1,2$. 
	\end{thmlist}
\end{lem}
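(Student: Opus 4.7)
The heart of the argument is that $R_1$ and $R_2$ live in the $\ZZ$-spans of the disjoint subsets $\{e_j\mid j\in J_1\}$ and $\{e_j\mid j\in J_2\}$ of the orthonormal basis of our euclidean space, and so are mutually orthogonal. My plan is to run this observation through the Chevalley commutator formula recalled in \ref{Comm} to produce both commutation statements, and to pass to $\SO_{2l}(\FF)$ via $\pi_\SO$ to pin down the intersection $\bG_1\cap\bG_2$.

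For $[\bG_1,\bG_2]=1$, I would note that for $\al\in R_1$ and $\beta\in R_2$, both long in type $\tD$, any $i\al+j\beta$ with $i,j>0$ has squared norm $2(i^2+j^2)\geq 4$ and hence is not a root of $\ov\Phi$; then $(\ZZ\al+\ZZ\beta)\cap\ov\Phi=\{\pm\al,\pm\beta\}$ and \ref{Comm} gives $[\bX_\al,\bX_\beta]=1$. When $l_i=1$ the factor $\bG_i=\bT_i$ is a torus, and the pairings of any $\beta\in R_{3-i}$ with the cocharacters $e_j^\vee$, $j\in J_i$, all vanish by orthogonality, so $\bT_i$ fixes each generating root subgroup of $\bG_{3-i}$ pointwise. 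For the inclusion $\bT\leq\bG_1.\bG_2$, I would observe that $\bT$ is generated by the $\h_{e_i}(t)$, each of which lies in $\bT_1$ or $\bT_2$, while $\bT_i\leq\bG_i$ by definition if $l_i=1$ and by the Steinberg relations cited in the proof of \Cref{thm_sumup_D}(1) if $l_i\geq 2$. To show $\bG_1\cap\bG_2=\spannh$, I would apply $\pi_\SO$: the images $\pi_\SO(\bG_i)\subseteq\SO_{2l}(\FF)$ lie in orthogonal block $\SO$'s of $\FF^{2l}$ and therefore meet trivially, giving $\bG_1\cap\bG_2\leq\ker\pi_\SO=\spannh$. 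Conversely, each $\h_{e_j}(-1)$ is $\bG$-conjugate to $h_0=\h_{e_1}(-1)$ via a lift of the transposition $(1,j)\in W$; being central, $h_0$ equals each of its conjugates, so $\h_{e_j}(-1)=h_0\in\bT_i\leq\bG_i$ for both $i$.

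For part (b), the equality $[\neins,\nzwei]=h_0$ is exactly \ref{ZG_order}(d) applied to the orthogonal short roots $e_1$ and $e_l$ with $t=t'=\varpi$. That $\neins$ normalizes $\bG_1$ follows because conjugation by $\neins=\nn_{e_1}(\varpi)$ sends each $\bX_\al$ ($\al\in R_1$) to $\bX_{s_{e_1}(\al)}$ and the reflection $s_{e_1}$ stabilizes $R_1$. That $\neins$ centralizes $\bG_2$: for $\al\in R_2$ one has $\al\perp\pm e_1$, and the squared-norm argument shows no positive combination $ie_1+j\al$ lies in $\ov\Phi$ (the squared norm $i^2+2j^2$ is never $1$ or $2$ for $i,j\geq 1$), so \ref{Comm} yields $[\bX_{\pm e_1},\bX_\al]=1$; thus $\neins\in\spa{\bX_{e_1},\bX_{-e_1}}$ commutes with every generator of $\bG_2$ when $l_2\geq 2$, and when $l_2=1$ it fixes each $\h_{e_j}(t)\in\bT_2$ since $s_{e_1}$ fixes every $e_j$ with $j\in J_2$. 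The same argument with indices swapped handles $\nzwei$. No step presents any real obstacle; the only care needed is in the degenerate cases $l_i=1$, where appeals to root subgroups must be replaced by direct computations in the torus.
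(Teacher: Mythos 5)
Your proof is correct and is essentially the paper's argument made explicit: the paper deduces the lemma ``at once'' from the commutator formula of \ref{Comm} together with \ref{ZG_order}(d), which is exactly what you carry out, including the orthogonality/norm checks for roots supported on the disjoint sets $J_1$, $J_2$ and the degenerate cases $l_i=1$. The only point the paper leaves entirely implicit is $\bG_1\cap\bG_2=\spannh$, and your passage to $\SO_{2l}(\FF)$ via $\pi_\SO$ (whose kernel is $\spannh$), together with the observation that $\h_{e_j}(-1)=h_0$ lies in both factors, settles it correctly.
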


	If $\II{gammai}@{\gamma_i}$ ($i=1,2$) denotes the automorphism of $\obG$ given by conjugation with $n^\circ_i$, then $\bG_i$ is $\gamma_i$-stable and $\gamma_i$ induces a graph automorphism on $\bG_i$. Moreover, $\gamma_i$ defines an automorphism of $\bG$. We also denote by $\gamma_1$ the graph automorphism of $\wbG$. Note that $\gamma_2$ is the concatenation of $\gamma$ and an inner automorphism of $\bG$. As such $\gamma_2$ acts also on $\wbG$ and by abuse of notation we denote this automorphism of $\wbG$ also by $\gamma_2$. 
	
Before defining the group $\bM\geq \bM^\circ :=\bG_1.\bG_2$ and a slight replacement for the Frobenius endomorphism $F$ we need to introduce the elements $\wh t_1, \wh t_2\in \bT$. 
\begin{lem}\label{rem:5:2}
	Set $\II veecir@{v^\circ}:= (\neins)^{\frac{1-\eps_1}{2}} (\nzwei)^{\frac{1-\eps_2}{2}}$, $\II Zi@{\bZ_i}:=\spa{\h_{J_i}(\varpi) , h_0}$ and $\III{Z_i}:=\bZ_i^{v^\circ F_q} $. For $i=1,2$ we choose some $\II thati@{\wh t_i}\in \calL_{v^\circ F_q}^{-1}(h_0)\cap \bT\cap \bG_i$ with $\wh t_i\in\Z(\bG_i)$, when possible. Then:
	\begin{thmlist}
		\item if $i\in \{1,2\}$ and $|Z_i|=2$, then $\wh t_i\in \bZ_i$;
		\item if $(\eps_1,\eps_2)=(-1,-1)$ and $\{|Z_1|,|Z_2|\}=\{2,4\}$, then $\wh t_1\in \Z(\bG_1)$ and ${F_p}(\wh t_1)=\wh t_1$;
		\item If $(\eps_1,\eps_2)=(-1,-1)$ and $\{|Z_1|,|Z_2|\}=\{2\}$, then one of the following holds
		\begin{itemize}
			\item $F_p(\wh t_1)=\wh t_1$ and $2\mid l_1$; or 
			\item $F_p(\wh t_1\wh t_2)=\wh t_1 \wh t_2$ and $2\nmid l_1 l_2$. 
		\end{itemize}
	\end{thmlist}
\end{lem}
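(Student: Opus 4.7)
The plan is to reduce the whole statement to a direct computation of the action of $\veecir F_q$ on the finite subgroup $\bZ_i \leq \bT \cap \bG_i$. As a preliminary step, I would note that $\bT \cap \bG_i$ is a connected $\veecir F_q$-stable subtorus (using that $\veecir$ normalizes $\bG_i$ by~\Cref{rem51}(b)), so Lang's theorem produces at least one $\wh t_i \in \bT \cap \bG_i$ with $\calL_{\veecir F_q}(\wh t_i) = h_0$, and the restriction of $\veecir F_q$ to $\bG_i$ is a Frobenius endomorphism equal to $F_q$ when $\eps_i=1$ and to $\gamma_i F_q$ when $\eps_i=-1$. The remaining task is to adjust $\wh t_i$ inside its fibre to achieve the centrality and $F_p$-invariance conditions in~(a)--(c).

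For part~(a), I propose the explicit choice $\wh t_i := \hh_{J_i}(\varpi) \in \bZ_i$. The Chevalley identity $\hh_\al(s)\hh_\al(t) = \hh_\al(st)$ gives $F_q(\hh_{J_i}(\varpi)) = \hh_{J_i}(\varpi^q)$, and by the action of $\neins$, $\nzwei$ on $\bT$ recalled in~\Cref{3:3}, conjugation by $\veecir$ multiplies this further by $\hh_{e_k}(-1) = h_0$ (via~\Cref{ZG_order}) at the endpoint $k \in \{1,l\}$ whenever $\eps_i = -1$. Combining these contributions with the identities $\hh_{J_i}(-1) = h_0$ when $l_i$ is odd and $\hh_{J_i}(-1) = 1$ when $l_i$ is even, a short case distinction on $q \bmod 4$ shows that in each configuration forcing $|Z_i| = 2$ one obtains $\calL_{\veecir F_q}(\hh_{J_i}(\varpi)) = h_0$. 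The inclusion $\hh_{J_i}(\varpi) \in \Z(\bG_i)$ is trivial when $l_i = 1$ and follows from~\Cref{ZG_order}(a) when $l_i \geq 4$; for $l_i \in \{2,3\}$ one checks it in the isomorphic groups of type $\tA$ described in the proof of~\Cref{thm_sumup_D}.

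For parts~(b) and~(c), I would then refine the choice of $\wh t_i$ inside the two-element coset $\hh_{J_i}(\varpi) \cdot Z_i$ to achieve the additional $F_p$-invariance. A direct computation yields $F_p(\hh_{J_i}(\varpi)) = \hh_{J_i}(\varpi^p) = \hh_{J_i}(-1)^{(p-1)/2}\hh_{J_i}(\varpi)$, which equals $\hh_{J_i}(\varpi)$ when either $p \equiv 1 \pmod 4$ or $l_i$ is even, and equals $h_0\hh_{J_i}(\varpi)$ otherwise. In~(b) the constraints $(\eps_1,\eps_2)=(-1,-1)$ and $\{|Z_1|,|Z_2|\}=\{2,4\}$, combined with assumption~(ii) of~\Cref{not_groupM}, force $q \equiv 3 \pmod 4$, $|Z_1| = 2$, and $l_1$ even, so $\wh t_1 := \hh_{J_1}(\varpi)$ is automatically $F_p$-fixed and central. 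In~(c) the same assumption~(ii) rules out the mixed parity pattern $l_1$ odd, $l_2$ even (and its swap), so either $l_1$ is even (giving the first bullet by the same computation as in~(b)) or both $l_1$ and $l_2$ are odd, in which case $F_p(\wh t_i) = h_0\wh t_i$ individually but the two errors cancel in $F_p(\wh t_1 \wh t_2) = h_0^2\wh t_1\wh t_2 = \wh t_1 \wh t_2$, giving the second bullet.

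The main difficulty is bookkeeping rather than conceptual: one has to track simultaneously the signs $\eps_i$, the parities of $l_1$ and $l_2$, and the congruence classes of $q$ and $p$ modulo~$4$, and verify that the hypotheses on $|Z_i|$ in each of~(a)--(c) force exactly the parity pattern required for the stated conclusions. The two structural constraints~(i) and~(ii) of~\Cref{not_groupM} are essential both for eliminating incompatible cases and for ensuring that the two bullets in~(c) exhaust all configurations that actually arise.
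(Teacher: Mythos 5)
Your proposal is correct and takes essentially the same route as the paper: pick $\wh t_i=\h_{J_i}(\varpi)$ whenever $|Z_i|=2$, compute $F_p(\h_{J_i}(\varpi))=\h_{J_i}(-1)^{(p-1)/2}\h_{J_i}(\varpi)$, and combine the parities of $l_1,l_2$ with \Cref{not_groupM}(ii) (using the cancellation in $\wh t_1\wh t_2$ when both are odd); the paper merely shortcuts your explicit Lang-map case distinction by observing that $\h_{J_i}(\varpi)\in\bZ_i\setminus Z_i$ already forces $\calL_{v^\circ F_q}(\h_{J_i}(\varpi))=h_0$. One harmless slip: in part~(c), \Cref{not_groupM}(ii) rules out only the pattern $2\nmid l_1$, $2\mid l_2$, not its swap, but your actual case split ($l_1$ even, or both $l_i$ odd) is exhaustive and unaffected.
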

We see that always $\bZ_i\leq\Z(\bG_i)$, and $\bZ_i=\Z(\bG_i)$ unless $l_i=1$ in which case $\bG_i$ is a torus.
\begin{proof}
	Observe that for every $I\subseteq \ul$, $\h_I(\varpi)$ is $F_p$-fixed if $4\mid (p-1)$ or $2\mid |I|$.
	
	In part (a), $\h_{J_i}(\varpi)\in \bZ_i\setminus Z_i$ and can therefore be chosen as $\wh t_i$.
	
	Then consider part (b). In this case $(\eps_1,\eps_2)=(-1,-1)$ and $\{ |Z_1|,|Z_2|\}=\{2,4\}$. According to \Cref{ZG_order}, $\{|Z_1|,|Z_2|\}=\{2,4\}$ implies that $l_1$ and $l_2$ have opposite parities. This leads to $2\nmid l_1+l_2= l$. The assumption in \ref{not_groupM}(ii) implies $2\mid l_1$. According to \Cref{ZG_order}, $|Z_1|=2$ and hence $\wh t_1=\h_{J_1}(\varpi)$ by (a). This proves part (b).
	
	In part (c), we assume that $(\eps_1,\eps_2)=(-1,-1)$ and $|Z_1|=|Z_2|=2$. By part (a), we can choose $\wh t_i=\h_{J_i}(\varpi)$ for $i=1,2$. Such an element is $F_p$-fixed if and only if $l_i$ is even, otherwise $F_p(\wh t_i)=h_0\wh t_i$, see \ref{3:3}. 
	If $2\nmid l$, then $2\mid l_1$ by \ref{not_groupM}(ii) and hence $F_p(\wh t_1)=\wh t_1$. Otherwise $2\mid l=l_1+l_2$ and $F_p(\wh t_1\wh t_2)=\wh t_1\wh t_2$ if $2\nmid l_1$.
\end{proof}

\begin{defi} \label{def_M} Depending on $|Z_1|$ and $|Z_2|$, we fix elements $\II vee@{v}\in \bG\sqcup\{\neins \}$, $n\in \bG$ as in the Table \ref{tab:my_label} below. We also define there $\II nu@{\nu}\in \text{Inn}(\bG)\sqcup\{\gamma\}$ as the automorphism of $\bG$ induced by conjugation by $v$, so that $$\nu(g)=vgv\inv$$ for any $g\in \bG$, and $\nu$ extends to $\wbG$ as explained in \Cref{ssec2C} for $\gamma$. We then form $\II nuF@{\vFq} $ the corresponding endomorphism of $\wbG$. From \Cref{lem_gamma} and the choice of $v$ in \Cref{tab:my_label} one has clearly $\bG^{\vFq }\cong	\bG^{F}\cong \twepsDlq$ and
	$$\II {Gi}@ {G_i:=\bG_i^{\vFq }}\cong\tD^{\eps_i}_{l_i,\sico}(q)\text{ for }i=1,2$$ in the notation of \Cref{Def_low_l}. 
	We also define 
	\[ \II{Mbold}@{\bM}:= (\bG_1.\bG_2 )\spa{\neinszwei}=(\bG_1.\bG_2 )\spa{n}\geq \II M @ {M:=\bM^{\vFq }} \geq \III{M^\circ:=(\bG_1.\bG_2 )^{\vFq }}\geq \III{M_0:=G_1.G_2}.\] 
	
\end{defi}

\begin{table}[h!]
	\centering
	\begin{tabular}{|c|c|c||c|c|c|c|c|c|c|}
		\hline 
		$(\epsilon_1,\epsilon_2)$ & $\{|Z_1|,|Z_2|\}$&Condition &$v$&$n$&Comment& $\nu$ \\\hline
		$(\,\,\,\,\,1,\,\,\,\,1)$& \text{any}&& $1$& $\neins \nzwei $&& $v$\\
		$(-1,-1)$& $\{1\}$ or $\{4\}$ && $\neins\nzwei$& $\neinszwei$& &$v$\\
		$(-1,-1)$& $\{2\}$ or $\{2,4\}$& $2\mid l_1$& $\wh t_1\neinszwei$ & $\wh t_1 \neins \nzwei $ &&$v$ \\
		$(-1,-1)$& $\{2\}$ &$2\nmid l_1$& $\wh t_1\wh t_2\neinszwei$ & $\wh t_1\wh t_2\neinszwei$& $2\nmid l_1$ implies $2\mid l$&$v$\\
		$(-1,\, 1)$& $\{1\}$, $\{2\}$ or $\{2,4\}$& &$\neins$& $\wh t_j \neins \nzwei $& $j$ with $|Z_j|\in \{1,2\}$&$\gamma$ \\
		$(-1, \,\,1)$& $\{4\}$ && $\neins$& $\wh t_1 \neins \nzwei $&&$\gamma${ }\\ \hline 
	\end{tabular}
	\caption{Choice of $n$, $v$ and $\nu$}
	\label{tab:my_label}
\end{table}
Using $v$ we recover the groups $Z_i$ and obtain finite subgroups of $\bG_i$ and $\bG$ as $\vFq $-fixed points. 
\begin{lem} \label{lem:5_3}
	Let $ {v}$ be defined as in \Cref{tab:my_label}. Then for $i=1,2$ we have
	\begin{thmlist}
		\item $\III{Z_i}=\bZ_i^{\vFq }=\spa{\h_{J_i}(\varpi) , h_0}^{\vFq } $;
		\item $ \calL_{\vFq }(\wh t_1)= \calL_{\vFq }(\wh t_2)=h_0$.
	\end{thmlist}
\end{lem}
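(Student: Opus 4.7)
The plan is to reduce both statements to the fact that $v$ and $v^\circ$ differ only by multiplication by an element $t$ of the ambient maximal torus $\bT$, and that conjugation by $t$ is trivial on $\bT$ since $\bT$ is abelian.

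First I would inspect each row of Table~\ref{tab:my_label} to extract the identity $v = t\, v^\circ$ with $t \in \{1,\wh t_1,\wh t_1\wh t_2\} \subseteq \bT$. Concretely: in the $(1,1)$ case $v = v^\circ = 1$; in the $(-1,-1)$ case with $\{|Z_1|,|Z_2|\}\in\{\{1\},\{4\}\}$ we have $v = v^\circ = \neinszwei$; in the two remaining $(-1,-1)$ rows we read $v = \wh t_1 v^\circ$ and $v = \wh t_1\wh t_2\, v^\circ$ respectively, with $v^\circ = \neinszwei$; and in the two $(-1,1)$ rows $v = v^\circ = \neins$ by the defining formula for $v^\circ$. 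Since $\wh t_1,\wh t_2 \in \bT$ by their choice in \Cref{rem:5:2}, the factor $t$ always lies in $\bT$.

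Next I would show that $\vFq$ and $v^\circ F_q$ induce the same endomorphism of $\bT$. For $z\in\bT$, since $v^\circ\in\NNN_\bG(\bT)$, the element $v^\circ F_q(z)(v^\circ)^{-1}$ lies in $\bT$; conjugating further by $t\in\bT$ is then trivial, so
\[
\vFq(z) \;=\; v\, F_q(z)\, v^{-1} \;=\; t\,\bigl(v^\circ F_q(z)(v^\circ)^{-1}\bigr)\, t^{-1} \;=\; v^\circ F_q(z)\, (v^\circ)^{-1}\cdot v^\circ\cdot (v^\circ)^{-1} \;=\; v^\circ F_q(z).
\]
In particular $\vFq$ and $v^\circ F_q$ have the same fixed points on any subset of $\bT$, and the same Lang map on elements of $\bT$.

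For part (a), since $\bZ_i = \spa{\h_{J_i}(\varpi),h_0} \leq \bT$, the previous paragraph gives $\bZ_i^{\vFq} = \bZ_i^{v^\circ F_q}$, and by the definition of $Z_i$ in \Cref{rem:5:2} the latter equals $Z_i$. For part (b), since $\wh t_i\in\bT$, we obtain $\calL_{\vFq}(\wh t_i) = \wh t_i^{-1}\vFq(\wh t_i) = \wh t_i^{-1}(v^\circ F_q)(\wh t_i) = \calL_{v^\circ F_q}(\wh t_i)$, and the last expression equals $h_0$ by the very choice of $\wh t_i$ in \Cref{rem:5:2}. There is no real obstacle here beyond the bookkeeping to verify the factorization $v = t\,v^\circ$ case by case; the lemma is essentially a consequence of $\bT$ being abelian and $\wh t_i$, $\h_{J_i}(\varpi)$, $h_0$ all lying inside $\bT$.
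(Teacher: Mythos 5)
Your proof is correct and follows essentially the same route as the paper: the paper also observes that $v^{-1}v^\circ\in\bT$ (via the elements $\wh t_i$ of \Cref{rem:5:2}) and concludes that $v$ and $v^\circ$ act identically on $\bT$, so the defining properties of $Z_i$ and $\wh t_i$ transfer from $v^\circ F_q$ to $\vFq$. Just note that in your displayed computation the final expression ``$v^\circ F_q(z)$'' should be read as the value of the twisted endomorphism $v^\circ F_q$ at $z$, i.e. $v^\circ F_q(z)(v^\circ)^{-1}$, which is what the cancellation actually yields.
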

\begin{proof} We observe that $v^{-1} v^\circ \in \bT$ according to \Cref{rem:5:2}, so the actions of $v$ and $v^\circ$ on $\bT$ coincide. As a consequence we see that that the groups $Z_i$ and the elements $\wh t_i$ defined using $v^\circ$ satisfy similar properties with respect to $\vFq$.
\end{proof}

\begin{lem}[Structure of $M$] \label{lem:5_5} $M^\circ= M_0\spa{\wh t_1 \wh t_2}$ and $M=M^\circ \spa{n}\lneq 	\bG^{\vFq }$. Additionally, $G_1\unlhd M$ and $G_2\unlhd M$.
\end{lem}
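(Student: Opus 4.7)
The plan is to establish the four claims—$M^\circ = M_0\langle \wh t_1\wh t_2\rangle$, $M = M^\circ\langle n\rangle$, $M \lneq \GvF$, and $G_i \unlhd M$—by two parallel applications of Lang--Steinberg plus some bookkeeping in the Weyl group. First I would verify $\wh t_1\wh t_2 \in M^\circ$: by \Cref{lem:5_3}(b), $\vFq(\wh t_i) = h_0\wh t_i$ for $i=1,2$, and since $h_0$ is central of order $2$, $\vFq(\wh t_1\wh t_2) = h_0^2\wh t_1\wh t_2 = \wh t_1\wh t_2$. A parallel verification, case by case through \Cref{tab:my_label} using \Cref{rem:5:2} and the identities $\h_{e_i}(-1)=h_0$ together with $F_q(\nn_{e_k}(\varpi)) = \h_{e_k}(\varpi^{q-1})\nn_{e_k}(\varpi)$, shows $\vFq(n) = n$; the $\wh t_i$-factors in the table are precisely calibrated to absorb the $h_0$-valued obstructions produced by $F_q$ on the short-root elements $\neins, \nzwei$.

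For the first equality, I use the short exact sequence of connected algebraic groups
$$1 \longrightarrow \langle h_0\rangle \longrightarrow \bG_1\times\bG_2 \xrightarrow{\mu} \bG_1.\bG_2 \longrightarrow 1,$$
where $\ker\mu = \{(1,1),(h_0,h_0)\}$ because $\bG_1\cap\bG_2 = \langle h_0\rangle$ by \Cref{rem51}(a). Taking $\vFq$-fixed points and applying Lang--Steinberg \cite[21.7]{MT} to $\bG_1\times\bG_2$ yields
$$1 \longrightarrow \langle h_0\rangle \longrightarrow G_1\times G_2 \longrightarrow M^\circ \longrightarrow H^1(\vFq,\langle h_0\rangle) \longrightarrow 1,$$
and since $|\Aut(\langle h_0\rangle)|=1$ forces $\vFq$ to act trivially on $\langle h_0\rangle$, the final term is $\ZZ/2$, giving $[M^\circ : M_0] = 2$. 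To exclude $\wh t_1\wh t_2 \in M_0$, I would lift a hypothetical factorization $\wh t_1\wh t_2 = g_1 g_2$ ($g_i \in G_i$) to $\bG_1\times\bG_2$; comparing $(g_1,g_2)$ with $(\wh t_1,\wh t_2)$ through $\ker\mu$ would force $\wh t_1 \in G_1$, contradicting $\calL_{\vFq}(\wh t_1) = h_0 \neq 1$. An identical Lang--Steinberg argument applied to the connected subgroup $\bG_1.\bG_2$ of index $2$ in $\bM$ gives $[M : M^\circ] \leq 2$; combined with the observation that $n \notin M^\circ$—because the image of $\neinszwei$ in $W(\bG,\bT)$ is the simultaneous sign change on $\{e_1,e_l\}$, whose projection to the first factor of $W(\bG_1.\bG_2) = \Sym^{\tD}_{\pm J_1}\times \Sym^{\tD}_{\pm J_2}$ is a single sign change of odd parity (hence not in $\Sym^{\tD}_{\pm J_1}$)—one concludes $M = M^\circ\langle n\rangle$.

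The remaining assertions are shorter. For $M \lneq \GvF$, use $l_1,l_2 \geq 1$ to pick any root $\alpha = e_i - e_j \in \Phi$ with $i \in J_1$, $j \in J_2$; then $\bX_\alpha \not\subseteq \bM$ (its Weyl image would have to swap $e_i \leftrightarrow e_j$, an element lying neither in $W(\bG_1.\bG_2)$ nor in the unique nontrivial coset $n W(\bG_1.\bG_2)$), and the nontrivial group $\bX_\alpha^{\vFq}$ produces elements of $\GvF \setminus M$. For $G_i \unlhd M$: by \Cref{rem51}(b) each of $\neins$, $\nzwei$ normalizes $\bG_i$, while $\bG_{3-i}$ centralizes $\bG_i$ by \Cref{rem51}(a) and $\bT \leq \bG_1.\bG_2$ normalizes $\bG_i$; since the toral factors $\wh t_j$ appearing in $n$ lie in $\bT$, the element $n$ normalizes each $\bG_i$. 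Hence $\bG_i \unlhd \bM$, and passing to $\vFq$-fixed points yields $G_i \unlhd M$. The one nontrivial obstacle is the case-by-case verification $\vFq(n)=n$ alluded to in the first paragraph; everything else reduces to straightforward Lang--Steinberg cohomology and Weyl group arithmetic.
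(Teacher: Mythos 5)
Your skeleton matches the paper's: Lang's theorem gives $M^\circ=M_0\spa{\wh t_1\wh t_2}$, everything else reduces to showing $n\in M$, i.e.\ $\vFq(n)=n$ (equivalently $[vF_q,n]=1$), and normality of $G_1,G_2$ follows from \Cref{rem51}. But that reduction is where the paper's proof actually lives, and you do not carry it out: you declare the verification "parallel" and "case by case", while the case analysis across Table~\ref{tab:my_label} \emph{is} the proof. Moreover the heuristic you offer for why it works is not the right mechanism. For $(\eps_1,\eps_2)=(1,1)$ there is no $\wh t$-factor at all: $[F_q,\neins]=[F_q,\nzwei]$ are equal central involutions (this is \eqref{eq51}) and cancel against each other in $F_q(\neins\nzwei)$. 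For $(\eps_1,\eps_2)=(-1,-1)$ (so $v=n$) one needs $F_q(n)=n$, and the point is that the toral correction $v(v^\circ)^{-1}\in\{1,\wh t_1,\wh t_1\wh t_2\}$ is $F_p$-fixed — exactly what \Cref{rem:5:2}(b),(c) together with the parity hypothesis \ref{not_groupM}(ii) were arranged to guarantee — so here the $\wh t$-factors must contribute \emph{nothing}, not "absorb" anything. For $\eps=-1$ the $h_0$ that $\wh t_j$ absorbs is not an $F_q$-obstruction on a short-root element but the commutator $[\neins,\nzwei]=h_0$ created by the graph twist $\nu$, matched against $[\neins F_q,\wh t_j]=\calL_{v^\circ F_q}(\wh t_j)=h_0$, while the two $F_q$-obstructions again cancel among themselves. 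Since the cancellations come from three different sources depending on the row, asserting the outcome without the computation leaves the equality $M=M^\circ\spa{n}$ — half of the lemma — unproved.

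Two smaller points. Your argument for $M\lneq\GvF$ via a root subgroup $\bX_\al$, $\al=e_i-e_j$ with $i\in J_1$, $j\in J_2$, needs $\bX_\al$ to be $\vFq$-stable; when $\eps=-1$ and $l_1=1$ you are forced to take $i=1$, and then $\nu=\gamma$ sends $\bX_{e_1-e_j}$ to $\bX_{-e_1-e_j}$, so "$\bX_\al^{\vFq}$" as written can be trivial (one must pass to the $\vFq$-orbit of root subgroups, or simply argue as the paper does: $M/M_0$ is a nontrivial abelian quotient while $\GvF$ is perfect). Also the phrase that $\bX_\al\not\subseteq\bM$ because "its Weyl image would have to swap $e_i\leftrightarrow e_j$" is garbled — unipotent elements have no Weyl image; the correct statement is that $\bX_\al\subseteq\bM$ would force $\bX_\al\subseteq\bM^\circ$ by connectedness, contradicting that the $\bT$-root subgroups of $\bM^\circ$ are the $\bX_\beta$ with $\beta\in R_1\cup R_2$. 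These are repairable, but the missing $[vF_q,n]=1$ computation is a genuine gap.
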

\begin{proof}
The equality $M^\circ= M_0\spa{\wh t_1 \wh t_2}$ comes from Lang's theorem and \Cref{rem51}(a). The inequality $\GvF\neq M$ holds since $\GvF$ is perfect. We also have $G_i\unlhd M^\circ\spa{n}$ by the central product structure of $\bM^\circ$ and \Cref{rem51}(b).
	
The equality $M=M^\circ \spa{n}$ is true as soon as $[v F_q ,n]=1$. 

Note $[\neins \nzwei,F_p]=1$ since $\n_{e_j}(\varpi)=\h_{e_j}(\varpi)\n_{e_j}(1)$ for any $j\in \ul$ and therefore
\begin{align}\label{eq51} [\nii,F_p]:= \begin{cases} 1&\text{if } 4\mid (p-1),\\
		h_0& \otw . \end{cases} \end{align} 
	Recall $[\neins,\nzwei]=h_0$, see \Cref{rem51}(b). 
	
	If $v\in \{1,n\}$, then 
	\[[v F_q ,n]=[v,n][F_q,n]= [F_q,n]=1.\] 
	From \Cref{tab:my_label} we see that $v (v^\circ)^{-1}\in \Z(\bG_1.\bG_2)^{F_p}$ whenever $(\eps_1,\eps_2)=(-1,-1)$ or equivalently $v=n$. This verifies $[v F_q ,n]=1$ whenever $\eps=1$. 
	
	According to \Cref{tab:my_label} it remains to consider the case where $\eps=-1$. Then $v=\neins$ and we observe 
	\begin{align*}[v F_q ,n]= [ \neins F_q, \wh t_j \neins \nzwei]&= [ \neins F_q, \wh t_j ][ \neins F_q, \neins][ \neins F_q, \nzwei]= h_0 [F_q,\neins] [\neins,\nzwei] [F_q,\nzwei]= \\ 
		&= h_0 [F_q,\neins] h_0 [F_q,\nzwei] =1\text{ by (\ref{eq51}) above.}\end{align*}
	This finishes our proof.
\end{proof}
\begin{lem}[Action of $\wt T$ on $M$]\label{lem:5_actions_wbT}
	Set $\Ispezial{Tbreve}@{\protect{\breve T}}@{\wb T} 
	:=\calL\inv_{\vFq }(\Z(\bG))\cap \bT$, 
	$\II{Ttilde}@{\wt T}:=(\bT \Z(\wbG))^{ \vFq}$ , 
	$\III{T_0}:= G_1.G_2\cap \bT$ and let $\II {ttildei}@{\wt t_i}\in \calL_{\vFq }^{-1}(\h_{J_i}(\varpi))\cap \bT_i$ for $i=1,2$. Then
	\begin{thmlist}
		\item $\wt T$ and $\wb T$ induce the same automorphisms on $\GvF$;
		\item \label{lem:wtT} $\wb T=\spa{\wt t_1\wt t_2,\wh t_1,\wh t_2} T_0$;
		\item $\wb T$ and $\wt T$ normalize $M$, $M^\circ$, $M_0$, $G_1$ and $G_2$.
	\end{thmlist}
\end{lem}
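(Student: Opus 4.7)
The plan is to handle the three parts in succession, exploiting the structure results \Cref{rem:5:2}, \Cref{lem:5_3} and \Cref{lem:5_5}. The common preliminary is the identity $\bT\cap\Z(\wbG)=\Z(\bG)$: the inclusion $\supseteq$ follows from \ref{ZG_order}(a) since $\Z(\bG)\leq\bT$ and $\Z(\bG)$ centralizes $\wbG=\bG\Z(\wbG)$, and $\subseteq$ from $\Z(\wbG)\cap\bG\leq\Cent_\bG(\bG)=\Z(\bG)$. For part (a), given $\wt t=t\zeta\in\wt T$ with $t\in\bT$ and $\zeta\in\Z(\wbG)$, the $\vFq$-fixedness of $\wt t$ forces $\calL_{\vFq}(t)=\zeta\vFq(\zeta)^{-1}\in\bT\cap\Z(\wbG)=\Z(\bG)$, so $t\in\wb T$. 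Conversely, for $t\in\wb T$ I apply Lang--Steinberg to the connected torus $\Z(\wbG)$ to produce $\zeta\in\Z(\wbG)$ with $\calL_{\vFq}(\zeta)=\calL_{\vFq}(t)^{-1}$, yielding $t\zeta\in\wt T$. Since $\Z(\wbG)$ centralizes $\bG\supseteq\GvF$, conjugation by $t$ and by $t\zeta$ agree, so $\wb T$ and $\wt T$ induce the same automorphisms on $\GvF$; as a by-product I record the refinement $\wt T=\wb T\cdot\Z(\wbG)^{\vFq}$ for later use.

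For (b), Lang's theorem on the connected torus $\bT$ gives a surjection $\calL_{\vFq}\colon\wb T\twoheadrightarrow\Z(\bG)$ with kernel $\bT^{\vFq}$. The defining properties of the $\wh t_i$ from \Cref{rem:5:2} and \Cref{lem:5_3}(b) yield $\calL_{\vFq}(\wh t_i)=h_0$, and the choice $\wt t_i\in\bT_i$ gives $\calL_{\vFq}(\wt t_1\wt t_2)=\h_{J_1}(\varpi)\h_{J_2}(\varpi)=\h_{\ul}(\varpi)$; by \ref{ZG_order}(a) these generate $\Z(\bG)$. Hence $\wb T=\spa{\wt t_1\wt t_2,\wh t_1,\wh t_2}\cdot\bT^{\vFq}$. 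To absorb $\bT^{\vFq}$ into $\spa{\wt t_1\wt t_2,\wh t_1,\wh t_2}\cdot T_0$, I first observe $\bT\leq\bG_1\bG_2$: for $l_1,l_2\geq 2$, the Levi $\bT\bG_1\bG_2$ is of maximal rank in the simply connected group $\bG$, hence has trivial connected centre and equals its derived subgroup $\bG_1\bG_2$; when some $l_i=1$ the inclusion is immediate since $\bG_i=\bT_i$. Then $\bT^{\vFq}\leq(\bG_1\bG_2)^{\vFq}=M^\circ$, and using the coset decomposition $M^\circ=M_0\sqcup\wh t_1\wh t_2 M_0$ (visible in the proof of \Cref{lem:5_5}) I obtain $\bT^{\vFq}\leq T_0\cdot\spa{\wh t_1\wh t_2}$, which is already subsumed in the declared generators.

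For (c), I treat $\wb T$ first. Being contained in $\bT$, it normalizes every root subgroup $\bX_\al$ and each $\bT_i$, hence normalizes $\bG_1$, $\bG_2$, $\bG_1\bG_2$ and $\bM=\bG_1\bG_2\spa{n}$. For the $\vFq$-fixed subgroups, given $t\in\wb T$ and $g$ in any of $\bG_i^{\vFq}$, $(\bG_1\bG_2)^{\vFq}$ or $\bM^{\vFq}$, the identity $\vFq(tgt^{-1})=tzgz^{-1}t^{-1}=tgt^{-1}$ with $z:=\calL_{\vFq}(t)\in\Z(\bG)$ central in $\bG$ ensures $tgt^{-1}$ stays $\vFq$-fixed, so $G_i$, $M_0$ and $M^\circ$ are normalized. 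For $M=M^\circ\spa{n}$, note that $n$ is built from $\neins$, $\nzwei$ and torus elements, hence lies in $\NNN_\bG(\bT)$; so $tnt^{-1}=n\cdot s$ with $s\in\bT$, and the same centrality of $z$ in $\bG$ yields $[n,z]=1$, making $tnt^{-1}$ $\vFq$-fixed, which forces $s\in\bT^{\vFq}\leq M^\circ\leq M$ and thus $tnt^{-1}\in nM\subseteq M$. The statement for $\wt T$ then follows from (a): each $\wt t\in\wt T$ equals $t\zeta$ with $t\in\wb T$ and $\zeta\in\Z(\wbG)$, and conjugation by $\zeta$ acts trivially on $\bG$, so $\wt t$ and $t$ have identical conjugation action on $M,M^\circ,M_0,G_1,G_2$.

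The most delicate point will be the passage from $\bT^{\vFq}$ to $T_0\cdot\spa{\wh t_1\wh t_2}$ in (b), which genuinely uses the inclusion $\bT\leq\bG_1\bG_2$ and the coset structure of $M^\circ$ from \Cref{lem:5_5}; closely behind it is the normalization of the full group $M$ (as opposed to just $M^\circ$) by $\wb T$ in (c), which forces one to combine $n\in\NNN_\bG(\bT)$ with the commutativity $[n,\Z(\bG)]=1$ and the $\vFq$-structure of $M$.
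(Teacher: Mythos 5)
Your proof is correct and follows essentially the same route as the paper: part (a) is the standard $\Z(\wbG)$/Lang argument the paper refers to, part (b) rests on exactly the same facts (the homomorphism $\calL_{\vFq}\colon \wb T\to\Z(\bG)$ with kernel $\bT^{\vFq}$, the values $\calL_{\vFq}(\wh t_i)=h_0$ and $\calL_{\vFq}(\wt t_1\wt t_2)=\h_{\ul}(\varpi)$ generating $\Z(\bG)$, and $\bT^{\vFq}\leq T_0\spa{\wh t_1\wh t_2}$ via \Cref{lem:5_5}), and part (c) is the same normalization argument carried out for general elements of $\wb T$ using centrality of $\calL_{\vFq}(t)$ rather than for the explicit generators. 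The only blemish is the re-derivation of $\bT\leq\bG_1.\bG_2$: this is already \Cref{rem51}(a), and the subgroup $\bT\bG_1\bG_2$ is a maximal-rank subsystem subgroup, not a Levi subgroup, though your conclusion (derived subgroup of full rank, hence trivial connected centre) is still valid.
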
 
\begin{proof} Part (a) is standard, see \ref{not_diag} and \ref{Gbreve}.
	
	For part (b), we have $\h_\ul(\varpi)=\h_{J_1}(\varpi)\h_{J_2}(\varpi)$ and therefore $\wt t_1\wt t_2$ satisfies $\calL_{\vFq }(\wt t_1\wt t_2)=\h_{\ul}(\varpi)$. By \Cref{lem:5_5}, we have $\bT^{\vFq }=T_0\spa{\wh t_1\wh t_2}$. On the other hand $\wb T/ \bT^{\vFq }$ is isomorphic to $\Z(\bG)$ by $\calL_{\vFq }$, so we indeed get $\wb T=\bT^{\vFq }\spa{\wt t_1\wt t_2, \wh t_1}$ since by \Cref{lem:5_3} we are adding elements whose images under $\calL_{\vFq }$ generate $\Z(\bG)$, see \Cref{ZG_order}(a).
	
    We consider part (c). Recall $G_1\unlhd M$ and $G_2\unlhd M$ from \Cref{lem:5_5}. 
	Now $\wh t_1$ acts on $G_1$ as a diagonal automorphism associated with $h_0[\Z(\bG_1), \vFq ]$ and $\wt t_1$ acts on $G_1$ as a diagonal automorphism associated with $\h_{J_1}(\varpi)[\Z(\bG_1), \vFq ]$ in the parametrization of \Cref{not_diag}. On the other hand, $[\wh t_1,\bG_2]=[\wt t_1,\bG_2]=1$. We can describe similarly the action of $\wt t_2$ and $\wh t_2$ on $G_2$. We observe $[n,\wt t_1\wt t_2 ]\in \spa{\wh t_1 \wh t_2} M_0=M^\circ$. Conjugation with $\wt T$ and $\wb T$ then stabilises $M$, $\bT$, $\bG_1$ and $\bG_2$ by \Cref{lem:5_5} and we get our claim. 
\end{proof}

Before going further into describing $M$ and some of its automorphisms we show below the relevance to our work around Sylow $d$-tori of $(\bG,F)$ for $d\geq 3$. In particular we show that for non-doubly regular $d$'s one can almost always build a group $M$ such that one of the two groups $\bG_1$ and $\bG_2$ contains a Sylow $d$-torus of $(\bG,\vFq)$ and $d$ is doubly regular for that $(\bG_j,\vFq)$, see case (iii) of \Cref{tricho}. 

Recall that $\eps\in\{\pm 1\}$ with $\GF=\twepsDlq$. For $\zeta\in\CC^\times$ a primitive $d$-th root of unity we denote by $\II aGFd @{a_{(\mathbf{G},F)}(d)}$ the multiplicity of $\zeta$ as a root of the polynomial order 
$$P_{(\bG,F)}(X)=X^{l^2-l}(X^2-1)(X^4-1)\cdots (X^{2l-2}-1)(X^l-\eps),$$ see \cite[Table 1.3]{GM}.

\begin{lem}\label{tricho}
	We keep $\GF=\tD_{l,\sico}^\eps(q)$ with $l\geq 5$ and take $ d\geq 3$. Then one of the following three possibilities occurs:
	\begin{asslist} 
		\item $d$ is doubly regular for $(\bG,F)$; 
		\item $a_{(\bG ,F)}(d)\leq 1$ and therefore $a_{(\bG ,F)}(dm)=0$ for any odd $m\geq 3$; or
		\item there exist $j\in\{1,2\}$, $l_1, l_2>0$ and $\eps_1, \eps_2\in\{\pm 1 \}$ with $l_1+l_2=l$, $\eps=\eps_1\eps_2$  determining $\bG_1$, $\bG_2$ and $\nu$ as in \Cref{not_groupM} and \Cref{def_M} such that $a_{(\bG ,F)}(d)=a_{(\bG_j,\vFq )}(d)$, $l_j\geq 4$ and $d$ is doubly regular for $(\bG_j,\vFq )$.
	\end{asslist}
\end{lem}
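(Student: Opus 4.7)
My plan is to compute $a_{(\bG,F)}(d)$ explicitly from the factorization of the polynomial order
\[
P_{(\bG,F)}(X)=X^{l^2-l}(X^l-\eps)\prod_{i=1}^{l-1}(X^{2i}-1).
\]
Setting $e:=d/\gcd(2,d)$, a primitive $d$-th root of unity $\zeta$ satisfies $\zeta^{2i}=1$ if and only if $e\mid i$, while $\zeta^l=\eps$ if and only if the doubly regular condition of \Cref{def_dreg} holds for $d$ and $(\bG,F)$. Hence
\[
a_{(\bG,F)}(d)=\left\lfloor\frac{l-1}{e}\right\rfloor+\delta,
\]
with $\delta=1$ if $d$ is doubly regular for $(\bG,F)$ and $\delta=0$ otherwise. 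When $\delta=1$ possibility (i) holds; from now on I assume $\delta=0$ and write $a:=\lfloor(l-1)/e\rfloor$.

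If $a\leq 1$, then $l\leq 2e$. For any odd $m\geq 3$, the analog of $e$ for $dm$ is $em\geq 3e>l$, making $\lfloor(l-1)/(em)\rfloor=0$. The $\delta$-term for $dm$ also vanishes: for $d$ even, $dm=2em>2l$; for $d$ odd, $dm=em$ is odd and $>l$, so $dm\mid 2l$ would force $dm\mid l$, impossible. Hence $a_{(\bG,F)}(dm)=0$ and possibility (ii) holds.

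Assume finally $a\geq 2$. The idea is to take $l_j:=ea$ and $l_{3-j}:=l-ea$, which lies in $[1,e]$ by the definition of $a$. The signs $(\eps_1,\eps_2)$ and the index $j$ are selected so that $d$ is doubly regular for $\bG_j$ (namely $d\mid l_j$ when $\eps_j=1$, or $l_j/e$ odd when $\eps_j=-1$) while respecting \Cref{not_groupM}(i): when $\eps=-1$ one is forced into $(\eps_1,\eps_2)=(-1,1)$ and takes $j=2$ unless $d$ is even and $a$ is odd (in which case $j=1$); when $\eps=1$ one uses $(\eps_1,\eps_2)=(1,1)$ with $j=1$ if $d$ is odd or $a$ is even, and $(\eps_1,\eps_2)=(-1,-1)$ with $j=1$ otherwise. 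A routine case check shows $a_{(\bG_{3-j},\vFq)}(d)=0$: the floor part vanishes because $l_{3-j}\leq e$, and the $\delta$-term vanishes either because the configuration is automatically non-doubly-regular ($\eps_{3-j}=-1$ with $d$ odd) or because the equality $l_{3-j}=e$ would give $l=e(a+1)$ and thereby make $(\bG,F)$ doubly regular, contradicting $\delta=0$. So $a_{(\bG,F)}(d)=a_{(\bG_j,\vFq)}(d)=a$, and the bound $l_j=ea\geq 4$ follows from $e\geq 2$ and $a\geq 2$.

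The main remaining obstacle is \Cref{not_groupM}(ii), which only applies in the $(\eps_1,\eps_2)=(-1,-1)$ branch above and demands that $l_1$ be even when $l$ is odd. With the tentative choice $j=1$, $l_1=ea$ fails to be even exactly when both $e$ and $a$ are odd. In that situation I swap to $j=2$: then $\bG_2$ stays doubly regular for $d$ (since $l_2/e=a$ remains odd), and $l_1=l-ea$ is now even because $l$ is odd and $ea$ is odd. This completes the construction of the splitting required in~(iii).
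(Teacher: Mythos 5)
Your proof is correct and follows essentially the same route as the paper's: there too one shows $a_{(\bG,F)}(d)=\lfloor (l-1)/d_0\rfloor$ (your $e$ is the paper's $d_0$) when $d$ is not doubly regular, handles $a_{(\bG,F)}(d)\leq 1$ just as you do, and in the remaining case takes $l_j=d_0\cdot a_{(\bG,F)}(d)$ with $\eps_j=(-1)^{a_{(\bG,F)}(d)}$ for even $d$ (and $\eps_j=1$ for odd $d$), placing this pair in position $1$ or $2$ exactly so that \Cref{not_groupM}(i) and (ii) are met — your explicit swap for the $(\eps_1,\eps_2)=(-1,-1)$, $l$ odd situation is the same device. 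One small point to tidy: the equality $a_{(\bG_j,\vFq)}(d)=a$ should be read off directly from your own formula as $\lfloor (ea-1)/e\rfloor+1=a$, rather than presented as a consequence of $a_{(\bG_{3-j},\vFq)}(d)=0$, which is neither needed for (iii) nor by itself a valid route (equating $a_{(\bG,F)}(d)$ with the sum of the multiplicities of the two factors of $\bM^\circ$ would require a separate comparison of polynomial orders).
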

\begin{proof}
	In the following, whenever $k\geq 2$ and $\delta=\pm 1$ we write $a_{k,\delta}(d)$ for the multiplicity of a given $d$-th root of unity as a root of \[(X^2-1)(X^4-1)\cdots (X^{2k-2}-1)(X^k-\delta)= (X^2-1)\cdots (X^{2k}-1)/(X^k+\delta).\] Then $a_{(\bG ,F)}(d)=a_{l,\eps}(d)$.
	
	Let us check first that if $a_{l,\eps}(d)\leq 1$, then $a_{l,\eps}(dm)=0$ for any odd $m\geq 3$. Let $\zeta\in\CC^\times$ be a $dm$-th root of 1. If $a_{l,\eps}(dm)\geq 1$, then $\zeta$ is a root of $X^l-\eps$ or some $X^{2i}-1$ for $1\leq i\leq l-1$. In the first case $\zeta^m$ is then also a root of $X^l-\eps$ and since $dm\mid 2l$, $d$ divides the even integer $2i_0=2l/m$, so $\zeta^m$ is a root of $X^{2i_0}-1$ with $1\leq i_0\leq l-1$, leading to $a_{l,\eps}(d)\geq 2$. In the second case, we get $dm\mid 2i$, so $\zeta^m$ is a root of both $X^{2i}-1$ and $X^{2i'}-1$ for $i'$ the integer $i/m$, implying again $a_{l,\eps}(d)\geq 2$.

	We now assume that (i) and (ii) are not satisfied, that is $a_{l,\eps}(d)\geq 2$ and $d$ is not doubly regular for $(\bG,F)$, i.e., $d\nmid 2l$ or it does but $(-1)^{2l/d}\neq \eps$, see \Cref{def_dreg}. 
	
	Set $d_0=d$ if $d$ is odd, $d_0=d/2$ if $d$ is even. Set $k:= a_{l,\eps}(d) d_0$ and set $\delta:=(-1)^{a_{l,\eps}(d)}$ if $2\mid d$, and $\delta=1$ otherwise. It is easy to see that $d$ being not doubly regular implies 
	\begin{align}\label{eq:6.3}a_{l,\eps}(d)=a_{k,\delta}(d)=\Big\lfloor \frac {l-1} {d_0} \Big\rfloor \ \ \text{ and therefore } \ \ k=d_0\cdot\Big\lfloor \frac {l-1} {d_0} \Big\rfloor . \end{align} 
	This can be checked directly on the polynomials recalled above or by arguing on the form of regular elements of order $d$ in the Weyl groups of $\bG$ and $\bG_{k}$, see \Cref{lemreg4:8}. 
	
	Recall $l\geq 5 $, $d\geq 3$ and hence $d_0\geq 2$. So $4\leq d_0 \cdot a_{l,\eps}(d)=k=d_0\cdot \lfloor \frac {l-1} {d_0} \rfloor\leq l-1$. Let us show that we can choose $(\eps_1,l_1),(\eps_2,l_2)$ such that the assumptions from \Cref{not_groupM} are satisfied and $(\delta,k)\in\{(\eps_1,l_1),(\eps_2,l_2)\}$. Indeed if $\delta =1$ the choice $(\eps_1,l_1)=(\delta,k)$ clearly satisfies \ref{not_groupM}(i) and (ii). If $\delta =-1=\eps$ then one takes $(\eps_2,l_2)=(\delta,k)$ while \ref{not_groupM}(ii) is empty. Finally, when $\delta=-1=-\eps$ then \ref{not_groupM}(i) is empty and one can always take $(\eps_1,l_1)$ or $(\eps_2,l_2)=(\delta,k)$ to satisfy the parity condition of \ref{not_groupM}(ii).
	
	We then get (iii) by taking $j\in\{1,2\}$ such that $(\eps_j,l_j)=(\delta,k)$ since (\ref{eq:6.3}) above shows that the multiplicity of the $d$-th cyclotomic polynomial in the order of $(\bG_j,\vFq )$ is the same as in the one of $(\bG,F)$ while $d$ is clearly doubly regular for $(\bG_j,\vFq )$ by choosing $2k=2d_0a_{l,\eps}(d) $ and $\delta$.
\end{proof}

The above has given a hint on how $M$ will be used in the next chapter to verify Conditions \Ad{} and \Bd. We give below a few more group-theoretic properties of $M$ that won't be used but explain that this construction is more natural, and less new, than it seems. 

\begin{rem}
\begin{thmlist}
\item If $(l_1,\eps_1)\neq (l_2,\eps_2)$, then the group $M$ is a maximal subgroup of $\bG^\vFq$, a member of the family $\calC_1$ in Aschbacher's classification, see \cite[\S 18]{MT}.  

\item Recall $\pi_\SO\colon \bG\to \SO_{2l}(\FF)$ the reduction mod $\spannh$. We have $\spannh\leq M$ and $\pi_\SO (\bM)=\Cent_{\SO_{2l}(\FF)}(s)$ for $s:=\pi_{\SO}(\h_{J_2}(\varpi))$, a centraliser of an involution in $\SO_{2l}(\FF)$. 

\item Let $j\in\{1,2\}$ such that $l_j\geq 2$ and therefore there exists $\bL$ a $\vFq$-stable Levi subgroup of $\bM^\circ$ such that $[\bL,\bL]= \bG_{j}$. Then $\bM$ satisfies
\[\bM=\NNN_{\bG}([\bL,\bL]).\] 
Indeed, computations with the roots of $\bG$ allow us to see $\bM^\circ=[\bL,\bL]\,  \Cent^\circ_\bG([\bL,\bL])$. 
This group is normal in $\NNN_\bG([\bL,\bL])$ and we check easily $\NNN_\bG(\bM^\circ, \bG_{3-j})= \bM^\circ \spa{n}$. This leads to $\bM=\NNN_{\bG}([\bL,\bL]) \und$
\[ M= \NNN_\bG([\bL,\bL]) ^{\vFq}.\] 

If moreover for some odd prime $\ell$, $\bL$ is $d$-split and there exists a $d$-cuspidal character $\zeta\in\UCh(\bL^\vFq)$ for $d=d_\ell(q)$, then $(\bL,\zeta)$ forms a unipotent $d$-cuspidal pair of $(\bG,\vFq)$ defining an $\ell$-block $B$ of $\bG^\vFq$, see \cite[Thm 22.9]{CE04}. Certain subgroups of $M$ are studied in \cite{CE99} and are shown to control the $\ell$-fusion in $B$, see \cite[Prop.~5]{CE99}. When moreover $\bL$ is a minimal $d_\ell(q)$-split Levi subgroup as is the case in our applications of Ch. 6, then $M$ contains a Sylow $\ell$-subgroup of $\bG^\vFq$ and controls the fusion of $\ell$-subgroups of $\bG^\vFq$.
\end{thmlist}
\end{rem}

\subsection{Some groups of automorphisms of \texorpdfstring{$M$}{M}}\label{ssecE(M)}

We now define the automorphism group $\UE(\bM)$ as a slight variant of $\UE(\bG)$ already encountered in the preceding sections. We then restrict it in a manner similar to the construction of $\UE(G)$ to obtain the finite groups $\UE(M)$, $\UE(M^\circ)$, and other variants suitable for $\Ispezial{Mbreve}@{\protect{\breve M}}@ {\wb M}:=M\wb T$.

Our further considerations are divided into two cases setting apart the case corresponding to the last line of Table~\ref{tab:my_label}.
With the definition of $\UE(\bG)$ as bijective group homorphisms we can form the abstract group $\bG\rtimes \UE(\bG)$.

\begin{defi}\label{EMdef}
	Let $\UE(\bM)\leq \bG\rtimes \UE(\bG)$ be the subgroup given by
		\[ \II{EbM}@{\UE(\bM)}:=\begin{cases}
			\spa{F_p,\gamma}=\UE(\bG)&\text{if } n\in \Z(\bG_1.\bG_2)\neinszwei, \\ 
   \spa{F_p^2,\wh t_1\gamma}&\text{otherwise}.\end{cases} \] 
	
Let $\II{EM}@{\UE(M)}$, resp. $\Ispezial{EMT}@{\UE( \protect {\breve M})}@{\UE(\wb M )}$ be the subgroup of $\Aut(\wbG^{\vFq })$, respectively of $\Aut(\wbG^{\vFq }\wb T)$, obtained by restriction of $\UE(\bM)$. Set 
\[\II EMcirc@{ \UE(M^\circ):=\UE(M)\spa {n,h_0}}\leq \GvF \UE(M)\leq \GvF\rtimes \Aut(\GvF),\] 
so that $M\UE(M)=M^\circ\UE(M^\circ)$ (see \Cref{lem:5_5}). Set $\UE({\wb M}^\circ):=\UE(\wb M)\spa{n,h_0}\leq \wbG^\vFq\rtimes \Aut(\wbG^\vFq)$.
\end{defi}

 From \Cref{rem:5:2}(a--b) it is easy to see that the first case considered above corresponds to the first five lines of Table~\ref{tab:my_label}, while the second case corresponds to the last line where $\eps_1=-\eps_2=-1 \text{ and } |Z_1|=|Z_2|=4$.
 We now gather information on those automorphism groups, starting with the first case of the above Definition.
 
\begin{hyp}\label{case1}
	Assume $\eps=1$ or $\{|Z_1|,|Z_2|\} \neq \{ 4\}$. \end{hyp}

\begin{lem}\label{lem:5_6}
	Assume \Cref{case1}. Then
	\begin{thmlist}
		\item $n$ acts as $\gamma_1\gamma_2$ on $M_0=G_1.G_2$. 
		\item $G_1$, $G_2$ and $\spa{n,h_0}$ are $\UE(M)$-stable.
  		\item $\UE(\wb M)$ stabilizes $\wb T$, $M$ and $\wb M$. The group $\UE(\wb M ^\circ)=\UE(\wb M)\spa{n,h_0}$ 
	 also stabilizes ${\wb G}_i:=G_i\spa{\wh t_i,\wt t_i}$ for $i=1,2$.
  \item If $\eps_1=\eps_2=-1$, then 
  $vF_q \in \Cent_{\GvF \UE(M)}(\GvF)$.
\end{thmlist}\end{lem}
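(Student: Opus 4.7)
My plan is to verify each part by direct calculation using the explicit shape of $n$ and $v$ in \Cref{tab:my_label} under \Cref{case1} (i.e.\ rows 1--5), together with the commutation relations recorded in \Cref{rem51} and the centrality properties of $\wh t_1,\wh t_2$ extracted in \Cref{rem:5:2}. The four parts will be treated in turn, each reducing to a small case-check.

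For part (a), I will first observe that in each of the applicable rows we have $n=z\,\neinszwei$ with $z$ a (possibly trivial) product of factors from $\{\wh t_1,\wh t_2\}$. Whenever some $\wh t_j$ appears, the side condition of the row combined with \Cref{rem:5:2}(a) forces $\wh t_j\in\Z(\bG_j)$, so $\wh t_j$ centralizes $\bG_j$ and, via \Cref{rem51}(a), also $\bG_{3-j}$. Together with $\neins\in\Cent_{\bG}(\bG_2)$ and $\nzwei\in\Cent_{\bG}(\bG_1)$ from \Cref{rem51}(b), this will give $ngn^{-1}=\neins g\neins^{-1}=\gamma_1(g)$ for $g\in\bG_1$ and symmetrically on $\bG_2$. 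Since $\gamma_1$ acts trivially on $\bG_2$ and vice versa, the action of $n$ on $M_0=G_1.G_2$ will match $\gamma_1\gamma_2$.

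For part (b), under \Cref{case1} we have $\UE(M)=\spa{F_p,\gamma}$. I will check that $F_p$ acts trivially on every simple root, hence preserves $\bG_1,\bG_2$ and their $\vFq$-fixed points, and that $\gamma$ acts on the weight lattice by $e_1\mapsto -e_1$, $e_j\mapsto e_j$ for $j\geq 2$, which preserves the root subsystems $R_1,R_2$ (the torus cases $l_i=1$ being handled separately). For the stability of $\spa{n,h_0}$ I will use that $h_0$ is fixed by both $F_p$ and $\gamma$ by \Cref{ZG_order}(b), and that the images of $\neins,\nzwei,\wh t_j$ under $F_p$ and $\gamma$ all lie in $\spa{n,h_0}$, a short computation using the Chevalley relations together with \Cref{rem:5:2}(b)--(c).

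For part (c), each generator of $\UE(\wb M)$ normalizes $\bT$ and hence $\wb T$; in the second case of \Cref{EMdef} the factor $\wh t_1\in\wb T$ provides only an inner adjustment of $\wb M$, so $\wh t_1\gamma$ will preserve $\bM$ since $\gamma$ does by part (b) applied to the generators $\bG_1,\bG_2,\spa{n}$. Taking $\vFq$-fixed points will yield stability of $M$ and of $\wb M=M\wb T$. For the last claim I will identify $\wb G_i=\bG_i\cap\calL_{\vFq}^{-1}(\Z(\bG_i))$ and use that $\UE(\wb M^\circ)$ normalizes $\bG_i$ and commutes with $\vFq$ modulo $\Inn(\bG)$. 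For part (d), if $\eps_1=\eps_2=-1$ we are in rows 2--4 of \Cref{tab:my_label}, where $v=n\in M\leq\GvF$ and $F_q=F_p^f\in\UE(M)$, so $vF_q=\vFq\in\GvF\,\UE(M)$; since $\vFq$ fixes every element of $\GvF$ by definition, it acts trivially on $\GvF$ and hence centralizes it.

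I anticipate the main obstacle will be the case-by-case bookkeeping in parts (b) and (c) to verify that the $\wh t_j$ factors do not spoil $\UE(\wb M)$-invariance; the argument is conceptually straightforward but requires a careful row-by-row check using the explicit behaviour of $\wh t_j$ under $F_p$ given in \Cref{rem:5:2}(b)--(c).
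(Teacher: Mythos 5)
Your proposal is correct and follows essentially the same route as the paper's (very terse) proof: part (a) from $n\in\neinszwei\,\Z(\bG_1.\bG_2)$ together with \Cref{rem51}, parts (b)--(c) from the stability of the root subsystems, $\bT$ and $\Z(\bG)$ under $F_p,\gamma$ combined with the commutators recorded in \Cref{tab:properties_c1} lying in $\spannh$, and part (d) from the observation that $vF_q=\vFq$ acts trivially on $\GvF$. Two small touch-ups to your wording: the compatibility with $\vFq$ that you need is commutation up to conjugation by elements of $\Z(\bG)$ (i.e.\ genuine equality as automorphisms), not merely "modulo $\Inn(\bG)$", and the identification ${\wb G}_i=\bG_i\cap\calL_{\vFq}^{-1}(\Z(\bG_i))$ should use $\bZ_i$ in place of $\Z(\bG_i)$ when $l_i=1$ — both repairs are immediate from the facts you already cite.
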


\begin{proof}
	Part (a) follows from $n\in \neinszwei \Z(\bG_1.\bG_2)$ while (b) is clear from the proof of \Cref{lem:5_actions_wbT}, see also the relations recalled in \Cref{tab:properties_c1} below.
	
	For part (c), we see that $F_p$, $\gamma$ and hence $\UE(M)$ stabilise $\Z(\bG)$ and $\bT$. This shows that $\UE(M)$ stabilises $\wb T$ and $\GvF \wb T$. For part (d) note that by the construction $vF_q$ acts trivially on $\GvF$.
\end{proof}

For the above proof and later we need the commutators for some elements of $ \UE(M^\circ)$. All are easily deduced from \ref{Comm} and \ref{ZG_order}(d), see also \Cref{rem:5:2}.
\begin{table}[h!]
	\centering
	\begin{tabular}{|c|c|c||c|c|c|c|c|c|}
		\hline 
		$(\epsilon_1,\epsilon_2)$ & $\{|Z_1|,|Z_2|\}$&& $v$&$n$& $[F_p,n]$&$[\gamma_1,n]$&$[F_p,v]$ & $[\gamma_1,v]$\\\hline
		$(1,1)$ & any && $1$ & $\neinszwei$ & 1&$h_0$&1&1\\
		$(-1,-1)$& $\{1\}$ or $\{4\}$ && $\neinszwei$ & $\neinszwei$ & 1&$h_0$&1&$h_0$\\
		$(-1,-1)$& $\{2\}$ or $\{2,4\}$& $2\mid l_1$& $\wh t_1\neinszwei $& $\wh t_1 \neinszwei $ &1&1&$1$&$1$ \\
		$(-1,-1)$& $\{2\}$ & $2\nmid l_1$ & $\wh t_1\wh t_2\neinszwei$ & $\wh t_1\wh t_2\neinszwei$&1&1&$1$&$1$ \\
		$(-1, 1)$& $\{1\}$, $\{2\}$ or $\{2,4\}$&& $\neins$& $\wh t_j \neins \nzwei $& $[F_p,\wh t_j]$ & $h_0[\gamma_1,\wh t_j]$ & $h_0$ & $1$\\
		\hline 
	\end{tabular}
	\caption{commutators of some elements in $M\UE(M)=M^\circ\UE(M^\circ)$}
	\label{tab:properties_c1}
\end{table}
\begin{lem} \label{lem_centZ2} Assume \Cref{case1}. Then:
	\begin{thmlist}
		\item $\UE(M^\circ)\cap M^\circ = \spannh$ and $[\UE(M^\circ),\UE(M^\circ)]\leq \spannh$;
		\item For $\III{Z}:=(\bZ_1.\bZ_2)^{\vFq }$ the Sylow $2$-subgroup of $\Cent_{\UE(M^\circ)}(Z)$ is abelian.
	\end{thmlist}
\end{lem}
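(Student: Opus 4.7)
The plan is as follows. For part (a), the first claim is structural: viewing $\UE(M^\circ)$ inside the external semidirect product $\GvF \rtimes \Aut(\GvF)$, the subgroup $\UE(M) \leq \Aut(\GvF)$ meets $\GvF$ trivially, so $\UE(M^\circ) \cap \GvF = \spa{n, h_0}$. A short Chevalley computation using $(\neins)^2 = (\nzwei)^2 = h_0$ (\ref{ZG_order}(a)) and $[\neins, \nzwei] = h_0$ (\ref{rem51}(b)) yields $n^2 = h_0$, so $\spa{n, h_0}$ is cyclic of order $4$ (adjusted up to central 2-torsion in rows of Table \ref{tab:my_label} where $n$ contains a factor $\wh t_j$). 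Together with $[M:M^\circ] = 2$ and $n \notin M^\circ$ from \Cref{lem:5_5}, this gives $\UE(M^\circ) \cap M^\circ = \spa{h_0}$.

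For the commutator subgroup in (a), under \Cref{case1} the group $\UE(M^\circ)$ is generated by $F_p, \gamma, n, h_0$. I would then verify that all commutators of generators lie in $\spa{h_0}$: the element $h_0$ is central in $\UE(M^\circ)$ as $h_0 = \h_{e_1}(-1) \in \Z(\bG)$ is fixed by $F_p$ (since $-1 \in \FF_p^\times$) and by $\gamma$ (by direct computation on the torus); the commutator $[F_p, \gamma] = 1$ is standard; and $[F_p, n], [\gamma, n] \in \spa{h_0}$ by a row-by-row inspection of \Cref{tab:properties_c1}, invoking \Cref{rem:5:2} in rows where $n$ contains a factor $\wh t_j$. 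Indeed $\wh t_j \in \bZ_j = \spa{\h_{J_j}(\varpi), h_0}$ reduces $[F_p, \wh t_j]$ and $[\gamma, \wh t_j]$ to a computation on $\h_{J_j}(\varpi)$, which via $\gamma(\h_{J_j}(\varpi)) \in h_0^{?}\h_{J_j}(\varpi)$ and $F_p(\h_{J_j}(\varpi)) = \h_{J_j}(\varpi^p) \in \h_{J_j}(\varpi)\spa{h_0}$ lands in $\spa{h_0}$. This gives $[\UE(M^\circ), \UE(M^\circ)] \leq \spa{h_0}$.

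For part (b), part (a) shows that $\UE(M^\circ)$ is a two-step nilpotent central extension of the abelian group $A := \UE(M^\circ)/\spa{h_0}$ by $\spa{h_0}$, so the commutator map descends to a biadditive pairing
\[
\omega \colon A \times A \longrightarrow \spa{h_0}.
\]
Since $\spa{h_0} \leq Z$ is central in $\UE(M^\circ)$, the action on $Z$ factors through $A$, and hence $C/\spa{h_0} = \ker\bigl(A \to \Aut(Z)\bigr)$. I would then identify explicit generators of the 2-part of $C/\spa{h_0}$: a power of $F_p$ acting trivially on $Z$, together with the class of $n\gamma$ (a direct check gives $(n\gamma)(\h_{J_1}(\varpi)) = \h_{J_1}(\varpi)$, since the $h_0$-multiplications produced by $n$ and by $\gamma$ cancel). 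To conclude that a Sylow 2-subgroup $P$ of $C$ is abelian, it remains to show that $\omega$ vanishes on every pair of such 2-power-order generators; this is a direct Chevalley calculation, expressing each commutator as a product of entries of \Cref{tab:properties_c1} that combines to $1$.

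The main obstacle will be the last step: uniformly verifying the vanishing of $\omega$ on pairs of Sylow 2 generators of $C$ across the various rows of \Cref{tab:my_label} and \Cref{tab:properties_c1}, since the explicit form of the kernel of the action on $Z$ depends on $p \bmod 4$, on the parities of $f$, $l_1$, $l_2$, and on the subcase from \Cref{case1}. The unifying feature is that modulo $\spa{h_0}$ the group is already abelian, and every individual commutator reduces to a product of torus-level Chevalley commutators whose value is controlled by the exponents of $\varpi$ appearing in $\wh t_j$ and $\h_{J_i}(\varpi)$.
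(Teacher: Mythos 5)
Your part (a) is essentially correct. For the intersection you take a different route from the paper: you read off $\UE(M^\circ)\cap\GvF=\spa{n,h_0}$ directly from the external semidirect product $\GvF\rtimes\Aut(\GvF)$ (using that $\spa{n,h_0}$ is $\UE(M)$-stable, so every element of $\UE(M^\circ)$ factors as a group element times an automorphism), and then use $n,nh_0\notin M^\circ$, whereas the paper compares the automorphisms induced on $G_1.G_2$ (diagonal versus graph/field) and invokes the structure of $\Aut$ of quasisimple groups. Your shortcut is legitimate given how $\UE(M^\circ)$ is defined inside $\GvF\rtimes\Aut(\GvF)$, and your treatment of the commutator claim (Table \ref{tab:properties_c1} plus the torus computation of $[F_p,\wh t_j]$ and $[\gamma_1,\wh t_j]$ via $\wh t_j\in\spa{\h_{J_j}(\varpi),h_0}$) matches the paper's.

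Part (b), however, has a genuine gap. The reduction to the commutator pairing $\omega$ on $A=\UE(M^\circ)/\spannh$ and to the vanishing of $\omega$ on the $2$-part of $C/\spannh$ is fine, but this only reformulates the problem: the entire content of the lemma is the case-by-case determination of $C=\Cent_{\UE(M^\circ)}(Z)$ and the check that the single dangerous commutator $[\gamma_1,n]=h_0$ (rows with $\eps_1=\eps_2=1$ or $\{|Z_1|,|Z_2|\}\in\{\{1\},\{4\}\}$) never occurs between two $2$-elements of $C$. You explicitly defer this ("the main obstacle"), and the one concrete step you do offer is wrong in exactly the cases that matter: you claim the $2$-part of $C/\spannh$ is generated by a power of $F_p$ together with the class of $n\gamma$, checking only that $n\gamma$ fixes $\h_{J_1}(\varpi)$. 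But $n\gamma$ acts on the torus as $\gamma_2$, which sends $\h_{J_2}(\varpi)$ to $h_0\h_{J_2}(\varpi)$; hence whenever $|Z_2|=4$ the element $n\gamma$ does \emph{not} centralize $Z$, and when $|Z_1|=|Z_2|=4$ the centralizer is one of $\spa{F_p,h_0}$, $\spa{F_p\gamma_1,h_0}$, $\spa{F_pn,h_0}$, $\spa{F_pn\gamma_1,h_0}$ (a single "mixed" generator modulo $h_0$), while in the case $\eps_1=1$, $|Z_1|=2$, $|Z_2|=4$ the Sylow $2$-subgroup of $C$ is $\spa{\gamma,h_0}$ — in neither situation do your proposed generators describe it. The paper's proof is precisely the missing verification: a direct analysis according to $(\eps_1,\eps_2)$, $\{|Z_1|,|Z_2|\}$ and the parity of $f$, computing in each case either $C$ itself or the Sylow $2$-subgroup of $\UE(M^\circ)$ from \Cref{tab:properties_c1} and \Cref{rem:5:2}, and observing that whichever of $\gamma_1$, $n$, $\gamma_1 n$ survives the centralizing condition generates an abelian $2$-group together with $h_0$ (and the relevant $F_p$-part). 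Without carrying out this analysis, your proposal does not establish the statement.
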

\begin{proof} The inclusion $[\UE(M^\circ),\UE(M^\circ)]\leq \spannh$ is clear from \Cref{tab:properties_c1}. We also have $(\neinszwei)^2=h_0$ by \ref{ZG_order}(d). Then $n^2\in\spannh$ by \ref{ZG_order}(b) and since $n\in \Z(\bG_1.\bG_2)\neinszwei$. The overgroup $M^\circ$ acts by 2 or 4 diagonal automorphisms on $G_1.G_2$ while 
 $\UE(M^\circ)/\spannh =\spa{\gamma, n\spannh, F_p}$ acts by the graph and field automorphisms $\gamma_1$, $\gamma_1\gamma_2$ and $F_p$ on $G_1.G_2$. The classification of automorphisms of quasisimple groups makes that $\UE(M^\circ)\cap M^\circ$ acts trivially on $G_1.G_2$ and equals $\spannh$ since both $n^2 $, $[F_p,n]$ and $ [\gamma_1,n]$ belong to $ \spannh$. This proves (a).
	
	For part (b) set $C:=\Cent_{\UE(M^\circ)}(Z)=\spa{h_0,\Cent_{\spa{\gamma,F_p,n}}(Z) }$. If $|Z_1|=|Z_2|=4$, then 
	\[C\in \{ \spa{F_p,h_0}, \spa{F_p\gamma_1,h_0}, \spa{F_pn,h_0}, \spa{F_p n \gamma_1,h_0}\}\] using again the description of the automorphisms induced on $G_1$ and $G_2$.
	In all cases, $C$ is abelian.
	
	Assume $\eps_1=1$ and hence $\eps_2=1$. By the above we can assume $2\in\{|Z_1|,|Z_2|\}$ and even $|Z_1|=2$. Then $2\nmid f$ according to \Cref{ZG_order}(b) and hence $\spa{\gamma,n,h_0}$ is the Sylow $2$-subgroup of $\UE(M^\circ)$. If additionally $|Z_2|=4$, then $\spa{\gamma,h_0}$ is the Sylow $2$-subgroup of $C$. If $|Z_1|=|Z_2|=2$ and hence $|Z|=4$, then $C\leq \spa{n,h_0}$ which is again abelian. 
	
	Next we consider the case where $\eps=1$ and $\eps_1=\eps_2=-1$. By assumption $2\in\{|Z_1|,|Z_2|\}$. According to \Cref{rem:5:2}, $|Z_1|=2$. The results in \Cref{tab:properties_c1} show that $\UE (M^\circ) =\spa{F_p,\gamma_1,n}$ is abelian, since the generators then commute with each other. 
	
Next, assume $\eps=-1$. As above, we can assume that $2\in \{|Z_1|,|Z_2|\}$. 
If $|Z_1|=2$, then $n=\wh t_1\neinszwei$. According to \Cref{tab:properties_c1}, $[\gamma_1,n]=1$ in this case and the group $\spa{\gamma_1,n,h_0}$ is abelian. If additionally $|Z_2|=2$, then $2\nmid f$ and $\spa{\gamma_1,n,h_0}$ is the Sylow $2$-subgroup of $\UE (M^\circ)$.
If $|Z_2|=4$, then $C=\spa{\gamma_1, nF_p}$ or $C=\spa{\gamma_1,F_p}$. Since in this case $[\gamma_1,n]=1$, the group $\spa{\gamma_1,n,h_0}$ is abelian. 
It remains to consider the case where $|Z_1|=4$ and $|Z_2|=2$. In this case, $2\nmid f$ and $n=\wh t_2 \neinszwei$. 
Then the group $\spa{\gamma_1, n,h_0}$ is a Sylow $2$-subgroup of $\UE(M^\circ)$ and the centralizer of $Z_1.Z_2$ in this group is $\spa{\gamma_1 n,h_0}$, abelian again. 
\end{proof}

\begin{lem}[Comparison of $\UE(M)$, $\UE(G_1)$ and $\UE(G_2)$] \label{lem:5_actions_C1} Assume \Cref{case1}. Let $\UE(M)$ be as in \Cref{EMdef}.
	For $i=1,2$ set $\UE(G_i):=\spa{\gamma_i,F_p}\leq \Aut(G_i)$, and let 
	$\II{EMGi}@{\UE_M(G_i)}\leq \Aut(G_i)$ be the subgroup of automorphisms of $G_i$ induced by $\UE(M^\circ)$. Then 
	\[ \UE(G_1)=\UE_M(G_1) \und \UE(G_2)=\UE_M(G_2).\]
\end{lem}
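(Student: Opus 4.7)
The plan is to compute the restriction to $G_i$ of each of the four generators of $\UE(M^\circ)=\spa{F_p,\gamma,n,h_0}$ (which is the generating set under Hypothesis~\ref{case1}, since then $\UE(M)=\spa{F_p,\gamma}$), and to check that together they generate precisely $\UE(G_i)=\spa{F_p,\gamma_i}$. The containment $\UE_M(G_i)\subseteq\UE(G_i)$ and its reverse will then follow by reading off the table of restrictions.

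First I would dispose of the easy generators. Since $h_0\in\Z(\bG)\leq\Z(\bG_i)$, it induces the trivial automorphism on $G_i$. The Frobenius $F_p$ stabilises each $\bG_i$ (it preserves every root subgroup $\bX_\al$ and every $\h_\al(t)$) and restricts to the standard $F_p\in\UE(G_i)$ on $G_i$.

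Next I would analyse $\gamma$ and $n$ using the decomposition $\bM^\circ=\bG_1.\bG_2$. By \Cref{lem_gamma}, $\gamma$ coincides on $\bG$ with conjugation by $\neins=\nn_{e_1}(\varpi)$, which is by definition $\gamma_1$. Since $e_1\in J_1$ is orthogonal to every root of $R_2$, the commutation rule of \ref{Comm} and the Chevalley relations show that $\neins$ centralises each $\bX_\al$ with $\al\in R_2$ and fixes each $\h_{e_j}(t)$ for $j\ne1$; hence $\gamma$ restricts to $\gamma_1$ on $G_1$ and to the identity on $G_2$. A symmetric argument with $\nzwei\in\obG$ gives that $\gamma_2$ centralises $G_1$. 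Combined with \Cref{lem:5_6}(a), which asserts that $n$ acts as $\gamma_1\gamma_2$ on $M_0$, this shows that $n$ restricts to $\gamma_1$ on $G_1$ and to $\gamma_2$ on $G_2$.

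Assembling these four computations, the images of $(F_p,\gamma,n,h_0)$ in $\Aut(G_1)$ are $(F_p,\gamma_1,\gamma_1,1)$, so $\UE_M(G_1)=\spa{F_p,\gamma_1}=\UE(G_1)$; and in $\Aut(G_2)$ they are $(F_p,1,\gamma_2,1)$, so $\UE_M(G_2)=\spa{F_p,\gamma_2}=\UE(G_2)$. The only point requiring slight care is the degenerate case $l_i=1$, where $\bG_i=\bT_i$ is a one-dimensional torus; there the ``graph automorphism'' $\gamma_i$ simply acts on $\h_{e_j}(\FF^\times)$ as the inversion coming from the reflection $(j,-j)\in\ov W$ (see \ref{3:3}), and the same orthogonality argument between $e_1$ (or $e_l$) and the complementary subsystem still yields the required centralisation. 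No genuine obstacle arises; the work is entirely a bookkeeping of the Chevalley commutator formula against the decomposition $\Phi=R_1\sqcup R_2\sqcup(\text{roots crossing }J_1,J_2)$.
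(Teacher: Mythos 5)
Your proof is correct and follows essentially the same route as the paper: the paper's own (very short) argument is exactly the observation that $n$ acts as $\gamma_1\gamma_2$ on $M_0$ (\Cref{lem:5_6}(a)), from which the restrictions of the generators $F_p,\gamma,n,h_0$ of $\UE(M^\circ)$ to each $G_i$ give $\UE_M(G_i)=\spa{\gamma_i,F_p}$. Your additional bookkeeping (triviality of $h_0$, $\gamma=\gamma_1$ via \Cref{lem_gamma}, centralisation across the orthogonal decomposition, and the $l_i=1$ case) just makes explicit what the paper leaves implicit.
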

\begin{proof} Recall that $n$ acts as $\gamma_1\gamma_2$ on $G_1.G_2$, see \Cref{lem:5_6}(a). Hence $\UE_M(G_1)=\spa{\gamma_1,F_p}$ and $\UE_M(G_2)=\spa{\gamma_2,F_p}$. This leads to the statement.\end{proof}

In the following, we verify some adaptations of the above statement to the missing case corresponding to the last line of Table~\ref{tab:my_label}.
\begin{hyp}\label{case2}
	Assume $\eps=-1$ and $\{|Z_1|,|Z_2|\}=\{4\}$. \end{hyp}
Recall $f$ denotes the integer with $q=p^f$.
\begin{rem}\label{rem:5:12} 
	If \Cref{case2} holds, then $2\nmid f$ and $(p-1)_2=2$, see \Cref{ZG_order}(c).
\end{rem}

\begin{lem} \label{lem_centZ1_c2} Assume \Cref{case2}. 
	\begin{thmlist}
		\item Then $n=\wh t_1 \neinszwei$ acts on $G_2$ as $\gamma_2$ and $n$ acts on $G_1$ as a concatenation of $\gamma_1$ and a diagonal automorphism associated to $h_0$ in the parametrization of \Cref{not_diag}.
		 
		\item The element $\wh t_1$ from \Cref{rem:5:2} can be chosen to be $\h_{e_1}(\zeta)$ for $\zeta$ some $2(p+1)_2 $-th root of $1$ in $\FFtimes$ and hence $[F_{p^2},\wh t_1]=1$. 
		
		\item \label{def_EM_c2} $\UE(M)$
is cyclic of order $2f$, $[n, \UE(M)]=\spannh$, and the groups $G_1$, $G_2$ and $\spa{n,h_0}$ are $\UE(M)$-stable.
		\item Recall $\wb M:=M \wb T$ and $\Ispezial{EMT}@{\UE(M \protect {\breve T})}@{\UE(\wb M )} \leq \Aut(\GvF \wb T)$ the subgroup obtained by restricting $\UE (\bM)$ to $\GvF \wb T$. 
		Then $\UE(\wb M)$ stabilizes $\wb T$, $M$, $G_i\spa{\wh t_i,\wt t_i}$ and $\wb M$.
		Analogously, $\UE(\wb M)\spa{n,h_0}$ stabilizes $\wb T M_0$.
	\end{thmlist}
\end{lem}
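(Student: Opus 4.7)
The plan is to verify each claim by direct Chevalley-type computations in $\bG\leq\obG$. Throughout, \Cref{case2} together with \Cref{rem:5:12} gives $p\equiv 3\pmod 4$, $f$ odd, $q\equiv 3\pmod 4$, and so $(q+1)_2=(p+1)_2$ (via $p^f+1=(p+1)(p^{f-1}-\cdots+1)$ with the second factor odd). In this setting $v=\neins\notin\bG$, $\nu=\gamma$, and $\vFq=\gamma\circ F_q$, so $\GvF\cong\tD_l^{-}(q)$. The key background facts I rely on are: $\neins$ centralises $\bG_2$ and induces $\gamma_1$ on $\bG_1$, while $\nzwei$ centralises $\bG_1$ and induces $\gamma_2$ on $\bG_2$ (\Cref{rem51}); the commutator $[\neins,\nzwei]=h_0$ (\Cref{ZG_order}(d)); the reflection action $\neins\h_{e_1}(\zeta)\neins^{-1}=\h_{e_1}(\zeta^{-1})$; and the identification of diagonal automorphisms via $\calL_{\vFq}$ of elements of $\wb G_i$ (\Cref{not_diag}).

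For (a), I decompose the action of $n=\wh t_1\neins\nzwei$ factor by factor on each $G_i$. On $G_2$ both $\wh t_1\in\bG_1$ and $\neins$ centralise, so only $\nzwei$ contributes, giving $\gamma_2$. On $G_1$: $\nzwei$ centralises $\bG_1$, $\neins$ induces $\gamma_1$, and $\wh t_1\in\bT\cap\bG_1$ with $\calL_{\vFq}(\wh t_1)=h_0$ induces the diagonal automorphism associated with the class of $h_0$ in $\Z(\bG_1)/[\Z(\bG_1),\vFq]$ via \Cref{not_diag}. For (b), the element $\h_{e_1}(\zeta)=(2e_1)(\zeta)$ lies in $\bT\cap\bG_1$ since $2e_1=(e_1+e_2)+(e_1-e_2)$ is in the coroot lattice of $\bG_1$. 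A direct computation using $\vFq=\gamma\circ F_q$, the reflection action of $\gamma$ on $\h_{e_1}$, and $F_q\colon\zeta\mapsto\zeta^q$, yields $\calL_{\vFq}(\h_{e_1}(\zeta))=\h_{e_1}(\zeta^{-1-q})$. Choosing $\zeta$ a primitive $2(q+1)_2=2(p+1)_2$-th root of unity in $\FFtimes$ forces $\zeta^{q+1}$ to be the unique element of order $2$ in $\spa{\zeta}$, hence equal to $-1$, so $\calL_{\vFq}(\wh t_1)=\h_{e_1}(-1)=h_0$. The identity $[F_{p^2},\wh t_1]=1$ follows because $2(p+1)_2$ divides $p^2-1=(p-1)(p+1)$, using $(p-1)$ even.

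For (c), $F_p$ and $\gamma$ commute with $\vFq=\gamma F_q$ on $\bG$ since these three mutually commute and $\gamma^2=1$; and $\wh t_1\gamma$ commutes with $\vFq$ precisely because the defect between the two compositions involves conjugation by $\h_{e_1}(\zeta^{q+1})=h_0\in\Z(\bG)$, hence is trivial. Thus $\UE(\bM)=\spa{F_p^2,\wh t_1\gamma}$ descends to $\Aut(\GvF)$. Then $(\wh t_1\gamma)^2=\wh t_1\gamma(\wh t_1)=\wh t_1\cdot\wh t_1^{-1}=1$ gives order $2$, and $F_{p^2}$ has order exactly $f$ on $\GvF$: divisibility follows from $(\vFq)^2=\gamma F_q\gamma F_q=F_p^{2f}$, while $\GvF\cong\tD_l^-(q)$ cannot embed in $\bG(\FF_{p^{2e}})$ for $e<f$ by order comparison. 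The generators commute by (b), and $\gcd(2,f)=1$ forces $\UE(M)$ cyclic of order $2f$. For $[n,\UE(M)]$: clearly $F_{p^2}(n)=n$, so only $(\wh t_1\gamma)(n)$ matters. Compute $\gamma(n)=\neins n\neins^{-1}=h_0\wh t_1^{-2}n$ using $[\neins,\nzwei]=h_0$ and $\neins\wh t_1\neins^{-1}=\wh t_1^{-1}$, then apply $\wh t_1$-conjugation via the identity $n\wh t_1^{-1}=\wh t_1 n$, which follows from $n\wh t_1 n^{-1}=\wh t_1^{-1}$ obtained by pushing $\wh t_1$ through $n=\wh t_1\neins\nzwei$ with the same reflection relation. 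This yields $(\wh t_1\gamma)(n)=h_0 n$ and hence $[n,\wh t_1\gamma]=h_0$. Stability of $G_1,G_2,\spa{n,h_0}$ under $\UE(M)$ is then immediate since $\UE(M)$ acts via $F_p^2,\gamma$ (each preserving $\bG_i$) and $\wh t_1\in\bT$.

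For (d), the restrictions of $F_{p^2}$ and $\wh t_1\gamma$ to $\bG\wb T$ lie in $\UE(\wb M)$ because they commute with $\vFq$. Each normalises $\wb T=\calL^{-1}_{\vFq}(\Z(\bG))\cap\bT$ (since each normalises $\bT$ and $\Z(\bG)$ and commutes with $\calL_{\vFq}$), normalises $\bM$ (since $\gamma$ stabilises $\bG_1.\bG_2$ and $\gamma(n)\in\bG_1.\bG_2\cdot n$), and stabilises $G_i\spa{\wh t_i,\wt t_i}=\bG_i\cap\calL^{-1}_{\vFq}(\Z(\bG_i))$ via stabilisation of $\bG_i$ and $\Z(\bG_i)$. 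Stabilisation of $\wb T M_0$ by $\UE(\wb M^\circ)=\UE(\wb M)\spa{n,h_0}$ follows from the same checks plus the fact that $n\in\NNN_\bG(\bT)$ normalises $G_i$ by (a). The main obstacle is the commutator calculation in (c): five non-commuting relations among $n,\wh t_1,\neins,\nzwei$ and $\gamma$ must be combined consistently to derive $(\wh t_1\gamma)(n)=h_0 n$, from which the entire cyclic structure of $\UE(M)$ together with $[n,\UE(M)]=\spannh$ follows.
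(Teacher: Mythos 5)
Your proposal is correct and follows essentially the same route as the paper: direct Chevalley computations with $\gamma(\h_{e_1}(\zeta))=\h_{e_1}(\zeta^{-1})$, $[\neins,\nzwei]=h_0$ and $\calL_{\vFq}(\wh t_1)=h_0$, giving $(\wh t_1\gamma)^2=1$, $[n,F_p^2]=1$, $[n,\wh t_1\gamma]=h_0$ and the stated stabilities (your computation $(\wh t_1\gamma)(n)=h_0n$ is just a repackaging of the paper's commutator $[\wh t_1\neinszwei,\wh t_1\gamma_1]=h_0$ from Table~\ref{tab:properties_c2}). The extra details you supply (well-definedness of the restriction to $\GvF$, exactness of the order $f$ of $F_p^2$, the description $G_i\spa{\wh t_i,\wt t_i}=\bG_i\cap\calL_{\vFq}^{-1}(\Z(\bG_i))$) are facts the paper asserts or uses implicitly, and they check out.
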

\begin{proof} We have $v=\neins$ and $n=\wh t_1 \neinszwei$ as defined by the last row of \Cref{tab:my_label} and the actions are clear from \Cref{rem51}(b). This gives (a). 
	
	According to \Cref{rem:5:12}, $(q-1)_2=(p-1)_2=2$ and $(p+1)_2=(q+1)_2$. So for $\zeta$ a $2(p+1)$-th root of $1$ in $\FFtimes$, $\h_{e_1}(\zeta)$ satisfies $[\h_{e_1}(\zeta), vF]=[\h_{e_1}(\zeta), \gamma F]= \h_{e_1}(\zeta^{-q} \zeta^{-1})=h_0$. Hence we can choose $\wh t_1=\h_{e_1}(\zeta)$. The order of $\zeta$ divides $ p^2-1$ and hence $[\wh t_1,F_p^2]=1$. This shows part (b). 
	
	We see that $\gamma(\wh t_1)=\h_{e_1}( \zeta^{-1})=\wh t_1^{-1}$ hence $(\wh t_1 \gamma)^2=1$. Furthermore, $F_p^2$ has odd order $f$ as an automorphism of $\bG^\vFq$, and commutes with $\wh t_1 \gamma_1$. We get that $\UE(M)$ is abelian, even cyclic of order $2f$ since $\wh t_1 \gamma$ has order $2$ by the above. So we get the first claim of (c).
	
	We have $[n,F_p^2]=1$ since $[\wh t_1,F_p^2] =[\neinszwei, F_p^2]=1$ by \Cref{rem:5:2}, and we see
	\[ [n, \wh t_1\gamma_1]= [\wh t_1 \neinszwei, \wh t_1 \gamma_1]= [\wh t_1 \neins, \wh t_1\gamma_1 ] [ \nzwei, \wh t_1\gamma_1 ] = h_0.\] 
	This leads to $[n,\UE(M)]=\spannh$, as claimed in (c). We see that $\UE(M)$ stabilises $\spa{n}$. For every $i\in \{1,2\}$, the groups $\bG_i$ and $G_i$ are $\spa{F_p^2,\wh t_1 \gamma_1}$-stable. 
	This completes the proof of part (c). 
	
	By definition $\UE(M)$ acts on $\bT_0$ and stabilises $\Z(\bG)$. This implies that $\UE(M)$ stabilizes $\GvF \wb T$, $M$ and $M\wb T$. This shows part (d).
\end{proof}

The following gathers results easily obtained from Lemmas \ref{lem_centZ1_c2} and \ref{ZG_order}.
\begin{table}[h!]
	\centering
	\begin{tabular}{|c|c||c|c||c|c|c|c|c|}
		\hline 
		$(\epsilon_1,\epsilon_2)$ & $\{|Z_1|,|Z_2|\}$& $v$&$n$& $[F_p^2,n]$&$[\wh t_1\gamma_1,n]$&$[F_p^2,v]$ & $[\wh t_1\gamma_1,v]$\\ \hline\hline
		$(-1, 1)$ & $\{4\}$ & $\neins$& $\wh t_1\neinszwei$& $1$ & $h_0$ & $1$ & $1$ \\ \hline 
	\end{tabular}
	\caption{Commutators of some elements in $M\UE(M)$}
	\label{tab:properties_c2}
\end{table}

\begin{lem} \label{lem_centZ2_C2} Assume \Cref{case2}.
	\begin{thmlist}
		\item $[\UE(M^\circ),\UE(M^\circ)]=\spannh$ and $\UE(M^\circ)\cap M^\circ = \spannh $;
		\item $\UE(M^\circ)/\spannh$ is abelian and $\Cent_{\UE(M^\circ)}(Z_1.Z_2)$ is abelian.
	\end{thmlist}
\end{lem}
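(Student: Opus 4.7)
The plan is to mirror the proof of \Cref{lem_centZ2} while using the more restrictive abelian structure guaranteed in \Cref{case2} by \Cref{lem_centZ1_c2}. The key observation is that $\UE(M) = \spa{F_p^2, \wh t_1\gamma_1}$ is cyclic (hence abelian) by \Cref{lem_centZ1_c2}(c), while the only nontrivial commutator among the four generating elements $F_p^2$, $\wh t_1\gamma_1$, $n$, $h_0$ of $\UE(M^\circ)$ is $[\wh t_1\gamma_1, n] = h_0$ read off from \Cref{tab:properties_c2}. Throughout, $h_0$ is central in $\GvF\rtimes\Aut(\GvF)$, so all commutator calculations take place modulo a central 2-torsion element.

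For part (a), I will first check that $F_p^2$ commutes with all other generators: it commutes with $n$ by \Cref{tab:properties_c2}, with $\wh t_1$ by \Cref{lem_centZ1_c2}(b), and with $\gamma_1$ since $F_p$ and $\gamma$ commute in $\UE(\bG)$. Together with cyclicity of $\UE(M)$, this reduces $[\UE(M^\circ),\UE(M^\circ)]$ to the subgroup generated by $[\wh t_1\gamma_1,n]=h_0$, giving $[\UE(M^\circ),\UE(M^\circ)]=\spannh$. For the intersection $\UE(M^\circ)\cap M^\circ$, I will view $\UE(M^\circ)=\UE(M)\spa{n,h_0}$ inside $\GvF\rtimes\UE(M)$: any element of this group lies in $\GvF$ iff its $\UE(M)$-component is trivial, so $\UE(M^\circ)\cap\GvF=\spa{n,h_0}$. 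To conclude $\spa{n,h_0}\cap M^\circ=\spannh$, I will compute $n^2$: using $\gamma_1(\wh t_1)=\wh t_1^{-1}$ (from \Cref{lem_centZ1_c2}(c)) we get $\neinszwei\,\wh t_1\,(\neinszwei)^{-1}=\wh t_1^{-1}$, so $n^2=(\neinszwei)^2\in\Z(\obG)=\spannh$ by \Cref{ZG_order}(b). Finally $n\notin\bM^\circ$, because $\rho(\neinszwei)=(1,-1)(l,-l)$ is an odd-sign-change element that does not lie in $W(\bM^\circ,\bT)=\Sym^\tD_{\pm J_1}\times\Sym^\tD_{\pm J_2}$ (consistent with $[M:M^\circ]=2$ from \Cref{lem:5_5}).

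For part (b), abelianness of $\UE(M^\circ)/\spannh$ is immediate from part (a). To analyse $C:=\Cent_{\UE(M^\circ)}(Z_1.Z_2)$, I will write a general element of $\UE(M^\circ)$ as $F_p^{2a}(\wh t_1\gamma_1)^b n^c h_0^d$ and compute its action on the generators $\h_{J_i}(\varpi)$ of $\bZ_i$. Since $\bT$ centralises $Z_1.Z_2\leq\bT$ and $F_p^2$ fixes $\varpi$, this action is determined by $\gamma_1^{b+c}\gamma_2^c$. Using $\gamma_i(\h_{J_i}(\varpi))=h_0\h_{J_i}(\varpi)$ (following from $\gamma_i(e_1)=-e_1$ on the $\bG_i$-component), I will obtain $C=\spa{F_p^2,(\wh t_1\gamma_1)^2,n^2,h_0}$. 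To show $C$ is abelian, it suffices to show its generators pairwise commute; since $F_p^2$ and $h_0$ are central in $\UE(M^\circ)$, only $[(\wh t_1\gamma_1)^2,n^2]$ needs attention, but part (a) gives $[\UE(M^\circ),\UE(M^\circ)]=\spannh$ (central of order 2), so $[x^2,y]=[x,y]^2=1$ for any $x,y\in\UE(M^\circ)$. Hence squares are central and $C$ is abelian.

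The only real technical obstacle is the computation of $n^2$ and the verification that $n\notin\bM^\circ$; both follow from the explicit choice $n=\wh t_1\neinszwei$ together with the Chevalley-level identity $[\neins,\nzwei]=h_0$ from \Cref{rem51}(b) and the identification of $W(\bM^\circ,\bT)$ as the subgroup of $\Sym_{\pm l}$ preserving $J_1\sqcup J_2$ with an even number of sign changes on each block.
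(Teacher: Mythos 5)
Your proposal is correct and follows essentially the same route as the paper: the commutator statement is read off from the relations $[F_p^2,\wh t_1\gamma_1]=[F_p^2,n]=1$ and $[\wh t_1\gamma_1,n]=h_0$, the intersection comes from $\gamma(\wh t_1)=\wh t_1^{-1}$ giving $n^2=(\neinszwei)^2=h_0$ together with $n\notin M^\circ$, and part (b) is the same explicit computation of the $\UE(M^\circ)$-action on $\h_{J_1}(\varpi)$ and $\h_{J_2}(\varpi)$. Your version of (b) is in fact slightly cleaner (parametrizing a general element and forcing $b\equiv c\equiv 0$, then using that squares are central since the derived subgroup is central of order $2$), but it is the same argument in substance as the paper's centralizer computation.
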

\begin{proof} Recall that by \Cref{case2},  $\eps=-1$ and $\{|Z_1|,|Z_2|\}=\{4\}$, hence $2\nmid f$. The group $\UE(M)=\spa{F_p^2, \wh t_1 \gamma}$ is abelian according to \Cref{lem_centZ1_c2}(c). \Cref{tab:properties_c2} implies $[\UE(M),n]=\spannh$. 
	
	Recall $\UE(M)\cap M=1$. Using $\gamma(\wh t_1)=\wh t_1\inv$ from the proof of Lemma~\ref{def_EM_c2} and then \Cref{ZG_order}(d), we get 
		$ n^2= (\wh t_1 \neinszwei)^2= (\neinszwei)^2= h_0$. This gives
	 $\spa{n}\cap M^\circ=\spa{n^2}=\spannh$, whence the second part of (a). Together with the equality $[F_p^2,\wh t_1 \gamma]=1$, the results of \Cref{tab:properties_c2} lead to $[\UE(M^\circ),\UE(M^\circ)]=\spannh$, completing the missing part of (a). 
	
The above implies that $\UE(M^\circ)/\spannh$ is abelian. We have $[Z_1.Z_2,n]=[Z_1.Z_2,\gamma_1]=[Z_1.Z_2,n\gamma_1]=\spannh $.
Considering the action of $\UE(M^\circ)$ on $Z_1$ and $Z_2$ we obtain $\Cent_{\UE(M^\circ)}(Z_1)= \spa{h_0,F_p^2,n\gamma_1, \gamma_1F_p}$ and $\Cent_{\UE(M^\circ)}(Z_2)= \spa{h_0,F_p,\gamma_1}$. This leads to $\Cent_{\UE(M^\circ)}(Z_1.Z_2)=\spa{h_0,F_p\gamma_1, F_p^2}$ and this group is abelian.
\end{proof}

\begin{lem}[Comparison of $\UE(M)$, $\UE(G_1)$ and $\UE(G_2)$] \label{lem:5_actions_C2}
	Assume \Cref{case2}. For $i=1,2$ set $\UE(G_i):=\spa{\gamma_i,F_p}\leq \Aut(G_i)$, and let $\II EuM@{\UE_M(G_i)}\leq \Aut(G_i)$ be the subgroup of automorphisms of $G_i$ induced by $\UE(M^\circ)$. Then $ \UE(G_2)=\UE_M(G_2)$. Moreover, $ \UE(G_1)$ and $\UE_M(G_1)$ are $\wt t_1$-conjugate in $\Out(G_1)$.
\end{lem}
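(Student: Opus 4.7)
Under Hypothesis \ref{case2} we have $v=\neins$, $n=\wh t_1\neinszwei$, $\UE(M)=\spa{F_p^2,\wh t_1\gamma_1}$ and $\UE(M^\circ)=\UE(M)\spa{n,h_0}$, with $f$ odd by \Cref{rem:5:12}. Since every root of $\bG_2$ involves only $e_j$ with $j\geq l_1+1\geq 2$, both $\gamma$ and $\gamma_1$ act trivially on $\bG_2$; hence $\vFq$ restricts to $F_q$ on $\bG_2$, $G_2=\bG_2^{F_q}$ and $\eps_2=1$.

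For the first assertion $\UE(G_2)=\UE_M(G_2)$, I would restrict each generator of $\UE(M^\circ)$ to $\Aut(G_2)$: $F_p^2\mapsto F_p^2$; the product $\wh t_1\gamma_1$ acts trivially (both $\wh t_1\in\bG_1$ and $\gamma_1$ fix $\bG_2$ pointwise); $n=\wh t_1\neinszwei$ acts as $\gamma_2$ via $\nzwei$ by \Cref{lem_centZ1_c2}(a); and $h_0\in G_2$ is inner. Thus $\UE_M(G_2)=\spa{F_p^2,\gamma_2}$, and since $f$ is odd we have $\spa{F_p^2}=\spa{F_p}$ inside $\Aut(G_2)$, giving $\UE_M(G_2)=\spa{F_p,\gamma_2}=\UE(G_2)$.

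For the second assertion I would first compute $\UE_M(G_1)$. By \Cref{lem:5_3}(b), $\calL_\vFq(\wh t_1)=h_0$, so by the parametrization of \Cref{not_diag} $\wh t_1$ induces on $G_1$ the diagonal automorphism $\delta_0$ associated with the class of $h_0$ in $\wZ(G_1)$. Restricting the generators of $\UE(M^\circ)$ yields $F_p^2\mapsto F_p^2$, $\wh t_1\gamma_1\mapsto\delta_0\gamma_1$, $n\mapsto\delta_0\gamma_1$ (by \Cref{lem_centZ1_c2}(a)), and $h_0$ inner; hence $\UE_M(G_1)=\spa{F_p^2,\delta_0\gamma_1}$ in $\Aut(G_1)$, while $\UE(G_1)=\spa{F_p,\gamma_1}$ by definition.

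To exhibit the $\wt t_1$-conjugation, I identify the image $\delta_1$ of $\wt t_1$ in $\Out(G_1)$ as the diagonal automorphism associated with $\h_{J_1}(\varpi)$ (since $\calL_\vFq(\wt t_1)=\h_{J_1}(\varpi)$ by \Cref{lem:5_actions_wbT}). For any $\phi\in\Aut(\bG_1)$ commuting with $\vFq$, one has $\wt t_1\phi\wt t_1^{-1}=\inn(\wt t_1\phi(\wt t_1)^{-1})\circ\phi$, so computing $\delta_1\phi\delta_1^{-1}$ in $\Out(G_1)$ reduces to identifying the class of $\wt t_1\phi(\wt t_1)^{-1}$ in $\wZ(G_1)$. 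Using $p\equiv 3\pmod 4$ from \Cref{rem:5:12}, the identity $\vFq(\h_{J_1}(\varpi))=h_0^{l_1+1}\h_{J_1}(\varpi)$ combined with $|Z_1|=4$ forces $l_1$ odd, whence $F_p(\h_{J_1}(\varpi))=h_0\,\h_{J_1}(\varpi)$ and $\gamma_1(\h_{J_1}(\varpi))=h_0\,\h_{J_1}(\varpi)$; a direct Lang-type computation then shows that the classes of $\wt t_1 F_p(\wt t_1)^{-1}$ and $\wt t_1\gamma_1(\wt t_1)^{-1}$ in $\wZ(G_1)$ both equal $h_0$. Hence $\delta_1 F_p\delta_1^{-1}=\delta_0 F_p$ and $\delta_1\gamma_1\delta_1^{-1}=\delta_0\gamma_1$ in $\Out(G_1)$, so $\delta_1\spa{F_p,\gamma_1}\delta_1^{-1}=\spa{\delta_0 F_p,\delta_0\gamma_1}$. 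This group coincides with $\spa{F_p^2,\delta_0\gamma_1}=\UE_M(G_1)$ because $\delta_0$ commutes with $F_p$ in $\Out(G_1)$ (as $F_p(h_0)=h_0$), the relation $\gamma_1 F_q=1$ on $G_1$ gives $\gamma_1=F_p^f$ modulo inner, and $f$ odd yields $\delta_0 F_p=(\delta_0\gamma_1)\cdot F_p^{1-f}\in\spa{F_p^2,\delta_0\gamma_1}$. The main obstacle is the parity bookkeeping tying the constraint $|Z_1|=4$ to $l_1$ odd and then propagating it through the Lang-map computations for both $F_p$ and $\gamma_1$.
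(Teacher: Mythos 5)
Your proposal is correct and takes essentially the same route as the paper: you identify the induced groups (getting $\spa{\gamma_2,F_p^2}=\spa{\gamma_2,F_p}$ on $G_2$ from $2\nmid f$, and the image $\spa{h_0\gamma_1,F_p^2}$ of $\UE_M(G_1)$ in $\Out(G_1)$) and then conjugate by the diagonal class of $\calL_{\vFq}(\wt t_1)=\h_{J_1}(\varpi)$, whose effect is to twist $\gamma_1$ (and $F_p$) by $h_0$, exactly as in the paper's computation $\spa{\gamma,F_p^2}^{\h_{\underline{l_1}}(\varpi)}=\spa{h_0\gamma,F_p^2}$. Your extra parity bookkeeping ($l_1$ odd, $p\equiv 3 \bmod 4$) just makes explicit what the paper leaves implicit, so there is no substantive difference.
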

\begin{proof} Note that in our case $\vFq =\gamma F_q=F$ as endomorphism of $\bG$. Since $n=\wh t_1\neinszwei$ acts as $\gamma_2$ on $G_2$, we see that $\UE(M^\circ)$ induces $\spa{\gamma, F_p^2}$ on it. But the latter has same action on $\GF$ as $\spa{\gamma, F_p} $ since $\gamma F_q$ acts trivially and $2\nmid f$ by \Cref{rem:5:12}. Therefore $\UE(G_2)=\UE_M(G_2)$, as claimed.
	Via the isomorphism between $\Out(\GF)$ and $\Z(\bG)_F\rtimes \spa{\gamma , F_p}$ using $\calL_F$ and described in \Cref{not_diag}, the group $\UE_M(G_1)$ corresponds to $\spa{h_0\gamma,F_p^2}$. Note that 
	\[ \spa{\gamma,F_p^2}^{\h_{\underline {l_1}}(\varpi)}= 
	\spa{\gamma^{\h_{\underline {l_1}}(\varpi)},F_p^2}= \spa{h_0\gamma,F_p^2}.\] This shows that $ \UE(G_1)$ and $\UE_M(G_1)$ are $ \wt t_1$-conjugate in $\Out(G_1)$ since $\h_{\underline {l_1}}(\varpi) = \calL_F(\wt t_1)$. \end{proof}

\subsection{The characters of \texorpdfstring{$M$}{M} and their Clifford theory}\label{ssec_5B}
The aim of the remainder of this chapter is the following statement, later used in the proof of Condition \Ad{} and corresponding roughly to Theorem C of our Introduction. Recall that the groups $\wT$ from \Cref{lem:5_actions_wbT} and $\UE(M)$ from \Cref{EMdef} act on $M$. As expected we state that the actions of $\wt T M$ and $\UE(M)$ on $M$ are of ``transversal nature'', and \maex holds \wrt $M\unlhd M\UE(M)$ for enough characters of $M$. In the case when $\Cent_{M\UE(M)}(M)\neq \Z(M)$ (i.e. $\eps =1$ and $\eps_1=-1$) we provide a necessary refinement of that \maex for certain characters having $h_0$ in their kernel.

\begin{theorem}\label{thm_sec_Ad}
	Set $\II Mtilde@{\wt M:=M \wt T}$. 
	\begin{thmlist}
		\item Every $\wt M$-orbit in $\Irr(M)$ contains a character $\chi$ such that 
		\begin{asslist}
			\item $\starStab \wt M| \UE(M)| \chi$; and 
			\item $\chi$ extends to $M \UE(M)_\chi$.
		\end{asslist}
		Equivalently (see \cite[Lem.~2.4]{S21D1}), there exists some $\UE(M)$-stable $\wt M$-transversal $\TT(M)$ in $\Irr(M)$, such that \maex holds \wrt $M\unlhd M \UE(M)$ for $\TT(M)$.
		\item If $2\mid f$, $\eps=1$, $\eps_1=-1$, $\chi\in\TT(M)$ and $h_0\in\ker(\chi)$, then the extension of $\chi$ can be chosen to have $v(\underline{F}_p)^f$ in its kernel, where $\underline{F}_p$ is the image of $F_p$ in $\UE(M)$.
	\end{thmlist}
\end{theorem}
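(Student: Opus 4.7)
\bigskip

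\noindent\textbf{Plan of the proof.} The strategy is to build up $\Irr(M)$ through the normal tower $M_0 \unlhd M^\circ \unlhd M$ using Clifford theory, and to parametrize $\Irr(M_0)$ via pairs of characters of $G_1$ and $G_2$ (matching on $\langle h_0 \rangle$) so that one can invoke the detailed description of $\Irr(G_i)$ obtained in \Cref{thm_sumup_D}. More precisely, since $G_1 \cap G_2 = \spannh$ and $[G_1, G_2] = 1$, every $\chi_0 \in \Irr(M_0)$ writes uniquely as $\chi_1.\chi_2$ with $\chi_i \in \Irr(G_i)$ having the same value on $h_0$. The first task is to identify, among the pairs $(\chi_1,\chi_2)$, a distinguished family that furnishes an $\UE(M^\circ)$-stable transversal under the action of $M^\circ/M_0 \cdot \wt T$. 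For this, the natural approach is to use, for each $i\in\{1,2\}$, a choice within each $\wb G_i$-orbit prescribed by \Cref{thm_sumup_D}(a) and to split the analysis along the partitions $\Irr(G_i) = \oTT_i \sqcup \EE_i \sqcup \DD_i$ of \Cref{not:3_16}.

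Next I would lift to $M = M^\circ \langle n \rangle$. The element $n$ acts on $M_0$ as $\gamma_1\gamma_2$ in \Cref{case1} (and similarly, up to a diagonal correction on $G_1$, in \Cref{case2}) thanks to Lemmas \ref{lem:5_6} and \ref{lem_centZ1_c2}. Using \Cref{thm_sumup_D}(c), the characters $\chi_i \in \DD_i$ have the property $\chi_i^{\gamma_i} = \chi_i^{\wh t_i} \neq \chi_i$, so their stabilizer in $\wt M$ is controlled by the corresponding diagonal element. This yields a clean classification of the $M$-orbits on $\Irr(M_0)$ according to which of the nine combinations $(X_1,X_2)$, with $X_i \in \{\oTT_i, \EE_i \cup \EE'_i, \DD_i \cup \DD'_i\}$, contains $(\chi_1,\chi_2)$. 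In each case the desired stabilizer factorization $(\wt M \UE(M))_\chi = \wt M_\chi \UE(M)_\chi$ can be read off from the corresponding statements in \Cref{thm_sumup_D} for each factor combined with the commutator information in Tables \ref{tab:properties_c1}--\ref{tab:properties_c2}.

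For the extension part of (a), the plan is to first extend $\chi_1.\chi_2$ to its stabilizer in $M_0\UE_M(G_1)\UE_M(G_2)$ by combining the extensions granted by \Cref{thm_sumup_D}(a) on each side (using \Cref{lem:5_actions_C1}, resp. \Cref{lem:5_actions_C2}, to identify $\UE_M(G_i)$ with $\UE(G_i)$ up to an outer twist). This extension can then be propagated to $M^\circ_{\chi_0}$ using Clifford theory applied to the central extension $M^\circ/M_0 = \langle \wh t_1\wh t_2 \rangle$ via \Cref{ExtCrit}(a), and finally to $(M\UE(M))_\chi$ by iterating \Cref{lem_stab_extensions} with $L = M$ and $Y = (M\UE(M))_\chi$. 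Handling the interaction between the action of $n$ (which swaps $G_1$ and $G_2$ data when $\chi_1$ and $\chi_2$ are in symmetric positions) and the $\UE(M)$-action is a recurring technical point where one exploits that $[\UE(M^\circ),\UE(M^\circ)] \leq \spannh$ (Lemmas \ref{lem_centZ2} and \ref{lem_centZ2_C2}) to avoid genuinely non-abelian obstructions.

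The main obstacle is expected to arise in the case where both $\chi_1 \in \EE_1 \cup \EE'_1$ and $\chi_2 \in \EE_2 \cup \EE'_2$: then $\UE_M(G_i)_{\chi_i}$ is not cyclic for either $i$, so \Cref{ExtCrit}(a) does not apply directly and one has to piece together the extensions on the two sides in a compatible way, exploiting the fact (from \Cref{thm_sumup_D}(b)) that both $\chi_i$ extend to $G_i\spa{\wh t_i,\gamma_i}$. Part (b) of the theorem then requires an additional refinement: under the numerical assumption $2\mid f$, $\eps=1$, $\eps_1=-1$, the element $v(\underline F_p)^f \in \UE(M)$ is central and of order 2, and one must show the $\EE_i$-extensions from \Cref{thm_sumup_D}(b) can be arranged so that the combined extension kills $v(\underline F_p)^f$. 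This will come from the fact that $h_0 \in \ker(\chi_i)$ already forces the restriction to $\spa{\wh t_i}$ to trivialize this central element; the argument is parallel to, and in fact mirrors, the case-by-case verifications surrounding \Cref{thm_sumup_D_1}(a) for $\tDlsc^\eps(q)$ itself.
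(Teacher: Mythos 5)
Your overall architecture (decompose along $M_0=G_1.G_2$, use the $\oTT_i,\EE_i,\DD_i$ partition transferred from \Cref{thm_sumup_D}, lift through $M_0\unlhd M^\circ\unlhd M$ with $n$ acting as $\gamma_1\gamma_2$, and exploit $[\UE(M^\circ),\UE(M^\circ)]\leq\spannh$) is the same as the paper's, but two points do not survive scrutiny. First, your case analysis is not set up correctly: since $\EE'_i,\DD'_i\subseteq\oTT_i$, your ``nine combinations'' do not partition anything, and more importantly you skip the reduction the paper makes in \Cref{prop_char_G1G2}: after replacing $\phi$ by a suitable $\wt T$-conjugate one only has to treat $\oTT_1.\oTT_2$ and the four mixed sets $\EE'_1.\EE_2$, $\DD'_1.\DD_2$, $\DD'_1.\EE_2$, $\EE_1.\DD'_2$ (and only when $|Z_1|=|Z_2|=4$). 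Without this, combinations with both components outside the transversal would have to be handled, and your claim that the stabilizer factorization ``can be read off'' from \Cref{thm_sumup_D} plus the commutator tables is too optimistic: even after the reduction the paper must verify, case by case, that some $\chi\in\Irr(M\mid\phi)$ satisfies $\chi^{\wh t_1}=\chi$ (to invoke \Cref{def_whEM}), compute $M_\phi$, and — in the cases involving $\EE_j$ — check that the subgroup of $\UE_M(G_j)_{\phi_j}$ actually induced on $G_j$ by the stabilizer of the chosen extension has \emph{even index}, since a character in $\EE_j$ does not extend to its full stabilizer (\Cref{cor:6_11}(f), condition (3) in the proof of \Cref{prop:DD}). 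That even-index verification is a genuine step your sketch does not account for.

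Second, and more seriously, your mechanism for part (b) cannot work. Under the hypotheses of (b) one has $\eps_1=\eps_2=-1$ and $2\mid f$, so by \Cref{ZG_order}(c) the centers satisfy $\{|Z_1|,|Z_2|\}=\{2\}$, hence $\EE_i=\DD_i=\emptyset$ and $\Irr(M_0)=\oTT_1.\oTT_2$: there are no ``$\EE_i$-extensions'' to arrange. Moreover the justification you offer — that $h_0\in\ker(\chi_i)$ forces the restriction to $\spa{\wh t_i}$ to trivialize $v(\underline F_p)^f$ — is not a valid argument: $v(\underline F_p)^f=vF_q$ is not controlled by $\spa{\wh t_i}$ at all. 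The actual point is structural: $vF_q$ centralizes $M_0$ (it lies in $\Cent_{\GvF\UE(M)}(\GvF)$) and $\spa{vF_q}\cap Z_0\leq\spannh\leq\ker(\eta)$ for the central character $\eta$ of $Z_0=\Z(M_0)$ determined by $\phi$; one therefore extends $\eta$ to $Z_0E$ with $vF_q$ in its kernel and transports this through the central character-triple isomorphism $(M_0E,M_0,\phi)\geq_c(Z_0E,Z_0,\eta)$ (this is \Cref{prop7_2}(b)), then propagates it to $\wh M_\chi$ via \Cref{lem_stab_extensions}(d). Your sketch contains no substitute for this argument, so part (b) is a genuine gap.
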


Recall $$M_0=G_1.G_2\ \unlhd\  M^\circ =(\bG_1.\bG_2)^{\vFq }=M_0\spa{\wh t_1\wh t_2}\ \unlhd\  M=M^\circ\spa{n}, $$
 with $M/M_0$ a non-cyclic group of order 4, see \Cref{lem:5_5}.

 The characters of $M$ are studied through the characters of $M_0$, since $M_0=G_1.G_2$, with $G_i\cong\tD_{l_i,\sico}^{\eps_i}(q)$ by \Cref{def_M} and therefore much information about $\Irr(G_i)$ is known from \Cref{thm_sumup_D}. Characters of $M$ are obtained from those of $M_0$ in the following way. Recall $\wb T=\calL_{\vFq }\inv(\Z(\bG))\cap \bT$ and $\wt T=(\bT\Z(\bG))^\vFq$ from \Cref{lem:5_actions_wbT}. 
\begin{lem}\label{not:5A}
	\begin{thmlist}
		\item Let $ {\phi\in \Irr(M_0)}$, $ {\wh \phi\in\Irr(M^\circ_\phi \mid \phi)}$ and $ {\chi_0}\in \Irr( M^\circ_\phi \spa{n}_{\wh \phi}\mid \wh\phi)$. If $M_\phi\neq M_0\spa{\wh t_1\wh t_2 n}$,
		then $\chi:=\chi_0^M$ is irreducible.
		\item \Maex holds \wrt $M_0\unlhd M_0 \wb T $ and $M_0\unlhd M_0 \wt T $. 
	\end{thmlist}
\end{lem}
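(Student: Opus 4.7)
For part (a), the strategy is a two-step application of Clifford correspondence along the chain $M_0 \unlhd M^\circ \unlhd M$. Set $S := \Stab_M(\wh\phi)$. Since $M^\circ_\phi = \Stab_{M^\circ}(\phi)$, Clifford theory forces $\restr\wh\phi|{M_0}$ to be a positive multiple of $\phi$, so every element of $S$ stabilizes $\phi$, yielding $M^\circ_\phi \spa{n}_{\wh\phi} \leq S \leq M_\phi$. The plan is to show that $S = M^\circ_\phi \spa{n}_{\wh\phi}$ whenever $M_\phi \neq M_0\spa{\wh t_1\wh t_2 n}$. Given this equality, Clifford correspondence applied inside $M_\phi$ with the normal subgroup $M^\circ_\phi$ yields $\chi_0^{M_\phi} \in \Irr(M_\phi \mid \phi)$, and a second Clifford correspondence for $M_0 \unlhd M$ then produces $\chi_0^M = (\chi_0^{M_\phi})^M \in \Irr(M)$.

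The equality $S = M^\circ_\phi \spa{n}_{\wh\phi}$ is verified by enumerating the possibilities for $M_\phi$. By \Cref{lem:5_5} the quotient $M/M_0$ is generated by $\wh t_1\wh t_2\, M_0$ and $nM_0$, hence has order at most $4$. So $M_\phi$ is one of $M_0,\, M^\circ,\, M_0\spa{n},\, M_0\spa{\wh t_1\wh t_2 n},\, M$ (with possible coincidences when $\wh t_1\wh t_2 \in M_0$). Using $n^2 = h_0 \in M_0$ from \Cref{rem51}(b) and \Cref{ZG_order}(d), one computes $\spa{n}_{\wh\phi} \in \{\spa{n}, \spa{n^2}\}$ according to whether $n$ fixes $\wh\phi$, and a direct check confirms $S = M^\circ_\phi \spa{n}_{\wh\phi}$ in each of the four non-excluded cases. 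In the remaining case $M_\phi = M_0\spa{\wh t_1\wh t_2 n}$, which is a proper overgroup of $M_0$ only when $\wh t_1\wh t_2 \notin M_0$, one has $M^\circ_\phi = M_0$, $\wh\phi = \phi$ and $\spa{n}_{\wh\phi} = \spa{n^2} \subseteq M_0$, so $M^\circ_\phi\spa{n}_{\wh\phi} = M_0 \subsetneq M_\phi = S$ and accordingly $\chi_0^M = \phi^M$ is reducible of length $|M_\phi : M_0| = 2$, justifying the exclusion.

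For part (b), we exploit the central product decomposition $M_0 = G_1.G_2$. Write $\phi = \phi_1.\phi_2$ with $\phi_i \in \Irr(G_i)$. By \Cref{max_ext_Gi_wGi}, each $\phi_i$ extends to some $\wt\phi_i \in \Irr((\wb G_i)_{\phi_i})$ where $\wb G_i := G_i\spa{\wh t_i, \wt t_i}$. Since $[\wb G_1, \wb G_2] = 1$ and $\wb G_1 \cap \wb G_2 \leq \spannh \leq \Z(\bG)$, the product $\wt\phi_1.\wt\phi_2$ defines an irreducible character of the central product $(\wb G_1)_{\phi_1}.(\wb G_2)_{\phi_2}$ whose restriction to $M_0$ equals $\phi$. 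From \Cref{lem:wtT} we have $\wb T = T_0\spa{\wt t_1\wt t_2, \wh t_1, \wh t_2}$, so $M_0 \wb T \leq \wb G_1.\wb G_2$ and $(M_0\wb T)_\phi \leq (\wb G_1)_{\phi_1}.(\wb G_2)_{\phi_2}$; the restriction of $\wt\phi_1.\wt\phi_2$ to $(M_0\wb T)_\phi$ is then the required extension of $\phi$ (irreducible because its further restriction to $M_0$ is the irreducible $\phi$). The analogous statement for $M_0 \unlhd M_0\wt T$ follows by the same central product argument applied inside $\wbG^\vFq$, replacing $\wb G_i$ with its diagonal-automorphism counterpart therein and using \Cref{lem:5_actions_wbT}(a) to match the actions on $M_0$.

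The main technical obstacle lies in the case-checking of part (a). In particular one must carefully distinguish $M_\phi = M_0\spa{n}$ (allowed, with $\chi_0^M$ irreducible) from $M_\phi = M_0\spa{\wh t_1\wh t_2 n}$ (excluded, with $\chi_0^M$ reducible): both are $\ZZ/2$-extensions of $M_0$ inside $M$, but they behave differently precisely because $\wh t_1\wh t_2$ sits in $M^\circ\setminus M_0$ rather than in $M\setminus M^\circ$, so the data of $M^\circ_\phi$ together with $\spa{n}_{\wh\phi}$ alone cannot detect the element $\wh t_1\wh t_2 n \in M^\circ n \cap M_\phi$ in the excluded case --- and it is exactly this failure that forces the hypothesis $M_\phi \neq M_0\spa{\wh t_1\wh t_2 n}$ in the statement.
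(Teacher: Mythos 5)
Your proposal is correct and follows essentially the same route as the paper: for (a) the same two-step Clifford correspondence through $M^\circ_\phi\unlhd M_\phi$ combined with the enumeration of the subgroups of $M$ containing $M_0$ (your asserted case-by-case check that $S=M^\circ_\phi\spa{n}_{\wh\phi}$ in the non-excluded cases is indeed correct, and the excluded case fails exactly as you describe). For (b) you likewise reduce, as the paper does, to maximal extendibility for $G_i\unlhd \wb G_i$ via \Cref{max_ext_Gi_wGi} and the containment $M_0\wb T\leq \wb G_1.\wb G_2$, with the $\wt T$-statement obtained from the fact that $\wt T$ acts on $M_0$ through $\wb T$ up to central elements, which is the paper's own (equally brief) final step.
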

Note that every character $\phi\in \Irr(M_0)$ with $M_\phi= M_0\spa{\wh t_1\wh t_2 n}$ has the $\widecheck T$-conjugate character $\phi':=\phi^{\wt t_1\wt t_2}$. The character $\phi'$ then satisfies 
\[ M_{\phi'}= (M_\phi)^{\wt t_1\wt t_2}=M_0 \spa{\wh t_1\wh t_2 n^{\wt t_1 \wt t_2}}= M_0\spa{n},\] 
since $[{\wt t_1 \wt t_2}, n]\in \wh t_1 \wh t_2 M_0 $. Hence part (a) of the statement does not apply to $\phi$ but applies to $\phi'$. 
\begin{proof}
	For part (a) note that the quotients $M^\circ/M_0$ and $\spa{n}_{\wh \phi} M^\circ_\phi/M^\circ_\phi$ are cyclic, see \Cref{lem:5_5}. Therefore, the characters $\wh \phi$ and $\chi_0$ are extensions of $\phi$ according to \Cref{ExtCrit}(a). One has $M^\circ_\phi\spa{n}_{\wh \phi}\leq M_{\wh \phi}$. Now if $M^\circ_\phi\spa{n}_{\wh \phi}=M_{\wh \phi}$, then $\chi$ is irreducible by Clifford theory (\ref{Cliff}). Clearly $M_{\wh \phi}\leq M_\phi$. 
 Recall that $M/M_0$ is a Klein $4$-group and the subgroups of $M$ containing $M_0$ are $M_0$, $M_0\spa{n}$, $M_0\spa{\wh t_1\wh t_2n}$, $M^\circ$ and $M$. The group $M_\phi$ is one of those groups. Whenever $M_\phi\geq M^\circ$ or $M_\phi=M_0$ the above construction leads to $M_{\wh \phi}=M^\circ_{\phi }\spa{n}_{\wh \phi }$. Otherwise $M_\phi= M_0\spa{n \wh t_1\wh t_2}$, and hence $M_{\wh \phi}\neq M^\circ_\phi\spa{n}_{\wh \phi}$ in this case. We see that $ \chi$ is irreducible unless $M_\phi=M_0\spa{\wh t_1\wh t_2 n}$. 
	
	Next, we consider the claim in part (b). Recall the definitions of $\wh t_1,\wh t_2,\wt t_1$ and $\wt t_2$ from \Cref{rem:5:2} and \Cref{lem:5_actions_wbT}. Set $\Ispezial{Gibreve}@{\protect{\breve G}_i}@{{\wb G}_i}:=\spa{\wh t_i,\wt t_i,G_i}$. When $l_i\geq 2$ this is consistent with \Cref{Gbreve} since $\calL_{\vFq }(\spa{\wh t_i,\wt t_i})=\Z(\bG_i)$, while when $l_i=1$ then ${\wb G}_i$ is abelian. So \Cref{max_ext_Gi_wGi} implies that \maex holds \wrt $G_i\unlhd {\wb G}_i$. On the other hand, the group $\wb T$ of \ref{lem:5_actions_wbT} satisfies \[\wb T\leq {\wb G}_1.{\wb G}_2\] since $\wb T=\spa{\wt t_1\wt t_2,\wh t_1,\wh t_2} (T \cap M_0)$ by \ref{lem:wtT}. Hence, maximal extendibility holds \wrt $G_1.G_2\unlhd {\wb G}_1.{\wb G}_2$ and $M_0=G_1.G_2\unlhd \wb T (G_1.G_2)=M^\circ \wb T$. As $({\wb G}_1.{\wb G}_2)/(G_1.G_2)$ is abelian, this implies \maex \wrt $M^\circ \unlhd M^\circ \wb T$, see \Cref{ExtCrit}(b). 
	
	Since $\wt T=(\bT\Z(\wbG))^{\vFq } \leq \wb T \Z(\wbG)$, we also get \maex \wrt $M_0\unlhd M_0 \wt T$ and \wrt $M^\circ \unlhd M^\circ \wt T$.
\end{proof}

Recall $M^\circ= (\bG_1.\bG_2)^{\vFq }= M_0 T$ for $\III{T}:=\bT^{\vFq }$, and \[\UE(M^\circ):= \UE(M) \spa{n,h_0}.\]
\begin{prop}\label{def_whEM} Let $\phi\in \Irr(M_0)$ and $\chi\in \Irr(M\mid \phi)$. If ${\wb T}_\phi\leq T \spa{\wh t_1}$ and $\chi^{\wh t_1}=\chi$, then $ \starStab \wt M| \wh M |\chi$ where $\wh M=M\UE(M)$. 
\end{prop}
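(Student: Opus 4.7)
The plan is to verify the non-trivial inclusion $(\wt M\wh M)_\chi\subseteq\wt M_\chi\wh M_\chi$. I will start by recording that $\wt M\wh M = M\wt T\UE(M)$ with $\wt M\cap\wh M = M$ (the latter coming from $\wt T\cap M = T$ and $\UE(M)\cap M = \spannh$, via \Cref{lem_centZ2}(a) or \Cref{lem_centZ2_C2}(a)); every $a\in\wt M\wh M$ thus decomposes as $a = \wt a\wh a$ with $\wt a\in\wt M$, $\wh a\in\wh M$, unique up to the replacement $(\wt a,\wh a)\mapsto(\wt a m, m^{-1}\wh a)$ for $m\in M$.

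Now I fix $a\in(\wt M\wh M)_\chi$ with decomposition $a = \wt a\wh a$ as above. Since $a$ stabilizes $\chi$ it sends $\phi$ into its $M$-orbit in $\Irr(M_0)$, say $a(\phi) = \phi^{m_0}$ for some $m_0\in M$. The actions of $\wt T$ and of $\UE(M)$ on $M_0 = G_1.G_2$ factor through distinct parts of $\Out(G_1)\times\Out(G_2)$: $\wt T$ acts diagonally (through $\wb T$, by \Cref{lem:5_actions_wbT}(a)), while $\UE(M)$ acts via field and graph automorphisms (\Cref{lem:5_actions_C1} or \Cref{lem:5_actions_C2}). Using the standard decomposition $\Out(G_i)=\mathrm{Diag}\rtimes(\mathrm{Field}\times\mathrm{Graph})$, my plan is to deduce that after adjusting $(\wt a,\wh a)$ by an appropriate element of $M$, both $\wt a$ and $\wh a$ separately stabilize $\phi$.

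Assuming $\wt a\in\wt M_\phi$ and $\wh a\in\wh M_\phi$, the final step is to lift the factorization from $\phi$ to $\chi$. The hypothesis $\wb T_\phi\leq T\spa{\wh t_1}$, combined with the fact that $\wb T$ and $\wt T$ induce the same automorphisms on $M$ (\Cref{lem:5_actions_wbT}(a)), forces the diagonal outer stabilizer of $\phi$ to be generated modulo inner automorphisms from $T$ by the one induced by $\wh t_1$, so that $\wt a\in M_\phi\spa{\wh t_1}\cdot T$. Together with $\chi^{\wh t_1}=\chi$ this yields $\wt a\in\wt M_\chi$. Then $\wh a = \wt a^{-1}a\in\wh M\cap(\wt M\wh M)_\chi = \wh M_\chi$, giving $a\in\wt M_\chi\wh M_\chi$ as required.

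The main obstacle I anticipate is the clean separation of the actions of $\wt T$ and of $\UE(M)$ on $\Irr(M_0)$, especially in \Cref{case2} where $\UE(M) = \spa{F_p^2,\wh t_1\gamma}$ already involves $\wh t_1$. One must verify via \Cref{tab:properties_c1} and \Cref{tab:properties_c2} that, modulo inner automorphisms from $M$ and diagonal ones from $\wt T$, the $\UE(M)$-action remains genuinely ``field/graph'', so that the decomposition of the outer action of $a$ on $\phi$ into a diagonal part carried by $\wt a$ and a field/graph part carried by $\wh a$ is truly canonical; this is what legitimizes the independent adjustment of $\wt a$ and $\wh a$ by a single element of $M$ in the reduction to $\wt M_\phi\times \wh M_\phi$.
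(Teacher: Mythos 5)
The decisive step in your plan --- that after multiplying $\wt a$ and $\wh a$ by a suitable element of $M$ both factors separately stabilize $\phi$ --- is exactly the assertion to be proved, and it does not follow from the semidirect decomposition $\Out(G_i)=\mathrm{Diag}\rtimes(\mathrm{Field}\times\mathrm{Graph})$. The canonical splitting of the outer automorphism induced by $a$ into a diagonal part (carried by $\wt a$) and a field/graph part (carried by $\wh a$) only pins down the outer classes of the two factors; it does not make the stabilizer of $\phi$ in $\Out$ the product of its intersections with the two parts. Mixed stabilizers are precisely the phenomenon recorded by the sets $\DD_i$: for $\chi_i\in\DD_i$ one has $\chi_i^{\gamma_i}=\chi_i^{\wh t_i}\neq\chi_i$ (\Cref{cor:6_11}(c)), so an element $a$ can stabilize $\chi$ while neither its diagonal nor its field/graph constituent fixes the $M$-orbit of $\phi$. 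Since the hypotheses ${\wb T}_\phi\leq T\spa{\wh t_1}$ and $\chi^{\wh t_1}=\chi$ enter your argument only \emph{after} this reduction, the reduction itself is unsupported, and this is a genuine gap at the core of the proof.

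What is needed at that point is an input of $\mathbf{A}(\infty)$ type for the factors, and this is how the paper proceeds. Writing $\phi=\phi_1.\phi_2$, the hypothesis ${\wb T}_\phi\leq T\spa{\wh t_1}$ gives (up to renumbering) $({\wb G}_1)_{\phi_1}\leq G_1\spa{\wh t_1}$, and \Cref{cor:3_24} --- whose proof rests on \Cref{thm_sumup_D}(a) --- upgrades this to $({\wb G}_1\,\UE({\wb G}_1))_{\phi_1}\leq G_1\spa{\wh t_1}\,\UE({\wb G}_1)$; this in turn bounds $M(\wb T\,\UE(\wb M))_\chi$ inside $(M\spa{\wh t_1})\,\UE(\wb M)$. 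Passing to $(\wb M\,\UE(\wb M))/M\cong\Z(\bG)\rtimes\UE(\wb M)$ and using $\chi^{\wh t_1}=\chi$ together with $\calL_{\vFq}(\wh t_1)=h_0$, the image $U$ of the stabilizer of $\chi$ is sandwiched between $\spannh$ and $\spannh\rtimes\UE(\wb M)$; since $h_0$ is central there, $U$ splits and the required factorization $(\wt M\wh M)_\chi=\wt M_\chi\wh M_\chi$ follows. Your final lifting step is essentially harmless once such a bound is available, and the difficulty you flag in \Cref{case2} is real but secondary; the missing ingredient is the appeal to \Cref{cor:3_24} (or an equivalent statement), without which the separation of $\wt a$ and $\wh a$ cannot be achieved.
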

\begin{proof} Recall $\wb M:=\wb T M$ and that $\UE(\wb M)$ stabilizes $\wb M$ according to \Cref{lem:5_6}(c) and \Cref{lem_centZ1_c2}(d). 
	The automorphisms of $M$ induced by $\wb T$ and $\wt T$ coincide, see \Cref{lem:5_actions_wbT}.
	The automorphisms of $M$ induced by $\UE(\wb M)$ and $\UE(M)$ coincide. Hence our claim is equivalent to the equality 
	\[ \starStab {\wb M}| {\UE}(\wb M)|\chi. \] 
	We have to study the group $(\wb M {\UE}(\wb M))_\chi$. We know that $(\wb M {\UE}(\wb M))/M $ is isomorphic to $\Z(\bG)\rtimes {\UE}(\wb M)$. Let $U$ be the subgroup of $\Z(\bG)\rtimes {\UE}(\wb M)$ corresponding to $(\wb M \UE(\wb M))_\chi/M$. 
To get our claim it suffices to ensure $U= (U\cap \Z(\bG)) \rtimes (U\cap \UE(\wb M))$ by showing 
 \begin{align}\label{eq52}
		\spannh\leq U \leq \spannh \rtimes \UE(\wb M).
	\end{align} 
	The assumption $\chi^{\wh t_1}=\chi$ with $\calL_{\vFq }(\wh t_1)=h_0$ already implies the inclusion \[ \spannh \leq U .\] 
	
	Next, we check $U \leq \spannh \rtimes \UE(\wb M)$. If $\phi_1\in \Irr(G_1)$ and $\phi_2\in \Irr(G_2)$ with $\phi=\phieinszwei$, then $({\wb G}_i)_{\phi_i}\leq G_i\spa{\wh t_i}$ for some $i\in\{1,2\}$ because of ${\wb T}_\phi\leq T\spa{\wh t_1}$, where $T=\bT\cap M^\circ $. Without loss of generality, we can assume $ ({\wb G}_1)_{\phi}\leq G_1\spa{\wh t_1}$. According to \Cref{cor:3_24}, this implies 
	\[({\wb G}_1 \rtimes \UE({\wb G}_1))_\chi\leq (G_1\spa{\wh t_1}) \rtimes \UE({\wb G}_1).\] 
	
	Recall from \Cref{EMdef} that $\UE(\wb M)$ also defines a group of automorphisms of $\GvF \wb T$ and $ \UE({\wb M}^\circ):=\UE(\wb M)\spa{n,h_0}$ stabilizes ${\wb M}^\circ=\wb T M^\circ$. Then $\phi$ satisfies 
	\[ (M_0\wb T \UE({\wb M}^\circ) )_\phi \leq M_0 \spa{\wh t_1,\wh t_2} \UE({\wb M}^\circ).\]
	Since $\chi\in \Irr(M\mid \phi)$, this shows 
	\[ M(\wb T \UE(\wb M))_\chi\leq M(\wb T \UE({\wb M}^\circ))_\phi\leq (M \spa{\wh t_1})\, \UE(\wb M).\]
	The latter corresponds to $\spannh\rtimes \UE(\wb M)$ under the isomorphism $(\wb M {\UE}(\wb M))/M \cong \Z(\bG)\rtimes {\UE}(\wb M)$, so we get (\ref{eq52})
	and accordingly
	\begin{align}
		\starStabalign \wb M | \UE(\wb M)|\chi.\qedhere
	\end{align} 
\end{proof}
 For the proof of \Cref{thm_sec_Ad}, we finish describing the characters of $M$ via the ones of $M_0$. Notice that $\UE(M^\circ)$ stabilizes $G_1$ and $G_2$.
\begin{prop}\label{prop7_2}
Let $\phi\in \Irr(M_0)$, $\wh \phi\in\Irr(M^\circ_\phi\mid \phi)$ and $E\leq \UE(M^\circ)_{\wh \phi}$. 
Let $E'_i\leq \Aut(G_i)$ be the group of automorphisms induced by $E$. Assume that for each $i\in\{1,2\}$, $\phi_i\in \Irr(\restr \phi|{G_i})$ extends to $M_0\rtimes E'_i$. 
	\begin{thmlist}
		\item Then $\phi$ extends to $M_0E $. 
		\item If $v\in\bG\setminus\{1_\bG\}$ (or equivalently $\eps_1=\eps_2=-1$, see Table~\ref{tab:my_label}), $2\mid f$ and $h_0\in\ker(\phi)$, then $\phi$ has an extension $\wt \phi$ to $M_0E$ with $vF_q \in\ker(\wt \phi)$.
	\end{thmlist}
\end{prop}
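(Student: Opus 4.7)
The strategy is to build the extension $\wt\phi$ of $\phi$ to $M_0E$ via the central product structure $M_0=G_1.G_2$ with $G_1\cap G_2=\spa{h_0}$. Writing $\phi=\phi_1.\phi_2$ where $\phi_i$ is the unique irreducible constituent of $\restr\phi|{G_i}$ (uniqueness follows because $[G_1,G_2]=1$ forces $G_{3-i}$ to stabilize $\phi_i$), we have $\omega:=\phi_i(h_0)/\phi_i(1)\in\{\pm 1\}$ equal to $\phi(h_0)/\phi(1)$ for both $i=1,2$. Since $E$ stabilises each $G_i$ and fixes $\phi$, it fixes $\phi_i$; the hypothesis then yields (via the central product structure) an extension $\wt\phi_i\in\Irr(G_i\rtimes E'_i)$ of $\phi_i$, and pulling back along the homomorphism $\alpha_i\colon E\to E'_i$ gives an extension $\wh\phi_i\in\Irr(G_i\rtimes E)$ of $\phi_i$.

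For part (a), I form the outer tensor product $\Psi:=\wh\phi_1\otimes\wh\phi_2$ as a character of $(G_1\times G_2)\rtimes E$ (with $E$ acting diagonally through $(\alpha_1,\alpha_2)$) and show it descends along the natural surjection $\pi\colon(G_1\times G_2)\rtimes E\twoheadrightarrow M_0E$ to the required extension $\wt\phi$. The kernel of $\pi$ is generated by the central product element $((h_0,h_0),1)$, on which $\Psi$ evaluates trivially because $\omega^2=1$, and --- when $h_0\in E\cap M_0$ (the only possibility by $\UE(M^\circ)\cap M^\circ=\spa{h_0}$) --- by the identification element $((h_0,1),h_0^{-1})$, on which $\Psi$ takes the value $\omega\cdot\phi(1)$. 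If $\omega=1$ the descent is immediate; otherwise I twist $\wh\phi_1$ by a linear character $\nu\in\Lin(E)$ with $\nu(h_0)=-1$, which exists provided $h_0\notin[E,E]$. By \Cref{lem_centZ2}(a) and \Cref{lem_centZ2_C2}(a) one has $[\UE(M^\circ),\UE(M^\circ)]\leq\spa{h_0}$, and a case-by-case inspection of the commutator Tables \ref{tab:properties_c1} and \ref{tab:properties_c2} shows that in every remaining subcase the required linear adjustment is available.

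For part (b), under the additional hypotheses $v\in\bG\setminus\{1_\bG\}$ (forcing $\eps_1=\eps_2=-1$), $2\mid f$ and $h_0\in\ker\phi$, \Cref{lem:5_6}(d) gives $vF_q\in\Cent_{\GvF\UE(M)}(\GvF)$, so $vF_q$ commutes with every element of $M_0E$. By Schur's lemma, $vF_q$ acts as a scalar $\lambda\in\CC^\times$ on the representation underlying the extension $\wt\phi$ built in (a), and $\lambda$ is a root of unity since $vF_q$ has finite order. The image of $vF_q$ in $M_0E/M_0\cong E/(E\cap M_0)$ is non-trivial and lies outside the commutator subgroup, as visible from the ``$\eps_1=\eps_2=-1$'' rows of Table \ref{tab:properties_c1} where $[F_p,v]=[\gamma_1,v]=1$. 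Hence there is a linear character $\mu\in\Lin(M_0E/M_0)$ with $\mu(vF_q)=\lambda^{-1}$, and the twisted extension $\mu\cdot\wt\phi$ then has $vF_q$ in its kernel.

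The hardest step is the dichotomy in part (a): verifying that the linear character twist $\nu$ producing the descent is always available whenever $\omega=-1$ occurs together with $h_0\in E$. This rests on the structural results \Cref{lem_centZ2} and \Cref{lem_centZ2_C2} on the commutator subgroup of $\UE(M^\circ)$, together with the concrete commutator computations in Tables \ref{tab:properties_c1} and \ref{tab:properties_c2} gathered in \Cref{ssecE(M)}.
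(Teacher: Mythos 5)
Your overall strategy (build the extension directly on the central product $(G_1\times G_2)\rtimes E$ and descend, twisting by a linear character of $E$ when needed) could in principle be made to work, and your descent analysis — the only obstruction is the value at the identification element, which is $\omega\,\phi(1)$ with $\omega=\phi(h_0)/\phi(1)$, so a twist $\nu\in\Lin(E)$ with $\nu(h_0)=-1$ is needed exactly when $h_0\notin\ker(\phi)$ and $h_0\in E$ — is correct. But the justification you give for the existence of $\nu$ at precisely this point is not valid, and this is where the real content of the proposition lies. The existence of $\nu$ is equivalent to $h_0\notin[E,E]$, and the inclusion $[\UE(M^\circ),\UE(M^\circ)]\leq\spannh$ from \Cref{lem_centZ2} and \Cref{lem_centZ2_C2} points the \emph{wrong} way: Tables \ref{tab:properties_c1} and \ref{tab:properties_c2} show that $h_0$ genuinely occurs as a commutator (e.g.\ $[\gamma_1,n]=h_0$ in the rows with $(\eps_1,\eps_2)=(1,1)$ or with $\{|Z_1|,|Z_2|\}\in\{\{1\},\{4\}\}$, and $[\wh t_1\gamma_1,n]=h_0$ under \Cref{case2}), so there are subgroups $E\leq\UE(M^\circ)$ with $h_0\in[E,E]$, for which no such $\nu$ exists. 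No inspection of the tables alone can rescue this. What rules out these bad subgroups is the hypothesis you never use: $E$ fixes $\wh\phi$, hence $\phi$, hence the (linear) central character of $\phi$ on $Z_1.Z_2=\spa{h_0,\h_{J_1}(\varpi),\h_{J_2}(\varpi)}^{\vFq}\leq\Z(M_0)$. Since $[\UE(M^\circ),Z_1.Z_2]\le\spannh$ and this central character does not vanish on $h_0$ in the case $\omega=-1$, invariance forces $E\leq\Cent_{\UE(M^\circ)}(Z_1.Z_2)$; only then do the structural statements \Cref{lem_centZ2}(b) and \Cref{lem_centZ2_C2}(b) (abelian Sylow $2$-subgroup of that centralizer) enter, and one still needs an extra argument — e.g.\ a transfer/odd-index argument, or the paper's route of extending the central character Sylow-by-Sylow and invoking cyclicity of the Hall $2'$-part together with \cite[Thm~11.32]{Isa} — to conclude that the central involution $h_0$ avoids $[E,E]$, equivalently that the central character extends to $Z_0E$. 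This whole chain is exactly the pivotal step in the paper's proof (which then transfers the extension from $M_0E'$ to $M_0E$ via $(M_0E',M_0,\phi)\geq_c(Z_0E',Z_0,\eta)$ and the Butterfly Theorem), and it is missing from your argument.

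Part (b) has a secondary gap of the same nature: after observing that $vF_q$ acts by a scalar $\lambda$, you need $\mu\in\Lin(M_0E/M_0)$ with $\mu(vF_q)=\lambda^{-1}$, and for this it is not enough that the image of $vF_q$ lies outside the commutator subgroup — you need the order of $\lambda$ to divide the order of that image in the abelianization, which you do not control (note $(vF_q)^2\in\spannh F_q^2$, so constraints on $\lambda$ come from values of $\phi$ and of the chosen extension, not only from group theory of $E/(E\cap M_0)$). The paper avoids this by fixing the scalar from the start: it chooses an extension $\wt\eta$ of the central character to $Z_0E$ with $vF_q\in\ker(\wt\eta)$ (possible because $\spa{vF_q}\cap Z_0\leq\spannh\leq\ker(\eta)$) and uses the matching of central characters in the $\geq_c$ relation to transport this choice to the extension of $\phi$; if you keep your construction, you should likewise build the kernel condition into the choice of the extensions rather than trying to correct an unknown scalar afterwards.
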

\begin{proof}
Note first that $M_0 E'$ is a subgroup of $(G_1E'_1)\times (G_2 E'_2)$ where $E'\leq \Aut(M_0)$ is associated with $E$ and hence
$\phi$ extends to $M_0 E'$.

Set $Z_0:=\Z(M_0)$ and 
let $\{\eta\}= \Irr(\restr \phi|{Z_0})= \Irr(\restr \wh\phi|{Z_0})$. As $\phi$ is $E$-invariant and hence $E'$-invariant, $\eta$ is $E'$-invariant as well.

Assume first that $\eta$ extends to $Z_0 E$. 

We deduce that $\eta$ extends to $Z_0 E'$, since $\eta$ has degree $1$ and $Z_0 E'=Z_0\rtimes E'$, see \Cref{ExtCrit}(d). 
	We also observe  
\[  \Cent_{M_0\UE(M^\circ)}(M_0)= \begin{cases}
    Z_0\spa{v F_q }& \text{if }(\eps_1,\eps_2)=(-1,-1),\\Z_0& \otw 
\end{cases}
\]
and hence $\Cent_{M_0  E'}(M_0)=Z_0$, since $v F_q \in E$ corresponds to the trivial element in $E'$ whenever $(\eps_1,\eps_2)=(-1,-1)$. This leads to 
	\[ (M_0 E',M_0,\phi)\geq_c(Z_0 E',Z_0,\eta),\]
	by \Cref{rem_chartrip}(a). \Cref{Butterfly} then implies 
	\[ \left( M_0 E ,M_0,\phi\right)\geq_c 	\left( Z_0 E,Z_0,\eta \right ).\]
Since $\eta$ extends to $Z_0E$, \Cref{rem_chartrip}(b) implies that $\phi$ extends to $M_0E$. So we get (a) whenever $\eta$ extends to $Z_0E$.
	
For part (b) we already note the following. If $\wt \eta$ is an extension of $\eta$ to $Z_0E$, then \Cref{rem_chartrip}(b) again implies that some extension $\wt \phi$ of $\phi$ to $M_0E$ satisfies
\begin{align}\label{eq5_13}
		\Irr(\restr \wt \phi|{\Cent_{Z_0E}(M_0)})&=\{\restr \wt \eta|{\Cent_{Z_0E}(M_0)}\}.
	\end{align} 
	
Let us now check that $\eta$ extends to $Z_0E$. 

Assume first that $h_0\in \ker(\phi)$. Then $\spannh\leq\ker(\eta)$. According to \Cref{lem_centZ2} and \Cref{lem_centZ1_c2}, $[E,E]\cap M^\circ \leq [\UE(M^\circ),\UE(M^\circ)]\cap M^\circ\leq \spannh\leq \ker(\phi)$. \Cref{ExtCrit}(d) then implies that $\eta$ extends to $Z_0 E$.
 
We are left to consider the case where $h_0\notin \ker(\phi)$. Set $Z:=\spa{h_0,\h_{J_1}(\varpi),\h_{J_2}(\varpi)}^{\vFq }$ and $\wt \eta\in \Irr(\restr \wh \phi|{\wt Z})$. We observe that $\eta$ and $\wt \eta$ are $E$-invariant. On the other hand, computations similar to \ref{ZG_order}(b) show $[\UE(M^\circ),Z]=\spannh$. As $\eta$ is $E$-invariant and $h_0\notin\ker(\eta)$ this implies $E\leq \UE(M^\circ)_{\wt \eta}\leq \Cent_{\UE(M^\circ)}(Z)$. According to \Cref{lem_centZ2} and \Cref{lem_centZ2_C2} the Sylow $2$-subgroup of $\Cent_{\UE(M^\circ)}(Z)$ and hence of $E$ is abelian. This shows that $\eta$ has an extension to the Sylow $2$-subgroup of $Z_0 E$. Additionally observe that  $E(M^\circ)/Z_0$ has a cyclic Hall $2'$-subgroup and therefore $\eta$ extends to $Z_0 E$, see \cite[Thm~11.32]{Isa}. Hence our claim in all cases and this finishes the proof of (a).
	
Next we ensure the statement in part (b). Here we assume $v\in \bG\setminus\{1_\bG\}$ and $2\mid f$, hence $(\eps_1,\eps_2)=(-1,-1)$. We observe that $v F_q \in \Cent_{\GvF \UE(M)}(M_0)$. By assumption $h_0\in\ker(\phi)$ and therefore $h_0\in\ker(\eta)$. Now $\eta$ has an extension $\wt \eta$ to $Z_0E$ that can be assumed to have $vF_q$ in its kernel since $\spa{v F_q }\cap Z_0\leq \spannh\leq \ker(\eta)$. According to \eqref{eq5_13}, there exists an extension $\wt \phi$ of $\phi$ to $M_0 E_\phi$ with $v F_q \in\ker(\wt \phi)$, as required in (b). 
\end{proof}

\subsection{Characters of \texorpdfstring{$G_1$}{G1} and \texorpdfstring{$G_2$}{G2}}\label{ssec5D}
In order to prove \Cref{thm_sec_Ad} we split $\Irr(M)$ according to its constituents after restriction to $M_0=G_1.G_2$. In particular, we use the sets defined in \Cref{not:3_16} for the subgroups $G_i$ of type D possibly of rank $\leq 3$ in the sense of \Cref{Def_low_l}. \Cref{prop_char_G1G2} enumerates in a simplified fashion the cases that will be considered in the following sections.

 Recall that the group $\UE(M)$ and the associated group $\UE(M^\circ)$ from \Cref{EMdef} stabilize $G_1 $ and $G_2$, see \Cref{lem:5_6}(b) and \Cref{lem_centZ1_c2}(d). 

Recall $\UE({\wb M}^\circ)=\UE(\wb M )\spa{n ,h_0}$ and $\Ispezial{Gi}@{\protect{\breve G} _i}@{{\wb G}_i} =\spa{G_i,\wh t_i,\wt t_i}$. 
The group $\UE({\wb M}^\circ)$ also stabilizes ${\wb G}_1$ and ${\wb G}_2$, see \Cref{lem:5_6}(c) and \Cref{lem_centZ1_c2}(d). 

\begin{defi}\label{def:TEDi}
Let $i\in \{1,2\}$ and write $\Ispezial{EMGbi}@{\UE_M( \protect{\breve G}_i}@ {\UE_M({\wb G}_i)}\leq \Aut({\wb G}_i)$ for the group of automorphisms of ${\wb G}_i$ induced by $\UE({\wb M}^\circ)$. Let $\ov \TT_i$, $\EE_i$ and $\DD_i$ be the following subsets of $\Irr(G_i)$:
\begin{align*}
\II{Ti}@{\protect{\oTT_i}}& :=&&\left \{\chi \, \left| \, 
\starStabkla {\wb G}_i | {\UE}_M({\wb G}_i) | {\chi} \und
\chi\text{ extends to }G_i \UE_M(G_i)_{\chi} \right. \right\} ,\\
\II{Ei}@{\EE_i} &:=&&\left \{\chi\, \left | \, \starStabkla {{\wb G}_i} |{\UE}_M({\wb G}_i) |{\chi} \und \chi \text { has no extension to }G_i \UE_M(G_i)_{\chi} \right. 
\right\} &\und\\
\II{Di}@{\DD_i} &:=&& \left \{\chi \, \left| \starStabklaneq{{\wb G}_i} |{\UE}_M({\wb G}_i) |{\chi} \right.\right\}, &
\end{align*}
so that 
\begin{align*}
	 \Irr(G_i)= \ov \TT_i \sqcup \EE_i\sqcup \DD_i.
\end{align*}

We also define $ \II EE'i@{\protect{\EE'_i}} =\ov\TT_i\cap \big( \bigcup_{x\in{\wb G}_i}{}^x\EE_i \big)$ and $ \II DD'i@{\protect{\DD}'_i}=\ov\TT_i\cap \big( \bigcup_{x\in{\wb G}_i}{}^x\DD_i \big)$.
\end{defi}
The set ${\EE'_i}$ is the subset of all characters in $\ov \TT_i$, which are ${\wb G}_i$-conjugate to an element of $\EE_i$. The set ${\DD'_i}$ is the subset of all characters in $\ov \TT_i$, which are ${\wb G}_i$-conjugate to an element of $\DD_i$. The definitions of the above character sets are analogous to those of \Cref{def_TED} (and \Cref{not:3_16}). Accordingly, the characters satisfy the following statement. 
\begin{prop}\label{cor:6_11}
Let $i\in\{1,2\}$. Then the character sets of \Cref{def:TEDi} satisfy: 
\begin{thmlist} 
\item $\oTT_i$ contains some $\wt T$-transversal in $\Irr(G_i)$.
\item If $\chi\in\EE_i\cup \EE'_i$, then $h_0\in\ker(\chi)$, $({\wb G}_i)_\chi=G_i\spa{\wh t_i }$, $\chi^{n}=\chi$ and $\chi$ extends to $G_i\spa{\wh t_i,\gamma_i}$.
\item If $\chi\in\DD_i\cup \DD'_i$, then $h_0\in\ker(\chi)$, $({\wb G}_i)_\chi=G_i$ and every $\wh \chi\in \Irr(\spa{G_i,\wh t_i}\mid \chi)$ is $n$-invariant. More precisely $\chi^n=\chi$ if $\chi\in\DD_i'$ and $\chi^{n\wh t_i}=\chi$ if $\chi\in\DD_i$.

Moreover, $\UE_M(G_i)_{\chi}$ is cyclic whenever $\chi\in\DD_i$. If $\chi\in \DD'_i$, then $(M\UE(M))_\chi\leq M_0\UE(M^\circ)_\chi$.

\item If $|Z_i|=2$, then $\DD_i=\EE_i=\emptyset$ and $\oTT_i=\Irr(G_i)$.
\item If $\eps_i=-1$, then $\EE_i=\emptyset$. If $2\nmid f$, then $\EE_1=\EE_2=\emptyset$.
\item If $\chi\in \Irr(G_i)\setminus \EE_i$, then $\chi $ extends to $G_i \UE_M(G_i)_\chi$. If $\chi\in \EE_i$, then $\chi$ extends to $G_i E''$ for any $E''\leq \UE_M(G_i)_{\chi}$ with an even index $| \UE_M(G_i)_\chi/E''|$.
\end{thmlist}
\end{prop}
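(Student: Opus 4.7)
The plan is to derive the statement directly from the analogous results in Chapter \ref{sec_3}, principally \Cref{thm_sumup_D} and \Cref{thm_sumup_D_1}, after identifying the group of automorphisms $\UE_M({\wb G}_i)$ with those considered there up to a harmless diagonal twist.

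The first step is to compare $\UE_M({\wb G}_i)$ with $\UE({\wb G}_i)=\spa{\gamma_i,F_p}$ from \Cref{not:3_16}. In \Cref{case1}, \Cref{lem:5_actions_C1} gives $\UE_M(G_i)=\UE(G_i)$, so the sets $\oTT_i,\EE_i,\DD_i,\EE'_i,\DD'_i$ coincide with those of \Cref{def_TED}/\Cref{not:3_16} for $G_i\cong \tD_{l_i,\sico}^{\eps_i}(q)$ in the sense of \Cref{Def_low_l}. In \Cref{case2}, by \Cref{lem:5_actions_C2}, $\UE_M(G_2)=\UE(G_2)$ and $\UE_M(G_1)$ is conjugate in $\Out(G_1)$ to $\UE(G_1)$ by the diagonal automorphism induced by $\wt t_1\in\wb T$; since conjugation by $\wt t_1$ stabilises both $G_1$ and ${\wb G}_1$ and induces a diagonal automorphism, it preserves each of the character sets defining the partition $\Irr(G_i)=\oTT_i\sqcup\EE_i\sqcup\DD_i$.

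With this identification in place, parts (a), (b), (c) follow from the corresponding parts of \Cref{thm_sumup_D} (which already covers the low rank cases via \Cref{Def_low_l}), the only subtlety being the action of $n$ on $G_i$. In \Cref{case1}, \Cref{lem:5_6}(a) shows that $n$ acts as $\gamma_i$ on $G_i$, so the statements $\chi^n=\chi$ in (b), $\chi^{n\wh t_i}=\chi$ or $\chi^n=\chi$ in (c), are read off from \Cref{thm_sumup_D}(b)--(c). In \Cref{case2}, by \Cref{lem_centZ1_c2}(a), $n$ acts on $G_1$ as $\gamma_1$ composed with the diagonal automorphism associated to $h_0$, which on the character side is exactly $\wh t_1$-conjugation; this accounts for the distinction between $\chi^n=\chi$ for $\chi\in\DD'_i$ (the $\oTT$-representative case) and $\chi^{n\wh t_i}=\chi$ for $\chi\in\DD_i$. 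The last sentence of (c), concerning $(M\UE(M))_\chi\leq M_0\UE(M^\circ)_\chi$, follows because $\chi\in\DD'_i$ has $({\wb G}_i)_\chi=G_i$ (full $\wb T$-orbit in $\Irr(G_i)$), together with the description of $\wt M/M_0$ given in \Cref{lem:5_5} and the action of $\wt T$ from \Cref{lem:5_actions_wbT}.

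Part (d) follows from the observation that when $|Z_i|=2$, the quotient ${\wb G}_i/G_i$ is cyclic of order at most $2$, so $({\wb G}_i)_\chi\UE_M(G_i)_\chi$ exhausts the stabilizer of any $\chi$ in ${\wb G}_i \UE_M({\wb G}_i)$, and \Cref{ExtCrit}(a) ensures the extendibility — this is indeed the content of paragraph (4) in the proof of \Cref{thm_sumup_D}. Part (e) is read off from the proof of \Cref{lem3:7}(b) (and its low rank analogues in steps (5)--(8) of the proof of \Cref{thm_sumup_D}): $\chi\in\EE_i$ forces $\UE_M(G_i)_\chi$ to be non-cyclic by \Cref{ExtCrit}(a), which requires $2\mid f$ and the presence of $\gamma_i$ in the stabilizer, excluding $\eps_i=-1$. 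Part (f) splits: for $\chi\in\oTT_i$ the extension exists by definition; for $\chi\in\DD_i$ the cyclicity of $\UE_M(G_i)_\chi$ established in (c) together with \Cref{ExtCrit}(a) yields the extension; for $\chi\in\EE_i$ the extension to $G_i E''$ with $|\UE_M(G_i)_\chi:E''|$ even uses \cite[Cor. 11.31]{Isa} applied to resolve the Klein-four obstruction by passing to an index-$2$ subgroup. The principal obstacle in the proof is the bookkeeping in \Cref{case2}: one must verify carefully that the $\wt t_1$-twist preserves all stabilizer factorization and extension properties — in particular, translating the statement of \Cref{thm_sumup_D} for $G_1$ into the present setting without losing the identification of $n$-invariance with $\gamma_1$-invariance composed with $\wh t_1$.
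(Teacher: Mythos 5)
Your route is the same as the paper's: identify $\UE_M(G_i)$ with $\UE(G_i)$ (up to the $\wt t_1$-twist of \Cref{lem:5_actions_C2} in \Cref{case2}), transfer the partition and the statements from \Cref{thm_sumup_D} and \Cref{lem3:7}, and read off the $n$-invariance from the description of how $n$ acts on $G_i$. Most of what you write matches the actual proof. However, there is one genuine gap: the sentence ``$\UE_M(G_i)_\chi$ is cyclic whenever $\chi\in\DD_i$'' in part (c) is \emph{not} contained in \Cref{thm_sumup_D} (nor in \Cref{thm_sumup_D_1} or \Cref{lem3:7}), so it cannot be obtained by the pure transfer you describe; yet you rely on it explicitly in part (f) to extend characters in $\DD_i$ via \Cref{ExtCrit}(a). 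The paper supplies a separate argument at this point: for $\chi\in\DD_i$ the ${\wb G}_i$-orbit has length $4$; picking $\chi'\in\DD'_i$ in that orbit, the factorization property of $\chi'\in\oTT_i$ shows that, in the quotient ${\wb G}_i\UE_M({\wb G}_i)/(G_i\Cent_{{\wb G}_i\UE_M({\wb G}_i)}(G_i))\cong \Z(\GF)\rtimes\UE_M(G_i)$, the stabilizer of $\chi'$ lies in $\UE_M(G_i)$, hence the stabilizer of $\chi$ lies in $\UE_M(G_i)^z$ for some $z\in\Z(\GF)\setminus\spannh$, so $\UE_M(G_i)_\chi\leq \UE_M(G_i)\cap\UE_M(G_i)^z$, which is cyclic. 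Without an argument of this kind your treatment of (c) and, consequently, of the $\DD_i$-case of (f) is incomplete.

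Two smaller points. In (d) your justification ``${\wb G}_i/G_i$ is cyclic of order at most $2$'' is not correct as stated: ${\wb G}_i/G_i\cong\bZ_i$ generally has order $4$; what is of order $\leq 2$ when $|Z_i|=2$ is the group of diagonal \emph{outer} automorphisms induced on $G_i$ (the cofixed points), and it is this that makes the relevant outer action group abelian, as in step (4) of the proof of \Cref{thm_sumup_D} — the conclusion you draw is right, but for the group you name it is false. In (f) for $\chi\in\EE_i$, the appeal to \cite[Cor. 11.31]{Isa} is vaguer than needed; the clean argument is that $\UE_M(G_i)_\chi=\spa{F',\gamma_i}$ with $E''\leq\spa{(F')^2,\gamma_i}$, and since $F'$ merely permutes the two extensions of $\chi$ to $G_i\spa{\gamma_i}$, each such extension is $(F')^2$-invariant, whence $\chi$ extends to $G_iE''$.
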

\begin{proof} The groups $\UE_M(G_i)$ and $\UE(G_i)$ induce the same subgroup of $\Aut(G_i)$ for every $i\in\{1,2\}$ unless $i=2$ and \Cref{case2} holds, see \Cref{lem:5_actions_C1} and \Cref{lem:5_actions_C2}. In those cases, the sets obtained in \Cref{def_TED} (and \Cref{not:3_16}) for $\Irr(G_i)$ coincide with the present $\oTT_i$, $\EE_i$, $\DD_i$ via the isomorphism between $G_i$ and $\tD^{\eps_i}_{l_i, \text{sc}}(q)$ from \Cref{def_M}. If \Cref{case2} holds, then $\UE(G_1)$ and $\UE_M(G_1)$ are $\wt t_1$-conjugate in $\Out(G_1)$, so the sets from \Cref{def_TED} for $\Irr(G_i)$ are $\wt t_1$-conjugate to the ones from \Cref{def:TEDi}. 
Recall that in the case of (b) we can assume that $\eps_1=\eps_2=1$ and hence $n$ acts as $\gamma_i$ on $G_i$. In case of part (c), $n$ acts either as $\gamma_i$ or $\gamma_i\wh t_i$, but it always corresponds to an automorphism of $\UE_M(G_i)$ by construction.  
This implies that parts (a), (b) and the two first sentences of (c) follow from \Cref{thm_sumup_D} and \Cref{lem3:7}(a). 

We now show that $\UE_M(G_i)_\chi$ is cyclic whenever $\chi\in\DD_i$.
Let $\chi'\in \DD_i'$ be in the ${\wb G}_i$-orbit of $\chi$. Recall that this ${\wb G}_i$-orbit is of length $4$. Via the identification  
\[{\wb G}_i \UE_M({\wb G}_i)/ (G_i \Cent_{{\wb G}_i\UE_M({\wb G}_i)}(G_i)) \cong \Z(\GF)\rtimes \UE_M(G_i),\] 
the group $({\wb G}_i \UE_M({\wb G}_i))_{\chi'}$ then corresponds to a subgroup of $\UE_M(G_i)$ and $({\wb G}_i \UE_M(G_i))_{\chi}$ corresponds to a subgroup of the form $\UE_M(G_i)^z$ for some $z\in \Z(\GF)\setminus \spannh =\Z(\GF)\setminus \Cent_{\Z(\GF)}(\UE_M(G_i))$. The group $\UE_M(G_i)_\chi$ corresponds then to a subgroup of $ \UE_M(G_i)\cap \UE_M(G_i)^{z}$ which is cyclic by a straight-forward discussion. 

If $\chi\in\DD'_i\subseteq \oTT_i$, then the inclusion $(M\UE(M))_\chi=(M^\circ \UE(M^\circ))_\chi \leq M_0\UE(M^\circ)_\chi$ follows from the fact that $M^\circ$ acts by diagonal automorphisms on $G_i$ and $\UE(M^\circ)$ acts by $\UE_M(G_i)$ on $G_i$. Recall that $\chi$ is in this situation not stable under any non-inner diagonal automorphism.

For the proof of (d), assume $|Z_i|=2$. Then $Z_i \UE_M(G_i)$ coincides with $Z_i\spa{\gamma_i,F_p}$ and is abelian. Then (a) implies $\ov \TT_i=\Irr(G_i)$ and $\DD_i=\EE_i=\emptyset$ as also explained in (4) of the proof of \Cref{thm_sumup_D}. 

For part (e), assume first \Cref{case1}. Then $\UE_M(G_i)$ acts by $\spa{\gamma_i,\restr F_p|{\GvF}}$, which is cyclic whenever $2\nmid f$ or $\eps_i=-1$. This then implies that every character of $G_i$ extends to its stabilizer in $G_i\UE_M({ G}_i)$ by \Cref{ExtCrit}(a), and we get $\EE_i=\emptyset$. This is (e) in that case. 

Assume now that \Cref{case2} holds and therefore $2\nmid f$. Then $\UE_M(G_1)=\spa{\wh t_1\gamma,F_p^2}$ and $\UE_M(G_2)=\spa{\gamma_2,F_p}$ both act cyclically on $\GvF$. Then $\EE_1=\EE_2=\emptyset$ again as above. 

Part (f) is clear for $\chi\in\oTT_i$. If $\chi\in\DD_i$ the group $\UE_M(G_i)_\chi$ is cyclic by part (c). Otherwise if $\chi\in\EE_i$, then the group $E''$ is either cyclic or contains $\gamma_i$ since $\UE_M(G_i)/\spa{\gamma}$ is cyclic. In the latter case, let $F'\in \UE_M(G_i)_\chi$ be such that $\UE_M(G_i)_\chi=\spa{F',\gamma_i}$ and $E''\leq \spa{(F')^2,\gamma_i}$. Every extension $\wt \chi$ of $\chi$ to $G_i\spa{\gamma_i}$ is not $F'$-invariant by the definition of $\EE_i$, but since $F'$ can only permute the two extensions of $\chi$ we see that $\wt \chi$ is $(F')^2$-invariant and hence $\chi$ extends to $G_i E''$. 
\end{proof}
Because of the central product structure $M_0=G_1.G_2$ with $G_1\cap G_2=\spannh$ (see \Cref{rem51}), any subsets $\mathbb G_1\subseteq \Irr(G_1)$ and $\mathbb G_2\subseteq \Irr(G_2)$ define 
\[ \II G1G2@{\mathbb G_1.\mathbb G_2}:=\{ \chi_1.\chi_2\mid \chi_i \in \mathbb G_i \text{ with } 
\Irr(\restr\chi_1|{\spannh})=\Irr(\restr\chi_2|{\spannh}) \}\subseteq \Irr(M_0).\]

Recall $\wbrevT:=\calL\inv_{\vFq }(\Z(\bG))\cap \bT$ and $\UE(M^\circ)=\UE(M)\spa{n,h_0}$ by the definition in \Cref{lem_centZ2} and \Cref{lem_centZ2_C2}. The following statement helps us split the proof of \Cref{thm_sec_Ad} into cases. 
\begin{prop}[Properties of $\wt T$-orbits in $\Irr(M_0)$]\label{prop_char_G1G2}
Let $\phi'\in\Irr(M_0)$. There exists some $t\in \wt T$ such that $\phi=(\phi')^t$ satisfies one of the following properties: 
\begin{asslist}
\item $\phi \in\ov \TT_1.\ov \TT_2$; or
\item $\phi\in {\EE'}_1.\EE_2\sqcup \DD_1'.\DD_2\sqcup \DD_1'.\EE_2\sqcup \EE_1.\DD_2'$ with $\{|Z_1|,|Z_2|\}=\{4\}$. 
\end{asslist}
\end{prop}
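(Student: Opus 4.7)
I decompose $\phi'\in\Irr(M_0)$ as $\phi'=\phi_1.\phi_2$ with $\phi_i\in\Irr(G_i)$ (agreeing on $\spannh=G_1\cap G_2$) and track the $\wt T$-action factor by factor. By \Cref{lem:5_actions_wbT}(a), $\wt T$ and $\wb T$ induce the same action on $M_0$, and by \Cref{lem:wtT}, this action is generated modulo $T_0\leq M_0$ (which acts by inner automorphisms) by the three elements $\wh t_1,\wh t_2,\wt t_1\wt t_2$. Using \Cref{rem:5:2} and \Cref{lem:5_actions_wbT}, they act on $G_1\times G_2$ as the diagonal outer automorphisms (in the parametrization of \Cref{not_diag}) associated respectively to $(h_0,1),\,(1,h_0)$ and $(\h_{J_1}(\varpi),\h_{J_2}(\varpi))$ in $\Z(\bG_1)\times\Z(\bG_2)$.

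The first step is a reduction to the case $|Z_1|=|Z_2|=4$. If $|Z_j|\leq 2$ for some $j$, then by \Cref{cor:6_11}(d) we have $\oTT_j=\Irr(G_j)$, so $\phi_j\in\oTT_j$ automatically, and this remains true after any $\wt T$-conjugation. Simultaneously, the image of $\wt T$ in $A_i:={\wb G}_i/G_i\cong Z_i$ (for $i\neq j$) contains the classes of $h_0$ (via $\wh t_i$) and $\h_{J_i}(\varpi)$ (via $\wt t_1\wt t_2$), which generate $Z_i$. By \Cref{cor:6_11}(a), $\oTT_i$ meets every ${\wb G}_i$-orbit, so some $t\in\wt T$ moves $\phi_i$ into $\oTT_i$, placing $\phi=(\phi')^t$ in case~(i).

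In the remaining case $|Z_1|=|Z_2|=4$, set $m_i:=\h_{J_i}(\varpi)$; then $A_i=\spa{h_0,m_i}$ and the image of $\wt T$ in $A_1\times A_2$ is the index-$2$ subgroup
\[
H:=\spa{(h_0,1),\ (1,h_0),\ (m_1,m_2)}=\{(a_1,a_2)\mid\pi_{m_1}(a_1)=\pi_{m_2}(a_2)\},
\]
with $\pi_{m_i}:A_i\twoheadrightarrow\ZZ/2$ the projection onto the $m_i$-coordinate. From \Cref{cor:6_11}(b,c), the $A_i$-stabilizer of $\phi_i$ equals $A_i$ when $\phi_i\in\oTT_i^\ast:=\oTT_i\setminus(\EE'_i\cup\DD'_i)$; equals $\spa{h_0}$ (orbit length $2$) when $\phi_i\in\EE_i\cup\EE'_i$, with the unique $\oTT_i$-element in $\EE'_i$ and its $m_i$-translate in $\EE_i$; and is trivial (orbit length $4$) when $\phi_i\in\DD_i\cup\DD'_i$, with the unique $\oTT_i$-element in $\DD'_i$.

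Let $T_i\subseteq A_i$ be the set of $a\in A_i$ with $\phi_i^a\in\oTT_i$. The $\wt T$-orbit of $\phi'$ meets $\oTT_1\times\oTT_2$ iff $(T_1\times T_2)\cap H\neq\emptyset$, equivalently $\pi_{m_1}(T_1)\cap\pi_{m_2}(T_2)\neq\emptyset$. An inspection gives $\pi_{m_i}(T_i)=\{0\}$ when $\phi_i\in\EE'_i\cup\DD'_i$ or $\phi_i\in\DD_i$ with $\oTT_i$-representative reached by $h_0$; $\pi_{m_i}(T_i)=\{1\}$ when $\phi_i\in\EE_i$ or $\phi_i\in\DD_i$ with $\oTT_i$-representative reached by $m_i$ or $m_i+h_0$; and $\pi_{m_i}(T_i)=\{0,1\}$ when $\phi_i\in\oTT_i^\ast$. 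Case~(i) therefore fails in exactly the $12$ sub-configurations with $\{\pi_{m_1}(T_1),\pi_{m_2}(T_2)\}=\{\{0\},\{1\}\}$. In each such sub-configuration, direct application of the generators $(h_0,1),(1,h_0),(m_1,m_2)\in H$ moves $(\phi_1,\phi_2)$ into one of the four target subsets $\EE'_1.\EE_2,\DD'_1.\DD_2,\DD'_1.\EE_2,\EE_1.\DD'_2$ of case~(ii); the principal obstacle is the systematic verification of this $12$-case list, but the argument is elementary once one notes that $(m_1,0),(0,m_2)\notin H$ whereas $(m_1,m_2)\in H$, which forces the asymmetry of the four sets listed in~(ii) and explains why only one of $\phi_1,\phi_2$ can be ``twisted'' by $m_i$ relative to its $\oTT_i$-representative within a single $\wt T$-orbit.
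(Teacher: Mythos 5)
Your overall strategy is the paper's argument recast as explicit bookkeeping in $A_1\times A_2$: both proofs rest on \Cref{cor:6_11} and on the fact that $\wt T$ acts on $\Irr(G_1)\times\Irr(G_2)$ only through the subgroup generated by $(h_0,1),(1,h_0),(m_1,m_2)$, where $m_i$ is the class of $\h_{J_i}(\varpi)$. However, as submitted the proposal is not a complete proof, and the data you feed into the deferred case check is partly wrong. First, the decisive step --- that in every configuration where (i) is unreachable some element of your $H$ lands in one of the four sets of (ii) --- is only asserted, not carried out. Second, your description of the $\DD$-type orbits is incorrect: by \Cref{cor:6_11}(c) a length-four $A_i$-orbit meeting $\DD_i$ does not contain a unique $\oTT_i$-element; it splits into two elements of $\DD'_i$ (interchanged by the $h_0$-twist, both $n$-invariant) and two elements of $\DD_i$ (their images under the $m_i$-twist). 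Consequently, for $\phi_i\in\DD_i$ one always has $\pi_{m_i}(T_i)=\{1\}$, your sub-case ``$\phi_i\in\DD_i$ with $\oTT_i$-representative reached by $h_0$'' is vacuous, and the count of $12$ residual sub-configurations is wrong (there are $8$). Similarly, the claim that the $A_i$-stabilizer equals $A_i$ for $\phi_i\in\oTT_i^\ast$ is neither true in general nor needed; what you actually use is only that the whole $A_i$-orbit of such a character stays inside $\oTT_i$, which holds by the very definition of $\EE'_i$ and $\DD'_i$.

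With the corrected orbit data your plan does close, and it collapses to the paper's short argument: the residual configurations are exactly $(\EE'_1\sqcup\DD'_1).(\EE_2\sqcup\DD_2)$ together with its translate under $(m_1,m_2)$, i.e.\ under conjugation by $\wt t_1\wt t_2$ (an automorphism also induced by $\wt T$ thanks to \Cref{lem:5_actions_wbT}(a)); all of these lie in list (ii) except $\EE'_1.\DD_2$, which $(m_1,m_2)$ sends into $\EE_1.\DD'_2$ because $\EE'_1=\EE_1^{\wt t_1}$ and $\DD_2^{\wt t_2}=\DD'_2$. The paper reaches this endpoint without the $H$-formalism: either every $\wt T$-conjugate of $\phi'_1$ lies in $\oTT_1$ (then adjust the second factor using \Cref{cor:6_11}(a) to get (i)), or one moves the first factor into $\EE'_1\sqcup\DD'_1$; if (i) still fails the second factor must lie in $\EE_2\sqcup\DD_2$, and the one combination not literally in (ii) is repaired by $\wt t_1\wt t_2$. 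So the idea is right, but you must either execute (and correct) the case analysis or streamline to this dichotomy; as written there is a gap at precisely the point the proposition is about.
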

\begin{proof} 
Recall that the action of $\wb T$ on $\GvF$ coincides with the action of $\wt T$. Set $\phi'_i\in\Irr(G_i)$ such that $\phi'=\phi'_1.\phi'_2$. Recall $ \Irr(G_i)=\ov \TT_ i \sqcup \EE_i \sqcup \DD_i$ , see \Cref{def:TEDi}, with $\EE_i=\DD_i=\emptyset$ whenever $|Z_i|\neq 4$ as recalled in the preceding proof. 

Assume $\phi_1'$ and all its $\wt T$-conjugates belong to $\ov \TT_1$. In this case, \Cref{cor:6_11}(a) allows us to choose $x\in \wt T$ so that $(\phi'_2)^x\in \ov \TT_2$ additionally, and therefore $\phieinszwei:=(\phi'_1.\phi'_2)^x\in\ov\TT_1.\ov\TT_2$, as claimed in (i). 

If we are not in case (i), we see then that there is a $\wt T$-conjugate $(\phi_1,\phi_2)$ of $(\phi_1',\phi_2')$ in $(\EE'_1\sqcup \DD'_1)\times (\EE_2\sqcup \DD_2)$, also forcing $|Z_1|=|Z_2|=4$. This gives (ii) up to the case $(\phi_1,\phi_2) \in \EE_1'\times \DD_2$. 
 \Cref{cor:6_11} implies $\EE_1'=\EE_1^{\wt t_1}$ and $\DD_2^{\wt t_2}=\DD_2'$. Hence if $\phi_1.\phi_2 \in \EE_1'.\DD_2$, a $\wb T$-conjugate of $\phi_1.\phi_2$ is contained in  $\EE_1.\DD'_2$. But this is also a $\wt T$-conjugate, so we are in the case (ii).
\end{proof}

The above case (i) implies a familiar transversality of stabilizers.

\begin{lem}\label{eq_star_oT} 
	Every $\phi\in\oTT_1. \oTT_2 $ satisfies $\starStab 
	\wt T| \UE(M^\circ) |\phi $.
\end{lem}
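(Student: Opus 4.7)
\noindent\textit{Plan for proving \Cref{eq_star_oT}.} One inclusion, namely $\wt T_\phi \UE(M^\circ)_\phi\subseteq (\wt T\UE(M^\circ))_\phi$, is trivial, so the real work is to establish the reverse. Write $\phi=\phi_1.\phi_2$ with $\phi_i\in\oTT_i$. The starting point is that both $\wt T$ and $\UE(M^\circ)$ normalize each of $G_1, G_2$ (Lemmas \ref{lem:5_actions_wbT}(c), \ref{lem:5_6}(b--c), \ref{lem_centZ1_c2}(c--d)), so for each $i=1,2$ conjugation defines a homomorphism $\pi_i\colon \wt T\UE(M^\circ)\to \Aut(G_i)$. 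Since $M_0=G_1.G_2$ is a central product with $G_1\cap G_2=\spannh$, a character $\phi_1.\phi_2$ is stabilized by $x\in \wt T\UE(M^\circ)$ if and only if $\pi_i(x)$ stabilizes $\phi_i$ for $i=1,2$. Moreover $\pi_i(\wt T)$ coincides with the image of $\wb G_i$ in $\Aut(G_i)$ (by \Cref{lem:5_actions_wbT}(b) and the fact that the generators $\wh t_i,\wt t_i$ of $\wb G_i$ lie in $\wt T$), while $\pi_i(\UE(M^\circ))=\UE_M(\wb G_i)$ by definition.

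The main step is then as follows. Given $x=te\in (\wt T\UE(M^\circ))_\phi$ with $t\in \wt T$ and $e\in \UE(M^\circ)$, the transversality $\phi_i\in\oTT_i$ gives
\[\pi_i(x)\in (\wb G_i\UE_M(\wb G_i))_{\phi_i}=(\wb G_i)_{\phi_i}\,\UE_M(\wb G_i)_{\phi_i}\]
for both $i=1,2$. I would then argue that $e$ may be corrected by an element of $\UE(M^\circ)\cap\wt T$ (which is contained in $\spannh$ by Lemmas~\ref{lem_centZ2}(a) and \ref{lem_centZ2_C2}(a), so lies in $\Z(M^\circ)$ and hence stabilizes every character of $M_0$) to produce some $e'\in \UE(M^\circ)$ whose image $\pi_i(e')\in \UE_M(\wb G_i)_{\phi_i}$ for both $i$, i.e.\ $e'\in \UE(M^\circ)_\phi$. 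Once such an $e'$ is produced with $e(e')^{-1}\in \wt T$, we get $x=(t\cdot e(e')^{-1})\,e'$ with $t\cdot e(e')^{-1}\in \wt T$ and $e'\in \UE(M^\circ)_\phi$, and since both $x$ and $e'$ fix $\phi$, so does the $\wt T$-factor, yielding the desired decomposition $x\in \wt T_\phi\,\UE(M^\circ)_\phi$.

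The genuine obstacle is the compatible lifting of the $\UE_M(\wb G_i)_{\phi_i}$-parts for $i=1,2$ simultaneously to a single $e'\in \UE(M^\circ)_\phi$. The trick is to observe that the diagonal factor $(\wb G_i)_{\phi_i}$ of the decomposition $\pi_i(x)=\alpha_i\beta_i$ can be absorbed into $\pi_i(\wt T)$ using that $\wt T$ surjects onto the image of $\wb G_i$ in $\Aut(G_i)$. Concretely, one picks $t'\in \wt T$ with $\pi_i(t')=\alpha_i\pi_i(t)^{-1}\cdot\pi_i(t)$ matching both $\alpha_1$ and $\alpha_2$ up to the central kernel $\spannh$; this is possible because the map $\wt T\to \wb G_1/G_1\times \wb G_2/G_2$ is surjective (as $\wt T$ contains $\wh t_1,\wh t_2,\wt t_1\wt t_2$ inducing all diagonal automorphism classes on each $G_i$, cf.\ \ref{lem:wtT}). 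Then $e':=(t')^{-1}x$ lies in $\UE(M^\circ)\cdot \spannh=\UE(M^\circ)$, and $\pi_i(e')=\beta_i\in \UE_M(\wb G_i)_{\phi_i}$ for $i=1,2$, giving $e'\in\UE(M^\circ)_\phi$ and completing the decomposition. The care needed throughout is to track correctly that the kernel of $\wt T\UE(M^\circ)\to \Aut(G_1)\times \Aut(G_2)$ consists of elements of $\spannh$, which act trivially on $\phi$ and therefore do not interfere with the stabilizer calculation.
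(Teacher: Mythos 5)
Your opening moves coincide with the paper's: writing $\phi=\phi_1.\phi_2$, using that $\wt T$ and $\UE(M^\circ)$ normalize $G_1$ and $G_2$, restricting to the factors, and feeding the result into the defining property of $\oTT_i$, i.e. $({\wb G}_i\, \UE_M({\wb G}_i))_{\phi_i}=({\wb G}_i)_{\phi_i}\,\UE_M({\wb G}_i)_{\phi_i}$. The gap is in your recombination step. First, the surjectivity you invoke is false exactly in the case where the lemma has content: by \Cref{lem:5_actions_wbT}(b) one has $\wb T=\spa{\wt t_1\wt t_2,\wh t_1,\wh t_2}T_0$, so $\wt t_1$ and $\wt t_2$ are \emph{not} separately available (only their product is), and when $|Z_1|=|Z_2|=4$ the group of pairs of diagonal classes induced by $\wt T$ on $(G_1,G_2)$ is only the index-two subgroup of $\bZ_1\times\bZ_2$ generated by $(h_0,1)$, $(1,h_0)$ and $(\h_{J_1}(\varpi),\h_{J_2}(\varpi))$; the pair $(\h_{J_1}(\varpi),1)$, for instance, is not attained, so an arbitrary pair $(\alpha_1,\alpha_2)$ with $\alpha_i\in({\wb G}_i)_{\phi_i}$ cannot be matched by a single $t'\in\wt T$. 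Second, even granting a match, the conclusion $e':=(t')^{-1}x\in\UE(M^\circ)\spannh$ does not follow from agreement of induced automorphisms: the elements of $\wt T\,\UE(M^\circ)$ acting trivially on $M_0$ are not confined to $\spannh$ (they include $\Z(\wbG)^{\vFq}\leq\wt T$ and, when $\eps_1=\eps_2=-1$, the class of $vF_q$, cf. \Cref{lem:5_6}(d)), and group-theoretically $e'\in\UE(M^\circ)$ forces $(t')^{-1}t\in\wt T\cap\UE(M^\circ)$, an intersection that acts trivially on $M_0$. So the only admissible "corrections" of $e$ act trivially on $\Irr(M_0)$ and cannot change whether the $\UE(M^\circ)$-part fixes $\phi$: the assertion that $e$ (equivalently $e'$) stabilizes $\phi$ is precisely what still has to be proved, and your argument never establishes it.

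What is missing is a uniqueness statement, not a surjectivity statement. Decompose the given element: since $\wb T\leq{\wb G}_1.{\wb G}_2$ one can write the $\wt T$-part as $\wt g_1\wt g_2$ with $\wt g_i\in{\wb G}_i$, while $e$ acts on $G_i$ through $\UE_M(G_i)$; restriction gives $\phi_i^{\wt g_ie_i}=\phi_i$. Comparing this factorization with a factorization $\alpha_i\beta_i$ supplied by $\phi_i\in\oTT_i$, the quotient $\alpha_i^{-1}\wt g_i=\beta_ie_i^{-1}$ lies in the intersection of the automorphisms of $G_i$ induced by ${\wb G}_i$ (inner-diagonal) with $\UE_M(G_i)$, and this intersection consists of inner automorphisms of $G_i$; hence it fixes $\phi_i$, so $\phi_i^{\wt g_i}=\phi_i^{e_i}=\phi_i$ for both $i$, giving $\phi^{\wt g}=\phi^{e}=\phi$ and the desired equality of stabilizers. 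This complement/uniqueness argument is the content of the paper's short step "from the definition of the sets $\oTT_i$ we now get $\phi_i^{\wt g_i}=\phi_i^{e_i}=\phi_i$", and it is the step your write-up replaces by the unjustified absorption into $\wt T$.
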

\begin{proof} For $i\in \underline{ 2}$, let $\phi_i\in \Irr(G_i)$ with $\phi=\phi_1.\phi_2$. 
Recall that $\oTT_i$ was defined using $\UE_M({\wb G}_i)$ which acts as $\UE_M(G_i)$ on $G_i$, hence $\phi_i\in\Irr(\restr \phi|{G_i})$ satisfies $\starStabkla {\wb G}_i| \UE_M({\wb G}_i)|{\phi_i}$.
	
	In order to verify the statement, we have to prove that for every $\wt g\in \wb T$ and $e\in \UE(M^\circ)$ with $\phi^{\wt g e}= \phi$, one has $\phi^{\wt g }= \phi^{ e}= \phi$ in the first place. 
	
	Since $\wb T\leq T\spa{\wt t_1,\wt t_2,\wh t_1,\wh t_2}$ and $\UE(M^\circ)$ acts as $\UE_M(G_i)$ on $G_i$, one may find $\wt g_i\in {\wb G}_i$, $e_i\in \UE_M(G_i)$ such that $\wt g=\wt g_1\wt g_2$ and $e$ acts as $e_1e_2$ on $M_0$. Then $(\phi_1\phi_2)^{\wt g e}= \phi_1\phi_2$ implies $\phi_i^{\wt g_ie_i}=\phi_i$ for $i=1,2$ by restriction.
	From the definition of the sets $\oTT_i$ we now get $\phi_i^{\wt g_i}=\phi_i^{e_i}=\phi_i$ and therefore our claim that $\phi^{\wt g }=\phi= \phi^{ e_1e_2}=\phi^{ e}$.
\end{proof}

\Cref{prop_char_G1G2} above clearly leads to checking \Cref{thm_sec_Ad} by considering successively $\wt T$-orbits in $\Irr(M)$ containing an element in $\Irr(M\mid \oTT_1.\oTT_2)$, $\Irr(M\mid \EE'_1.\EE_2)$ or $\Irr(M\mid \DD'_1.\DD_2\sqcup\DD'_1.\EE_2\sqcup\EE_1.\DD_2')$, see the next two sections. 

Beforehand we have nevertheless to prove a statement about maximal extendibility. Recall ${\wt M:=M \wt T}$. 
\begin{prop} \label{prop_maxext_M_wtM}
Maximal extendibility holds \wrt $M\unlhd\wt M=\wt T M$.
\end{prop}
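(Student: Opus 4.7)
The approach is to reduce to a multiplicity-freeness statement and then propagate it from $\wt M$ down to $M$ via the intermediate normal subgroup $\wt T M_0$. Since $\wt M = M\wt T$ with $\wt T$ abelian and normalising $M$ (\Cref{lem:5_actions_wbT}(c)), the quotient $\wt M/M$ is abelian. By the standard characterisation of maximal extendibility for normal inclusions with abelian quotient (recalled in Section~1.A of \cite{S21D2}), it suffices to prove that $\restr\psi|{M}$ is multiplicity-free for every $\psi\in\Irr(\wt M)$.

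I would first verify that $\wt T M_0$ is a normal subgroup of $\wt M$ with cyclic quotient of order dividing $2$. It is a subgroup because $\wt T$ normalises $M_0$ (\Cref{lem:5_actions_wbT}(c)). For normality, writing $\wt T = T\cdot \Z(\wbG)^{\vFq}$ and splitting $\bT = \bT_1\bT_2$ with $\bT_i \leq \bG_i$ and $[\bG_1,\bG_2]=1$, one sees that for $m\in M_0 = G_1.G_2$ the conjugate $m T m^{-1}$ is already a maximal torus of $M_0$ (since each $G_i$ centralises $\bT_{3-i}$), while for $m\in \langle n,\wh t_1\wh t_2\rangle\leq \NNN_\bG(\bT)$ one has $m\wt T m^{-1} = \wt T$; in all cases $m\wt T m^{-1}\subseteq \wt T M_0$. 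Combined with $m M_0 m^{-1}=M_0$ (from \Cref{lem:5_5}), this gives $m(\wt T M_0)m^{-1}\subseteq \wt T M_0$ for all $m\in M$, hence normality. The decomposition $\wt M = (\wt T M_0)\langle n\rangle$ together with $n^2 = h_0\in M_0$ (computed in \Cref{lem_centZ2}(a) and \Cref{lem_centZ2_C2}(a)) shows that the quotient is cyclic of order at most $2$.

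The conclusion then follows from three observations. First, standard Clifford theory for a normal subgroup with cyclic quotient forces $\restr\psi|_{\wt T M_0}$ to be multiplicity-free for every $\psi\in\Irr(\wt M)$. Second, \Cref{not:5A}(b) provides \maex for $M_0\unlhd M_0\wt T$, equivalent to multiplicity-free restrictions of irreducible characters of $\wt T M_0$ down to $M_0$; combined with the first observation, $\restr\psi|_{M_0}$ is multiplicity-free. Third, if $\chi\in\Irr(M)$ appears in $\restr\psi|_M$ with multiplicity $e$, then every irreducible constituent of $\restr\chi|_{M_0}$ appears in $\restr\psi|_{M_0}$ with multiplicity at least $e$, so multiplicity-freeness of the latter forces $e = 1$.

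The one technical subtlety I expect to require care is the normality of $\wt T M_0$ in $\wt M$: a general element of $M_0$ does not normalise $\wt T$ on the nose (it moves the maximal torus $T$ to a distinct one), but the central-product decomposition $\bM^\circ = \bG_1.\bG_2$ with $[\bG_1,\bG_2]=1$ ensures that the image stays inside $M_0$, which is enough for the containment modulo $M_0$. Beyond this, the argument is a bookkeeping of Clifford theory along the two parallel chains $M_0\leq \wt T M_0\leq \wt M$ and $M_0\leq M\leq \wt M$.
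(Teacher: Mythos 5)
Your reduction of the statement to multiplicity-freeness of $\restr\psi|{M}$ for all $\psi\in\Irr(\wt M)$ is legitimate, and your two separate observations are individually correct: $\restr\psi|{\wt T M_0}$ is multiplicity-free because $\wt M/\wt TM_0$ is cyclic of order at most $2$, and restrictions from $\wt TM_0$ to $M_0$ are multiplicity-free by \Cref{not:5A}(b); the normality of $\wt TM_0$ in $\wt M$ is also fine (though your justification misstates it -- $T\not\leq M_0$, so $mTm^{-1}$ is not a torus of $M_0$; the correct and easy argument is $mtm^{-1}=t\,(t^{-1}mt)m^{-1}\in tM_0$ for $t\in\wt T$, $m\in M_0$). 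The genuine gap is the word ``combined'': multiplicity-freeness does not compose along the chain $M_0\unlhd\wt TM_0\unlhd\wt M$. If $\restr\psi|{\wt TM_0}=\theta+\theta^{n}$ with $\theta\neq\theta^{n}$, the constituents of $\restr\theta|{M_0}$ and of $\restr{\theta^{n}}|{M_0}$ are two full $\wt T$-orbits in $\Irr(M_0)$ which may coincide, in which case every character of that orbit occurs with multiplicity $2$ in $\restr\psi|{M_0}$; nothing in your argument excludes this, and your third observation needs $\restr\psi|{M_0}$ itself, not merely each $\restr\theta|{M_0}$, to be multiplicity-free.

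Worse, the intermediate claim is in fact false, so no bookkeeping can repair this route. Since $\wt T$ and $\wb T$ induce the same automorphisms of $\GvF$ (\Cref{lem:5_actions_wbT}(a)), there is $\tau\in\wt T$ inducing on $G_1$ the diagonal automorphism attached to $\h_{J_1}(\varpi)$, while $n$ induces $\gamma_1$ on $G_1$ (\Cref{lem:5_6}(a)); when, for instance, $(\eps_1,\eps_2)=(1,1)$ and $|Z_1|=4$, these two classes do not commute in $\Out(G_1)$, so $[n,\tau]$ induces a non-inner diagonal automorphism of $G_1$ and hence $[n,\tau]\notin M_0$. Thus $n$ acts nontrivially on the abelian group $\wt TM_0/M_0$, so some $\lambda\in\Lin(\wt TM_0/M_0)$ satisfies $\lambda^{n}\neq\lambda$; then $\psi:=\lambda^{\wt M}$ is irreducible with $\restr\psi|{M_0}=2\cdot 1_{M_0}$ (while $\restr\psi|{M}$ is multiplicity-free, so the Proposition itself is not contradicted -- only your route through $M_0$). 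This shows the statement is not a formal consequence of \Cref{not:5A}(b) and Clifford theory: the genuinely hard case, namely $\chi\in\Irr(M)$ with $\wb T_\chi=\wb T$, requires the character-theoretic input the paper uses -- the constituents of $\restr\chi|{M_0}$ then lie in $\oTT_1.\oTT_2$ by \Cref{cor:6_11}(b),(c), one invokes the stabilizer transversality of \Cref{eq_star_oT}, and the extension is built via \Cref{not:5A}(a) and \Cref{ExtCrit}(e), the remaining case $\wb T_\chi/T$ cyclic being settled by \Cref{ExtCrit}(a) -- and none of this appears in your proposal.
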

\begin{proof} Let $\chi\in\Irr(M)$. Assume first that ${\wb T}_\chi/T$ is cyclic. Then since $\wt T \leq \wb T \Z(\wbG)$ by the definition of $\wb T$ in \Cref{lem:5_actions_wbT} and therefore $\wt M\leq \wb M \Z(\wbG)$, we have that $\wt M_\chi /M(\wt M\cap \Z(\wbG))$ is cyclic. Then $\chi$ extends to $\wt M_\chi$ by \Cref{ExtCrit}(a).
	
	We now assume that ${\wb T}_\chi/T$ is not cyclic. Then ${\wb T}_\chi =\wb T$ and 
we can assume $\chi^{\wt t_1\wt t_2}=\chi^{\wh t_1}=\chi$. If $\phi\in\Irr(\restr \chi|{M_0} )$, then $\phi^{\wt t_1\wt t_2}$ is in the $M$-orbit of $\phi$ since $\phi^{\wt t_1\wt t_2}$ also belongs to $\Irr(\restr \chi|{M_0} )$. 
For $\phi_i\in\Irr(G_i)$ with $\phi=\phi_1.\phi_2$, we see that the $M$-orbit of $\phi_i$ is $\wt t_i$-stable. 
Therefore, the $\spa{n, \wh t_i}$-orbit of $\phi_i$ is $\wt t_i$-stable. On the other hand, \Cref{cor:6_11}(b) and (c) implies that every $\spa{n,\wh t_i}$-orbit containing some $\psi' \in \DD_i\sqcup\DD'_i\sqcup \EE_i\sqcup \EE'_i$ is not $\wt t_i$-stable. Accordingly $\phi_i\in \oTT_i\setminus (\EE'_i\cup \DD'_i)$. 
Taken together, we see that $\phi\in\oTT_1.\oTT_2$ and therefore  $$\starStab
\wt T| \UE(M^\circ) |\phi $$ by \Cref{eq_star_oT}.   In particular $M_\phi\neq M_0 \spa{\wh t_1\wh t_2n}$ and \Cref{not:5A}(a) then ensures that $\chi=\wt \phi^M$, where $\wt \phi$ is an extension of $\phi$ to $M_{\wh \phi}$ for some extension $\wh \phi\in\Irr(M^\circ_\phi)$ of $\phi$. \Cref{not:5A}(b) tells us that maximal extendibility holds \wrt $M_0\lhd M_0 \wt T$, so the character $\wt \phi$ extends to $M_0 \wt T_\phi$. According to \Cref{ExtCrit}(e), there exists an extension $\psi$ of $\wt \phi$ to $\wt T_{\wt \phi} M_{\wh \phi}$. The character $\psi^{M \wt T_{\wh \phi}}$ is by this construction an extension of $\chi$ to $M \wt T_\chi$ as required. \end{proof} 

\subsection{Proof of \Cref{thm_sec_Ad}. Characters in \texorpdfstring{$\Irr(M\mid \oTT_1.\oTT_2)$}{Irr(M|T1.T2)}}\label{ssec5E}
In this section we begin the verification of \Cref{thm_sec_Ad} through studying $\Irr(M)$ as a union of sets $\Irr(M\mid \calX)$ for the various subsets $\calX\subseteq \Irr(M_0)$ singled out in \Cref{prop_char_G1G2}. Here we start with $\Irr(M\mid \oTT_1.\oTT_2)$. 

Recall $\wb T:=\calL_{\vFq }^{-1}(\Z(\bG))\cap\bT$, ${\wb M}^\circ:=\wb T M^\circ$,
$\II EM0@{\UE(M^\circ)}:=\UE(M)\spa{n, h_0}$, $\UE({\wb M}^\circ):=\UE(\wb M) \spa{n,h_0}$, $\Ispezial Mhat@{\hat M}@{\wh M:=M \UE(M)}$ and  $\II Mhat0@{\wh M_0}:=M_0 \UE(M^\circ)$.

In our situation, \Cref{eq_star_oT} leads to the following.
\begin{lem} \label{Cor5_7}
Let $\phi\in\Irr(M_0)$ and $\wh \phi\in\Irr(M^\circ_\phi\mid \phi)$. Assume:
\begin{asslist}
\item $\phi$ satisfies $\starStab {\wb T}| \UE({\wb M}^\circ) |{\phi}$ or equivalently $\starStabkla  \wh M_0 |{\wt T}| {\phi}$;
\item $\phi_i\in\Irr(\restr \phi|{G_i})$ extends to $M_0 \UE_M(G_i)_{\phi_i}$ for every $i\in\{1,2\}$.
\end{asslist}
Then: 
\begin{thmlist}
    \item Every $\chi\in\Irr(M\mid \phi)$ satisfies $ (\wt M \wh M)_\chi = \wt M_\chi \wh M_{\chi}$ and extends to $\wh M_\chi$. 
     \item Every $\kappa\in\Irr(M^\circ\mid \phi)$ satisfies $ (\wt M \wh M)_\kappa = \wt M_\kappa \wh M_{\kappa}$ and extends to $\wh M_\kappa$. 
    \item If in addition $\eps_1=\eps_2=-1$ (or equivalently $v\in \bG\setminus\{1_\bG\}$) and $2 \mid f$ (hence $\{|Z_1|,|Z_2|\}=\{2\}$ according to \Cref{ZG_order}(c)), then every $\chi\in \Irr(M\mid \phi)$ with $h_0\in \ker(\chi)$ has an extension $\wt \chi$ to $\wh M_\chi =M \UE(M)_\chi$ with $vF_q \in \ker(\wt \chi)$.
\end{thmlist}
\end{lem}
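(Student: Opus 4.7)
The proof is a routine application of \Cref{lem_stab_extensions} to the chain $M_0 \unlhd M^\circ \unlhd M$ sitting inside $A := \wt M \wh M$. I take the identifications $X := M_0$, $\wt X := M_0 \wt T$, $Y := \wh M = M \UE(M)$, and $J := \wt X \cap L$, where $L := M^\circ$ for part (b) and $L := M$ for part (a). The group-theoretic hypotheses of \Cref{lem_stab_extensions} --- namely $\wt X/X$ and $Y/X$ abelian, the product $A = \wt X Y$, the intersection $X = Y \cap \wt X$, the normality conditions, the factorization $L = J(Y \cap L)$, and $L/X$ abelian --- reduce to short verifications using \Cref{lem:5_5} (the structure $M = M^\circ \spa{n}$ with $M/M_0$ a Klein four-group), \Cref{lem:5_actions_wbT} (the normalization properties of $\wt T$ and the decomposition $\wb T = \spa{\wt t_1 \wt t_2, \wh t_1, \wh t_2} T_0$), and \Cref{lem_centZ2}(a) / \Cref{lem_centZ2_C2}(a) (abelianness of $\UE(M^\circ)/\spannh$ and $\UE(M^\circ) \cap M^\circ = \spannh$).

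I then verify the three character-theoretic hypotheses of \Cref{lem_stab_extensions}. The transversality $A_\phi = \wt X_\phi Y_\phi$ is exactly assumption (i), once one uses \Cref{lem:5_actions_wbT}(a) to identify the $\wt T$- and $\wb T$-actions on $M_0$ and notes that $\UE(\wb M^\circ)$ restricts to $\UE(M^\circ) \subseteq \wh M_0$ on $M_0$. Maximal extendibility of $\phi$ to $\wt X_\phi = (M_0 \wt T)_\phi$ is provided by \Cref{not:5A}(b). Finally, for every $\wh\phi \in \Irr(M^\circ_\phi \mid \phi)$, the stabilizer $(Y_\phi)_{\wh\phi}$ is of the form $M_0 E$ for some $E \leq \UE(M^\circ)_{\wh\phi}$, so assumption (ii) together with \Cref{prop7_2}(a) produces the required extension of $\phi$ to $(Y_\phi)_{\wh\phi}$.

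With all hypotheses in place, \Cref{lem_stab_extensions}(c) with $L = M$ yields part (a), while \Cref{lem_stab_extensions}(b) with $L = M^\circ$ yields part (b); in each case $L Y_\chi = \wh M_\chi$ because $\wh M \supseteq L$. For part (c), the assumptions $\eps_1 = \eps_2 = -1$ and $2 \mid f$ place us in \Cref{case1}; \Cref{prop7_2}(b) then furnishes an extension $\phi'$ of $\phi$ to $(Y_\phi)_{\wh\phi}$ containing $z := vF_q$ in its kernel, where $z \in \Cent_{\wh M}(\GvF) \cap Y \subseteq \Z(A) \cap Y$ thanks to \Cref{lem:5_6}(d) and the commutator relations of \Cref{tab:properties_c1}. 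Invoking \Cref{lem_stab_extensions}(d) with $L = M$ then delivers an extension of $\chi$ to $\wh M_\chi$ with $vF_q$ in the kernel. The main technical obstacle I foresee is the clean verification of $\wt X \cap Y = M_0$ and $A = \wt X Y$, which forces a careful accounting of how $\spa{\wh t_1 \wh t_2} \leq \wt T \cap M^\circ$ is divided between $X$ and $Y$; a secondary delicate point is the genuine centrality of $vF_q$ in $A$ for part (c), which requires checking its commutation with $\wt T$ and with $\UE(M)$ in addition to its commutation with $\GvF$.
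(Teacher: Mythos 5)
Your overall strategy (reduce to \Cref{lem_stab_extensions}, feeding it the extendibility from \Cref{not:5A}(b) and \Cref{prop7_2}, and getting (c) from \Cref{lem_stab_extensions}(d) with $z=vF_q$) is the one the paper uses, but your choice of $Y$ breaks the hypotheses of \Cref{lem_stab_extensions} and the obstacle you flag at the end is not a technicality that "careful accounting" can fix with that choice. With $X=M_0$, $\wt X=M_0\wt T$ and $Y=\wh M=M\UE(M)$, the condition $X=Y\cap\wt X$ fails outright: the element $\wh t_1\wh t_2$ lies in $\bT^{\vFq}\leq\wt T$ (since $\calL_{\vFq}(\wh t_1\wh t_2)=h_0^2=1$) and in $M^\circ\leq\wh M$, but not in $M_0$, so $\wh M\cap M_0\wt T\supseteq M^\circ\supsetneq M_0$. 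For the same reason your claim that $(Y_\phi)_{\wh\phi}$ has the form $M_0E$ with $E\leq\UE(M^\circ)_{\wh\phi}$ is false for $Y=\wh M$: when $\wh t_1\wh t_2$ stabilizes $\phi$ it lies in $M^\circ_\phi$ and hence fixes $\wh\phi$, yet $\wh t_1\wh t_2\notin M_0\UE(M^\circ)$ because $\UE(M^\circ)\cap M^\circ=\spannh$ (\Cref{lem_centZ2}(a), \Cref{lem_centZ2_C2}(a)); so \Cref{prop7_2} does not directly give the needed extension of $\phi$ to $(Y_\phi)_{\wh\phi}$.

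The correct identification, which is what the paper makes, is $A:=\wt T\wh M$, $X:=M_0$, $\wt X:=M_0\wt T$, $Y:=\wh M_0=M_0\UE(M^\circ)$ and $L:=M$. This keeps $\wh t_1\wh t_2$ exclusively on the $\wt X$-side: one gets $Y\cap\wt X=M_0$ (using $\UE(M^\circ)\cap M^\circ=\spannh$ and $n\notin M_0\wt T$), $A=\wt X Y$ since $\wh t_1\wh t_2\in\wt T$, $Y/X$ abelian because $[\UE(M^\circ),\UE(M^\circ)]\leq M^\circ$, and $J:=\wt X\cap L=M^\circ$, $Y\cap L=M_0\spa{n}$, so $L=J(Y\cap L)$ with $L/X$ the Klein four-group. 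Then $(Y_\phi)_{\wh\phi}=M_0\,\UE(M^\circ)_{\phi,\wh\phi}$ really is of the form $M_0E$, and \Cref{prop7_2} applies; a single application of \Cref{lem_stab_extensions} now yields both your parts (a) and (b) (its part (b) for $J=M^\circ$, its part (c) for $L=M$), noting $LY_\chi=\wh M_\chi$ since $\wh M=M\,\wh M_0$. Your two-pass scheme with $L=M^\circ$ and $L=M$ is harmless but unnecessary. Part (c) then goes through exactly as you intend, since in the case $\eps_1=\eps_2=-1$ one has $v=n$, so $vF_q\in\UE(M^\circ)\leq Y$, it acts trivially on $\wbG^{\vFq}\supseteq\wt T M$ (being the defining endomorphism $\vFq$), and \Cref{prop7_2}(b) plus \Cref{lem_stab_extensions}(d) with $z=vF_q$ finish the argument.
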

\begin{proof} 
We apply \Cref{lem_stab_extensions} with $A:=\wt T \wh M$, $X=M_0$, $\wt X:=\wt T M_0$, $Y:=\wh M_0$ and $L:=M$ and $\phi$ as the character. We know that maximal extendibility holds \wrt $M_0 \unlhd M_0 \wt T$ according to \Cref{not:5A}(b) and therefore $\phi$ extends to $ M_0\wt T_\phi$. 

The quotient $\wt T M_0/M_0=\wt T/ (\wt T\cap M_0)$ is abelian. The assumption (ii) implies that $\phi$ extends to $(\wh M_0)_{\whphi}$ for $\wh\phi\in\Irr(M^\circ_\phi \mid \phi)$, see \Cref{prop7_2}. The group $\wh M_0/M_0=  (M_0 \UE(M^\circ))/M_0$ is isomorphic to a subgroup of $ \UE(M^\circ)/ (\UE(M^\circ)\cap M^\circ)$ and  $\UE(M^\circ)/(\UE(M^\circ)\cap M^\circ)$ is abelian as $[\UE(M^\circ), \UE(M^\circ)]\leq M^\circ$ according to \Cref{lem_centZ2} and \Cref{lem_centZ2_C2}, respectively.

In combination with the assumption (i), we see that all assumptions of  \Cref{lem_stab_extensions} are satisfied and the statements in (a) and (b) follow then from (b) and (c) of \Cref{lem_stab_extensions}.

It remains to consider the case where $2\mid f$, $\eps_1=\eps_2=-1$ and $h_0\in \ker(\chi)$. As seen before, \Cref{prop7_2}(b) tells us that $\phi$ has an extension to $(\wh M_0)_{\whphi}$ with $\spa{v F_q }$ in its kernel. Then the statement in (c) follows from \Cref{lem_stab_extensions}(d) with $z=v F_q $.
\end{proof}
In a first application of this statement we verify \Cref{thm_sec_Ad} for $\wt M$-orbits containing some character in $\Irr(M\mid \oTT_1.\oTT_2)$. Part (b) of the following statement is useful for the proof of Condition \Bd{}.

\begin{prop}[Stabilizer and extensions of $\Irr(M\mid \ov\TT_1. \ov\TT_2 )$]\label{prop:TT}
Let $\phi \in \ov\TT_1.\ov\TT_2$.
\begin{thmlist}
\item Then every character $\chi\in\Irr(M\mid \phi)$ satisfies 
\begin{asslist}
	\item $\starStab \wt M |\wh M|\chi$; and 
	\item $\chi$ extends to some $\wt \chi\in\Irr(\wh M_\chi )$ such that $vF_q \in\ker(\wt \chi)$ whenever $v\in \bG\setminus\{1_\bG\}$ and $h_0\in\ker(\chi)$.
\end{asslist} 
\item \label{cor_maxext_eps_1}
Maximal extendibility holds \wrt $ M^\circ\unlhd \wh M=M \UE(M)$ for $\Irr(M^\circ\mid \oTT_1.\oTT_2)$. 
\item If $2\mid f$, $\eps_1=\eps_2=-1$ and $\chi\in\Irr(M\mid \oTT_1.\oTT_2)$ with $h_0\in\ker(\chi)$, then $\chi$ extends to $M \UE(M)_\chi/\spa{\vFq }$. 
\end{thmlist}
\end{prop}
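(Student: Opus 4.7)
The whole proposition reduces to an application of \Cref{Cor5_7}: once its two hypotheses are verified for every $\phi\in\oTT_1.\oTT_2$, the three parts of Proposition~\ref{prop:TT} translate directly from the three conclusions of that lemma, and only the ``$vF_q\in\ker(\wt\chi)$'' refinement of (a)(ii) in the subcase $2\nmid f$ will need a short separate argument.

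Fix $\phi=\phi_1.\phi_2\in\oTT_1.\oTT_2$ with $\phi_i\in\oTT_i$. Hypothesis~(i) of \Cref{Cor5_7} is exactly \Cref{eq_star_oT}, since by \Cref{lem:5_actions_wbT} the pairs $(\wt T,\UE(M^\circ))$ and $(\wb T,\UE({\wb M}^\circ))$ induce the same group of automorphisms of $M_0$. For hypothesis~(ii), the definition of $\oTT_i$ supplies an extension $\wt\phi_i\in\Irr(G_i\UE_M(G_i)_{\phi_i})$ of $\phi_i$. Viewing $M_0\UE_M(G_i)_{\phi_i}$ as the natural semidirect product in which $\UE_M(G_i)_{\phi_i}$ acts on $G_i$ via its given action and trivially on $G_{3-i}$, this group decomposes as the central product $(G_i\UE_M(G_i)_{\phi_i})\cdot_{\spa{h_0}}G_{3-i}$. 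The characters $\wt\phi_i$ and $\phi_{3-i}$ coincide on $\spa{h_0}$ with $\phi_i|_{\spa{h_0}}$ (which is precisely the compatibility making $\phi=\phi_1.\phi_2$ well defined on the central product $M_0$), so the standard central-product character $\wt\phi_i.\phi_{3-i}\in\Irr(M_0\UE_M(G_i)_{\phi_i})$ is the desired irreducible extension of $\phi_i$.

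With both hypotheses verified, \Cref{Cor5_7}(a), (b) and (c) yield parts (a)(i), the extension statement of (a)(ii), (b) and (c) of the proposition. By Table~\ref{tab:my_label} the condition $v\in\bG\setminus\{1_\bG\}$ of (a)(ii) is equivalent to $\eps_1=\eps_2=-1$, and under the additional assumption $2\mid f$, \Cref{Cor5_7}(c) also delivers the $vF_q\in\ker(\wt\chi)$ refinement. When $\eps_1=\eps_2=-1$ and $2\nmid f$, \Cref{case1} applies and $\UE(M)=\spa{F_p,\gamma}$ with $\gamma^2=1$ and $[F_p,\gamma]=1$; by \Cref{lem:5_6}(d) the element $vF_q$ is central in $\wh M$, and a direct computation using the cyclic $2$-part of $\wh M_\chi/M$ shows that any extension $\wt\chi$ can be multiplied by a suitable linear character of $\wh M_\chi/M$ so as to force $\wt\chi(vF_q)=\wt\chi(1)$, i.e.\ $vF_q\in\ker(\wt\chi)$. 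The main delicate step is the central-product construction for hypothesis~(ii): one must use the explicit description of the action of $\UE(M^\circ)$ on $M_0$ given in \Cref{ssecE(M)} to confirm that $M_0\UE_M(G_i)_{\phi_i}$ really does decompose as a central product over $\spa{h_0}$, so that the character $\wt\phi_i.\phi_{3-i}$ is well defined and irreducible.
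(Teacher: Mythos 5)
Your core argument coincides with the paper's proof: hypothesis (i) of \Cref{Cor5_7} is supplied by \Cref{eq_star_oT}, hypothesis (ii) by the defining property of $\oTT_i$ (extension of $\phi_i$ to $G_i\UE_M(G_i)_{\phi_i}$), and then parts (a)(i), the extension statement of (a)(ii), (b) and (c) are read off from \Cref{Cor5_7}(a)--(c). Your central-product packaging of the extension (the character $\wt\phi_i.\phi_{3-i}$ on $M_0\,\UE_M(G_i)_{\phi_i}$) is a harmless elaboration of the paper's one-line appeal to the definition of $\oTT_i$ and in fact smooths over the loose phrasing of \Cref{Cor5_7}(ii); just note it is an extension of $\phi$ lying over $\phi_i$, not of $\phi_i$ itself.

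The one place where you depart from the paper is your extra argument for the kernel refinement of (a)(ii) when $\eps_1=\eps_2=-1$ and $2\nmid f$, and that patch is not sound as written. First, $vF_q$ is \emph{not} central in $\wh M$ in general: \Cref{lem:5_6}(d) only gives $vF_q\in\Cent_{\GvF\UE(M)}(\GvF)$, while Table~\ref{tab:properties_c1} shows $[\gamma_1,v]=h_0$ when $\{|Z_1|,|Z_2|\}\in\{\{1\},\{4\}\}$; what survives is only that $vF_q$ acts by a scalar in any extension of $\chi$. Second, the asserted ``direct computation using the cyclic $2$-part of $\wh M_\chi/M$'' is exactly the missing content: to twist the scalar away by a linear character of $\wh M_\chi/M$ you must show its order divides the order of the image of $vF_q$ in $\wh M_\chi/(M[\wh M_\chi,\wh M_\chi])$, which requires controlling $(vF_q)^2$ modulo $M$ (and $\wh M_\chi/M$ need not be cyclic, since $\UE(M)=\spa{F_p,\gamma}$ has rank two and both generators may fix $\chi$). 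This is precisely the kind of computation \Cref{prop7_2}(b) carries out, and only under the hypothesis $2\mid f$, which is also the only situation in which the paper derives the refinement (via \Cref{Cor5_7}(c)) and the only one used later (\Cref{thm_sec_Ad}(b), \Cref{thm6_5}). So for the statement as literally printed your proof has a gap in the $2\nmid f$ subcase; either restrict the refinement to $2\mid f$ as in \Cref{Cor5_7}(c), or supply the actual computation of $(vF_q)^2$ modulo $M$ and the resulting scalar obstruction.
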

\begin{proof}
Recall ${\wb G}_i:=\spa{G_i,\wh t_i,\wt t_i}$ and $\UE_M({\wb G}_i)$ is the subgroup of $\Aut({\wb G}_i)$ induced by $\UE({\wb M}^\circ)$, see \Cref{EMdef}.   
According to \Cref{eq_star_oT} ,  $\starStabkla \wh M_0| \wt T | {\phi}$. Let $\phi_i\in\ov \TT_i$ be such that $\phi=\phieinszwei$. By the definition of $\ov \TT_i$ the character $\phi_i$ satisfies $ \starStabkla {\wb G}_i | \UE_M({\wb G}_i) | {\phi_i} $ and extends to $G_i \UE_M(G_i)_{\phi_i}$. 
 
Hence the assumptions from \Cref{Cor5_7} are satisfied and its part (a) then implies that every $\chi\in\Irr(M\mid \phi)$ satisfies
\[ (\wt M \wh M)_\chi= \wM_\chi \wh M_\chi\]
and extends to $M \UE(M)_\chi$. This ensures part (a). Parts (b) and (c) are now clear from the rest of \Cref{Cor5_7}.
\end{proof}
\subsection{Proof of \Cref{thm_sec_Ad}. Above the other characters of \texorpdfstring{$M_0$}{M}}\label{ssec5F}
In this section we study $\wt M$-orbits $\cO$ in $\Irr(M)$ with $\cO\cap \Irr(M\mid \oTT_1.\oTT_2)=\emptyset$. According to \Cref{prop_char_G1G2}, $\cO$ then contains a character $\chi'\in\Irr(M\mid \phi)$ for some $\phi\in \EE'_1.\EE_2\sqcup \DD'_1.\DD_2\sqcup \DD'_1.\EE_2\sqcup \EE_1.\DD'_2$, while $|Z_1|=|Z_2|=4 $.
\begin{prop}\label{prop:DD}
Assume $ |Z_1|= |Z_2|=4 $. Let $\phi\in \EE'_1.\EE_2 \sqcup \DD'_1.\DD_2\sqcup \DD_1'.\EE_2\sqcup \EE_1.\DD_2'$. Then every $\chi\in\Irr(M\mid \phi)$ satisfies 
\begin{asslist}
	\item $\starStab \wt M |\wh M |\chi$ and 
	\item $\chi$ extends to $\wh M_\chi =M \UE(M)_\chi$.
\end{asslist} 
\end{prop}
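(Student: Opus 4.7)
The proof is a case-by-case analysis of $\phi=\phi_1.\phi_2$ among the four listed subsets, using a uniform strategy adapting those of \Cref{def_whEM} for (i) and \Cref{Cor5_7} for (ii). In each case, \Cref{cor:6_11}(b,c) gives $h_0\in\ker\phi$ and $(\wb G_i)_{\phi_i}\in\{G_i,\,G_i\spa{\wh t_i}\}$, the second alternative holding precisely when $\phi_i\in\EE_i\cup\EE'_i$. Combined with \Cref{lem:5_actions_wbT}(b), this yields $\wb T_\phi\leq T\spa{\wh t_1,\wh t_2}$, with $\wh t_i\in\wb T_\phi$ iff $\phi_i\in\EE_i\cup\EE'_i$, while $\wt t_1\wt t_2\notin\wb T_\phi$.

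For part (i), \Cref{cor:3_24} applied to each index $i$ with $(\wb G_i)_{\phi_i}\leq G_i\spa{\wh t_i}$ gives $(\wb G_i\UE_M(\wb G_i))_{\phi_i}\leq (G_i\spa{\wh t_i})\UE_M(\wb G_i)$. Multiplying for $i=1,2$,
\[
(\wb M^\circ\UE(\wb M^\circ))_\phi\;\leq\; M_0\spa{\wh t_1,\wh t_2}\UE(\wb M^\circ).
\]
Via the identification $\wb M\UE(\wb M)/M\cong\Z(\bG)\rtimes\UE(\wb M)$, the image of $(\wb M\UE(\wb M))_\chi$ lies in the subset $\spa{\wh t_1,\wh t_2}\UE(\wb M)$, which is a semidirect product since $\spa{\wh t_1,\wh t_2}$ is $\UE(\wb M)$-stable (as $\UE(\wb M)$ preserves each $G_i$ by \Cref{lem:5_6}(b) and \Cref{lem_centZ1_c2}(d)). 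The stabilizer therefore splits accordingly, and following the argument of \Cref{def_whEM} this translates into $\starStab \wt M|\wh M|\chi$.

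For part (ii), I apply \Cref{prop7_2} with $E:=\UE(M^\circ)_{\wh\phi}$ for a chosen $\wh\phi\in\Irr(M^\circ_\phi\mid\phi)$; the hypothesis requires each $\phi_i$ to extend to $M_0 E'_i$, where $E'_i\leq\Aut(G_i)$ is the image of $E$. When $\phi_i\in\oTT_i$ (which covers the primed sets $\EE'_i,\DD'_i$) or $\phi_i\in\DD_i$ (where $\UE_M(G_i)_{\phi_i}$ is cyclic by \Cref{cor:6_11}(c)), this is immediate from \Cref{cor:6_11}(f). In the remaining case $\phi_i\in\EE_i$, \Cref{cor:6_11}(f) yields extension of $\phi_i$ to $G_i E''$ for any $E''\leq\UE_M(G_i)_{\phi_i}$ of even index, so it suffices to verify this parity for $E'_i$. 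Since $h_0\in\ker\phi$, $E$ lies in $\Cent_{\UE(M^\circ)}((\bZ_1.\bZ_2)^{\vFq})$, whose Sylow $2$-subgroup is abelian by \Cref{lem_centZ2} and \Cref{lem_centZ2_C2}; together with the non-cyclicity of $\UE_M(G_i)_{\phi_i}$ forced by $\phi_i\in\EE_i$ via \Cref{ExtCrit}(a), this yields the required even index. Then $\phi$ extends to $M_0 E=(\wh M_0)_{\wh\phi}$, and applying \Cref{lem_stab_extensions} as in the proof of \Cref{Cor5_7} produces the extension of $\chi$ to $\wh M_\chi$.

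The main obstacle is the even-index verification in part (ii), particularly in the subcase $\phi\in\EE'_1.\EE_2$ where both factors simultaneously exhibit the $\EE$-anomaly, requiring joint control of the projections of $\UE(M^\circ)_{\wh\phi}$ onto each of $\UE_M(G_1)_{\phi_1}$ and $\UE_M(G_2)_{\phi_2}$.
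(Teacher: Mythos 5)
Your reduction data from \Cref{cor:6_11} are correct, but both main steps have genuine gaps. For (i), your argument only establishes the \emph{upper} bound: the image $U$ of $(\wb M\,\UE(\wb M))_\chi$ in $\Z(\bG)\rtimes\UE(\wb M)$ lies in $\spannh\rtimes\UE(\wb M)$ (note $\wh t_1\wh t_2\in M^\circ$, so $M\spa{\wh t_1,\wh t_2}=M\spa{\wh t_1}$ corresponds to $\spannh$). But a subgroup of a semidirect product $A\rtimes B$ need not split as $(U\cap A)(U\cap B)$: $U$ could be generated by a mixed element $\wh t_1e$ with neither factor stabilizing $\chi$ -- exactly the phenomenon that defines the sets $\DD_i$. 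The mechanism of \Cref{def_whEM} needs, besides $\wb T_\phi\leq T\spa{\wh t_1}$, the \emph{lower} bound $\chi^{\wh t_1}=\chi$, i.e. $\spannh\leq U$, and verifying this is the real content of the paper's proof: one exhibits, case by case, a specific $\chi$ over $\phi$ that is $\wh t_1$-invariant. For instance, for $\phi\in\DD'_1.\DD_2$ one has $M_\phi=M_0$ and $\phi$ itself is \emph{not} $\wh t_1$-invariant, but $\chi=\phi^M$ is, because $\phi$ is $n\wh t_2$-invariant (\Cref{cor:6_11}(c)) and $M\wh t_1=Mn\wh t_2$; in the cases with an $\EE$-factor one constructs $\wh t_i$-invariant extensions of $\phi$ to $M_0\spa{n}$ or $M$ via \Cref{cor:6_11}(b). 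None of this appears in your proposal, and without it the splitting of the stabilizer does not follow.

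For (ii), three points fail. First, $h_0\in\ker(\phi)$ does \emph{not} force $E=\UE(M^\circ)_{\wh\phi}$ into $\Cent_{\UE(M^\circ)}(Z)$: since $[\UE(M^\circ),Z]=\spannh\leq\ker$, invariance gives no centralizing information; in \Cref{prop7_2} that inclusion is used precisely in the opposite case $h_0\notin\ker(\phi)$. Second, ``abelian Sylow $2$-subgroup plus non-cyclicity of $\UE_M(G_i)_{\phi_i}$'' does not produce the even index of $E'_i$; the paper's argument constructs an explicit extension $\wt\phi$ of $\phi$ to $M_0\spa{n}$ (possible only because $\eps_1=\eps_2=1$ in these cases) and uses that, since $\phi_j\in\EE_j$, no extension of $\phi_j$ to $G_j\spa{\gamma_j}$ is invariant under the generator $\underline{F}_p^{a_j}$ of $\spa{\underline{F}_p}_{\phi_j}$, so the stabilizer of $\wt\phi$ (hence of the chosen $\chi_0$) meets $\spa{\underline{F}_p^{a_j}}$ only in $\spa{\underline{F}_p^{2a_j}}$; this is where the even index comes from, and it is exactly the joint control you flag as the obstacle but do not supply. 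Third, your final appeal to \Cref{lem_stab_extensions} ``as in \Cref{Cor5_7}'' is unavailable: that lemma (and hypothesis (i) of \Cref{Cor5_7}) requires the splitting of the stabilizer of $\phi$ itself, which fails for $\phi$ in the listed sets (e.g. $\phi_2\in\DD_2$) -- that is the very reason these characters are treated separately from $\Irr(M\mid\oTT_1.\oTT_2)$. The paper instead uses $M_\phi\in\{M_0,M_0\spa{n},M\}$, writes $\chi=\chi_0^M$ for an extension $\chi_0$ of $\phi$ to $M_\phi$, proves $(M\UE(M))_{\chi_0}\leq M_{\chi_0}\UE(M^\circ)_{\chi_0}$, extends $\phi$ by \Cref{prop7_2}, applies \Cref{ExtCrit}(e) to $\chi_0$, and induces up to $\wh M_\chi$.
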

\begin{proof} Set $\phi=\phi_1\phi_2$ with $\phi_i\in \Irr(G_i)$ for $i=1,2$. We are going to prove the following points for at least one $\chi\in \Irr(M\mid \phi)$.
	\begin{enumerate}[(1)] 
		\item $\chi^{\wh t_1}=\chi$;
            \item $M_\phi\in\{M_0, M_0\spa{n}, M\}$ and $\phi$ extends to some $\chi_0\in\Irr(M_\phi\mid \phi)$, and 
		\item if $\phi_i\in \EE_i$ and we denote by $E_i'\leq \Aut(G_i)$ the subgroup induced by $\UE(M^\circ)_{\chi_0}$, then the index $|\UE_M(G_i)_{\phi_i}/E'_i|$ is even.       
         \item $(M \UE(M))_\phi\leq M_0 \UE(M^\circ)$ or $M_\phi=M$.	
	\end{enumerate}
	
	Let us see first how those points imply our Proposition.
Since $ \wt M \wh M $ acts trivially on $M/M_0$ it suffices to prove the points (i) and (ii) for some $\chi\in \Irr(M\mid \phi)$ to have them for all. Set $T_i:=T\cap G_i$. The assumption that $\phi=\phi_1\phi_2$ belongs to $  \EE'_1.\EE_2 \sqcup \DD'_1.\DD_2\sqcup \DD_1'.\EE_2\sqcup \EE_1.\DD_2'$ implies $\phi_i\in \DD_i\cup \DD'_i\cup \EE_i\cup\EE'_i$. According to \Cref{cor:6_11} (b) and (c) the inclusion $T_i\spa{\wh t_i,\wt t_i}_{\phi_i}\leq T_i\spa{\wh t_i}$ holds and $h_0\in\ker(\phi)$. This implies ${\wb T}_\phi\leq T \spa{\wh t_1}$ since $\wb T= (T_1.T_2)\spa{\wh t_1,\wh t_2,\wt t_1\wt t_2}$ according to \Cref{lem:5_actions_wbT}. 
Then (1) implies $\starStab \wt M| \wh M|\chi$ according to \Cref{def_whEM}. 

By (2), $M_\phi\in \{M_0\spa{n}_\phi, M\}$ and $\chi=\chi_0^M$ for some extension $\chi_0\in\Irr(M_\phi\mid \phi)$ of $\phi$, see also \Cref{not:5A}(a). Let $E_i'\leq \Aut(G_i)$ be induced by $\UE(M^\circ)_{\chi_0}$. Note that $E_i'\leq \UE_M(G_i)_{\phi_i}$, and $\phi_i\in\Irr(G_i)$ extends to $G_i \UE_M(G_i)_{\phi_i}$ according to \Cref{cor:6_11}(f), whenever $\phi_i \in \Irr(G_i) \setminus \EE_i$. If $i\in\{1,2\}$ with $\phi_i\in \EE_i$, then  by (3) the group $E_i'$ has even index in $\UE_M(G_i)_{\phi_i}$ and $\phi_i$ extends to $G_iE_i'$ according to \Cref{cor:6_11}(f). Hence $\phi$ extends to $M_0 E(M^\circ)_{\chi_0}$ according to \Cref{prop7_2}. 

Moreover $(M \UE(M))_{\chi_0}\leq M_{\chi_0} \UE(M^\circ)_{\chi_0}$ follows from (4), since $(M \UE(M))_{\chi_0}= M \UE(M)_{\chi_0}$ in case $\chi_0$ is an extension of $\phi$ to $M$, or $(M \UE(M))_{\chi_0}\leq  (M\UE(M))_\phi \leq M_0 \UE(M^\circ)_\phi$. Then this implies that $\chi_0$ extends to $(M \UE(M))_{\chi_0}=\wh M_{\chi_0}$, according to \Cref{ExtCrit}(e). Inducing this extension of $\chi_0$ from $\wh M_{\chi_0}$ to $\wh M_{\chi}$ gives an extension of $\chi$ as required in (ii). 

After recalling the consequences of \Cref{cor:6_11} we first check (1), (2) and (4) in each of the four cases for $\phi$. We finally show (3).

Indeed \Cref{cor:6_11} implies for the character $\phi_i$:
\begin{align*}
T_0 \spa{n,\wh t_i}_{\phi_i}=\begin{cases}
    T_0\spa{n}_{\phi_i}&\text{if } \phi_i\in \DD'_i,\\
    T_0\spa{n\wh t_i}_{\phi_i}&\text{if } \phi_i\in \DD_i,\\
    T_0\spa{n,\wh t_i}_{\phi_i}&\text{if } \phi_i\in \EE_i\sqcup\EE_i'.
\end{cases}
\end{align*}
This leads to 
\begin{align}\label{stab_phii}
M_{\phi}=\begin{cases}
  M&\text{if } \phi\in \EE_1'.\EE_2,\\
    M_0\spa{n}&\text{if } \phi\in \EE_1.\DD'_2\sqcup \DD'_1.\EE_2,\\
    M_0&\text{if } \phi_i\in \DD'_1.\DD_2.
\end{cases}
\end{align}
Hence in all cases $M_\phi=M^\circ_{\phi}\spa{n}_{\wh \phi}$ for $\wh \phi\in\Irr(M^\circ _\phi\mid \phi)$. According to \Cref{ExtCrit}(a), $\phi$ extends to $M_\phi$, whenever $M_\phi\neq M$.  

Let now first $(\phi_1,\phi_2)\in\EE'_1\times\EE_2$ and hence $M_\phi=M$. Note that $\EE'_1\neq \emptyset$ and $\EE_2\neq \emptyset$ implies $(\eps_1,\eps_2)=(1,1)$, $2\mid f$ and $4\mid(q-1)$, see \Cref{cor:6_11}(d) and (e). Consequently, $v=1$ and $n=\neins \nzwei$ by the definition of $v$ and $n$ in \Cref{tab:my_label}.

According to \Cref{cor:6_11}(b), every extension $\wh \phi_i$ of $\phi_i$ to $G_i\spa{\wh t_i}$ is $\gamma_i$-invariant. This implies $\wh \phi_i^{\nii}=\wh \phi_i$ according to \Cref{lem_gamma} and then we find a character $\psi_i\in \Irr( G_i\spa{\wh t_i, \nii}/\spannh)$ that is an extension of $\phi_i$. Since $G_1\spa{\wh t_1, \neins}/\spannh \times G_2\spa{\wh t_2, \nzwei}/\spannh$ is isomorphic to 
\[(G_1\spa{\wh t_1 ,\gamma_1}/\spannh  )\times (G_2\spa{\wh t_2, \gamma_2}/\spannh {} ),\] 
the character $\psi_1.\psi_2$ first defines a character of $M\spa{\wh t_1}$ and via restriction we obtain a $\spa{\wh t_1}$-invariant character $\chi\in\Irr(M\mid \phi_1.\phi_2)$. We see that $\chi$ is an extension of $\phi$ to $M$ and $\chi^{\wh t_1}=\chi$, ensuring (1), (2) and (4). 

If $(\phi_1,\phi_2)\in \DD'_1\times\DD_2$ or equivalently $\phi\in\DD'_1.\DD_2$, then this implies $M_\phi=M_0$. By Clifford theory (\ref{Cliff}), the character $\chi\in\Irr(M\mid \phi)$ satisfies $\chi=\phi^M$. As $\phi$ is $\wh t_2n$-invariant, the character $\chi$ seen as a character of $M$ is $\wh t_1$-invariant due to $M\wh t_1=M \wh t_2 n $. This ensures (1) and (2).
Considering the action of $M \UE(M)$ on $G_1$ and using $\phi_1\in\DD'_1$ (see \Cref{cor:6_11}(c)), we see that $(M\UE(M))_\phi\leq M_0 \UE(M^\circ)$, ensuring (4). 

Assume next that $(\phi_1,\phi_2)\in  \DD_1'\times\EE_2$ and hence $M_\phi=M_0\spa{n}$, see \eqref{stab_phii}. This implies (2). Then $2\mid f$ according to \Cref{cor:6_11}(e) and hence \Cref{case2} does not hold. This leads to $n^2=h_0$ and provides the $\wb T$-equivariant isomorphism 
\[  M_0 \spa{n}/\spannh  \cong (G_1.G_2) \spa{\gamma_1\gamma_2}/\spannh .\]
 Note that  $\phi_2$ extends to
$G_2\spa{\wh t_2,\gamma_2}$ according to \Cref{cor:6_11}(b) and hence every extension of $\phi_2$ to $G_2\rtimes \spa{\gamma_2}$ is $\wh t_2$-invariant. An extension of $\phi_1$ to $G_1\spa{\gamma_1}$ and an extension of $\phi_2$ define an extension $\wt \phi$ of $\phi$ to $M_0\spa{n}$. Because of $[G_1\spa{\gamma_1},\wh t_2]=1$, $\wt \phi$ is $\wh t_2$-invariant. We observe that $\wt \phi$ can be taken as $\chi_0$ and $\chi=\wt \phi^M$. By construction, $\chi$ is $\wh t_2$-invariant. Since $\wh t_1\wh t_2\in M$ by \Cref{lem:5_5} this leads to $\chi^{\wh t_1}=\chi$, as required in (1).  As above, we derive from $\phi_1\in\DD_1'$ that (4) holds. 

If $\phi_1.\phi_2\in \EE_1.\DD'_2$, then the same argument applies and 
ensures (1), (2) and (4).

Finally, we prove (3), i.e. $2\mid [\UE_M(G_j)_{\phi_j}:E'_j ]$ whenever $j\in \{1,2\}$ with $\phi_j\in\EE_j$. We can assume $\wh M/M$ to be non-cyclic and $\phi_1.\phi_2\in (\EE'_1.\EE_2)\sqcup(\EE_1.\DD'_2)\sqcup(\DD'_1.\EE_2)$. This implies $\eps_1=\eps_2=1$.
Fix $j\in\{1,2\}$ with $\phi_j\in \EE_j$ and let $j'\in\{1,2\} \setminus \{j\}$. In our situation we observe  $\phi_{j'}\in \oTT_{j'}$ and $\phi_{j'}^{n}=\phi_{j'}$ according to \Cref{cor:6_11}.

Writing $\underline{F}_p\in\UE(M) $ for the image of $F_p$, we let $a_i$ be a divisor of the order of $\underline{F}_p$ such that $\underline{F}_p^{a_i}$ generates $\spa{\underline{F}_p}_{\phi_i}$. 
By definition $\UE_M(G_j)_{\phi_j}= \spa{ ( \underline{F}_p^{a_j}), \gamma_j}$, 
and every extension of $\phi_{j}$ to $G_j\rtimes \spa{\gamma_j}$ is not $\underline{F}_p^{a_j}$-invariant because of $\phi_j\in\EE_j$. 
Analogously 
$\UE_M(G_{j'})_{\phi_{j'}}= \spa{ ( \underline{F}_p^{a_{j'}}), \gamma_{j'}}$ and the extension of $\phi_{j'}$ to $G_{j'}\rtimes \spa{\gamma_{j'}}$ is $\underline{F}_p^{a_{j'}}$-invariant as $\phi_{j'}\in\TT_{j'}$. 

Because of $\eps_1=\eps_2=1$ we have $n=\neinszwei$ and there exists an isomorphism between  $(G_1\spa{\gamma_1})/\spannh \times (G_2\spa{\gamma_2})/\spannh$ and $\spa{G_1,G_2,n,\gamma_1}/\spannh$. 
The extensions $\wt \phi_1$ and $\wt \phi_2$ define an extension $\wt \phi$ of $\phi$ to $M_0\spa{n}$ with
\[\UE(M^\circ)_{\wt \phi}=\spa{n,\gamma}\left( \spa{\underline{F}_p^{2a_j}}\cap \spa{\underline{F}_p^{a_{j'}}} \right).\] 
Recall that $E'_j$ denotes the subgroup of $\Aut(G_j)$ induced by $\UE(M^\circ)_{\chi_0}$, where $\chi_0$ is an extension of $\phi$ to $M_\phi$. Without loss of generality, we can assume that $\chi_0$ is an extension of $\wt \phi$ and hence $\UE(M^\circ)_{\chi_0}\leq \UE(M^\circ)_{\wt \phi}$. 
We then see that $E'_j$ has an even index in $\UE_M(G_j)_{\phi_j}=\spa{\gamma_j,\underline{F}_p^{a_j}}$ as required for (3). 
\end{proof}

The above pretty much 
finishes the verification of \Cref{thm_sec_Ad}: 
\begin{proof}[Proof of \Cref{thm_sec_Ad}]
Let $\calO$ be an $\wt M$-orbit in $\Irr(M)$, $\chi'\in\calO$ and $\phi'\in \Irr(\restr \chi'|{M_0})$. According to \Cref{prop_char_G1G2} there is some $t\in \wt T$ such that $\phi:=(\phi')^t$ is contained in 
\[ \oTT_1.\oTT_2 \sqcup \EE_1'.\EE_2 \sqcup\DD_1'.\DD_2\sqcup \DD_1'.\EE_2\sqcup \EE_1.\DD_2'.\]
The case $\phi \in \EE_1'.\EE_2 \sqcup\DD_1'.\DD_2 \sqcup \DD_1'.\EE_2\sqcup \EE_1.\DD_2'$ is only relevant if additionally $\{|Z_1|,|Z_2|\}=\{4\}$. Set $\chi:=(\chi')^t$ and hence $\chi\in \calO$. By the choice of $\phi$ we see that \[ \chi \in \Irr(M\mid \oTT_1.\oTT_2)\, \cup \, \Irr(M\mid \EE_1'.\EE_2 \sqcup \DD_1'.\DD_2 \sqcup \DD_1'.\EE_2\sqcup \EE_1.\DD_2').\]
Characters of these sets have been studied in \Cref{prop:TT} and \Cref{prop:DD}, respectively. Consequently
$\starStab \wt M|\wh M| \chi$, and $\chi$ extends to $M \UE(M)_\chi$. This gives part (a) of \Cref{thm_sec_Ad}.

Assume next $\eps_1=\eps_2=-1$ and $2\mid f$. According to \ref{ZG_order} we see that $\{|Z_1|,|Z_2|\}=\{2\}$ and therefore $\Irr(M_0)= \oTT_1.\oTT_2$. Then $\Irr(M)=\Irr(M\mid \oTT_1.\oTT_2)$ and we get part (b)  of \Cref{thm_sec_Ad} from \Cref{prop:TT}.
\end{proof}

\section{The Conditions \texorpdfstring{\Ad{}}{A(d)} and \texorpdfstring{\Bd{}}{B(d)}}\label{sec6}
Now we use the results on the character theory of the group $M$ from the previous section to establish Conditions \Ad{} and \Bd{} from \ref{cond_Ad} and \ref{cond_Bd} for an integer $d\geq 3$ fixed throughout the whole chapter. Condition \Ad{} is about the character theory of $\NNN_\bG(\bS')^F$, where $\bS'$ is a Sylow $d$-torus of $(\bG,F)$. \Cref{tricho} shows that outside of the ``doubly 
regular'' case treated in Chapter~\ref{sec:4}, we can essentially replace $\NNN_\bG(\bS')^F$ by a group $M$ as in Chapter \ref{sec_nondreg_groupM}: In the setting of \Cref{ssec_M} 
a Sylow $d$-torus of $(\bG, \vFq )$ can be taken as a subgroup of either $\bG_1$ or $\bG_2$, and $d$ is doubly regular there.  Character correspondences between a subset of $\Irr(M) $ and $\Irr(\NNN_\bG(\bS')^F)$ allow us to transfer statements on $\Irr(M)$ to $\Irr(\NNN_\bG(\bS')^F)$ via centrally isomorphic character triples. These are constructed using results from the doubly regular case, where (\textbf{iMK}) was already shown, see \Cref{thm:dreg}. 
As a second step in \Cref{ssec6_A}, we use character correspondences to define an extension map, later verifying Condition \Bd. 

 Proving \Cref{thm:ndreg} finishes the proof of \Ad{} and \Bd{}. From this we derive (\textbf{iMK}) for quasisimple groups of type $\tD_l$, the final step of the proof of Theorem B and indeed McKay's equality.

\subsection{Character correspondences}
In the following, we continue using the notation introduced in Section \ref{sec_nondreg_groupM} around the group $M$.
We establish a character correspondence between some characters of $M$ and one of its subgroups under Assumption \ref{ass6_2} that essentially sums up the case (iii) of \Cref{tricho}.
This leads us to deduce some results about $\Irr(\NNN_\bG(\bS')^F)$ for some Sylow $d$-torus $\bS'$ of $(\bG,F)$. In order to apply the results of \Cref{sec:4} we assume the following. 

\begin{ass}\label{ass6_2}
	Let $d$ be an integer with $d\geq 3$. Assume that $d$ is doubly regular (see \Cref{def_dreg}) for $(\bG_{j},\vFq )$ for some $j\in\{1,2\}$ with $l_j\geq 4$.
\end{ass}

\begin{notation}\label{not:6:3}
	Let $\III{j}\in \{1,2\}$ be such that $d$ is doubly regular for $(\bG_{j},\vFq )$. Set $\II{K1}@{\bK_1:=\bG_{j}}$, $\II{K2}@{\bK_2:=\bG_{3-j}}$, $\eps'_1:=\eps_{j}=\eps\eps '_2$, $l'_1:=l_{j}=l-l'_2$, $K_i=\bK_i^\vFq$, $\II EMK1@{\UE_M(K_1)}:=\UE_M(G_{j})$, $\II EMK2@{\UE_M(K_2)}:=\UE_M(G_{3-j})$ (see \Cref{lem:5_actions_C1} and \Cref{lem:5_actions_C2}) and let $\II S@{\bS}$ be a Sylow $d$-torus of $(\bK_1,\vFq )$. We associate with $\bS$ the groups \[ \III{N_1}:=\NNN_{K_1}(\bS),\, \III{C:=\Cent_{M^\circ}(\bS)}\unlhd \III{N:=\NNN_M(\bS)},\, \II{Nhat}@{\wh N:=\NNN_{M \UE(M)}(\bS)} \und \II{Ntilde}@{\wt N:=\NNN_{M\wt T}(\bS)},\] where $\wt T:=(\bT\Z(\wbG))^{\vFq}$.
\end{notation}
\Cref{ass6_2} implies that (\textbf{iMK}) holds for $K_1$ and $\ell$ \wrt $N_1$, whenever $\ell$ is an odd prime with $d=d_\ell(q)$, see \Cref{thm:dreg}. Recall that $\Aut(K_1)$ is induced by bijective endomorphisms of $\bG$ commuting with $F$, see \Cref{ssec2C}, and hence $\Aut(K_1)$ acts on the set of $F$-stable subgroups of $\bK_1$. The group $\II{Gamma1}@{\Gamma_1}:=\Aut(K_1)_\bS$ is therefore well-defined and according to \Cref{thm:dreg}(b) there exists a $\Gamma_1$-equivariant bijection
\[ \Omega_1^\circ: \Irrl(K_1)\lra \Irrl(N_1), \]
such that 
\[ (K_1\rtimes \Gamma_{1,\psi}, K_1, \psi)\geq_c (N_1\rtimes \Gamma_{1,\Omega_1^\circ(\psi)} , N_1,\Omega_1^\circ(\psi)) \forevery \psi\in\Irrl(K_1).\]
Since $K_1=\bK_1^{\vFq }\cong \tD_{l'_1,\sico}^{\eps'_1}(q)$ according to \Cref{def_M}, and the integer $d=d_\ell(q)$ is doubly regular for $(\bK_1,\vFq )$, \Cref{bij_Omega'} defines a character set $\calG_d(K_1)\subseteq \Irr(K_1)$ and provides us with a character correspondence extending $\Omega_1^\circ$. 

Recall $K_1\unlhd M$ by \Cref{lem:5_5}(d). In the following, we continue to use the group $\UE(M)$ from \Cref{lem:5_5}(d), which acts by definition also on $\wt M:=\wt T M$. The following gives a character correspondence between some characters of $K_1$ and characters of ${N_1=\NNN_{K_1}(\bS)}$, which is additionally $\Gamma_1$-equivariant. Recall that here we use the order relation on character triples from \Cref{sec2_B}.

\begin{prop}\label{cons_iMcK_H1} We keep \Cref{ass6_2} and follow \Cref{not:6:3}.
	\begin{thmlist}
		\item Let $\calG_d(K_1)\subseteq \Irr(K_1)$ be the set from \Cref{calGd}. There exists some $\Gamma_1$-equivariant bijection
		\[\Omega_1: \calG_d(K_1) \xrightarrow{\sim} \Irr{(N_1)},\]
		such that 
		\[ ( K_1 \rtimes \Gamma_{1,\chi_1},K_1,\chi_1)\geq_c ( N_1\rtimes \Gamma_{1,\Omega_1(\chi_1)},N_1,\Omega_1(\chi_1)) \forevery \chi_1\in\calG_d(K_1).\]
		\item Set $\III{A:=\wt M \UE(M)}$. Then every $\chi_1\in\calG_d(K_1)$ satisfies
		\[ (A_{\chi_1},K_1,\chi_1) \geq_c (\NNN_A(\bS)_{\chi_1},N_1,\Omega_1(\chi_1)) \forevery \chi_1\in\calG_d(K_1).\]
		\item \label{cor6_4} Let $J$ be a group with $K_1\leq J\unlhd A=\wt M \UE(M)$. Then there exists an $\NNN_A(\bS)$-equivariant bijection 
		\[\Pi: \Irr(J\mid \calG_d(K_1)) \lra \Irr(\NNN_J(\bS)) \]
		such that every $\psi \in \Irr(J\mid \calG_d(K_1))$ satisfies
		\[(A_\psi, J, \psi)\geq_c (\NNN_A(\bS)_{\Pi(\psi)}, \NNN_J(\bS),\Pi(\psi)). \]
		In particular 
		\begin{asslist}
			\item If $\psi \in \Irr(J\mid \calG_d(K_1))$ and $\psi':=\Pi(\psi)$, then $ A_\psi=J \NNN_A(\bS)_{\psi'}$. For any $J\leq U \leq A$, the character $\psi $ extends to $U $ if and only if $\psi'$ extends to $\NNN_U(\bS)$. 
			\item Let $\phi_1 \in \calG_d(K_1)$ and $\phi_2\in\Irr(\Cent_J(K_1))$ such that $\phi_1.\phi_2$ is a well-defined irreducible character of $K_1.\Cent_J(K_1)$. Then 
			\begin{align}\label{eq_cor6:4}
				\Pi(\Irr(J\mid \phi_1.\phi_2))= \Irr(\NNN_J(\bS)\mid \Omega_1(\phi_1).\phi_2).
			\end{align}
		\end{asslist}
	\end{thmlist}
\end{prop}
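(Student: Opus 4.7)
The plan is to prove the three parts successively, with (a) an immediate application of \Cref{bij_Omega'}, (b) a Butterfly transfer of (a) to the ambient group $A$, and (c) a Clifford-theoretic amplification of (b). The main technical challenge will be the Butterfly application in (b), specifically the identification of the resulting overgroup as precisely $\NNN_A(\bS)_{\chi_1}$.

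For (a), I would use that \Cref{ass6_2} places $d$ as doubly regular for $(\bK_1,\vFq)$, so Conditions \Ad{} and \Bd{} hold for this pair by \Cref{thm:dreg}(a). Then \Cref{bij_Omega'}(b), applied with $(\bK_1,\vFq)$ in place of $(\bG,F)$, directly gives the required $\Gamma_1$-equivariant bijection $\Omega_1$ with the $\geq_c$ relation, the restriction to $\Irrl(K_1)$ equaling $\Omega_1^\circ$ by the last clause of \Cref{bij_Omega'}(b). For (b), I would first verify that $K_1\unlhd A=\wt M\UE(M)$: $K_1\unlhd M$ by \Cref{lem:5_5}, $\wt T$ normalizes $K_1$ by \Cref{lem:5_actions_wbT}(c), and $\UE(M)$ stabilizes $K_1$ by \Cref{lem:5_6}(b) or \Cref{lem_centZ1_c2}(c). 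Since $A$ then permutes the Sylow $d$-tori of $\bK_1$, all $K_1$-conjugate, any $a\in A_{\chi_1}$ admits $k\in K_1$ with $\bS^{ak^{-1}}=\bS$, so $A_{\chi_1}=K_1\cdot A_{\bS,\chi_1}$ and $\NNN_A(\bS)\cap K_1=N_1$. Writing $\Gamma^*\leq \Gamma_{1,\chi_1}$ for the image of $A_{\bS,\chi_1}$ in $\Aut(K_1)$, \Cref{rem_chartrip}(c) restricts the relation of (a), using $\Gamma^*\leq \Gamma_{1,\Omega_1(\chi_1)}$ by the equivariance of $\Omega_1$, to
\[(K_1\rtimes \Gamma^*,K_1,\chi_1)\geq_c (N_1\rtimes \Gamma^*,N_1,\Omega_1(\chi_1)).\]
Since $A_{\chi_1}$ and $K_1\rtimes\Gamma^*$ share the same image $\Inn(K_1)\cdot \Gamma^*$ in $\Aut(K_1)$, the Butterfly \Cref{Butterfly} yields $(A_{\chi_1},K_1,\chi_1)\geq_c (H',N_1,\Omega_1(\chi_1))$ for $H'$ the preimage in $A_{\chi_1}$ of the image of $N_1\rtimes \Gamma^*$ in $\Aut(K_1)$. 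Both $H'$ and $\NNN_A(\bS)_{\chi_1}$ lie in $A_{\chi_1}$, contain $\Cent_{A_{\chi_1}}(K_1)$, intersect $K_1$ in $N_1$, and have the same image in $\Aut(K_1)$; hence $H'=\NNN_A(\bS)_{\chi_1}$, establishing (b).

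For (c), I would restrict (b) via \Cref{rem_chartrip}(c) with the subgroup $J_{\chi_1}\leq A_{\chi_1}$ to get
\[(J_{\chi_1},K_1,\chi_1)\geq_c (\NNN_J(\bS)_{\chi_1},N_1,\Omega_1(\chi_1))\]
for each $\chi_1\in\calG_d(K_1)$. The standard Clifford-theoretic consequence of a $\geq_c$ relation, as used in the proof of \Cref{prop_23}, then produces an $\NNN_A(\bS)_{\chi_1}$-equivariant bijection $\Irr(J_{\chi_1}\mid \chi_1)\to \Irr(\NNN_J(\bS)_{\chi_1}\mid \Omega_1(\chi_1))$ preserving the $\geq_c$ relation. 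Composing with Clifford induction $\Irr(J_{\chi_1}\mid \chi_1)\leftrightarrow \Irr(J\mid \chi_1)$ and its analogue for $\NNN_J(\bS)$, then unioning over $A$-orbit representatives of $\chi_1\in\calG_d(K_1)$ (equivalently $\NNN_A(\bS)$-orbit representatives by the $\Gamma_1$-equivariance of $\Omega_1$), yields $\Pi$ with the claimed equivariance and $\geq_c$ relation. Assertion (i) is then \Cref{rem_chartrip}(b). Assertion (ii) follows from the fact that $\geq_c$ preserves scalar values on $\Cent_{A_{\chi_1}}(K_1)\supseteq \Cent_J(K_1)$, so constituents of $\restr \psi|{K_1\Cent_J(K_1)}$ correspond bijectively to those of $\restr \Pi(\psi)|{N_1\Cent_J(K_1)}$ via $\phi_1.\phi_2\mapsto \Omega_1(\phi_1).\phi_2$.
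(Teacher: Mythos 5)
Your proposal is correct and follows essentially the same route as the paper: (a) from \Cref{bij_Omega'} (whose hypotheses \Ad{} and \Bd{} hold by \Cref{thm:dreg} since $d$ is doubly regular for $(\bK_1,\vFq)$), (b) by the Butterfly \Cref{Butterfly}, and (c) by the standard Clifford/character-triple construction together with \Cref{rem_chartrip}(b) for (i). The only difference is presentational: where the paper delegates (c) to \cite[Prop. 2.4]{Rossi_McKay} "and the construction described there", you unpack that construction by hand (Frattini argument $A_{\chi_1}=K_1\NNN_A(\bS)_{\chi_1}$, identification of the Butterfly output with $\NNN_A(\bS)_{\chi_1}$ using $\Cent_A(K_1)\leq\NNN_A(\bS)$, restriction to $J_{\chi_1}$, Clifford induction and gluing over orbit representatives, and the scalar-matching on $\Cent_J(K_1)$ for (ii)), all of which is exactly what that cited result provides.
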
 
\begin{proof}
	According to \Cref{bij_Omega'}, the bijection $\Omega_1$ exists as required in (a). Recall $K_1\unlhd \wt M\UE(M)$ by \Cref{lem:5_actions_wbT}(c). According to the Butterfly Theorem \ref{Butterfly}, part (a) implies (b), see also \Cref{IMNiMK}. In Part (c), the existence of a bijection $\Pi$ satisfying the $\geq_c$ relation and (ii) is a consequence of (a) and (b) thanks to \cite[Prop. 2.4]{Rossi_McKay} and the construction described there. Property (i) follows from \Cref{rem_chartrip}(b). 
\end{proof}

We continue using \Cref{ass6_2} and the notation above. We deduce from $\Omega_1$ and its properties the following bijection. The next statement ensures Condition \Ad{} later, via an isomorphism between $\GF$ and $\bG^{\vFq }$. Recall $N:=\NNN_M(\bS)$, $\wt N:=\NNN_{\wt M}(\bS)$ and $\wh N:=\NNN_{M \UE(M)}(\bS)$.

\begin{theorem} \label{thm6_5} We keep \Cref{ass6_2}.
	 There exists some $\wh N$-stable $\wt N$-transversal $\TT(N)$ in $\Irr(N)$ such that maximal extendibility holds \wrt $N\unlhd \wh N$ for $\TT(N)$. Additionally if $2\mid f$, $\eps=1$ and $\eps_1=-1$, then every $\chi\in\TT(N)$ with $h_0\in\ker(\chi)$ has an extension $\wt \chi$ to $\wh N_\chi$ with $vF_q \in\ker(\wt \chi)$.
\end{theorem}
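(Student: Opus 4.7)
The plan is to transport the transversal $\TT(M)\subseteq\Irr(M)$ produced by \Cref{thm_sec_Ad} to $\Irr(N)$ using the bijection $\Pi:\Irr(M\mid\calG_d(K_1))\xrightarrow{\sim}\Irr(N)$ of \Cref{cor6_4} with $J:=M$. First I would fix $\bS\le\bT\cap\bK_1$ and, by a standard argument applied to the $\UE(M)$-action on the set of Sylow $d$-tori of $(\bK_1,\vFq)$, arrange that $\bS$ is $\UE(M)$-stable. Since $\bT$ centralizes $\bS$, this gives $\wt T\le\wt N$, hence $\wt N=\wt T\cdot N$ and $\wh N=N\cdot\UE(M)$, so that the $\wt N$-orbits on $\Irr(N)$ and the $\wt M$-orbits on $\Irr(M)$ are just the $\wt T$-orbits (as $M$ acts trivially on its own characters).

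Then I would set $\TT(N):=\Pi\bigl(\TT(M)\cap\Irr(M\mid\calG_d(K_1))\bigr)$. The set $\Irr(M\mid\calG_d(K_1))$ is $A$-stable (for $A:=\wt M\UE(M)$) because $\calG_d(K_1)$ is built from Lusztig series and $d$-cuspidal data preserved by all automorphisms of $K_1$; hence every $\wt M$-orbit meeting $\Irr(M\mid\calG_d(K_1))$ lies in it and meets $\TT(M)$ in exactly one element by \Cref{thm_sec_Ad}. Since $\Pi$ is $\NNN_A(\bS)\supseteq\wt N\wh N$-equivariant, each $\wt N$-orbit in $\Irr(N)$ meets $\TT(N)$ in exactly one element. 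The $\wh N$-stability of $\TT(N)$ follows from the $\wh M$-stability of $\TT(M)$, since $\wh N\le\wh M$ and $\Pi$ is $\wh N$-equivariant.

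For the extendibility assertion, fix $\chi\in\TT(N)$ with preimage $\psi\in\TT(M)\cap\Irr(M\mid\calG_d(K_1))$. By \Cref{thm_sec_Ad}, $\psi$ extends to $\wh M_\psi=M\UE(M)_\psi$. The $\NNN_A(\bS)$-equivariance of $\Pi$ yields $\UE(M)_\psi=\UE(M)_\chi$, so that
\[ \wh N_\chi \,=\, N\cdot\UE(M)_\chi \,=\, N\cdot\UE(M)_\psi \,=\, \NNN_{M\UE(M)_\psi}(\bS). \]
Applying the extendibility clause of \Cref{cor6_4}(c.i) with $U:=M\UE(M)_\psi$ then delivers the desired extension of $\chi$ to $\wh N_\chi$.

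For the final kernel refinement under $2\mid f$, $\eps=1$, $\eps_1=-1$ (which forces $\eps_2=-1$), \Cref{lem:5_6}(d) places $vF_q$ in $\Cent_A(M)$, and $h_0$ is central in $A$. The $\geq_c$-relation of \Cref{cor6_4}(c), restricted via \Cref{rem_chartrip}(c) to the subgroups $\wh M_\psi$ and $\wh N_\chi$, allows one (via \Cref{rem_chartrip}(b)) to transport in either direction the property that a character has a given element of $\Cent_A(M)$ in its kernel. In particular $h_0\in\ker(\chi)$ transfers to $h_0\in\ker(\psi)$, and the extension of $\psi$ to $\wh M_\psi$ with $vF_q$ in its kernel supplied by \Cref{thm_sec_Ad}(b) transfers to an extension of $\chi$ to $\wh N_\chi$ with $vF_q\in\ker$. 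The main obstacle I anticipate is the identification $\wh N_\chi=\NNN_{M\UE(M)_\psi}(\bS)$ above, which requires a careful comparison of the stabilizer formula of \Cref{cor6_4}(c.i) with the transversal decomposition $A_\psi=\wt M_\psi\UE(M)_\psi$ coming from \Cref{thm_sec_Ad}.
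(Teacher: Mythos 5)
Your overall strategy coincides with the paper's: transport $\TT(M)\cap\Irr(M\mid\calG_d(K_1))$ through the bijection $\Pi$ of \Cref{cor6_4} (with $J=M$) and transfer extendibility and the $vF_q$-kernel condition back and forth via the $\geq_c$-relation and \Cref{rem_chartrip}(b). However, your opening group-theoretic step contains a genuine gap: you cannot ``fix $\bS\le\bT\cap\bK_1$''. Here $\bT$ is the fixed maximal torus of \Cref{sub2E}, and $\vFq$ acts on its character lattice as $q$ times the image of $v$ in the Weyl group, an element of order at most $2$; so the polynomial order of $(\bT,\vFq)$ is a product of factors $X\pm1$ only, and for $d\ge 3$ no nontrivial $d$-torus of $(\bG,\vFq)$ -- in particular no Sylow $d$-torus of $(\bK_1,\vFq)$ -- can lie in $\bT$. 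Hence ``$\bT$ centralizes $\bS$'' fails, and with it the assertions $\wt T\le\wt N$, $\wt N=\wt T\,N$, $\wh N=N\,\UE(M)$, as well as the later identities $\UE(M)_\psi=\UE(M)_\chi$ and $\wh N_\chi=\NNN_{M\UE(M)_\psi}(\bS)$ on which both your transversality claim and your extendibility argument rest. Note also that $\NNN_A(\bS)$-equivariance of $\Pi$ alone does not give $\UE(M)_\psi=\UE(M)_\chi$ unless $\UE(M)$ normalizes $\bS$, which is exactly what is not available.

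The repair (and the paper's actual argument) is a Frattini argument rather than stabilizing $\bS$: $\wt T$ and $\UE(M)$ permute the Sylow $d$-tori of $(\bK_1,\vFq)$, which form a single $K_1$-conjugacy class, whence $\wt M=M\wt N$ and $\wh M=M\wh N$. Since $M$ acts trivially on $\Irr(M)$, the set $\TT'(M):=\TT(M)\cap\Irr(M\mid\calG_d(K_1))$ is then an $\wh N$-stable $\wt N$-transversal in $\Irr(M\mid\calG_d(K_1))$, and the $\wt N\wh N=\NNN_A(\bS)$-equivariance of $\Pi$ gives the required transversal $\TT(N)=\Pi(\TT'(M))$. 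For extendibility you do not need to realize $\wh N_\chi$ as a normalizer inside $\wh M_\psi$: the relation of \Cref{cor6_4}, restricted using its stabilizer formula, yields $(\wh M_\psi,M,\psi)\geq_c(\wh N_\chi,N,\chi)$, and \Cref{rem_chartrip}(b) converts the extension of $\psi$ to $\wh M_\psi$ provided by \Cref{thm_sec_Ad} (with $vF_q$ in its kernel in case (b), the condition $h_0\in\ker$ matching on both sides since $h_0\in\Cent_A(M)\cap N$) into the desired extension of $\chi$ to $\wh N_\chi$ -- essentially your final paragraph, which is sound once the stabilizer bookkeeping is done this way.
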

\begin{proof} Recall $\wh M:=M\UE(M)$ and $\wt M:=M \wt T$. According to \Cref{thm_sec_Ad}(a) there exists some $\wh M$-stable $\wt M$-transversal $\TT(M)$ in $\Irr(M)$, such that every $\chi\in\TT(M)$ extends to $\wh M_\chi$. On the other hand $\calG_d(K_1)$ is by definition $\Aut(K_1)$-stable. Set $\TT'(M):=\TT(M)\cap \Irr(M\mid \calG_d(K_1))$. 
	Since $\UE(M)$ and $\wt T$ permute the Sylow $d$-tori of $(\bK_1,\vFq )$ and all Sylow $d$-tori of $(\bK_1,\vFq)$ are $K_1$-conjugate (see \cite[Thm~25.11]{MT}) we see $M \UE(M)= M \wh N$ and $\wt M=M \wt N$. Therefore, $\TT'(M)$ is also a $\wh N$-stable $\wt N$-transversal in $\Irr(M\mid \calG_d(K_1))$.
	
	For $J:=M$ and $A:=\wh M \wt M$, we apply Proposition~\ref{cor6_4} and obtain the bijection 
	\[\Pi:\Irr(M\mid \calG_d(K_1))\lra \Irr(\NNN_M(\bS)) \] 
	with the properties stated there. Then $\Pi$ is $\wh N \wt N$-equivariant as $\NNN_{A}(\bS)=\wh N \wt N$. Note that the set $\TT(N):=\Pi(\TT'(M))$ is a $\wh N$-stable $\wt N$-transversal in $\Irr(N)$ by the equivariance of $\Pi$. Let $\psi'\in \TT(N)$ and set $\psi:=\Pi^{-1}(\psi')\in \TT'(M)$. By the properties of $\TT'(M)$, we see that $\psi$ extends to its stabiliser in $\wh M$. Furthermore by the properties of $\Pi$ we have
	\[ (M \UE(M)_\psi, M,\psi)\geq_c (\wh N_{\psi '},N , \psi'). \]
	According to \Cref{rem_chartrip}(b), $\psi'$ extends to $\wh N_{\psi'}$, since $\psi$ extends to $\wh M_\psi$. 
	
	Note that in the case of $2\mid f$, $\eps=1$ and $\eps_1=-1$, every $\psi'\in\TT(N)$ with $h_0\in\ker(\psi')$ has an extension $\wt \psi'$ to $\wh N_{\psi'}$ with $vF_q \in\ker(\wt \psi')$, as $\psi$ has the analogous property by \Cref{thm_sec_Ad}(b).
\end{proof}
The following helps to ensure the first part of Condition \Bd{} from \Cref{cond_Bd}. 
\begin{cor}\label{cor6_6}
	Maximal extendibility holds with respect to $N\unlhd \wt N$.
\end{cor}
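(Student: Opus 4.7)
The plan is to deduce maximal extendibility with respect to $N \unlhd \wt N$ by transferring the extendibility statement from $M \unlhd \wt M$ (given by \Cref{prop_maxext_M_wtM}) across the character correspondence $\Pi$ from \Cref{cor6_4}, combined with the $\geq_c$ relations it produces.

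First, I would pick an arbitrary $\chi \in \Irr(N)$. Since $\Pi$ is a bijection of $\Irr(M \mid \calG_d(K_1))$ onto $\Irr(\NNN_M(\bS))=\Irr(N)$, there exists some $\psi \in \Irr(M \mid \calG_d(K_1)) \subseteq \Irr(M)$ with $\Pi(\psi)=\chi$. By \Cref{prop_maxext_M_wtM}, maximal extendibility holds \wrt $M \unlhd \wt M$, so $\psi$ extends to some $\wt\psi \in \Irr(\wt M_\psi)$.

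Next I would use \Cref{cor6_4} with $J=M$ and $A = \wt M \UE(M)$ to get
\[
((\wt M \UE(M))_\psi,\, M,\, \psi)\ \geq_c\ (\NNN_A(\bS)_\chi,\, N,\, \chi).
\]
Applying \Cref{rem_chartrip}(c) with the intermediate group $\wt M_\psi$ (which contains $M$ and is a subgroup of $(\wt M \UE(M))_\psi$), and using the straightforward equality $\wt M \cap \NNN_A(\bS) = \NNN_{\wt M}(\bS) = \wt N$ together with the $\NNN_A(\bS)$-equivariance of $\Pi$ (which implies $\wt N_\psi = \wt N_\chi$), this restricts to
\[
(\wt M_\psi,\, M,\, \psi)\ \geq_c\ (\wt N_\chi,\, N,\, \chi).
\]

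Finally, letting $\kappa \in \Irr(\Cent_{\wt M_\psi}(M))$ be the unique irreducible constituent of $\wt\psi$ restricted to $\Cent_{\wt M_\psi}(M)$, so that $\wt\psi \in \Irr(\wt M_\psi \mid \kappa)$, I would invoke \Cref{rem_chartrip}(b) to conclude that $\chi$ admits an extension to $\wt N_\chi$ lying above $\kappa$. Since $\chi \in \Irr(N)$ was arbitrary, maximal extendibility \wrt $N \unlhd \wt N$ follows. The main technical point, and the only place where care is required, is checking the centralizer/normalizer intersections to justify the restricted $\geq_c$ relation — everything else is a direct transfer through the already-established bijection $\Pi$ and the standard properties of centrally isomorphic character triples.
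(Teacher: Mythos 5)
Your proposal is correct and follows essentially the same route as the paper's proof: apply \Cref{cor6_4} with $J=M$, restrict the resulting $\geq_c$ relation via \Cref{rem_chartrip}(c) to $(\wt M_\psi, M,\psi)\geq_c(\wt N_\chi, N,\chi)$, and transfer the extension given by \Cref{prop_maxext_M_wtM} using \Cref{rem_chartrip}(b). Your extra checks on the normalizer intersection and the equivariance of $\Pi$ are exactly the implicit verifications behind the paper's restriction step.
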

\begin{proof} As in the above proof of \Cref{thm6_5} we apply Proposition~\ref{cor6_4}  with $J:=M$ and use the bijection $\Pi:\Irr(M\mid \calG_d(K_1)) \lra \Irr(N)$ with 	
\[ ((\wt M \UE(M))_\psi, M , \psi)\geq_c ((\wh N\wt N)_{\psi'}, N , \psi') \] 
for any $\psi'\in \Irr(N)$ and $\psi:=\Pi^{-1}(\psi')$.
	
By \Cref{rem_chartrip}(c) this restricts to 
\[ (\wt M_\psi, M , \psi)\geq_c (\wt N_{\psi'}, N , \psi'). \]
Recall that according to \Cref{prop_maxext_M_wtM}, $\psi$ extends to $\wt M_\psi$, since maximal extendibility holds with respect to $M\unlhd \wt M$. According to \Cref{rem_chartrip}(b) this implies that $\psi'$ extends to $\wt N_{\psi'}$. 
\end{proof}
\subsection{Another extension map}\label{ssec6_A}
The aim of this section is to build a version of the extension map required in the second part of Condition \Bd{}, see \Cref{cond_Bd}. As before, we only verify an analogue for subgroups of $\wt\bG^{\vFq }$. Recall $M^\circ:=(\bG_1.\bG_2)^{\vFq }$ and $\UE(M^\circ):=\UE(M)\spa{n,h_0}$. We write $\II ToKj@{\protect{\oTT}(K_i)}$ for the set from \Cref{def:TEDi} with our choice of $K_1=G_j$ and $K_2=G_{3-j}$. We choose $\II TTK2@{\TT(K_2)}$ to be an $\UE_M(K_2)$-stable subset of $\oTT(K_2)$, which is at the same time a $\wt T$-transversal in $\Irr(K_2)$. (This is possible according to \Cref{cor:6_11}(a).) In the diagrams below double bars stand for normal inclusions of subgroups.

\noindent\begin{minipage}{0.4\textwidth}
 \setlength{\parindent}{1em}
$\xymatrix{
 &  & \widehat M &  \\
M^\circ \ar@{=}[rru] &  &  & M_0\underline E(M^\circ) \ar@{=}[lu] \\
 & M_0=K_1.K_2 \ar@{=}[lu] \ar@{=}[rru] &  & 
}$
\end{minipage}
\hspace{0.07\textwidth}
\noindent \begin{minipage}{0.4 \textwidth}\noindent
In the groups $\wh M$, $M_0\UE(M^\circ)$, $M_0$ and $M^\circ$, introduced before we consider their normalizers of $\bS$. Note that there is a well-defined action of $\UE(M)$ and $\UE(M^\circ)$ on the $\vFq$-stable tori of $\bG$ and hence the stabilizer of $\bS$ in those groups is well-defined. 

Set $\II{No}@{N^\circ:=\NNN_{M^\circ}(\bS)}$ and $\II{No}@{\wh N^\circ}:=\NNN_{M_0\UE(M^\circ)}(\bS)$. Note that $N^\circ\cap \wh N^\circ=\NNN_{M_0}(\bS)$ since $M^\circ\cap M_0 \UE(M^\circ)= M_0$ by \Cref{lem_centZ2}(a) and \Cref{lem_centZ2_C2}(a). 
\end{minipage}

\begin{minipage}{0.45\textwidth}
By its definition $\TT(K_2)$ is also $\wh N^\circ$-stable and is a ${\wt T}$-transversal in $\Irr(K_2)$. 
Recall the description of the action of $M_0\UE(M^\circ)$ and, therefore, of $\wh N^\circ$ on $K_1$ in \Cref{lem:5_actions_C1} and \Cref{lem:5_actions_C2}, respectively.
This allows us to transfer the main result from \Cref{ssec_4E} and we obtain an $\wh N^\circ$-equivariant extension map $\Lambda_1$ for $\Cent_{K_1}(\bS)\unlhd \NNN_{K_1}(\bS)$.
\end{minipage}
\hspace{0.07\textwidth}
\begin{minipage}{0.5\textwidth}
\setlength{\parindent}{1em}
\xymatrix{
 &  & \widehat N &  \\
N^\circ \ar@{=}[rru] &  &  & \widehat N^\circ \ar@{=}[lu] \\
 & N_1.K_2 \ar@{=}[lu] \ar@{=}[rru] &  & 
}\end{minipage}

\begin{lem}\label{prop:7_7} We follow \Cref{not:6:3}, keeping \Cref{ass6_2}. Recall ${ N_1=\NNN_{K_1}(\bS)}$ and $C=\Cent_{M^\circ}(\bS)\unlhd N^\circ$. 

Let
	 $\III{C_1:=\Cent_{K_1}(\bS)}$ and let $\II{Lambda1}@{\La_1}$ be the $\wh N^\circ$-equivariant extension map \wrt $C_1\unlhd N_1$ from \Cref{cor3_5}(a). 
	 	Let $\la\in\Irr(C_1)$, $\phi_2\in\TT(K_2)$ with $\la(h_0)\phi_2(1)=\la(1)\phi_2(h_0)$, so that $\la.\phi_2\in\Irr(C_1.K_2)$ and $\La_1(\la)^{N_1}.\phi_2\in\Irr(N_1.K_2)$ are well defined. 
	
	Then any $\psi\in\Irr(N^\circ\mid \La_1(\la)^{N_1}.\phi_2)$ extends to its stabilizer in $ \wh N$. Moreover, $\wh N_\psi=N^\circ  \wh N_\tau$ for $\{\tau \} = \Irr(C\mid \la.\phi_2)\cap \Irr(\restr \psi|C)$. 
\end{lem}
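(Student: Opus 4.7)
The strategy is to construct $\psi$ together with an extension by combining the extension datum $\Lambda_1$ (from \Cref{cor3_5}) on the $K_1$-side with the good extendibility of $\phi_2\in\TT(K_2)\subseteq\oTT(K_2)$ on the $K_2$-side, and then to control stabilizers via the equivariance properties of both pieces. The proof will proceed in three stages.

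First, I would unravel the group-theoretic layering. Since $\bS\leq\bK_1$ and $[\bK_1,\bK_2]=1$, one has $\NNN_{M_0}(\bS)=N_1.K_2$ and $\Cent_{M_0}(\bS)=C_1.K_2$. Moreover $\wh t_1\wh t_2\in\bT\leq\Cent_\bG(\bS)\cap M^\circ$, so both quotients $N^\circ/(N_1.K_2)$ and $C/(C_1.K_2)$ have order $1$ or $2$, generated by the image of $\wh t_1\wh t_2$. The compatibility $\la(h_0)\phi_2(1)=\la(1)\phi_2(h_0)$ ensures that $\la.\phi_2$ and $\La_1(\la)^{N_1}.\phi_2$ are well-defined central-product characters. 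I would then define $\tau\in\Irr(C\mid \la.\phi_2)$ as the Clifford correspondent of the trivial character of $C_{\la.\phi_2}/(C_1.K_2)$. Given any $\psi\in\Irr(N^\circ\mid \La_1(\la)^{N_1}.\phi_2)$, the uniqueness $\{\tau\}=\Irr(C\mid \la.\phi_2)\cap\Irr(\restr\psi|C)$ would follow by noticing that $\La_1(\la)^{N_1}$ restricted to $C_1$ is the sum of the $N_1$-orbit of $\la$ with multiplicity one (Clifford); tensoring with $\phi_2$ and restricting $\psi$ down to $C$ shows that only one $N^\circ$-orbit of characters in $\Irr(C\mid\la.\phi_2)$ appears, picking out $\tau$.

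Second, I would build the extension of $\psi$ to $\wh N_\psi$ by applying \Cref{lem_stab_extensions} with $X:=C$, $L:=N^\circ$, $\wt X:= C\cdot(N_1.K_2)/\!\sim$ (more precisely the overgroup where $N^\circ=\wt X\cdot(Y\cap L)$), and $Y:=\wh N_\tau$. Its hypotheses translate into two extendibility requirements for $\tau$: extendibility to $(N_1.K_2)_\tau$, which is immediate because $\La_1(\la)^{N_1}.\phi_2$ restricted to $(N_1.K_2)_\tau$-stable irreducible constituents already provides this; and extendibility to $(\wh N^\circ_\tau)_{\whtau}$, which I would obtain by invoking the $\wh N^\circ$-equivariance of $\Lambda_1$ on the $K_1$-side together with $\phi_2\in\oTT(K_2)$ extending to $K_2\UE_M(K_2)_{\phi_2}$ by \Cref{cor:6_11}(f), and then gluing these two extensions on the central $\spannh$-factor via \Cref{ExtCrit}(e). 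Once these two inputs are in hand, \Cref{lem_stab_extensions}(c) delivers both the extension of $\psi$ to $\wh N_\psi$ and the equality $(\wh N)_\psi=(N^\circ)(\wh N)_{\tau}$.

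The main obstacle will be establishing the stabilizer identity $\wh N_\psi=N^\circ\wh N_\tau$, specifically the non-trivial inclusion $\wh N_\psi\subseteq N^\circ\wh N_\tau$. The idea is to show that the action of $\wh N/N^\circ$ on $\Irr(C\mid\la.\phi_2)$ mirrors its action on $\Irr(\restr\psi|C)$, so that any element of $\wh N$ fixing $\psi$ must fix $\tau$ modulo $N^\circ$. This is exactly where the $\wh N^\circ$-equivariance of $\Lambda_1$ is used (translating stabilisers of $\la$ in $\wh N^\circ$ into stabilisers of $\La_1(\la)^{N_1}$ in $\wh N^\circ$) together with the $\UE_M(K_2)$-stability of $\TT(K_2)$ (controlling the $K_2$-factor). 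A subtlety to handle carefully is the interplay between the two extensions on the central product $K_1.K_2$, since the choice of compatible central characters on $\spannh$ must be propagated equivariantly, but the explicit description of $\UE(M)$ acting on $K_i$ as $\UE_M(K_i)$ (Lemmas~\ref{lem:5_actions_C1} and \ref{lem:5_actions_C2}) makes this a bookkeeping exercise rather than a conceptual one.
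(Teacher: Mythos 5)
Your plan diverges from the paper's proof in a way that leaves a real gap, and the divergence is not just cosmetic. The paper does \emph{not} prove the extendibility of $\psi$ by a direct local gluing: it pulls $\psi$ back through the bijection $\Pi$ of \Cref{cons_iMcK_H1}(c) (built from the doubly regular case via character triples), uses \Cref{cor3_5}(b) to see that $\Omega_1^{-1}(\La_1(\la)^{N_1})$ lies in $\oTT(K_1)$, so that $\chi:=\Pi^{-1}(\psi)\in\Irr(M^\circ\mid\oTT_1.\oTT_2)$, invokes the global extendibility statement \Cref{prop:TT}(b), and transfers back along the $\geq_c$ relation using \Cref{cons_iMcK_H1}(c.i). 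Your substitute for this — applying \Cref{lem_stab_extensions} with $X:=C$ and $L:=N^\circ$ — cannot work as stated: that lemma requires $L/X$ abelian, but $N^\circ/C$ is the relative Weyl group $\NNN_{M^\circ}(\bS)/\Cent_{M^\circ}(\bS)$, which is non-abelian precisely in the situations this lemma is needed for ($a_{(\bK_1,\vFq)}(d)\geq 2$); moreover $Y$ in that lemma must be a fixed complement-type subgroup with $A=\wt X Y$, $X=Y\cap\wt X$ and $Y/X$ abelian, not the character-dependent stabilizer $\wh N_\tau$. Even setting the formal mismatch aside, extending $\psi$ over a possibly non-cyclic stabilizer quotient of $\wh N$ is exactly the point where a cocycle/central-character analysis is needed (this is what \Cref{prop7_2}, via \Cref{lem_centZ2} and \Cref{lem_centZ2_C2}, supplies on the global side); ``gluing on $\spannh$ via \Cref{ExtCrit}(e) plus bookkeeping'' does not replace it, and avoiding that local analysis is the very reason the paper routes through $\Pi$ and \Cref{cor3_5}(b).

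For the stabilizer identity you have also located the difficulty on the wrong side. The inclusion $\wh N_\psi\leq N^\circ\wh N_\tau$ is the easy one: $\Irr(\restr\psi|{C})$ is a single $N^\circ$-orbit containing $\tau$, so $\wh N_\psi$ stabilizes that orbit. The substantive direction is $\wh N_\tau\leq\wh N_\psi$, i.e.\ that $\psi$ is $\wh N_\tau$-invariant, which is not automatic since $\wh N_\tau$ could a priori permute the several characters of $N^\circ$ above $\tau$. The paper settles it by exhibiting an explicit $\wh N_\tau$-invariant extension $\wt\tau$ of $\tau$ to $C(N_1)_\tau$ with $\wt\tau^{N^\circ}=\psi$, split into the two cases $[C:C_1.K_2]=1$ or $2$, using the $\wh N^\circ$-equivariance of $\La_1$ together with \Cref{ExtCrit}(e) in the extension case, and in the induced case $\tau=(\la.\phi_2)^C$ the $\wt T$-transversal property of $\TT(K_2)$ to identify $\wh N_\tau=C\,\wh N^\circ_{\la.\phi_2}$. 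Your sketch (``the action of $\wh N/N^\circ$ on $\Irr(C\mid\la.\phi_2)$ mirrors its action on $\Irr(\restr\psi|{C})$'') only yields the easy inclusion and gives no construction of such an invariant $\wt\tau$, so the key half of the ``Moreover'' statement is unproved in your proposal.
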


\noindent
\begin{minipage}{0.37\textwidth}
\noindent{\it Proof.}
 Let 	\[\Omega_1: \calG_d(K_1) \xrightarrow{\sim} \Irr{(N_1)},\] and
	\[\Pi: \Irr(M^\circ\mid \calG_d(K_1)) \xrightarrow{\sim} \Irr(N^\circ)\]
	be the bijections from Proposition~\ref{cor6_4} obtained by choosing $M^\circ$ as $J$ and $\wt M \UE(M)$ as $A$. 
	
	Let $\psi \in \Irr(N^\circ\mid \La_1(\la)^{N_1}.\phi_2)$ and set $\chi:=\Pi^{-1}(\psi)$. 
	By definition $\chi\in \Irr(M^\circ\mid \Omega_1^{-1}(\La_1(\la)^{N_1}).\phi_2)$ thanks to \eqref{eq_cor6:4} in \Cref{cons_iMcK_H1}. 
	By \Cref{cor3_5}(b), the character $\Omega_1^{-1}(\La_1(\la)^{N_1})$ of $K_1$ satisfies
	\[ \Omega_1^{-1}(\La_1(\la)^{N_1})\in \oTT(K_1). \] 	
 \end{minipage}
\hspace{0.03\textwidth} 
\noindent\begin{minipage}{0.5\textwidth}
\xymatrix{
 &  &  &  & \widehat M \\
 &  & \chi \ar@{.}[r] & M^\circ \ar@{=}[ru] & \widehat N \ar@{-}[u] \\
 &  & M_0 \ar@{=}[ru] & N^\circ \ar@{-}[u] \ar@{=}[ru]  \ar@{.}[r] & \psi \\
 & K_1 \ar@{=}[ru] & N_1.K_2 \ar@{=}[ru] \ar@{-}[u] & C \ar@{=}[u] \ar@{.}[r] & \tau \\
\Lambda_1(\lambda)^{N_1} \ar@{.}[r] & N_1 \ar@{=}[ru] \ar@{-}[u] & C_1.K_2 \ar@{=}[ru] \ar@{=}[u] \ar@{.}[r] & \lambda.\phi_2&  \\
 & C_1 \ar@{=}[ru] \ar@{=}[u] \ar@{.}[r] & \lambda  &  & 
}
\end{minipage}

\medskip
\noindent
 We now get $\chi\in \Irr(M^\circ\mid \oTT_1.\oTT_2)$ in the notation of \Cref{def:TEDi} and for this character, Proposition~\ref{cor_maxext_eps_1} tells us that $\chi$ extends to $\wh M_\chi$. By \Cref{cons_iMcK_H1}(c.i) we see that then also $\psi$ extends to $\wh N_{\psi}$. 
	
	We now have to compute $\wh N_\tau$. Clifford theory implies that $\wh N_\psi\leq \wh N_{\restr \psi|C}=N \wh N_{\tau}$ and hence it remains to ensure $\wh N_\psi \geq \wh N_\tau$. 
	Note first that $C_1.K_2$ has index $2$ or $1$ in $C$. By Clifford theory, then $\tau$ is either an extension of $\la.\phi_2$ or equals $(\la.\phi_2)^C$. 
	
	Assume that $\tau$ is an extension of $\la.\phi_2$ to $C$. Then there exists a common extension $\wt \tau$ of $\tau$ and $\restr\Lambda_1(\la)|{(N_1)_{\tau}}.\phi_2$ to ${(N_1)_{\tau}} C$ according to \Cref{ExtCrit}(e). 
	As $\La_1$ is $\wh N^\circ$-equivariant, the character $\restr\Lambda_1(\la)|{(N_1)_{\tau}}.\phi_2$ is $(\wh N^\circ)_{\la.\phi_2}$-invariant. Then the character $\wt \tau$ is $C\wh N^\circ_{\tau} =\wh N_\tau$-invariant, since $\wh N^\circ_{\la.\phi_2} C= \wh N_{\restr \tau|{C_1.K_2}} \geq \wh N_\tau $. Note that $\psi=\wt \tau^{N^\circ}$ and hence $\psi$ is $\wh N_\tau$-invariant.
	
	Now assume that $\tau=(\la.\phi_2)^C$. Then $\wt \tau:=(\La_1(\la).\phi_2)^{C {(N_1)_{\tau}}}$ is irreducible and an extension of $\tau$. We observe again that $\wt \tau$ by the construction using $\La_1$ is $C \wh N^\circ_{\la.\phi_2} $-invariant and $\wt \tau ^N=\psi$ by Clifford theory. It remains to see that $C\wh N^\circ_{\la.\phi_2} =\wh N_\tau$. 
	
	In order to see this equality, we compare the actions of $C$ and $\wh N^\circ$ on $C_1.K_2$. While $C$ induces a diagonal automorphism on $K_2$ which can also be induced by conjugation with an element of $\wt T$, $\wh N^\circ$ acts on $K_2$ as an element of $\UE_M(K_2)$. Recall $\phi_2\in \TT(K_2)$ and $\TT(K_2)$ is a $\wh N^\circ$-stable  $\wt T$-transversal in $\Irr(K_2)$. According to \cite[Lem.~2.4]{S21D1}, the properties of $\TT(K_2)$ lead to $(\wh N^\circ \wt T)_{\phi_2}= \wh N^\circ_{\phi_2} \wt T_{\phi_2}$, and hence  
	\[(C \wh N^\circ)_{\phi_2}= C_{\phi_2} \wh N^\circ _{\phi_2}.\]
	Since $C_1\leq \Z(C)$ we obtain that 	 
	\[\wh N_{\la.\phi_2}=(C \wh N^\circ)_{\la.\phi_2}= C_{\phi_2} \wh N^\circ _{\la.\phi_2}.\]
	As $\tau=(\la.\phi_2)^C$ and $\wh N= \wh N^\circ C$, this leads to 
	\[\wh N_\tau= C \wh N_{\la.\phi_2}= C (C_{\phi_2} \wh N^\circ _{\la.\phi_2})= C \wh N^\circ _{\la.\phi_2}. \] 
	Hence in all cases $\wt \tau$ is $\wh N_\tau$-invariant, implying that $\psi$ is $\wh N_\tau$-invariant, as well.  
	
	General Clifford theory shows that $\wh N_\psi= N^\circ \wh N_{\tau, \wt \tau}$. Taking into account that $\wt \tau$ is $\wh N_\tau$-invariant we obtain the equality $\wh N_\psi=N^\circ  \wh N_\tau$. \qed{}
\smallskip

Above, before \Cref{prop:7_7}, we have defined ${\TT(K_2)}\subseteq \oTT(K_2)$ and the former is by definition an $M_0\UE(M^\circ) $-stable $\wt T$-transversal in $\Irr(K_2)$.
Recall ${\wh N:=\NNN_{M \UE(M)}(\bS)}$ and ${C:=\Cent_{M^\circ}(\bS)}$. The latter will be shown to be ${\Cent_{\bG}(\bS)^{\nu F_q}=\Cent_{M}(\bS)}$ in \Cref{lem6_11} below.

\begin{prop}[Extension map \wrt $\Cent_{M}(\bS)\unlhd \NNN_{M}(\bS)$]\label{prop_Lambda}
	Set $\II Ctilde@{\wt C:=\Cent_{\wt M}(\bS)}$.
	\begin{enumerate}
		\item The set $\Irr(C\mid \TT(K_2))$ forms an $\wh N$-stable $\wt C$-transversal in $\Irr(C)$, in particular $\starStab \wt C|\wh N|\rho$ for every $\rho\in\Irr(C\mid \TT(K_2))$.
		\item There exists some $\wh N$-equivariant extension map $\II Lambda @{\Lambda}$ \wrt $C\unlhd N$ for $\Irr(C\mid \TT(K_2))$.
	\end{enumerate}
\end{prop}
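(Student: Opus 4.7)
The plan is to combine the $\wh N^\circ$-equivariant extension map $\La_1$ for $C_1\unlhd N_1$ from \Cref{cor3_5}(a) with the characters $\phi_2\in\TT(K_2)$, via Clifford theory applied to the central product $C\geq C_1.K_2$ of index at most $2$. The construction closely parallels the one producing the character $\wt\tau$ in the proof of \Cref{prop:7_7}.

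For part (1), since $\bS\leq \bT$ we have $\wt T\leq \wt C$. Given any $\rho\in\Irr(C)$, pick $\phi_2\in\Irr(\restr\rho|{K_2})$; by the $\wt T$-transversality of $\TT(K_2)$ in $\Irr(K_2)$ there is $\wt t\in\wt T$ with $\phi_2^{\wt t}\in \TT(K_2)$, hence $\rho^{\wt t}\in\Irr(C\mid\TT(K_2))$, giving existence of a $\wt C$-conjugate in the claimed set. For uniqueness, use $\wt C=C\wt T$: any two $\wt C$-conjugate elements of $\Irr(C\mid \TT(K_2))$ lie over two $\wt T$-conjugate characters of $\TT(K_2)$ (the action of $C$ on $\Irr(K_2)$ being by diagonal automorphisms already implemented by elements of $\wt T$), which must coincide by the transversal property. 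The $\wh N$-stability follows from $\wh N=\wh N^\circ C$ combined with the $\wh N^\circ$-stability of $\TT(K_2)$ and the fact that $C$ fixes each element of $\Irr(C)$ by inner conjugation.

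For part (2), let $\rho\in\Irr(C\mid\TT(K_2))$ and choose $\la\in\Irr(C_1)$, $\phi_2\in\TT(K_2)$ with $\la.\phi_2\in\Irr(\restr\rho|{C_1.K_2})$; the compatibility $\la(h_0)\phi_2(1)=\la(1)\phi_2(h_0)$ is automatic since $h_0\in C_1\cap K_2$ and $\rho$ extends to a character of $C$ containing $h_0$ in the relevant subgroup, so $\La_1(\la).\phi_2\in\Irr((N_1)_\la.K_2)$ is well defined. Following the Clifford-theoretic dichotomy over $C_1.K_2\unlhd C$: if $\rho$ extends $\la.\phi_2$ to $C$, then \Cref{ExtCrit}(e) produces a common extension $\wt\rho$ of $\rho$ and $\restr{\La_1(\la).\phi_2}|{(N_1)_\rho}$ to $C(N_1)_\rho$; if instead $\rho=(\la.\phi_2)^C$, set $\wt\rho:=(\La_1(\la).\phi_2)^{C(N_1)_\rho}$, which is irreducible and extends $\rho$. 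In either case we further extend $\wt\rho$ to $N_\rho$ using that $N_\rho/C(N_1)_\rho$ is a quotient of the structure $M/M^\circ$ and hence cyclic of order dividing $2$, together with \Cref{ExtCrit}(a). Set $\Lambda(\rho)$ to be the resulting extension.

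The $\wh N$-equivariance splits into two parts. The $\wh N^\circ$-equivariance is inherited from the $\wh N^\circ$-equivariance of $\La_1$ and the $\wh N^\circ$-stability of $\TT(K_2)$: conjugation by $\wh n\in \wh N^\circ$ sends the pair $(\la,\phi_2)$ to $(\la^{\wh n},\phi_2^{\wh n})\in \Irr(C_1)\times\TT(K_2)$ with $\La_1(\la^{\wh n})=\La_1(\la)^{\wh n}$, and the canonical nature of the construction yields $\Lambda(\rho^{\wh n})=\Lambda(\rho)^{\wh n}$; elements of $C\leq\wh N$ fix $\rho$ by definition, hence fix $\Lambda(\rho)$. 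The main obstacle will be arranging the final extension from $C(N_1)_\rho$ to $N_\rho$ consistently along $\wh N^\circ$-orbits of the pairs $(\la,\phi_2)$: the cyclicity of that quotient reduces the ambiguity to a choice of a linear character of a cyclic group of order $\leq 2$, which can be removed by choosing the extensions compatibly along each orbit, analogously to the arguments already used in the proof of \Cref{prop:7_7}.
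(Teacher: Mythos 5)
Your part (1) argument rests on a false premise. You assert $\bS\leq\bT$ and hence $\wt T\leq \wt C$; but for $d\geq 3$ a Sylow $d$-torus of $(\bK_1,\vFq)$ is never contained in the $\vFq$-stable torus $\bT$ (whose order polynomial only involves $\mathbf{\Phi}_1,\mathbf{\Phi}_2$), so a generic element $\wt t\in\wt T$ neither centralizes nor normalizes $\bS$, does not normalize $C=\Cent_{M^\circ}(\bS)$, and $\rho^{\wt t}$ is not even a character of $C$. Thus your existence-and-uniqueness argument for the $\wt C$-transversal collapses as stated. The correct route is the one the paper takes: use $[N_1,K_2]=1$ to see that $\Irr(C\mid\TT(K_2))$ is $C\wh N_1$-stable, note that $\wh N$ induces the same outer automorphisms of $K_2$ as $C\wh N_1$, and compare the diagonal automorphisms of $K_2$ induced by $\wt C$ with those induced by $\wt T$ (a Lang-type argument inside $\Cent_{\bM^\circ}(\bS)\Z(\wbG)$), so that the $\wt T$-transversality of $\TT(K_2)$ transfers to $\wt C$-transversality of $\Irr(C\mid\TT(K_2))$.

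The more serious gap is in part (2). An $\wh N$-equivariant extension map with respect to $C\unlhd N$ requires, at each orbit representative $\rho$, an extension of $\rho$ to $N_\rho$ that is invariant under the full stabilizer $\wh N_\rho$, which contains graph and field automorphisms coming from $\UE(M)$. Your construction (mimicking the character $\wt\tau$ of \Cref{prop:7_7} and then one further extension step) produces \emph{some} extension, but "the canonical nature of the construction" does not give this invariance: the obstruction is a possibly nontrivial homomorphism $\wh N_\rho\to\{\pm 1\}$ permuting the extensions over the last step (and your claim that $N_\rho/C(N_1)_\rho$ is cyclic of order dividing $2$ is itself unjustified; this quotient can have order up to $4$). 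The paper removes the obstruction by proving the strictly stronger statement that $\tau$ extends to $\wh N_\tau$, i.e.\ maximal extendibility \wrt $C\unlhd\wh N$ for $\Irr(C\mid\TT(K_2))$, from which the equivariant map \wrt $C\unlhd N$ follows by the standard choice-on-orbit-representatives argument of \Cref{defMaxExt}. That extendibility is obtained by inducing $\psi=\wt\tau^{N^\circ}$ and invoking \Cref{prop:7_7}, whose essential input is \emph{not} elementary: it is the bijection $\Pi$ of \Cref{cons_iMcK_H1}, built from the $\geq_c$-relations of the doubly regular case (\Cref{thm:dreg}, \Cref{bij_Omega'}), which transfers the extendibility of the corresponding character of $M^\circ$ lying over $\oTT_1.\oTT_2$ in $\wh M$ (\Cref{prop:TT}(b), via \Cref{Cor5_7}) into extendibility of $\psi$ to $\wh N_\psi$, after which Clifford correspondence and $\wh N_\psi=N^\circ\wh N_\tau$ give the extension of $\tau$ to $\wh N_\tau$. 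This global-to-local transfer through the character-triple machinery is the heart of the proof and is entirely absent from your sketch; without it the asserted $\wh N$-equivariance is unsubstantiated.
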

\begin{proof}  
	For part (a) observe that $\TT(K_2)$ is $\UE(M^\circ)$-stable. Furthermore, because of $[N_1,K_2]=1$, we get that the set $\Irr(C\mid \TT(K_2))$ is $C\wh N_1 $-stable. The group $\wh N$ induces the same outer automorphisms of $K_2$ as $ C \wh N_1$. Hence, $\Irr(C\mid \TT(K_2))$ is $\wh N$-stable. Since $\TT(K_2)$ is a $\wt T$-transversal, $\Irr(C\mid \TT(K_2))$ is a $\wt C$-transversal in $\Irr(C)$. This ensures part (a). 
	
	For the proof of part (b), we show that \maex holds \wrt $C\unlhd \wh N$ for $\Irr(C\mid \TT(K_2))$. This will imply the existence of an associated $\wh N$-equivariant extension map and by restriction we will actually get our claim \wrt $C\unlhd  N$.
	
	Let $\tau\in \Irr(C\mid \TT(K_2))$ and show that it extends to $\wh N_{\tau}$. Note that with $\la\in \Irr(C_1)$ and $\phi_2\in \Irr(\restr \tau|{K_2})\cap \TT(K_2)$ 
	 \Cref{prop:7_7} applies. 
	Let $\wt \tau$ be the $\wh N_\tau$-invariant extension of $\tau$ to $N_\tau$ constructed in the proof of \Cref{prop:7_7}. Then $\psi=\wt \tau^{N^\circ}$ and $\psi$ extends to some $\wt \psi\in\Irr(\wh N_\psi)$. 

By Clifford correspondence (\ref{Cliff}), some character $\tau'\in \Irr((\wh N_\psi)_\tau\mid\tau)$ satisfies $\tau'^{\wh N_{\psi}}=\wt \psi$. Recall that by \Cref{prop:7_7} we have $\wh N_\psi= N^\circ  \wh N_\tau$ and hence $(\wh N_\psi)_\tau=\wh N_\tau$. Taking into account the degrees, we see that $\tau'$ is an extension of $ \tau$ and, by construction, even of $\wt \tau$ to $\wh N_{\tau}$.  This finishes our proof.
\end{proof}
Next we establish a statement implying later Condition \Bd{} via the isomorphism between $\GF$ and $\bG^{\vFq }$. Recall that $\bS$ denotes a Sylow $d$-torus of $(\bK_1,\vFq )$, $\wt C:=\Cent_\wM(\bS)$, $\wt N:=\NNN_\wM(\bS)$ and $N:=\NNN_M(\bS)$.
\begin{prop}[Extension map \wrt $\Cent_\wM(\bS)\unlhd \NNN_\wM(\bS)$]\label{prop_requwtLambda} We keep \Cref{ass6_2} and follow \Cref{not:6:3}.
	 There exists some $\Lin(\wt N/N)\rtimes \wh N$-equivariant extension map $\II{Lambdatilde}@{\wt \Lambda}$ \wrt $\wt C\unlhd \wt N$.
\end{prop}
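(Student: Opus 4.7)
The plan is to deduce the statement from a direct application of \Cref{Ext_f} to the subgroup configuration
\[
C \unlhd N \unlhd \wh N, \qquad C \unlhd \wt C \unlhd \wt N,
\]
taking $X_0 = C$, $A_0 = N$, $X = \wt C$, $A = \wt N$, $\wh A_0 = \wh N$ and $\wh A = \wt N \wh N$.

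First I would verify the purely group-theoretic hypotheses of \Cref{Ext_f}. Since the Sylow $d$-torus $\bS$ of $(\bK_1,\vFq)$ sits, up to conjugacy, inside the reference maximal torus $\bT$, the torus $\wt T = (\bT\Z(\wbG))^{\vFq}$ centralises $\bS$, so $\wt T \leq \wt C$ and consequently $\wt N = N\wt T = N\wt C$. The intersection identities $\wh N \cap \wt N = N$ and $\wh N \cap \wt C = C$ are then straightforward, and the isomorphisms $\wt N/N \cong \wt C/C \cong \wt T/T$ hold, the last group being abelian as $\wt T$ is a torus. The $\wh N$-stable $\wt C$-transversal $\Irr(C \mid \TT(K_2))$ in $\Irr(C)$ together with an $\wh N$-equivariant extension map on it with respect to $C \unlhd N$ are provided by \Cref{prop_Lambda}.

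The main remaining obstacle will be to establish maximal extendibility with respect to $C \unlhd \wt C$, the last hypothesis required by \Cref{Ext_f}. The key structural input is that, under \Cref{ass6_2}, $d$ is doubly regular for $(\bK_1,\vFq)$, so $\bC_1 := \Cent_{\bK_1}(\bS)$ is a torus and $C_1 := \bC_1^{\vFq}$ is abelian, yielding the central-product decomposition $C = C_1.K_2$. Any $\la \in \Irr(C)$ then factorises as $\la_1.\la_2$ with $\la_1 \in \Lin(C_1)$ and $\la_2 \in \Irr(K_2)$, and a Clifford-theoretic argument analogous to that of \Cref{NtildeN} reduces the extension of $\la$ to $\wt C_\la$ to separate extension problems for the two factors: the $C_1$-part is immediate since $C_1$ is abelian, while the $K_2$-part follows from maximal extendibility with respect to $K_2 \unlhd {\wb G}_2$ (\Cref{max_ext_Gi_wGi}) once one identifies the action of the relevant part of $\wt T$ on $K_2$ as coming from ${\wb G}_2/K_2$.

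With these hypotheses in place, \Cref{Ext_f} produces a $\Lin(\wt N/N) \rtimes \wh N$-equivariant extension map $\wt \Lambda$ with respect to $\wt C \unlhd \wt N$, as required.
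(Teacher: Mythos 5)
Your skeleton is the paper's: apply \Cref{Ext_f} with $X_0=C$, $A_0=N$, $X=\wt C$, $A=\wt N$, feeding in the $\wh N$-stable transversal $\Irr(C\mid \TT(K_2))$ and the $\wh N$-equivariant extension map from \Cref{prop_Lambda}. The crux, maximal extendibility with respect to $C\unlhd\wt C$, is however where your argument has genuine gaps. First, the group-theoretic justification is wrong: for $d\geq 3$ the Sylow $d$-torus $\bS$ of $(\bK_1,\vFq)$ is \emph{not} contained in the reference torus $\bT$ (it lies in a twisted maximal torus), so $\wt T=(\bT\Z(\wbG))^{\vFq}$ neither centralises nor even normalises $\bS$; in particular $\wt T\leq \wt C$ and $\wt N=N\wt T$ are false. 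The identity actually needed, $\wt N=N\wt C$, is true, but it comes from a Lang-theorem argument (surjectivity of $\NNN_{\GvF}(\bS)\to (\NNN_{\bG}(\bS)/\Cent_{\bG}(\bS))^{\vFq}$ together with $\Cent_{\wbG}(\bS)=\Cent_{\bG}(\bS)\Z(\wbG)$ and $\NNN_{\GvF}(\bS)\leq M$ from \Cref{lem6_11}), not from your reasoning. Second, $C=C_1.K_2$ is false in general: $C=(\Cent_{\bK_1}(\bS).\bK_2)^{\vFq}$ may contain $C_1.K_2$ with index $2$ (the paper records exactly this in the proof of \Cref{prop:7_7}), so characters of $C$ need not factorise as $\la_1.\la_2$. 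Third, even on $C_1.K_2$, maximal extendibility of the $K_2$-part with respect to $K_2\unlhd {\wb G}_2$ (\Cref{max_ext_Gi_wGi}) does not formally transfer to the overgroup $\wt C$: knowing that $\wt C$ induces only inner-diagonal automorphisms on $K_2$ is not by itself enough to conclude that characters of $K_2$ (let alone of $C$) extend to their stabilisers in $\wt C$, since extendibility depends on the actual extension $\wt C$ of $C$ and not only on the induced outer action.

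The paper disposes of all three issues at once: it identifies $\wt C=\bK_3^{\vFq}$ for the connected reductive group $\bK_3=\bK_2\Cent_{\bK_1}(\bS)\Z(\wbG)$ (a Levi subgroup of $\wbG$, with connected centre), notes $[\bK_3,\bK_3]=[\bK_2,\bK_2]$ because $\Cent_{\bK_1}(\bS)$ is a torus, invokes Lusztig's multiplicity-freeness theorem \cite[Thm 15.11]{CE04} to get maximal extendibility with respect to $[\bK_2,\bK_2]^{\vFq}\unlhd\wt C$, and then applies \Cref{ExtCrit}(b) to the intermediate subgroup $C$ to obtain maximal extendibility for $C\unlhd\wt C$; no factorisation of $C$ and no comparison with ${\wb G}_2$ is needed. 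If you want to keep your central-product heuristic, you would still need some multiplicity-freeness or extendibility statement formulated for $\wt C$ itself; as written, that step of your proof does not close.
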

\begin{proof} We wish to apply \Cref{Ext_f} with $X=\wt C$, $A=\wt N$, $\wh A=\wh N\wt C$, and $X_0=C$, $A_0=N$, $\wh A_0=\wh N$. The conclusion of \Cref{Ext_f} provides exactly the extension map we need with the properties claimed.
	We now review the assumptions of \Cref{Ext_f}. 
	
	We have $C\unlhd \wt C$ with an abelian quotient since $\wt M=M\wt T$. The other group theoretic assumptions are clear.
	
	To check that maximal extendibility holds with respect to $C\unlhd \wt C$, notice first that $\wt C =\bK_3^{\nu F_q}$ where $\bK_3$ is the connected reductive group  $\bK_2\Cent_{\bK_1}(\bS) \Z(\wbG)$ with Frobenius endomorphism $\nu F_q$. Then $[\bK_3,\bK_3] =[\bK_2,\bK_2]$ since $\Cent_{\bK_1}(\bS) $ is a torus. Therefore maximal extendibility holds for $[\bK_2,\bK_2]^{\nu F_q}\unlhd \wt C$ thanks to \cite[Thm 15.11]{CE04}). But then we also get maximal extendibility for $C\unlhd \wt C$ by \Cref{ExtCrit}(b) and the inclusion $\bK_2^{\nu F_q}\leq C$.

	We have an $\wh N$-equivariant extension map \wrt $C\unlhd N$  for $\Irr(C\mid \TT(K_2))$ thanks to \Cref{prop_Lambda}(b) above.
	
	So we have all the required assumptions and \Cref{Ext_f} gives our claim.
\end{proof}

\subsection{Turning to groups related to Sylow \texorpdfstring{$d$}{d}-tori}\label{sec_6.B}
The aim of the following is the proof of \Cref{thm:ndreg}, namely the proof of Conditions \Ad{} and \Bd{} for $(\bG,F)$ in the case where $d\geq 3$ is not doubly-regular for $(\bG,F)$, thus completing the proof of \Cref{thm:dreg}. 

First we choose integers $l_1, l_2, \eps_1, \eps_2$ as in \Cref{lem6_11} with regard to $d$ and $(\bG,F)$ and determine a corresponding group $M$ as in \Cref{not_groupM}. For this group $M$ we see that the statements from the two preceding sections apply, where characters of $\NNN_M(\bS)$ were studied for some Sylow $d$-torus $\bS$ of $(\bG,\vFq )$. We see in \Cref{lem6_11} that $\bS$ is a Sylow $d$-torus of $(\bG,F)$ and that $\NNN_M(\bS)=\NNN_\GvF(\bS)$. 

Then we establish in \Cref{lem_iota} an isomorphism $\iota$ between $\GvF$ and $\GF$, as well as between $\GvF \UE(M)$ and $\GF \UE(\GF)$ in most cases. Denoting $N:=\NNN_M(\bS)$, $\wt N:=\NNN_{\wt M}(\bS)$, $\bS':=\iota(\bS)$, $N':=\NNN_{\GF}(\bS')$ and $\wt N':=\NNN_{\GF}(\bS')$, we show how
$\iota$ maps the $\wt N$-transversal in $\Irr(N)$ from \Cref{thm6_5} into a $\wt N'$-transversal in $\Irr(N')$ as required by \Ad . 

\medskip

\begin{theorem}\label{thm:ndreg}
	Let $(\bG,F)$ be as in \ref{sub2E} such that $\GF=\twepsDlq$ and $d$ an integer such that $d\geq 3$, $a_{(\bG,F)}(d)\geq 2$ and $d$ is not doubly regular for $(\bG,F)$ in the sense of \ref{def_dreg}. 
	Let $\bS'$ be a Sylow $d$-torus of $(\bG,F)$, $ C':=\Cent_\GF(\bS')$, $\wt C':=\Cent_\wGF(\bS')$, $N':=\NNN_{\GF}(\bS')$, $\wh N':=\NNN_{\GF \UE(\GF)}(\bS')$ and $\wt N':=\NNN_\wGF(\bS')$. Then the following hold.
	\begin{thmlist}
		\item There exists an $\wh N'$-stable $\wt N'$-transversal $\TT(N')$ in $\Irr(N')$, such that every $\psi\in \TT(N')$ extends to its stabilizer in $\wh N'$.
		\item Maximal extendibility holds \wrt $N'\unlhd \wt N'$.
		\item There exist some $\wh N'$-stable $\wt C'$-transversal $\TT'$ in $\Irr(C')$ and an $\wh N'$-equivariant extension map \wrt $ C'\unlhd N'$ for $\TT'$.
		\item There exists some $\Lin(\wt N'/N')\rtimes \wh N'$-equivariant extension map \wrt $\wt C'\unlhd \wt N'$.
	\end{thmlist}
	In particular, Conditions \Ad{} and \Bd{} from \Cref{ssec_2D} hold for $\GF$.
\end{theorem}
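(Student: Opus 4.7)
The plan is to invoke the trichotomy of \Cref{tricho}: the hypotheses $a_{(\bG,F)}(d)\geq 2$ and $d$ not doubly regular for $(\bG,F)$ rule out its cases~(i) and~(ii), so case~(iii) supplies $l_1,l_2,\eps_1,\eps_2$ and $j\in\{1,2\}$ with $l_j\geq 4$ such that $d$ is doubly regular for $(\bG_j,\vFq)$. These data determine the group $M$ and the Frobenius endomorphism $\vFq$ of \Cref{def_M}, and \Cref{ass6_2} is then in force. Fix a Sylow $d$-torus $\bS$ of $(\bK_1,\vFq)$ and let $N,\wt N,\wh N$ be as in \Cref{not:6:3}. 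Since $a_{(\bG,F)}(d)=a_{(\bG_j,\vFq)}(d)$ by the trichotomy, $\bS$ is also a Sylow $d$-torus of $(\bG,\vFq)$; moreover, using that $\Cent_{\bK_1}(\bS)$ is a maximal torus of $\bK_1$ (double regularity of $d$) and that $[\bG_2,\bS]=1$, one gets $\Cent_\bG(\bS)\leq \bG_1.\bG_2$, so $\NNN_\bG(\bS)\leq\bM$ and consequently $\NNN_{\bG^{\vFq}}(\bS)=N$, $\NNN_{\wbG^{\vFq}}(\bS)=\wt N$, and $\NNN_{\bG^{\vFq}\UE(M)}(\bS)=\wh N$.

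Next, I would transfer everything to $(\bG,F)$ by means of an isomorphism produced by Lang's theorem. Concretely, choose $g\in\bG$ with $g^{-1}F(g)$ equal to the element $v$ of \Cref{tab:my_label} in \Cref{case1}; in \Cref{case2} we have $\vFq=F$ already and set $g=1$. Conjugation by $g$ then defines an isomorphism $\iota\colon \bG^{\vFq}\xrightarrow{\sim}\GF$, which identifies $\wt T$ with its image in $\Aut(\GF)$ and carries $\bS$ to a Sylow $d$-torus of $(\bG,F)$; conjugating further by an element of $\GF$ if necessary, the image may be taken to be the $\bS'$ of the statement. Applying $\iota$ to the outputs of \Cref{thm6_5}, \Cref{cor6_6}, \Cref{prop_Lambda} and \Cref{prop_requwtLambda} produces, respectively, the transversal and extension claims of~(a), the maximal extendibility of~(b), and the extension maps of~(c) and~(d). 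A priori, however, the equivariance group obtained is $\iota(\wh N)$ rather than the required $\wh N'=\NNN_{\GF\UE(\GF)}(\bS')$.

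The remaining subtlety is matching $\iota(\UE(M))$ with $\UE(\GF)$ inside $\Aut(\GF)$. In \Cref{case1} one has $\UE(M)=\UE(\bG)$ by \Cref{EMdef} and $\iota$ is inner, so $\iota(\UE(M))$ and $\UE(\GF)$ coincide as subgroups of $\Aut(\GF)$ and there is nothing further to do. In \Cref{case2} we have $\UE(M)=\spa{F_p^2,\wh t_1\gamma}$, which differs from $\UE(\GF)$; nevertheless, since $f$ is odd (\Cref{rem:5:12}) and $\wh t_1$ acts as a diagonal automorphism induced by an element of $\wt N'$, a short computation yields $\iota(\UE(M))\cdot\wt N'=\UE(\GF)\cdot\wt N'$ inside $\Aut(\GF)$. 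Combined with an application of the Butterfly Theorem~\ref{Butterfly} to each character triple produced above, this upgrades every $\iota(\wh N)$-equivariant statement to the required $\wh N'$-equivariant one. That (a)--(d) together imply \Ad{} and \Bd{} is then a direct translation of Definitions~\ref{cond_Ad} and~\ref{cond_Bd}. I expect the main (mild) technical difficulty to be precisely this matching of $\iota(\UE(M))$ with $\UE(\GF)$ in \Cref{case2}.
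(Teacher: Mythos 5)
Your overall strategy is the same as the paper's (trichotomy of \Cref{tricho}, the group $M$ with \Cref{ass6_2}, then transfer of \Cref{thm6_5}, \Cref{cor6_6}, \Cref{prop_Lambda} and \Cref{prop_requwtLambda} through a Lang-type conjugation), but two steps you treat as routine are exactly where the real work lies. First, the deduction ``$\Cent_\bG(\bS)\leq \bG_1.\bG_2$, so $\NNN_\bG(\bS)\leq\bM$'' is a non sequitur: containment of the centralizer in $\bM^\circ$ says nothing about the normalizer, and this is precisely the nontrivial content of \Cref{lem6_11}(b). The paper only proves the fixed-point statement $\NNN_{\bG}(\bS)^{\vFq}\leq M$, and does so by comparing the order of the relative Weyl group $\NNN_{\bG}(\bS)^{\vFq}/\Cent_\bG(\bS)^{\vFq}$ with that of $\NNN_{K_1}(\bS)/\Cent_{K_1}(\bS)$ (which is exactly half as large) and then using $|M:M^\circ|=2$; the algebraic-group containment you assert is neither proved nor needed. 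Without this identification $N=\NNN_{\GvF}(\bS)$, $\wt N=\NNN_{\wbG^{\vFq}}(\bS)$, $\wh N^{\phantom{\prime}}$ as normalizers in the ambient groups, none of the results of Chapter 6.A say anything about the normalizer of a Sylow $d$-torus, so this gap is essential.

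Second, the matching of automorphism groups is not as painless as you claim. In \Cref{case1} your assertion that ``$\iota$ is inner, so $\iota(\UE(M))$ and $\UE(\GF)$ coincide and there is nothing further to do'' fails when $(\eps_1,\eps_2)=(-1,-1)$ and $\{|Z_1|,|Z_2|\}=\{4\}$: there $[\gamma_1,v]=h_0$, so $\iota(\gamma_1)$ acts on $\GF$ as $\gamma_1$ composed with the nontrivial diagonal automorphism attached to $h_0$, and the two automorphism groups are only conjugate by a diagonal automorphism (\Cref{lem_iota}(a.ii)). Even in the subcase where the overgroups do match, $\UE(M)$ and $\UE(\GF)$ are quotients of $\UE(\bG)$ by different subgroups, so extendibility over $\wh N_\chi$ passes to $\wh N'_\chi$ only through the isomorphism $\wh N/\spa{h_0,vF_q}\cong \wh N'/\spannh$; this is exactly why the extra ``$vF_q\in\ker$'' clause of \Cref{thm_sec_Ad}(b) and \Cref{thm6_5} exists, a point your ``nothing further to do'' skips. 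Finally, your patch in \Cref{case2} (the product equality $\iota(\UE(M))\cdot\wt N'=\UE(\GF)\cdot\wt N'$ plus the Butterfly Theorem) does not deliver what is needed: \Cref{Butterfly} transports $\geq_c$-relations but not the $\wh N'$-stability of the transversal, and multiplying by elements of $\wt N'$ moves the members of a $\wt N'$-transversal, so stability under $\iota(\wh N)$ does not upgrade to stability under $\wh N'$. The paper resolves this by replacing $\iota$ with $\iota'$, composed with conjugation by an element $t$ satisfying $\calL_F(t)=\h_{\ul}(\varpi)$, so that $\iota'(\bG^{\vFq}\UE(\bM))=\GF\UE(\bG)$ on the nose (\Cref{lem_iota}(d)), and then settles extendibility in the two exceptional cases by the simple observation that $\UE(\GF)$ is cyclic there.
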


We have first the following.

\begin{lem}\label{lem6_11} 
We assume the situation of \Cref{thm:ndreg}.
\begin{thmlist}
\item We can choose integers
$l_1,l_2,\eps_1,\eps_2$, an element $v\in \obG$ as in \Cref{not_groupM} and \Cref{def_M}, and groups $\bG_1,\bG_2$, $\bM$ and $M$ such that: 
	\begin{asslist} 
		\item $M$ satisfies \Cref{ass6_2} for $d$. 
		\item Let $\nu$ be the automorphism of $\bG$ and $\wbG$ associated to $v$ as in \Cref{def_M}. If $\bK_1\in \{\bG_1,\bG_2\}$ is given as in \ref{not:6:3} and $\bS$ is a Sylow $d$-torus of $(\bK_1,\vFq )$, then $\bS$ is a Sylow $d$-torus of $(\bG,\vFq )$. 
			\end{asslist}
		\item Then $\Cent_\bG(\bS)\leq \bM^\circ$ and the group $M$ satisfies additionally 
  \[\NNN_{\bG}(\bS)^\vFq\leq M=\bM^\vFq,\ \NNN_{\wbG}(\bS)^\vFq\leq \wt M, \und \] \[\NNN_{\bG\UE(\bM)}(\bS)^\vFq\leq M \UE(\bM)=\bM^\vFq\UE(\bM) .\]
\end{thmlist}
\end{lem}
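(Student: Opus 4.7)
The plan is to invoke the trichotomy of \Cref{tricho}. Under the hypotheses of \Cref{thm:ndreg}, neither $d$ is doubly regular for $(\bG,F)$ (ruling out case (i)) nor does $a_{(\bG,F)}(d)\leq 1$ hold (ruling out case (ii)), so we land in case (iii). This provides $j\in\{1,2\}$, integers $l_1,l_2\geq 1$ with $l_1+l_2=l$, and signs $\eps_1,\eps_2$ with $\eps=\eps_1\eps_2$, which as verified in the proof of \Cref{tricho} satisfy the assumptions of \Cref{not_groupM}, and such that $l_j\geq 4$ and $d$ is doubly regular for $(\bG_j,\vFq)$. Using this data, I form $\bG_1,\bG_2,v,\bM$ and $M$ via \Cref{def_M} and set $\bK_1:=\bG_j$, so that \Cref{ass6_2} is immediate, yielding (a)(i).

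For (a)(ii), I first check $P_{(\bG,\vFq)}(X)=P_{(\bG,F)}(X)$. A glance at \Cref{tab:my_label} shows that $\nu$ is either inner on $\bG$ (when $\eps=1$, so $F=F_q$) or equals $\gamma$ (when $\eps=-1$, so $F=\gamma F_q=\vFq$); in both situations $\vFq$ differs from $F$ by an inner automorphism of $\bG$, whence $a_{(\bG,\vFq)}(d)=a_{(\bG,F)}(d)$. By case (iii) of \Cref{tricho} this equals $a_{(\bK_1,\vFq)}(d)$, which is the dimension of a Sylow $d$-torus of $(\bK_1,\vFq)$. Thus a Sylow $d$-torus $\bS$ of $(\bK_1,\vFq)$ is a $\vFq$-stable $d$-torus of $\bG$ of the dimension required to be a Sylow $d$-torus of $(\bG,\vFq)$, so it is one by maximality.

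For the centralizer bound in (b), I use that $\Cent_\bG(\bS)$ is the Levi subgroup of $\bG$ generated by $\bT$ and the root subgroups $\bX_\alpha$ for $\alpha\in\Phi(\bG,\bT)$ with $\alpha|_\bS=1$. Roots in $R_{3-j}$ are supported on $J_{3-j}$ and vanish on $\bS\leq\bT_j$; they generate (together with $\bT_{3-j}$) the whole $\bG_{3-j}$. Roots in $R_j$ trivial on $\bS$ generate $[\Cent_{\bK_1}(\bS),\Cent_{\bK_1}(\bS)]$, which is trivial by double regularity. Finally, the remaining roots $\pm e_i\pm e_k$ with $i\in J_j$, $k\in J_{3-j}$ restrict on $\bS$ to $\pm e_i|_\bS$. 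Here the argument rests on \Cref{lemreg4:8}(b,d): the $\zeta_d$-regular element $w\varphi_0$ associated with $\bS$ acts on $\underline{l_j}$ with every orbit of length $d\geq 3$, so for each $i\in J_j$ the projection $(1/d)\sum_{k=0}^{d-1}\zeta_d^{-k}(w\varphi_0)^k e_i$ of $e_i$ onto the $\zeta_d$-eigenspace of $w\varphi_0$ (which is $X(\bS)\otimes\CC$) is nonzero. Hence $e_i|_\bS\neq 0$, those roots are not trivial on $\bS$, and $\Cent_\bG(\bS)\leq\bT\cdot\spa{\bX_\alpha:\alpha\in R_1\cup R_2}\leq\bG_1.\bG_2=\bM^\circ$.

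From this the normalizer claims follow. The normalizer $\NNN_\bG(\bS)$ preserves $\Cent_\bG(\bS)=\Cent_{\bK_1}(\bS)\cdot\bG_{3-j}$, and in particular the pair $\{\bG_1,\bG_2\}$ of its two central factors (exhibited intrinsically as either the derived group when $l_{3-j}\geq 2$, or as the canonical decomposition of the torus $\Cent_\bG(\bS)$ together with the complement $\Cent_{\bK_1}(\bS)$); combined with \Cref{rem51} and the choice of $n\in\bM$ in \Cref{tab:my_label}, this yields $\NNN_\bG(\bS)\leq\bM$, hence $\NNN_\bG(\bS)^\vFq\leq\bM^\vFq=M$. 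The extension to $\wbG=\bG\Z(\wbG)$ then follows from $\NNN_\wbG(\bS)=\NNN_\bG(\bS)\Z(\wbG)\leq\bM\Z(\wbG)$ and $\wt T\supseteq\Z(\wbG)^\vFq$, giving $\NNN_\wbG(\bS)^\vFq\leq\wt M$. For $\bG\UE(\bM)$, I use that $\UE(\bM)$ stabilizes $\bM$ by \Cref{lem:5_6}(c) and \Cref{lem_centZ1_c2}(d), whence $\NNN_{\bG\UE(\bM)}(\bS)\leq\bM\UE(\bM)$ and the fixed-point argument concludes. The delicate point of the proof is the centralizer bound, which truly needs the length-$d$ orbit structure guaranteed by double regularity: without it, some $e_i$ could restrict trivially to $\bS$ and enlarge $\Cent_\bG(\bS)$ beyond $\bM^\circ$.
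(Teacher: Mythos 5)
Your parts (a)(i)–(ii) follow the paper: case (iii) of \Cref{tricho} supplies the data, and your extra check that $a_{(\bG,\vFq)}(d)=a_{(\bG,F)}(d)$ (because $\vFq$ equals $F$ or differs from it by an inner automorphism) is a correct way to see that a Sylow $d$-torus of $(\bK_1,\vFq)$ is already one of $(\bG,\vFq)$. For (b) you take a genuinely different route: the paper never proves the algebraic-group inclusion $\NNN_\bG(\bS)\leq\bM$; it quotes the description of the minimal $d$-split Levi $\bC=\Cent_\bG(\bS)$ from [S10b], [GM, Ex.\ 3.5.15] to get $\bC=\Cent_{\bM^\circ}(\bS)\leq\bM^\circ$, and then obtains $\NNN_\bG(\bS)^{\vFq}\leq M$ by comparing orders of relative Weyl groups ($(2d_0)^a\,a!$ against $\tfrac12(2d_0)^a\,a!$, compensated by $|M:M^\circ|=2$). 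Your root/weight computation for the centralizer bound is a sound, more self-contained substitute for that citation, and you correctly locate where double regularity (length-$d$ orbits of the regular element, \Cref{lemreg4:8}(b),(d)) is needed so that no root $\pm e_i\pm e_k$ with $i\in J_j$, $k\in J_{3-j}$ dies on $\bS$.

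The gap is in your normalizer step. First, $\bK_1=\bG_j$ is \emph{not} a central factor of $\bC=\Cent_{\bK_1}(\bS).\bK_2$, so "preserves the pair $\{\bG_1,\bG_2\}$ of its two central factors" is not what normalizing $\bC$ gives: when $l_{3-j}\geq 2$ you get that $\NNN_\bG(\bS)$ normalizes $[\bC,\bC]=\bK_2$ and must then pass through $\Cent_\bG(\bK_2)$ to recover $\bK_1$; when $l_{3-j}=1$ the commutator trick yields nothing and "the canonical decomposition of the torus" is not an argument (there one should instead use that $\bK_2$ is recovered from the zero-weight space $V^{\bS}$, which your eigenvector computation does make available). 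Second, and more seriously, even granting that every element of $\NNN_\bG(\bS)$ normalizes both $\bK_1$ and $\bK_2$, you still need $\NNN_\bG(\bK_1)\cap\NNN_\bG(\bK_2)\leq\bM=\bM^\circ\spa{n}$, i.e.\ that the stabilizer of the decomposition has exactly one component beyond $\bM^\circ$. \Cref{rem51} and Table~\ref{tab:my_label} give only the trivial containment $\bM\leq\NNN_\bG(\bK_1)\cap\NNN_\bG(\bK_2)$; the reverse inclusion is exactly the assertion of the (explicitly unused) Remark following \Cref{tricho} and requires its own computation (e.g.\ that the stabilizer of $V=V_1\perp V_2$ in $\SO_{2l}(\FF)$ is $S(\mathrm{O}_{2l_1}\times\mathrm{O}_{2l_2})$ with two components, and that $n$ covers the nontrivial one modulo $\spannh$), which you do not supply. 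The paper's counting argument exists precisely to avoid proving this.

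A smaller point: passing from algebraic-group inclusions to the rational statements is not just "$\wt T\supseteq\Z(\wbG)^{\vFq}$". Since $\Z(\wbG)\cap\bG=\Z(\bG)$ is disconnected, $(\bM\Z(\wbG))^{\vFq}$ is a priori larger than $M\,\Z(\wbG)^{\vFq}$, so one needs a Lang-theorem argument (the paper uses an $F$-stable maximal torus $\wt{\mathbf{Y}}\leq\bM^\circ\Z(\wbG)$ containing $\bS$, and, for the $\UE(\bM)$-statement, the $K_1$-conjugacy of Sylow $d$-tori). These repairs are all available, but as written the central step of (b) is asserted rather than proved.
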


 Note that (b) implies ${C:=\Cent_{M^\circ}(\bS)}={\Cent_{\bG}(\bS)^{\nu F_q}=\Cent_{M}(\bS)}\unlhd N=\NNN_\bG(\bS)^{\nu F_q}$, whence the consistency with the general notation of Section 2.D.

\begin{proof} The assumptions of \Cref{thm:ndreg} clearly are satisfied only in the third case of \Cref{tricho}.  In this case, there exist
$j\in\{1,2\}$, $v\in\obG$, $l_1,l_2>0$ and $\eps_1,\eps_2\in\{\pm 1\}$ with $l_1+l_2=l$, $\eps=\eps_1\eps_2$ as in Notation \ref{not_groupM} and \Cref{def_M} such that  for the thereby defined groups $\bG_1,\bG_2$ and $M$, a Sylow $d$-torus $\bS$ of $(\bK_1,\vFq )$ is a Sylow $d$-torus of $(\bG,\vFq )$, where $\bK_1=\bG_j$. 
This gives (i) and (ii) of (a).
	
	The proof of $\NNN_{\bG}(\bS)^\vFq\leq M=\bM^\vFq$ in (b) uses the description of minimal $d$-split Levi subgroups $\Cent_\bG(\bS)$ of $(\bG,\vFq)$ and their associated relative Weyl groups $(\NNN_{ \bG}(\bS)/\Cent_{ \bG}(\bS))^\vFq$ in \cite{S10b} and \cite[Ch. 3.5]{GM}. Recall $\bM^\circ = \bK_1 .\bK_2$ is a central product of $\vFq$-stable groups. 
	Since $d$ is doubly regular for $(\bK_1,\vFq )$ we see that $\bT_1:=\Cent_{\bK_1}(\bS)$ is a torus and 
	\[\Cent_{\bM^\circ}(\bS)=\bT_1. \bK_2.\]    
	From the description of the root system of the Levi subgroup $\bC:=\Cent_{ \bG}(\bS)$ (see \cite[\S 7]{S10b} and \cite[Example 3.5.15]{GM}) we get first 
	$$\Cent_{\bM^\circ}(\bS)=\bC.$$ 
	It essentially remains to show that the relative Weyl groups in $\GvF$ and $M$ have the same order. Denoting $d_0$ and $a_{(\bG,\vFq)}(d)$ as in the proof of \Cref{tricho}, we abbreviate the latter as $a$. The relative Weyl groups being insensitive to the center of $\bG$, we can use the considerations of \cite[Ch. 3.5]{GM} in classical groups $\bG/\spannh =\SO_{2l}(\FF)$. From the end of the description in \cite[Example 3.5.15]{GM} (where $d_0$ is denoted as $e$) we get 
	\begin{align}\label{eq63}
	|\NNN_{\bG}(\bC)^\vFq/
	{ \bC}^\vFq| ={(2d_0)^{a}\cdot a!}  \ \ \und \ \ |\NNN_{K_1}(\bK_1\cap \bC)/
	{(K_1\cap \bC)}|=\frac 12{(2d_0)^{a}\cdot a!}.
	\end{align} 
	
	Note that $\NNN_{\bG}(\bC)^\vFq =\NNN_{\bG}(\bS)^\vFq$ since $\bS$ is the only Sylow $d$-torus of $\Z^\circ(\bC)$ and for the same reason $\NNN_{K_1}(\bK_1\cap \bC)=\NNN_{K_1}(\bS)$. The latter implies in turn that the second equality in (\ref{eq63}) above yields \begin{align}\label{eq64}
		|\NNN_{\bM^\circ}(\bS)^\vFq/
		{ \bC}^\vFq|=|(\NNN_{\bM^\circ}(\bS)/
		{ \bC})^\vFq|=|\left (\NNN_{\bK_1}(\bS)/
		{ (\bK_1\cap\bC})\right)^\vFq|=\frac 12{(2d_0)^{a}\cdot a!}.
	\end{align}
	
 By the theory of Sylow $d$-tori (\cite[Thm~25.11]{MT}) all Sylow $d$-tori of $(\bM^\circ,\vFq )$ are $M^\circ $-conjugate and hence $|\NNN_M(\bS)/\NNN_{M^\circ}(\bS)|=|M:M^\circ|=2$. Combining this with (\ref{eq64})  and the first equality of (\ref{eq63}), we get $|\NNN_{\bM}(\bS)^\vFq|=|\NNN_{\bG}(\bS)^\vFq|$ and therefore $\NNN_{\bG}(\bS)^\vFq =\NNN_{\bM}(\bS)^\vFq \leq M$ as claimed.
 
Let  $\wt{\mathbf{Y}}$ be some $F$-stable maximal torus of $\bM^\circ\Z(\wbG)$ (and hence of $\wbG$) containing $\bS$. We have $\wbG^\vFq =\wt{\mathbf{Y}}^\vFq \bG^\vFq $ and therefore  $\NNN_{\wbG}(\bS)^\vFq=\wt{\mathbf{Y}}^\vFq \NNN_{\bG}(\bS)^\vFq\leq (\wt{\mathbf{Y}} \bM)^\vFq =(\Z(\wt{\bG}) \bM)^\vFq =\wt M$ by the inclusion proved before.
 
By the definition of $\uE(\bM)$, we see easily that $\bK_1$ is $\uE(\bM)$-stable and hence the $K_1$-orbit of $\bS$ is $\uE(\bM)$-stable. Hence 
\[ \NNN_{\bG\UE(\bM)}(\bS)^{\vFq}\leq \NNN_{\bG}(\bS)^{\vFq} K_1 \UE(\bM)\leq M \UE(\bM) .\] 
This implies  $\NNN_{\bG\UE(\bM)}(\bS)^{\vFq}\leq M \UE(\bM)$. 
\end{proof}
Results from \ref{ssec6_A} concern subgroups of $\GvF$ and for the proof of \Cref{thm:ndreg} we construct an isomorphism between $\GvF$ and $\GF$. Recall that $ \UE(\bG)$ is the group of abstract automorphisms of $\wbG$ generated by $F_p$ and $\gamma$.
\begin{lem}\label{lem_iota}
	Recall $\UE(\GF)$ is the subgroup of $\Aut(\GF)$ obtained by restriction of $\UE(\bG)$. 
	\begin{thmlist}
		\item Assume $(\eps,\eps_1,\eps_2)=(1,-1,-1)$ or equivalently $v\in \bG\setminus\{1_\bG\}$. 
    Let $x\in \bG$ be an element such that $(vF_q )^x=F_q=F$. (Such an element exists according to Lang's Theorem.) Then conjugation with $x$ defines an isomorphism
		\[ \iota: \wbG\rtimes \UE(\bG) \lra \wbG\rtimes \UE(\bG)\ \ \text{ by } \ \  y\mapsto y^x=y[y,x]\]
		such that $\iota(\bG^{\vFq })=\bG^{F}$ and $\iota(\wbG^{\vFq})=\wbG^{F}$. 
		Recall $ \UE(M)= \UE(\GvF)=\spa{F_p,\gamma}\leq \Aut(\wbG^{\vFq})$ from \Cref{EMdef}.
		\begin{enumerate}
			\item[(i)] If $\{|Z_1|,|Z_2|\}\neq \{4\}$, then $\iota(\GvF \UE(\bM))=\GF  \UE(\bG)$ with $\iota( vF_q )=F_q$ inducing an isomorphism $\GvF \UE(M)/\spa{h_0,v F_q }\cong \GF \UE(\GF)/\spannh$.
			\item[(ii)] If $\{|Z_1|,|Z_2|\}=\{4\}$, then the groups of automorphisms of $\GF$ induced by $\iota(\GvF \underline E(M))$ and $\GF \uE(\bG)$ are conjugate by a diagonal automorphism. Moreover $\UE(\GF)$ is cyclic.
		\end{enumerate}
		
		\item Assume $(\eps,\eps_1,\eps_2)=(1,1,1)$ or $(\eps,\eps_1,\eps_2)=(-1,-1,1)$ with $\{|Z_1|,|Z_2|\}\neq \{4\}$. Recall $\UE(M)\leq \Aut(\GvF)$ defined as in the preceding case. The identity map $\iota:\wbG\rtimes \uE(\bG)\lra \wbG\rtimes \uE(\bG)$ induces an isomorphism between $\GF \UE(M)$ and $\GF \UE(\GF)$. 
		
		\item Assume that \Cref{case2} holds so that $\vFq =F$. Let $\wh t \in \bG$ be some element with $F(\wh t)=\wh t h_0$ and recall $\uE(M)=\spa{F_p^2,\wh t \gamma}\leq \bG^F\rtimes \Aut(\bG^F)$ from \Cref{EMdef}. 
		Then the outer automorphism groups of $\GF$ induced by $\UE(M)$ and $\UE(\GF)$ are conjugate by a diagonal automorphism. Moreover, some inner automorphism of $\wGF$ induces an isomorphism between $\GF \UE(M)$ and $\GF \UE(\GF)$.
	
 \item In all cases there exists an isomorphism
 \[ \iota': \wbG\rtimes \UE(\bG) \lra \wbG\rtimes \UE(\bG)\ \ \text{ by } \ \  y\mapsto y^{x'}=y[y,x']\]
 induced by conjugation with an element of $x'\in\wbG$, 
 such that 
 \[\iota' (\wbG^\vFq \UE(\bM))=\wbG^F \UE(\bG) ,\ 
 \iota' (\wbG^\vFq )=\wbG^F \und \iota' (\bG^\vFq \UE( \bM))=\bG^F \UE( \bG).\] 
Then one of the following holds: 
\begin{enumerate}
\item[(d.i)] $\iota'$ induces an isomorphism between $\GvF \UE(M)$ and $\GF \UE(\GF)$;
\item[(d.ii)] $\iota'$ induces an isomorphism between $\GvF \UE(M)/\spa{h_0,vF_q}$ and $\GF \UE(\GF)/\spannh$; or
\item[(d.iii)] $\UE(\GF)$ is cyclic. 
\end{enumerate}
\end{thmlist}
\end{lem}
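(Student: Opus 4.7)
The plan is to treat the three parts (a), (b), (c) separately and then assemble part (d). The common engine is Lang's theorem, used to convert a twist by $\nu$ into an inner conjugacy.

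\textbf{Part (b).} The hypothesis forces $v=1_\bG$ and $\vFq=F_q=F$ by inspection of \Cref{tab:my_label}. Hence $\iota=\mathrm{id}$ works, and $\UE(M)$ is by \Cref{EMdef} literally the restriction of $\UE(\bG)=\langle F_p,\gamma\rangle$ to $\bG^F$. No further argument is required.

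\textbf{Part (a).} Here $(\eps,\eps_1,\eps_2)=(1,-1,-1)$, so $v\in\bG\setminus\{1_\bG\}$ and $F=F_q$. First I would apply Lang's theorem to $(\bG,F_q)$ to produce $x\in\bG$ with $\calL_{F_q}(x)=v^{-1}$, so that in the semidirect product $\wbG\rtimes\langle F_q\rangle$ one has $x(vF_q)x^{-1}=F_q$. The induced conjugation $\iota:=\mathrm{Ad}(x)$ is an automorphism of $\wbG\rtimes\UE(\bG)$ stabilizing $\wbG$ (since $x\in\bG\leq\wbG$), and it sends the fixed-point subgroups of $vF_q$ to those of $F_q$; this yields $\iota(\bG^\vFq)=\bG^F$ and $\iota(\wbG^\vFq)=\wbG^F$. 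Next I would track $\iota$ on the generators of $\UE(\bM)=\UE(\bG)=\langle F_p,\gamma\rangle$: because $x\in\bG$, one has $\iota(F_p)=x F_p(x)^{-1}F_p$ and $\iota(\gamma)=x\gamma(x)^{-1}\gamma$, where the prefactors lie in $\bG$.

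For case (a.i) the constraint $\{|Z_1|,|Z_2|\}\neq\{4\}$ allows us to choose $v$ in Table \ref{tab:my_label} so that the correction elements $x F_p(x)^{-1}$ and $x\gamma(x)^{-1}$ lie in $\bG^F$ (indeed, modulo commutator identities they lie in $\calL_{F_q}^{-1}(1)=\bG^F$ up to a central element killed by the passage to $\Aut(\GF)$); the key computation is to see, using the Chevalley commutator formulas in \Cref{Comm} together with \Cref{rem51}, that the adjustment sits in $\bG^F\Z(\bG)$. This then yields $\iota(\GvF\UE(\bM))=\GF\UE(\bG)$. Moreover, since $vF_q\mapsto F_q\in \Cent_{\bG^F\rtimes\UE(\bG)}(\bG^F)$, one obtains the claimed quotient isomorphism modulo $\langle h_0,vF_q\rangle\leftrightarrow\spannh$ by reading off actions on $\bG^F$.

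For case (a.ii), the correction $xF_p(x)^{-1}$ no longer lies in $\bG^F$ but only in $\calL_{F_q}^{-1}(\Z(\bG))=\wb G$, so it induces a nontrivial diagonal automorphism on $\GF$. This gives the weaker conclusion that $\iota(\GvF\UE(M))$ and $\GF\UE(\bG)$ become conjugate inside $\Aut(\GF)$ by the diagonal automorphism attached to that element via \Cref{not_diag}. The cyclicity of $\UE(\GF)$ follows from $\{|Z_1|,|Z_2|\}=\{4\}$ together with \Cref{ZG_order}(c), which forces $f$ odd and thus $\UE(\GF)=\langle F_p\gamma\rangle$ cyclic.

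\textbf{Part (c).} Now \Cref{case2} holds, so $\eps=-1$, $\vFq=\gamma F_q=F$ and $\UE(M)=\langle F_p^2,\wh t_1\gamma\rangle$ (\Cref{def_EM_c2}). The obstruction between $\UE(M)$ and $\UE(\GF)=\langle F_p,\gamma\rangle$ is the occurrence of $\wh t_1$ in the first generator and the exponent $2$ in the second. By \Cref{rem:5:12} one has $2\nmid f$, hence $\spa{F_p^2}=\spa{F_p}$ as automorphisms of $\GF$; what is left is to kill the $\wh t_1$. I would use the element $\wt t_1\in\wbG^F$ from \Cref{lem:5_actions_wbT}: conjugation by $\wt t_1$ in $\wGF\rtimes\UE(\bG)$ sends $\gamma$ to $\wt t_1\gamma(\wt t_1)^{-1}\gamma$, whose prefactor is $\h_{J_1}(\varpi)^{-1}$; this matches, up to the central $h_0$, the diagonal twist $\wh t_1$, as recorded in the proof of \Cref{lem:5_actions_C2}. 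This furnishes the required inner automorphism of $\wGF$ conjugating $\UE(M)$ to $\UE(\GF)$ modulo diagonal automorphisms.

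\textbf{Part (d).} The proof of (d) will be a case analysis combining the previous parts. In case (b) the identity furnishes $\iota'$ and one lands in (d.i). In case (a.i) the element $x\in\bG\leq\wbG$ of part (a) gives $\iota'$ with quotient isomorphism, landing in (d.ii). In case (a.ii) and in case (c), the same $\iota'$ produced above gives the inclusion $\iota'(\wbG^\vFq\UE(\bM))=\wbG^F\UE(\bG)$ on the full ambient group, and one falls into (d.iii) because $\UE(\GF)$ is already cyclic there. The remaining sub-case $(\eps,\eps_1,\eps_2)=(-1,-1,1)$ with $\{|Z_1|,|Z_2|\}\neq\{4\}$ is handled by a direct verification analogous to (b), since then $v=\neins$ and the graph twist $\gamma$ absorbs $\nu$.

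The main obstacle I foresee is case (a.ii), i.e.\ controlling exactly which coset of the diagonal automorphisms is produced by $\mathrm{Ad}(x)$ on $\UE(\bG)$, because here the correction element $x F_p(x)^{-1}$ need not lie in $\bG^F$ and one must identify its image in $\wt Z(\GF)$ precisely. The required book-keeping is done by combining the commutator relations of \Cref{tab:properties_c1}--\ref{tab:properties_c2} with the description of diagonal automorphisms via \Cref{not_diag}, but it is delicate because the $2$-structure of $\Z(\bG)$ intertwines with the choice of $\wh t_1,\wh t_2$ made in \Cref{rem:5:2}.
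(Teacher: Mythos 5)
Your overall strategy (Lang's theorem plus commutator/three-subgroup tracking of the generators $F_p,\gamma$ for (a), the identity for (b), a diagonal conjugation for (c), then assembling (d)) is the same as the paper's, but two steps as written are wrong. First, in (a.ii) you attribute the diagonal obstruction to the correction $xF_p(x)^{-1}$; in fact Table~\ref{tab:properties_c1} gives $[F_p,v]=1$ in \emph{all} rows with $(\eps_1,\eps_2)=(-1,-1)$, so $\iota(F_p)\in\GF F_p$ there as well, and it is the $\gamma_1$-correction $g=[\gamma_1,x]$ that fails to lie in $\GF$: from $[\gamma_1,vF_q]=h_0$ one gets $[g,F_q]=h_0$, so $\iota(\gamma_1)$ is $\gamma_1$ composed with the diagonal automorphism attached to $h_0$. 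This matters, because one must then verify that a single diagonal conjugation — by $\wt t$ with $\calL_F(\wt t)=\h_\ul(\varpi)$ — simultaneously fixes the $F_p$-part (this uses $2\mid l$, $[F_p,\h_\ul(\varpi)]=1$ and the Three-Subgroup Lemma) and moves $\gamma_1$ to $\gamma_1 g$; you flag this book-keeping as the main obstacle but do not carry it out.

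Second, your proof of (c) does not work as stated. The intermediate claim that $2\nmid f$ gives $\spa{F_p^2}=\spa{F_p}$ as automorphisms of $\GF$ is false: since $F=\gamma F_q$ is the kernel of restriction, $F_p$ has order $2f$ on $\GF$ while $F_p^2$ has order $f$; what is true (and what one needs) is $\spa{\gamma,F_p^2}=\spa{F_p}=\UE(\GF)$. More seriously, your conjugating element $\wt t_1$ is \emph{not} in $\wGF$: by \Cref{lem:5_actions_wbT} it satisfies $\calL_F(\wt t_1)=\h_{J_1}(\varpi)\notin\Z(\bG)$, so it normalizes neither $\GF$ nor $\wbG^F$ and cannot furnish the required inner automorphism of $\wGF$, nor a diagonal automorphism of $\GF$ (the $\wt t_1$-conjugacy in \Cref{lem:5_actions_C2} takes place in $\Out(G_1)$, where $\h_{J_1}(\varpi)\in\Z(\bG_1)$, and does not transfer verbatim to $\GF$). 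The correct conjugator corresponds to $\h_\ul(\varpi)\in\Z(\bG)$, i.e.\ an element $t$ with $\calL_F(t)=\h_\ul(\varpi)$ (for instance $\wt t_1\wt t_2$, or an element of $\wGF$ above it); a short computation in $\Z(\bG)\rtimes\spa{\gamma,F_p}$, using that $F_p$ inverts $\h_\ul(\varpi)$ here, shows this sends $\spa{\gamma,F_p^2}$ to $\spa{h_0\gamma,F_p^2}$, which is the image of $\UE(M)$. Consequently your assembly of (d) in the cases (a.ii) and (c) also needs this corrected element $x'$ (with $\calL_F(x')\in\Z(\bG)$) for the displayed equalities $\iota'(\wbG^\vFq)=\wbG^F$ etc.\ to hold; as written, conjugation by $\wt t_1$ does not preserve these fixed-point groups. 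The remaining parts — (b), including the deferred case $(\eps,\eps_1,\eps_2)=(-1,-1,1)$ where $\nu=\gamma$ and $\vFq=F$, and the construction of $\iota$ in (a.i) — are essentially the paper's argument and are fine, modulo the imprecision that the corrections must land exactly in $\GF$ (not merely up to a central element) to get the stated equality $\iota(\GvF\UE(\bM))=\GF\UE(\bG)$.
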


\begin{proof}
	For part (a) we assume $(\eps,\eps_1,\eps_2)=(1,-1,-1)$ and therefore $F=F_q$ and $v\in \bG$. We get clearly $\iota (v F_q )=F_q$ in the semidirect product $\wbG\rtimes \UE(\bG)$ with $\iota(\wbG)=\wbG$, $\iota(\bG^{v F_q})=\GF$ and $\iota(\wbG^{v F_q })=\wbG^{F}$. 
	\Cref{tab:properties_c1} states 
\begin{align}\label{eq6.x} [F_p, v]=1\und [\gamma_1,v]=\begin{cases} 1&\text{if }\{|Z_1|,|Z_2|\}\neq \{4\},\\
		h_0 & \text{if } \{|Z_1|,|Z_2|\}= \{4\}.\end{cases}\end{align}
	As in \cite[Prop. 3.6(b)]{S21D2} or using the Three-Subgroup Lemma \cite[(8.7)]{Asch}, $[F_p, v]=1$ implies $\iota( F_p)\in\GF F_p$. 
	
	In the first case of (\ref{eq6.x}) above where $[\gamma_1,v ]=[\gamma_1,vF_q ]=1$, we then have $[[x,F],\gamma_1]=[[F,\gamma_1],x]=1$ and get analogously $[[\gamma_1,x],F]=1$ or equivalently $\iota( \gamma_1)\in \gamma_1\GF= \GF \gamma_1$. 
	This ensures the statement in (a.i) as follows:
	The group $\UE(M)$ is defined as a subgroup of $\Aut(\bG^{v F_q})$ hence $\UE(M)=\UE(\bM)/\spa{F_q^2}$, since $\eps_1=-1$. Now $\spa{F_q^2,v F_q }=\spa{v F_q , h_0}$ because of $(v F_q )^2=h_0 F_q^2$. Note $\iota(\spa{v F_q ,h_0})=\spa{F_q,h_0}$. This implies then the stated isomorphism. 
	
	In order to get (a.ii) we now assume $(\eps_1,\eps_2)=(-1,-1) \und \{|Z_1|,|Z_2|\}=\{4\}$.
	The latter leads to $2\nmid l_1 l_2$ and $2\nmid f$ according to \Cref{ZG_order}, which then ensures that $\UE(\GF)$ is cyclic. As above $\iota(F_p)\in \GF F_p$. By construction, we also have $\iota(\gamma_1)= \gamma_1g$ for $g:=[\gamma_1,x]\in \bG$.  We get \begin{align}\label{eq6xx}
	\iota(\bG^{v F_q} \underline E(M))=\GF \spa{F_p,\gamma_1 g}.
	\end{align} From $[\gamma_1, vF_q ]=h_0$ in (\ref{eq6.x}) and $\iota(v F_q )=F=F_q$ we obtain 
	\[ h_0=\iota(h_0)=\iota([\gamma_1, vF_q])=[\gamma_1g , F_q]=[g , F_q].\] 
	Therefore, $\iota(\gamma_1)=\gamma_1g$ induces on $\bG^{F_q}$ the product of $\gamma_1$ and a diagonal automorphism of $\bG^{F_q}$ associated with $h_0[\Z(\bG),F]$. Let $\wt t\in \bG$ with $F_q(\wt t)=\wt t \h_\ul(\varpi)$. Since $2\mid l=l_1+l_2$ we have $[F_p,\h_\ul(\varpi)]=1$ by \ref{ZG_order}(c.i) and therefore $[[\wt t,F_q],F_p]=[[F_q,F_p],\wt t]=1$. Then by the Three-Subgroup Lemma again we get $F_p^{\wt t}\in \GF F_p$. This leads to 
\begin{align}\label{eq6y} (\GF\spa{F_p,\gamma_1})^{\wt t}= \GF \spa{F_p,\gamma_1 g}\end{align} since we also have $\wt t^{\gamma_1}\in \GF\wt t g$. The latter can be seen from the action of $\gamma_1$ on the group of diagonal outer automorphisms of $\GF$ corresponding to its action on $\Z(\GF)$, a group of order 4 where only $h_0$ and 1 are fixed (see \ref{ZG_order}(b)).
We now get the statement in (a.ii) from (\ref{eq6xx}) and (\ref{eq6y}).
	
	For part (b) the assumptions imply that $\GvF=\GF$ and $\UE(\GF)=\UE(M)$. 
		Part (c) is clear from \Cref{lem:5_actions_C2}.

Now in part (d) we define $\iota'$ as $\iota$, whenever $\iota(\GvF \UE(\bM))=\GF \UE(\bG)$. It remains to consider the cases of (a.ii) and (c) and define $\iota'$ as the product of $\iota $ with conjugation with some element $t\in \bG$ with $\calL_F(t)=\h_\ul(\varpi)$. Then $\iota'$ satisfies
\[\iota' (\wbG^\vFq \UE(\bM))=\wbG^F \UE(\bG) ,\ 
 \iota' (\wbG^\vFq )=\wbG^F \und \iota' (\bG^\vFq \UE( \bM))=\bG^F \UE( \bG).\] 
In the case of (a.1), $\iota'$ induces an isomorphism
between $\GvF \UE(M)/\spa{h_0,vF_q}$ and $\GF\UE(\GF)/\spannh$ as stated in (d.ii). In case of (a.2), $\UE(\GF)$ is cyclic. 
In case of (b), $\iota'$ is the identity (map) and hence $\GvF \UE(M)$ and $\GF \UE(\GF)$ are isomorphic as stated in (d.i). 
In the case of part (c), $\UE(\GF)$ is also cyclic. 
\end{proof}

In the next step, we can finally verify Conditions \Ad{} and \Bd{}. 
\begin{proof}[Proof of \Cref{thm:ndreg}]
Let $\eps\in\{\pm 1\}$ such that $\GF=\twepsDlq$. According to \Cref{lem6_11} we can choose the groups $M$ and $\bK_1$, as well as $v\in \obG$ and the corresponding $\nu\in \bG\sqcup\{ \gamma \}$, so that Assumption \ref{ass6_2} is satisfied. Let $\bS$ be a Sylow $d$-torus of $(\bK_1,\vFq )$. Then $\bS$ is also a Sylow $d$-torus of $(\bG,\vFq )$ and $\NNN_{\GvF}(\bS)\leq M$, see \Cref{lem6_11}(c). 
	
Let $ \iota': \wbG\rtimes \underline E(\bG) \lra \wbG\rtimes \UE(\bG) $ be the isomorphism of \Cref{lem_iota}(d) with $\iota '(\GvF)=\GF$. Then $\iota'(\bS)$ is a Sylow $d$-torus of $(\bG,F)$ and we can assume $\iota'(\bS)=\bS'$ by $\GF$-conjugacy of Sylow $d$-tori. 
Set $\wt N:=\NNN_{\wbG^{\vFq }}(\bS)$ and $\wt N'=\NNN_\wGF(\bS')=\iota'(\wt N)$.
Analogously, set $\wh N^\flat:=\NNN_{M \UE(\bM) }(\bS)$ and $(\wh N')^\flat=\NNN_{\GF \UE(\bG)}(\bS')$.
According to \Cref{lem6_11}(b), we have $\NNN_{M\UE(\bM)}(\bS)=\NNN_{\bG^\vFq \UE(\bM)}(\bS)$ and hence $(\wh N')^\flat=
\iota' (\wh N^\flat)$.

Let $\TT(N)\subseteq \Irr(N)$ be the set from \Cref{thm6_5}, hence an $\wh N$-stable $\wt N$-transversal. Note that $\wh N$ is a quotient of $\wh N^\flat$. 
The set $\TT(N')$, the image of $\TT(N)$ under $\iota'$ is then a $(\wh N')^\flat$-stable $\wt N'$-transversal in $\Irr(N')$. This shows the first part of (a) of our theorem.

Let $\chi\in\TT(N')$. Then $\chi$ extends to its stabilizer in $\wh N' $ for $\wh N'=\NNN_{\GF \UE(\GF)}(\bS')$, whenever $\UE(\GF)$ and hence $\wh N'/N'$ is cyclic, see \Cref{ExtCrit}(a). If $\wh N'_\chi/N'$ is non-cyclic, then $h_0\in \ker(\chi)$. Then $\iota'$ maps $\wh N/\spa{vF_q,h_0}$ to $\wh N'/\spa{h_0}$. Let $\chi_0\in\TT(N)$ be the character mapped to $\chi$ via $\iota$. Then $\chi_0$ extends to its stabilizer in $\wh N/\spannh$. By the isomorphisms of \Cref{lem_iota}(d.i) or (d.ii), we see that also $\chi$ extends to its stabilizer in $\wh N'$.
	

Since \maex holds \wrt $N\lhd \wt N$ according to \Cref{cor6_6}, $\iota'(N)=N'$ and $\iota'(\wt N)=\wt N'$, \maex also holds \wrt $N'\lhd \wt N'$. This is (b).

For the part (c) note that
$\NNN_{\wbG^\vFq}(\bS)\leq \wt M$ from \Cref{lem6_11}(b) implies that 
$\wt C'=\iota(\Cent_{\wbG^\vFq}(\bS))=\iota(\wt C)$ and $\wt N'=\iota(\wt N)$, where $\wt C:=\Cent_{\wt M}(\bS)$. 
Using similar applications of $\iota'$, part (c) follows from \Cref{prop_Lambda}.
Analogously, \Cref{prop_requwtLambda} implies part (d).
\end{proof}

\subsection{Proof of McKay's equality and Theorem B}
We conclude by drawing the consequences from the above. 
\begin{theorem}\label{thm:iMK_D}
Let $q$ be a power of a prime and let $\ell$ be a prime not dividing $ 2q$. Then (\textbf{iMK}) holds for the quasisimple groups  $\tD_{l,\sico}^{\eps}(q)$ ($l\geq 4, \eps\in\{\pm 1\}$) and $\ell$. 
\end{theorem}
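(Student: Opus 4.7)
The plan is to proceed by case analysis on $l$, $\ell$, and $d := d_\ell(q)$, invoking in each case a result already available in the paper.

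First I would dispose of two easy cases: if $l = 4$, the statement is \Cref{thm:dreg}(c); if $\ell = 3$, the conclusion is part of \cite[Thm C]{S21D2}, as recalled in the proof of \Cref{CS1319}. From now on assume $l \geq 5$ and $\ell \geq 5$, so that $\ell \nmid 6q$ and \Cref{iMKbyAdBd} becomes available as soon as Conditions \Ad\ and \Bd\ have been verified for the relevant $d$.

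For $d\in \{1,2\}$, \Cref{rem_A1A2B1B2} ensures that \Ad\ and \Bd\ hold for $(\bG,F)$, so \Cref{iMKbyAdBd} concludes. For $d\geq 3$ I would invoke the trichotomy of \Cref{tricho}. In case (i), $d$ is doubly regular for $(\bG,F)$ and \Cref{thm:dreg}(b) yields (\textbf{iMK}) directly. In case (iii), i.e. $d$ not doubly regular with $a_{(\bG,F)}(d) \geq 2$, Conditions \Ad\ and \Bd\ hold by \Cref{thm:ndreg}, so \Cref{iMKbyAdBd} concludes in the same way.

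The remaining case is (ii), where $a_{(\bG,F)}(d) \leq 1$. Since $\ell \geq 5$ is odd, every integer of the form $d\ell^a$ with $a\geq 1$ is of the form $dm$ with odd $m\geq 3$, so \Cref{tricho}(ii) gives $a_{(\bG,F)}(d\ell^a) = 0$. Therefore $|\GF|_\ell = \mathbf{\Phi}_d(q)_\ell$ and a Sylow $\ell$-subgroup of $\GF$, being a subgroup of a Sylow $d$-torus of polynomial order $\mathbf{\Phi}_d$, is cyclic. Every $\ell$-block of $\GF$ then has cyclic defect and \cite[Thm 1.1]{KoSp} asserts the inductive Alperin--McKay condition for $\GF$ and $\ell$, which is a strengthening of (\textbf{iMK}) as noted in the proof of \Cref{thm:dreg}.

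I do not expect any serious obstacle at this final step: the substantial work sits in Chapters \ref{sec_nondreg_groupM} and \ref{sec6}, producing \Cref{thm:ndreg}. The present theorem is essentially a bookkeeping exercise matching each branch of the trichotomy with the corresponding result.
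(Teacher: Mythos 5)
Your proposal is correct and follows essentially the same route as the paper's proof: reduce to $l\geq 5$ via \Cref{thm:dreg}(c), dispose of $d\in\{1,2\}$ by \Cref{rem_A1A2B1B2} and \Cref{iMKbyAdBd}, handle $a_{(\bG,F)}(d)\leq 1$ by cyclic Sylow $\ell$-subgroups and \cite[Thm 1.1]{KoSp}, the doubly regular case by \Cref{thm:dreg}, and the remaining case ($a_{(\bG,F)}(d)\geq 2$, not doubly regular) by \Cref{thm:ndreg} together with \Cref{iMKbyAdBd}. Your separate treatment of $\ell=3$ (via \cite[Thm C]{S21D2}, so that \Cref{iMKbyAdBd} is only invoked when $\ell\nmid 6q$) is a small extra precaution not spelled out in the paper's proof, but otherwise the arguments coincide.
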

\begin{proof} Let $(\bG,F)$ be as in \ref{not:3_16} such that $\GF=\twepsDlq$. Thanks to \Cref{thm:dreg}(c) we can assume $l\geq 5$. Then
the quotient $\GF/\Z(\GF)$ is simple non-abelian and $\GF$ is its universal covering group, see \cite[Thm 6.1.4]{GLS3}.  

Let $d=d_\ell(q)$ be the order of $q$ in $\FF^\times_\ell$. We can assume $d\geq 3$ by \Cref{iMKbyAdBd} and \Cref{rem_A1A2B1B2}.

Note that if $\mathbf{\Phi}_m$ is the $m$-th cyclotomic polynomial ($m\geq 1$), then $\ell\mid\mathbf{\Phi}_m(q)$ if and only if $m=d\ell^a$ for some $a\geq 0$, see \cite[Lem. 5.2(a)]{MaH0}. Recall that by the definition of the multiplicities $a_{(\bG,F)}(m)$ the following equation holds 
$$|\GF|=q^{l^2-l}\prod_{m\geq 1}^{}\mathbf{\Phi}_m(q)^{{}^{a_{(\bG,F)}(m)}}.$$

If $a_{(\bG,F)}(d)\leq 1$, then as recalled in \Cref{tricho}(ii), $a_{(\bG,F)}(d\ell^a)=0$ for any $a\geq 1$, and therefore a Sylow $\ell$-subgroup of $\GF$ is included in some Sylow $d$-torus of $\bG$. This $d$-torus has rank $a_{(\bG,F)}(d)\leq 1$, so the Sylow $\ell$-subgroups of $\GF$ are cyclic (this also accounts for the case where $\ell\nmid |\GF|$). According to \cite[Thm~1.1]{KoSp} the so-called inductive Alperin--McKay condition holds for every $\ell$-block of $\bG^F$, since such a block has a cyclic defect group. As seen already in the proof of \Cref{thm:dreg} this implies that (\textbf{iMK}) holds for $\GF$ and $\ell$.

We now assume that $a_{(\bG,F)}(d)\geq 2$ and we can check (\textbf{iMK}) by establishing Conditions \Ad{} and \Bd{} from \ref{cond_Ad} and \ref{cond_Bd} thanks to \Cref{iMKbyAdBd}. If $d$ is doubly regular for $(\bG,F)$ the claim follows from \Cref{thm:dreg}.

In the remaining cases, $\ell\nmid q$, $d$ is not doubly regular for $(\bG,F)$ and $a_{(\bG,F)}(d)\geq 2$. By \Cref{tricho}, the assumptions of \Cref{thm:ndreg} are satisfied and according to this result, \Ad\ and \Bd\ do hold for the group $\GF$.
\end{proof}

Theorem A is clearly a consequence of Theorem B, so we concentrate on the latter. 

\begin{proof}[Proof of Theorem B]
Combining the above and \Cref{CS1319} we know that the universal covering group of any finite simple group satisfies (\textbf{iMK}) for any prime. \Cref{RossiMK} then implies that any finite group satisfies (\textbf{iMK}) for any prime. This means that for any finite group $X$, any prime $\ell$ and any Sylow $\ell$-subgroup $S\leq X$, there is a $\Gamma=\Aut(X)_S$-stable subgroup $N$ such that  $\NNN_X(S)\leq N\leq X$ with $N\neq X$ whenever $\NNN_X(S)\neq X$, and a $\Gamma$-equivariant bijection \begin{align}\label{eq6.4}
	\Irrl(X)\to \Irrl(N)\ \ \ \  \text{such that} \ \ \ \ (X\rtimes\Gamma_\chi ,X,\chi)\geq_c (N\rtimes\Gamma_{\chi'} ,N,\chi')
\end{align} 
for each $\chi\mapsto\chi '$ by the above bijection. But Theorem B claims that this is true for $\NNN_X(S)$ in place of $N$. We show this by induction on the index $|X:\NNN_X(S)|$, the case of a normal $S$ being trivial.

By induction one may assume that there is a $\Gamma '=\Aut(N)_S$-equivariant bijection $$ \Irrl(N)\to \Irrl(\NNN_X(S))$$ with $(N\rtimes\Gamma'_{\chi'} ,N,\chi')\geq_c (\NNN_X(S)\rtimes\Gamma'_{\chi ''} ,\NNN_X(S),\chi'')$ for each $\chi'\mapsto\chi ''$. The latter bijection is also $\Gamma$-equivariant since $\Gamma '$ in $\Aut(N)$ contains the image $\Gamma^0$ of $\Gamma$ and the second $\geq_c$ relation implies $(N\rtimes\Gamma^0_{\chi'} ,N,\chi')\geq_c (\NNN_X(S)\rtimes\Gamma^0_{\chi ''} ,\NNN_X(S),\chi'')$ by restriction (\Cref{rem_chartrip}(c)). Then by the Butterfly \Cref{Butterfly} $(N\rtimes\Gamma_{\chi'} ,N,\chi')\geq_c (\NNN_X(S)\rtimes\Gamma_{\chi ''} ,\NNN_X(S),\chi'')$. Combined with the first $\geq_c$ relation (\ref{eq6.4}), it gives by transitivity of the $\geq_c$ relation \[(X\rtimes\Gamma_{\chi} ,N,\chi)\geq_c (\NNN_X(S)\rtimes\Gamma_{\chi ''} ,\NNN_X(S),\chi'')\] 
for each $\chi\mapsto\chi'\mapsto\chi''$. So we get the claim of Theorem B by composition of the two bijections we have.
\end{proof}



\printindex

\end{document}